\newcommand{\ct}[1]{\ensuremath{\mathbf{#1}}}              
\newcommand{\at}[1]%
            {\ensuremath{\protect\underline{\mathbf{#1}}}} 
\newcommand{\ec}[1]{\ensuremath{\mathcal{#1}}}             
\newcommand{\cl}[1]{\ensuremath{\overline{#1}}}            
\newcommand{\op}[1]{\ensuremath{\operatorname{#1}}}        
\newcommand{\tp}[1]{\langle#1\rangle}                      
\newcommand{\farg}{\cdot}                                  
\newcommand{\mawh}[1]{\widehat{#1}}                        
\newcommand{\mawt}[1]{\widetilde{#1}}
\newcommand{\maol}[1]{\overline{#1}}
\newcommand{\h}[1][]                                       
 {\ifthenelse{\boolean{mmode}}%
  {$\mathrm{h}$}%
  {h\nobreakdash#1\hspace{0pt}}}
\DeclareMathOperator{\Mor}{Mor}    
\newcommand{\id}{\mathrm{id}}        
\DeclareMathOperator{\Hom}{Hom}    
\newcommand{\comp}{\circ}          
\newcommand{\dcomp}{\diamond}      
\newcommand{\hcomp}{\ast}          
\newcommand{\adcomp}%
  {\overset{\operatorname{ad}}{\comp}} 
\newcommand{\funcomp}%
  {\overset{\operatorname{fn}}{\comp}}
\newcommand{\tcomp}{\mathbin{\Tilde{\circ}}}  
\newcommand{\thcomp}{\mathbin{\Tilde{\ast}}} 
\DeclareMathOperator{\Id}{Id}      
\DeclareMathOperator{\Nat}{Nat}    
\DeclareMathOperator{\opp}{op}     
\newcommand{\sccat}
{\mathbin{\kern-1pt\raisebox{6pt}{.}\kern-5pt
\downarrow\kern-5pt\raisebox{6pt}{.}\kern-1pt}}
\newcommand{\parrow}[1]
   {\underset{{\displaystyle \raisebox{5pt}%
   {$\longleftarrow$}}}{\op{#1}}{\,}}
\newcommand{\iarrow}[1]
   {\underset{{\displaystyle \raisebox{5pt}%
   {$\longrightarrow$}}}{\op{#1}}{\,}}
\newcommand{\ladj}{\!\dashv\!}         
\newcommand{\uadj}{\top}           
\DeclareMathOperator{\Sig}{Sig}    
\DeclareMathOperator{\Sub}{Sub}    
\DeclareMathOperator{\inc}{in}     
\newcommand{\lmapsto}{\longmapsto} 
\newcommand{\rest}%
{\mathnormal{\restriction}}           
\newcommand{\incl}{\subseteq}      
\newcommand{\tprod}{\textstyle{\prod}}     
\newcommand{\tcoprod}{\textstyle{\coprod}} 
\newcommand{\ttcoprod}{\amalg}    
\newcommand{\bprod}{\times}        
\newcommand{\inter}{\bigcap}       
\newcommand{\union}{\bigcup}       
\newcommand{\bunion}{\cup}         
\newcommand{\iso}{\cong}           
\newcommand{\function}[4]{
            \begin{array}{@{\:}c@{\:}c@{\:}l}
                   #1 &\mor& #2 \\
                   #3 &\longmapsto& #4
            \end{array} }
\newcommand{\nfunction}[4]
    {\left\{
     \function{#1}{#2}{#3}{#4}
     \right. }
\DeclareMathOperator{\Fix}{Fix}    
\newcommand{\fmon}[1]
 {\ensuremath{#1^{\star}}}
\newcommand{\ffmon}[1]
 {\ensuremath{#1^{\star\star}}}
\newcommand{\fffmon}[1]
 {\ensuremath{#1^{\star\star\star}}}
\newcommand{\bb}[1]{\ensuremath
 {\lvert #1 \rvert}}
\DeclareMathOperator{\Sg}{Sg}      
\DeclareMathOperator{\E}{E}        
\DeclareMathOperator{\Ker}{Ker}    
\newcommand{\pr}{\mathrm{pr}}        
\DeclareMathOperator{\Op}{Op}      
\newcommand{\vs}[1]{\mathbin{\downarrow}#1}
\DeclareMathOperator{\Alg}{Alg}    
\DeclareMathOperator{\p}{p}        
\DeclareMathOperator{\bconcat}
            {\curlywedge}
\newcommand{\cncat}{\curlywedge}
\newcommand{\concat}
  {\ensuremath{\text
  {\Large $\curlywedge$}}}
\newcommand{\ext}[1]
  {\ensuremath{#1^{\sharp}}}
\newcommand{\nat}{\natural}        
\DeclareMathOperator{\Ter}{Ter}    
\DeclareMathOperator{\Eq}{Eq}      
\DeclareMathOperator{\Mod}{Mod}    
\DeclareMathOperator{\Cn}{Cn}      
\newcommand{\Hall}{\ensuremath{\mathrm{H}}} 
\newcommand{\Ben}{\ensuremath{\mathrm{B}}}  
\DeclareMathOperator{\Sen}{Sen}    
\newcommand{\vacio}{\ensuremath{\varnothing}}
\newcommand{\ol}{\overline}
\newcommand{\brel}{\ensuremath{\xymatrix{{}\ar@{{*}{-}{*}}[r] & {}}}}
\newcommand{\nseq}[3]{\xymatrix@1@C=16pt{#1 \ar@{>}[r]_-{\scriptscriptstyle{#2}} & #3 }}
\def\labelstyle{\textstyle}     
\def\twocellstyle{\textstyle}   
\newcommand{\Smmallmatrix}{
  \xymatrixcolsep={7ex}
  \xymatrixrowsep={5.6ex}}
\newsavebox{\xymor}  
\newsavebox{\xymon}  
\newsavebox{\xyepi}  
\newsavebox{\xytn}   
\newsavebox{\xyrel}  
\newsavebox{\xycel}  
\newsavebox{\xymdf}  
\newsavebox{\xyumor} 
\newsavebox{\xydmor} 
\newsavebox{\xyomor} 
\newsavebox{\xyemor} 
\newcommand{\xynode}{\makebox[0ex]{}}
\savebox{\xymor}{\ensuremath{%
\xymatrix@1@C=19pt{\xynode \ar@{>}[r] & \xynode }}}
\savebox{\xymon}{\ensuremath{%
\xymatrix@1@C=19pt{\xynode \ar@{{ +}{-}{>}}[r] & \xynode }}}
\savebox{\xyepi}{\ensuremath{%
\xymatrix@1@C=19pt{\xynode \ar@{{}{-}{+>}}[r] & \xynode }}}
\savebox{\xytn}{\ensuremath{%
\xymatrix@1@C=19pt{\xynode \ar[r]|(.44){\object@{.-}} & \xynode }}}
\savebox{\xyrel}{\ensuremath{%
\xymatrix@1@C=19pt{\xynode \ar@{{}{-}{-o}}[r] & \xynode }}}
\savebox{\xycel}{\ensuremath{%
\xymatrix@1@C=19pt{\xynode \ar@{=>}[r] & \xynode }}}
\savebox{\xymdf}{\ensuremath{%
\xymatrix@1@C=16pt{\xynode \ar@{}[r]|{\dir{~>}} & \xynode}}}
\savebox{\xyumor}{\ensuremath{%
\xymatrix@1@C=19pt{\xynode \ar@{{}{-}^{>}}[r] & \xynode }}}
\savebox{\xydmor}{\ensuremath{%
\xymatrix@1@C=19pt{\xynode \ar@{{}{-}_{>}}[r] & \xynode }}}
\savebox{\xyomor}{\ensuremath{%
\xymatrix@1@C=19pt{\xynode \ar@{{}{-}^{< }}[r] & \xynode }}}
\savebox{\xyemor}{\ensuremath{%
\xymatrix@1@C=19pt{\xynode \ar@{{ >}{-}{>}}[r] & \xynode }}}
\newcommand{\mor}{\usebox{\xymor}}    
\newcommand{\cel}{\usebox{\xycel}}    
\newcommand{\mdf}{\usebox{\xymdf}}    
\newcommand{\xyn}[1]{\save="#1"\restore}
\newcommand{\xymn}[3][0ex,5ex]{%
\save[]+<#1>*+{#2}="#3" \restore }
\newcommand{\functor}[9]{
 \xymatrix{
    #4 \save[]+<0ex,5ex>*+{#1}="1"  \restore
      \ar[d]_{#6}  \ar@{}[rd]|{\longmapsto}
  & #5 \save[]+<0ex,5ex>*+{#3}="3"  \restore
      \ar[d]^{#7}
  \\
   #8 & #9 \ar "1";"3"^-{#2} } }
\newcommand{\functornd}[9]{
 \xymatrix{
    #4 \save[]+<0ex,5ex>*+{#1}="1"  \restore
      \ar[d]_{#6}  \ar@{}[rd]|{\longmapsto}
  & #5 \save[]+<0ex,5ex>*+{#3}="3"  \restore
  \\
   #8 & #9 \ar[u]_{#7} \ar "1";"3"^-{#2} } }
\newcommand{\functordn}[9]{
 \xymatrix{
    #4 \save[]+<0ex,5ex>*+{#1}="1"  \restore
       \ar@{}[rd]|{\longmapsto}
  & #5 \save[]+<0ex,5ex>*+{#3}="3"  \restore
      \ar[d]^{#7}
  \\
   #8  \ar[u]^{#6}  & #9 \ar "1";"3"^-{#2} } }
\newcommand{\larr}{->}
\newcommand{\rarr}{->}
\newcommand{\xfunctor}[9]{
 \xymatrix{
    #4 \save[]+<0ex,5ex>*+{#1}="1"  \restore
      \ifthenelse{\equal{\larr}{->}}{\ar[d]_{#6}}{}
      \ifthenelse{\equal{\larr}{<-}}{\ar[d];[]^{#6}}{}
      \ifthenelse{\equal{\larr}{-<}}{\ar@{< }[d]_{#6}}{}
      \ar@{}[rd]|{\longmapsto}
  & #5 \save[]+<0ex,5ex>*+{#3}="3"  \restore
      \ifthenelse{\equal{\rarr}{->}}{\ar[d]^{#7}}{}
      \ifthenelse{\equal{\rarr}{<-}}{\ar[d];[]_{#7}}{}
      \ifthenelse{\equal{\rarr}{-<}}{\ar@{< }[d]^{#7}}{}
  \\
   #8 & #9 \ar "1";"3"^-{#2} } }
\theoremstyle{plain}
\newtheorem{corollary}{Corollary}
\newtheorem{lemma}{Lemma}
\newtheorem{proposition}{Proposition}
\theoremstyle{definition}
\newtheorem{definition}{Definition}
\newtheorem*{example}{Example}
\newtheorem*{remark}{Remark}
\theoremstyle{remark}
\numberwithin{equation}{section}
\newenvironment{narrow}[2]{%
  \begin{list}{}{%
  \setlength{\leftmargin}{#1}%
  \setlength{\rightmargin}{#2}%
  \setlength{\listparindent}{\parindent}%
  \setlength{\itemindent}{\parindent}%
  \setlength{\parsep}{\parskip}}%
 \item[]}{\end{list}}
\begin{document}

\title[Morphisms and transformations of Fujiwara]{On the morphisms and
transformations of Tsuyoshi Fujiwara (\Small{as a concretion of a
bidimensional many-sorted general algebra and its application to
the equivalence between many-sorted clones and algebraic
theories})
%
%
}


\author[J. Climent]{J. Climent Vidal}
\address{Universidad de Valencia\\
         Departamento de Lógica y Filosofía de la Ciencia\\
         E-46010 Valencia, Spain}
\email{Juan.B.Climent@uv.es}

\author[J. Soliveres]{J. Soliveres Tur}
\address{Universidad de Valencia\\
         Departamento de Lógica y Filosofía de la Ciencia\\
         E-46010 Valencia, Spain}
\email{Juan.Soliveres@uv.es}

\subjclass[2000]{Primary: 03C05, 03F25, 08A68, 08A40, 18A23,
18C10, 18D05; Secondary:
           03C95, 68Q65.}

\keywords{Clone, many-sorted algebraic theory, many-sorted set,
          many-sorted algebra, construction of
          Ehresmann-Grothendieck, Kleisli category for a monad,
          many-sorted term, many-sorted
          closure space, adjoint square, algebraic morphism,
          algebraic transformation, Hall
          algebra, Bénabou algebra, polyderivor,
          transformation of polyderivors, $2$-institution.}
\date{\today}

\begin{abstract}
For, not necessarily similar, single-sorted algebras Fujiwara defined,
through the concept of family of basic mapping-formulas between
single-sorted signatures, a notion of morphism which generalizes the
ordinary notion of homomorphism between algebras; and an equivalence
relation, the conjugation, on the families of basic mapping-formulas,
which corresponds to the relation of inner isomorphism for algebras.
In this paper we extend the theory of Fujiwara about morphisms to the,
not necessarily similar, many-sorted algebras, by defining the concept
of polyderivor between many-sorted signatures, which assigns to basic sorts,
words and to formal operations, families of derived terms, and under
which the standard signature morphisms, the basic mapping-formulas of
Fujiwara, and the derivors of Goguen-Thatcher-Wagner are subsumed.
Then, by means of the homomorphisms between Bénabou algebras, which
are the algebraic counterpart of the finitary many-sorted algebraic
theories of Bénabou, we define the composition of polyderivors from which
we get a corresponding category, and prove that it is isomorphic to
the category of Kleisli for a monad on the standard category of
many-sorted signatures.  Next, by defining the notion of
transformation between polyderivors, which generalizes the relation of
conjugation of Fujiwara, we endow the category of many-sorted
signatures and polyderivors with a structure of $2$-category.  From this
we get a derived $2$-category of many-sorted specifications in which
we prove, syntactically, the equivalence of the many-sorted
specifications of Hall and Bénabou, and, from a suitable
pseudo-functor from it into the $2$-category of categories, we deduce
the equivalence of the categories of Hall and Bénabou algebras.
Besides, by defining for each many-sorted signature its corresponding
category of generalized many-sorted terms, we prove that the
realization of these terms in the many-sorted algebras is invariant
under polyderivors and compatible with the transformations between
polyderivors, and from this we get an example, among others, of the new
concept of $2$-institution, itself an strict generalization of that
of institution by Goguen and Burstall.

\end{abstract}

\maketitle



\section{Introduction.}
The closed sets of operations, or clones, on an arbitrary set $A$,
i.e., the sets of operations on $A$ closed under the generalized
operations of composition and containing the projection mappings, were
initially defined and investigated by P. Hall, as pointed out by Cohn
in~\cite{pC81}, pp.  127 and 132 (who attended the lectures by
Professor P. Hall from 1944 to 1951), to show that the crucial
mathematical properties of a $\Sigma$-algebra $\mathbf{A} =
(A,(F_{\sigma})_{\sigma\in \Sigma})$ do not depend on the family of
primitive operations $(F_{\sigma})_{\sigma\in \Sigma}$ on $A$ defined
by the single-sorted signature $\Sigma$, but on the system of all
operations on $A$ obtainable from $(F_{\sigma})_{\sigma\in \Sigma}$ by
means of the operations of composition.

The concept of an ordinary clone, axiomatized by P. Hall as a
single-sorted partial algebra subject to satisfy some laws
(see~\cite{pC81}, p.  132) and, independently but subsequently, by M.
Lazard as a compositor (see~\cite{mL55}, p.  327), was generalized up
to that of a many-sorted clone by Goguen and Meseguer in~\cite{gm85},
and axiomatically defined by them (in~\cite{gm85}, pp.  318--319) as
any many-sorted algebra (of the appropriate signature) that satisfies
a definite system of many-sorted equational laws, concretely, the
so-called \emph{Projection Axiom}, \emph{Identity Axiom},
\emph{Associativity Axiom}, and \emph{Invariance of Constant Functions
Axiom}.  Given its origin in P. Hall, we agree to refer to the
many-sorted algebras that are models of the just named axioms as Hall
algebras.

Hall algebras, as reflected by the defining axioms, are a species of
algebraic construct in which the essential properties of the
fundamental procedures of \emph{substitution}, for the many-sorted
terms in the free many-sorted algebras, and of \emph{composition}, for
the many-sorted-operations on sorted sets are embodied.  And this is
precisely one of the reasons why Hall algebras are a powerful and
fundamental instrument to investigate many-sorted algebras.  To this
we add that Hall algebras are not only worth of study because of its
source in the above mentioned procedures.  Besides that, Hall algebras
are interesting in themselves since they furnish important examples of
equationally defined many-sorted algebras, and also because they have
been used by Goguen and Meseguer in~\cite{gm85} to prove the
Completeness Theorem of finitary many-sorted equational logic (that
generalizes the classical Completeness Theorem of finitary equational
logic of Birkhoff), providing in this way, a full algebraization of
many-sorted equational deduction.


Another approximation to the study of many-sorted algebras has been
proposed by Bénabou in~\cite{jB68}, by making use of the finitary
many-sorted algebraic theories (categories with objects the words on a
set of sorts $S$ such that, for every word $w = (w_{i})_{i\in n}$,
there exists a family of morphisms $(p^{w}_{i})_{i\in n}$, where, for
$i\in n$, $p^{w}_{i}$ is a morphism from $w$ to $(w_{i})$, the word of
length one associated to the letter $w_{i}$, such that
$(w,(p^{w}_{i})_{i\in n})$ is a product of the family $((w_{i}))_{i\in
n}$), that are the generalization to the many-sorted case of the
finitary single-sorted algebraic theories of Lawvere, see~\cite{fL63}.
The equational presentation of the finitary many-sorted algebraic
theories of Bénabou gives rise to what we have called Bénabou
algebras.  And the Bénabou algebras, even having a many-sorted
specification different from that of the Hall algebras, are also
models of the essential properties of the clones for the many-sorted
operations.

For an arbitrary, but fixed, set of sorts $S$, the many-sorted
specifications $\mathrm{H}_{S}$, for Hall algebras, and
$\mathrm{B}_{S}$, for Bénabou algebras, are \emph{not isomorphic} in
the category $\mathbf{Spf}$, of many-sorted specifications and
many-sorted specification morphisms, because between the corresponding
categories of models: $\mathbf{Alg}(\mathrm{H}_{S})$, of Hall
algebras, and $\mathbf{Alg}(\mathrm{B}_{S})$, of Bénabou algebras,
there is not any isomorphism.  However, the many-sorted specifications
$\mathrm{H}_{S}$ and $\mathrm{B}_{S}$ can be considered, in some
definite way, as being \emph{equivalent}, as a consequence of the
proof, in the fourth section about Hall and Bénabou algebras, of the
categorical equivalence between the categories
$\mathbf{Alg}(\mathrm{H}_{S})$ and $\mathbf{Alg}(\mathrm{B}_{S})$.

But, the semantical equivalence of the many-sorted specifications
$\mathrm{H}_{S}$ and $\mathrm{B}_{S}$, or, for that matter, of any two
many-sorted specifications, understood, by convention, as meaning the
categorical equivalence of the canonically associated categories of
models, can not be properly reflected at the purely syntactical level
of the many-sorted specifications and many-sorted specification
morphisms, i.e., can not be mathematically defined in the category
$\mathbf{Spf}$.  And this is so, essentially, as a consequence of the
fact of not having actually endowed $\mathbf{Spf}$ with a (non
trivial) structure of $2$-category.  Thus, if one remains anchored in
the tradition of viewing $\mathbf{Spf}$ as being, simply, a category,
then the only reasonable way of classifying many-sorted specifications
from within the category $\mathbf{Spf}$ is through the
category-theoretical concept of isomorphism, and not, by structural
impossibility, by means of some other notion of equivalence between
many-sorted specifications, itself being strictly weaker than that of
isomorphism (as it would be the case if instead of having a category,
we had a $2$-category).

Therefore, what is really needed to settle the problem of the
equivalence between many-sorted specifications (i.e., the problem of
determining whether or not two many-sorted specifications determine
equivalent categories) is to dispose of some way of comparing
many-sorted specifications that goes, strictly, beyond the mere
isomorphisms, in the same way as equivalences go beyond the
isomorphisms when comparing categories among them.  We suggest in this
paper that an adequate way of providing a solution to the just
mentioned problem is by constructing suitable $2$-categories of
many-sorted signatures and many-sorted specifications, through the
appropriate definitions of the $2$-cells between the $1$-cells.  This
bidimensionality, by supplying one additional degree of freedom,
generates a richer world, that opens the possibility to deal not only
with isomorphic but also with adjoint and equivalent many-sorted
specifications.  Thus carrying further the previous development which
was incomplete because of its restriction to categories.
The methodology we have followed in order to find a solution of the
equivalence problem will now be considered.

It consists in generalizing the theory of Fujiwara in~\cite{tF59} and
\cite{tF60} into several directions.  Firstly, by defining the concept
of \emph{morphism of Fujiwara}, from now on abbreviated to
\emph{polyderivor}, from a many-sorted signature into another, which
assigns to basic sorts, words and to formal operations, families of
derived terms, and this in such a way that under the concept of
polyderivor falls the concept of derivor, defined in~\cite{gtw76}, and
that of morphism between many-sorted algebraic theories.  Secondly, by
endowing with a structure of $2$-category the category of many-sorted
signatures and polyderivors, by defining the appropriate transformations
between the polyderivors, that generalize the equivalences defined by
Fujiwara in~\cite{tF60}, and allow richer comparisons between
many-sorted signatures than the usually considered.  Lastly, by
introducing the corresponding $2$-categories of many-sorted
specifications, polyderivors between many-sorted specifications and
transformations from such a polyderivor into a like one.

After having developed the generalized theory we prove that the
transformations between polyderivors determine natural transformations
between the functors, on the categories of many-sorted algebras and
many-sorted terms, associated to the polyderivors.  Besides, we prove
that the realization of the many-sorted terms in the many-sorted
algebras is invariant under the polyderivors and compatible with the
transformations between the polyderivors, from which we get an example
of $2$-institution, that generalizes the usual notion of institution
as defined in~\cite{gb86}.

By using the machinery introduced we prove, as an example, the
equivalence between the many-sorted specifications of Hall and
Bénabou.  And from this we get, as an immediate consequence of the
existence of a certain pseudo-functor from the $2$\nobreakdash-category
$\mathbf{Spf}_{\mathfrak{pd}}$, of many-sorted specifications, to the
$2$-category $\mathbf{Cat}$, the equivalence between the categories of
Hall and Bénabou algebras.  This, we believe, helps to understand,
from a purely category-theoretical standpoint, how some equivalences
between categories, e.g., that between clones (represented by Hall
algebras) and finitary many-sorted algebraic theories (represented by
Bénabou algebras), arise from more primitive syntactical equivalences
between some many-sorted specifications associated to them.

Note that the suggested solution to the equivalence problem between
many-sorted specifications bears some resemblance in \emph{intention}
to e.g., the classification of the homology theories (functors from a
topological category to an algebraic category satisfying some axioms)
accomplished by Eilenberg and Steenrod in~\cite{es52}, an impossible
task without the notion of natural equivalence (invertible $2$-cell)
of functors ($1$-cells).


The paper falls naturally into two parts.  The first part is divided
into three sections: \textsc{Introduction}, common to both parts,
\textsc{The many-sorted term institution}, and \textsc{Many-sorted
specifications and morphisms}; and the second is divided into four
sections: \textsc{Hall and Bénabou algebras}, \textsc{Morphisms of
Fujiwara}, \textsc{Transformations of Fujiwara}, and
\textsc{Equivalence of the speci\-fications of Hall and Bénabou}.  Next
we proceed to describe the contents of the just enumerated sections,
leaving out, obviously, the first one.

The main goal of the \emph{second section} is to construct the
many-sorted term institution.  To attain such a goal we begin by
defining $\mathbf{MSet}$, the category of many-sorted sets and
many-sorted mappings, $\mathbf{Sig}$, the category of standard
many-sorted signatures, and $\mathbf{Alg}$, the category of standard
many-sorted algebras, through the construction of
Ehresmann-Grothendieck applied, respectively, to the contravariant
functors $\mathrm{MSet}$, $\mathrm{Sig}$, and $\mathrm{Alg}$.
Then we remark that $\mathbf{MSet}$ and $\mathbf{Sig}$ are split
bifibrations on $\mathbf{Set}$ and prove that $\mathbf{MSet}$,
$\mathbf{Sig}$, and $\mathbf{Alg}$ are bicomplete, that $\mathbf{Alg}$
is concrete, univocally transportable through a \lq\lq forgetful\rq\rq
functor $\mathrm{G}$ into the fibered product
$\mathbf{MSet}\times_{\mathbf{Set}}\mathbf{Sig}$, and that the functor
$\mathrm{G}$ has a left adjoint
$\mathbf{T}\colon\mathbf{MSet}\times_{\mathbf{Set}}
\mathbf{Sig}\mor\mathbf{Alg}$ which transforms objects of
$\mathbf{MSet}\times_{\mathbf{Set}}\mathbf{Sig}$ into labelled term
algebras in $\mathbf{Alg}$ and morphisms of
$\mathbf{MSet}\times_{\mathbf{Set}}\mathbf{Sig}$ into translators
between the associated labelled term algebras in $\mathbf{Alg}$.

On the basis of the functor $\mathbf{T}$ we define, for every
many-sorted signature $\mathbf{\Sigma}$, the category
$\mathbf{Ter}(\mathbf{\Sigma})$, of generalized terms for
$\mathbf{\Sigma}$, as the dual of the Kleisli category for
$\mathbb{T}_{\mathbf{\Sigma}}$ (the standard monad derived from the
adjunction between the category $\mathbf{Alg}(\mathbf{\Sigma})$ and
the category $\mathbf{Set}^{S}$), and we extend this procedure up to a
pseudo-functor $\Ter$ from $\mathbf{Sig}$ to $\mathbf{Cat}$ which
formalizes the procedure of translation for many-sorted terms.  Then,
to account exactly for the invariant character of the process of
realization of the many-sorted terms in the many-sorted algebras,
under change of many-sorted signature, we show that there exists a
pseudo-extranatural transformation from a pseudo-functor obtained from
$\mathrm{Alg}$ and $\mathrm{Ter}$ to the constant functor
$\mathrm{K}_{\mathbf{Set}}$, both defined on
$\mathbf{Sig}^{\mathrm{op}}\times \mathbf{Sig}$ and taking values in
the $2$-category $\mathbf{Cat}$.  Finally, after generalizing the
concept of institution by means, essentially, of the notion of
pseudo-extranatural transformation from a pseudo-functor to a constant
functor, we get $\mathfrak{Tm}$, the many-sorted term institution
on $\mathbf{Set}$.

In the \emph{third section} we begin by defining, for a many-sorted
signature $\mathbf{\Sigma}$, the concept of
$\mathbf{\Sigma}$-equation, but for the generalized terms in
$\mathbf{Ter}(\mathbf{\Sigma})$, the relation of satisfaction between
many-sorted algebras and $\mathbf{\Sigma}$-equations, the consequence
operator $\mathrm{Cn}_{\mathbf{\Sigma}}$, and by translating, for a
morphism between many-sorted signatures, equations for the source
many-sorted signature into equations for the target many-sorted
signature.  Then we continue with the proof of the satisfaction
condition and, after defining a convenient pseudo-functor from
$\mathbf{Sig}$ to $\mathbf{Cat}_{\boldsymbol{\mathcal{V}}}$, for an
adequate Grothendieck universe $\boldsymbol{\mathcal{V}}$, we get the
equational institution on $\mathbf{2}$.

Next, in order to explain category-theoretically the concept of
equational deduction, we begin by defining, by means of the
construction of Ehresmann-Grothendieck applied to a suitable
contravariant functor from $\mathbf{Set}$ to $\mathbf{Cat}$, the
category $\mathbf{MClSp}$, of many-sorted closure spaces.  Then,
through the concept of adjoint square, we define, for the Grothendieck
universe $\boldsymbol{\mathcal{V}}$, the $2$-category
$\mathbf{Mnd}_{\boldsymbol{\mathcal{V}},\mathrm{alg}}$ of monads,
algebraic morphisms between monads, and transformations between
algebraic morphisms, into which the category $\mathbf{MClSp}$ is
naturally embedded.  After this we prove the existence of a
pseudo-functor $\mathrm{Cn}$ from $\mathbf{Sig}$ to
$\mathbf{Mnd}_{\boldsymbol{\mathcal{V}},\mathrm{alg}}$ that has as
components, essentially, the consequence operators
$\mathrm{Cn}_{\mathbf{\Sigma}}$ for the different signatures
$\mathbf{\Sigma}$, and make it, after generalizing the concept of
entailment system, part of the, so-called, equational consequence
entailment system.

Following this, after defining the category $\mathbf{Spf}$, of
many-sorted specifications and many-sorted specification morphisms, we
prove the existence of a contravariant functor,
$\mathrm{Alg}^{\mathrm{sp}}$ and of a pseudo-functor,
$\mathrm{Ter}^{\mathrm{sp}}$, from $\mathbf{Spf}$ to $\mathbf{Cat}$,
that extend $\mathrm{Alg}$ and $\mathrm{Ter}$, respectively.  Then we
state that from $\mathbf{Spf}^{\opp}\bprod \mathbf{Spf}$ to the
$2$-category $\mathbf{Cat}$ there exists a pseudo-functor, obtained
from $\Alg^{\mathrm{sp}}$ and $\mathrm{Ter}^{\mathrm{sp}}$, and a
pseudo-extranatural transformation from it to the functor constantly
$\mathbf{Set}$, and from this we get $\mathfrak{Spf}$, the
many-sorted specification institution of Fujiwara on $\mathbf{Set}$,
and an institution morphism from $\mathfrak{Spf}$ to $\mathfrak{Tm}$.

In the \emph{fourth section} we show that the categories of Hall
and Bénabou algebras, that are models of the essential properties of
the clones of many-sorted operations, are equivalent and also that
there exists a biunivocal correspondence between the Bénabou algebras
and the finitary many-sorted Bénabou theories, that are a
generalization of the single-sorted algebraic theories of Lawvere.
Furthermore, these algebras are interesting because they will be used,
among other things, to define, later on, the composition of the
polyderivors from a many-sorted signature into another, that are
generalizations of the usual morphisms between many-sorted signatures,
and also to exemplify an equivalence between the specifications of
Hall and Bénabou algebras in a $2$-category of specifications,
polyderivors, and transformations.

In the \emph{fifth section} after defining the morphisms of Fujiwara
between many-sorted signatures (that generalize the standard morphisms
and the derivors between many-sorted signatu\-res, as well as the
families of basic mapping-formulas defined by Fujiwara in \cite{tF59}
for the single-sorted case), and the composition of these morphisms,
we get the category $\mathbf{Sig}_{\mathfrak{pd}}$, of many-sorted
signatures and morphisms of Fujiwara, and prove that it can be
obtained, essentially, as the Kleisli category for a monad in
$\mathbf{Sig}$.  Then we define a pseudo-functor (contravariant in the
morphisms) $\Alg_{\mathfrak{pd}}\colon \mathbf{Sig}_{\mathfrak{pd}}\mor
\mathbf{Cat}$ and, by applying the construction of
Ehresmann-Grothendieck, we get a new category
$\mathbf{Alg}_{\mathfrak{pd}}$ of many-sorted algebras and morphisms
between many-sorted algebras which have, as a component, the morphisms
of Fujiwara.  Following this we define another pseudo-functor
(covariant in the morphisms) $\mathrm{Ter}_{\mathfrak{pd}}$ from
$\mathbf{Sig}_{\mathfrak{pd}}$ to $\mathbf{Cat}$ which formalizes the
procedure of translation for many-sorted terms, but now for the
morphisms of Fujiwara.  Then, to account exactly for the invariant
character of the process of realization of the many-sorted terms in
the many-sorted algebras, under change of many-sorted signature
through the morphisms of Fujiwara, we show that there exists a
pseudo-extranatural transformation from a pseudo-functor obtained from
$\mathrm{Alg}_{\mathfrak{pd}}$ and $\mathrm{Ter}_{\mathfrak{pd}}$ to the
constant functor $\mathrm{K}_{\mathbf{Set}}$, both defined on
$\mathbf{Sig}_{\mathfrak{pd}}^{\mathrm{op}}\times
\mathbf{Sig}_{\mathfrak{pd}}$ and taking values in the $2$-category
$\mathbf{Cat}$, and from this we get $\mathfrak{Tm}_{\mathfrak{pd}}$,
the many-sorted term institution of Fujiwara.

In the \emph{sixth section} we endow the category
$\mathbf{Sig_{\mathfrak{pd}}}$ with a structure of $2$-category through
the concept of transformation between morphisms of Fujiwara, which
generalizes that one of equivalence between families of basic
mapping-formulas, defined by Fujiwara in \cite{tF60} for the
single-sorted case.  Then we prove that the transformations between
morphisms of Fujiwara determine natural transformations between the
functors associated to the morphisms of Fujiwara.  From this we extend
the pseudo-functors $\Alg_{\mathfrak{pd}}$ and
$\mathrm{Ter}_{\mathfrak{pd}}$ up to the $2$-category
$\mathbf{Sig_{\mathfrak{pd}}}$, and we get, in particular, by applying
a construction of Ehresmann-Grothendieck to $\Alg_{\mathfrak{pd}}$, a
corresponding $2$-category $\mathbf{Alg}_{\mathfrak{pd}}$.  Next, after
proving that the transformations between morphisms of Fujiwara are
compatible with the realization of the many-sorted terms in the
many-sorted algebras, we show that there exists a pseudo-extranatural
transformation from a pseudo-functor obtained from
$\mathrm{Alg}_{\mathfrak{pd}}$ and $\mathrm{Ter}_{\mathfrak{pd}}$ to the
constant functor $\mathrm{K}_{\mathbf{Set}}$, both defined on the
$2$-category $\mathbf{Sig}^{\mathrm{op}}\times \mathbf{Sig}$ and
taking values in the $2$-category $\mathbf{Cat}$, and from this we get
$\mathfrak{Tm}_{\mathfrak{pd}}$, the many-sorted term $2$-institution
of Fujiwara.

Besides, we prove that the morphisms and transformations of Fujiwara
can be taken as a concretion of a bidimensional many-sorted general
algebra, on the basis of the existence of an embedding from the
$2$-category $\mathbf{Sig}_{\mathfrak{pd}}$ into the $2$-category
$\mathbf{Mnd}_{\boldsymbol{\mathcal{V}},\mathrm{alg}}$ which sends a
signature $\mathbf{\Sigma}$ to the monad
$(\mathbf{Set}^{S},\mathbb{T}_{\mathbf{\Sigma}})$, a polyderivor
$\mathbf{d}$ from $\mathbf{\Sigma}$ to $\mathbf{\Lambda}$ to the
associate algebraic morphism $\mathbb{T}_{\mathbf{d}}$ from
$(\mathbf{Set}^{S},\mathbb{T}_{\mathbf{\Sigma}})$ to
$(\mathbf{Set}^{T},\mathbb{T}_{\mathbf{\Lambda}})$, and a
transformation $\xi$ from $\mathbf{d}$ to $\mathbf{e}$ to the
corresponding algebraic transformation $\mathbb{T}_{\xi}$ from
$\mathbb{T}_{\mathbf{d}}$ to $\mathbb{T}_{\mathbf{e}}$.

In the \emph{seventh section} we define a $2$-category of
specifications, $\mathbf{Spf}_{\mathfrak{pd}}$, with objects the
specifications, $1$-cells from a specification into a like one the
polyderivors between the underlying signatures of the specifications that
are compatible with the equations, and $2$-cells from a $1$-cell into
a like one a convenient class of transforma\-tions between the
polyderivors.  Following this we prove that the contravariant
pseudo-functor $\Alg_{\mathfrak{pd}}$ and the pseudo-functor
$\mathrm{Ter}_{\mathfrak{pd}}$, both defined on the $2$-category
$\mathbf{Sig}_{\mathfrak{pd}}$, can be lifted up to the $2$-category
$\mathbf{Spf}_{\mathfrak{pd}}$ as $\Alg^{\mathrm{sp}}_{\mathfrak{pd}}$
and $\mathrm{Ter}^{\mathrm{sp}}_{\mathfrak{pd}}$, respectively.  Then
we state that from the $2$-category
$\mathbf{Spf}^{\opp}_{\mathfrak{pd}}\bprod \mathbf{Spf}_{\mathfrak{pd}}$
to the $2$-category $\mathbf{Cat}$ there exists a pseudo-functor,
obtained from $\Alg^{\mathrm{sp}}_{\mathfrak{pd}}$ and
$\mathrm{Ter}^{\mathrm{sp}}_{\mathfrak{pd}}$, and a pseudo-extranatural
transformation from it to the functor constantly $\mathbf{Set}$, and
from this we get $\mathfrak{Spf}_{\mathfrak{pd}}$, the many-sorted
specification $2$-institution of Fujiwara.

Finally, it is in the 2-category $\mathbf{Spf}_{\mathfrak{pd}}$ that we
prove, for every set of sorts $S$, the equivalence of the
specifications of Hall and Bénabou for $S$, from which, through the
pseudo-functor $\Alg^{\mathrm{sp}}_{\mathfrak{pd}}$, the equivalence
between the corresponding categories of algebras,
$\mathbf{Alg}(\mathrm{H}_{S})$ and $\mathbf{Alg}(\mathrm{B}_{S})$,
follows immediately.


Every set we consider, unless otherwise stated, will be a
$\boldsymbol{\mathcal{U}}$-small set or a
$\boldsymbol{\mathcal{U}}$-large set, i.e., an element or a subset,
respectively, of a Grothendieck(-Sonner-Tarski) universe
$\boldsymbol{\mathcal{U}}$ (as defined in~\cite{sM98}, p.  22, but
see also~\cite{dM69}, pp.  160--163, \cite{jS62}, pp.  166--167, and
\cite{aT38}, p.  84, for more details about this concept), fixed once
and for all.  Besides, we agree that $\mathbf{Set}$ denotes the
category which has as set of objects $\boldsymbol{\mathcal{U}}$ and as
set of morphisms the subset of $\boldsymbol{\mathcal{U}}$ of all
mappings between $\boldsymbol{\mathcal{U}}$-small sets, and, depending
on the context, that $\mathbf{Cat}$ denotes either, the category of
the $\boldsymbol{\mathcal{U}}$\nobreakdash-categories (i.e.,
categories $\mathbf{C}$ such that the set of objects of $\mathbf{C}$
is a subset of the Grothendieck universe $\boldsymbol{\mathcal{U}}$,
and the hom-sets of $\mathbf{C}$ elements of
$\boldsymbol{\mathcal{U}}$), and functors between
$\boldsymbol{\mathcal{U}}$\nobreakdash-categories, or the $2$-category
of the $\boldsymbol{\mathcal{U}}$-categories, functors between
$\boldsymbol{\mathcal{U}}$-categories, and natural transformations
between functors.

In all that follows we use standard concepts and constructions from
category theory, see e.g., \cite{fB94}, \cite{fB94a}, \cite{fB94b},
\cite{Ehr65}, \cite{jwG74}, \cite{Gro71}, \cite{kl65}, \cite{sM98},
and \cite{pP71}; classical universal algebra, see e.g., \cite{pC81},
\cite{gG79}, \cite{bJ72}, \cite{agK67}, and \cite{mmt87}; categorical
universal algebra, see e.g., \cite{jB68} and \cite{fL63}; many-sorted
algebra, see e.g., \cite{jB68}, \cite{bl70}, \cite{gm85},
\cite{jh30}(particularly Chapter 3), \cite{h63}, \cite{aim59},
\cite{m76} and \cite{mg85}; and set theory, see e.g., \cite{nB70},
\cite{hE77} and \cite{dM69}.  Nevertheless, we have generically
adopted the following notational and conceptual conventions:
\begin{enumerate}
\item As far as a category $\mathbf{C}$ is concerned we will
      write, for an object $x$ of $\mathbf{C}$, $x\in \mathbf{C}$
      instead of $x\in \mathrm{Ob}(\mathbf{C})$, however, for a
      morphism $f$ of $\mathbf{C}$, we will write
      $f\in\mathrm{Mor}(\mathbf{C})$ but not $f\in\mathbf{C}$.
      Furthermore, if there is no risk of confusion, in order to
      simplify the notation, we will write $1$ for the identity
      functor at $\mathbf{C}$, $F$ for the identity natural
      transformation at the functor $F$, and, under the same
      circumstance and reason, we will use the juxtaposition to denote
      both the horizontal and the vertical composition of natural
      transformations.

\item Relative to set theory we recall that between ordinals $<$
      is identified with $\in$; thus for a \emph{(von Neumann)
      ordinal} $\alpha$ we have that $\alpha = \{\,\beta\mid \beta\in
      \alpha\,\}$, and $\mathbb{N}$, the first transfinite ordinal, is
      the set of all \emph{natural numbers}.  For two sets $A$, $B$ we
      denote by $\mathrm{Hom}(A,B)$ the set of all \emph{mappings}
      $f\colon A\mor B$ from $A$ to $B$, i.e., the set of all ordered
      triples $f = (A,F,B)$ where $F$ is a \emph{function} from $A$ to
      $B$.  For a mapping $f\colon A\mor B$, a subset $X$ of $A$, and
      a subset $Y$ of $B$, we denote by $f^{-1}[Y]$ the \emph{inverse
      image of} $Y$ \emph{under} $f$, and by $f[X]$ the \emph{direct
      image of} $X$ \emph{under} $f$.  However, if $Y = \{y\}$ is a
      \emph{final} set, we will write $f^{-1}[y]$ instead of the more
      accurate $f^{-1}[\{y\}]$.  For sets $B$, $C$, a family of sets
      $(A_{i})_{i\in I}$, a family of mappings $(f_{i})_{i\in I}$ in
      $\prod_{i\in I}\mathrm{Hom}(B,A_{i})$, and a family of mappings
      $(g_{i})_{i\in I}$ in $\prod_{i\in I}\mathrm{Hom}(A_{i},C)$, we
      denote by $\left<f_{i}\right>_{i\in I}$, resp., by
      $\left[g_{i}\right]_{i\in I}$, the unique mapping from $B$ to
      $\prod_{i\in I}A_{i}$, resp., from $\coprod_{i\in I}A_{i}$ to
      $C$, such that, for every $i\in I$, $f_{i} =
      \mathrm{pr}_{i}\comp \left<f_{i}\right>_{i\in I}$, resp., $g_{i}
      = \left[g_{i}\right]_{i\in I}\comp \mathrm{in}_{i}$.

\item For a set $S$ we agree upon denoting by
      $\mathbf{T}_{\star}(S) = (\fmon{S},\curlywedge,\lambda)$ the
      \emph{free monoid on} $S$, where
      \begin{enumerate}
      \item $\fmon{S}$, the underlying set of
            $\mathbf{T}_{\star}(S)$, is $\bigcup_{n\in
            \mathbb{N}}S^{n}$, the set of all \emph{words on} $S$,
            with $S^{n}$ the set of all \emph{functions} from $n$ to
            $S$,

     \item  $\curlywedge$, the \emph{concatenation} of words on
            $S$, is the binary operation on $\fmon{S}$ which sends a
            pair of words $(w,v)$ on $S$ to the function $w\curlywedge
            v$ from $\bb{w}+\bb{v}$ to $S$, where $\bb{w}$ and
            $\bb{v}$ are the lengths of $w$ and $v$, respectively,
            defined as follows
            $$
            w\bconcat v
            \nfunction
            {\bb{w}+\bb{v}}{S}
            {i}{
            \begin{cases}
            w_{i}, & \text{if $0\leq i < \bb{w}$;}\\
            v_{i-\bb{w}}, & \text{if $\bb{w}\leq i < \bb{w}+\bb{v}$,}
            \end{cases}
            }
            $$ and

      \item $\lambda$ is the \emph{empty word}, i.e., the unique function
            from $0$ to $S$.
      \end{enumerate}
      For a mapping $\varphi\colon S\mor T$ we denote by
      $\varphi^{\star}$ the unique homomorphism from
      $\mathbf{T}_{\star}(S)$ to $\mathbf{T}_{\star}(T)$ such that
      $\varphi^{\star}\comp \between_{S} = \between_{T}\comp \varphi$,
      where $\between_{S}$, resp., $\between_{T}$, is the canonical
      embedding of $S$ into $\fmon{S}$, resp., of $T$ into $\fmon{T}$.
\end{enumerate}
More specific notational conventions will be included and explained in
the successive sections.

We point out that in, almost, all that follows we frequently draw
diagrams to provide a geometrical description of what is going on.  By
doing so we hopefully expect to aid the reader in his understanding of
the notions and constructions involved in each concrete situation, as
well as in his grasping of the displayed ideas (after all, diagrams
are intended precisely for that purpose).  Finally, we warn the reader
that dealing with entities depending on a great number of parameters
which, in addition, are allowed to vary simultaneously, has indeed a
high notational price (to say nothing of the fact that living and
working in the topos $\mathbf{MSet}$---which, in particular, is
non-Boolean, has as internal logic the trivalued logic of Heyting
(see, e.g., \cite{aH30}), and is of De Morgan---and associated
categories is much more demanding than it is in
$\mathbf{Set}^{S}$---which, among others properties, is a Boolean
topos---and associated categories, for an arbitrary, but \emph{fixed},
set of sorts $S$).  However, we have tried our best to do the notation
as uniform, simple, and clear as possible.

\section{The many-sorted term institution.}

Our main aim in this section is to show that the concept of \lq\lq
derived operation of an algebra\rq\rq, also known as \lq\lq term
operation of an algebra\rq\rq, elemental as it is, but fundamental for
universal algebra, can be naturally subsumed under the notion of
institution (see for this notion, e.g., \cite{gb86}), provided that an
institution is meant not to be an extranatural transformation (as
in~\cite{gb86}) but a pseudo-extranatural transformation (as defined
at the end of this section).

To attain the aim just mentioned we begin by a careful examination of
the different types of things that are involved around it, namely
many-sorted sets, signatures, algebras, terms, and generalized
institutions.  More specifically, in this section we define the
category $\mathbf{MSet}$ of many-sorted sets, in which the many-sorted
sets will be labelled with the sets of sorts, by applying the
construction of (at least) Ehresmann-Grothendieck (see~\cite{Ehr65},
pp.  89--91, where it is called \lq\lq produit croisé\rq\rq,
and~\cite{Gro71}, pp. (sub.)  175--177) to a contravariant functor from
$\mathbf{Set}$ to $\mathbf{Cat}$.  Following this we define the
categories $\mathbf{Sig}$, of many-sorted signatures, and
$\mathbf{Alg}$, of many-sorted algebras, by applying also the
construction of Ehresmann-Grothendieck to suitable contravariant
functors from $\mathbf{Set}$ and $\mathbf{Sig}$, respectively, to
$\mathbf{Cat}$.

Besides we prove the existence of a left adjoint $\mathbf{T}$ to a
\lq\lq forgetful\rq\rq functor $\mathrm{G}$ from $\mathbf{Alg}$ to
$\mathbf{MSet}\times_{\mathbf{Set}}\mathbf{Sig}$, and from this left
adjoint $\mathbf{T}$ we define a pseudo-functor $\Ter$ from
$\mathbf{Sig}$ to $\mathbf{Cat}$ which formalizes the procedure of
translation for many-sorted terms.

Finally, to account exactly for the invariant character of the
realization of many-sorted terms in many-sorted algebras under change
of many-sorted signature, we prove the existence of a
pseudo-extranatural transformation $(\mathrm{Tr},\theta)$ from a
pseudo-functor $\Alg(\farg)\bprod\mathrm{Ter}(\farg)$, from
$\mathbf{Sig}^{\mathrm{op}}\times \mathbf{Sig}$ to $\mathbf{Cat}$, to the functor
$\mathrm{K}_{\mathbf{Set}}$, between the same categories, that is
constantly $\mathbf{Set}$.  Then, after providing a generalization of the
ordinary concept of institution, we prove that $(\mathrm{Tr},\theta)$
is part of an institution on $\mathbf{Set}$, the so-called many-sorted
term institution.


Before stating the first proposition of this section, we agree upon
calling, from now on, for a set (of sorts) $S\in
\boldsymbol{\mathcal{U}}$, the objects of the category
$\mathbf{Set}^{S}$ (i.e., the functions $A = (A_{s})_{s\in S}$ from
$S$ to $\boldsymbol{\mathcal{U}}$) $S$-\emph{sorted
sets}; and the morphisms of the category $\mathbf{Set}^{S}$ from an
$S$-sorted set $A$ into a like one $B$ (i.e., the ordered triples
$(A,f,B)$, abbreviated to $f\colon A\mor B$, where $f$ is an element
of $\prod_{s\in S}\mathrm{Hom}(A_{s},B_{s})$, the cartesian product of
the family $(\mathrm{Hom}(A_{s},B_{s}))_{s\in S}$) $S$-\emph{sorted
mappings from} $A$ \emph{to} $B$.  Furthermore, we also agree that a
pseudo-functor $F$ from a \emph{category} $\mathbf{C}$ to a
$2$-\emph{category} $\mathbf{D}$ consists of the following data:
\begin{enumerate}
\item An object mapping
      $F\colon \mathrm{Ob}(\mathbf{C})\mor \mathrm{Ob}(\mathbf{D})$.

\item For every $x,y\in \mathbf{C}$, an hom-mapping
      $$
      F\colon \mathrm{Hom}_{\mathbf{C}}(x,y)\mor
      \mathrm{Hom}_{\mathbf{D}}(F(x),F(y)).
      $$

\item For every morphisms $f\colon x\mor y$ and $g\colon y\mor
      z$ in $\mathbf{C}$, an isomorphic $2$-cell $\gamma^{f,g}$ from
      $F(g)\comp F(f)$ to $F(g\comp f)$.

\item For every $x\in\mathbf{C}$, an isomorphic $2$-cell  $\nu^{x}$ from
      $\id_{F(x)}$ to $F(\mathrm{id}_{x})$.
\end{enumerate}
These data must satisfy the following coherence axioms:
\begin{enumerate}
\item For morphisms $f\colon x\mor y$, $g\colon y\mor z$, and $h\colon
      z\mor t$ in $\mathbf{C}$,
      $$
      \gamma^{g\comp f,h}\comp (\id_{F(h)}\ast\gamma^{f,g}) =
      \gamma^{f,h\comp g}\comp (\gamma^{g,h}\ast\id_{F(f)}).
      $$

\item For a morphism $f\colon x\mor y$ in $\mathbf{C}$,
      $$
      \id_{F(f)} = \gamma^{\id_{x},f}\comp (\id_{F(f)}\ast \nu^{x}) \quad
      \text{and} \quad
      \id_{F(f)} = \gamma^{f,\id_{y}}\comp (\nu^{y}\ast \id_{F(f)}).
      $$
\end{enumerate}

In the following proposition, that is basic for a great deal of what
follows, for a mapping $\varphi$ from $S$ to $T$, we prove the
existence of an adjunction $\coprod_{\varphi}\ladj\Delta_{\varphi}$
from the category of $S$-sorted sets to the category of $T$-sorted
sets, as well as the existence of a contravariant functor
$\mathrm{MSet}$ and of a pseudo-functor $\mathrm{MSet}^{\ttcoprod}$
(related, respectively, to the right and left components of the
adjunction) from $\mathbf{Set}$ to $\mathbf{Cat}$.

\begin{proposition}
Let $\varphi\colon S\mor T$ be a mapping.  Then the functors
$\Delta_{\varphi}$ from $\mathbf{Set}^{T}$ to $\mathbf{Set}^{S}$ and
$\tcoprod_{\varphi}$ from $\mathbf{Set}^{S}$ to $\mathbf{Set}^{T}$ defined,
respectively, as follows
\begin{enumerate}
\item $\Delta_{\varphi}$ assigns to a $T$-sorted set $A$
      the $S$-sorted set $A_{\varphi} = (A_{\varphi(s)})_{s\in
      S}$, i.e., $A\comp \varphi$, and to a $T$-sorted mapping
      $f\colon A\mor B$ the $S$-sorted mapping
      $$
      f_{\varphi} = (f_{\varphi(s)})_{s\in
      S}\colon A_{\varphi}\mor B_{\varphi};
      $$

\item $\coprod_{\varphi}$ assigns to an $S$-sorted set $A$ the
      $T$-sorted set $\coprod_{\varphi}A = (\coprod_{
      s\in\varphi^{-1}[t]}A_{s})_{t\in T}$ and to an $S$-sorted
      mapping $f\colon A\mor B$ the $T$-sorted mapping
      $$
      \textstyle{\coprod}_{\varphi}f =
      (\textstyle{\coprod}_{ s\in\varphi^{-1}[t]}f_{s})_{t\in T}\colon
      \textstyle{\coprod}_{\varphi}A\mor \textstyle{\coprod}_{\varphi}B,
      $$
\end{enumerate}
are such that, $\coprod_{\varphi}\ladj\Delta_{\varphi}$.  We agree
that $\theta^{\varphi}$, $\eta^{\varphi}$, and $\varepsilon^{\varphi}$
denote, respectively, the natural isomorphism, the unit and the counit
of the adjunction.

Besides, there exists a contravariant functor $\mathrm{MSet}$ from
$\mathbf{Set}$ to $\mathbf{Cat}$ which sends a set $S$ to the category
$\mathrm{MSet}(S) = \mathbf{Set}^{S}$, and a mapping $\varphi$ from
$S$ to $T$ to the functor $\Delta_{\varphi}$ from $\mathbf{Set}^{T}$
to $\mathbf{Set}^{S}$; and a pseudo-functor
$\mathrm{MSet}^{\ttcoprod}$ from $\mathbf{Set}$ to the
$2$\nobreakdash-category $\mathbf{Cat}$ given by the following data
\begin{enumerate}
\item The object mapping of $\mathrm{MSet}^{\ttcoprod}$ is that which
      sends a set $S$ to the category $\mathrm{MSet}^{\ttcoprod}(S) =
      \mathbf{Set}^{S}$.

\item The morphism mapping of $\mathrm{MSet}^{\ttcoprod}$ is
      that which sends a mapping $\varphi$ from $S$ to $T$ to the
      functor $\mathrm{MSet}^{\ttcoprod}(\varphi) =
      \textstyle{\coprod}_{\varphi}$ from $\mathbf{Set}^{S}$ to
      $\mathbf{Set}^{T}$.

\item For every $\varphi\colon S\mor T$ and $\psi\colon T\mor
      U$, the natural isomorphism $\gamma^{\varphi,\psi}$ from
      $\coprod_{\psi}\comp\coprod_{\varphi}$ to
      $\coprod_{\psi\comp\varphi}$ is that which is defined, for
      every $S$-sorted set $A$, as the $U$-sorted mapping that in the
      $u$-th coordinate is $((a,s),\varphi(s))\mapsto (a,s)$, if there
      exists an $s\in S$ such that $u=\psi(\varphi(s))$, and is the
      identity at $\vacio$, otherwise.

\item For every set $S$, the natural isomorphism $\nu^{S}$ from
      $\Id_{\mathbf{Set}^{S}}$ to $\coprod_{\id_{S}}$ is that which is
      defined, for every $S$-sorted set $A$ and $s\in S$, as the
      canonical isomorphism from $A_{s}$ to $A_{s}\bprod\{s\}$.
\end{enumerate}
\end{proposition}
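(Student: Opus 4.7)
The plan is to handle the three assertions in sequence, starting from a single pointwise observation about coproducts in \textbf{Set} and propagating it through the indexed structure.

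First I would construct the adjunction $\coprod_{\varphi}\ladj\Delta_{\varphi}$ by exhibiting, for every $A\in\mathbf{Set}^{S}$ and $B\in\mathbf{Set}^{T}$, the bijection
\[
\theta^{\varphi}_{A,B}\colon \Hom_{\mathbf{Set}^{T}}(\tcoprod_{\varphi}A,B)\mor \Hom_{\mathbf{Set}^{S}}(A,\Delta_{\varphi}B)
\]
obtained from the universal property of the coproducts $\coprod_{s\in\varphi^{-1}[t]}A_{s}$ in $\mathbf{Set}$: a $T$-sorted mapping $(g_{t})_{t\in T}$ with $g_{t}\colon \coprod_{s\in\varphi^{-1}[t]}A_{s}\mor B_{t}$ is the same datum as a family of mappings $A_{s}\mor B_{\varphi(s)}$ indexed by $s\in S$, that is, an $S$-sorted mapping $A\mor\Delta_{\varphi}B$. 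Naturality in $A$ and $B$ is then immediate from the compatibility of $(\coprod_{\varphi}f)_{t}$ with the injections $\inc_{s}\colon A_{s}\mor\coprod_{s'\in\varphi^{-1}[t]}A_{s'}$. From this I would read off the unit $\eta^{\varphi}_{A}$ (whose $s$-th component sends $a\in A_{s}$ to $(a,s)\in\coprod_{s'\in\varphi^{-1}[\varphi(s)]}A_{s'}$) and the counit $\varepsilon^{\varphi}_{B}$ (whose $t$-th component sends $(b,s)\in\coprod_{s\in\varphi^{-1}[t]}B_{t}$ to $b\in B_{t}$), and check the triangle identities directly on elements.

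Next, for the contravariant functor $\mathrm{MSet}$, I would observe that, because $A_{\varphi}$ is defined as the strict composition $A\comp\varphi$, the equalities $\Delta_{\id_{S}} = \Id_{\mathbf{Set}^{S}}$ and $\Delta_{\psi\comp\varphi} = \Delta_{\varphi}\comp\Delta_{\psi}$ hold on the nose, both on objects and on morphisms; hence $\mathrm{MSet}$ is a genuine (strict) contravariant functor with no coherence to verify.

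For the pseudo-functor $\mathrm{MSet}^{\ttcoprod}$, the situation is only pseudo because coproducts are canonically determined but not strictly preserved under iteration: for $\varphi\colon S\mor T$ and $\psi\colon T\mor U$, the $u$-th component of $\coprod_{\psi}\coprod_{\varphi}A$ is $\coprod_{t\in\psi^{-1}[u]}\coprod_{s\in\varphi^{-1}[t]}A_{s}$, while that of $\coprod_{\psi\comp\varphi}A$ is $\coprod_{s\in(\psi\comp\varphi)^{-1}[u]}A_{s}$; these are canonically isomorphic via the map $((a,s),\varphi(s))\mapsto(a,s)$ given in the statement, and this pointwise isomorphism is natural in $A$. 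Similarly, $(\coprod_{\id_{S}}A)_{s} = A_{s}\bprod\{s\}$ is canonically isomorphic to $A_{s}$. I would then verify the two coherence axioms for $(\gamma^{\varphi,\psi},\nu^{S})$: the associativity axiom reduces, on an element $(((a,s),\varphi(s)),\psi(\varphi(s)))$ of $\coprod_{\chi}\coprod_{\psi}\coprod_{\varphi}A$, to the equality $(a,s) = (a,s)$, regardless of whether one first contracts the inner or the outer pair of coproducts; and the unit axioms are similarly checked by tracing an element through the insertion of a singleton pairing and its removal. The main obstacle, such as it is, is purely bookkeeping: one has to keep track of the three levels of coproduct indices and verify that the canonical isomorphisms commute, but this is a mechanical consequence of the universal property of coproducts in $\mathbf{Set}$ together with the associativity of composition of mappings of sort sets.
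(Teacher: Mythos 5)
Your proposal is correct and follows essentially the same route as the paper: the adjunction is obtained from the universal property of the coproducts $\coprod_{s\in\varphi^{-1}[t]}A_{s}$ (the paper phrases this via the universal morphism $\eta^{\varphi}_{A}\colon A\mor\Delta_{\varphi}(\coprod_{\varphi}A)$ and notes that $\coprod_{\varphi}A$ is the left Kan extension along $\varphi$, which is equivalent to your hom-set bijection $\theta^{\varphi}$), and the pseudo-functor structure is verified by checking the two coherence axioms for $\gamma^{\varphi,\psi}$ and $\nu^{S}$, which you do by element-chasing where the paper displays the corresponding commutative diagrams.
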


\begin{proof}
We begin by proving that $\coprod_{\varphi}$ is a left adjoint to
$\Delta_{\varphi}$.  Let $A$ be an $S$-sorted set, then
the pair $(\eta_{A}^{\varphi},\coprod_{\varphi}A)$, where
$\eta_{A}^{\varphi}$ is the $S$-sorted mapping from $A$ to
$\Delta_{\varphi}(\coprod_{\varphi}A) = (\tcoprod_{x\in
\varphi^{-1}[\varphi(s)]} A_{x} )_{s\in S}$ whose $s$-th coordinate,
for $s\in S$, is the canonical embedding from $A_{s}$ to
$\coprod_{x\in \varphi^{-1}[\varphi(s)]}A_{x}$, is a universal
morphism from $A$ to $\Delta_{\varphi}$.  This is so because, for a
$T$\nobreakdash-sorted set $B$ and an $S$-sorted mapping $f\colon
A\mor B_{\varphi}$, the $T$-sorted mapping $f^{\S}=(f^{\S}_{t})_{t\in
T}$ from $\coprod_{\varphi}A$ to $B$, where, for $t\in T$,
$f^{\S}_{t}$ is the unique mapping $[f_{s} ]_{s\in\varphi^{-1}[t]}$
from $\tcoprod_{s\in \varphi^{-1}[t] }A_{s}$ to $B_{t} =
B_{\varphi(s)}$ such that, for every $s\in \varphi^{-1}[t]$, the
following diagram commutes
$$
\xymatrix{ A_{s}
\ar[r]^-{\inc_{s}}
\ar[rd]_{f_{s}} &
\tcoprod_{s\in \varphi^{-1}[t] }A_{s}
\ar[d]^{[f_{s}]_{s\in\varphi^{-1}[t]} = f^{\S}_{t}} \\
     &
B_{t} = B_{\varphi(s)} }
$$
is such that $f = \Delta_{\varphi}(f^{\S})\comp \eta_{A}^{\varphi}$ and
unique with such a property.

Otherwise stated, $\coprod_{\varphi}A$ is, simply,
$\mathrm{Lan}_{\varphi}A$, i.e., the left Kan extension of $A$ along
$\varphi$, recalling that every set is the set of objects of a
discrete category and every mapping between sets the object mapping of
a functor between discrete categories.

To prove that $\mathrm{MSet}^{\ttcoprod}$ is a pseudo-functor, it is
enough to verify the coherence axioms. But given the situation
$$
\xymatrix{
S \ar[r]^{\varphi} &
T \ar[r]^{\psi} &
U \ar[r]^{\xi} &
X,
}
$$
the following diagrams commute
$$
\xymatrix@C=60pt{
\coprod_{\xi}\comp\coprod_{\psi}\comp\coprod_{\varphi}
  \ar[r]^{\id_{\coprod_{\xi}}\hcomp\gamma^{\varphi,\psi}}
  \ar[d]_{\gamma^{\psi,\xi}\hcomp\id_{\coprod_{\varphi}}} &
\coprod_{\xi}\comp\coprod_{\psi\comp\varphi}
  \ar[d]^{\gamma^{\psi\comp\varphi,\xi}} \\
\coprod_{\xi\comp\psi}\comp\coprod_{\varphi}
  \ar[r]_-{\gamma^{\varphi,\xi\comp\psi}} &
\coprod_{\xi\comp\psi\comp\varphi}
}
$$

$$
\xymatrix{
\coprod_{\varphi}\comp\Id_{\mathbf{Set}^{S}}
  \ar[r]^{\id_{\coprod_{\varphi}}\hcomp\nu^{S}}
  \ar[d]_{\id_{\coprod_{\varphi}}} &
\coprod_{\varphi}\comp\coprod_{\id_{S}}
  \ar[d]^{\gamma^{\id_{S},\varphi}} \\
\coprod_{\varphi}
  \ar[r]_{\id_{\coprod_{\varphi}}} &
\coprod_{\varphi\comp\id_{S}}
}
\qquad\qquad
\xymatrix{
\Id_{\mathbf{Set}^{T}}\comp\coprod_{\varphi}
  \ar[r]^{\nu^{T}\hcomp\id_{\coprod_{\varphi}}}
  \ar[d]_{\id_{\coprod_{\varphi}}} &
\coprod_{\id_{T}}\comp\coprod_{\varphi}
  \ar[d]^{\gamma^{\varphi,\id_{T}}} \\
\coprod_{\varphi}
  \ar[r]_{\id_{\coprod_{\varphi}}} &
\coprod_{\id_{T}\comp\varphi}
}
$$

\end{proof}

From now on, when dealing with a pseudo-functor we will restrict
ourselves to define explicitly only its object and morphism mappings,
if about the remaining data and conditions involved in it, i.e., the
natural isomorphisms and the coherence conditions, there is not any
doubt.

\begin{remark}
The particular case of the just proved proposition when the sets of
sorts are $\fmon{S}\times S$ and $\fmon{S}\times \fmon{S}$, and the
mapping from $\fmon{S}\times S$ to $\fmon{S}\times \fmon{S}$ is
$$
1\times \between_{S}\nfunction
{\fmon{S}\times S}
{\fmon{S}\times \fmon{S}}
{(w,s)}
{(w,(s))}
$$
will be specially useful in the fourth section (on Hall and Bénabou
algebras) to prove the isomorphy between the free Bénabou algebra on
the $\fmon{S}\times \fmon{S}$-sorted set $\coprod_{1\times
\between_{S}}\Sigma$ associated to a given $\fmon{S}\times S$-sorted set
(below called an $S$-sorted signature) $\Sigma$ and the Bénabou
algebra of terms for $(S,\Sigma)$, as well as in the fifth section
(on the morphisms of Fujiwara) to provide an alternative, but
equivalent, definition of the concept of morphism of Fujiwara between
signatures.
\end{remark}

\begin{definition}
The category $\mathbf{MSet}$ of \emph{many-sorted sets} and
\emph{many-sorted mappings}, obtained by applying the construction of
Ehresmann-Grothendieck to the contravariant functor $\mathrm{MSet}$
from $\mathbf{Set}$ to $\mathbf{Cat}$, is $\mathbf{MSet} =
\int^{\mathbf{Set}}\mathrm{MSet}$.
\end{definition}

Therefore $\mathbf{MSet}$ has as objects the pairs $(S,A)$, where $S$
is a set and $A$ an $S$-sorted set, and as morphisms from $(S,A)$ to
$(T,B)$ the pairs $(\varphi,f)$, where $\varphi\colon S\mor T$ and
$f\colon A\mor B_{\varphi}$.  From now on, to shorten terminology, we
will say $\mathrm{ms}$-\emph{set} and $\mathrm{ms}$-\emph{mapping}
instead of \emph{many-sorted set} and \emph{many-sorted mapping},
respectively.

%

From the definition of the category $\mathbf{MSet}$ it follows,
immediately, that the projection functor $\pi_{\mathrm{MSet}}$ for
$\mathbf{MSet}$ is a split fibration (observe that, for every set $S$,
the fiber of $\pi_{\mathrm{MSet}}$ in $S$ is, essentially, the
category $\mathbf{Set}^{S}$ of $S$-sorted sets and mappings).

On the other hand, if we apply the construction of
Ehresmann-Grothendieck to the pseudo-functor
$\mathrm{MSet}^{\ttcoprod}$, then we get a category with the same
objects as $\mathbf{MSet}$, but with morphisms from $(S,A)$ to $(T,B)$
the pairs $(\varphi,f)$, where $\varphi\colon S\mor T$ and $f\colon
\coprod_{\varphi}A\mor B$.  However, for every morphism $\varphi\colon
S\mor T$, we have that $\coprod_{\varphi}\ladj\Delta_{\varphi}$, hence
$\Hom(\coprod_{\varphi}A,B)$ and $\Hom(A,B_{\varphi})$ are naturally
isomorphic, thus the categories $\int^{\mathbf{Set}}\mathrm{MSet}$ and
$\int_{\mathbf{Set}}\mathrm{MSet}^{\ttcoprod}$ are isomorphic (observe
the use, in the symbol of integration (also called the integral of
Grothendieck), of the subscript to indicate the covariant situation,
and of the superscript to indicate the contravariant one).  From this
it follows that the functor $\pi_{\mathrm{MSet}}$ is also a split
opfibration.  Therefore $\mathbf{MSet}$ is a split bifibration on
$\mathbf{Set}$.

\begin{remark}
The construction of Ehresmann-Grothendieck applied to the
contravariant functor $\mathrm{MSet}$ produces not only the category
$\mathbf{MSet}$, but also, implicitly, a logic, the internal logic of
$\mathbf{MSet}$ (which, we recall, is the trivalued logic of Heyting),
from the combination, by means of logical morphisms between the fibers
of $\pi_{\mathrm{MSet}}$, of the Boolean internal logics of the just
named fibers.  Informally speaking, we can say that globally the
category $\mathbf{MSet}$ has an intermediate logic, but that locally
(in its fibers) it is Boolean (in the same way as a manifold is a
space which locally looks like $\mathbb{R}^{n}$ (or $\mathbb{C}^{n}$)
but not necessarily globally).  Thus, in this case, we see that the
system of laws governing the world obtained by synthesizing a family
of given interwoven worlds, each of them governed by its proper system
of laws, is not necessarily identical to anyone of the local systems
of laws.
\end{remark}


To show the bicompleteness of the category $\mathbf{MSet}$ and of
some other categories defined a bit further on, in this same section, the
following definition and propositions, stated in~\cite{tbg91} and
(partially) in~\cite{jwG66}, are particularly useful, since they give
sufficient conditions that are, mostly, easily verifiable for the
cases we will be considering.

\begin{definition}(Cf., \cite{tbg91}, p. 249)
We say that a functor $F\colon \mathbf{C}^{\opp}\mor \mathbf{Cat}$ is
\emph{locally reversible} if, for every morphism $h\colon c\mor d$ in
$\mathbf{C}$, the functor $F(h)$ from $F(d)$ to $F(c)$ has a left
adjoint.
\end{definition}

\begin{proposition}\label{LimitesGrt}
Let $F\colon\mathbf{C}^{\opp}\mor \mathbf{Cat}$ be a functor.  If
$\mathbf{C}$ is complete, for every object $c\in \mathbf{C}$, the
category $F(c)$ is complete and, for every morphism $h\colon c\mor d$
in $\mathbf{C}$, the functor $F(h)$ from $F(d)$ to $F(c)$ is
continuous (i.e., preserves projective limits), then
$\int^{\mathbf{C}}F$ is complete.
\end{proposition}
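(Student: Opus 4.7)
The plan is to build the limit of an arbitrary diagram $D\colon \mathbf{I}\mor \int^{\mathbf{C}}F$ in three stages, working downstairs in the base $\mathbf{C}$, then upstairs in a single fiber, and finally assembling the two pieces into an object of the total category. Write $D(i) = (c_i,x_i)$ and, for each $\alpha\colon i\mor j$ in $\mathbf{I}$, $D(\alpha) = (h_\alpha, f_\alpha)$, where $h_\alpha\colon c_i\mor c_j$ and $f_\alpha\colon x_i\mor F(h_\alpha)(x_j)$ in $F(c_i)$. First, since $\mathbf{C}$ is complete, form the limit $(c^{\ast},(p_i)_{i\in \mathbf{I}})$ of the projected diagram $\pi_F\comp D$, so that $h_\alpha\comp p_i = p_j$ for every $\alpha\colon i\mor j$.

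Second, I build a lifted diagram $\widetilde{D}\colon \mathbf{I}\mor F(c^{\ast})$ by sending $i\mapsto F(p_i)(x_i)$ and $\alpha\mapsto F(p_i)(f_\alpha)$; the target of $\widetilde{D}(\alpha)$ is $F(p_i)(F(h_\alpha)(x_j)) = F(h_\alpha\comp p_i)(x_j) = F(p_j)(x_j)$, using contravariance of $F$ together with the cone equation $h_\alpha\comp p_i = p_j$. Functoriality is checked likewise: the composition law in $\int^{\mathbf{C}}F$ gives $f_{\beta\alpha} = F(h_\alpha)(f_\beta)\comp f_\alpha$, so that $F(p_i)(f_{\beta\alpha}) = F(p_j)(f_\beta)\comp F(p_i)(f_\alpha)$. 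Now use the hypothesis that $F(c^{\ast})$ is complete to form the limit $(x^{\ast},(q_i)_{i\in \mathbf{I}})$ of $\widetilde{D}$, where $q_i\colon x^{\ast}\mor F(p_i)(x_i)$.

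Third, I claim that $((c^{\ast},x^{\ast}),((p_i,q_i))_{i\in\mathbf{I}})$ is a limit cone of $D$ in $\int^{\mathbf{C}}F$. The cone equations $D(\alpha)\comp (p_i,q_i) = (p_j,q_j)$ translate, on the second coordinate, into $F(p_i)(f_\alpha)\comp q_i = q_j$, i.e., into the cone equations for $\widetilde{D}$, which already hold. For the universal property, take a competing cone $((d,y),((k_i,g_i))_{i\in\mathbf{I}})$. By the universal property of $c^{\ast}$ in $\mathbf{C}$, the family $(k_i)_{i\in\mathbf{I}}$ factors uniquely through a morphism $k\colon d\mor c^{\ast}$ with $p_i\comp k = k_i$. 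This is the point where the hypothesis that $F(k)\colon F(c^{\ast})\mor F(d)$ preserves projective limits becomes indispensable: applying $F(k)$ to the cone $(x^{\ast},(q_i))$ produces a limit cone on $F(k)\comp \widetilde{D}$ in $F(d)$, whose value at $i$ is $F(k)(F(p_i)(x_i)) = F(k_i)(x_i)$, which is precisely the codomain of $g_i$. A direct rewriting of the cone equations for $((d,y),((k_i,g_i)))$ shows that $(g_i)_{i\in \mathbf{I}}$ is a cone over $F(k)\comp \widetilde{D}$, yielding a unique $g\colon y\mor F(k)(x^{\ast})$ with $F(k)(q_i)\comp g = g_i$. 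Then $(k,g)$ is the required unique factorization.

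The main obstacle is conceptual rather than computational: it is to recognize that the contravariant lift of the fiberwise data to the single fiber $F(c^{\ast})$ genuinely requires the cone equations downstairs to even be well-defined, and, more importantly, that the universal property upstairs can only be transported across fibers when the reindexing functors $F(h)$ preserve limits. Without the continuity hypothesis, the factorization $g\colon y\mor F(k)(x^{\ast})$ could not be produced. Once these two points are in place, the rest of the verification is a routine unwinding of the formula for composition in $\int^{\mathbf{C}}F$ and of the universal properties of $c^{\ast}$ and $x^{\ast}$.
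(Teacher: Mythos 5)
Your construction is correct and is precisely the standard argument that the paper delegates to \cite{tbg91}: take the limit downstairs, reindex the fibre data along the limiting cone into the single fibre $F(c^{\ast})$, take the limit there, and use continuity of the reindexing functors $F(k)$ to transport the universal property to an arbitrary competing cone. Since the paper gives no proof beyond the citation, your write-up is a faithful, self-contained rendering of the intended argument, with the key uniqueness step (first $k$, then $g$) correctly isolated.
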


\begin{proof}
See~\cite{tbg91}, pp. 247--248.
\end{proof}

\begin{proposition}\label{CoLimitesGrt}
Let $F\colon\mathbf{C}^{\opp}\mor \mathbf{Cat}$ be a functor.  If $\mathbf{C}$ is
cocomplete, for every object $c\in \mathbf{C}$, the category $F(c)$ is
cocomplete, and $F$ is locally reversible, then $\int^{\mathbf{C}}F$ is
cocomplete.
\end{proposition}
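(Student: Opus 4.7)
The plan is to build the colimit of any small diagram $D\colon \mathbf{I}\mor \int^{\mathbf{C}}F$ in two stages: first in the base, then in the appropriate fiber, exploiting local reversibility to transport each $D(i)$ into a single common fiber where the colimit can be formed.

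Write $D(i) = (c_{i},x_{i})$ and, for $u\colon i\mor j$ in $\mathbf{I}$, $D(u) = (h_{u},\phi_{u})$ with $h_{u}\colon c_{i}\mor c_{j}$ in $\mathbf{C}$ and $\phi_{u}\colon x_{i}\mor F(h_{u})(x_{j})$ in $F(c_{i})$. First I would take $(c,(\lambda_{i})_{i\in\mathbf{I}})$ to be the colimit in $\mathbf{C}$ of the diagram $\pi\comp D$, which exists by cocompleteness of $\mathbf{C}$. Local reversibility supplies, for every morphism $h$ in $\mathbf{C}$, a left adjoint which I will denote $L_{h}\ladj F(h)$, with unit $\eta^{h}$ and counit $\varepsilon^{h}$; since $F$ is a (strict) functor, the composites of left adjoints are canonically isomorphic, $L_{h'\comp h}\iso L_{h'}\comp L_{h}$, and $L_{\id_{c}}\iso \Id_{F(c)}$.

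Next I would transport each object $x_{i}$ into the fiber $F(c)$ by forming $\widetilde{x}_{i} = L_{\lambda_{i}}(x_{i})$, and assemble these into a diagram $\widetilde{D}\colon\mathbf{I}\mor F(c)$ as follows: for $u\colon i\mor j$, since $\lambda_{j}\comp h_{u} = \lambda_{i}$, the transpose of $\phi_{u}$ under $L_{h_{u}}\ladj F(h_{u})$ is a morphism $\phi_{u}^{\sharp}\colon L_{h_{u}}(x_{i})\mor x_{j}$ in $F(c_{j})$; applying $L_{\lambda_{j}}$ and composing with the coherence isomorphism $L_{\lambda_{i}}\iso L_{\lambda_{j}}\comp L_{h_{u}}$ yields $\widetilde{D}(u)\colon \widetilde{x}_{i}\mor \widetilde{x}_{j}$ in $F(c)$. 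By cocompleteness of $F(c)$, take the colimit $(y,(\mu_{i})_{i\in\mathbf{I}})$ of $\widetilde{D}$ and set $\mu_{i}^{\flat}\colon x_{i}\mor F(\lambda_{i})(y)$ to be the transpose of $\mu_{i}$. I claim that $((c,y),((\lambda_{i},\mu_{i}^{\flat}))_{i\in\mathbf{I}})$ is the desired colimit in $\int^{\mathbf{C}}F$.

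For the universal property, given any cocone $((k_{i},\sigma_{i})\colon (c_{i},x_{i})\mor (e,z))_{i\in\mathbf{I}}$, the family $(k_{i})$ is a cocone over $\pi\comp D$ in $\mathbf{C}$, so it factors uniquely through $c$ via some $g\colon c\mor e$ with $g\comp \lambda_{i} = k_{i}$; functoriality of $F$ then gives $F(k_{i})(z) = F(\lambda_{i})(F(g)(z))$, and the transposes $\sigma_{i}^{\sharp}\colon L_{\lambda_{i}}(x_{i})\mor F(g)(z)$ assemble into a cocone over $\widetilde{D}$ in $F(c)$, yielding a unique $\tau\colon y\mor F(g)(z)$; the pair $(g,\tau)\colon (c,y)\mor (e,z)$ is the required factorization. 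The main obstacle will be the bookkeeping that $\widetilde{D}$ really is a functor (functoriality under composition of $u$'s) and that the cocone $(\sigma_{i}^{\sharp})$ is compatible with $\widetilde{D}$; both hinge on naturality of the adjunction bijections $\Hom_{F(d)}(L_{h}(x),y) \iso \Hom_{F(c)}(x,F(h)(y))$ together with the strict functoriality of $F$, so no pseudofunctorial coherence data are needed and the verification reduces to a diagram chase that I would write out but not grind through here.
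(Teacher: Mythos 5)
Your construction is correct and is essentially the argument of the reference the paper cites for this proposition (\cite{tbg91}, pp.~250--251): form the colimit of the base diagram in $\mathbf{C}$, transport the fiber objects into the fiber over the apex using the left adjoints supplied by local reversibility, take the colimit there, and verify the universal property via the adjunction transposes. The paper itself offers no proof beyond that citation, so there is nothing further to compare.
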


\begin{proof}
See~\cite{tbg91}, pp. 250--251.
\end{proof}

\begin{corollary}
The category $\mathbf{MSet}$ is bicomplete.
\end{corollary}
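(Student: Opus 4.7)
The plan is to deduce both completeness and cocompleteness of $\mathbf{MSet}$ from Propositions \ref{LimitesGrt} and \ref{CoLimitesGrt}, applied to the contravariant functor $\mathrm{MSet}\colon \mathbf{Set}^{\opp}\mor \mathbf{Cat}$ whose Grothendieck construction gives $\mathbf{MSet}$ by definition. In both cases the hypotheses factor into three conditions about the base, the fibers, and the transition functors, and each of these is either standard or already recorded in the preceding proposition.

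For completeness, I would first observe that $\mathbf{Set}$ is complete (standard) and that, for every $S\in \boldsymbol{\mathcal{U}}$, the fiber $\mathrm{MSet}(S) = \mathbf{Set}^{S}$ is complete, since limits in a functor category with discrete domain are computed pointwise from the corresponding limits in $\mathbf{Set}$. The remaining hypothesis of Proposition \ref{LimitesGrt} is that, for every $\varphi\colon S\mor T$, the transition functor $\Delta_{\varphi}$ from $\mathbf{Set}^{T}$ to $\mathbf{Set}^{S}$ preserves projective limits. But the previous proposition exhibits the adjunction $\coprod_{\varphi}\ladj\Delta_{\varphi}$, so $\Delta_{\varphi}$ is a right adjoint and hence continuous. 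Therefore Proposition \ref{LimitesGrt} applies and $\mathbf{MSet} = \int^{\mathbf{Set}}\mathrm{MSet}$ is complete.

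For cocompleteness, I would invoke Proposition \ref{CoLimitesGrt}: $\mathbf{Set}$ is cocomplete, each fiber $\mathbf{Set}^{S}$ is cocomplete (colimits computed sort-wise in $\mathbf{Set}$), and it remains to show that $\mathrm{MSet}$ is locally reversible in the sense of the preceding definition, i.e., that for every $\varphi\colon S\mor T$ the functor $\Delta_{\varphi} = \mathrm{MSet}(\varphi)$ admits a left adjoint. This is again precisely the content of the adjunction $\coprod_{\varphi}\ladj\Delta_{\varphi}$ established in the preceding proposition. Hence Proposition \ref{CoLimitesGrt} yields cocompleteness of $\int^{\mathbf{Set}}\mathrm{MSet} = \mathbf{MSet}$.

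There is essentially no obstacle: the entire proof is a bookkeeping exercise checking the three hypotheses of each proposition, and the only non-trivial ingredient, namely the existence of the adjoint $\coprod_{\varphi}$ to $\Delta_{\varphi}$, has already been proved. The proof is therefore a direct combination of Propositions \ref{LimitesGrt} and \ref{CoLimitesGrt} with the adjunction from the first proposition of the section.
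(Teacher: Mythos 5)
Your proposal is correct and coincides with the paper's own proof: both completeness and cocompleteness are obtained by checking the hypotheses of Propositions~\ref{LimitesGrt} and~\ref{CoLimitesGrt}, with the adjunction $\coprod_{\varphi}\ladj\Delta_{\varphi}$ supplying continuity of the transition functors in the first case and local reversibility in the second. There is nothing to add.
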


\begin{proof}
The category $\mathbf{MSet}$ is complete because $\mathbf{Set}$ is complete,
for every set $S$, $\mathrm{MSet}(S) = \mathbf{Set}^{S}$ is complete,
and, for every mapping $\varphi\colon S\mor T$, the functor
$\mathrm{MSet}(\varphi) = \Delta_{\varphi}$ from $\mathbf{Set}^{T}$ to
$\mathbf{Set}^{S}$ is continuous, since it has $\coprod_{\varphi}$ as a
left adjoint.

The category $\mathbf{MSet}$ is cocomplete because $\mathbf{Set}$ is
cocomplete, for every set $S$, $\mathrm{MSet}(S) = \mathbf{Set}^{S}$ is
cocomplete, and the contravariant functor $\mathrm{MSet}$ is locally
reversible.
\end{proof}


Our next goal is to define the category $\mathbf{Sig}$, of standard
many-sorted signatures and many-sorted signature morphisms, by
applying the construction of Ehresmann-Grothendieck to a contravariant
functor $\op{Sig}$ from $\mathbf{Set}$ to $\mathbf{Cat}$.  The
category $\mathbf{Sig}$ will show to be fundamental to get, by
means of the same construction, but applied to a contravariant functor
from $\mathbf{Sig}$ to $\mathbf{Cat}$, the category of many-sorted
algebras, and also (as will be seen in the fifth section on morphisms
of Fujiwara) to build on it, through the construction of Kleisli,
another category with the same objects that $\mathbf{Sig}$, but with a
new type of morphisms, the polyderivors, which will show to be adequate to
prove, in the last section, the category-theoretical equivalence
between the specifications for Hall and Bénabou algebras.

Before we prove the existence of the contravariant functor
$\op{Sig}$ in the following proposition, we recall that, for a set of
sorts $S$, the category of $S$-\emph{sorted signatures}, denoted by
$\mathbf{Sig}(S)$, is $\mathbf{Set}^{\fmon{S}\bprod S}$, where, we recall,
$\fmon{S}$ is the underlying set of the free monoid on $S$.  Therefore
an $S$-\emph{sorted signature} is a function $\Sigma$ from
$\fmon{S}\bprod S$ to $\boldsymbol{\mathcal{U}}$ which sends a pair
$(w,s)\in \fmon{S}\bprod S$ to the set $\Sigma_{w,s}$ of the
\emph{formal operations} of \emph{arity} $w$, \emph{sort} (or
\emph{coarity}) $s$, and \emph{rank} (or \emph{biarity}) $(w,s)$; and
an $S$-\emph{sorted signature morphism} from $\Sigma$ to $\Sigma'$ an
ordered triple $(\Sigma,d,\Sigma')$, abbreviated to $d\colon
\Sigma\mor \Sigma'$, where $d$ is an element of $\prod_{(w,s)\in
\fmon{S}\bprod S}\mathrm{Hom}(\Sigma_{w,s},\Sigma'_{w,s})$.
Thus, for $(w,s)\in \fmon{S}\bprod S$, $d_{w,s}$ is a mapping
from $\Sigma_{w,s}$ to $\Sigma'_{w,s}$ which sends a formal operation
$\sigma$ in $\Sigma_{w,s}$ to the formal operation $d_{w,s}(\sigma)$,
abbreviated to $d(\sigma)$, in $\Sigma'_{w,s}$.  Sometimes we will
write $\sigma\colon w\mor s$ to indicate that the formal operation
$\sigma$ belongs to $\Sigma_{w,s}$.

\begin{proposition}
There exists a contravariant functor $\op{Sig}$ from $\mathbf{Set}$ to
$\mathbf{Cat}$ defined as follows
\begin{enumerate}
\item $\op{Sig}$ sends a set (of sorts) $S$ to $\Sig(S) =
      \mathbf{Sig}(S)$, the category of $S$-sorted signatures.

\item $\op{Sig}$ sends a mapping $\varphi$ from $S$ to $T$ to the
      functor $\Sig(\varphi) = \Delta_{\fmon{\varphi}\bprod \varphi}$
      from $\mathbf{Sig}(T)$ to $\mathbf{Sig}(S)$ which relabels $T$-sorted
      signatures into $S$-sorted signatures, i.e., $\Sig(\varphi)$
      assigns to a $T$-sorted signature $\Lambda$ the $S$-sorted
      signature $\Sig(\varphi)(\Lambda) =
      \Lambda_{\fmon{\varphi}\bprod \varphi}$, and assigns to a
      morphism of $T$-sorted signatures $d$ from $\Lambda$ to
      $\Lambda'$ the morphism of $S$-sorted signatures
      $\Sig(\varphi)(d) = d_{\fmon{\varphi}\bprod \varphi}$ from
      $\Lambda_{\fmon{\varphi}\bprod \varphi}$ to
      $\Lambda'_{\fmon{\varphi}\bprod \varphi}$.
\end{enumerate}

\end{proposition}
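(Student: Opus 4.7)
The plan is to build the contravariant functor $\Sig$ from essentially two ingredients, both available at this point in the paper: the relabelling functor $\Delta_\psi$ for a mapping $\psi$ between sets (from the preceding proposition) and the elementary fact that the assignment $S \mapsto \fmon{S}$, $\varphi \mapsto \fmon{\varphi}$ is itself functorial from $\mathbf{Set}$ to $\mathbf{Set}$.

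First, I would observe that for every set $S$, the category $\Sig(S) = \mathbf{Sig}(S) = \mathbf{Set}^{\fmon{S}\bprod S}$ is a well-defined $\boldsymbol{\mathcal{U}}$-category, since $\fmon{S}\bprod S$ is a $\boldsymbol{\mathcal{U}}$-small set whenever $S$ is. Next, for each mapping $\varphi\colon S\mor T$, the product mapping $\fmon{\varphi}\bprod\varphi\colon \fmon{S}\bprod S\mor \fmon{T}\bprod T$ is a well-defined set function, so by the preceding proposition $\Delta_{\fmon{\varphi}\bprod\varphi}$ is indeed a functor from $\mathbf{Set}^{\fmon{T}\bprod T}$ to $\mathbf{Set}^{\fmon{S}\bprod S}$, i.e., from $\Sig(T)$ to $\Sig(S)$. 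This takes care of the object and morphism mappings of $\Sig$.

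It then remains to check the two contravariance conditions. For the identity, I would use that $\fmon{\id_{S}} = \id_{\fmon{S}}$ (the unique homomorphism from $\mathbf{T}_{\star}(S)$ to itself extending $\between_{S}$ is $\id_{\fmon{S}}$ by universality), so $\fmon{\id_{S}}\bprod\id_{S} = \id_{\fmon{S}\bprod S}$, and hence $\Sig(\id_{S}) = \Delta_{\id_{\fmon{S}\bprod S}} = \Id_{\mathbf{Sig}(S)}$ since relabelling along an identity is the identity functor. For composition, given $\varphi\colon S\mor T$ and $\psi\colon T\mor U$, I would first note that, by the universal property defining $(\farg)^{\star}$, $\fmon{\psi\comp\varphi} = \fmon{\psi}\comp\fmon{\varphi}$; combined with the bifunctoriality of $\bprod$ on $\mathbf{Set}$, this yields
\[
\fmon{(\psi\comp\varphi)}\bprod(\psi\comp\varphi) = (\fmon{\psi}\bprod\psi)\comp(\fmon{\varphi}\bprod\varphi).
\]
Then, applying the elementary identity $\Delta_{g\comp f} = \Delta_{f}\comp\Delta_{g}$ (which is immediate from the definition $\Delta_{h}(A) = A\comp h$ and the associativity of function composition), I obtain
\[
\Sig(\psi\comp\varphi) = \Delta_{(\fmon{\psi}\bprod\psi)\comp(\fmon{\varphi}\bprod\varphi)} = \Delta_{\fmon{\varphi}\bprod\varphi}\comp\Delta_{\fmon{\psi}\bprod\psi} = \Sig(\varphi)\comp\Sig(\psi),
\]
which is the contravariant functoriality axiom.

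There is no real obstacle here: the proof is routine once one recognizes that $\Sig$ is the composite of three already-established functorial constructions, namely $S \mapsto \fmon{S}\bprod S$ (a functor $\mathbf{Set}\mor\mathbf{Set}$), the passage $X\mapsto \mathbf{Set}^{X}$ and $\psi\mapsto \Delta_{\psi}$ (a contravariant functor $\mathbf{Set}\mor\mathbf{Cat}$), and composition. The only mild point worth checking explicitly is the monoid-level identity $\fmon{(\psi\comp\varphi)} = \fmon{\psi}\comp\fmon{\varphi}$, which follows from the stated universal property of $\mathbf{T}_{\star}$ by observing that both sides are homomorphisms $\mathbf{T}_{\star}(S)\mor\mathbf{T}_{\star}(U)$ whose restriction along $\between_{S}$ equals $\between_{U}\comp\psi\comp\varphi$.
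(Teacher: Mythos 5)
Your proof is correct and follows essentially the same route as the paper, which simply observes that $\Sig$ is the composite of the covariant endofunctor $(\cdot)^{\star}\times(\cdot)$ of $\mathbf{Set}$ with the contravariant functor $\mathrm{MSet}$; you arrive at exactly this decomposition and merely spell out the routine verifications (functoriality of $(\cdot)^{\star}$ and of $\Delta$) that the paper leaves implicit.
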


\begin{proof}
Because $\op{Sig}$ is the composition of the covariant endofunctor
$(\cdot)^{\star}\times (\cdot)$ of $\mathbf{Set}$ which sends a set $S$ to
$\fmon{S}\times S$ and a mapping $\varphi\colon S\mor T$ to the
mapping $\fmon{\varphi}\bprod \varphi\colon \fmon{S}\bprod S\mor
\fmon{T}\bprod T$ (which assigns to $(w,s)$ in $\fmon{S}\bprod S$,
$(\fmon{\varphi}(w),\varphi(s))$ in $\fmon{T}\bprod T$), and the
contravariant functor $\mathrm{MSet}$ from $\mathbf{Set}$ to $\mathbf{Cat}$.
\end{proof}

\begin{definition}
The category $\mathbf{Sig}$ of \emph{many-sorted signatures} and
\emph{many-sorted signature morphisms}, obtained by applying the
construction of Ehresmann-Grothendieck to the contravariant functor
$\Sig$ on $\mathbf{Set}$ to $\mathbf{Cat}$, is $\mathbf{Sig} =
\int^{\mathbf{Set}}\Sig$.
\end{definition}

Therefore the category $\mathbf{Sig}$ has as objects the pairs
$(S,\Sigma)$, where $S$ is a set of sorts and $\Sigma$ an $S$-sorted
signature and as many-sorted signature morphisms from $(S,\Sigma)$ to
$(T,\Lambda)$ the pairs $(\varphi,d)$, where $\varphi\colon S\mor T$
is a morphism in $\mathbf{Set}$ while $d\colon \Sigma\mor
\Lambda_{\fmon{\varphi}\bprod \varphi}$ is a morphism in
$\mathbf{Sig}(S)$.  The composition of
$$
(\varphi,d)\colon (S,\Sigma)\mor (T,\Lambda)
\,\text{\,\,and\,\,}\,
(\psi,e)\colon (T,\Lambda)\mor(U,\Omega),
$$
denoted by $(\psi,e)\comp (\varphi,d)$, is $(\psi\comp\varphi,
e_{\fmon{\varphi}\bprod \varphi}\comp d)$, where
$$
e_{\fmon{\varphi}\bprod \varphi}\colon\Lambda_{\fmon{\varphi}\bprod
\varphi}\mor (\Omega_{\fmon{\psi}\bprod \psi})_{\fmon{\varphi}\bprod
\varphi}= \Omega_{\fmon{(\psi\comp\varphi)}\bprod
(\psi\comp\varphi)}.
$$
From now on, unless otherwise stated, we will write $\mathbf{\Sigma}$,
$\mathbf{\Lambda}$, $\mathbf{\Omega}$, and $\mathbf{\Xi}$ instead of
$(S,\Sigma)$, $(T,\Lambda)$, $(U,\Omega)$, and $(X,\Xi)$,
respectively, and $\mathbf{d}$, $\mathbf{e}$, and $\mathbf{h}$,
instead of $(\varphi,d)$, $(\psi,e)$, and $(\gamma,h)$, respectively.
Furthermore, to shorten terminology, we will drop the qualifying
adjective \lq\lq many-sorted\rq\rq and thus we will say
\emph{signature} and \emph{signature morphism} instead of
\emph{many-sorted signature} and \emph{many-sorted signature
morphism}, respectively.

\begin{remark}
The category $\mathbf{Sig}$, as was the case for $\mathbf{MSet}$, is
also a split bifibration on $\mathbf{Set}$ through the projection
functor $\pi_{\mathrm{Sig}}$ for $\mathbf{Sig}$.
\end{remark}

Since the category $\mathbf{Sig}$ can be identified to a subcategory
of the category $\mathbf{Sig}_{\mathfrak{pd}}$, defined in the fifth
section, we refer to that section for examples of signature morphisms.

\begin{proposition}
The category $\mathbf{Sig}$ is bicomplete.
\end{proposition}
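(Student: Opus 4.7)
The plan is to invoke Propositions \ref{LimitesGrt} and \ref{CoLimitesGrt} exactly as was done for $\mathbf{MSet}$, since $\mathbf{Sig} = \int^{\mathbf{Set}}\Sig$ has the same formal shape as $\mathbf{MSet}$, only with the base-change along $\fmon{\varphi}\bprod \varphi$ instead of along $\varphi$. So the whole proof reduces to checking the hypotheses of those two propositions for the contravariant functor $\Sig$.

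For completeness, I would first note that $\mathbf{Set}$ is complete. Next, for every set $S$, the category $\Sig(S) = \mathbf{Set}^{\fmon{S}\bprod S}$ is a functor category with values in the complete category $\mathbf{Set}$, hence is complete (limits being computed pointwise). Finally, for every mapping $\varphi\colon S\mor T$, the functor $\Sig(\varphi) = \Delta_{\fmon{\varphi}\bprod \varphi}$ is continuous, because by the first proposition of the section (applied to the mapping $\fmon{\varphi}\bprod \varphi\colon \fmon{S}\bprod S\mor \fmon{T}\bprod T$) it has the left adjoint $\tcoprod_{\fmon{\varphi}\bprod \varphi}$. Proposition \ref{LimitesGrt} then yields that $\mathbf{Sig}$ is complete.

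For cocompleteness, I would proceed symmetrically: $\mathbf{Set}$ is cocomplete, every $\Sig(S) = \mathbf{Set}^{\fmon{S}\bprod S}$ is cocomplete (colimits computed pointwise in $\mathbf{Set}$), and the contravariant functor $\Sig$ is locally reversible, since for each $\varphi\colon S\mor T$ the functor $\Sig(\varphi) = \Delta_{\fmon{\varphi}\bprod \varphi}$ admits the left adjoint $\tcoprod_{\fmon{\varphi}\bprod \varphi}$, provided by that same first proposition. Proposition \ref{CoLimitesGrt} then yields that $\mathbf{Sig}$ is cocomplete.

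There is essentially no obstacle: the proof is a direct transcription of the corresponding argument for $\mathbf{MSet}$, the only substantive input being the observation that $\Sig$ factors through $\mathrm{MSet}$ via the endofunctor $(\cdot)^{\star}\bprod(\cdot)$ of $\mathbf{Set}$, so the adjunction $\tcoprod_{\fmon{\varphi}\bprod \varphi}\ladj \Delta_{\fmon{\varphi}\bprod \varphi}$ supplies simultaneously the continuity needed for Proposition \ref{LimitesGrt} and the local reversibility needed for Proposition \ref{CoLimitesGrt}.
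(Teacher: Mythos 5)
Your proof is correct and coincides with the paper's: the paper simply states that the argument is formally identical to the one given for $\mathbf{MSet}$, which is exactly the transcription you carry out, with the adjunction $\tcoprod_{\fmon{\varphi}\bprod \varphi}\ladj \Delta_{\fmon{\varphi}\bprod \varphi}$ supplying both the continuity needed for completeness and the local reversibility needed for cocompleteness.
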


\begin{proof}
The proof of the bicompleteness of the category $\mathbf{Sig}$ is formally
identical to that of the category $\mathbf{MSet}$.
\end{proof}


After having defined the categories $\mathbf{MSet} =
\int^{\mathbf{Set}}\mathrm{MSet}$ and $\mathbf{Sig} =
\int^{\mathbf{Set}}\Sig$ and examined some of its most useful
properties (from the standpoint of general algebra), we proceed next to
define the category $\mathbf{Alg}$ of many-sorted algebras by applying
the construction of Ehresmann-Grothendieck to a suitable contravariant
functor $\mathrm{Alg}$ defined on $\mathbf{Sig}$ and taking values in
$\mathbf{Cat}$.  Besides, we prove that the category $\mathbf{Alg}$ is
concrete and univocally transportable relative to a \lq\lq
forgetful\rq\rq functor to an adequate category, that this forgetful
functor, in addition, has a left adjoint, and that $\mathbf{Alg}$ is
a bicomplete category.

Before we realize what has been announced we recall that, for a
signature $\mathbf{\Sigma}$ and an $S$-sorted set $A$, the
$\fmon{S}\bprod S$-sorted set of the \emph{finitary operations on}
$A$, $\mathrm{HOp}_{S}(A)$ (thus denoted because, as we will show in
the fourth section, it is an example of a Hall algebra), is
$(\mathrm{Hom}(A_{w},A_{s}))_{(w,s)\in\fmon{S}\bprod S}$, where $A_{w}
= \prod_{i\in \bb{w}}A_{w_{i}}$, with $\bb{w}$ denoting the length of
the word $w$; and that a \emph{structure of}
$\mathbf{\Sigma}$-\emph{algebra} \emph{on} $A$ is a morphism $F =
(F_{w,s})_{(w,s)\in \fmon{S}\times S}$ in $\mathbf{Sig}(S)$ from
$\Sigma$ to $\mathrm{HOp}_{S}(A)$.  For a pair $(w,s)\in
\fmon{S}\times S$ and a formal operation $\sigma\in \Sigma_{w,s}$, in
order to simplify the notation, the operation from $A_{w}$ to $A_{s}$
corresponding to $\sigma$ under $F_{w,s}$ will be written as
$F_{\sigma}$ instead of $F_{w,s}(\sigma)$.  Then the category of
$\mathbf{\Sigma}$-\emph{algebras}, denoted by
$\mathbf{Alg}(\mathbf{\Sigma})$, has as objects the pairs $(A,F)$,
abbreviated to $\mathbf{A}$, where $A$ is an $S$-sorted set and $F$ a
structure of $\mathbf{\Sigma}$-algebra on $A$; and as morphisms from
$\mathbf{A}$ to $\mathbf{B}$, where $\mathbf{B} = (B,G)$, the
$\mathbf{\Sigma}$-\emph{homomorphisms}, i.e., the triples
$(\mathbf{A},f,\mathbf{B})$, abbreviated to $f\colon \mathbf{A}\mor
\mathbf{B}$, where $f$ is an $S$-sorted mapping from $A$ to $B$ such
that, for every $(w,s)\in \fmon{S}\times S$, $\sigma\in \Sigma_{w,s}$,
and $(a_{i})_{i\in \bb{w}}\in A_{w}$ we have that
$$
  f_{s}(F_{\sigma}((a_{i})_{i\in \bb{w}})) =
  G_{\sigma}(f_{w}((a_{i})_{i\in \bb{w}})),
$$
where $f_{w}$ is the mapping $\prod_{i\in \bb{w}}f_{w_{i}}$ from
$A_{w}$ to $B_{w}$ which sends $(a_{i})_{i\in \bb{w}}$ in $A_{w}$ to
$(f_{w_{i}}(a_{i}))_{i\in \bb{w}}$ in $B_{w}$, or, what is equivalent,
such that, for every $(w,s)\in \fmon{S}\times S$ and $\sigma\in
\Sigma_{w,s}$, the following diagram commutes
$$
\xymatrix@C=80pt@R=40pt{
A_{w}
  \ar[r]^{f_{w}}
  \ar[d]_{F_{\sigma}} &
B_{w}
  \ar[d]^{G_{\sigma}}
  \\
A_{s}
  \ar[r]_{f_{s}} &
B_{s}
}
$$

Sometimes, to avoid any confusion, we will denote the structures of
$\mathbf{\Sigma}$-algebra of the $\mathbf{\Sigma}$-algebras
$\mathbf{A}$, $\mathbf{B}$, \ldots, by $F^{\mathbf{A}}$,
$F^{\mathbf{B}}$, \ldots, respectively, and the components of
$F^{\mathbf{A}}$, $F^{\mathbf{B}}$, \ldots, as
$F^{\mathbf{A}}_{\sigma}$, $F^{\mathbf{B}}_{\sigma}$, \ldots,
respectively.

\begin{proposition}
There exists a contravariant functor $\Alg$ from $\mathbf{Sig}$ to
$\mathbf{Cat}$ which sends a signature $\mathbf{\Sigma}$ to
$\Alg(\mathbf{\Sigma}) = \mathbf{Alg}(\mathbf{\Sigma})$, the category
of $\mathbf{\Sigma}$-algebras, and a signature morphism
$\mathbf{d}\colon \mathbf{\Sigma}\mor \mathbf{\Lambda}$ to the functor
$\Alg(\mathbf{d}) = \mathbf{d}^{\ast}\colon
\mathbf{Alg}(\mathbf{\Lambda})\mor \mathbf{Alg}(\mathbf{\Sigma})$
defined as follows
\begin{enumerate}
\item $\mathbf{d}^{\ast}$ assigns to a
      $\mathbf{\Lambda}$-algebra $\mathbf{B} = (B,G)$ the
      $\mathbf{\Sigma}$-algebra $\mathbf{d}^{\ast}(\mathbf{B}) =
      (B_{\varphi},G^{\mathbf{d}})$, where $G^{\mathbf{d}}$ is the
      composition of the $\fmon{S}\times S$-sorted mappings
      $$
      d\colon\Sigma\mor\Lambda_{\fmon{\varphi}\bprod \varphi} \text{\quad
      and \quad}
      G_{\fmon{\varphi}\bprod
      \varphi}\colon\Lambda_{\fmon{\varphi}\bprod
      \varphi}\mor\mathrm{HOp}_{T}(B)_{\fmon{\varphi}\bprod \varphi}.
      $$
      We agree that, for a formal operation $\sigma\in\Sigma_{w,s}$,
      $G_{d(\sigma)}\colon B_{\varphi^{\star}(w)}\mor B_{\varphi(s)}$
      denotes the value of $G^{\mathbf{d}}$ at $\sigma$.

\item $\mathbf{d}^{\ast}$ assigns to a $\mathbf{\Lambda}$-homomorphism $f$ from
      $\mathbf{B}$ to $\mathbf{B}'$ the $\mathbf{\Sigma}$-homo\-mor\-phism
      $\mathbf{d}^{\ast}(f) = f_{\varphi}$ from $\mathbf{d}^{\ast}(\mathbf{B})$
      to $\mathbf{d}^{\ast}(\mathbf{B}')$.
\end{enumerate}
\end{proposition}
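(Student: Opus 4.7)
The plan is to verify four things in order: well-definedness of $\mathbf{d}^{\ast}$ on objects, well-definedness on morphisms, functoriality of $\mathbf{d}^{\ast}$, and contravariant functoriality of $\Alg$ itself. None of these is conceptually difficult; the whole proposition is essentially a bookkeeping check that the relabelling induced by a signature morphism respects the algebraic structure in the expected way. The main care is needed in unwinding the definitions of $G^{\mathbf{d}}$ and of composition in $\mathbf{Sig}$, and in checking that the arities and coarities match up coherently.

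First I would verify that, for $\mathbf{d} = (\varphi, d)\colon \mathbf{\Sigma}\mor \mathbf{\Lambda}$ and a $\mathbf{\Lambda}$-algebra $\mathbf{B} = (B,G)$, the pair $(B_{\varphi}, G^{\mathbf{d}})$ is a $\mathbf{\Sigma}$-algebra. For this I need only observe that for $\sigma\in\Sigma_{w,s}$, the component $d(\sigma)$ lies in $\Lambda_{\varphi^{\star}(w),\varphi(s)}$, so $G_{d(\sigma)}$ is a mapping from $B_{\varphi^{\star}(w)}$ to $B_{\varphi(s)}$; and the canonical equality $B_{\varphi^{\star}(w)} = (B_{\varphi})_{w}$ (which is just a rewriting of $\prod_{i\in\bb{w}}B_{\varphi(w_{i})}$) shows that $G_{d(\sigma)}$ has the correct type as a finitary operation on $B_{\varphi}$ of biarity $(w,s)$. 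Thus $G^{\mathbf{d}} = G_{\fmon{\varphi}\bprod\varphi}\comp d$ is indeed a morphism in $\mathbf{Sig}(S)$ from $\Sigma$ to $\mathrm{HOp}_{S}(B_{\varphi})$.

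Next I would verify that $f_{\varphi}$ is a $\mathbf{\Sigma}$-homomorphism from $\mathbf{d}^{\ast}(\mathbf{B})$ to $\mathbf{d}^{\ast}(\mathbf{B}')$ whenever $f\colon\mathbf{B}\mor\mathbf{B}'$ is a $\mathbf{\Lambda}$-homomorphism. For $\sigma\in\Sigma_{w,s}$ the required square
$$
f_{\varphi(s)}\comp G_{d(\sigma)} = G'_{d(\sigma)}\comp f_{\varphi^{\star}(w)}
$$
is precisely the homomorphism condition for $f$ applied to the formal operation $d(\sigma)\in\Lambda_{\varphi^{\star}(w),\varphi(s)}$, since $(f_{\varphi})_{w} = f_{\varphi^{\star}(w)}$. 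Functoriality of $\mathbf{d}^{\ast}$ on the fiber is then immediate: $(\id_{\mathbf{B}})_{\varphi} = \id_{B_{\varphi}}$ and $(g\comp f)_{\varphi} = g_{\varphi}\comp f_{\varphi}$ coordinatewise.

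Finally I would check contravariant functoriality of $\Alg$. For the identity morphism $(\id_{S},\id_{\Sigma})$ one has $B_{\id_{S}} = B$ and $G^{(\id_{S},\id_{\Sigma})} = G$, so $\Alg(\id_{\mathbf{\Sigma}})$ is the identity functor. For composition, given $\mathbf{d} = (\varphi,d)\colon\mathbf{\Sigma}\mor\mathbf{\Lambda}$, $\mathbf{e} = (\psi,e)\colon\mathbf{\Lambda}\mor\mathbf{\Omega}$, and an $\mathbf{\Omega}$-algebra $\mathbf{C} = (C,H)$, unwinding the definitions yields
$$
(H^{\mathbf{e}})^{\mathbf{d}} = (H_{\fmon{\psi}\bprod\psi}\comp e)_{\fmon{\varphi}\bprod\varphi}\comp d = H_{\fmon{(\psi\comp\varphi)}\bprod(\psi\comp\varphi)}\comp(e_{\fmon{\varphi}\bprod\varphi}\comp d) = H^{\mathbf{e}\comp\mathbf{d}},
$$
using the functoriality of $(\farg)^{\star}\bprod(\farg)$ and the definition of $\mathbf{e}\comp\mathbf{d}$ recalled in the excerpt; combined with $(C_{\psi})_{\varphi} = C_{\psi\comp\varphi}$ this gives $\mathbf{d}^{\ast}\comp\mathbf{e}^{\ast} = (\mathbf{e}\comp\mathbf{d})^{\ast}$ on objects, and an analogous computation $(f_{\psi})_{\varphi} = f_{\psi\comp\varphi}$ handles morphisms. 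The only mildly delicate step is this last composition check, where one must be careful not to confuse the two relabellings; but once the definition of composition in $\mathbf{Sig}$ is spelled out the identity becomes automatic.
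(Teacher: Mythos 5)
Your proposal is correct and follows essentially the same route as the paper: identify $G^{\mathbf{d}} = G_{\fmon{\varphi}\bprod\varphi}\comp d$ as a $\mathbf{\Sigma}$-structure on $B_{\varphi}$ via the identification $\mathrm{HOp}_{T}(B)_{\fmon{\varphi}\bprod\varphi} = \mathrm{HOp}_{S}(B_{\varphi})$, read off the homomorphism square for $f_{\varphi}$ from the $\mathbf{\Lambda}$-homomorphism condition applied to $d(\sigma)$, and note that identities and composites are preserved. You are in fact slightly more complete than the paper, which only verifies that each $\mathbf{d}^{\ast}$ is a functor and leaves the contravariant functoriality of $\Alg$ itself ($\Alg(\mathbf{e}\comp\mathbf{d}) = \mathbf{d}^{\ast}\comp\mathbf{e}^{\ast}$ and the identity case) implicit; your explicit computation of $(H^{\mathbf{e}})^{\mathbf{d}} = H^{\mathbf{e}\comp\mathbf{d}}$ is exactly the missing bookkeeping and is carried out correctly.
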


\begin{proof}
For every $\mathbf{\Lambda}$-algebra $(B,G)$, we have that $G\colon
\Lambda\mor \mathrm{HOp}_{T}(B)$.  Then, by composing
$d\colon\Sigma\mor\Lambda_{\fmon{\varphi}\bprod \varphi}$ and
$G_{\fmon{\varphi}\bprod \varphi}\colon\Lambda_{\fmon{\varphi}\bprod
\varphi}\mor\mathrm{HOp}_{T}(B)_{\fmon{\varphi}\bprod \varphi}$, and
taking into account that $\mathrm{HOp}_{T}(B)_{\fmon{\varphi}\bprod
\varphi}= \mathrm{HOp}_{S}(B_{\varphi})$, we have that $G^{\mathbf{d}} =
G_{\fmon{\varphi}\bprod \varphi}\comp d$ is a structure of
$\mathbf{\Sigma}$-algebra on $B_{\varphi}$.

On the other hand, given $(w,s)\in \fmon{S}\times S$ and
$\sigma\in\Sigma_{w,s}$, $d(\sigma)\in
\Lambda_{\fmon{\varphi}(w),\varphi(s)}$, thus, being $f$ a
$\mathbf{\Lambda}$-homomorphism from $(B,G)$ to $(B',G')$, we have that
$f_{\varphi(s)}\comp G_{d(\sigma)} = {G'_{d(\sigma)}}\comp
f_{\fmon{\varphi}(w)}$, therefore, because $G^{\mathbf{d}}_{\sigma} =
G_{d(\sigma)}$ and ${G'_{\sigma}}^{\!\!\mathbf{d}} = {G'_{d(\sigma)}}$,
$(f_{\varphi})_{s}\comp G^{\mathbf{d}}_{\sigma} =
{G'_{\sigma}}^{\!\!\mathbf{d}}\comp (f_{\varphi})_{w}$, hence
$f_{\varphi}$ is a $\mathbf{\Sigma}$-homo\-morphism from
$(B_{\varphi},G^{\mathbf{d}})$ to $(B'_{\varphi},{G'}^{\mathbf{d}})$.

Since the identities and the composites are, obviously, preserved by
$\mathbf{d}^{\ast}$, it follows that $\mathbf{d}^{\ast}$ is a functor from
$\mathbf{Alg}(\mathbf{\Lambda})$ to $\mathbf{Alg}(\mathbf{\Sigma})$.
\end{proof}

\begin{definition}
The category $\mathbf{Alg}$ of \emph{many-sorted algebras} and
\emph{many-sorted algebra homomorphisms}, obtained by applying the
construction of Ehresmann-Grothen\-dieck to the contravariant functor
$\Alg$ from $\mathbf{Sig}$ to $\mathbf{Cat}$, is $\mathbf{Alg} =
\int^{\mathbf{Sig}}\Alg$.
\end{definition}

Therefore the category $\mathbf{Alg}$ has as objects the pairs
$(\mathbf{\Sigma},\mathbf{A})$, where $\mathbf{\Sigma}$ is a signature
and $\mathbf{A}$ a $\mathbf{\Sigma}$-algebra, and as morphisms from
$(\mathbf{\Sigma},\mathbf{A})$ to $(\mathbf{\Lambda},\mathbf{B})$, the
pairs $(\mathbf{d},f)$, with $\mathbf{d}$ a signature morphism from
$\mathbf{\Sigma}$ to $\mathbf{\Lambda}$ and $f$ a
$\mathbf{\Sigma}$-homomorphism from $\mathbf{A}$ to
$\mathbf{d}^{\ast}(\mathbf{B})$.  Hence, for every $(w,s)\in
\fmon{S}\times S$ and $\sigma\in \Sigma_{w,s}$ the following diagram
commutes
$$
\xymatrix@C=80pt@R=40pt{
A_{w}
  \ar[r]^{f_{w}}
  \ar[d]_{F_{\sigma}} &
B_{\varphi^{\star}(w)}
  \ar[d]^{G_{d(\sigma)}}
  \\
A_{s}
  \ar[r]_{f_{s}} &
B_{\varphi(s)}
}
$$
From now on, to shorten terminology, we will say \emph{algebra} and
\emph{algebra homomorphism}, or, simply, \emph{homomorphism}, instead
of \emph{many-sorted algebra} and \emph{many-sorted algebra
homomorphism}, respectively.  Sometimes, to avoid any confusion, we
denote an algebra $(\mathbf{\Sigma},\mathbf{A})$ and an homomorphism
$(\mathbf{d},f)$ also by $(S,\Sigma,A,F)$ and $(\varphi,d,f)$,
respectively.

Since the category $\mathbf{Alg}$ can be identified to a subcategory
of the category $\mathbf{Alg}_{\mathfrak{pd}}$, defined in the fifth
section, we refer to that section for examples of homomorphisms
between algebras.

\begin{proposition}
The category $\mathbf{Alg}$ is a concrete and univocally transportable
category.
\end{proposition}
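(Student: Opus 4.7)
The plan is to exhibit the forgetful functor $\mathrm{G}$ explicitly and then verify each of the two required properties directly from the Ehresmann-Grothendieck presentation of $\mathbf{Alg}$. First I would define $\mathrm{G}\colon \mathbf{Alg}\mor \mathbf{MSet}\times_{\mathbf{Set}}\mathbf{Sig}$ on objects by $(S,\Sigma,A,F)\longmapsto ((S,A),(S,\Sigma))$ and on morphisms by $(\varphi,d,f)\longmapsto ((\varphi,f),(\varphi,d))$, noting that both images share the same underlying set mapping $\varphi$ and therefore do land in the fibered product. Functoriality is immediate since composition in $\mathbf{Alg}$ is, by construction, componentwise identical to composition in $\mathbf{MSet}$ and $\mathbf{Sig}$.

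Concreteness amounts to showing that $\mathrm{G}$ is faithful, which is essentially tautological: if $(\varphi,d,f)$ and $(\varphi',d',f')$ are two homomorphisms between the same algebras satisfying $\mathrm{G}(\varphi,d,f)=\mathrm{G}(\varphi',d',f')$, then projecting onto each component of the fibered product yields $\varphi=\varphi'$, $f=f'$, and $d=d'$, so the two triples coincide.

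For univocal transportability, suppose $\mathbf{A}=(S,\Sigma,A,F)$ is an algebra and $\alpha = ((\varphi,f),(\varphi,d))$ is an isomorphism in $\mathbf{MSet}\times_{\mathbf{Set}}\mathbf{Sig}$ from $\mathrm{G}(\mathbf{A})$ to some $((T,B),(T,\Lambda))$. Being an isomorphism in the fibered product forces $\varphi\colon S\mor T$ to be a bijection, $f\colon A\mor B_{\varphi}$ to be componentwise bijective, and $d\colon \Sigma\mor \Lambda_{\fmon{\varphi}\bprod \varphi}$ to be componentwise bijective. I must produce a unique structure $G$ of $\mathbf{\Lambda}$-algebra on $B$ making $(\varphi,d,f)\colon \mathbf{A}\mor (T,\Lambda,B,G)$ a homomorphism (in fact an isomorphism) that lifts $\alpha$. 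The homomorphism condition for a formal operation $\sigma\in\Sigma_{w,s}$ reads $f_{s}\comp F_{\sigma} = G_{d(\sigma)}\comp f_{w}$, so bijectivity of $f_{w}$ forces
\[
G_{d(\sigma)} \;=\; f_{s}\comp F_{\sigma}\comp f_{w}^{-1}.
\]
Since $d_{w,s}\colon \Sigma_{w,s}\mor \Lambda_{\fmon{\varphi}(w),\varphi(s)}$ is a bijection for each $(w,s)$, this formula determines $G_{\tau}$ uniquely for every formal operation $\tau$ of $\mathbf{\Lambda}$, and no other choice is compatible with the homomorphism axiom; hence the lifted algebra $(T,\Lambda,B,G)$ and the lifted morphism $(\varphi,d,f)$ are both uniquely determined. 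That the resulting morphism is an isomorphism in $\mathbf{Alg}$ follows by checking that the triple built from $\varphi^{-1}$, $d^{-1}$ (suitably relabelled along $\varphi^{-1}$), and $f^{-1}$ is its two-sided inverse.

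The only non-routine aspect is the bookkeeping surrounding the relabeling functors: one has to verify that the inverse data assemble into a genuine morphism $(T,\Lambda,B,G)\mor (S,\Sigma,A,F)$, which requires identifying $(B_{\varphi})_{\varphi^{-1}}$ with $B$ and similarly for $\Lambda$ and $f$, and then confirming that the two composites reduce to the identities $\id_{\mathbf{A}}$ and $\id_{(T,\Lambda,B,G)}$. These identifications are forced by the canonical natural isomorphisms $\gamma^{\varphi,\varphi^{-1}}$ and $\gamma^{\varphi^{-1},\varphi}$ already supplied by the pseudo-functor $\mathrm{MSet}^{\ttcoprod}$ (and its analogue for $\Sig$), so the verification is mechanical once the notation is in place.
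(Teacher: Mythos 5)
Your proof is correct and uses the same functor $\mathrm{G}$ into $\mathbf{MSet}\times_{\mathbf{Set}}\mathbf{Sig}$ that the paper constructs (there via the universal property of the pullback of $\pi_{\mathrm{MSet}}$ and $\pi_{\mathrm{Sig}}$); you simply carry out the faithfulness and transport verifications that the paper leaves implicit, and the forced formula $G_{d(\sigma)} = f_{s}\comp F_{\sigma}\comp f_{w}^{-1}$ is exactly the right uniqueness argument. One small simplification: since $\mathrm{MSet}$ and $\Sig$ are \emph{strict} contravariant functors, $(B_{\varphi})_{\varphi^{-1}} = B_{\varphi\comp\varphi^{-1}} = B$ on the nose, so the final bookkeeping with the isomorphisms $\gamma$ of the pseudo-functor $\mathrm{MSet}^{\ttcoprod}$ is not needed.
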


\begin{proof}
It is enough to specify  a functor from $\mathbf{Alg}$ to a convenient
category of sorted sets labelled by signatures.

Let $\mathrm{G}_{\mathbf{MSet}}$ be the forgetful functor from
$\mathbf{Alg}$ to $\mathbf{MSet}$ (that is not a fibration),
$\pi_{\mathrm{Alg}}$ the projection functor for $\mathbf{Alg}$, and
$(\mathbf{MSet}\times_{\mathbf{Set}}\mathbf{Sig},(\mathrm{P}_{0},\mathrm{P}_{1}))$
the pullback of the projection functors $\pi_{\mathrm{MSet}}$ and
$\pi_{\mathrm{Sig}}$, for $\mathbf{MSet}$ and $\mathbf{Sig}$,
respectively, where
\begin{enumerate}
\item The category
      $\mathbf{MSet}\times_{\mathbf{Set}}\mathbf{Sig}$ has as objects,
      essentially, triples $(S,\Sigma,A)$, with $(S,\Sigma)$ a
      signature and $A$ an $S$-sorted set, and as morphisms from
      $(S,\Sigma,A)$ to $(T,\Lambda,B)$ triples $(\varphi,d,f)$, such
      that $(\varphi,d)$ is a signature morphism from $(S,\Sigma)$ to
      $(T,\Lambda)$ and $(\varphi,f)$ a mapping from $(S,A)$ to
      $(T,B)$, while

\item $\mathrm{P}_{0}$ is the functor from
      $\mathbf{MSet}\times_{\mathbf{Set}}\mathbf{Sig}$ to
      $\mathbf{MSet}$ which sends a morphism $(\varphi,d,f)$ from
      $(S,\Sigma,A)$ to $(T,\Lambda,B)$ to the $\mathrm{ms}$-mapping
      $(\varphi,f)$ from $(S,A)$ to $(T,B)$, and $\mathrm{P}_{1}$ is the
      functor from $\mathbf{MSet}\times_{\mathbf{Set}}\mathbf{Sig}$ to
      $\mathbf{Sig}$ which sends a morphism $(\varphi,d,f)$ from
      $(S,\Sigma,A)$ to $(T,\Lambda,B)$ to the signature morphism
      $(\varphi,d)$ from $(S,\Sigma)$ to $(T,\Lambda)$.
\end{enumerate}

Then we have that the structural functors $\mathrm{P}_{0}$ and
$\mathrm{P}_{1}$ are fibrations, and that the unique functor
$\mathrm{G}$ from $\mathbf{Alg}$ to
$\mathbf{MSet}\times_{\mathbf{Set}}\mathbf{Sig}$ such that
$\mathrm{P}_{0}\comp \mathrm{G} = \mathrm{G}_{\mathbf{MSet}}$ and
$\mathrm{P}_{1}\comp \mathrm{G} = \pi_{\mathrm{Alg}}$, as in the
following diagram
$$
\xymatrix@C=40pt@R=35pt{
\mathbf{Alg}
  \ar@/^2pc/[rrd]^{\pi_{\mathrm{Alg}}}
  \ar[rd]|(.6)*+{\mathrm{G}}
  \ar@/_2pc/[rdd]_{\mathrm{G}_{\mathbf{MSet}}} \\
&
\mathbf{MSet}\times_{\mathbf{Set}}\mathbf{Sig}
      \ar[r]^-{\mathrm{P}_{1}}
      \ar[d]_-{\mathrm{P}_{0}} &
\mathbf{Sig}
      \ar[d]^{\pi_{\mathrm{Sig}}}  \\
&
\mathbf{MSet}
      \ar[r]_{\pi_{\mathrm{MSet}}} &
\mathbf{Set}
}
$$
makes the category $\mathbf{Alg}$ a concrete and univocally transportable
category on the category $\mathbf{MSet}\times_{\mathbf{Set}}\mathbf{Sig}$.
\end{proof}

The functor $\mathrm{G}$ from $\mathbf{Alg}$ to
$\mathbf{MSet}\times_{\mathbf{Set}}\mathbf{Sig}$ not only has the
above mentioned properties, but it also has a left adjoint, obtained
from the family $(\mathbf{T}_{\mathbf{\Sigma}})_{\mathbf{\Sigma}\in
\mathbf{Sig}}$ where, for a signature $\mathbf{\Sigma}$ in
$\mathbf{Sig}$, the functor $\mathbf{T}_{\mathbf{\Sigma}}$ from
$\mathbf{Set}^{S}$ to $\mathbf{Alg}(\mathbf{\Sigma})$ is the left
adjoint to the forgetful functor $\mathrm{G}_{\mathbf{\Sigma}}$ from
$\mathbf{Alg}(\mathbf{\Sigma})$ to $\mathbf{Set}^{S}$.  And this left
adjoint to $\mathrm{G}$, that, as we will see below, allows us to get
translations between free algebras, will be used, once defined the
many-sorted equations in the third section, to translate, for a
signature morphism, many-sorted equations for the source signature to
many-sorted equations for the target signature.  This translation of
equations, together with the invariant character of the relation of
satisfaction under change of notation, will allow us to define, also
in the third section, the many-sorted equational institution (more
general than that defined by Goguen and Burstall in~\cite{gb86}) that
embodies the essentials of semantical many-sorted equational
deduction.

Before we prove the existence of a left adjoint to $\mathrm{G}$,
we recall that
\begin{enumerate}
\item For a signature $\mathbf{\Sigma}$ and an $S$-sorted set
      of variables $X$, $\mathbf{T}_{\mathbf{\Sigma}}(X)$, the value
      of the functor $\mathbf{T}_{\mathbf{\Sigma}}$ in $X$, is the
      \emph{free} (also called the \emph{term} or \emph{word})
      $\mathbf{\Sigma}$-\emph{algebra} \emph{on} $X$, and $\eta_{X}$
      is the \emph{insertion (of the generators)} $X$ \emph{into}
      $\mathrm{T}_{\mathbf{\Sigma}}(X)$, the underlying $S$-sorted set
      of $\mathbf{T}_{\mathbf{\Sigma}}(X)$;

\item For a $\mathbf{\Sigma}$-algebra $\mathbf{A}$ and a
      \emph{valuation} $f$ \emph{of the $S$-sorted set of variables}
      $X$ \emph{in} $A$, i.e., an $S$-sorted mapping $f$ from $X$ to
      $A$, $f^{\sharp}$ denotes the \emph{canonical extension of} $f$
      \emph{up to} $\mathbf{T}_{\mathbf{\Sigma}}(X)$, i.e., the unique
      $\mathbf{\Sigma}$-homomorphism from
      $\mathbf{T}_{\mathbf{\Sigma}}(X)$ to $\mathbf{A}$ such that
      $f^{\sharp}\comp\eta_{X} = f$; and

\item For an $S$-sorted mapping $f$ from $X$ to $Y$, $f^{@}$
      denotes the unique $\mathbf{\Sigma}$-homo\-mor\-phism from
      $\mathbf{T}_{\mathbf{\Sigma}}(X)$ to
      $\mathbf{T}_{\mathbf{\Sigma}}(Y)$ such that $f^{@}\comp\eta_{X}
      = \eta_{Y}\comp f$, i.e., the value of the functor
      $\mathbf{T}_{\mathbf{\Sigma}}$ in $f$. Therefore $f^{@}$ is also
      $(\eta_{Y}\comp f)^{\sharp}$.
\end{enumerate}
Moreover, transposing to the many-sorted case the terminology coined
for the single-sorted case, we call, for $s\in S$, the elements of
$\mathrm{T}_{\mathbf{\Sigma}}(X)_{s}$, \emph{many-sorted terms for}
$\mathbf{\Sigma}$ \emph{of type} $(X,s)$, from now on abbreviated to
\emph{terms} \emph{for} $\mathbf{\Sigma}$ \emph{of type} $(X,s)$, or,
simply, to \emph{terms} \emph{of type} $(X,s)$.  We point out that
what we have called terms for $\mathbf{\Sigma}$ of type $(X,s)$ are
also known, for those following the terminology in
Gr\"{a}tzer~\cite{gG79}, p.  39, as \emph{polynomial symbols}
(\emph{for} $\mathbf{\Sigma}$) \emph{of type} $(X,s)$.

\begin{proposition}
There exists a functor $\mathbf{T}\colon
\mathbf{MSet}\times_{\mathbf{Set}}\mathbf{Sig}\mor\mathbf{Alg}$ left
adjoint to the functor $\mathrm{G}\colon \mathbf{Alg}\mor
\mathbf{MSet}\times_{\mathbf{Set}}\mathbf{Sig}$.
\end{proposition}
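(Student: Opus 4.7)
The plan is to assign to each object $(\mathbf{\Sigma}, X) = (S, \Sigma, X)$ of $\mathbf{MSet}\times_{\mathbf{Set}}\mathbf{Sig}$ the pair $\mathbf{T}(\mathbf{\Sigma}, X) = (\mathbf{\Sigma}, \mathbf{T}_{\mathbf{\Sigma}}(X))$, where $\mathbf{T}_{\mathbf{\Sigma}}(X)$ is the free $\mathbf{\Sigma}$-algebra on the $S$-sorted set $X$, and to exhibit, for each such object, the morphism $\eta_{(\mathbf{\Sigma}, X)} = (\id_{S}, \id_{\Sigma}, \eta_{X})$ from $(\mathbf{\Sigma}, X)$ to $\mathrm{G}(\mathbf{T}(\mathbf{\Sigma}, X)) = (\mathbf{\Sigma}, \mathrm{T}_{\mathbf{\Sigma}}(X))$ as a universal arrow from $(\mathbf{\Sigma}, X)$ to $\mathrm{G}$; here $\eta_{X}$ is the insertion of generators. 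Once this is done, the existence of $\mathbf{T}$ as left adjoint to $\mathrm{G}$, the definition of $\mathbf{T}$ on morphisms, its functoriality, and the naturality of $\eta$ all follow from standard general-adjoint-functor considerations.

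To check the universal property, take an object $(\mathbf{\Lambda}, \mathbf{B})$ of $\mathbf{Alg}$, with $\mathbf{\Lambda} = (T, \Lambda)$ and $\mathbf{B} = (B, G)$, and an arbitrary morphism $(\varphi, d, f)\colon (\mathbf{\Sigma}, X) \to \mathrm{G}(\mathbf{\Lambda}, \mathbf{B}) = (\mathbf{\Lambda}, B)$ in $\mathbf{MSet}\times_{\mathbf{Set}}\mathbf{Sig}$, so that $\mathbf{d} = (\varphi, d)$ is a signature morphism from $\mathbf{\Sigma}$ to $\mathbf{\Lambda}$ and $f\colon X \to B_{\varphi}$ is an $S$-sorted mapping. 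A factorization of $(\varphi, d, f)$ through $\eta_{(\mathbf{\Sigma}, X)}$ amounts precisely to a morphism of $\mathbf{Alg}$ of the form $(\mathbf{d}, g)\colon (\mathbf{\Sigma}, \mathbf{T}_{\mathbf{\Sigma}}(X)) \to (\mathbf{\Lambda}, \mathbf{B})$, i.e., a $\mathbf{\Sigma}$-homomorphism $g\colon \mathbf{T}_{\mathbf{\Sigma}}(X) \to \mathbf{d}^{\ast}(\mathbf{B})$, subject to the constraint $g \comp \eta_{X} = f$. Since the underlying $S$-sorted set of the $\mathbf{\Sigma}$-algebra $\mathbf{d}^{\ast}(\mathbf{B})$ is precisely $B_{\varphi}$, the mapping $f$ is automatically a valuation of $X$ in $\mathbf{d}^{\ast}(\mathbf{B})$, and by the universal property of the free algebra $\mathbf{T}_{\mathbf{\Sigma}}(X)$ there is one and only one such $g$, namely $g = f^{\sharp}$.

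The induced definition of $\mathbf{T}$ on morphisms is then: to a morphism $(\varphi, d, f)\colon (\mathbf{\Sigma}, X) \to (\mathbf{\Lambda}, Y)$ associate $\mathbf{T}(\varphi, d, f) = (\mathbf{d}, h^{\sharp})$, where $h\colon X \to \mathrm{T}_{\mathbf{\Lambda}}(Y)_{\varphi}$ is the composite of $f\colon X \to Y_{\varphi}$ with the relabeling $(\eta_{Y})_{\varphi}\colon Y_{\varphi} \to \mathrm{T}_{\mathbf{\Lambda}}(Y)_{\varphi}$ of the insertion of generators, and $h^{\sharp}$ is the canonical extension of $h$ viewed as a valuation in the $\mathbf{\Sigma}$-algebra $\mathbf{d}^{\ast}(\mathbf{T}_{\mathbf{\Lambda}}(Y))$. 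Functoriality of $\mathbf{T}$ on identities and composites, as well as naturality of $\eta$, then follow immediately from the uniqueness clause of the universal property.

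The main obstacle, essentially notational rather than conceptual, is in verifying that $f\colon X \to B_{\varphi}$, reinterpreted as a valuation in the $\mathbf{\Sigma}$-algebra $\mathbf{d}^{\ast}(\mathbf{B})$, behaves consistently with the structural operations $G_{d(\sigma)}$ by which $\mathbf{d}^{\ast}(\mathbf{B})$ was defined; but this is an immediate consequence of the very definition of $\Alg(\mathbf{d}) = \mathbf{d}^{\ast}$ recorded in the preceding proposition. Everything else reduces to iterated applications of the universal property of the family of free algebras $(\mathbf{T}_{\mathbf{\Sigma}}(X))_{X \in \mathbf{Set}^{S}}$, so that the desired adjunction $\mathbf{T} \dashv \mathrm{G}$ may be viewed as the $\mathbf{Sig}$-parameterized assembly of the already-known fiberwise adjunctions $\mathbf{T}_{\mathbf{\Sigma}} \dashv \mathrm{G}_{\mathbf{\Sigma}}$ for varying $\mathbf{\Sigma}$.
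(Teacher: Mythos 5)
Your proposal is correct and follows essentially the same route as the paper: the object assignment $(S,\Sigma,X)\mapsto(\mathbf{\Sigma},\mathbf{T}_{\mathbf{\Sigma}}(X))$ and the morphism assignment $(\varphi,d,f)\mapsto(\mathbf{d},((\eta_{Y})_{\varphi}\comp f)^{\sharp})$ coincide exactly with the paper's definitions. The only difference is that you make explicit the universal-arrow verification (that a factorization through $(\id_{S},\id_{\Sigma},\eta_{X})$ forces the signature component to be $\mathbf{d}$ and the algebra component to be $f^{\sharp}$), which the paper leaves implicit.
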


\begin{proof}
The functor $\mathbf{T}$ from
$\mathbf{MSet}\times_{\mathbf{Set}}\mathbf{Sig}$ to $\mathbf{Alg}$
given on objects $(S,\Sigma,X)$ by $\mathbf{T}(S,\Sigma,X) =
(\mathbf{\Sigma},\mathbf{T}_{\mathbf{\Sigma}}(X))$ and on arrows
$(\varphi,d,f)\colon (S,\Sigma,X)\mor (T,\Lambda,Y)$ as
$$
(\mathbf{d},f^{\mathbf{d}})\colon
(\mathbf{\Sigma},\mathbf{T}_{\mathbf{\Sigma}}(X))\mor
(\mathbf{\Lambda},\mathbf{T}_{\mathbf{\Lambda}}(Y)),
$$
where $f^{\mathbf{d}} = ((\eta_{Y})_{\varphi}\comp f)^{\sharp}$ is the
canonical extension of the $S$-sorted mapping
$(\eta_{Y})_{\varphi}\comp f$ from $X$ to
$\mathrm{T}_{\mathbf{\Lambda}}(Y)_{\varphi}$ up to the free
$\mathbf{\Sigma}$-algebra on $X$, is left adjoint to the functor
$\mathrm{G}$.
\end{proof}

For a morphism $(\varphi,d,f)\colon (S,\Sigma,X)\mor(T,\Lambda,Y)$ in
$\mathbf{MSet}\times_{\mathbf{Set}}\mathbf{Sig}$, the functor
$\mathbf{T}\colon\mathbf{MSet}\times_{\mathbf{Set}}\mathbf{Sig}\mor\mathbf{Alg}$
acting on $(\varphi,d,f)$ allows us to get the
$\mathbf{\Sigma}$-homo\-mor\-phism $f^{\mathbf{d}}$ from
$\mathbf{T}_{\mathbf{\Sigma}}(X)$ to
$\mathbf{T}_{\mathbf{\Lambda}}(Y)_{\varphi}$, hence, for $s\in S$, it
translates terms for $\mathbf{\Sigma}$ of type $(X,s)$, i.e., elements
$P$ of $\mathrm{T}_{\mathbf{\Sigma}}(X)_{s}$, into terms for
$\mathbf{\Lambda}$ of type $(Y,\varphi(s))$, i.e., elements
$f^{\mathbf{d}}_{s}(P)$ of
$\mathrm{T}_{\mathbf{\Lambda}}(Y)_{\varphi(s)}$.

In particular, the unit $\eta^{\varphi}$ of the adjunction
$\coprod_{\varphi}\ladj\Delta_{\varphi}$ provides, for every
$S$-sorted set $X$, the $S$-sorted mapping $\eta^{\varphi}_{X}\colon
X\mor(\coprod_{\varphi}X)_{\varphi}$ and if $\mathbf{d}\colon
\mathbf{\Sigma}\mor \mathbf{\Lambda}$ is a morphism of signatures,
then $(\varphi,d,\eta^{\varphi}_{X})\colon(S,\Sigma,X)\mor
(T,\Lambda,\textstyle{\coprod}_{\varphi}X)$ is a morphism in
$\mathbf{MSet}\times_{\mathbf{Set}}\mathbf{Sig}$, hence the functor
$\mathbf{T}$ acting on $(\varphi,d,\eta^{\varphi}_{X})$ gives rise to
the morphism
$$
(\mathbf{d},\eta^{\mathbf{d}}_{X})\colon
(\mathbf{\Sigma},\mathbf{T}_{\mathbf{\Sigma}}(X))\mor
(\mathbf{\Lambda},\mathbf{T}_{\mathbf{\Lambda}}(\textstyle\coprod_{\varphi}X)),
$$
where $\eta^{\mathbf{d}}_{X} =
((\eta_{\coprod_{\varphi}X})_{\varphi}\comp
\eta^{\varphi}_{X})^{\sharp}$ is the $\mathbf{\Sigma}$-homomorphism
from $\mathbf{T}_{\mathbf{\Sigma}}(X)$ to
$\mathbf{T}_{\mathbf{\Lambda}}(\coprod_{\varphi}X)_{\varphi}$ that
extends the $S$-sorted mapping
$(\eta_{\coprod_{\varphi}X})_{\varphi}\comp \eta^{\varphi}_{X}$ from
$X$ to $\mathrm{T}_{\mathbf{\Lambda}}(\coprod_{\varphi}X)_{\varphi}$.
Therefore, for $s\in S$, $\eta^{\mathbf{d}}_{X,s}$, the $s$-th
component of $\eta^{\mathbf{d}}_{X}$, translates terms for
$\mathbf{\Sigma}$ of type $(X,s)$ into terms for $\mathbf{\Lambda}$ of
type $(\coprod_{\varphi}X,\varphi(s))$.  The
$\mathbf{\Sigma}$-homomorphisms $\eta^{\mathbf{d}}_{X}$, as stated in
the following proposition, are in fact the components of a natural
transformation, and this contributes to explain their relevance as
translators.

\begin{proposition}\label{fundPdextranaturalbis}
Let $\mathbf{d}$ be a morphism of signatures from $\mathbf{\Sigma}$ to
$\mathbf{\Lambda}$.  Then the family $\eta^{\mathbf{d}} =
(\eta^{\mathbf{d}}_{X})_{X\in \boldsymbol{\mathcal{U}}}$, which to an
$S$-sorted set $X$ assigns the $\mathbf{\Sigma}$-homomorphism
$\eta^{\mathbf{d}}_{X}$ from $\mathbf{T}_{\mathbf{\Sigma}}(X)$ to
$\mathbf{T}_{\mathbf{\Lambda}}(\coprod_{\varphi}X)_{\varphi}$, is a
natural transformation from $\mathbf{T}_{\mathbf{\Sigma}}$ to
$\mathbf{d}^{\ast}\comp\mathbf{T}_{\mathbf{\Lambda}}\comp
\coprod_{\varphi}$, and so, for the forgetful functor
$\mathrm{G}_{\mathbf{\Sigma}}$ from $\mathbf{Alg}(\mathbf{\Sigma})$ to
$\mathbf{Set}^{S}$, the family $\mathrm{G}_{\mathbf{\Sigma}}\ast
\eta^{\mathbf{d}}$, i.e., the horizontal composition of the natural
transformation $\eta^{\mathbf{d}}$ and $\mathrm{G}_{\mathbf{\Sigma}}$, also
denoted by $\eta^{\mathbf{d}}$, is a natural transformation from
$\mathrm{T}_{\mathbf{\Sigma}} = \mathrm{G}_{\mathbf{\Sigma}}\comp
\mathbf{T}_{\mathbf{\Sigma}}$ to $\Delta_{\varphi}\comp
\mathrm{T}_{\mathbf{\Lambda}}\comp \coprod_{\varphi}$, since
$\mathrm{G}_{\mathbf{\Sigma}}\comp \mathbf{d}^{\ast} = \Delta_{\varphi}\comp
\mathrm{G}_{\mathbf{\Lambda}}$ and $\mathrm{T}_{\mathbf{\Lambda}} =
\mathrm{G}_{\mathbf{\Lambda}}\comp \mathbf{T}_{\mathbf{\Lambda}}$. 
\end{proposition}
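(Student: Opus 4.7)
The plan is to establish naturality by checking the relevant square commutes for an arbitrary $S$-sorted mapping $f\colon X\mor Y$, and then read off the second claim for free. Concretely, I would draw the square
$$
\xymatrix@C=40pt{
\mathbf{T}_{\mathbf{\Sigma}}(X)
  \ar[r]^-{\eta^{\mathbf{d}}_{X}}
  \ar[d]_{f^{@}} &
\mathbf{d}^{\ast}(\mathbf{T}_{\mathbf{\Lambda}}(\tcoprod_{\varphi}X))
  \ar[d]^{\mathbf{d}^{\ast}((\tcoprod_{\varphi}f)^{@})} \\
\mathbf{T}_{\mathbf{\Sigma}}(Y)
  \ar[r]_-{\eta^{\mathbf{d}}_{Y}} &
\mathbf{d}^{\ast}(\mathbf{T}_{\mathbf{\Lambda}}(\tcoprod_{\varphi}Y))
}
$$
and argue its commutativity. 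Both composites are $\mathbf{\Sigma}$-homomorphisms with common domain the free $\mathbf{\Sigma}$-algebra $\mathbf{T}_{\mathbf{\Sigma}}(X)$, so by the universal property it is enough to verify they agree after pre-composition with the insertion $\eta_{X}\colon X\mor \mathrm{T}_{\mathbf{\Sigma}}(X)$.

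For the top-right route, unfolding the definition $\eta^{\mathbf{d}}_{X} = ((\eta_{\coprod_{\varphi}X})_{\varphi}\comp \eta^{\varphi}_{X})^{\sharp}$, we have $\eta^{\mathbf{d}}_{X}\comp \eta_{X} = (\eta_{\coprod_{\varphi}X})_{\varphi}\comp \eta^{\varphi}_{X}$; then applying $\mathbf{d}^{\ast}((\coprod_{\varphi}f)^{@})$, whose underlying $S$-sorted mapping is $((\coprod_{\varphi}f)^{@})_{\varphi}$, and using that $(\coprod_{\varphi}f)^{@}\comp \eta_{\coprod_{\varphi}X} = \eta_{\coprod_{\varphi}Y}\comp \coprod_{\varphi}f$ (naturality of the insertion $\eta$ for $\mathbf{T}_{\mathbf{\Lambda}}$, i.e., the unit of $\mathbf{T}_{\mathbf{\Lambda}}\ladj \mathrm{G}_{\mathbf{\Lambda}}$), we obtain $(\eta_{\coprod_{\varphi}Y})_{\varphi}\comp (\coprod_{\varphi}f)_{\varphi}\comp \eta^{\varphi}_{X}$. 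For the bottom-left route, $\eta^{\mathbf{d}}_{Y}\comp f^{@}\comp \eta_{X} = \eta^{\mathbf{d}}_{Y}\comp \eta_{Y}\comp f = (\eta_{\coprod_{\varphi}Y})_{\varphi}\comp \eta^{\varphi}_{Y}\comp f$. Equality of the two expressions therefore reduces to the identity $(\coprod_{\varphi}f)_{\varphi}\comp \eta^{\varphi}_{X} = \eta^{\varphi}_{Y}\comp f$, which is precisely the naturality of the unit $\eta^{\varphi}$ of the adjunction $\coprod_{\varphi}\ladj \Delta_{\varphi}$ established in the opening proposition of the section.

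For the second assertion, I would simply invoke the fact that horizontally composing a natural transformation with a functor yields a natural transformation: $\mathrm{G}_{\mathbf{\Sigma}}\ast \eta^{\mathbf{d}}$ is automatically natural once $\eta^{\mathbf{d}}$ is, and the identifications $\mathrm{G}_{\mathbf{\Sigma}}\comp \mathbf{d}^{\ast} = \Delta_{\varphi}\comp \mathrm{G}_{\mathbf{\Lambda}}$ (evident from the definition of $\mathbf{d}^{\ast}$, whose underlying $S$-sorted action is relabelling along $\varphi$) and $\mathrm{T}_{\mathbf{\Lambda}} = \mathrm{G}_{\mathbf{\Lambda}}\comp \mathbf{T}_{\mathbf{\Lambda}}$ give the stated source and target.

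The only mildly delicate step is the bookkeeping with the subscript $\varphi$ on $S$-sorted mappings (i.e., being careful that $\mathbf{d}^{\ast}(g)$ really is $g_{\varphi}$ on underlying sets and that the component indices line up between $s\in S$ and $\varphi(s)\in T$); once this is tracked, the verification is a two-step diagram chase combining the naturality of $\eta$ for free algebras with the naturality of $\eta^{\varphi}$ for the substitution--coproduct adjunction, so no genuine obstacle arises.
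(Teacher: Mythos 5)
Your proof is correct and follows essentially the same route as the paper's: the paper's argument is exactly the commutative cube whose front face is the naturality square of $\eta^{\varphi}$ and whose back face is the desired square, with commutativity of the latter deduced by restricting to the generators via the insertions $\eta_{X}$, $\eta_{Y}$ and invoking the universal property of $\mathbf{T}_{\mathbf{\Sigma}}(X)$. Your explicit two-step chase (naturality of the unit of $\mathbf{T}_{\mathbf{\Lambda}}\ladj\mathrm{G}_{\mathbf{\Lambda}}$ combined with naturality of $\eta^{\varphi}$) is precisely what the paper's diagram encodes.
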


\begin{proof}
It follows after the commutativity of the following diagram
$$
\xy
0;<1ex,0ex>:<0ex,1ex>::
\POS 0+(0,0)
\xymatrix"a"@C=35ex@R=15ex@!0{
X
  \ar[r]|*+{f}
  \ar[d]|*+{\eta^{\varphi}_{X}}
  &
Y
  \ar[d]|*+{\eta^{\varphi}_{Y}}
   \\
(\coprod_{\varphi}X)_{\varphi}
  \ar[r]|*+{(\coprod_{\varphi}f)_{\varphi}}
  &
(\coprod_{\varphi}Y)_{\varphi}
\save "a1,2"."a2,1"!C*+<5ex>\frm{}="a"\restore 
}
\POS 0+(9,11)
\xymatrix"b"@C=35ex@R=15ex@!0{
\mathrm{T}_{\mathbf{\Sigma}}(X)
  \ar[r]|*+{f^{@}}
  \ar[d]|*+{\eta^{\mathbf{d}}_{X}}&
\mathrm{T}_{\mathbf{\Sigma}}(Y)
  \ar[d]|*+{\eta^{\mathbf{d}}_{Y}}  \\
\mathrm{T}_{\mathbf{\Lambda}}(\coprod_{\varphi}X)_{\varphi}
  \ar[r]|*+{(\coprod_{\varphi}f)^{@}_{\varphi}} &
\mathrm{T}_{\mathbf{\Lambda}}(\coprod_{\varphi}Y)_{\varphi}
\save"b1,2"."b2,1"!C*+<5ex>\frm{}="b"\restore 
}
\ar "a1,1";"b1,1"|*+{\eta_{X}}
\ar "a1,2";"b1,2"|*+{\eta_{Y}}
\ar
"a2,1";"b2,1"|(.45)*-<1ex>
{\,\,\,\,\,\,\,\,\,\,(\eta_{\coprod_{\varphi}X})_{\varphi}}
\ar "a2,2";"b2,2"|(.50)*-<1ex>
{\,\,\,\,\,\,\,\,\,(\eta_{\coprod_{\varphi}Y})_{\varphi}}
\endxy
$$
\end{proof}

\begin{remark}
The natural transformation $\eta^{\mathbf{d}} =
\mathrm{G}_{\mathbf{\Sigma}}\ast \eta^{\mathbf{d}}$ from the functor
$\mathrm{T}_{\mathbf{\Sigma}}$ to the functor $\Delta_{\varphi}\comp
\mathrm{T}_{\mathbf{\Lambda}}\comp \coprod_{\varphi}$, will be used
later on, in this same section, to provide, for a signature
morphism $\mathbf{d}\colon \mathbf{\Sigma}\mor \mathbf{\Lambda}$, an
alternative, but equivalent, definition of (the morphism mapping of) a
translation functor $\mathbf{d}_{\diamond}$ from
$\mathbf{Ter}({\mathbf{\Sigma}})$ to
$\mathbf{Ter}({\mathbf{\Lambda}})$, where, for a signature
$\mathbf{\Sigma}$, we anticipate, $\mathbf{Ter}({\mathbf{\Sigma}})$ is
the category with objects the $S$-sorted sets and morphisms from $X$
to $Y$ the $S$-sorted mappings from $Y$ to
$\mathrm{T}_{\mathbf{\Sigma}}(X)$, that we will call terms for
$\mathbf{\Sigma}$ of type $(X,Y)$.
\end{remark}

Before proceeding to prove the bicompleteness of the category
$\mathbf{Alg}$, we make next a brief excursus about the utility of
some big subcategories of $\mathbf{Alg}$.  As we know, from
$\mathbf{Alg}$ to $\mathbf{Sig}$ we have the fibration
$\pi_{\mathrm{Alg}}$ and from $\mathbf{Sig}$ to $\mathbf{Set}$ the
fibration $\pi_{\mathrm{Sig}}$, hence from $\mathbf{Alg}$ to
$\mathbf{Set}$, by composing both fibrations, we get the fibration
$\pi_{\mathrm{Sig,Alg}}$.  And this composed fibration allows us to
get, for every set of sorts $S$, the corresponding fiber
$\mathbf{Alg}(S)$, that we call the category of $S$-\emph{algebras}.
We recall that $\mathbf{Alg}(S)$ has, essentially, as objects the
pairs $(\Sigma,\mathbf{A})$, where $\Sigma$ is an $S$-sorted signature
and $\mathbf{A}$ a $\Sigma$-algebra, and as morphisms from
$(\Sigma,\mathbf{A})$ to $(\Lambda,\mathbf{B})$, the pairs $(d,f)$,
with $d$ an $S$-sorted signature morphism from $\Sigma$ to $\Lambda$
and $f$ a $\Sigma$-homomorphism from $\mathbf{A}$ to
$d^{\ast}(\mathbf{B})$.

At first glance, the categories of the type $\mathbf{Alg}(S)$ can
appear to be unnecessarily or excessively general, hence almost
useless.  However, they show themselves to be useful, for example, to
get a full understanding, i.e., to complete category-theoretically
the explanation, of some classical constructions in universal algebra
as, e.g., those due to Birkhoff-Frink in~\cite{bf48}, where they
state, among others interesting results, the following representation
theorems:
\begin{enumerate}
\item Let $(A,J)$ be an algebraic closure space. Then there exists
      a single-sorted signature $\Sigma^{(A,J)}$ and a structure
      of $\Sigma^{(A,J)}$-algebra $F^{(A,J)}$ on $A$ such that
      $$
      (A,\mathrm{Sg}_{(A,F^{(A,J)})}) = (A,J),
      $$
      where $\mathrm{Sg}_{(A,F^{(A,J)})}$ is the generated subalgebra
      operator on $A$ induced by the $\Sigma^{(A,J)}$-algebra
      $(A,F^{(A,J)})$ (see~\cite{bf48}, p. 300).

\item Let $\mathbf{L}$ be a lattice.  Then $\mathbf{L}$ is an
      algebraic lattice, i.e., it is, besides, complete and compactly
      generated (see for the meaning of these terms, e.g., \cite{bs81},
      p.  17), iff there exists a single-sorted signature $\Sigma$ and
      a $\Sigma$-algebra $\mathbf{A}$ such that $\mathbf{L}$ is
      isomorphic to the algebraic lattice determined by the fixed
      points of the operator $\mathrm{Sg}_{\mathbf{A}}$
      (see~\cite{bf48}, p.  302).
\end{enumerate}

To show the role that the fibers of the functor
$\pi_{\mathrm{Sig,Alg}}$ play in the categorization of the first
theorem of Birkhoff-Frink, and because it is stated in terms of
ordinary sets, we should consider the category $\mathbf{Alg}(1)$,
i.e., the fiber of $\pi_{\mathrm{Sig,Alg}}$ in $1$, the standard final
set.  We recall that $\mathbf{Alg}(1)$ has as objects, essentially,
the pairs $(\Sigma,\mathbf{A})$, where $\Sigma = (\Sigma_{n})_{n\in
\mathbb{N}}$ is a single-sorted signature, i.e., an object of
$\mathbf{Set}^{\mathbb{N}}$, and $\mathbf{A} = (A,F)$ a
$\Sigma$-algebra, i.e., an ordinary set $A$ together with an
$\mathbb{N}$-sorted mapping $F$ from $\Sigma$ to
$(\mathrm{Hom}(A^{n},A))_{n\in \mathbb{N}}$, and as morphisms from
$(\Sigma,\mathbf{A})$ to $(\Lambda,\mathbf{B})$, where $\mathbf{B} =
(B,G)$, the pairs $(d,f)$ with $d = (d_{n})_{n\in \mathbb{N}}$ an
$\mathbb{N}$-sorted mapping from $\Sigma$ to $\Lambda$ in
$\mathbf{Set}^{\mathbb{N}}$, and $f$ a $\Sigma$-homomorphism from
$\mathbf{A} = (A,F)$ to $\mathbf{B}^{d} = (B,G\comp d)$.

To attain the aim just stated, we should also consider the category
$\mathbf{AClSp}$ with objects the algebraic closure spaces, i.e., the
pairs $(A,J)$, where $A$ is an ordinary set and $J$ an algebraic
closure operator on $A$, and morphisms from $(A,J)$ to $(B,K)$ the
ordered triples $((A,J),f,(B,K))$, abbreviated to $f\colon (A,J)\mor
(B,K)$, where $f$ is a mapping from $A$ to $B$ such that, for every
$X\subseteq A$, $f[J(X)]\subseteq K(f[X])$.

Next, let $\mathrm{Sg}$ be the functor from $\mathbf{Alg}(1)$ to
$\mathbf{AClSp}$ which sends an algebra $(\Sigma,\mathbf{A})$ to the
algebraic closure space $(A,\mathrm{Sg}_{\mathbf{A}})$ where
$\mathrm{Sg}_{\mathbf{A}}$ is the generated subalgebra operator on $A$
induced by $\mathbf{A}$; and a morphism $(d,f)$ from
$(\Sigma,\mathbf{A})$ to $(\Lambda,\mathbf{B})$ to the morphism $f$
from $(A,\mathrm{Sg}_{\mathbf{A}})$ to $(B,\mathrm{Sg}_{\mathbf{B}})$.

The action of $\mathrm{Sg}$ on the morphisms is well defined because,
for every $X\subseteq A$, $f[\mathrm{Sg}_{\mathbf{A}}(X)] \subseteq
\mathrm{Sg}_{\mathbf{B}}(f[X])$, taking into account that $f$ is a
$\Sigma$-homomorphism from $\mathbf{A} = (A,F)$ to $\mathbf{B}^{d} =
(B,G\comp d)$, and recalling that, for an arbitrary $\Sigma$-algebra
$\mathbf{C}$ and subset $Z$ of $C$, $\mathrm{Sg}_{\mathbf{C}}(Z) =
\bigcup_{n\in \mathbb{N}}\E_{\mathbf{C}}^{n}(Z)$, where
$(\E^{n}_{\mathbf{C}}(Z))_{n\in \mathbb{N}}$ is the family in
$\Sub(C)$ defined recursively as: $\E_{\mathbf{C}}^{0}(Z) = Z$, and,
for $n\geq 0$, $\E_{\mathbf{C}}^{n+1}(Z) =
\E_{\mathbf{C}}(\E_{\mathbf{C}}^{n}(Z))$, where $\E_{\mathbf{C}}$ is
the operator on $\Sub(C)$ which sends a subset $Z$ of $C$ to
$$
\textstyle Z\bunion\left(\;\union_{n\in
\mathbb{N}}\union_{\sigma\in\Sigma_{n}} F_{\sigma}[Z^{n}]\right).
$$

Then the functor $\mathrm{Sg}$ from $\mathbf{Alg}(1)$ to
$\mathbf{AClSp}$ is surjective on the objects (and this is the first
theorem of Birkhoff-Frink) and on the morphisms.  In fact, given an
algebraic closure space $(A,J)$, for the ordinary signature
$\Sigma^{(A,J)} = (\Sigma_{n}^{(A,J)})_{n\in \mathbb{N}}$ defined, for
$n\in \mathbb{N}$, as follows
$$
\Sigma_{n}^{(A,J)} = \textstyle\bigcup_{x\in A^{n}}(\{x\}\times
                     J(\mathrm{Im}(x))),
$$
where $\mathrm{Im}(x)$ is the image of the mapping $x\colon n\mor A$,
and the $\Sigma$-algebra $(A,F^{(A,J)})$, where $F^{(A,J)}$, the
structure of $\Sigma^{(A,J)}$-algebra on $A$, is defined, for $n\in
\mathbb{N}$, $x\in A^{n}$, and $a\in
J(\mathrm{Im}(x))$, as follows
$$
  F_{x,a}^{(A,J)}\nfunction
  {A^{n}}{A}{(y_{0},\ldots,y_{n-1})}
  {
  \begin{cases}
    a, &\text{if $(y_{0},\ldots,y_{n-1}) = x$;} \\
    y_{0}, &\text{if $(y_{0},\ldots,y_{n-1}) \neq x$,}
  \end{cases}
  }
$$
we have that $J = \Sg_{(A,F^{(A,J)})}$, i.e., the object mapping of
the functor $\mathrm{Sg}$ is surjective.

On the other hand, given a morphism $f$ from an algebraic closure
space $(A,J)$ to another $(B,K)$, we have that the pair $(d_{f},f)$,
where $d_{f}$ is the morphism from $\Sigma^{(A,J)}$ to
$\Sigma^{(B,K)}$ which to a pair $(x,a)$, with $x\in A^{n}$, for some
$n\in \mathbb{N}$, and $a\in J(\mathrm{Im}(x))$, assigns the pair
$(f^{n}(x),f(a))$, is a homomorphism from the algebra
$(\Sigma^{(A,J)},(A,F^{(A,J)}))$ to the algebra
$(\Sigma^{(B,K)},(B,F^{(B,K)}))$, and, obviously, it is send to the
morphism $f$ by the functor $\mathrm{Sg}$. Therefore the morphism
mapping of $\mathrm{Sg}$ is surjective.

To succeed in the categorization of the second theorem of
Birkhoff-Frink, we should take into consideration the category
$\mathbf{ALat}_{\bigwedge}$ with objects the algebraic lattices, and
morphisms the lattice morphisms which preserve arbitrary infima.

Next, let $\mathrm{Fix}$ be the contravariant functor from
$\mathbf{AClSp}$ to $\mathbf{ALat}_{\bigwedge}$ which assigns to an
algebraic closure space $(A,J)$ the algebraic lattice
$\mathbf{Fix}(J)$ of the fixed points of $J$, and to a morphism $f$
from $(A,J)$ to $(B,K)$ associates the morphism $f^{-1}[\cdot]$ (which
sends a fixed point $Y = K(Y)$ of $K$ to $f^{-1}[Y]$, its inverse
image under $f$) from $\mathbf{Fix}(K)$ to $\mathbf{Fix}(J)$.

Then we have that $\mathrm{Fix}$ is essentially
surjective.  Therefore the contravariant functor
$\mathrm{Fix}\comp \mathrm{Sg}$ from $\mathbf{Alg}(1)$ to
$\mathbf{ALat}_{\bigwedge}$ is essentially surjective.

\begin{remark}
Results similar to the just stated about the theorems
of Birkhoff-Frink, are also valid for a set of sorts $S$ with two or
more sorts, but replacing the category $\mathbf{AClSp}$ by the
category $\mathbf{UAClSp}(S)$, of uniform algebraic $S$-closure spaces,
as in~\cite{cs04}.
\end{remark}

Additional examples of the utility of the categories of the type
$\mathbf{Alg}(S)$ are the category $\mathbf{Mod}_{\mathrm{r}}$ of all (right)
modules over all rings ($\mathbf{Mod}$ is fibered over $\mathbf{Rng}$, the
fiber over each ring $\mathbf{R}$ being the category
$\mathbf{Mod}_{\mathbf{R}}$ of right $\mathbf{R}$-modules), and, generally
speaking, all those categories that can be obtained, essentially, in
the same way as was obtained $\mathbf{Mod}_{\mathrm{r}}$, i.e., starting
from some action of a mathematical construct on another one.



The category $\mathbf{Alg}$ of algebras, as was the case for the
categories $\mathbf{MSet}$ and $\mathbf{Sig}$, is also complete and
cocomplete.  These results are already known, although we are not
aware of a suitably explicit and direct proof, as that provided by us
below, of the cocompleteness of $\mathbf{Alg}$, in what has to do,
particularly, with the existence of a certain left adjoint.

\begin{proposition}\label{Algcomplete}
The category $\mathbf{Alg}$ is complete.
\end{proposition}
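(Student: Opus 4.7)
The plan is to invoke Proposition~\ref{LimitesGrt} applied to the contravariant functor $\Alg\colon \mathbf{Sig}^{\opp}\mor \mathbf{Cat}$, so I must verify its three hypotheses: that $\mathbf{Sig}$ is complete, that each fiber $\mathbf{Alg}(\mathbf{\Sigma})$ is complete, and that each reduct functor $\mathbf{d}^{\ast}\colon \mathbf{Alg}(\mathbf{\Lambda})\mor \mathbf{Alg}(\mathbf{\Sigma})$ is continuous. The first condition is the preceding proposition of this section.

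For the second condition, fix a signature $\mathbf{\Sigma} = (S,\Sigma)$ and consider the forgetful functor $\mathrm{G}_{\mathbf{\Sigma}}\colon \mathbf{Alg}(\mathbf{\Sigma})\mor \mathbf{Set}^{S}$. Given a small diagram $D$ in $\mathbf{Alg}(\mathbf{\Sigma})$, I would construct its limit by taking the sortwise limit $L = (L_{s})_{s\in S}$ of $\mathrm{G}_{\mathbf{\Sigma}}\comp D$ in $\mathbf{Set}^{S}$ (which exists because $\mathbf{Set}^{S}$, being a functor category into a complete category, is complete) and endowing $L$ with the unique structure of $\mathbf{\Sigma}$-algebra such that the limit projections become $\mathbf{\Sigma}$\nobreakdash-homomorphisms; for $\sigma\in \Sigma_{w,s}$ one defines $F^{\mathbf{L}}_{\sigma}\colon L_{w}\mor L_{s}$ via the universal property of the limit applied to the cone $(F^{D(i)}_{\sigma}\comp (\pr^{w}_{i}))_{i}$. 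This is the standard verification that $\mathrm{G}_{\mathbf{\Sigma}}$ creates (hence reflects and preserves) limits.

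The crucial step is the third condition, and this is where the potential obstacle lies, although it dissolves upon noting the compatibility $\mathrm{G}_{\mathbf{\Sigma}}\comp \mathbf{d}^{\ast} = \Delta_{\varphi}\comp \mathrm{G}_{\mathbf{\Lambda}}$, which follows immediately from the definition of $\mathbf{d}^{\ast}$ since the reduct of $(B,G)$ has underlying $S$-sorted set $B_{\varphi} = \Delta_{\varphi}(B)$. Given a diagram $D$ in $\mathbf{Alg}(\mathbf{\Lambda})$ with limit $\mathbf{M}$, I would apply $\mathrm{G}_{\mathbf{\Sigma}}$ to $\mathbf{d}^{\ast}(\mathbf{M})$, obtaining $\Delta_{\varphi}(\mathrm{G}_{\mathbf{\Lambda}}(\mathbf{M}))$; since $\mathrm{G}_{\mathbf{\Lambda}}$ creates limits by step two, and $\Delta_{\varphi}$ preserves them (being a right adjoint to $\textstyle\coprod_{\varphi}$ by the first proposition of this section), this is the limit of $\mathrm{G}_{\mathbf{\Sigma}}\comp \mathbf{d}^{\ast}\comp D$. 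Because $\mathrm{G}_{\mathbf{\Sigma}}$ also reflects limits, it follows that $\mathbf{d}^{\ast}(\mathbf{M})$ is a limit of $\mathbf{d}^{\ast}\comp D$, so $\mathbf{d}^{\ast}$ is continuous.

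With all three hypotheses verified, Proposition~\ref{LimitesGrt} yields that $\mathbf{Alg} = \int^{\mathbf{Sig}}\Alg$ is complete. A concrete description of the limit of a diagram $E\colon \mathbf{I}\mor \mathbf{Alg}$ is thereby available: first form the limit $\mathbf{\Sigma}^{\ast}$ of $\pi_{\mathrm{Alg}}\comp E$ in $\mathbf{Sig}$, with limit cone $(\mathbf{d}_{i})_{i\in \mathbf{I}}$, then form in $\mathbf{Alg}(\mathbf{\Sigma}^{\ast})$ the limit of the lifted diagram $i\mapsto \mathbf{d}_{i}^{\ast}(E(i))$; the pair of these data constitutes the sought limit. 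Since the argument is a straightforward combination of standard creation-of-limits arguments with the adjunction $\textstyle\coprod_{\varphi}\ladj \Delta_{\varphi}$, no additional technical machinery is required.
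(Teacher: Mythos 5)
Your proposal is correct and follows essentially the same route as the paper: both verify the hypotheses of Proposition~\ref{LimitesGrt} by citing the completeness of $\mathbf{Sig}$ and of each fiber $\mathbf{Alg}(\mathbf{\Sigma})$, and both deduce the continuity of $\mathbf{d}^{\ast}$ from the commuting square $\mathrm{G}_{\mathbf{\Sigma}}\comp \mathbf{d}^{\ast} = \Delta_{\varphi}\comp \mathrm{G}_{\mathbf{\Lambda}}$ together with the fact that the forgetful functors create limits and $\Delta_{\varphi}$ preserves them as a right adjoint. You merely spell out in more detail the creation-of-limits and limit-reflection steps that the paper leaves implicit.
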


\begin{proof}
Let $\mathbf{d}\colon\mathbf{\Sigma}\mor \mathbf{\Lambda}$ be a signature
morphism.  Since the forgetful functors $\mathrm{G}_{\mathbf{\Sigma}}$ and
$\mathrm{G}_{\mathbf{\Lambda}}$ create projective limits
and the following diagram commutes%
$$
\xymatrix@C=70pt{
\mathbf{Alg}(\mathbf{\Sigma})
  \ar[r]^{\mathrm{G}_{\mathbf{\Sigma}}} &
\mathbf{Set}^{S} \\
\mathbf{Alg}(\mathbf{\Lambda})
  \ar[u]^{\mathbf{d}^{\ast}}
  \ar[r]_{\mathrm{G}_{\mathbf{\Lambda}}} &
\mathbf{Set}^{T}
  \ar[u]_{\Delta_{\varphi}}
}
$$
the functor $\mathbf{d}^{\ast}$ preserves projective limits, i.e., is
continuous.  But the category $\mathbf{Sig}$ is complete, and, for
every signature $\mathbf{\Sigma}$, $\mathbf{Alg}(\mathbf{\Sigma})$ is
complete.  Therefore, by Proposition~\ref{LimitesGrt}, the category
$\mathbf{Alg}$ is complete.
\end{proof}

To prove that the category $\mathbf{Alg}$ is cocomplete we begin by proving
that, for every signature morphism $\mathbf{d}\colon\mathbf{\Sigma}\mor
\mathbf{\Lambda}$, the functor $\mathbf{d}^{\ast}$ has a left adjoint
$\mathbf{d}_{\ast}$.

\begin{proposition}\label{leftadjsigmorph}
Let $\mathbf{d}\colon \mathbf{\Sigma}\mor \mathbf{\Lambda}$ be a
signature morphism.  Then there exists a functor
$\mathbf{d}_{\ast}\colon \mathbf{Alg}(\mathbf{\Sigma})\mor
\mathbf{Alg}(\mathbf{\Lambda})$ that is left adjoint to the functor
$\mathbf{d}^{\ast}$.
\end{proposition}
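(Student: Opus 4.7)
The plan is to construct $\mathbf{d}_{\ast}$ explicitly as a quotient of the free $\mathbf{\Lambda}$-algebra on a base-changed underlying $S$-sorted set, in the classical \lq\lq generators and relations\rq\rq style. For a $\mathbf{\Sigma}$-algebra $\mathbf{A}=(A,F)$, I will form the free $\mathbf{\Lambda}$-algebra $\mathbf{T}_{\mathbf{\Lambda}}(\coprod_{\varphi}A)$ on the $T$-sorted set $\coprod_{\varphi}A$.  Composing the unit $\eta^{\varphi}_{A}$ of the adjunction $\coprod_{\varphi}\ladj\Delta_{\varphi}$ with the $\varphi$-relabeling of the insertion of generators of $\mathbf{T}_{\mathbf{\Lambda}}(\coprod_{\varphi}A)$ will yield an $S$-sorted mapping $\iota_{\mathbf{A}}\colon A\mor\Delta_{\varphi}\mathrm{T}_{\mathbf{\Lambda}}(\coprod_{\varphi}A)$ that coincides with the component at $A$ of the natural transformation $\eta^{\mathbf{d}}$ produced in Proposition~\ref{fundPdextranaturalbis}.

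Next, I will take $\mathbf{d}_{\ast}(\mathbf{A})$ to be the quotient of $\mathbf{T}_{\mathbf{\Lambda}}(\coprod_{\varphi}A)$ by the least $\mathbf{\Lambda}$-congruence $\Theta_{\mathbf{A}}$ containing, for every $(w,s)\in\fmon{S}\bprod S$, every $\sigma\in\Sigma_{w,s}$ and every $a=(a_{i})_{i\in\bb{w}}\in A_{w}$, the pair formed by $\iota_{\mathbf{A},s}(F_{\sigma}(a))$ and the result of applying the $\mathbf{\Lambda}$-operation of $\mathbf{T}_{\mathbf{\Lambda}}(\coprod_{\varphi}A)$ named by $d(\sigma)$ to the tuple $(\iota_{\mathbf{A},w_{i}}(a_{i}))_{i\in\bb{w}}$.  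Composition of $\iota_{\mathbf{A}}$ with the canonical projection $\mathbf{T}_{\mathbf{\Lambda}}(\coprod_{\varphi}A)\mor\mathbf{d}_{\ast}(\mathbf{A})$ will then furnish an $S$-sorted mapping $\eta^{\mathbf{A}}\colon A\mor\mathbf{d}^{\ast}(\mathbf{d}_{\ast}(\mathbf{A}))$ which, by the very choice of $\Theta_{\mathbf{A}}$, is a $\mathbf{\Sigma}$-homomorphism; this will serve as the unit of the adjunction.

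For the universal property, given a $\mathbf{\Lambda}$-algebra $\mathbf{B}=(B,G)$ and a $\mathbf{\Sigma}$-homomorphism $f\colon\mathbf{A}\mor\mathbf{d}^{\ast}(\mathbf{B})$, I will first transpose $f$ along $\coprod_{\varphi}\ladj\Delta_{\varphi}$ to obtain a $T$-sorted map $f^{\S}\colon\coprod_{\varphi}A\mor B$, and then use the freeness of $\mathbf{T}_{\mathbf{\Lambda}}(\coprod_{\varphi}A)$ to extend it uniquely to a $\mathbf{\Lambda}$-homomorphism $\widetilde{f}\colon\mathbf{T}_{\mathbf{\Lambda}}(\coprod_{\varphi}A)\mor\mathbf{B}$.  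The critical verification is that $\widetilde{f}$ identifies every generating pair of $\Theta_{\mathbf{A}}$: written out, this reduces to the equation $f_{s}(F_{\sigma}(a))=G_{d(\sigma)}(f_{w}(a))$, which is precisely the assertion that $f$ is a $\mathbf{\Sigma}$-homomorphism from $\mathbf{A}$ to $\mathbf{d}^{\ast}(\mathbf{B})=(B_{\varphi},G^{\mathbf{d}})$, using $G^{\mathbf{d}}_{\sigma}=G_{d(\sigma)}$.  Hence $\widetilde{f}$ descends uniquely to $\bar{f}\colon\mathbf{d}_{\ast}(\mathbf{A})\mor\mathbf{B}$ satisfying $\mathbf{d}^{\ast}(\bar{f})\comp\eta^{\mathbf{A}}=f$.

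Functoriality of $\mathbf{d}_{\ast}$ on morphisms will then follow automatically from the universal property, and the resulting bijection $\mathrm{Hom}_{\mathbf{Alg}(\mathbf{\Lambda})}(\mathbf{d}_{\ast}(\mathbf{A}),\mathbf{B})\iso\mathrm{Hom}_{\mathbf{Alg}(\mathbf{\Sigma})}(\mathbf{A},\mathbf{d}^{\ast}(\mathbf{B}))$ will be natural in both arguments by standard diagrammatic checks.  The main obstacle is the bookkeeping of sorts across the change of notation: the generating pairs of $\Theta_{\mathbf{A}}$ live in sorts of $\mathbf{T}_{\mathbf{\Lambda}}(\coprod_{\varphi}A)$ obtained from $S$-sorts by applying $\varphi$, so care is needed when writing out the compatibility of $\iota_{\mathbf{A}}$ and $\widetilde{f}$ with the $\mathbf{\Lambda}$-structure.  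An alternative, more abstract route would invoke the Adjoint Lifting Theorem, exploiting the monadicity of both forgetful functors $\mathrm{G}_{\mathbf{\Sigma}}$ and $\mathrm{G}_{\mathbf{\Lambda}}$, the existence of the left adjoint $\coprod_{\varphi}$ at the base level, and the availability of reflexive coequalizers in $\mathbf{Alg}(\mathbf{\Lambda})$, to lift the left adjoint across the commutative square appearing in the proof of Proposition~\ref{Algcomplete}.
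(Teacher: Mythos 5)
Your construction is correct and coincides in essentially every detail with the paper's own proof: the paper likewise defines $\mathbf{d}_{\ast}(\mathbf{A})$ as $\mathbf{T}_{\mathbf{\Lambda}}(\coprod_{\varphi}A)/\cl{R}{}^{\mathbf{A}}$ for the congruence generated by exactly your relating pairs, takes the same unit $\eta^{\mathbf{A}}$, and verifies the universal property by showing the generating relation lies in the kernel of the transposed-and-extended homomorphism via the identity $f_{s}(F_{\sigma}(a))=G_{d(\sigma)}(f_{w}(a))$. The only cosmetic difference is that the paper writes out the morphism mapping of $\mathbf{d}_{\ast}$ explicitly rather than deducing it from the universal property.
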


\begin{proof}
We begin by defining the action $\mathbf{d}_{\ast}$ on the objects.  Let
$\mathbf{A}$ be a $\mathbf{\Sigma}$-algebra.  Then
$\mathbf{d}_{\ast}(\mathbf{A})$ is the $\mathbf{\Lambda}$-algebra defined as %
$ \mathbf{T}_{\mathbf{\Lambda}}(\tcoprod_{\varphi}A)/\cl{R}{}^{\mathbf{A}} $, where
$\cl{R}{}^{\mathbf{A}}$ is the congruence on
$\mathbf{T}_{\mathbf{\Lambda}}(\coprod_{\varphi}A)$ generated by the $T$-sorted
relation $R^{\mathbf{A}}$, defined, for every $t\in T$, as
$$
R^{\mathbf{A}}_{t} =
\biggl\{
  \left( (F_{\sigma}^{\mathbf{A}}(a_{i}\mid i\in\bb{w}),s) ,
    d(\sigma)((a_{i},w_{i})\mid i\in\bb{w})
  \right)
  \biggm|
  \begin{gathered}
    s\in \varphi^{-1}[t],\, w\in\fmon{S},\,  \\
    \sigma\in\Sigma_{w,s},\, a\in A_{w}
  \end{gathered}
\biggr\}.
$$

Following this we define the action of $\mathbf{d}_{\ast}$ on the
morphisms.  Let $f$ be a $\mathbf{\Sigma}$-homomorphism from
$\mathbf{A}$ to $\mathbf{A'}$.  Then
$R^{\mathbf{A}}\incl\Ker(\pr_{\cl{R}{}^{\mathbf{A'}}}\comp
(\coprod_{\varphi}f)^{@})$ because, for $t\in T$ and $\left(
(F_{\sigma}(a_{i}\mid i\in\bb{w}),s) , d(\sigma)((a_{i},w_{i})\mid
i\in\bb{w}) \right)\in R^{\mathbf{A}}_{t} $, we have that
\begin{align*}
    [(\tcoprod_{\varphi}f)^{@}
     (F_{\sigma}^{\mathbf{A}}(a_{i}\mid i\in\bb{w}),s)]
    &=
    [(f_{s}(F_{\sigma}^{\mathbf{A}}(a_{i}\mid i\in\bb{w})),s)] \\
    &=
    [d(\sigma)(f_{w_{i}}(a_{i},w_{i})\mid i\in\bb{w})] \\
    &=
    [(\tcoprod_{\varphi}f)^{@}
      ( d(\sigma)((a_{i},w_{i})\mid i\in\bb{w}))].
\end{align*}
From this it follows that there exists a unique $\mathbf{\Lambda}$-homomorphism
$\mathbf{d}_{\ast}(f)$ from
$\mathbf{d}_{\ast}(\mathbf{A})$ to $\mathbf{d}_{\ast}(\mathbf{A'})$ such that
$\mathbf{d}_{\ast}(f)\comp\pr_{\cl{R}{}^{\mathbf{A}}} =
\pr_{\cl{R}{}^{\mathbf{A'}}}\comp (\coprod_{\varphi}f)^{@}$.

After this we prove that the functor $\mathbf{d}_{\ast}$, which sends a
$\mathbf{\Sigma}$-algebra $\mathbf{A}$ to the
$\mathbf{\Lambda}$\nobreakdash-algebra $\mathbf{d}_{\ast}(\mathbf{A}) =
\mathbf{T}_{\mathbf{\Lambda}}(\coprod_{\varphi}A)/\cl{R}{}^{\mathbf{A}}$
and a $\mathbf{\Sigma}$-homo\-morphism $f$ from $\mathbf{A}$ to
$\mathbf{A'}$ to the $\mathbf{\Lambda}$-homo\-morphism
$\mathbf{d}_{\ast}(f)$ from $\mathbf{d}_{\ast}(\mathbf{A})$ to
$\mathbf{d}_{\ast}(\mathbf{A'})$, is left adjoint to $\mathbf{d}^{\ast}$.

Let $\mathbf{A}$ be a $\mathbf{\Sigma}$-algebra.  Then we denote by
$\eta^{\mathbf{A}}$ the $S$-sorted mapping from $A$ to the underlying
$S$-sorted set of the $\mathbf{\Sigma}$-algebra
$\mathbf{d}^{\ast}(\mathbf{d}_{\ast}(\mathbf{A}))$, obtained as the inverse
image under $\theta^{\varphi}$ (the natural isomorphism of the
adjunction $\coprod_{\varphi}\ladj \Delta_{\varphi}$) of the
$T$\nobreakdash-sorted mapping
$$
\xymatrix{
\coprod_{\varphi}A
  \ar[r]^{\eta_{\coprod_{\varphi}A}} &
\mathrm{T}_{\mathbf{\Lambda}}(\coprod_{\varphi}A)
  \ar[r]^{\pr_{\cl{R}{}^{\mathbf{A}}}} &
\mathrm{T}_{\mathbf{\Lambda}}(\coprod_{\varphi}A)/\cl{R}{}^{\mathbf{A}}.
}
$$ %

The $S$-sorted mapping $\eta^{\mathbf{A}}$ is a
$\mathbf{\Sigma}$-homomorphism from $\mathbf{A}$ to
$\mathbf{d}^{\ast}(\mathbf{d}_{\ast}(\mathbf{A}))$.  Let $\sigma\colon w\mor
s$ be a formal operation and $a\in A_{w}$, then we have that
\begin{align*}
    \eta^{\mathbf{A}}_{s}(F^{\mathbf{A}}_{\sigma}(a_{i}\mid i\in\bb{w})) &=
    [(F^{\mathbf{A}}_{\sigma}(a_{i}\mid i\in\bb{w}),s)], \\
\intertext{ and, on the other hand, we also have that }
    F^{\mathbf{d}^{\ast}(\mathbf{d}_{\ast}(\mathbf{A}))}_{\sigma}
      (\eta^{\mathbf{A}}_{w_{i}}(a_{i})\mid i\in\bb{w})
    &=
    F^{\mathbf{d}_{\ast}(\mathbf{A})}_{d(\sigma)}([(a_{i},w_{i})]\mid i\in\bb{w}) \\
    &=
    [F^{\mathbf{T}_{\mathbf{\Lambda}}(\coprod_{\varphi}A)}_{d(\sigma)}
      ((a_{i},w_{i})\mid i\in\bb{w})] \\
    &=
    [d(\sigma)((a_{i},w_{i})\mid i\in\bb{w})],
\end{align*}
but, by the definition of $R^{\mathbf{A}}_{\varphi(s)}$,
$[(F^{\mathbf{A}}_{\sigma}(a_{i}\mid i\in\bb{w}),s)]=
[d(\sigma)((a_{i},w_{i})\mid i\in\bb{w})]$. Therefore we can assert
that
$$
\eta^{\mathbf{A}}_{s}(F^{\mathbf{A}}_{\sigma}(a_{i}\mid i\in\bb{w})) =
F^{\mathbf{d}^{\ast}(\mathbf{d}_{\ast}(\mathbf{A}))}_{\sigma}
      (\eta^{\mathbf{A}}_{w_{i}}(a_{i})\mid i\in\bb{w}).
$$

Finally, we prove the universal property.  For this, let $\mathbf{B}$
be a $\mathbf{\Lambda}$-algebra, $f$ a $\mathbf{\Sigma}$-homomorphism
from $\mathbf{A}$ to $\mathbf{d}^{\ast}(\mathbf{B})$, and $\mawh{f}$
the $T$-sorted mapping associated to the $S$-sorted mapping $f\colon
A\mor B_{\varphi}$.  Then there exists a unique
$\mathbf{\Lambda}$-homomorphism $f^{\natural}$ such
that the right triangle in the following diagram commutes %
$$
\xymatrix{
\coprod_{\varphi}A
  \ar[r]^{\eta_{\coprod_{\varphi}A}}
  \ar[rd]_{\mawh{f}} &
\mathrm{T}_{\mathbf{\Lambda}}(\coprod_{\varphi}A)
  \ar[r]^{\pr_{\cl{R}{}^{\mathbf{A}}}}
  \ar[d]^{\ext{\mawh{f}}} &
\mathrm{T}_{\mathbf{\Lambda}}(\coprod_{\varphi}A)/\cl{R}{}^{\mathbf{A}}
  \ar[ld]^{f^{\natural}} \\
 &
B
}
$$
since, for every $t\in T$, from $\bigl(
(F^{\mathbf{A}}_{\sigma}(a_{i}\mid i\in\bb{w}),s) ,
d(\sigma)((a_{i},w_{i})\mid i\in\bb{w}) \bigl)\in R^{\mathbf{A}}_{t}$
it follows that
\begin{align*}
    \ext{\mawh{f}}_{t}(F^{\mathbf{A}}_{\sigma}(a_{i}\mid i\in\bb{w}),s) &=
    f_{s}(F^{\mathbf{A}}_{\sigma}(a_{i}\mid i\in\bb{w})) \\
    &=
    F^{\mathbf{B}}_{d(\sigma)}
      (f_{w_{i}}(a_{i})\mid i\in\bb{w}),s) \\
    &=
    F^{\mathbf{B}}_{d(\sigma)}
      (\ext{\mawh{f}}_{\varphi(w_{i})}(a_{i},w_{i})\mid i\in\bb{w}),s) \\
    &=
    \ext{\mawh{f}}_{t}(d(\sigma)((a_{i},w_{i})\mid i\in\bb{w})),
\end{align*}
therefore, $R^{\mathbf{A}}\incl\Ker(\ext{\mawh{f}})$.

Since $\mathbf{d}^{\ast}$ is a functor,
$\mathbf{d}^{\ast}(f^{\natural})$ is a $\mathbf{\Sigma}$-homomorphism.
Besides, for every $s\in S$ and
$a\in A_{s}$, we have that  %
\begin{align*}
    \mathbf{d}^{\ast}(f^{\natural})_{s}(\eta^{\mathbf{A}}_{s}(a)) &=
    f^{\natural}_{\varphi(s)}([(a,s)])\\
    &=
    \ext{\mawh{f}}_{\varphi(s)}(a,s) \\
    &=
    \ext{\mawh{f}}_{\varphi(s)}(\eta_{\coprod_{\varphi}A,\varphi(s)}(a)) \\
    &=
    \mawh{f}_{\varphi(s)}(a)\\
    &=
    f_{s}(a),
\end{align*}
hence $f = \mathbf{d}^{\ast}(f^{\natural})\comp \eta^{\mathbf{A}}$.

It is obvious that $f^{\natural}$ is the unique
$\mathbf{\Lambda}$-homomorphism from $\mathbf{d}_{\ast}(\mathbf{A})$
into $\mathbf{B}$ such that the above diagram commutes, hence
$\mathbf{d}_{\ast}\ladj\mathbf{d}^{\ast}$.
\end{proof}

\begin{proposition}\label{Algcocomplete}
The category $\mathbf{Alg}$ is cocomplete.
\end{proposition}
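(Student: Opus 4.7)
The plan is to invoke Proposition~\ref{CoLimitesGrt} with $\mathbf{C} = \mathbf{Sig}$ and $F = \mathrm{Alg}$, since $\mathbf{Alg} = \int^{\mathbf{Sig}}\mathrm{Alg}$ by definition. To apply that proposition, three hypotheses need to be verified: that $\mathbf{Sig}$ is cocomplete, that each fiber $\mathbf{Alg}(\mathbf{\Sigma})$ is cocomplete, and that the contravariant functor $\mathrm{Alg}$ is locally reversible in the sense that every $\mathbf{d}^{\ast}$ admits a left adjoint.

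First, I would recall that $\mathbf{Sig}$ has already been shown to be bicomplete in this section, so in particular cocomplete. Second, for a fixed signature $\mathbf{\Sigma}$, the category $\mathbf{Alg}(\mathbf{\Sigma})$ is cocomplete; this is a standard fact about varieties of many-sorted algebras, and it can be argued exactly as in the single-sorted case, noting that the forgetful functor $\mathrm{G}_{\mathbf{\Sigma}}\colon \mathbf{Alg}(\mathbf{\Sigma})\mor \mathbf{Set}^{S}$ is monadic (it has the left adjoint $\mathbf{T}_{\mathbf{\Sigma}}$ constructed just above and creates coequalizers of $\mathrm{G}_{\mathbf{\Sigma}}$-split pairs), so $\mathbf{Alg}(\mathbf{\Sigma})$ inherits all small colimits from the cocomplete category $\mathbf{Set}^{S}$.

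Third, and this is the step whose work has already been done, for every signature morphism $\mathbf{d}\colon \mathbf{\Sigma}\mor \mathbf{\Lambda}$ the functor $\mathrm{Alg}(\mathbf{d}) = \mathbf{d}^{\ast}$ has the left adjoint $\mathbf{d}_{\ast}$ constructed in Proposition~\ref{leftadjsigmorph}. This is precisely the condition that $\mathrm{Alg}$ be locally reversible.

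With the three hypotheses in hand, Proposition~\ref{CoLimitesGrt} applies directly and yields the cocompleteness of $\mathbf{Alg} = \int^{\mathbf{Sig}}\mathrm{Alg}$. The only step that required real work is the third one, and that obstacle has been cleared by the explicit quotient construction $\mathbf{T}_{\mathbf{\Lambda}}(\tcoprod_{\varphi}A)/\cl{R}{}^{\mathbf{A}}$ of the preceding proposition; everything else is an application of previously established results.
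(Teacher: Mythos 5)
Your proof is correct and follows exactly the route the paper takes: verify the three hypotheses of Proposition~\ref{CoLimitesGrt} (cocompleteness of $\mathbf{Sig}$, cocompleteness of each fiber $\mathbf{Alg}(\mathbf{\Sigma})$, and local reversibility of $\mathrm{Alg}$ via the left adjoint $\mathbf{d}_{\ast}$ of Proposition~\ref{leftadjsigmorph}) and conclude. The paper's own proof is just a terser statement of the same three facts, so your added remarks on monadicity of $\mathrm{G}_{\mathbf{\Sigma}}$ merely fill in justification the paper leaves implicit.
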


\begin{proof}
The category $\mathbf{Sig}$ is cocomplete. For every signature
$\mathbf{\Sigma}$, the category $\mathbf{Alg}(\mathbf{\Sigma})$ is
cocomplete. The functor $\Alg$ is locally reversible.  Therefore,
by Proposition~\ref{CoLimitesGrt}, the category $\mathbf{Alg}$ is
cocomplete.
\end{proof}

From Propositions~\ref{Algcomplete} $\!\!\And\!\!$ \ref{Algcocomplete}
we obtain immediately the following

\begin{corollary}
The category $\mathbf{Alg}$ is bicomplete.
\end{corollary}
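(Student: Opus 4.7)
The statement to prove is that $\mathbf{Alg}$ is bicomplete, and this is presented explicitly as an immediate corollary of the two propositions that precede it in the text. So my plan is essentially to invoke both results in one line, with no substantive work remaining.

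First, I would recall that a category is called bicomplete precisely when it is both complete and cocomplete. Then I would invoke Proposition~\ref{Algcomplete}, which establishes completeness of $\mathbf{Alg}$ by verifying the hypotheses of Proposition~\ref{LimitesGrt}: the base $\mathbf{Sig}$ is complete, each fiber $\mathbf{Alg}(\mathbf{\Sigma})$ is complete (because the forgetful functors $\mathrm{G}_{\mathbf{\Sigma}}$ create projective limits), and for each signature morphism $\mathbf{d}$, the functor $\mathbf{d}^{\ast}$ is continuous, in view of the commutative square relating $\mathrm{G}_{\mathbf{\Sigma}}$, $\mathrm{G}_{\mathbf{\Lambda}}$, $\mathbf{d}^{\ast}$, and $\Delta_{\varphi}$.

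Next I would invoke Proposition~\ref{Algcocomplete}, which yields cocompleteness of $\mathbf{Alg}$ by checking the hypotheses of Proposition~\ref{CoLimitesGrt}: $\mathbf{Sig}$ is cocomplete, each $\mathbf{Alg}(\mathbf{\Sigma})$ is cocomplete, and the pseudo-functor $\Alg$ is locally reversible, the last point being precisely the content of Proposition~\ref{leftadjsigmorph}, which constructs for every signature morphism $\mathbf{d}$ a left adjoint $\mathbf{d}_{\ast}$ to $\mathbf{d}^{\ast}$ via a coequalizer of terms modulo the congruence generated by $R^{\mathbf{A}}$.

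There is no main obstacle here, because the hard work has already been done in the two preceding propositions (in particular, the nontrivial construction of the left adjoint $\mathbf{d}_{\ast}$ in Proposition~\ref{leftadjsigmorph}). The proof of the corollary is therefore a one-line appeal: bicompleteness is the conjunction of completeness and cocompleteness, so combining Propositions~\ref{Algcomplete} and~\ref{Algcocomplete} yields the conclusion.
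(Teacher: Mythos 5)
Your proposal is correct and follows exactly the paper's route: the corollary is stated there as an immediate consequence of Propositions~\ref{Algcomplete} and~\ref{Algcocomplete}, with bicompleteness meaning complete plus cocomplete. Your additional recap of how those two propositions are themselves proved (via Propositions~\ref{LimitesGrt}, \ref{CoLimitesGrt}, and \ref{leftadjsigmorph}) is accurate but not needed for the corollary itself.
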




The contravariant functor $\Alg$ from $\mathbf{Sig}$ to $\mathbf{Cat}$
is not only useful to construct the category $\mathbf{Alg}$, actually,
as we prove in what follows, it, together with a pseudo-functor
$\mathrm{Ter}$ from $\mathbf{Sig}$ to $\mathbf{Cat}$, and a
pseudo-extranatural transformation $(\mathrm{Tr},\theta)$ (from a
pseudo-functor on $\mathbf{Sig}^{\mathrm{op}}\times \mathbf{Sig}$ to
$\mathbf{Cat}$, induced by $\Alg$ and $\mathrm{Ter}$, to the functor,
between the same categories, constantly $\mathbf{Set}$), enables us to
construct a new institution on $\mathbf{Set}$, the so-called
many-sorted term institution, denoted by $\mathfrak{Tm} =
(\mathbf{Sig},\mathrm{Alg},\mathrm{Ter},(\mathrm{Tr},\theta))$, but,
we point out, for a concept of institution that is \emph{strictly}
more general than that of generalized $\mathbf{V}$-institution
in~\cite{gb86}.  The institution $\mathfrak{Tm}$ can be qualified of
basic, or fundamental, among others, by the following reasons:
\begin{enumerate}
\item It embodies, in a coherent way, algebras, terms, and the natural
      process of realization of terms as term operations in algebras,
      and

\item The many-sorted equational institution and the many-sorted
      specification institution (both to be defined in
      the third section), i.e., the core of universal algebra, are
      built on it.
\end{enumerate}

It happens that, in the institution $\mathfrak{Tm}$ on $\mathbf{Set}$,
the existence of the pseudo-functor $\mathrm{Ter}$ follows from the
fact that, for any signature $\mathbf{\Sigma}$, the terms for
$\mathbf{\Sigma}$, understood in a generalized sense to be explained
below, have a category-theoretical interpretation as the morphisms of
a category $\mathbf{Ter}(\mathbf{\Sigma})$.  Furthermore, the
component $\mathrm{Tr}$ of the pseudo-extranatural transformation
$(\mathrm{Tr},\theta)$, in $\mathfrak{Tm}$, depends for its existence
on the fact that the generalized terms have associated generalized
term operations on the algebras.  And, finally, as it could not be
otherwise, it is the case that the properties of the generalized
terms, resp., generalized term operations on the algebras, are
function of those of the ordinary terms, resp., term operations on the
algebras.

Therefore, to proceed properly, we should begin by defining, for a
$\mathbf{\Sigma}$-algebra $\mathbf{A}$ and an $S$-sorted set $X$, the
concept of many-sorted $X$-ary operation on $\mathbf{A}$, that of
many-sorted $X$-ary term operation on $\mathbf{A}$, and, as an
immediate consequence of the universal property of the free algebras,
the procedure of realization of terms $P$ of type $(X,s)$ as term
operations $P^{\mathbf{A}}$ on $\mathbf{A}$ (i.e., the passage from a
formal operation $P$, constructed from variables and formal
operations, to their realization as a true, or substantial, operation
$P^{\mathbf{A}}$ on the algebra $\mathbf{A}$, that transforms
valuations of the variables $X$ in the underlying $\mathrm{ms}$-set
$A$ of $\mathbf{A}$, into elements, of the adequate sort, of
$\mathbf{A}$).

\begin{definition}
Let $X$ be an $S$-sorted set, $\mathbf{A}$ a
$\mathbf{\Sigma}$-algebra, $s\in S$ and $P\in
\mathrm{T}_{\mathbf{\Sigma}}(X)_{s}$ a term for $\mathbf{\Sigma}$ of
type $(X,s)$.  Then
\begin{enumerate}
\item The $\mathbf{\Sigma}$-algebra of the \emph{many-sorted}
      $X$-\emph{ary} \emph{operations on} $\mathbf{A}$,
      $\mathbf{Op}_{X}(\mathbf{A})$, is $\mathbf{A}^{A_{X}}$, i.e.,
      the direct $A_{X}$-power of $\mathbf{A}$, where $A_{X}$ is
      $\mathrm{Hom}(X,A)$, the (ordinary) set of the $S$-sorted
      mappings from $X$ to $A$.  From now on, to shorten terminology,
      we will speak of $X$-\emph{ary} \emph{operations on}
      $\mathbf{A}$ instead of \emph{many-sorted} $X$-\emph{ary}
      \emph{operations on} $\mathbf{A}$.

\item The $\mathbf{\Sigma}$-algebra of the \emph{many-sorted}
      $X$-\emph{ary} \emph{term operations on} $\mathbf{A}$,
      $\mathbf{Ter}_{X}(\mathbf{A})$, is the subalgebra of
      $\mathbf{Op}_{X}(\mathbf{A})$ generated by the subfamily
      $$
      \mathcal{P}^{A}_{X} = (\mathcal{P}^{A}_{X,s})_{s\in S} =
      (\{\,\pr^{A}_{X,s,x} \mid x\in X_{s}\,\})_{s\in S}
      $$
      of $\Op_{X}(A) = A^{A_{X}}$, where, for every $s\in S$ and $x\in
      X_{s}$, $\pr^{A}_{X,s,x}$ is the mapping from $A_{X}$ to $A_{s}$
      which sends $a\in A_{X}$ to $a_{s}(x)$.  From now on, to shorten
      terminology, we will speak of $X$-\emph{ary} \emph{term operations on}
      $\mathbf{A}$ instead of \emph{many-sorted} $X$-\emph{ary}
      \emph{term operations on} $\mathbf{A}$.

\item We denote by $\mathrm{Tr}^{X,\mathbf{A}}$ the unique
      $\mathbf{\Sigma}$-homomorphism from $\mathbf{T}_{\mathbf{\Sigma}}(X)$ to
      $\mathbf{Op}_{X}(\mathbf{A})$ such that $\pr^{A}_{X} =
      \mathrm{Tr}^{X,\mathbf{A}}\comp \eta_{X}$, where $\pr^{A}_{X}$
      is the $S$-sorted mapping $(\pr^{A}_{X,s})_{s\in S}$ from $X$ to
      $\mathrm{Op}_{X}(A)$, with $\pr^{A}_{X,s} =
      (\pr^{A}_{X,s,x})_{x\in X_{s}}$, for every $s\in S$.  Furthermore,
      $P^{\mathbf{A}}$ denotes the image of $P$ under
      $\mathrm{Tr}^{X,\mathbf{A}}_{s}$, and we call the mapping
      $P^{\mathbf{A}}$ from $A_{X}$ to $A_{s}$, the \emph{term
      operation on} $\mathbf{A}$ \emph{determined by} $P$, or the
      \emph{term realization of} $P$ \emph{on} $\mathbf{A}$.
\end{enumerate}

\end{definition}

We recall that, for the insertion of generators $\eta_{X}\colon X\mor
\mathrm{T}_{\mathbf{\Sigma}}(X)$,
$\mathrm{Tr}^{X,\mathbf{A}}[\eta_{X}[X]]$, the (direct) image of
$\eta_{X}[X]$ under $\mathrm{Tr}^{X,\mathbf{A}}$, is also
$\mathbf{Ter}_{X}(\mathbf{A})$, i.e., the term operations on an
algebra are the same as the operations determined by the terms built
from variables and formal operations denoting the primitive operations
of the algebra.  From now on, to simplify the notation, we will also
denote by $\mathrm{Tr}^{X,\mathbf{A}}$ the co-restriction of the
$\mathbf{\Sigma}$-homomorphism $\mathrm{Tr}^{X,\mathbf{A}}$ from
$\mathbf{T}_{\mathbf{\Sigma}}(X)$ to $\mathbf{Op}_{X}(\mathbf{A})$ to
the subalgebra $\mathbf{Ter}_{X}(\mathbf{A})$ of
$\mathbf{Op}_{X}(\mathbf{A})$.

\begin{remark}
What we have called \emph{term operations on} $\mathbf{A}$ are also known,
for those following the terminology in Gr\"{a}tzer~\cite{gG79}, pp.
37--45, and J\'{o}nsson~\cite{bJ72}, pp.  83--87, as \emph{polynomial
operations of} $\mathbf{A}$, and, for those following that one in
Cohn~\cite{pC81}, pp.  145--149, as \emph{derived operators of}
$\mathbf{A}$.
\end{remark}

\begin{remark}
If for an $S$-sorted set $X$, a $\mathbf{\Sigma}$-algebra
$\mathbf{A}$, and an $S$-sorted subset $M$ of $A$, we define the
$\mathbf{\Sigma}$-algebra of the $X$-\emph{ary term operations with
constants in} $M$ \emph{on} $\mathbf{A}$, denoted by
$\mathbf{Ter}_{X}(\mathbf{A},M)$, as the subalgebra of
$\mathbf{Op}_{X}(\mathbf{A})$ generated by the subfamily $
\mathcal{P}^{A}_{X}\cup \mathcal{K}^{A,M}_{X} $ of $\Op_{X}(A) =
A^{A_{X}}$, where
$$
\mathcal{K}^{A,M}_{X} = (\mathcal{K}^{A,M}_{X,s})_{s\in S}
= (\{\,\kappa^{a}_{X,s} \mid a\in M_{s}\,\})_{s\in S},
$$
and, for every $s\in S$ and $a\in M_{s}$, $\kappa^{a}_{X,s}$ is the
mapping from $A_{X}$ to $A_{s}$ that is constantly $a$, then we have
that $\mathbf{Ter}_{X}(\mathbf{A})$ is
$\mathbf{Ter}_{X}(\mathbf{A},(\vacio)_{s\in S})$, where
$(\vacio)_{s\in S}$ is the $S$-sorted set that is constantly $\vacio$.
And, using the terminology in J\'{o}nsson~\cite{bJ72}, pp.
87--89, the $\mathbf{\Sigma}$-algebra of the $X$-\emph{ary algebraic
operations on} $\mathbf{A}$, denoted by
$\mathbf{Alg}_{X}(\mathbf{A})$, is $\mathbf{Ter}_{X}(\mathbf{A},A)$.
\end{remark}

We point out that since the above concepts are defined for arbitrary
$\mathrm{ms}$-sets, they are also applicable, in particular, for a
given set of sorts $S$ and an arbitrary, but fixed, $S$-sorted set of
variables $V^{S} = (V^{S}_{s})_{s\in S}$, where, for every $s\in S$,
$V^{S}_{s} = \{\,v^{s}_{n}\mid n\in \mathbb{N}\,\}$ is an effectively
enumerated set, to the \emph{finite} $S$-sorted subsets $\vs{w}$ of
$V^{S}$ associated to the words $w\in\fmon{S}$,  where, for every word
$w\in \fmon{S}$, we agree that $\vs{w}$ is the finite subset of
$V^{S}$ defined, for every $s\in S$, as $(\vs{w})_{s} =
\{\,v^{s}_{i}\mid i\in w^{-1}[s]\,\}$ (observe that $(\vs{w})_{s}$ is
empty for those sorts $s$ that do not occur in the word $w$).

In all that follows, every proposition relative to the above
concepts will only be stated for arbitrary $\mathrm{ms}$-sets,
therefore the corresponding propositions for the finite $S$-sorted
subsets $\vs{w}$ of $V^{S}$ will not be actually stated and will
remain tacit. However, to prove the just mentioned implicit
propositions, it will be shown to be useful to know that, for a
word $w\in \fmon{S}$, a mapping $\varphi\colon S\mor T$, and its
extension $\fmon{\varphi}\colon\mathbf{T}_{\star}(S)\mor
\mathbf{T}_{\star}(T)$ to the corresponding free monoids on $S$
and $T$, the $S$-sorted set $\vs{w}$ can be embedded in the
$S$-sorted set $(\vs{\fmon{\varphi}}(w))_{\varphi}$, associated to
the $T$-sorted set $\vs{\fmon{\varphi}}(w)\cong
\coprod_{\varphi}(\vs{w})$, through the $S$-sorted mapping
$\inc^{w,\varphi}$ defined, for every $s\in S$, as follows
$$
\inc^{w,\varphi}_{s}
\nfunction
{(\vs{w})_{s}}
{(\vs{\fmon{\varphi}}(w))_{\varphi(s)}}
{v^s_{i}}{v^{\varphi(s)}_{i}}
$$
and that this embedding has as an immediate consequence that a
signature morphism $\mathbf{d}\colon\mathbf{\Sigma}\mor
\mathbf{\Lambda}$, determines a morphism
$$
(\varphi,d,\inc^{w,\varphi})\colon(S,\Sigma,\vs{w})\mor
(T,\Lambda,\vs{\fmon{\varphi}(w)})
$$
in $\mathbf{MSet}\times_{\mathbf{Set}}\mathbf{Sig}$, hence that the $s$-th
component of the $\mathbf{\Sigma}$-homomorphism
$$
(\inc^{w,\varphi})^{\mathbf{d}}\colon\mathbf{T}_{\mathbf{\Sigma}}(\vs{w})\mor
\mathbf{T}_{\mathbf{\Lambda}}(\vs{\fmon{\varphi}(w)})_{\varphi}
$$
translates terms for $\mathbf{\Sigma}$ of type $(\vs{w},s)$ into terms for
$\mathbf{\Lambda}$ of type $(\vs{\fmon{\varphi}(w)},\varphi(s))$.

For completeness we recall that for many-sorted terms, as for
single-sorted terms, we also have that
\begin{enumerate}
\item The \emph{exchange law} is valid, i.e., that given a
      valuation $a\colon X\mor A$, where $X$ is an $S$-sorted set and
      $A$ the underlying set of a $\mathbf{\Sigma}$-algebra
      $\mathbf{A}$, and a term $P$ for $\mathbf{\Sigma}$ of type
      $(X,s)$, we always have the equality $a^{\sharp}_{s}(P) =
      P^{\mathbf{A}}(a)$; and that

\item The $\mathbf{\Sigma}$-homo\-mor\-phisms \emph{commute}
      with term operations, i.e., that given a
      $\mathbf{\Sigma}$-homo\-mor\-phism $u\colon \mathbf{A}\mor
      \mathbf{B}$ and a term $P$ for $\mathbf{\Sigma}$ of type
      $(X,s)$, we always have the equality $u_{s}\comp P^{\mathbf{A}}
      = P^{\mathbf{B}}\comp u_{X}$.
\end{enumerate}

Following this we state the fundamental facts about term operations of
different arities on the same algebra.  These facts are, actually, the
generalization to the many-sorted case and categorization of some of
those stated by Schmidt in~\cite{jSch61}, pp.  107--109.

\begin{proposition}\label{Schmidt}
Let $\mathbf{A}$ be a $\mathbf{\Sigma}$-algebra and $f\colon X\mor Y$
an $S$-sorted mapping.  Then there exists a unique
$\mathbf{\Sigma}$-homomorphism $\Ter_{f}(\mathbf{A})$ from
$\mathbf{Ter}_{X}(\mathbf{A})$ to $\mathbf{Ter}_{Y}(\mathbf{A})$ such
that the following diagram commutes
$$
\xymatrix@C=14ex@R=8ex{
\mathbf{T}_{\mathbf{\Sigma}}(X)
\ar[r]^-{\mathrm{Tr}^{X,\mathbf{A}}}
\ar[d]_{f^{@}} &
\mathbf{Ter}_{X}(\mathbf{A})
\ar[d]^{\Ter_{f}(\mathbf{A})} \\
\mathbf{T}_{\mathbf{\Sigma}}(Y)
\ar[r]_{\mathrm{Tr}^{Y,\mathbf{A}}} &
\mathbf{Ter}_{Y}(\mathbf{A})
}
$$
Besides, we have that
\begin{enumerate}
\item For every $S$-sorted set $X$, it is the case that
      $$
      \Ter_{\mathrm{id}_{X}}(\mathbf{A}) =
      \mathrm{id}_{\mathbf{Ter}_{X}(\mathbf{A})}.
      $$

\item If $g\colon Y\mor Z$ is another $S$-sorted mappings, then
      $$
      \Ter_{g\comp f}(\mathbf{A})
      = \Ter_{g}(\mathbf{A})\comp \Ter_{f}(\mathbf{A}).
      $$
\end{enumerate}
\end{proposition}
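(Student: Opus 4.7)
The plan is to build $\mathrm{Ter}_{f}(\mathbf{A})$ by first constructing a $\mathbf{\Sigma}$-homomorphism between the ambient operation algebras $\mathbf{Op}_{X}(\mathbf{A})$ and $\mathbf{Op}_{Y}(\mathbf{A})$ induced by $f$ via precomposition, and then showing that this homomorphism carries the subalgebra $\mathbf{Ter}_{X}(\mathbf{A})$ into $\mathbf{Ter}_{Y}(\mathbf{A})$ by checking its effect on the canonical generators.

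First I would observe that the $S$-sorted mapping $f\colon X\mor Y$ induces, by precomposition, an (unsorted) mapping $A_{f}\colon A_{Y}\mor A_{X}$ sending $a\in A_{Y}=\mathrm{Hom}(Y,A)$ to $a\comp f$. Since $\mathbf{Op}_{X}(\mathbf{A})=\mathbf{A}^{A_{X}}$ and $\mathbf{Op}_{Y}(\mathbf{A})=\mathbf{A}^{A_{Y}}$ are direct powers of $\mathbf{A}$, the functoriality of the power construction yields a $\mathbf{\Sigma}$-homomorphism $\mathrm{Op}_{f}(\mathbf{A})\colon\mathbf{Op}_{X}(\mathbf{A})\mor\mathbf{Op}_{Y}(\mathbf{A})$ whose $s$-th component sends $P\colon A_{X}\mor A_{s}$ to $P\comp A_{f}\colon A_{Y}\mor A_{s}$.

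Next I would verify that $\mathrm{Op}_{f}(\mathbf{A})$ sends the generating family $\mathcal{P}^{A}_{X}$ of $\mathbf{Ter}_{X}(\mathbf{A})$ into the generating family $\mathcal{P}^{A}_{Y}$ of $\mathbf{Ter}_{Y}(\mathbf{A})$. Indeed, for $s\in S$ and $x\in X_{s}$, and for every $b\in A_{Y}$,
$$
\mathrm{Op}_{f}(\mathbf{A})_{s}(\pr^{A}_{X,s,x})(b)
=\pr^{A}_{X,s,x}(b\comp f)=(b\comp f)_{s}(x)=b_{s}(f_{s}(x))
=\pr^{A}_{Y,s,f_{s}(x)}(b),
$$
so $\mathrm{Op}_{f}(\mathbf{A})_{s}(\pr^{A}_{X,s,x})=\pr^{A}_{Y,s,f_{s}(x)}$. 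Since $\mathbf{\Sigma}$-homo\-mor\-phisms preserve generated subalgebras, the restriction and co-restriction of $\mathrm{Op}_{f}(\mathbf{A})$ to $\mathbf{Ter}_{X}(\mathbf{A})$ and $\mathbf{Ter}_{Y}(\mathbf{A})$ is a well-defined $\mathbf{\Sigma}$-homomorphism, which I define to be $\mathrm{Ter}_{f}(\mathbf{A})$.

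To check commutativity of the square, both composites $\mathrm{Ter}_{f}(\mathbf{A})\comp\mathrm{Tr}^{X,\mathbf{A}}$ and $\mathrm{Tr}^{Y,\mathbf{A}}\comp f^{@}$ are $\mathbf{\Sigma}$-homo\-mor\-phisms from the free algebra $\mathbf{T}_{\mathbf{\Sigma}}(X)$ to $\mathbf{Ter}_{Y}(\mathbf{A})$, so by freeness it suffices to check agreement on $\eta_{X}[X]$; for $x\in X_{s}$ the first gives $\mathrm{Ter}_{f}(\mathbf{A})_{s}(\pr^{A}_{X,s,x})=\pr^{A}_{Y,s,f_{s}(x)}$ by the computation above, while the second gives $\mathrm{Tr}^{Y,\mathbf{A}}_{s}(\eta_{Y,s}(f_{s}(x)))=\pr^{A}_{Y,s,f_{s}(x)}$, using $f^{@}\comp\eta_{X}=\eta_{Y}\comp f$ and $\mathrm{Tr}^{Y,\mathbf{A}}\comp\eta_{Y}=\pr^{A}_{Y}$. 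Uniqueness follows because $\mathbf{Ter}_{X}(\mathbf{A})$ is generated by $\mathcal{P}^{A}_{X}=\mathrm{Tr}^{X,\mathbf{A}}[\eta_{X}[X]]$, so $\mathrm{Tr}^{X,\mathbf{A}}$ (co-restricted) has image containing a generating set and hence any two $\mathbf{\Sigma}$-homo\-mor\-phisms out of $\mathbf{Ter}_{X}(\mathbf{A})$ that coincide after precomposition with $\mathrm{Tr}^{X,\mathbf{A}}$ must coincide.

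Finally, for the functoriality clauses (1) and (2), the identity case is immediate since $\mathrm{id}_{\mathbf{Ter}_{X}(\mathbf{A})}$ manifestly satisfies the defining diagram for $f=\mathrm{id}_{X}$ (using $(\mathrm{id}_{X})^{@}=\mathrm{id}_{\mathbf{T}_{\mathbf{\Sigma}}(X)}$), and the composition case follows by pasting the two defining squares for $f$ and $g$ and invoking $(g\comp f)^{@}=g^{@}\comp f^{@}$: the composite $\mathrm{Ter}_{g}(\mathbf{A})\comp\mathrm{Ter}_{f}(\mathbf{A})$ then satisfies the defining property of $\mathrm{Ter}_{g\comp f}(\mathbf{A})$, so uniqueness forces equality. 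The only non-routine point is verifying that the precomposition-induced homomorphism lands in the subalgebra of term operations, which is handled by the generators-to-generators computation; everything else is bookkeeping with the universal property of $\mathbf{T}_{\mathbf{\Sigma}}(X)$.
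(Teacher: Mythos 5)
Your proposal is correct and follows essentially the same route as the paper: you construct $\mathrm{Op}_{f}(\mathbf{A})$ by precomposition with $A_{f}$, check that it sends the generators $\pr^{A}_{X,s,x}$ to $\pr^{A}_{Y,s,f_{s}(x)}$, and bi-restrict to obtain $\Ter_{f}(\mathbf{A})$, with the commutativity, uniqueness, and functoriality handled via the universal property of the free algebra and the fact that $\mathcal{P}^{A}_{X}$ generates $\mathbf{Ter}_{X}(\mathbf{A})$. Your write-up is in fact slightly more explicit than the paper's on the uniqueness and functoriality clauses, which the paper dispatches as following easily from the definition.
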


\begin{proof}
As we know $\mathbf{Ter}_{X}(\mathbf{A})$ is the subalgebra of
$\mathbf{Op}_{X}(\mathbf{A})$ generated by the subfamily
$\mathcal{P}^{A}_{X}$ of $\Op_{X}(A)$.  Therefore, to prove
that there is some $\mathbf{\Sigma}$-homomorphism from
$\mathbf{Ter}_{X}(\mathbf{A})$ to $\mathbf{Ter}_{Y}(\mathbf{A})$ it
will be enough to prove that there is a $\mathbf{\Sigma}$-homo\-morph\-ism
$\mathrm{Op}_{f}(\mathbf{A})$ from $\mathbf{Op}_{X}(\mathbf{A})$ to
$\mathbf{Op}_{Y}(\mathbf{A})$ such that
$\mathrm{Op}_{f}(\mathbf{A})[\mathcal{P}^{A}_{X}]\subseteq
\mathcal{P}^{A}_{Y}$.

Let $\mathrm{Op}_{f}(\mathbf{A})$ be the $S$-sorted mapping from
$\mathrm{Op}_{X}(A)$ to $\mathrm{Op}_{Y}(A)$ whose
$s$-th coordinate mapping $\mathrm{Op}_{f}(\mathbf{A})_{s}$, for $s\in
S$, sends a mapping $P$ in $A^{A_{X}}_{s}$ to the mapping $P\comp
A_{f}$ in $A^{A_{Y}}_{s}$, where $A_{f}$ is the mapping from $A_{Y}$
to $A_{X}$ which assigns to an $S$\nobreakdash-sorted mapping $u$ in $A_{Y}$ the
$S$-sorted mapping $u\comp f$ in $A_{X}$.  Thus defined
$\mathrm{Op}_{f}(\mathbf{A})$ is a $\mathbf{\Sigma}$-homomorphism from
$\mathbf{Op}_{X}(\mathbf{A})$ to $\mathbf{Op}_{Y}(\mathbf{A})$.

Furthermore, for every $s\in S$, the action of
$\mathrm{Op}_{f}(\mathbf{A})_{s}$ on a generator $\pr^{A}_{X,s,x}$ of
$\mathbf{Ter}_{X}(\mathbf{A})$ is the mapping
$\mathrm{Op}_{f}(\mathbf{A})_{s}(\pr^{A}_{X,s,x}) =
\pr^{A}_{X,s,x}\comp A_{f}$ from $A_{Y}$ to $A_{s}$.  But for an
$S$-sorted mapping $u$ in $A_{Y}$, we have that
$(\mathrm{Op}_{f}(\mathbf{A})_{s}(\pr^{A}_{X,s,x}))(u) =
u_{s}(f_{s}(x))$, therefore
$\mathrm{Op}_{f}(\mathbf{A})_{s}(\pr^{A}_{X,s,x}) =
\pr^{A}_{Y,s,f_{s}(x)}$, that is a member of $\mathcal{P}^{A}_{Y,s}$,
the $s$\nobreakdash-th component of the set of generators
$\mathcal{P}^{A}_{Y}$ of $\mathbf{Ter}_{Y}(\mathbf{A})$.  From this we
can assert that $\mathrm{Op}_{f}(\mathbf{A})$ has a bi-restriction,
$\Ter_{f}(\mathbf{A})$, to $\mathbf{Ter}_{X}(\mathbf{A})$ and
$\mathbf{Ter}_{Y}(\mathbf{A})$.  Thus defined $\Ter_{f}(\mathbf{A})$
is the unique $\mathbf{\Sigma}$-homomorphism from
$\mathbf{Ter}_{X}(\mathbf{A})$ to $\mathbf{Ter}_{Y}(\mathbf{A})$ such
that $\Ter_{f}(\mathbf{A})\comp \mathrm{Tr}^{X,\mathbf{A}} =
\mathrm{Tr}^{Y,\mathbf{A}}\comp f^{@}$.

The remaining properties follow easily from the definition of
$\Ter_{f}(\mathbf{A})$.
\end{proof}

The proposition just stated can be interpreted as meaning that, for a
$\mathbf{\Sigma}$-algebra $\mathbf{A}$, we have
\begin{enumerate}
\item A functor $\Ter_{(\cdot)}(\mathbf{A})$ from $\mathbf{Set}^{S}$
      to $\mathbf{Alg}(\mathbf{\Sigma})$ which sends an $S$-sorted set $X$ to
      the $\mathbf{\Sigma}$-algebra $\mathbf{Ter}_{X}(\mathbf{A})$, and an
      $S$-sorted mapping $f$ from $X$ to $Y$ to the
      $\mathbf{\Sigma}$-homomorphism $\Ter_{f}(\mathbf{A})$ from
      $\mathbf{Ter}_{X}(\mathbf{A})$ to
      $\mathbf{Ter}_{Y}(\mathbf{A})$, and

\item A natural transformation
      $\mathrm{Tr}^{(\cdot),\mathbf{A}}$ from
      $\mathbf{T}_{\mathbf{\Sigma}}$ to $\Ter_{(\cdot)}(\mathbf{A})$
      which sends an $S$\nobreakdash-sorted set $X$ to the
      $\mathbf{\Sigma}$-homomorphism $\mathrm{Tr}^{X,\mathbf{A}}$ from
      $\mathbf{T}_{\mathbf{\Sigma}}(X)$ to
      $\mathbf{Ter}_{X}(\mathbf{A})$,
\end{enumerate}
summarized in the following diagram
$$
\xymatrix@C=20ex@R=8ex{
\mathbf{Set}^{S}
\ar@/^15pt/[r]^{\mathbf{T}_{\mathbf{\Sigma}}}="f"
  \ar@/^-15pt/[r]_{\Ter_{(\cdot)}(\mathbf{A})}="g" &
\mathbf{Alg}(\mathbf{\Sigma}).
\ar @{} "f";"g" |{\dir{=>}}^{\,\mathrm{Tr}^{(\cdot),\mathbf{A}}}
}
$$

What we want to prove now is the compatibility between the
translation of terms and their realization as term operations on
the algebras.  But for this it will be shown to be useful to take
into account the following auxiliary functors and natural
transformation.

\begin{definition}
For a mapping $\varphi\colon S\mor T$, an $S$-sorted set $X$, a
$T$-sorted set $Y$, and an $S$-sorted mapping $f\colon X\mor
Y_{\varphi}$, we have the following functors and natural
transformation
\begin{enumerate}
\item $\mathrm{H}(Y,\cdot)$ is the covariant hom-functor from
      $\mathbf{Set}^{T}$ to $\mathbf{Set}$ which, we recall, sends a
      $T$-sorted set $A$ to the set $\mathrm{H}(Y,\cdot)(A) = A_{Y}$,
      and a $T$-sorted mapping $u$ from $A$ to $B$ to the mapping
      $\mathrm{H}(Y,\cdot)(u)$ from $A_{Y}$ to $B_{Y}$ which assigns
      to a $T$-sorted mapping $t$ from $Y$ to $A$ the mapping $u\comp
      t$ from $Y$ to $B$.

\item $\mathrm{H}(X,\cdot)\comp \Delta_{\varphi}$ is the functor
      from $\mathbf{Set}^{T}$ to $\mathbf{Set}$ which sends a
      $T$-sorted set $A$ to the set $(A_{\varphi})_{X}$, and a
      $T$-sorted mapping $u$ from $A$ to $B$ to the mapping
      $\mathrm{H}(X,\cdot)(u_{\varphi})$ from $(A_{\varphi})_{X}$ to
      $(B_{\varphi})_{X}$ which assigns to an $S$-sorted mapping
      $\ell$ from $X$ to $A_{\varphi}$ the mapping $u_{\varphi}\comp
      \ell$ from $X$ to $B_{\varphi}$.

\item $\vartheta^{\varphi,f}$ is the natural transformation from
      $\mathrm{H}(Y,\cdot)$ to $\mathrm{H}(X,\cdot)\comp
      \Delta_{\varphi}$, as in the following diagram
      $$
      \xymatrix@C=20ex@R=8ex{
      \mathbf{Set}^{T}
      \ar@/^15pt/[r]^{\mathrm{H}(Y,\cdot)}="f"
      \ar@/^-15pt/[r]_{\mathrm{H}(X,\cdot)\comp\Delta_{\varphi}}="g" &
      \mathbf{Set}
      \ar @{} "f";"g" |{\dir{=>}}^{\,\vartheta^{\varphi,f}}
      }
      $$
      which sends a $T$-sorted set $A$ to the
      mapping $\vartheta^{\varphi,f}_{A}$ from $A_{Y}$ to
      $(A_{\varphi})_{X}$ which assigns to a morphism $t\colon Y\mor
      A$ in $A_{Y}$ the morphism $t_{\varphi}\comp f$ in
      $(A_{\varphi})_{X}$.
\end{enumerate}
\end{definition}

From this definition, for a $T$-sorted set $A$, we get the
$S$-sorted mapping $\Upsilon^{\varphi,f}_{A}$ from
$\Op_{X}(A_{\varphi}) = A_{\varphi}^{(A_{\varphi})_{X}}$ to
$\Op_{Y}(A)_{\varphi} = (A^{A_{Y}})_{\varphi} = A^{A_{Y}}_{\varphi}$
which sends, for $s\in S$, a mapping $a\colon (A_{\varphi})_{X}\mor
A_{\varphi(s)}$ to the mapping $a\comp \vartheta^{\varphi,f}_{A}\colon
A_{Y}\mor A_{\varphi(s)}$, that we will use in the proof of the
following proposition and corollary.

\begin{proposition}\label{RealizacionNatural}
Let $(\varphi,d,f)\colon(S,\Sigma,X)\mor (T,\Lambda,Y)$ be a morphism
in the category $\mathbf{MSet}\times_{\mathbf{Set}}\mathbf{Sig}$.
Then, for every $\mathbf{\Lambda}$-algebra $\mathbf{A}$ and term $P\in
\mathrm{T}_{\mathbf{\Sigma}}(X)_{s}$ for $\mathbf{\Sigma}$ of type
$(X,s)$, the following diagram commutes
$$
\xymatrix@C=19ex@R=8ex{
(A_{\varphi})_{X}
  \ar[r]^{P^{\mathbf{d}^{\ast}(\mathbf{A})}} &
A_{\varphi(s)}
  \ar@{=}[d]\\
A_{Y}
  \ar[u]^{\vartheta^{\varphi,f}_{A}}
  \ar[r]_{f^{\mathbf{d}}_{s}(P)^{\mathbf{A}}} &
A_{\varphi(s)}
}
$$

\end{proposition}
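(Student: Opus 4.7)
The plan is to evaluate both legs of the diagram on an arbitrary $T$-sorted mapping $t\colon Y\mor A$ and reduce the resulting equality to the uniqueness clause of the universal property of the free algebra $\mathbf{T}_{\mathbf{\Sigma}}(X)$. Specifically, by applying the exchange law (recalled just before Proposition~\ref{Schmidt}) to the valuation $t_{\varphi}\comp f\colon X\mor A_{\varphi}$ in $\mathbf{d}^{\ast}(\mathbf{A})$ and to the valuation $t\colon Y\mor A$ in $\mathbf{A}$, the commutativity at the point $t$ translates into the identity
$$
  (t_{\varphi}\comp f)^{\sharp}_{s}(P) = t^{\sharp}_{\varphi(s)}(f^{\mathbf{d}}_{s}(P)),
$$
where the first sharp is the canonical extension into $\mathbf{d}^{\ast}(\mathbf{A})$ and the second into $\mathbf{A}$. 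This must be proved for every $s\in S$ and every $P\in \mathrm{T}_{\mathbf{\Sigma}}(X)_{s}$.

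Next I would exhibit two $\mathbf{\Sigma}$-homomorphisms from $\mathbf{T}_{\mathbf{\Sigma}}(X)$ to $\mathbf{d}^{\ast}(\mathbf{A})$ whose equality encodes this identity. On the one hand, $(t_{\varphi}\comp f)^{\sharp}$ is, by definition, the unique $\mathbf{\Sigma}$-homomorphism extending $t_{\varphi}\comp f$. On the other hand, since $t^{\sharp}\colon \mathbf{T}_{\mathbf{\Lambda}}(Y)\mor \mathbf{A}$ is a $\mathbf{\Lambda}$-homomorphism, functoriality of $\mathbf{d}^{\ast}$ gives a $\mathbf{\Sigma}$-homomorphism $\mathbf{d}^{\ast}(t^{\sharp}) = (t^{\sharp})_{\varphi}\colon \mathbf{d}^{\ast}(\mathbf{T}_{\mathbf{\Lambda}}(Y))\mor \mathbf{d}^{\ast}(\mathbf{A})$, and composing with $f^{\mathbf{d}}\colon \mathbf{T}_{\mathbf{\Sigma}}(X)\mor \mathbf{d}^{\ast}(\mathbf{T}_{\mathbf{\Lambda}}(Y))$ yields a second $\mathbf{\Sigma}$-homomorphism $(t^{\sharp})_{\varphi}\comp f^{\mathbf{d}}$ from $\mathbf{T}_{\mathbf{\Sigma}}(X)$ to $\mathbf{d}^{\ast}(\mathbf{A})$.

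Then I would verify that these two $\mathbf{\Sigma}$-homomorphisms agree on the generators $\eta_{X}[X]$. Indeed, for $s\in S$ and $x\in X_{s}$, the definition $f^{\mathbf{d}} = ((\eta_{Y})_{\varphi}\comp f)^{\sharp}$ gives
$$
  ((t^{\sharp})_{\varphi}\comp f^{\mathbf{d}})_{s}(\eta_{X,s}(x))
  = (t^{\sharp})_{\varphi(s)}(\eta_{Y,\varphi(s)}(f_{s}(x)))
  = t_{\varphi(s)}(f_{s}(x))
  = (t_{\varphi}\comp f)_{s}(x),
$$
which is precisely the value of $(t_{\varphi}\comp f)^{\sharp}_{s}\comp \eta_{X,s}$. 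By the uniqueness clause of the universal property of $\mathbf{T}_{\mathbf{\Sigma}}(X)$, we conclude $(t_{\varphi}\comp f)^{\sharp} = (t^{\sharp})_{\varphi}\comp f^{\mathbf{d}}$, which evaluated at $P$ yields the required equality.

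No step presents a real obstacle; the only thing to be careful about is the bookkeeping of sorts under the relabelling functor $\Delta_{\varphi}$, in particular that the $s$-component of $(t^{\sharp})_{\varphi}$ is $t^{\sharp}_{\varphi(s)}$, and that $\mathbf{d}^{\ast}$ sends the $\mathbf{\Lambda}$-homomorphism $t^{\sharp}$ to a $\mathbf{\Sigma}$-homomorphism with the expected underlying $S$-sorted mapping. Once this is set up correctly, the proof reduces to the two applications of the exchange law together with one invocation of the universal property of the free algebra.
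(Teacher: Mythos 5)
Your proof is correct and follows essentially the same route as the paper: evaluate both legs at an arbitrary $t\in A_{Y}$, use the exchange law on each side, and reduce everything to the identity $(t_{\varphi}\comp f)^{\sharp}=(t^{\sharp})_{\varphi}\comp f^{\mathbf{d}}$, which the paper obtains from the same commuting square on generators that you verify explicitly. No gaps.
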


\begin{proof}
Let $a\in A_{Y}$ be a $T$-sorted mapping from $Y$ to $A$.  Then the
following diagram commutes
$$
\xymatrix@C=70pt{
X \ar[r]^-{\eta_{X}}
  \ar[d]_{f} &
\mathrm{T}_{\mathbf{\Sigma}}(X)
  \ar[d]^{f^{\mathbf{d}}}
  \ar`r[dd]+/r5pc/ `[dd]^{\ext{(a_{\varphi}\comp f)}} [dd]   \\
Y_{\varphi}
  \ar[r]^-{(\eta_{Y})_{\varphi}}
  \ar[rd]_-{a_{\varphi}} &
\mathrm{T}_{\mathbf{\Lambda}}(Y)_{\varphi}
  \ar[d]^{(\ext{a})_{\varphi}}  \\
&
A_{\varphi}
}
$$
hence, for every $P\in \mathrm{T}_{\mathbf{\Sigma}}(X)_{s}$, we have that
\begin{align*}
    f^{\mathbf{d}}_{s}(P)^{\mathbf{A}}(a) &=
    (\ext{a})_{\varphi(s)}\comp f^{\mathbf{d}}_{s}(P) \\
    &= \ext{(a_{\varphi}\comp f)}_{s}(P) \\
    &= P^{\mathbf{d}^{\ast}(\mathbf{A})}(a_{\varphi}\comp f) \\
    &= P^{\mathbf{d}^{\ast}(\mathbf{A})}\comp\vartheta^{\varphi,f}_{A}(a).
\end{align*}
Therefore $f^{\mathbf{d}}_{s}(P)^{\mathbf{A}} =
P^{\mathbf{d}^{\ast}(\mathbf{A})}\comp\vartheta^{\varphi,f}_{A}$, as
asserted.
\end{proof}


We gather in the following corollary some useful consequences of the last
proposition.

\begin{corollary}\label{fundPdextranatural}
Let $(\varphi,d,f)\colon (S,\Sigma,X)\mor(T,\Lambda,Y)$ be a morphism
in the category $\mathbf{MSet}\times_{\mathbf{Set}}\mathbf{Sig}$, $\mathbf{A}$ a
$\mathbf{\Lambda}$-algebra, and $P\in \mathrm{T}_{\mathbf{\Sigma}}(X)_{s}$ a
term for $\mathbf{\Sigma}$ of type $(X,s)$.  Then we
have that
\begin{enumerate}
\item The following diagrams commute
      $$
      \begin{aligned}
      \xymatrix@C=14ex@R=8ex{
      \mathbf{T}_{\mathbf{\Sigma}}(X)
      \ar[r]^-{\mathrm{Tr}^{X,\mathbf{d}^{\ast}(\mathbf{A})}}
      \ar[d]_{f^{\mathbf{d}}} &
      \mathbf{Ter}_{X}(\mathbf{d}^{\ast}(\mathbf{A}))
      \ar[d]^{\Upsilon^{\varphi,f}_{A}} \\
      \mathbf{T}_{\mathbf{\Lambda}}(Y)_{\varphi}
      \ar[r]_{\mathrm{Tr}^{Y,\mathbf{A}}_{\varphi}} &
      \mathbf{Ter}_{Y}(\mathbf{A})_{\varphi}
      }
      \end{aligned}
      \quad \quad\;\;
      \begin{aligned}
      \xymatrix@C=14ex@R=8ex{
      (A_{\varphi})_{X}
      \ar[r]^{P^{\mathbf{d}^{\ast}(\mathbf{A})}}
      &
      A_{\varphi(s)}
      \ar@{=}[d]\\
      A_{\coprod_{\varphi}\!X}
      \ar[r]_{\eta^{\mathbf{d}}_{X,s}(P)^{\mathbf{A}}}
      \ar[u]^{\theta^{\varphi}_{X,A}}  &
      A_{\varphi(s)}
      }
      \end{aligned}
      $$

\item If $(\varphi,d,g)\colon (S,\Sigma,X')\mor(T,\Lambda,Y')$
      is another morphism in the category
      $\mathbf{MSet}\times_{\mathbf{Set}}\mathbf{Sig}$,
      $k$ an $S$-sorted mapping from $X$ to $X'$, and $\ell$ a
      $T$-sorted mapping from $Y$ to $Y'$ such that
      $\ell_{\varphi}\comp f= g\comp k$, then the following diagram
      commutes
      $$
      \xymatrix@C=9ex@R=5ex{
      &
      \mathbf{T}_{\mathbf{\Sigma}}(X')
      \ar[rr]|*+{\mathrm{Tr}^{X',\mathbf{d}^{\ast}(\mathbf{A})}}
      \ar[dd]|(.35)*+{g^{\mathbf{d}}} & &
      \mathbf{Ter}_{X'}(\mathbf{d}^{\ast}(\mathbf{A}))
      \ar[dd]|*+{\Upsilon^{\varphi,g}_{A}} \\
      \mathbf{T}_{\mathbf{\Sigma}}(X)
      \ar[rr]|(.63)*+{\mathrm{Tr}^{X,\mathbf{d}^{\ast}(\mathbf{A})}}
      \ar[dd]|*+{f^{\mathbf{d}}}
      \ar[ur]|*+{k^{@}} & &
      \mathbf{Ter}_{X}(\mathbf{d}^{\ast}(\mathbf{A}))
      \ar[dd]|(.35)*+{\Upsilon^{\varphi,f}_{A}}
      \ar[ur]|*+{\mathrm{Ter}_{k}(\mathbf{d}^{\ast}(\mathbf{A}))} \\
      &
      \mathbf{T}_{\mathbf{\Lambda}}(Y')_{\varphi}
      \ar[rr]|(.65)*+{\mathrm{Tr}^{Y',\mathbf{A}}_{\varphi}} &
      &
      \mathbf{Ter}_{Y'}(\mathbf{A})_{\varphi} \\
      \mathbf{T}_{\mathbf{\Lambda}}(Y)_{\varphi}
      \ar[rr]|*+{\mathrm{Tr}^{Y,\mathbf{A}}_{\varphi}}
      \ar[ur]|*+{\ell^{@}_{\varphi}} &
      &
      \mathbf{Ter}_{Y}(\mathbf{A})_{\varphi}
      \ar[ur]|*+{\mathrm{Ter}_{\ell}(\mathbf{A})_{\varphi}}
      }
      $$
\end{enumerate}
\end{corollary}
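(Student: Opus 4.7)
The plan is to derive Part (1) directly from Proposition \ref{RealizacionNatural} and to assemble the prismatic diagram of Part (2) face by face, using Part (1), Proposition \ref{Schmidt}, and the compatibility hypothesis $\ell_{\varphi}\comp f=g\comp k$.

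For the first diagram of Part (1), I would use that $\mathrm{Tr}^{X,\mathbf{d}^{\ast}(\mathbf{A})}_{s}$ sends $P\in \mathrm{T}_{\mathbf{\Sigma}}(X)_{s}$ to $P^{\mathbf{d}^{\ast}(\mathbf{A})}$, so by definition of $\Upsilon^{\varphi,f}_{A}$ the right-then-down composite applied to $P$ yields $P^{\mathbf{d}^{\ast}(\mathbf{A})}\comp \vartheta^{\varphi,f}_{A}$, while the down-then-right composite yields $f^{\mathbf{d}}_{s}(P)^{\mathbf{A}}$; these coincide by Proposition \ref{RealizacionNatural}. One then only has to observe that both composites are $\mathbf{\Sigma}$-homomorphisms and that their common image lies in the subalgebra $\mathbf{Ter}_{Y}(\mathbf{A})_{\varphi}$, so the square makes sense at the indicated level. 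The second diagram is then just the specialization to $Y=\coprod_{\varphi}X$ and $f=\eta^{\varphi}_{X}$, since by definition $(\eta^{\varphi}_{X})^{\mathbf{d}}=\eta^{\mathbf{d}}_{X}$ and $\vartheta^{\varphi,\eta^{\varphi}_{X}}_{A}=\theta^{\varphi}_{X,A}$, the latter being the curry component of the adjunction $\coprod_{\varphi}\ladj \Delta_{\varphi}$.

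For Part (2), I would verify the six faces of the prism separately. The top and bottom faces are direct instances of Proposition \ref{Schmidt} applied, respectively, to $\mathbf{d}^{\ast}(\mathbf{A})$ with the mapping $k$, and to $\mathbf{A}$ with the mapping $\ell$, the latter then whiskered through $\Delta_{\varphi}$. The front and back faces reproduce the first diagram of Part (1) for the data $(\varphi,d,f)$ and $(\varphi,d,g)$ respectively. The left face $g^{\mathbf{d}}\comp k^{@}=\ell^{@}_{\varphi}\comp f^{\mathbf{d}}$ follows from the universal property of $\mathbf{T}_{\mathbf{\Sigma}}(X)$: both sides are $\mathbf{\Sigma}$-homomorphisms, and precomposition with $\eta_{X}$ turns each of them into $(\eta_{Y'})_{\varphi}\comp g\comp k=(\eta_{Y'}\comp \ell)_{\varphi}\comp f$, using the hypothesis $\ell_{\varphi}\comp f=g\comp k$.

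The only face that requires a genuine, if short, calculation is the right one, and this is where I expect the main bookkeeping difficulty. Unwinding the definition of $\Upsilon$ and of $\mathrm{Ter}_{(\farg)}$, both composites $\Upsilon^{\varphi,g}_{A}\comp \mathrm{Ter}_{k}(\mathbf{d}^{\ast}(\mathbf{A}))$ and $\mathrm{Ter}_{\ell}(\mathbf{A})_{\varphi}\comp \Upsilon^{\varphi,f}_{A}$, applied to $Q\in \mathrm{Ter}_{X}(\mathbf{d}^{\ast}(\mathbf{A}))_{s}$ and evaluated at $a\in A_{Y'}$, yield $Q(a_{\varphi}\comp g\comp k)$ and $Q(a_{\varphi}\comp \ell_{\varphi}\comp f)$ respectively, which agree by the compatibility hypothesis. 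Once all six faces commute, the full three-dimensional diagram commutes, and the corollary follows. The real obstacle is purely notational---keeping straight the interplay of $(\farg)_{\varphi}$, $^{@}$, $^{\mathbf{d}}$, $\ext{(\farg)}$ and the various hom-whiskerings---rather than any conceptual one, everything being dictated by the two propositions invoked.
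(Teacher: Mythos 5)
Your proof of part (1) is correct and follows the paper's own route: the first square is Proposition~\ref{RealizacionNatural} read element-wise (together with the observation that $\Upsilon^{\varphi,f}_{A}$ carries $\mathbf{Ter}_{X}(\mathbf{d}^{\ast}(\mathbf{A}))$ into $\mathbf{Ter}_{Y}(\mathbf{A})_{\varphi}$), and the second square is the specialization $Y=\coprod_{\varphi}X$, $f=\eta^{\varphi}_{X}$ via $\vartheta^{\varphi,\eta^{\varphi}_{X}}_{A}=\theta^{\varphi}_{X,A}$, exactly as in the paper. The paper explicitly restricts itself to part (1); your six-face verification of the cube in part (2) --- Proposition~\ref{Schmidt} for the top and bottom faces, part (1) for the front and back, freeness of $\mathbf{T}_{\mathbf{\Sigma}}(X)$ plus the hypothesis $\ell_{\varphi}\comp f=g\comp k$ for the left face, and the evaluation $Q(a_{\varphi}\comp g\comp k)=Q(a_{\varphi}\comp\ell_{\varphi}\comp f)$ for the right face --- is correct and supplies the argument the paper omits.
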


\begin{proof}
We restrict ourselves to prove the first part of the corollary.
The upper left-hand diagram commutes because, for a morphism
$(\varphi,d,f)$ from $(S,\Sigma,X)$ to $(T,\Lambda,Y)$ and a
$\mathbf{\Lambda}$-algebra $\mathbf{A}$, the $S$-sorted mapping
$\Upsilon^{\varphi,f}_{A}$ from $\Op_{X}(A_{\varphi})$
to $\Op_{Y}(A)_{\varphi}$ is actually a $\mathbf{\Sigma}$-homomorphism
from $\mathbf{Op}_{X}(\mathbf{d}^{\ast}(\mathbf{A}))$ to
$\mathbf{Op}_{Y}(\mathbf{A})_{\varphi}$ that restricts to
$\mathbf{Ter}_{X}(\mathbf{d}^{\ast}(\mathbf{A}))$ and
$\mathbf{Ter}_{Y}(\mathbf{A})_{\varphi}$.

The upper right-hand diagram commutes because, for the $T$-sorted set
$\coprod_{\varphi}X$ and the $S$-sorted mapping $\eta^{\varphi}_{X}$
from $X$ to $(\coprod_{\varphi}X)_{\varphi}$, we have that
$\vartheta^{\varphi,\eta^{\varphi}_{X}}_{A} = \theta^{\varphi}_{X,A}$.
\end{proof}



As is well-known, for a signature $\mathbf{\Sigma}$, the
conglomerate of terms for $\mathbf{\Sigma}$ is precisely the set
$\bigcup_{X\in \boldsymbol{\mathcal{U}}}\bigcup_{s\in
S}\mathrm{T}_{\mathbf{\Sigma}}(X)_{s}$, but such an amorphous set
is not adequate, because of its lack of structure, for some tasks,
as e.g., to explain the invariant character of the realization of
terms as term operations on algebras, under change of signature
(or to state a Completeness Theorem for finitary many-sorted
equational logic).

However, by conveniently generalizing the concept of term for a
signature $\mathbf{\Sigma}$ (as explained immediately below), it
is possible to endow, in a natural way, to the corresponding
generalized terms for $\mathbf{\Sigma}$, taken as
\emph{morphisms}, with a structure of category, that enables us to
give, in this paper, a category-theoretical explanation of the
existing relation between terms and algebras.  To this we add that
the use of the generalized terms and related notions, such as,
e.g., that of generalized equation (to be defined in the following
section), has allowed us, in~\cite{cs05}, to provide a purely
category-theoretical proof of the Completeness Theorem for monads
in categories of sorted sets. Moreover, such a proof, after
dualizing the generalized terms and equations, is also applicable
to get a corresponding Completeness Theorem for comonads in
categories of sorted sets.

Actually, we associate to every signature $\mathbf{\Sigma}$ the
category $\mathbf{Kl}(\mathbb{T}_{\mathbf{\Sigma}})^{\mathrm{op}}$, of
generalized terms for $\mathbf{\Sigma}$, that we denote, to shorten
notation, by $\mathbf{Ter}(\mathbf{\Sigma})$, i.e., the dual of the
Kleisli category for $\mathbb{T}_{\mathbf{\Sigma}} =
(\mathrm{T}_{\mathbf{\Sigma}},\eta,\mu)$, the standard monad derived
from the adjunction $\mathbf{T}_{\mathbf{\Sigma}}\dashv
\mathrm{G}_{\mathbf{\Sigma}}$ between the category
$\mathbf{Alg}(\mathbf{\Sigma})$ and the category $\mathbf{Set}^{S}$,
with $\mathrm{T}_{\mathbf{\Sigma}} =
\mathrm{G}_{\mathbf{\Sigma}}\comp \mathbf{T}_{\mathbf{\Sigma}}$.
Thus we will be working with a category,
$\mathbf{Ter}(\mathbf{\Sigma})$ (to be defined fully below), that has
as objects the $S$-sorted sets and as morphisms from an $S$-sorted set
$X$ into a like one $Y$ the $S$\nobreakdash-sorted mappings $P$ from $Y$ to
$\mathrm{T}_{\mathbf{\Sigma}}(X)$, i.e., the families $P =
(P_{s})_{s\in S}$, where, for every $s\in S$, $P_{s}$ is a mapping
from $Y_{s}$ to $\mathrm{T}_{\mathbf{\Sigma}}(X)_{s}$ which sends a
variable $y\in Y_{s}$ to the term $P_{s}(y)\in
\mathrm{T}_{\mathbf{\Sigma}}(X)_{s}$.  These morphisms in
$\mathbf{Ter}(\mathbf{\Sigma})$ from $X$ to $Y$ we will call
generalized terms for $\mathbf{\Sigma}$ of type $(X,Y)$, or, simply,
terms for $\mathbf{\Sigma}$ of type $(X,Y)$.

The construction of the category $\mathbf{Ter}(\mathbf{\Sigma})$ is a
natural one.  This is so, essentially, because it has been obtained by
applying a category-theoretical construction, concretely that of
Kleisli (in~\cite{kl65}).  However, to understand more plainly how
the category $\mathbf{Ter}(\mathbf{\Sigma})$ is obtained, or, more
precisely, from where the morphisms of $\mathbf{Ter}(\mathbf{\Sigma})$
arise, the following observation could be helpful.  For a signature
$\mathbf{\Sigma}$, an $S$-sorted set $X$, and a sort $s\in S$, an
ordinary term $P\in \mathrm{T}_{\Sigma}(X)_{s}$ for $\mathbf{\Sigma}$
of type $(X,s)$ is, essentially, an $S$-sorted mapping $P\colon
\delta^{s}\mor \mathrm{T}_{\Sigma}(X)$ where, for $s\in S$,
$\delta^{s} = (\delta^{s}_{t})_{t\in S}$, the delta of Kronecker in
$s$, is the $S$-sorted set such that $\delta^{s}_{t} = \vacio$ if
$s\neq t$ and $\delta^{s}_{s} = 1$.  But the just mentioned $S$-sorted
mappings do not constitute the morphisms of a category.  Therefore, in
order to get a category, it seems natural to replace the special
$S$-sorted sets that are the deltas of Kronecker, as domains of
morphisms, by arbitrary $S$\nobreakdash-sorted sets, thus obtaining
the generalized terms, that are the category-theoretical rendering of
the ordinary terms, since they are now $S$-sorted mappings from an
$S$-sorted set to the free $\mathbf{\Sigma}$-algebra on another
$S$-sorted set, i.e., morphisms in a category
$\mathbf{Ter}(\mathbf{\Sigma})$.

This category-theoretical perspective about terms, in its turn, will
allow us to get a functor $\mathrm{Tr}^{\mathbf{\Sigma}}$, of
realization of terms as term operations, from
$\mathbf{Alg}(\mathbf{\Sigma})\times \mathbf{Ter}(\mathbf{\Sigma})$ to
$\mathbf{Set}$, and therefore to define (in the next section) the
validation of equations, understood as ordered pairs of coterminal
terms in the corresponding generalized sense, in an algebra.

Since it will be fundamental in all that follows, we provide, for a
signature $\mathbf{\Sigma}$, the full definition of the category
$\mathbf{Ter}(\mathbf{\Sigma})$, as announced above, and also the
definition of the procedure of realization of the terms for
$\mathbf{\Sigma}$ as term operations on a given
$\mathbf{\Sigma}$-algebra.  Observe that we depart, in the definition
of the category $\mathbf{Ter}(\mathbf{\Sigma})$, but only for this
type of category, from the (non-Ehresmannian) tradition, in calling a
category by the name of its morphisms.

\begin{definition}
Let $\mathbf{\Sigma}$ be a signature and $\mathbf{A}$ a $\mathbf{\Sigma}$-algebra.
Then
\begin{enumerate}
\item The category of \emph{terms for} $\mathbf{\Sigma}$,
      $\mathbf{Ter}(\mathbf{\Sigma})$, is the dual of
      $\mathbf{Kl}(\mathbb{T}_{\mathbf{\Sigma}})$.  Therefore
      $\mathbf{Ter}(\mathbf{\Sigma})$ has
      \begin{enumerate}
      \item As objects the elements of $\boldsymbol{\mathcal{U}}^{S}$, i.e., the
            $S$-sorted sets,

      \item As morphisms from $X$ to $Y$, that we call \emph{terms
            for} $\mathbf{\Sigma}$ \emph{of type} $(X,Y)$, or, simply,
            \emph{terms} \emph{of type} $(X,Y)$, the $S$-sorted mappings
            from $Y$ to $\mathrm{T}_{\mathbf{\Sigma}}(X)$,

      \item As composition, denoted in $\mathbf{Ter}(\mathbf{\Sigma})$ and
            $\mathbf{Kl}(\mathbb{T}_{\mathbf{\Sigma}})$ by $\diamond$, the
            operation which sends terms $P\colon X\mor Y$ and $Q\colon Y\mor
            Z$ in $\mathbf{Ter}(\mathbf{\Sigma})$ to the term $Q\diamond P\colon
            X\mor Z$ in $\mathbf{Ter}(\mathbf{\Sigma})$ defined as
            $$
            Q\diamond P = \mu_{X}\comp P^{@}\comp Q,
            $$
            where $\mu_{X}$ is the value at $X$ of the multiplication
            $\mu$ of the monad $\mathbb{T}_{\mathbf{\Sigma}} =
            (\mathrm{T}_{\mathbf{\Sigma}},\eta,\mu)$ and $P^{@}$ the
            value of the functor $\mathbf{T}_{\mathbf{\Sigma}}$
            at the $S$-sorted mapping $P\colon Y\mor
            \mathrm{T}_{\mathbf{\Sigma}}(X)$, and

\item As identities the values of $\eta$, the unit of the
      monad $\mathbb{T}_{\mathbf{\Sigma}}$, in the $S$-sorted sets.
\end{enumerate}

\item If $P\colon X\mor Y$ is a term for $\mathbf{\Sigma}$ of
      type $(X,Y)$, then $P^{\mathbf{A}}$, the \emph{term operation
      on} $\mathbf{A}$ \emph{determined by} $P$, or the \emph{term
      realization of} $P$ \emph{on} $\mathbf{A}$, is the mapping from
      $A_{X}$ to $A_{Y}$ which assigns to a valuation $f$ of the
      variables $X$ in $A$ the valuation $f^{\sharp}\comp P$ of the
      variables $Y$ in $A$, i.e., the composition of $P\colon Y\mor
      \mathrm{T}_{\mathbf{\Sigma}}(X)$ and the underlying mapping of
      $f^{\sharp}\colon \mathbf{T}_{\mathbf{\Sigma}}(X)\mor
      \mathbf{A}$, the canonical extension of the valuation $f\colon
      X\mor A$.
\end{enumerate}

\end{definition}

\begin{remark}
For a term $P\colon X\mor Y$ for $\mathbf{\Sigma}$ of
type $(X,Y)$, the term operation $P^{\mathbf{A}}$ on $\mathbf{A}$
determined by $P$ is also the mapping from $A_{X}$ to $A_{Y}$
obtained from the family
$$
((\mathrm{Tr}^{X,\mathbf{A}}_{s}(P_{s}(y)))_{y\in Y_{s}})_{s\in S}\in
\textstyle\prod_{s\in S}\mathrm{Hom}(Y_{s},\mathrm{Hom}(A_{X},A_{s})),
$$
through the following natural isomorphisms:

\begin{align*}
    \textstyle\prod_{s\in S}\mathrm{Hom}(Y_{s},\mathrm{Hom}(A_{X},A_{s}))
    &\cong
    \textstyle\prod_{s\in S}\mathrm{Hom}(A_{X},\mathrm{Hom}(Y_{s},A_{s}))\\
    &\cong
    \mathrm{Hom}(A_{X},\textstyle\prod_{s\in S}\mathrm{Hom}(Y_{s},A_{s}))\\
    &\cong
    \mathrm{Hom}(A_{X},A_{Y}) .
\end{align*}
\end{remark}

After associating to every signature $\mathbf{\Sigma}$ the
corresponding category $\mathbf{Ter}(\mathbf{\Sigma})$ of terms, we
proceed to assign to every signature morphism $\mathbf{d}\colon
\mathbf{\Sigma}\mor \mathbf{\Lambda}$, where, we recall, $\mathbf{d}$
is a pair $(\varphi,d)$ with $\varphi$ a mapping from $S$ to $T$ and
$d$ a morphism in $\mathbf{Set}(S)$ from $\Sigma$ to
$\Lambda_{\varphi^{\star}\times \varphi}$, a corresponding functor
$\mathbf{d}_{\diamond}$ from $\mathbf{Ter}(\mathbf{\Sigma})$ to
$\mathbf{Ter}(\mathbf{\Lambda})$.

\begin{proposition}\label{functorMorfismoSignaturas}
Let $\mathbf{d}\colon \mathbf{\Sigma}\mor \mathbf{\Lambda}$ be a
signature morphism.  Then there exists a functor
$\mathbf{d}_{\diamond}$ from $\mathbf{Ter}(\mathbf{\Sigma})$ to
$\mathbf{Ter}(\mathbf{\Lambda})$ defined as follows
\begin{enumerate}
\item $\mathbf{d}_{\diamond}$ sends an $S$-sorted set $X$ to the $T$-sorted
      set $\mathbf{d}_{\diamond}(X) = \coprod_{\varphi}X$.

\item $\mathbf{d}_{\diamond}$ sends a morphism $P$ from $X$ to
      $Y$ in $\mathbf{Ter}(\mathbf{\Sigma})$ to the morphism
      $\mathbf{d}_{\diamond}(P) =
      (\theta^{\varphi})^{-1}(\eta^{\mathbf{d}}_{X}\comp P)$ from
      $\coprod_{\varphi}X$ to $\coprod_{\varphi}Y$ in
      $\mathbf{Ter}(\mathbf{\Lambda})$, where $\eta^{\mathbf{d}}_{X}$
      is the $\mathbf{\Sigma}$-homo\-morphism from
      $\mathbf{T}_{\mathbf{\Sigma}}(X)$ to
      $\mathbf{T}_{\mathbf{\Lambda}}(\coprod_{\varphi}X)_{\varphi}$
      that extends the $S$-sorted mapping
      $(\eta_{\coprod_{\varphi}X})_{\varphi}\comp \eta^{\varphi}_{X}$
      from $X$ to
      $\mathrm{T}_{\mathbf{\Lambda}}(\coprod_{\varphi}X)_{\varphi}$,
      i.e., for $\eta^{\mathbf{d}}_{X}$ we have that
      $$
      \eta^{\mathbf{d}}_{X} =
      \ext{((\eta_{\coprod_{\varphi}X})_{\varphi}\comp
      \eta^{\varphi}_{X})},
      $$
      $\theta^{\varphi}$ the natural isomorphism of the
      adjunction $\coprod_{\varphi}\ladj\Delta_{\varphi}$, and
      $\eta^{\varphi}$ the unit of the same adjunction.
\end{enumerate}

\end{proposition}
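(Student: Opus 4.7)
The plan is first to verify well-typedness, then to handle identities directly, and finally to obtain functoriality on composition either by a direct computation or, more conceptually, by identifying $\mathbf{d}_{\diamond}$ with the restriction to free algebras of the left adjoint $\mathbf{d}_{\ast}$ from Proposition \ref{leftadjsigmorph}.

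For well-typedness: given a morphism $P\colon Y\mor \mathrm{T}_{\mathbf{\Sigma}}(X)$ in $\mathbf{Ter}(\mathbf{\Sigma})$, the composite $\eta^{\mathbf{d}}_{X}\comp P$ is a morphism from $Y$ to $\mathrm{T}_{\mathbf{\Lambda}}(\coprod_{\varphi}X)_{\varphi}$ in $\mathbf{Set}^{S}$, so the transpose $(\theta^{\varphi})^{-1}(\eta^{\mathbf{d}}_{X}\comp P)$ is a morphism $\coprod_{\varphi}Y\mor \mathrm{T}_{\mathbf{\Lambda}}(\coprod_{\varphi}X)$ in $\mathbf{Set}^{T}$, i.e., a term of type $(\coprod_{\varphi}X,\coprod_{\varphi}Y)$ in $\mathbf{Ter}(\mathbf{\Lambda})$.

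Preservation of identities is a direct calculation. Since $\eta^{\mathbf{d}}_{X} = \ext{((\eta_{\coprod_{\varphi}X})_{\varphi}\comp \eta^{\varphi}_{X})}$ by definition, the universal property of the free algebra gives $\eta^{\mathbf{d}}_{X}\comp \eta_{X} = (\eta_{\coprod_{\varphi}X})_{\varphi}\comp \eta^{\varphi}_{X}$. The standard description of the adjunction bijection for $\coprod_{\varphi}\ladj \Delta_{\varphi}$ (via the unit $\eta^{\varphi}$) says exactly that $\theta^{\varphi}(\eta_{\coprod_{\varphi}X}) = (\eta_{\coprod_{\varphi}X})_{\varphi}\comp \eta^{\varphi}_{X}$. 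Hence $\mathbf{d}_{\diamond}(\eta_{X}) = \eta_{\coprod_{\varphi}X}$, which is the identity at $\coprod_{\varphi}X$ in $\mathbf{Ter}(\mathbf{\Lambda})$.

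The main obstacle is the preservation of composition, where the interplay of three adjunctions has to be brought under control. My preferred route is to observe that the functor $\mathbf{d}_{\ast}$ of Proposition \ref{leftadjsigmorph}, being a left adjoint, carries the composite left adjoint $\mathbf{d}_{\ast}\comp \mathbf{T}_{\mathbf{\Sigma}}$ to one that is also left adjoint to $\mathrm{G}_{\mathbf{\Sigma}}\comp \mathbf{d}^{\ast}=\Delta_{\varphi}\comp \mathrm{G}_{\mathbf{\Lambda}}$; uniqueness of adjoints then gives a canonical natural isomorphism $\mathbf{d}_{\ast}(\mathbf{T}_{\mathbf{\Sigma}}(X))\iso \mathbf{T}_{\mathbf{\Lambda}}(\coprod_{\varphi}X)$. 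Via the bijections
\[
  \Hom_{\mathbf{Ter}(\mathbf{\Sigma})}(X,Y)=\Hom_{\mathbf{Set}^{S}}(Y,\mathrm{T}_{\mathbf{\Sigma}}(X))\iso \Hom_{\mathbf{Alg}(\mathbf{\Sigma})}(\mathbf{T}_{\mathbf{\Sigma}}(Y),\mathbf{T}_{\mathbf{\Sigma}}(X)),
\]
this restriction induces a functor $\mathbf{Ter}(\mathbf{\Sigma})\mor \mathbf{Ter}(\mathbf{\Lambda})$ automatically. What remains is to check that the induced functor agrees with the concrete formula of the proposition; this reduces, after chasing the adjunction transposes, to the identity $\eta^{\mathbf{d}}_{X}\comp P = \theta^{\varphi}(\mathbf{d}_{\diamond}(P))$, which is precisely the naturality established in Proposition \ref{fundPdextranaturalbis} applied to $P$. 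Once this identification is in place, functoriality of $\mathbf{d}_{\diamond}$ is inherited from functoriality of $\mathbf{d}_{\ast}$. Alternatively, one may verify composition preservation directly by computing $\mathbf{d}_{\diamond}(Q\diamond P)=(\theta^{\varphi})^{-1}(\eta^{\mathbf{d}}_{X}\comp \mu_{X}\comp P^{@}\comp Q)$ and, using that $\eta^{\mathbf{d}}_{X}$ is a $\mathbf{\Sigma}$-homomorphism together with the Kleisli identity $\mu\comp (\eta^{\mathbf{d}})^{@}=\cdots$, rewrite this as $\mu^{\mathbf{\Lambda}}_{\coprod_{\varphi}X}\comp \mathbf{d}_{\diamond}(P)^{@}\comp \mathbf{d}_{\diamond}(Q)=\mathbf{d}_{\diamond}(Q)\diamond \mathbf{d}_{\diamond}(P)$; the naturality of $\theta^{\varphi}$ handles the transposes throughout.
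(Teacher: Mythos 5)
Your handling of well-typedness and of identities coincides with the paper's: the paper establishes $\mathbf{d}_{\diamond}(\eta_{X})=\eta_{\coprod_{\varphi}X}$ by exactly the triangle you describe, using $\theta^{\varphi}(\eta_{\coprod_{\varphi}X})=(\eta_{\coprod_{\varphi}X})_{\varphi}\comp\eta^{\varphi}_{X}$. For composition the paper follows your \emph{alternative} route, not your preferred one: it expands $\mathbf{d}_{\diamond}(Q\dcomp P)$ and $\mathbf{d}_{\diamond}(Q)\dcomp\mathbf{d}_{\diamond}(P)$, uses naturality of $\theta^{\varphi}$ and the counit $\varepsilon^{\varphi}$ to reduce their equality to the single identity $\eta^{\mathbf{d}}_{X}\comp P^{\sharp}=\mathbf{d}_{\diamond}(P)^{\sharp}_{\varphi}\comp\eta^{\mathbf{d}}_{Y}$ between $\mathbf{\Sigma}$-homomorphisms out of the free algebra $\mathbf{T}_{\mathbf{\Sigma}}(Y)$, and settles that by checking that both sides restrict to $\eta^{\mathbf{d}}_{X}\comp P$ on the generators $Y$. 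Your preferred route---transporting $\mathbf{d}_{\ast}$ of Proposition~\ref{leftadjsigmorph} along the isomorphism $\mathbf{d}_{\ast}\comp\mathbf{T}_{\mathbf{\Sigma}}\iso\mathbf{T}_{\mathbf{\Lambda}}\comp\tcoprod_{\varphi}$ supplied by uniqueness of left adjoints, and letting the Kleisli/free-algebra comparison carry the functoriality---is more conceptual and makes preservation of composition automatic; its cost is the identification of the induced functor with the explicit formula. Two cautions on that step: the identity $\eta^{\mathbf{d}}_{X}\comp P=\theta^{\varphi}(\mathbf{d}_{\diamond}(P))$ is the \emph{definition} of $\mathbf{d}_{\diamond}(P)$, not something Proposition~\ref{fundPdextranaturalbis} delivers (that proposition gives naturality of $\eta^{\mathbf{d}}$ in ordinary $S$-sorted mappings $f\colon X\mor Y$, not in Kleisli morphisms); what your transpose-chase genuinely needs is that $\eta^{\mathbf{d}}_{X}$ be the $X$-component of the unit of the composite adjunction $\mathbf{d}_{\ast}\comp\mathbf{T}_{\mathbf{\Sigma}}\ladj\mathrm{G}_{\mathbf{\Sigma}}\comp\mathbf{d}^{\ast}$, and verifying that comes down to the same evaluation on generators that closes the paper's proof. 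So both branches of your argument terminate in the paper's key computation; neither constitutes a gap, but the conceptual branch should make the unit identification explicit rather than lean on Proposition~\ref{fundPdextranaturalbis}.
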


\begin{proof}
To begin with, we prove that $\mathbf{d}_{\diamond}$ preserves
identities.  If $X$ is an $S$-sorted set, then the following diagram
$$
\xymatrix@C=60pt@R=40pt{
X \ar[r]^{\eta_{X}}
  \ar[d]_{\eta^{\varphi}_{X}}
  \ar[rd]|{\theta^{\varphi}(\eta_{\coprod_{\varphi}X})} &
\mathrm{T}_{\mathbf{\Sigma}}(X)
  \ar[d]^{\eta^{\mathbf{d}}_{X} =
  \ext{(\theta^{\varphi}(\eta_{\coprod_{\varphi}X}))}} \\
(\coprod_{\varphi}X)_{\varphi}
  \ar[r]_{(\eta_{\coprod_{\varphi}X})_{\varphi}} &
\mathrm{T}_{\mathbf{\Lambda}}(\coprod_{\varphi}X)_{\varphi}
}
$$
commutes, therefore
$\mathbf{d}_{\diamond}(\eta_{X})=\eta_{\coprod_{\varphi}X}$.  Now we prove
that $\mathbf{d}_{\diamond}$ preserves compositions.  Let $P\colon X\mor Y$
and $Q\colon Y\mor Z$ be morphisms in ${\mathbf{Ter}(\mathbf{\Sigma})}$.
Then we have the following equations: %
\begin{align*}
    \mathbf{d}_{\diamond}(Q\dcomp P) &=
    (\theta^{\varphi})^{-1}(\eta^{\mathbf{d}}_{X}\comp
           P^{\sharp}\comp Q) \\
    &=
    (\theta^{\varphi})^{-1}(\eta^{\mathbf{d}}_{X})\comp
      \tcoprod_{\varphi}P^{\sharp}\comp \tcoprod_{\varphi}Q, \\
    \mathbf{d}_{\diamond}(Q)\dcomp \mathbf{d}_{\diamond}(P) &=
    \mathbf{d}_{\diamond}(P)^{\sharp} \comp \mathbf{d}_{\diamond}(Q)  \\
    &=
    \mathbf{d}_{\diamond}(P)^{\sharp}
      \comp
      (\theta^{\varphi})^{-1}(\eta^{\mathbf{d}}_{Y})\comp
      \tcoprod_{\varphi}Q,
\end{align*}
therefore, to prove that $\mathbf{d}_{\diamond}(Q\dcomp P) =
\mathbf{d}_{\diamond}(Q)\dcomp \mathbf{d}_{\diamond}(P)$ it is enough
to verify the following equation
\begin{equation*}
(\theta^{\varphi})^{-1}(\eta^{\mathbf{d}}_{X})\comp\tcoprod_{\varphi}
  P^{\sharp}
=
\mathbf{d}_{\diamond}(P)^{\sharp}
      \comp
      (\theta^{\varphi})^{-1}(\eta^{\mathbf{d}}_{Y}).
\end{equation*}
But for this, because of the commutativity of the following diagram
$$
\xymatrix@C=55pt@R=35pt{
\coprod_{\varphi}\mathrm{T}_{\mathbf{\Sigma}}(Y)
  \ar`u[rr]+/u30pt/`[rr]
  ^{(\theta^{\varphi})^{-1}(\eta^{\mathbf{d}}_{Y})}%
  [rr]
  \ar[r]^-{\coprod_{\varphi}\eta^{\mathbf{d}}_{Y}}
  \ar[d]|{\coprod_{\varphi}P^{\sharp}}
  &
\coprod_{\varphi}\mathrm{T}_{\mathbf{\Lambda}}(\coprod_{\varphi}Y)_{\varphi}
  \ar[r]^-{\varepsilon^{\varphi}_{\mathrm{T}_{\mathbf{\Lambda}}(\coprod_{\varphi}Y)}}
  \ar[d]|{\coprod_{\varphi}\mathbf{d}_{\diamond}(P)^{\sharp}_{\varphi}}
  &
\mathrm{T}_{\mathbf{\Lambda}}(\coprod_{\varphi}Y)
  \ar[d]|{\mathbf{d}_{\diamond}(P)^{\sharp}}
  \\
\coprod_{\varphi}\mathrm{T}_{\mathbf{\Lambda}}(X)
  \ar[r]_-{\coprod_{\varphi}\eta^{\mathbf{d}}_{X}}
  \ar`d[rr]+/d30pt/`[rr]
  _{(\theta^{\varphi})^{-1}(\eta^{\mathbf{d}}_{X})}%
  [rr]
  &
\coprod_{\varphi}\mathrm{T}_{\mathbf{\Lambda}}(\coprod_{\varphi}X)_{\varphi}
  \ar[r]_-{\varepsilon^{\varphi}_{\mathrm{T}_{\mathbf{\Lambda}}(\coprod_{\varphi}X)}}
  &
\mathrm{T}_{\mathbf{\Lambda}}(\coprod_{\varphi}X)
}
$$
it is enough to verify the following equation
\begin{equation}
    \eta^{\mathbf{d}}_{X}\comp P^{\sharp}
     =
    \mathbf{d}_{\diamond}(P)^{\sharp}_{\varphi}\comp\eta^{\mathbf{d}}_{Y}.
\tag{1}
\end{equation}
But equation (1) is valid because the restriction of both terms
to the generating $\mathrm{ms}$-set $Y$ coincide:
\begin{align*}
    \eta^{\mathbf{d}}_{X} \comp P^{\sharp}
      \comp \eta_{Y} &=
      \eta^{\mathbf{d}}_{X} \comp P, \\[2ex]
    \mathbf{d}_{\diamond}(P)^{\sharp}_{\varphi}
      \comp \eta^{\mathbf{d}}_{Y}
      \comp \eta_{Y}
    &=
    \mathbf{d}_{\diamond}(P)^{\sharp}_{\varphi}
      \comp (\eta_{\coprod_{\varphi}Y})_{\varphi}
      \comp \eta^{\varphi}_{Y} \\
    &=
    \mathbf{d}_{\diamond}(P)_{\varphi}\comp \eta^{\varphi}_{Y} \\
    &=
    (\theta^{\varphi})^{-1}(\eta^{\mathbf{d}}_{X}\comp P)_{\varphi}
      \comp\eta^{\varphi}_{Y} \\
    &=
    (\eta^{\mathbf{d}}_{X}\comp P)^{\sharp}\comp
    \eta_{Y}\\
    &=
    \eta^{\mathbf{d}}_{X} \comp P.
    \qedhere
\end{align*}
\end{proof}

\begin{remark}
For a term $P\colon X\mor Y$, the term $\mathbf{d}_{\diamond}(P)\colon
\coprod_{\varphi}X \mor \coprod_{\varphi}Y$ can be defined
alternative, but equivalently, as the composition of the morphisms in
the following diagram
$$
\xymatrix@C=9.5 ex{
\coprod_{\varphi}Y
\ar[r]^-{\coprod_{\varphi}P} &
*+!<1.89ex,0ex>{\coprod_{\varphi} \mathrm{T}_{\mathbf{\Sigma}}(X)}
\ar[r]^-{\coprod_{\varphi}\eta^{\mathbf{d}}_{X}} &
\coprod_{\varphi}\mathrm{T}_{\mathbf{\Lambda}}(\coprod_{\varphi}X)_{\varphi}
\ar[r]^-{\varepsilon^{\varphi}_{\mathrm{T}_{\mathbf{\Lambda}}(\coprod_{\varphi}X)}\,\,} &
\mathrm{T}_{\mathbf{\Lambda}}(\coprod_{\varphi}X),
}
$$
where, recalling that $\eta^{\mathbf{d}} =
\mathrm{G}_{\mathbf{\Sigma}}\ast \eta^{\mathbf{d}}$ is the second
natural transformation in Proposition~\ref{fundPdextranaturalbis}
and $\varepsilon^{\varphi}$ the counit of the adjunction
$\coprod_{\varphi}\ladj\Delta_{\varphi}$, we have that
\begin{enumerate}
\item The $T$-sorted mapping
      $\coprod_{\varphi}\eta^{\mathbf{d}}_{X}$ is the component at $X$
      of the natural transformation
      $\coprod_{\varphi}\ast\eta^{\mathbf{d}}$ from
      $\coprod_{\varphi}\comp \mathrm{T}_{\mathbf{\Sigma}}$ to
      $\coprod_{\varphi}\comp\Delta_{\varphi}\comp
      \mathrm{T}_{\mathbf{\Lambda}}\comp \coprod_{\varphi}$, and

\item The $T$-sorted mapping
      $\varepsilon^{\varphi}_{\mathrm{T}_{\mathbf{\Lambda}}(\coprod_{\varphi}X)}$
      is the component at $X$ of the natural transformation
      $\varepsilon^{\varphi}\ast
      (\mathrm{T}_{\mathbf{\Lambda}}\comp\coprod_{\varphi}X)$ from
      $\coprod_{\varphi}\comp\Delta_{\varphi}\comp
      \mathrm{T}_{\mathbf{\Lambda}}\comp \coprod_{\varphi}$ to
      $\mathrm{T}_{\mathbf{\Lambda}}\comp\coprod_{\varphi}X$.
\end{enumerate}
\end{remark}


We state now for the generalized terms the homologous of the
right-hand diagram in the first part of
Corollary~\ref{fundPdextranatural}, i.e., the invariant character
under signature change of the realization of terms as term operations
in arbitrary, but fixed, algebras.  We remark that from this fact we
will get, in the third section, the invariance of the relation of
satisfaction under signature change.

\begin{proposition}\label{invarianza}
Let $\mathbf{d}\colon\mathbf{\Sigma}\mor\mathbf{\Lambda}$ be a
signature morphism.  Then, for every $\mathbf{\Lambda}$\nobreakdash-algebra
$\mathbf{A}$ and term $P\colon X\mor Y$ for $\mathbf{\Sigma}$ of type
$(X,Y)$, the following diagram commutes
$$
\xymatrix@C=18ex@R=10ex{
(A_{\varphi})_{X}
  \ar[r]^{P^{\mathbf{d}^{\ast}(\mathbf{A})}} &
(A_{\varphi})_{Y} \\
A_{\coprod_{\varphi}\!X}
  \ar[r]_{\mathbf{d}_{\diamond}(P)^{\mathbf{A}}}
  \ar[u]^{\theta^{\varphi}_{X,A}} &
A_{\coprod_{\varphi}\!Y}
  \ar[u]_{\theta^{\varphi}_{Y,A}}
}
$$

\end{proposition}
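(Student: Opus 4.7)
The plan is to reduce the asserted equality of maps $A_{\tcoprod_{\varphi}X}\mor (A_{\varphi})_{Y}$ to the right-hand diagram of Corollary~\ref{fundPdextranatural}(1), which already handles the case of an ordinary term $P'\in \mathrm{T}_{\mathbf{\Sigma}}(X)_{s}$. Since a generalized term $P\colon X\mor Y$ is, by definition, an $S$-sorted mapping $P=(P_{s})_{s\in S}$ with $P_{s}(y)\in \mathrm{T}_{\mathbf{\Sigma}}(X)_{s}$ for each $y\in Y_{s}$, everything collapses to a pointwise comparison at each sort and each variable.

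First, I would unpack the two paths on a generic $g\colon \tcoprod_{\varphi}X\mor A$ in $A_{\tcoprod_{\varphi}X}$. Along the top path, by the definitions of $\theta^{\varphi}_{X,A}$ and of the term realization, we obtain the $S$-sorted mapping $(g_{\varphi}\comp \eta^{\varphi}_{X})^{\sharp}\comp P\colon Y\mor A_{\varphi}$, whose $s$-th component at $y\in Y_{s}$ is $(g_{\varphi}\comp \eta^{\varphi}_{X})^{\sharp}_{s}(P_{s}(y))\in A_{\varphi(s)}$, where the sharp denotes the canonical extension to $\mathbf{d}^{\ast}(\mathbf{A})$. Along the bottom path, $\mathbf{d}_{\diamond}(P)^{\mathbf{A}}(g) = g^{\sharp}\comp \mathbf{d}_{\diamond}(P)\colon \tcoprod_{\varphi}Y\mor A$, and applying $\theta^{\varphi}_{Y,A}$ yields $(g^{\sharp}\comp \mathbf{d}_{\diamond}(P))_{\varphi}\comp \eta^{\varphi}_{Y}$. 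Using the characterization $\mathbf{d}_{\diamond}(P) = (\theta^{\varphi})^{-1}(\eta^{\mathbf{d}}_{X}\comp P)$ via the adjunction $\coprod_{\varphi}\ladj \Delta_{\varphi}$, i.e.\ the identity $\mathbf{d}_{\diamond}(P)_{\varphi}\comp \eta^{\varphi}_{Y} = \eta^{\mathbf{d}}_{X}\comp P$, the $s$-th component at $y\in Y_{s}$ comes out to $g^{\sharp}_{\varphi(s)}(\eta^{\mathbf{d}}_{X,s}(P_{s}(y)))$.

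Hence the commutativity reduces to the scalar identity
\begin{equation*}
(g_{\varphi}\comp \eta^{\varphi}_{X})^{\sharp}_{s}(P_{s}(y)) = g^{\sharp}_{\varphi(s)}(\eta^{\mathbf{d}}_{X,s}(P_{s}(y)))
\end{equation*}
for every $s\in S$ and $y\in Y_{s}$. But this is exactly the pointwise content of the right-hand diagram of Corollary~\ref{fundPdextranatural}(1) applied to the ordinary term $P_{s}(y)\in \mathrm{T}_{\mathbf{\Sigma}}(X)_{s}$, the $\mathbf{\Lambda}$-algebra $\mathbf{A}$, and the valuation $g\in A_{\tcoprod_{\varphi}X}$: indeed that diagram reads $P'^{\mathbf{d}^{\ast}(\mathbf{A})}\comp \theta^{\varphi}_{X,A} = \eta^{\mathbf{d}}_{X,s}(P')^{\mathbf{A}}$ for every $P'\in \mathrm{T}_{\mathbf{\Sigma}}(X)_{s}$, which when instantiated with $P' = P_{s}(y)$ and evaluated at $g$ yields precisely the displayed identity.

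Alternatively, and more conceptually, the equality can be deduced directly from the uniqueness clause of the universal property of $\mathbf{T}_{\mathbf{\Sigma}}(X)$: both $(g_{\varphi}\comp \eta^{\varphi}_{X})^{\sharp}$ and $(g^{\sharp})_{\varphi}\comp \eta^{\mathbf{d}}_{X}$ are $\mathbf{\Sigma}$-homomorphisms from $\mathbf{T}_{\mathbf{\Sigma}}(X)$ to $\mathbf{d}^{\ast}(\mathbf{A})$, since $(g^{\sharp})_{\varphi} = \mathbf{d}^{\ast}(g^{\sharp})$ and $\eta^{\mathbf{d}}_{X}$ are, and their restrictions to the generators $\eta_{X}[X]$ both reduce to $g_{\varphi}\comp \eta^{\varphi}_{X}$, using the defining identity $\eta^{\mathbf{d}}_{X}\comp \eta_{X} = (\eta_{\tcoprod_{\varphi}X})_{\varphi}\comp \eta^{\varphi}_{X}$ on the second one. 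I do not anticipate a substantive obstacle; the only care required is in tracking the sort indices and the several adjunctions ($\tcoprod_{\varphi}\ladj \Delta_{\varphi}$ and $\mathbf{T}_{\mathbf{\Sigma}}\ladj \mathrm{G}_{\mathbf{\Sigma}}$) that are simultaneously in play.
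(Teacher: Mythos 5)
Your proof is correct and takes essentially the same route as the paper: the paper reduces to the right-hand diagram of Corollary~\ref{fundPdextranatural}(1) by decomposing $Y$ as the coproduct $\coprod_{s\in S,\,y\in Y_{s}}\delta^{s}$ of deltas of Kronecker (using that $\coprod_{\varphi}$ preserves colimits), which is exactly your pointwise reduction to the components $P_{s}(y)\in\mathrm{T}_{\mathbf{\Sigma}}(X)_{s}$. Your second, self-contained argument via the uniqueness clause of the universal property of $\mathbf{T}_{\mathbf{\Sigma}}(X)$ is also sound and is a pleasant bonus, but it is not needed once the reduction to the corollary is in place.
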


\begin{proof}
Because the $S$-sorted set $Y$ is isomorphic to $\coprod_{ {s\in S,
y\in Y_{s}}}\delta^{s}$ and the functor $\coprod_{\varphi}$ preserves
colimits, since it has $\Delta_{\varphi}$ as a right adjoint,
$\coprod_{\varphi}Y$ is isomorphic to $\coprod_{s\in S,\, y\in
Y_{s}}\delta^{\varphi(s)}$.  But $\mathrm{Hom}(\coprod_{\varphi}Y,A)$
and $\prod_{s\in S, y\in Y_{s}}\mathrm{Hom}(\delta^{\varphi(s)},A)$
are isomorphic, therefore it is enough to prove the proposition for
the $S$-sorted sets of the type $\delta^{s}$, i.e., the deltas of
Kronecker, and this follows directly from
Corollary~\ref{fundPdextranatural}.
\end{proof}

Once defined the mappings that associate, respectively, to a signature
the corresponding category of terms, and to a signature morphism the
functor between the associated categories of terms, we state in the
following proposition that both mappings are actually the components
of a pseudo-functor from $\mathbf{Sig}$ to the $2$-category $\mathbf{Cat}$.

\begin{proposition}\label{pseudoPol}
There exists a pseudo-functor $\mathrm{Ter}$ from $\mathbf{Sig}$ to the
$2$-category $\mathbf{Cat}$ given by the following data
\begin{enumerate}
\item The object mapping of $\mathrm{Ter}$ is that which sends a
      signature $\mathbf{\Sigma}$ to the category
      $\mathrm{Ter}(\mathbf{\Sigma}) = \mathbf{Ter}(\mathbf{\Sigma})$.

\item The morphism mapping of $\mathrm{Ter}$ is that which sends
      a signature morphism $\mathbf{d}$ from $\mathbf{\Sigma}$ to
      $\mathbf{\Lambda}$ to the functor $\mathrm{Ter}(\mathbf{d}) =
      \mathbf{d}_{\diamond}$ from  $\mathbf{Ter}(\mathbf{\Sigma})$ to
      $\mathbf{Ter}(\mathbf{\Lambda})$.

\item For every $\mathbf{d}\colon \mathbf{\Sigma}\mor \mathbf{\Lambda}$ and
      $\mathbf{e}\colon \mathbf{\Lambda}\mor \mathbf{\Omega}$, the natural
      isomorphism $\gamma^{\mathbf{d},\mathbf{e}}$ from $\mathbf{e}_{\diamond} \comp
      \mathbf{d}_{\diamond}$ to $(\mathbf{e}\comp\mathbf{d})_{\diamond}$
      is that which is
      defined, for every $S$-sorted set $X$, as the isomorphism
      $\gamma^{\mathbf{d},\mathbf{e}}_{X}\colon\coprod_{\psi}\coprod_{\varphi}X
      \mor \coprod_{\psi \comp \varphi} X$ in $\mathbf{Ter}(\mathbf{\Omega})$
      that corresponds to the $U$-sorted mapping
      $$
      \xymatrix@C=60pt@R=40pt{
      \coprod_{\psi \comp \varphi}X
      \ar[r]^{(\gamma^{\varphi,\psi}_{X})^{-1}} &
      \coprod_{\psi}\coprod_{\varphi}X
      \ar[r]^-{\eta_{\coprod_{\psi}\coprod_{\varphi}X}}
      & \mathrm{T}_{\mathbf{\Omega}}(\coprod_{\psi}\coprod_{\varphi}X),
      }
      $$
      where $\gamma^{\varphi,\psi}_{X}$ is the component at $X$ of the
      natural isomorphism $\gamma^{\varphi,\psi}$ for the pseudo-functor
      $\mathrm{MSet}^{\ttcoprod}$.

\item For every signature $\mathbf{\Sigma}$, the
       natural isomorphism $\nu^{\mathbf{\Sigma}}$ from
       $\Id_{\mathbf{Ter}(\mathbf{\Sigma})}$ to $(\id_{\mathbf{\Sigma}})_{\diamond}$
       is that which is defined, for every $S$-sorted set $X$, as the
       isomorphism $\nu^{\mathbf{\Sigma}}_{X}\colon X\mor
       \coprod_{\id_{S}}X$ in $\mathbf{Ter}(\mathbf{\Sigma})$ that corresponds
       to the $S$-sorted mapping
       $$
       \xymatrix@C=60pt@R=40pt{
       \coprod_{\id_{S}}X
       \ar[r]^{\nu^{S}_{X}} & X \ar[r]^-{\eta_{X}} &
       \mathrm{T}_{\mathbf{\Omega}}(X),
       }
       $$
       where $\nu^{S}_{X}$ is the component at $X$ of the
       natural isomorphism  $\nu^{S}$ for the
       pseudo-functor $\mathrm{MSet}^{\ttcoprod}$.
\end{enumerate}
\end{proposition}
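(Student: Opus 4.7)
The plan is to leverage the pseudo-functor structure already established for $\mathrm{MSet}^{\ttcoprod}\colon \mathbf{Set}\mor \mathbf{Cat}$ in the first proposition of this section. Since, on objects, $\mathbf{d}_{\diamond}$ agrees with $\coprod_{\varphi}$, and the isomorphisms $\gamma^{\mathbf{d},\mathbf{e}}_X$ and $\nu^{\mathbf{\Sigma}}_X$ are, up to post-composition with a unit of the monad $\mathbb{T}_{(\cdot)}$, exactly the components $\gamma^{\varphi,\psi}_X$ and $\nu^S_X$ from $\mathrm{MSet}^{\ttcoprod}$, the whole verification should reduce, via the Kleisli correspondence, to known facts about $\mathrm{MSet}^{\ttcoprod}$ together with the naturality statement in Proposition~\ref{fundPdextranaturalbis}. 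That Proposition~\ref{functorMorfismoSignaturas} already supplies the morphism component $\mathbf{d}_{\diamond}$, so what remains is to verify that (i) $\gamma^{\mathbf{d},\mathbf{e}}$ and $\nu^{\mathbf{\Sigma}}$ are natural isomorphisms in $\mathbf{Ter}(\mathbf{\Omega})$ and $\mathbf{Ter}(\mathbf{\Sigma})$ respectively, and (ii) the two coherence axioms hold.

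For step (i), I would first note that every isomorphism $f\colon A\mor B$ in $\mathbf{Set}^{T}$ gives rise, via $\eta_B\comp f\colon A\mor \mathrm{T}_{\mathbf{\Omega}}(B)$, to an isomorphism in $\mathbf{Ter}(\mathbf{\Omega})$, whose inverse in the Kleisli sense is $\eta_A\comp f^{-1}$; this follows directly from the definitions of $\diamond$ and the monad unit. Applied to $\gamma^{\varphi,\psi}_X$ and $\nu^S_X$, this yields that $\gamma^{\mathbf{d},\mathbf{e}}_X$ and $\nu^{\mathbf{\Sigma}}_X$ are invertible in the relevant Kleisli categories. For naturality of $\gamma^{\mathbf{d},\mathbf{e}}$ in $X$, I would translate the required square in $\mathbf{Ter}(\mathbf{\Omega})$ into a square of $\mathrm{ms}$-mappings between free $\mathbf{\Omega}$-algebras using the formula $\mathbf{d}_{\diamond}(P) = (\theta^{\varphi})^{-1}(\eta^{\mathbf{d}}_X\comp P)$, and then collapse the resulting diagram using (a) naturality of $\gamma^{\varphi,\psi}$ for $\mathrm{MSet}^{\ttcoprod}$, (b) naturality of $\eta^{\mathbf{d}}$ and $\eta^{\mathbf{e}}$ from Proposition~\ref{fundPdextranaturalbis}, and (c) the adjunction triangles for $\coprod_{\varphi}\ladj\Delta_{\varphi}$ and $\coprod_{\psi}\ladj\Delta_{\psi}$. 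Naturality of $\nu^{\mathbf{\Sigma}}$ is analogous but simpler, since $\eta^{\mathrm{id}_{\mathbf{\Sigma}}}_X$ unfolds to the canonical identification of $\mathrm{T}_{\mathbf{\Sigma}}(X)$ with $\mathrm{T}_{\mathbf{\Sigma}}(\coprod_{\id_S}X)_{\id_S}$.

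For step (ii), the associativity axiom involving $\mathbf{d},\mathbf{e},\mathbf{h}$ is to be proved by showing that both composite $2$-cells, evaluated at $X$, become, after applying $\theta^{\cdot}$ to pass to underlying $T$-sorted mappings, the same morphism $\coprod_{\xi\comp \psi\comp\varphi}X\mor \mathrm{T}_{\mathbf{\Xi}}(\coprod_{\xi}\coprod_{\psi}\coprod_{\varphi}X)$; this morphism is, up to post-composition with a unit $\eta$, the component $\gamma^{\varphi,\psi,\xi}_X$ of the known coherence for $\mathrm{MSet}^{\ttcoprod}$. The unit axioms reduce likewise to the unit coherence of $\mathrm{MSet}^{\ttcoprod}$ combined with the unit laws of $\mathbb{T}_{\mathbf{\Sigma}}$ in Kleisli form.

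The main obstacle will be the careful bookkeeping in step (i), specifically in matching, for a term $P\colon X\mor Y$, the composite $\gamma^{\mathbf{d},\mathbf{e}}_Y\dcomp(\mathbf{e}_{\diamond}\comp\mathbf{d}_{\diamond})(P)$ with $(\mathbf{e}\comp\mathbf{d})_{\diamond}(P)\dcomp\gamma^{\mathbf{d},\mathbf{e}}_X$. Unfolding both via $\mu$ and $(\cdot)^{@}$ produces large diagrams mixing $\eta^{\mathbf{d}}$, $\eta^{\mathbf{e}}$, $\eta^{\mathbf{e}\comp\mathbf{d}}$, $\gamma^{\varphi,\psi}$, and the counits $\varepsilon^{\varphi}$, $\varepsilon^{\psi}$; the cleanest way to handle this is to verify the equality first on the generating sub-$\mathrm{ms}$-set $Y\hookrightarrow \mathrm{T}_{\mathbf{\Sigma}}(Y)$, exactly as in the proof of Proposition~\ref{functorMorfismoSignaturas}, and then extend by universal property of the free algebra. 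The coherence axioms in step (ii) are then formal once this naturality has been reduced to the level of $\mathbf{Set}^{T}$.
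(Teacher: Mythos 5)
Your proposal is correct, and it coincides with what the paper intends: the paper in fact gives no proof of this proposition at all, relying on its earlier announced convention of not spelling out the coherence data of a pseudo-functor ``if there is not any doubt.'' Your verification---transporting the $2$-cells of $\mathrm{MSet}^{\ttcoprod}$ along the canonical embedding $f\mapsto \eta\comp f$ of $\mathbf{Set}^{T}$ into $\mathbf{Ter}(\mathbf{\Lambda})$, taking the $1$-cell component from Proposition~\ref{functorMorfismoSignaturas}, and checking the naturality of $\gamma^{\mathbf{d},\mathbf{e}}$ on the generating $\mathrm{ms}$-set exactly as in that proposition's proof---is precisely the routine argument the authors suppress.
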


Our next goals are to prove that
\begin{enumerate}
\item For a signature $\mathbf{\Sigma}$, there exists a functor
      $\mathrm{Tr}^{\mathbf{\Sigma}}$ from the product category
      $\mathbf{Alg}(\mathbf{\Sigma})\times \mathbf{Ter}(\mathbf{\Sigma})$ to
      $\mathbf{Set}$, that formalizes simulta\-neous\-ly the procedure of
      realization of terms (as term operations on algebras), and its
      naturalness (by taking into account the variation of the
      algebras through the homomorphisms between them), and that

\item For a signature morphism
      $\mathbf{d}\colon\mathbf{\Sigma}\mor \mathbf{\Lambda}$, there
      exists a natural isomorphism $\theta^{\mathbf{d}}$ from the
      functor $\mathrm{Tr}^{\mathbf{\Lambda}}\comp
      (\mathrm{Id}_{\mathbf{Alg}(\mathbf{\Lambda})}\times
      \mathbf{d}_{\diamond})$ to the functor
      $\mathrm{Tr}^{\mathbf{\Sigma}}\comp
      (\mathbf{d}^{\ast}\times\mathrm{Id}_{\mathbf{Ter}(\mathbf{\Sigma})})$,
      that shows the invariant character of the procedure of
      realization of terms under signature change.
\end{enumerate}

To accomplish the first stated goal we begin by proving the following

\begin{lemma}\label{realizdiamondcompcomm}
Let $\mathbf{A}$ be a $\mathbf{\Sigma}$-algebra, $P$ a term of type
$(X,Y)$, and $Q$ a term of type $(Y,Z)$.  Then we have that
\begin{enumerate}
\item $(Q\diamond P)^{\mathbf{A}} = Q^{\mathbf{A}}\comp P^{\mathbf{A}}$.

\item For $\eta_{X}$, the identity morphism at $X$ in
      $\mathbf{Ter}(\mathbf{\Sigma})$, $\eta_{X}^{\mathbf{A}} =
      \mathrm{id}_{A_{X}}$.
\end{enumerate}
\end{lemma}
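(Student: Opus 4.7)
The plan is to reduce both statements to the universal property of the free algebra $\mathbf{T}_{\mathbf{\Sigma}}(Y)$ together with the monad laws for $\mathbb{T}_{\mathbf{\Sigma}} = (\mathrm{T}_{\mathbf{\Sigma}},\eta,\mu)$. The verifications are short; the only thing that has to be unwound is the definition of the Kleisli composition $\dcomp$ used in $\mathbf{Ter}(\mathbf{\Sigma})$ and the definition $P^{\mathbf{A}}(f)=f^{\sharp}\comp P$ of the term realization.

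For part (2), $\eta_{X}\colon X\mor \mathrm{T}_{\mathbf{\Sigma}}(X)$ is the unit of the monad, so for any valuation $f\in A_{X}$, we have $\eta_{X}^{\mathbf{A}}(f)=f^{\sharp}\comp\eta_{X}=f$ by the very definition of the canonical extension $f^{\sharp}$. Hence $\eta_{X}^{\mathbf{A}}=\mathrm{id}_{A_{X}}$.

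For part (1), fix $f\in A_{X}$. From the definitions,
\begin{equation*}
(Q\dcomp P)^{\mathbf{A}}(f)=f^{\sharp}\comp(Q\dcomp P)=f^{\sharp}\comp\mu_{X}\comp P^{@}\comp Q,
\end{equation*}
whereas
\begin{equation*}
(Q^{\mathbf{A}}\comp P^{\mathbf{A}})(f)=Q^{\mathbf{A}}(f^{\sharp}\comp P)=(f^{\sharp}\comp P)^{\sharp}\comp Q.
\end{equation*}
Thus it suffices to prove the equality of $\mathbf{\Sigma}$-homomorphisms $(f^{\sharp}\comp P)^{\sharp}=f^{\sharp}\comp\mu_{X}\comp P^{@}$ from $\mathbf{T}_{\mathbf{\Sigma}}(Y)$ to $\mathbf{A}$. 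The right-hand side is a homomorphism, being a composite of the homomorphisms $P^{@}$, $\mu_{X}$ (which is $\mathrm{id}_{\mathrm{T}_{\mathbf{\Sigma}}(X)}^{\sharp}$) and $f^{\sharp}$, so by the universal property of $\mathbf{T}_{\mathbf{\Sigma}}(Y)$ it is enough to show that both sides coincide after precomposition with $\eta_{Y}$. For the left-hand side this is $(f^{\sharp}\comp P)^{\sharp}\comp\eta_{Y}=f^{\sharp}\comp P$ by definition. For the right-hand side, using $P^{@}\comp\eta_{Y}=\eta_{\mathrm{T}_{\mathbf{\Sigma}}(X)}\comp P$ (the naturality of $\eta$, i.e., the defining property of $P^{@}=\mathbf{T}_{\mathbf{\Sigma}}(P)$) and the monad unit law $\mu_{X}\comp\eta_{\mathrm{T}_{\mathbf{\Sigma}}(X)}=\mathrm{id}_{\mathrm{T}_{\mathbf{\Sigma}}(X)}$, one computes
\begin{equation*}
f^{\sharp}\comp\mu_{X}\comp P^{@}\comp\eta_{Y}=f^{\sharp}\comp\mu_{X}\comp\eta_{\mathrm{T}_{\mathbf{\Sigma}}(X)}\comp P=f^{\sharp}\comp P,
\end{equation*}
which completes the proof.

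No serious obstacle is expected: the argument is purely formal, involving only the Kleisli formula for $\dcomp$, one monad unit axiom, and the universal property of the free algebra. The only subtle point to state carefully is that $\mu_{X}$ and $P^{@}$ are genuine $\mathbf{\Sigma}$-homomorphisms, so that the right-hand side of the target identity is a legitimate homomorphism to which the uniqueness clause of the universal property applies.
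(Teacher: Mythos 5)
Your proof is correct and follows essentially the same route as the paper's: both reduce part (1) to the identity of the $\mathbf{\Sigma}$-homomorphisms $(f^{\sharp}\comp P)^{\sharp}$ and $f^{\sharp}\comp\mu_{X}\comp P^{@}$, verified on the generators $\eta_{Y}$ via $P^{@}\comp\eta_{Y}=\eta_{\mathrm{T}_{\mathbf{\Sigma}}(X)}\comp P$ and the monad unit law. You additionally spell out part (2), which the paper leaves to the reader.
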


\begin{proof}
We restrict ourselves to prove the first part of the lemma.  Since
$(Q\diamond P)^{\mathbf{A}}$ is the mapping from $A_{X}$ to $A_{Z}$
which sends an $S$-sorted mapping $u\colon X\mor A$ to the $S$-sorted
mapping
$$
u^{\sharp}\comp (Q\diamond P) =
u^{\sharp}\comp \mu_{X}\comp P^{@}\comp Q \colon Z\mor A,
$$
where, we recall, $\mu_{X}$ is the value in $X$ of the multiplication
$\mu$ of the monad $\mathbb{T}_{\mathbf{\Sigma}} =
(\mathrm{T}_{\mathbf{\Sigma}},\eta,\mu)$ and $P^{@}$ the value in the
$S$-sorted mapping $P\colon Y\mor \mathrm{T}_{\mathbf{\Sigma}}(X)$ of
the functor $\mathbf{T}_{\mathbf{\Sigma}}$; and $Q^{\mathbf{A}}\comp
P^{\mathbf{A}}$ the mapping from $A_{X}$ to $A_{Z}$ which sends an
$S$-sorted mapping $u\colon X\mor A$ to the $S$-sorted mapping
$$
(u^{\sharp}\comp P)^{\sharp}\comp Q\colon Z\mor A,
$$
to show that $(Q\diamond P)^{\mathbf{A}} = Q^{\mathbf{A}}\comp
P^{\mathbf{A}}$ it is enough to prove that the
$\mathbf{\Sigma}$-homo\-morphisms $u^{\sharp}\comp \mu_{X}\comp P^{@}$
and $(u^{\sharp}\comp P)^{\sharp}$ from
$\mathbf{T}_{\mathbf{\Sigma}}(Y)$ to $\mathbf{A}$ are identical.  But
this follows from the equation
$$
u^{\sharp}\comp \mu_{X}\comp P^{@}\comp \eta_{Y} =
(u^{\sharp}\comp P)^{\sharp}\comp \eta_{Y},
$$
that, in its turn, is a consequence of the laws for the monad
$\mathbb{T}_{\mathbf{\Sigma}}$ and of the equation
$$
  P^{@}\comp \eta_{Y} = \eta_{\mathrm{T}_{\mathbf{\Sigma}}(X)}\comp P,
$$
where $\eta_{Y}$ is the canonical embedding of $Y$ into
$\mathrm{T}_{\mathbf{\Sigma}}(Y)$ and
$\eta_{\mathrm{T}_{\mathbf{\Sigma}}(X)}$ the canonical embedding of
$\mathrm{T}_{\mathbf{\Sigma}}(X)$ into
$\mathrm{T}_{\mathbf{\Sigma}}(\mathrm{T}_{\mathbf{\Sigma}}(X))$.
\end{proof}

This lemma has as an immediate consequence the following

\begin{corollary}
Let $\mathbf{\Sigma}$ be a signature and $\mathbf{A}$ a
$\mathbf{\Sigma}$-algebra.  Then there exists a functor
$\mathrm{Tr}^{\mathbf{\Sigma},\mathbf{A}}$ from
$\mathbf{Ter}(\mathbf{\Sigma})$ to $\mathbf{Set}$ which sends an $S$-sorted set
$X$ to the set $\mathrm{Tr}^{\mathbf{\Sigma},\mathbf{A}}(X) = A_{X}$
and a term $P\colon X\mor Y$ to
$\mathrm{Tr}^{\mathbf{\Sigma},\mathbf{A}}(P) = P^{\mathbf{A}}\colon
A_{X}\mor A_{Y}$, the term operation on $\mathbf{A}$ determined by
$P$.
\end{corollary}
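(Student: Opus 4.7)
The plan is to prove this corollary by a direct verification of the functor axioms, which amounts to reading off exactly what the preceding Lemma \ref{realizdiamondcompcomm} has already established.

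First I would define the object mapping and the morphism mapping as dictated by the statement: send an $S$-sorted set $X$ to the ordinary set $A_X = \prod_{s\in S}\mathrm{Hom}(X_s,A_s)$, and send a term $P\colon X\mor Y$ in $\mathbf{Ter}(\mathbf{\Sigma})$ (i.e., an $S$-sorted mapping $P\colon Y\mor \mathrm{T}_{\mathbf{\Sigma}}(X)$) to the term operation $P^{\mathbf{A}}\colon A_X\mor A_Y$, which, by definition, sends a valuation $f\colon X\mor A$ to the composite $f^{\sharp}\comp P\colon Y\mor A$, where $f^{\sharp}\colon \mathbf{T}_{\mathbf{\Sigma}}(X)\mor \mathbf{A}$ is the canonical extension of $f$. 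Here there is nothing to verify about well-definedness: the construction $P\mapsto P^{\mathbf{A}}$ is simply the composition of mappings, and this produces a genuine mapping from $A_X$ to $A_Y$.

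Second, I would check functoriality, and this is where the two clauses of Lemma \ref{realizdiamondcompcomm} do all the work. Preservation of composition is the first clause: for $P\colon X\mor Y$ and $Q\colon Y\mor Z$ in $\mathbf{Ter}(\mathbf{\Sigma})$, one has
\[
\mathrm{Tr}^{\mathbf{\Sigma},\mathbf{A}}(Q\diamond P)=(Q\diamond P)^{\mathbf{A}}
=Q^{\mathbf{A}}\comp P^{\mathbf{A}}
=\mathrm{Tr}^{\mathbf{\Sigma},\mathbf{A}}(Q)\comp\mathrm{Tr}^{\mathbf{\Sigma},\mathbf{A}}(P),
\]
and preservation of identities is the second clause, which says that the identity $\eta_X\colon X\mor X$ in $\mathbf{Ter}(\mathbf{\Sigma})$ (given by the unit at $X$ of the monad $\mathbb{T}_{\mathbf{\Sigma}}$) is realized as $\eta_X^{\mathbf{A}}=\mathrm{id}_{A_X}$, whence $\mathrm{Tr}^{\mathbf{\Sigma},\mathbf{A}}(\eta_X)=\mathrm{id}_{\mathrm{Tr}^{\mathbf{\Sigma},\mathbf{A}}(X)}$.

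In essence there is no genuine obstacle: the corollary is exactly the functorial reformulation of Lemma \ref{realizdiamondcompcomm}, once one observes that in the Kleisli dual $\mathbf{Ter}(\mathbf{\Sigma})$ identities are $\eta_X$ and composition is $\diamond$. The only ``soft'' point worth making explicit is the matching of conventions between the Kleisli composition $Q\diamond P=\mu_X\comp P^{@}\comp Q$ and the set-theoretic composition $Q^{\mathbf{A}}\comp P^{\mathbf{A}}$; this matching is precisely what the proof of the lemma verified via the monad laws and the identity $P^{@}\comp\eta_Y=\eta_{\mathrm{T}_{\mathbf{\Sigma}}(X)}\comp P$. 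With those two facts already recorded, the present statement is immediate.
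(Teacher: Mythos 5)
Your proof is correct and coincides with the paper's intent: the paper states this corollary as an immediate consequence of Lemma~\ref{realizdiamondcompcomm} and gives no further argument, and your write-up simply makes explicit that the two clauses of that lemma are exactly the functoriality axioms for $\mathrm{Tr}^{\mathbf{\Sigma},\mathbf{A}}$ once one recalls that identities in $\mathbf{Ter}(\mathbf{\Sigma})$ are the $\eta_{X}$ and composition is $\diamond$.
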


Therefore, from the definition of the object and morphism mappings of
the functors of the type $\mathrm{Tr}^{\mathbf{\Sigma},\mathbf{A}}$,
we see that they encapsulate the procedure of realization of terms.
And, from the fact that they preserve identities and compositions in
$\mathbf{Ter}(\mathbf{\Sigma})$, we conclude that they formally
represent the two basic intuitions about the behaviour of the just
named procedure, i.e., that the realization of an identity term is an
identity term operation, and that the realization of a composite of
two terms is the composite of their respective realizations (in the
same order).

\begin{remark}
By identifying the $\mathbf{\Sigma}$-algebras with
the $\mathbb{T}_{\mathbf{\Sigma}}$-algebras, the just stated corollary
can be interpreted as meaning that every $\mathbf{\Sigma}$-algebra is
a functor from $\mathbf{Ter}(\mathbf{\Sigma}) =
\mathbf{Kl}(\mathbb{T}_{\mathbf{\Sigma}})^{\mathrm{op}}$ to
$\mathbf{Set}$.
\end{remark}

Before stating the following lemma we recall that, for an $S$-sorted
mapping $f$ from an $S$-sorted set $A$ into a like one $B$ and an
$S$-sorted set $X$, $f_{X}$ is the mapping from $A_{X}$ to $B_{X}$
which assigns to an $S$-sorted mapping $u\colon X\mor A$ the
$S$-sorted mapping $f\comp u$, i.e., $f_{X}$ is the value at $X$ of
the natural transformation $\mathrm{H}(\cdot,f)$ from the
contravariant functor $\mathrm{H}(\cdot,A)$ to the contravariant
functor $\mathrm{H}(\cdot,B)$, both from
$(\mathbf{Set}^{S})^{\mathrm{op}}$ to $\mathbf{Set}$.

\begin{lemma}
Let $f$ be a $\mathbf{\Sigma}$-homomorphism from $\mathbf{A}$ to
$\mathbf{B}$ and $P$ a term of type $(X,Y)$ in
$\mathbf{Ter}(\mathbf{\Sigma})$. Then the following diagram commutes
$$
\xymatrix@C=25pt@R=15pt{
{} &
{A_{X}}
\ar[dl]_{f_{X}}
\ar[dr]^{P^{\mathbf{A}}} & {} \\
{B_{X}}
\ar[dr]_{P^{\mathbf{B}}} & {} &
A_{Y}
\ar[dl]^{f_{Y}} \\
{} &
B_{Y} & {}
}
$$
We agree to denote by $f_{P}$ the diagonal mapping from $A_{X}$ to
$B_{Y}$ in the above commutative diagram.
\end{lemma}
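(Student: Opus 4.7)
The plan is to unwind the definitions and reduce the claim to the uniqueness of the canonical extension of a valuation. Recall that a term $P\colon X\mor Y$ in $\mathbf{Ter}(\mathbf{\Sigma})$ is, by definition, an $S$-sorted mapping $P\colon Y\mor \mathrm{T}_{\mathbf{\Sigma}}(X)$, and that $P^{\mathbf{A}}\colon A_{X}\mor A_{Y}$ is defined by $P^{\mathbf{A}}(u) = u^{\sharp}\comp P$ for every valuation $u\colon X\mor A$, where $u^{\sharp}$ is the unique $\mathbf{\Sigma}$-homomorphism from $\mathbf{T}_{\mathbf{\Sigma}}(X)$ to $\mathbf{A}$ extending $u$. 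Moreover, $f_{X}$ sends $u\colon X\mor A$ to $f\comp u\colon X\mor B$, and similarly for $f_{Y}$.

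First I would fix an arbitrary $u\in A_{X}$ and chase the diagram. Going the upper route yields
$$
f_{Y}(P^{\mathbf{A}}(u)) = f\comp (u^{\sharp}\comp P) = (f\comp u^{\sharp})\comp P,
$$
while the lower route yields
$$
P^{\mathbf{B}}(f_{X}(u)) = P^{\mathbf{B}}(f\comp u) = (f\comp u)^{\sharp}\comp P.
$$
Thus the commutativity of the diagram reduces to the single identity $f\comp u^{\sharp} = (f\comp u)^{\sharp}$ of $\mathbf{\Sigma}$-homomorphisms from $\mathbf{T}_{\mathbf{\Sigma}}(X)$ to $\mathbf{B}$.

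Second, I would verify that identity by the universal property of $\mathbf{T}_{\mathbf{\Sigma}}(X)$. The composite $f\comp u^{\sharp}$ is a $\mathbf{\Sigma}$-homomorphism from $\mathbf{T}_{\mathbf{\Sigma}}(X)$ to $\mathbf{B}$ (as a composition of $\mathbf{\Sigma}$-homomorphisms, using that $f\colon \mathbf{A}\mor \mathbf{B}$ is one). Precomposing with the insertion of generators $\eta_{X}\colon X\mor \mathrm{T}_{\mathbf{\Sigma}}(X)$ gives
$$
(f\comp u^{\sharp})\comp \eta_{X} = f\comp (u^{\sharp}\comp \eta_{X}) = f\comp u,
$$
so $f\comp u^{\sharp}$ extends the valuation $f\comp u\colon X\mor B$. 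By the defining universal property of the canonical extension, $(f\comp u)^{\sharp}$ is the unique such extension, hence $f\comp u^{\sharp} = (f\comp u)^{\sharp}$, as required.

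There is no real obstacle here: the only subtle point, and the one I would take care to state explicitly, is that the equality $P^{\mathbf{A}}(u) = u^{\sharp}\comp P$ uses $P$ as a morphism $Y\mor \mathrm{T}_{\mathbf{\Sigma}}(X)$ rather than as a family of terms indexed by $s\in S$, so that the claim is, in fact, a direct many-sorted and generalized-arity version of the single-sorted fact (already cited in the excerpt) that $\mathbf{\Sigma}$-homomorphisms commute with term operations, namely $u_{s}\comp P^{\mathbf{A}} = P^{\mathbf{B}}\comp u_{X}$ for ordinary terms $P$ of type $(X,s)$. From this observation one can also give an alternative proof by reducing to the single-sort case via the decomposition $Y \iso \coprod_{s\in S,\,y\in Y_{s}}\delta^{s}$, in the same spirit as Proposition~\ref{invarianza}.
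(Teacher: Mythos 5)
Your proof is correct and follows essentially the same route as the paper's: fix a valuation $u\colon X\mor A$, reduce the commutativity to the identity $f\comp u^{\sharp} = (f\comp u)^{\sharp}$, and obtain that identity from the universal property of $\mathbf{T}_{\mathbf{\Sigma}}(X)$ together with the fact that $f$ is a $\mathbf{\Sigma}$-homomorphism. Your explicit verification of the extension property is a slightly more detailed writing-out of the same step the paper invokes in one line.
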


\begin{proof}
Given an $S$-sorted mapping $u\colon X\mor A$, we have that $(f\comp
u)^{\sharp} = f\comp u^{\sharp}$, by the universal property of the
free $\mathbf{\Sigma}$-algebra on $X$ and taking into account that $f$ is
a $\mathbf{\Sigma}$-homo\-morphism from $\mathbf{A}$ to $\mathbf{B}$.
Therefore, since $P^{\mathbf{B}}\comp f_{X}(u) = (f\comp
u)^{\sharp}\comp P$, and $f_{Y}\comp P^{\mathbf{A}}(u) = f\comp
(u^{\sharp}\comp P)$, we have that $P^{\mathbf{B}}\comp f_{X}(u) =
f_{Y}\comp P^{\mathbf{A}}(u)$.  Thus $P^{\mathbf{B}}\comp f_{X} =
f_{Y}\comp P^{\mathbf{A}}$.
\end{proof}

This lemma has as an immediate consequence the following

\begin{corollary}
Let $\mathbf{\Sigma}$ be a signature and $f$ a
$\mathbf{\Sigma}$-homomorphism from $\mathbf{A}$ to $\mathbf{B}$.
Then there exists a natural transformation
$\mathrm{Tr}^{\mathbf{\Sigma},f}$ from  the functor
$\mathrm{Tr}^{\mathbf{\Sigma},\mathbf{A}}$ to the functor
$\mathrm{Tr}^{\mathbf{\Sigma},\mathbf{B}}$, as reflected in the diagram
$$
\xymatrix@C=20ex@R=8ex{
\mathbf{Ter}(\mathbf{\Sigma})
\ar@/^15pt/[r]^{\mathrm{Tr}^{\mathbf{\Sigma},\mathbf{A}}}="f"
  \ar@/^-15pt/[r]_{\mathrm{Tr}^{\mathbf{\Sigma},\mathbf{B}}}="g" &
\mathbf{Set}
\ar @{} "f";"g" |{\dir{=>}}^{\,\mathrm{Tr}^{\mathbf{\Sigma},f}}
}
$$
which sends an $S$-sorted set $X$ to the mapping
$\mathrm{Tr}^{\mathbf{\Sigma},f}_{X} = f_{X}$ from $A_{X}$ to
$B_{X}$. Besides, we have that
\begin{enumerate}
\item For $\mathrm{id}_{\mathbf{A}}$, the identity
      $\mathbf{\Sigma}$-homomorphism at $\mathbf{A}$, it is the case that
      $$
      \mathrm{Tr}^{\mathbf{\Sigma},\mathrm{id}_{\mathbf{A}}} =
      \mathrm{id}_{\mathrm{Tr}^{\mathbf{\Sigma},\mathbf{A}}}.
      $$

\item If $g\colon \mathbf{B}\mor\mathbf{C}$ is another
      $\mathbf{\Sigma}$-homomorphism, then
      $$
      \mathrm{Tr}^{\mathbf{\Sigma},g\comp f} =
      \mathrm{Tr}^{\mathbf{\Sigma},g}\comp
      \mathrm{Tr}^{\mathbf{\Sigma},f}.
      $$
\end{enumerate}
\end{corollary}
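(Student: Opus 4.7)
The plan is to verify the three claims in sequence, drawing each from a minimal piece of the machinery already assembled.

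First, I would establish that $\mathrm{Tr}^{\mathbf{\Sigma},f} = (f_{X})_{X \in \boldsymbol{\mathcal{U}}^{S}}$ is a natural transformation from $\mathrm{Tr}^{\mathbf{\Sigma},\mathbf{A}}$ to $\mathrm{Tr}^{\mathbf{\Sigma},\mathbf{B}}$. Concretely, for every term $P\colon X \mor Y$ in $\mathbf{Ter}(\mathbf{\Sigma})$ one must show the commutativity of the naturality square
$$
\xymatrix@C=14ex@R=7ex{
A_{X} \ar[r]^{P^{\mathbf{A}}} \ar[d]_{f_{X}} & A_{Y} \ar[d]^{f_{Y}} \\
B_{X} \ar[r]_{P^{\mathbf{B}}} & B_{Y}
}
$$
but this is precisely the content of the immediately preceding lemma, which asserts $P^{\mathbf{B}}\comp f_{X} = f_{Y}\comp P^{\mathbf{A}}$. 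Thus no new computation is required for naturality itself; it suffices to observe that the indexing of the lemma over all $P$ and all $(X,Y)$ is exactly the data needed.

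Second, I would verify the two functoriality equations. For the identity homomorphism $\mathrm{id}_{\mathbf{A}}\colon \mathbf{A}\mor \mathbf{A}$, the component at an $S$-sorted set $X$ is the mapping $(\mathrm{id}_{\mathbf{A}})_{X}$ from $A_{X}$ to $A_{X}$ sending $u\colon X\mor A$ to $\mathrm{id}_{\mathbf{A}}\comp u = u$, i.e., $\mathrm{id}_{A_{X}}$; hence $\mathrm{Tr}^{\mathbf{\Sigma},\mathrm{id}_{\mathbf{A}}}$ is the identity natural transformation at $\mathrm{Tr}^{\mathbf{\Sigma},\mathbf{A}}$. For composition, given a further $\mathbf{\Sigma}$-homomorphism $g\colon \mathbf{B}\mor \mathbf{C}$, one computes, for $u\colon X\mor A$, that $(g\comp f)_{X}(u) = (g\comp f)\comp u = g\comp (f\comp u) = g_{X}(f_{X}(u))$, so $(g\comp f)_{X} = g_{X}\comp f_{X}$ at every $X$. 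By definition of the vertical composition of natural transformations, this gives $\mathrm{Tr}^{\mathbf{\Sigma},g\comp f} = \mathrm{Tr}^{\mathbf{\Sigma},g}\comp \mathrm{Tr}^{\mathbf{\Sigma},f}$.

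Since both verifications are essentially reformulations of standard properties of the hom-functor $\mathrm{H}(X,\farg)$ applied to $f$, there is no genuine obstacle here; the only item that required substantive work is the naturality square, and that work has already been discharged by the preceding lemma. Accordingly, the proof of this corollary will amount to little more than invoking the lemma once and unwinding the definitions of the components $f_{X}$ for the identity and composition clauses.
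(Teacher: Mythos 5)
Your proposal is correct and follows exactly the route the paper intends: the corollary is presented there as an immediate consequence of the preceding lemma (which supplies the naturality square $P^{\mathbf{B}}\comp f_{X} = f_{Y}\comp P^{\mathbf{A}}$), with the identity and composition clauses left as the routine unwinding of $f_{X}(u) = f\comp u$ that you carry out explicitly.
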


Therefore, the naturalness of the procedure of realization of terms as
term operations on the different algebras is embodied in the natural
transformations of the type $\mathrm{Tr}^{\mathbf{\Sigma},f}$.

\begin{remark}
By identifying the $\mathbf{\Sigma}$-homomorphisms with the
$\mathbb{T}_{\mathbf{\Sigma}}$-homomor\-phisms, the just stated
corollary can be interpreted as meaning that every
$\mathbf{\Sigma}$-homomorphism $f$ from $\mathbf{A}$ to $\mathbf{B}$
is a natural transformation from the functor
$\mathrm{Tr}^{\mathbf{\Sigma},\mathbf{A}}$ to the functor
$\mathrm{Tr}^{\mathbf{\Sigma},\mathbf{B}}$, both from
$\mathbf{Ter}(\mathbf{\Sigma}) =
\mathbf{Kl}(\mathbb{T}_{\mathbf{\Sigma}})^{\mathrm{op}}$ to
$\mathbf{Set}$.  Actually, each homomorphism $(\mathbf{d},f)$ from an
algebra $(\mathbf{\Sigma},\mathbf{A})$ into a like one
$(\mathbf{\Lambda},\mathbf{B})$ is identifiable to a morphism (in the
category $(\mathbf{Cat})_{/\!/\mathbf{Set}}$, see~\cite{Gro71}, p.
(sub) 186) from the object
$(\mathbf{Ter}(\mathbf{\Sigma}),\mathrm{Tr}^{\mathbf{\Sigma},\mathbf{A}})$
over $\mathbf{Set}$ to the object
$(\mathbf{Ter}(\mathbf{\Lambda}),\mathrm{Tr}^{\mathbf{\Lambda},\mathbf{B}})$
over $\mathbf{Set}$, concretely, to the morphism given by the pair
$(\mathbf{d}_{\diamond},(\theta^{\varphi}_{\cdot,B})^{-1}\comp
\mathrm{H}(\cdot,f))$, and represented by the following diagram
$$
\xymatrix@C= 25ex@R= 10ex{
\mathbf{Ter}(\mathbf{\Sigma})
  \ar[r]^{\mathbf{d}_{\diamond}}
  \ar[d]_{\mathrm{Tr}^{\mathbf{\Sigma},\mathbf{A}}}
  &
  \mathbf{Ter}(\mathbf{\Lambda})
  \ar[d]^{\mathrm{Tr}^{\mathbf{\Lambda},\mathbf{B}}}
  \\
\mathbf{Set}
  \ar[r]_{\mathrm{Id}}
  \ar@{}[ru]|{\dir{=>}}^-(.5){(\theta^{\varphi}_{\cdot,B})^{-1}\comp
  \mathrm{H}(\cdot,f)}&
\mathbf{Set}
}
$$
where $\mathrm{H}(\cdot,f)$ is the natural transformation from the
contravariant hom-functor $\mathrm{H}(\cdot,A)$ to the
contravariant hom-functor $\mathrm{H}(\cdot,B_{\varphi})$, and
$(\theta^{\varphi}_{\cdot,B})^{-1}$ the natural isomorphism from
$\mathrm{H}(\cdot,B_{\varphi})$ to
$\mathrm{H}(\coprod_{\varphi}(\cdot),B)$.  Observe that the
naturalness of $(\theta^{\varphi}_{\cdot,B})^{-1}\comp
\mathrm{H}(\cdot,f)$ means that, for every term $P$ for
$\mathbf{\Sigma}$ of type $(X,Y)$, the following diagram commutes
$$
\xymatrix@C= 10ex@R=10ex{
A_{X} = \mathrm{Hom}(X,A)
  \ar[r]^-{\mathrm{H}(X,f)}
  \ar[d]_{P^{\mathbf{A}}}
  &
\mathrm{Hom}(X,B_{\varphi})
  \ar[r]^-{(\theta^{\varphi}_{X,B})^{-1}} &
\mathrm{Hom}(\coprod_{\varphi}X,B) = B_{\coprod_{\varphi}X}
  \ar[d]^{\mathbf{d}_{\diamond}(P)^{\mathbf{B}}}
  \\
A_{Y} = \mathrm{Hom}(Y,A)
  \ar[r]_-{\mathrm{H}(Y,f)} &
\mathrm{Hom}(Y,B_{\varphi})
  \ar[r]_-{(\theta^{\varphi}_{Y,B})^{-1}} &
\mathrm{Hom}(\coprod_{\varphi}Y,B) = B_{\coprod_{\varphi}Y}
}
$$
From the identification of the homomorphisms between algebras in the
category $\mathbf{Alg}$ to some convenient morphisms between the
associated objects over $\mathbf{Set}$, we can conclude, e.g., that
the concept of homomorphism as defined by Bénabou in~\cite{jB68} (that
does not allow the variation of the signature and therefore it works
between algebras of the same signature (see~\cite{jB68}, p.  (sub) 16,
last paragraph)), corresponds itself, for a signature
$\mathbf{\Sigma}$ and a $\mathbf{\Sigma}$-homomorphism $f$ from
$\mathbf{A}$ to $\mathbf{B}$, to the (very special) case in which
$(\mathbf{d}_{\diamond},(\theta^{\varphi}_{\cdot,B})^{-1}\comp
\mathrm{H}(\cdot,f))$ is precisely
$$
(\mathbf{d}_{\diamond},(\theta^{\varphi}_{\cdot,B})^{-1}\comp
\mathrm{H}(\cdot,f)) = (\mathrm{Id}_{\mathbf{Ter}(\mathbf{\Sigma})},
\mathrm{H}(\cdot,f)),
$$
that we represent by the following diagram
$$
\xymatrix@C= 25ex@R= 10ex{
\mathbf{Ter}(\mathbf{\Sigma})
  \ar[r]^{\mathrm{Id}}
  \ar[d]_{\mathrm{Tr}^{\mathbf{\Sigma},\mathbf{A}}}
  &
\mathbf{Ter}(\mathbf{\Sigma})
  \ar[d]^{\mathrm{Tr}^{\mathbf{\Sigma},\mathbf{B}}}
  \\
\mathbf{Set}
  \ar[r]_{\mathrm{Id}}
  \ar@{}[ru]|{\dir{=>}}^-(.5){\mathrm{H}(\cdot,f)}&
\mathbf{Set}
}
$$
i.e., definitely, it corresponds to the natural transformation
$\mathrm{Tr}^{\mathbf{\Sigma},f}$ from the functor
$\mathrm{Tr}^{\mathbf{\Sigma},\mathbf{A}}$ to the functor
$\mathrm{Tr}^{\mathbf{\Sigma},\mathbf{B}}$.
\end{remark}

For an arbitrary, but fixed, signature $\mathbf{\Sigma}$ the family of
functors $(\mathrm{Tr}^{\mathbf{\Sigma},\mathbf{A}})_{\mathbf{A}\in
\mathbf{Alg}(\mathbf{\Sigma})}$ together with the family of natural
transformations $(\mathrm{Tr}^{\mathbf{\Sigma},f})_{f\in
\mathrm{Mor}(\mathbf{Alg}(\mathbf{\Sigma}))}$ actually constitute the
object and morphism mappings, respectively, of a functor
$\mathrm{Tr}^{\mathbf{\Sigma},(\cdot)}$ from the category
$\mathbf{Alg}(\mathbf{\Sigma})$ to the exponential category
$\mathbf{Set}^{\mathbf{Ter}(\mathbf{\Sigma})}$.  And it is precisely
the functor $\mathrm{Tr}^{\mathbf{\Sigma},(\cdot)}$ that will allow us
to prove, in the following proposition, the existence of a functor
$\mathrm{Tr}^{\mathbf{\Sigma}}$ from
$\mathbf{Alg}(\mathbf{\Sigma})\bprod\mathbf{Ter}(\mathbf{\Sigma})$ to
$\mathbf{Set}$ that formalizes the realization of terms as term
operations on algebras, but taking into account the variation of the
algebras through the homomorphisms between them.

\begin{proposition}\label{funcTr}
Let $\mathbf{\Sigma}$ be a signature.  Then there exists a functor
$\mathrm{Tr}^{\mathbf{\Sigma}}$ from
$\mathbf{Alg}(\mathbf{\Sigma})\bprod\mathbf{Ter}(\mathbf{\Sigma})$ to
$\mathbf{Set}$ defined as follows
\begin{enumerate}
\item $\mathrm{Tr}^{\mathbf{\Sigma}}$ sends a pair $(\mathbf{A},X)$,
      formed by a $\mathbf{\Sigma}$-algebra $\mathbf{A}$  and an
      $S$-sorted set $X$, to the set
      $\mathrm{Tr}^{\mathbf{\Sigma}}(\mathbf{A},X) =
      \mathrm{Tr}^{\mathbf{\Sigma},\mathbf{A}}(X) = A_{X}$ of the
      $S$-sorted mappings from $X$ to the underlying $S$-sorted set $A$
      of $\mathbf{A}$.

\item $\mathrm{Tr}^{\mathbf{\Sigma}}$ sends an arrow $(f,P)$
      from $(\mathbf{A},X)$ to $(\mathbf{B},Y)$ in
      $\mathbf{Alg}(\mathbf{\Sigma})\bprod\mathbf{Ter}(\mathbf{\Sigma})$,
      to the mapping $\mathrm{Tr}^{\mathbf{\Sigma}}(f,P) = f_{P}$ from
      $A_{X}$ to $B_{Y}$, i.e., to
      $\mathrm{Tr}^{\mathbf{\Sigma},\mathbf{B}}(P) \comp
      \mathrm{Tr}^{\mathbf{\Sigma},f}_{X} =
      \mathrm{Tr}^{\mathbf{\Sigma},f}_{Y}
      \comp\mathrm{Tr}^{\mathbf{\Sigma},\mathbf{A}}(P)$.
\end{enumerate}

\end{proposition}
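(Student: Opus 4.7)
The plan is to assemble the desired functor from the two preceding corollaries, which already do most of the work: one gives, for each object $\mathbf{A}$, a functor $\mathrm{Tr}^{\mathbf{\Sigma},\mathbf{A}}\colon \mathbf{Ter}(\mathbf{\Sigma})\to\mathbf{Set}$, and the other gives, for each morphism $f\colon\mathbf{A}\mor\mathbf{B}$, a natural transformation $\mathrm{Tr}^{\mathbf{\Sigma},f}$ between the corresponding functors, compatibly with identities and composition in $\mathbf{Alg}(\mathbf{\Sigma})$. Together these are precisely the data of a functor $\mathrm{Tr}^{\mathbf{\Sigma},(\farg)}\colon \mathbf{Alg}(\mathbf{\Sigma})\mor \mathbf{Set}^{\mathbf{Ter}(\mathbf{\Sigma})}$. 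The desired functor $\mathrm{Tr}^{\mathbf{\Sigma}}$ is then obtained as the composite
$$
\mathbf{Alg}(\mathbf{\Sigma})\bprod \mathbf{Ter}(\mathbf{\Sigma})
\xrightarrow{\;\mathrm{Tr}^{\mathbf{\Sigma},(\farg)}\bprod \Id\;}
\mathbf{Set}^{\mathbf{Ter}(\mathbf{\Sigma})}\bprod\mathbf{Ter}(\mathbf{\Sigma})
\xrightarrow{\;\mathrm{ev}\;}
\mathbf{Set},
$$
where $\mathrm{ev}$ is the evaluation functor. So conceptually no new content is required; it is just the adjunction between product and exponential in $\mathbf{Cat}$.

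For a direct, elementwise verification, I would first check that the object and morphism mappings are well defined. On objects this is trivial. On morphisms, the definition $\mathrm{Tr}^{\mathbf{\Sigma}}(f,P)=f_{P}$ rests on the identity $\mathrm{Tr}^{\mathbf{\Sigma},\mathbf{B}}(P)\comp\mathrm{Tr}^{\mathbf{\Sigma},f}_{X} = \mathrm{Tr}^{\mathbf{\Sigma},f}_{Y}\comp \mathrm{Tr}^{\mathbf{\Sigma},\mathbf{A}}(P)$, which is exactly the commutative square in the lemma on naturality of $\mathrm{Tr}^{\mathbf{\Sigma},f}$, i.e., that a $\mathbf{\Sigma}$-homomorphism commutes with the term realizations.

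Preservation of identities follows because the identity at $(\mathbf{A},X)$ in the product category is $(\id_{\mathbf{A}},\eta_{X})$ and, by the preceding lemma, $\eta_{X}^{\mathbf{A}}=\id_{A_{X}}$ while $(\id_{\mathbf{A}})_{X}=\id_{A_{X}}$, so $(\id_{\mathbf{A}})_{\eta_{X}}=\id_{A_{X}}$. Preservation of composition is the essential step: given $(f,P)\colon (\mathbf{A},X)\mor(\mathbf{B},Y)$ and $(g,Q)\colon(\mathbf{B},Y)\mor(\mathbf{C},Z)$, I compute
\begin{align*}
(g\comp f)_{Q\dmd P}
&= (Q\dmd P)^{\mathbf{C}}\comp (g\comp f)_{X}
= Q^{\mathbf{C}}\comp P^{\mathbf{C}}\comp g_{X}\comp f_{X} \\
&= Q^{\mathbf{C}}\comp g_{Y}\comp P^{\mathbf{B}}\comp f_{X}
= g_{Q}\comp f_{P},
\end{align*}
where the first equality uses the definition of $f_{P}$ in the product composition, the second uses the first part of Lemma~\ref{realizdiamondcompcomm} asserting $(Q\dmd P)^{\mathbf{C}}=Q^{\mathbf{C}}\comp P^{\mathbf{C}}$, the third uses naturality of $\mathrm{Tr}^{\mathbf{\Sigma},g}$ applied to $P$, and the last unfolds the definition of $f_{P}$ and $g_{Q}$. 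There is no real obstacle: the only subtlety is that one must combine the two separate functorialities (in the algebra and in the term) by mediating them with the naturality square, so the key ingredient is that the naturality lemma has been proved upstream.
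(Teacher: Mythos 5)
Your proof is correct and follows essentially the same route as the paper: the paper also reduces everything to the preservation of composition, establishing $g_{Q}\comp f_{P}=(g\comp f)_{Q\diamond P}$ by combining Lemma~\ref{realizdiamondcompcomm} with the naturality square for $\mathrm{Tr}^{\mathbf{\Sigma},g}$, exactly as in your chain of equalities (merely presented as a commutative diagram rather than equationally). Your opening observation about obtaining $\mathrm{Tr}^{\mathbf{\Sigma}}$ as $\mathrm{ev}\comp(\mathrm{Tr}^{\mathbf{\Sigma},(\farg)}\bprod\Id)$ is also exactly what the paper records in the remark immediately following the proposition.
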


\begin{proof}
We restrict ourselves to prove that $\mathrm{Tr}^{\mathbf{\Sigma}}$
preserves compositions.  Given two arrows $(f,P)$ from $(\mathbf{A},X)$
to $(\mathbf{B},Y)$ and $(g,Q)$ from $(\mathbf{B},Y)$ to
$(\mathbf{C},Z)$, we have that $(g\comp f)_{Q\diamond P}$ is the
diagonal mapping in the commutative diagram
$$
\xymatrix@C=25pt@R=15pt{
{} &
A_{X}
\ar[dd]|*+{(g\comp f)_{Q\diamond P}}
\ar[dl]_{(g\comp f)_{X}}
\ar[dr]^{(Q\diamond P)^{\mathbf{A}}} & {} \\
C_{X}
\ar[dr]_{(Q\diamond P)^{\mathbf{C}}} & {} &
A_{Z}
\ar[dl]^{(g\comp f)_{Z}} \\
{} &
C_{Z} & {}
}
$$
But it happens that the following diagram commutes
$$
\xymatrix@C=25pt@R=15pt{
{} & {} &
A_{X}
\ar[dd]|{f_{P}}
\ar[dl]_{f_{X}}
\ar[dr]^{P^{\mathbf{A}}} & {} & {} \\
{} &
B_{X}
\ar[dl]_{g_{X}}
\ar[dr]_{P^{\mathbf{B}}} & {} &
A_{Y}
\ar[dl]^{f_{Y}}
\ar[dr]^{Q^{\mathbf{A}}}& {} \\
C_{X}
\ar[dr]_{P^{\mathbf{C}}} & {} &
B_{Y}
\ar[dd]|{g_{Q}}
\ar[dl]_{g_{Y}}
\ar[dr]^{Q^{\mathbf{B}}}
& {} &
A_{Z}
\ar[dl]^{f_{Z}} \\
{} &
C_{Y}
\ar[dr]_{Q^{\mathbf{C}}} & {} &
B_{Z}
\ar[dl]^{g_{Z}} & {} \\
{} & {} &
C_{Z} & {} & {}
}
$$
Therefore, by Lemma~\ref{realizdiamondcompcomm}, we have that
$g_{Q}\comp f_{P} = (g\comp f)_{Q\diamond P}$.
\end{proof}

\begin{remark}
The functor $\mathrm{Tr}^{\mathbf{\Sigma}}$ can also be obtained from
the functor $\mathrm{Tr}^{\mathbf{\Sigma},(\cdot)}$ as the composite
of the functors $\mathrm{Tr}^{\mathbf{\Sigma},(\cdot)}\times
\mathrm{Id}_{\mathbf{Ter}(\mathbf{\Sigma})}$ and
$\mathrm{Ev}_{\mathbf{Ter}(\mathbf{\Sigma}),\mathbf{Set}}$, as in the
following diagram
$$
\xymatrix@C=95pt@R=50pt{
\mathbf{Alg}(\mathbf{\Sigma})\bprod \mathbf{Ter}(\mathbf{\Sigma})
\ar[d]_{\mathrm{Tr}^{\mathbf{\Sigma},(\cdot)}\times
        \mathrm{Id}_{\mathbf{Ter}(\mathbf{\Sigma})}}
\ar[dr]^{\mathrm{Tr}^{\mathbf{\Sigma}}}
& {} \\
\mathbf{Set}^{\mathbf{Ter}(\mathbf{\Sigma})}\times \mathbf{Ter}(\mathbf{\Sigma})
\ar[r]_-{\mathrm{Ev}_{\mathbf{Ter}(\mathbf{\Sigma}),\mathbf{Set}}}
 & \mathbf{Set}
}
$$
where $\mathrm{Ev}_{\mathbf{Ter}(\mathbf{\Sigma}),\mathbf{Set}}$ is the
evaluation functor, because of the natural isomorphism
$$
\mathrm{Hom}(\mathbf{Alg}(\mathbf{\Sigma}),
\mathbf{Set}^{\mathbf{Ter}(\mathbf{\Sigma})})
\cong \mathrm{Hom}(\mathbf{Alg}(\mathbf{\Sigma})\bprod
\mathbf{Ter}(\mathbf{\Sigma}),\mathbf{Set}).
$$
\end{remark}

To accomplish the earlier stated second goal, i.e., to show
the invariant character of the procedure of realization of terms under
signature change, we prove, for a morphism
$\mathbf{d}\colon \mathbf{\Sigma}\mor \mathbf{\Lambda}$, the existence
of a natural isomorphism between two functors from
$\mathbf{Alg}(\mathbf{\Lambda})\bprod\mathbf{Ter}(\mathbf{\Sigma})$ to
$\mathbf{Set}$, constructed from the functors
$\mathrm{Tr}^{\mathbf{\Lambda}}$, $\mathrm{Tr}^{\mathbf{\Sigma}}$,
$\mathbf{d}_{\diamond}$ and $\mathbf{d}^{\ast}$.

\begin{proposition}\label{lemaPdextranatural}
Let $\mathbf{d}\colon \mathbf{\Sigma}\mor \mathbf{\Lambda}$ be a signature
morphism.  Then the family $\theta^{\mathbf{d}} =
(\theta^{\mathbf{d}}_{\mathbf{A},X})_{(\mathbf{A},X)\in
\mathbf{Alg}(\mathbf{\Lambda})\bprod \mathbf{Ter}(\mathbf{\Sigma})}$, where
$\theta^{\mathbf{d}}_{\mathbf{A},X} = \theta^{\varphi}_{X,A}$, i.e., the
natural isomorphism of the adjunction
$\coprod_{\varphi}\ladj\Delta_{\varphi}$, is a natural isomorphism
as shown in the following diagram
$$
\xymatrix@C=20ex@R=10ex{
{\mathbf{Alg}(\mathbf{\Lambda})\bprod\mathbf{Ter}(\mathbf{\Sigma})}
  \ar[r]^-{\mathbf{d}^{\ast}\bprod\Id}
  \ar[d]_{\Id\bprod \mathbf{d}_{\diamond}} &
{\mathbf{Alg}(\mathbf{\Sigma})\bprod\mathbf{Ter}(\mathbf{\Sigma})} \xyn{d}
  \ar[d]^{\mathrm{Tr}^{\mathbf{\Sigma}}} \\
{\mathbf{Alg}(\mathbf{\Lambda})\bprod\mathbf{Ter}(\mathbf{\Lambda})} \xyn{c}
  \ar[r]_-{\mathrm{Tr}^{\mathbf{\Lambda}}} &
\mathbf{Set}
\ar@{} "c";"d"|{\dir{=>}}^{\theta^{\mathbf{d}}}
}
$$
\end{proposition}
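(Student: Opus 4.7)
The plan is to verify separately that each component $\theta^{\mathbf{d}}_{\mathbf{A},X}=\theta^{\varphi}_{X,A}$ is an isomorphism and that the family is natural in the pair $(\mathbf{A},X)$. The first is immediate, since $\theta^{\varphi}_{X,A}$ is a bijection between $A_{\coprod_{\varphi}X}$ and $(A_{\varphi})_{X}$ for every $X$ and every $T$-sorted set $A$, being the natural isomorphism of the adjunction $\coprod_{\varphi}\ladj\Delta_{\varphi}$. For naturality, I would exploit the product structure of the domain by decomposing an arbitrary morphism $(f,P)\colon(\mathbf{A},X)\mor(\mathbf{B},Y)$ in $\mathbf{Alg}(\mathbf{\Lambda})\bprod\mathbf{Ter}(\mathbf{\Sigma})$ as the composite $(f,\eta_{Y})\comp(\mathrm{id}_{\mathbf{A}},P)$ and checking the two resulting naturality squares independently.

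For the variation in the term coordinate (fixing $\mathbf{A}$ and taking a morphism $(\mathrm{id}_{\mathbf{A}},P)$), unwinding the definitions of $\mathrm{Tr}^{\mathbf{\Sigma}}$ and $\mathrm{Tr}^{\mathbf{\Lambda}}$ shows that the required square reduces to the equation
$$
P^{\mathbf{d}^{\ast}(\mathbf{A})}\comp\theta^{\varphi}_{X,A}=\theta^{\varphi}_{Y,A}\comp\mathbf{d}_{\diamond}(P)^{\mathbf{A}},
$$
which is precisely the content of Proposition~\ref{invarianza} on the invariance of the term realization under signature change.

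For the variation in the algebra coordinate (fixing $X=Y$ and taking a morphism $(f,\eta_{X})$), the identity $\mathbf{d}_{\diamond}(\eta_{X})=\eta_{\coprod_{\varphi}X}$ established in the proof of Proposition~\ref{functorMorfismoSignaturas}, together with $\eta_{X}^{\mathbf{C}}=\mathrm{id}_{C_{X}}$ for any algebra $\mathbf{C}$ (Lemma~\ref{realizdiamondcompcomm}), reduces the square to
$$
(f_{\varphi})_{X}\comp\theta^{\varphi}_{X,A}=\theta^{\varphi}_{X,B}\comp f_{\coprod_{\varphi}X},
$$
which is exactly the naturality of the adjunction isomorphism $\theta^{\varphi}$ in its second variable, applied to the $T$-sorted mapping underlying $f$. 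Pasting the two commutative squares together yields naturality for a general morphism $(f,P)$, and combined with the pointwise invertibility this makes $\theta^{\mathbf{d}}$ a natural isomorphism.

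The main obstacle is recognizing the correct split and keeping the bookkeeping clean: the nontrivial half of naturality (variation of the term) is genuine algebraic content, coinciding with the invariance result of Proposition~\ref{invarianza}, whereas the variation in the algebra is purely a formal consequence of the adjunction. Once this decomposition is in place and the identifications $\mathrm{Tr}^{\mathbf{\Lambda}}(\mathbf{A},\coprod_{\varphi}X)=A_{\coprod_{\varphi}X}$ and $\mathrm{Tr}^{\mathbf{\Sigma}}(\mathbf{d}^{\ast}(\mathbf{A}),X)=(A_{\varphi})_{X}$ are made explicit, the verification is mechanical.
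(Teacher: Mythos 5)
Your proof is correct and follows essentially the same route as the paper's: the paper likewise reduces the naturality of $\theta^{\mathbf{d}}$ to Proposition~\ref{invarianza} for the variation of the term and to the naturality of $\theta^{\varphi}$ (together with $f$ being a homomorphism) for the variation of the algebra. Your explicit factorization $(f,P)=(f,\eta_{Y})\comp(\mathrm{id}_{\mathbf{A}},P)$ is just a cleaner bookkeeping of the same pasting that the paper performs in a single large diagram.
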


\begin{proof}
Let $(f,P)\colon(\mathbf{A},X)\mor (\mathbf{B},Y)$ be a morphism in
${\mathbf{Alg}(\mathbf{\Lambda})\bprod\mathbf{Ter}(\mathbf{\Sigma})}$.  Then we have
the following situation
$$
\xy
0;<1ex,0ex>:<0ex,1ex>::
%
\POS 0+(0,0)
\xymatrix@C=20ex@R=8ex@!0{
(\mathbf{A},X) \ar[r]^{(f,P)} & (\mathbf{B},Y)
\save"1,1"."1,2"!C*\frm{}="a"\restore 
}
\POS 0+(20,-10)
\xymatrix@C=21ex@R=8ex@!0{
(\mathbf{d}^{\ast}(\mathbf{A}),X)
  \ar[r]^{(f_{\varphi},P)} &
(\mathbf{d}^{\ast}(\mathbf{B}),Y)
\save"1,1"."1,2"!C*+<5ex>\frm{}="b"\restore 
}
\POS 0+(-20,-10)
\xymatrix@C=21ex@R=8ex@!0{
(\mathbf{A},\coprod_{\varphi}X)
  \ar[r]^{(f,\mathbf{d}_{\diamond}(P))}&
(\mathbf{B},\coprod_{\varphi}Y)
\save"1,1"."1,2"!C*+<5ex>\frm{}="c"\restore 
}
\POS 0+(14,-20)
\xymatrix"d"@C=11ex@R=11ex@!0{
&
(A_{\varphi})_{X}
  \ar[rd]|*+{P^{\mathbf{d}^{\ast}(\mathbf{A})}}
  \ar[ld]|*+{(f_{\varphi})_{X}}
  \\
(B_{\varphi})_{X}
  \ar[rd]|*{P^{\mathbf{d}^{\ast}(\mathbf{B})}}
  &&
(A_{\varphi})_{Y}
  \ar[ld]|*+{(f_{\varphi})_{Y}}
  \\
&(B_{\varphi})_{Y}
\save "d1,2"."d3,2"!C*+<1ex>\frm{}="d"\restore 
}
\POS (-14,-25)
\xymatrix"e"@C=12ex@R=11ex@!0{
&
A_{\coprod_{\varphi}X}
  \ar[rd]|*+{\mathbf{d}_{\diamond}(P)^{\mathbf{A}}}
  \ar[ld]|*+{f_{\coprod_{\varphi}\!X}}
  \\
B_{\coprod_{\varphi}X}
  \ar[rd]|*+{\mathbf{d}_{\diamond}(P)^{\mathbf{B}}}
  &&
A_{\coprod_{\varphi}Y}
  \ar[ld]|*+{f_{\coprod_{\varphi}\!Y}}
  \\
&
B_{\coprod_{\varphi}Y}
\save "e1,2"."e3,2"!C*+<5ex>\frm{}="e"\restore 
}
\ar "e1,2";"d1,2"|*+{\theta^{\varphi}_{X,A}}
\ar "e2,1";"d2,1"|*+{\theta^{\varphi}_{X,B}}
\ar "e2,3";"d2,3"|*+{\theta^{\varphi}_{Y,A}}
\ar "e3,2";"d3,2"|*+{\theta^{\varphi}_{Y,B}}
\ar @{|->}"a";"b"
\ar @{|->}"a";"c"
\ar @{|->}"b";"d"
\ar @{|->}"c";"e"
\endxy
$$
But the bottom diagram in the above figure commutes, because of
Proposition~\ref{invarianza}, the naturalness of $\theta^{\varphi}$,
and the fact that $f$ is a $\mathbf{\Lambda}$-homomorphism.  Therefore
the following diagram also commutes
$$
\xymatrix@C=20ex@R=8ex{
{A_{\coprod_{\varphi}X}}
  \ar[r]^-{\theta^{\varphi}_{X,A}}
  \ar[d]_{f_{\mathbf{d}_{\diamond}(P)}} &
{(A_{\varphi})_{X}}
  \ar[d]^{(f_{\varphi})_{P}} \\
{B_{\coprod_{\varphi}Y}}
  \ar[r]_-{\theta^{\varphi}_{Y,B}} &
{(B_{\varphi})_{Y}}
}
$$
From this it follows that the family $\theta^{\mathbf{d}}$ is a natural
isomorphism from $\mathrm{Tr}^{\mathbf{\Lambda}}\comp (\Id\bprod
\mathbf{d}_{\diamond})$ to $\mathrm{Tr}^{\mathbf{\Sigma}}\comp (
\mathbf{d}^{\ast}\bprod\Id)$.
\end{proof}


If we just recapitulate about that which has been, essentially,
obtained up to this point, then we can summarize it by saying that
what we have at our disposal consists of the following:
\begin{enumerate}
\item The contravariant functor $\Alg$ from $\mathbf{Sig}$ to $\mathbf{Cat}$,
      giving the category of models of a given signature,

\item The pseudo-functor $\mathrm{Ter}$ from $\mathbf{Sig}$ to
      $\mathbf{Cat}$, giving the dual of the Kleisli category for the
      standard monad derived from the adjunction induced by a given
      signature,

\item The family of functors
      $\mathrm{Tr}=(\mathrm{Tr}^{\mathbf{\Sigma}})_{\mathbf{\Sigma}\in\mathbf{Sig}}$,
      where, for every signature $\mathbf{\Sigma}$,
      $\mathrm{Tr}^{\mathbf{\Sigma}}$ is a functor from
      $\mathbf{Alg}(\mathbf{\Sigma})\bprod\mathbf{Ter}(\mathbf{\Sigma})$
      to $\mathbf{Set}$ that formalize the realization of terms as
      term operations on algebras, and

\item The family of natural isomorphisms
      $\theta=(\theta^{\mathbf{d}})_{\mathbf{d}\in\Mor(\mathbf{Sig})}$,
      where, for every signature morphism $\mathbf{d}$,
      $\theta^{\mathbf{d}}$ is the natural isomorphism that explains
      the invariant character of the procedure of realization of terms
      under the variation of the signature.
\end{enumerate}

Our next goal is to construct the many-sorted term institution by
combining adequately the above components.  To attain the just stated
goal we need to recall beforehand some auxiliary category-theoretic
concepts.  In particular, we proceed to define next, among others, the
concept of pseudo-extranatural transformation in $2$-categories and
for pseudo-functors.  This generality is necessary because, later on
(e.g., in the sixth section), after adding a certain type of $2$-cells
(the transformations of Fujiwara) to the category
$\mathbf{Sig}_{\mathfrak{pd}}$, of signatures and polyderivors (defined in
the fifth section), that converts it into a $2$-category, still
denoted by the same symbol, we will get another $2$-category,
precisely $\mathbf{Sig}_{\mathfrak{pd}}^{\mathrm{op}}\times
\mathbf{Sig}_{\mathfrak{pd}}$, on which we will define a pseudo-functor
related by a pseudo-extranatural transformation to a definite functor
(also defined on the same derived $2$-category), from which we will
get the so-called term $2$-institution of Fujiwara.

\begin{definition}\label{defLaxDinatural}
Let $\mathbf{C}$ and $\mathbf{D}$ be two $2$-categories,
$F,G\colon\mathbf{C}^{\opp}\bprod \mathbf{C}\mor\mathbf{D}$ two
2-functors, and $(\alpha,\beta)$ a pair such that
\begin{enumerate}
\item For every 0-cell $c$ in $\mathbf{C}$, $\alpha_{c}\colon F(c,c)\mor
      G(c,c)$ is a 1-cell in $\mathbf{D}$.

\item For every 1-cell $f\colon c\mor c'$ in $\mathbf{C}$, $\beta_{f}$
      is a $2$-cell in $\mathbf{D}$
      from
      $G(1,f)\comp \alpha_{c}\comp F(f,1)$
      to
      $G(f,1)\comp \alpha_{c'}\comp F(1,f)$.
\end{enumerate}
Then we say that $(\alpha,\beta)$ is a
\begin{enumerate}
\item  \emph{Lax-dinatural transformation} from $F$ to $G$ if,
       for every $2$-cell $\xi\colon f\Rightarrow g$ in $\mathbf{C}$,
       we have that
       $$
       \beta_{g}\comp(G(1,\xi)\hcomp\alpha_{c}\hcomp F(\xi,1)) =
       (G(\xi,1)\hcomp\alpha_{c'}\hcomp F(1,\xi))\comp \beta_{f}.
       $$
\item \emph{Pseudo-dinatural transformation} from $F$ to $G$ if it
      is a lax-dinatural transformation and, for every
      $f\colon c\mor c'$ in $\mathbf{C}$, $\beta_{f}$ is an isomorphism.

\item $2$-\emph{dinatural transformation} from $F$ to $G$ if it is
      a lax-dinatural transformation and, for every
      $f\colon c\mor c'$ in $\mathbf{C}$, $\beta_{f}$ is an identity.
\end{enumerate}
\end{definition}

The dinatural transformations when $F$ and $G$ are pseudo-functors
will also be relevant for us.  In this case it is necessary to impose
additional conditions of compatibility with the natural isomorphisms
of the pseudo-functors.  The definition is as follows.

\begin{definition}
Let $\mathbf{C}$ and $\mathbf{D}$ be two $2$-categories,
$(F,\gamma^{F},\nu^{F})$ and $(G,\gamma^{G},\nu^{G})$ two
pseudo-functors from $\mathbf{C}^{\opp}\bprod \mathbf{C}$ to
$\mathbf{D}$, and $(\alpha,\beta)$ a pair such that
\begin{enumerate}
\item For every 0-cell $c$ in $\mathbf{C}$, $\alpha_{c}\colon F(c,c)\mor
      G(c,c)$ is a 1-cell in $\mathbf{D}$.

\item For every 1-cell $f\colon c\mor c'$ in $\mathbf{C}$, $\beta_{f}$
      is a $2$-cell in $\mathbf{D}$
      from
      $G(1,f)\comp \alpha_{c}\comp F(f,1)$
      to
      $G(f,1)\comp \alpha_{c'}\comp F(1,f)$.
\end{enumerate}
Then we say that $(\alpha,\beta)$ is a \emph{lax-dinatural
transformation} from $(F,\gamma^{F},\nu^{F})$ to
$(G,\gamma^{G},\nu^{G})$ if it satisfies the following compatibility
conditions:
\begin{enumerate}
\item  For every $2$-cell $\xi\colon f\Rightarrow g$ in $\mathbf{C}$,
       we have that
       $$
       \beta_{g}\comp(G(1,\xi)\hcomp\alpha_{c}\hcomp F(\xi,1)) =
       (G(\xi,1)\hcomp\alpha_{c'}\hcomp F(1,\xi))\comp \beta_{f}.
       $$
\item For every pair of 1-cells $f\colon c\mor c'$, $g\colon
      c'\mor c''$ in
      $\mathbf{C}$, we have that
      \begin{multline*}
      \gamma^{F}_{(1,f),(1,g)}\comp
      \bigl({G(f,1)\hcomp\beta_{g}\hcomp F(1,f)}\bigr)
      \comp
      \bigl({G(1,g)\hcomp\beta_{f}\hcomp F(g,1)}\bigr) \\
      =
      {\beta_{g\comp f}}
      \comp \bigl({\gamma^{G}_{(1,f),(1,g)}\hcomp\alpha_{c}\hcomp
          \gamma^{F}_{(g,1),(f,1)}}\bigr).
      \end{multline*}

\item  For every object $c$ in $\mathbf{C}$, we have that
       $$
       \alpha_{c}\hcomp \nu^{F}_{(c,c)} =
       \nu^{G}_{(c,c)}\hcomp \alpha_{c}.
       $$
\end{enumerate}
\end{definition}

If the pseudo-functor $G$ is independent of both variables, then we say that
the above transformations are \emph{lax-extranatural},
\emph{pseudo-extranatural} or \emph{extranatural}, respectively.  Then
the compatibility with the $2$-cells of $\mathbf{C}$ is equivalent to
$$
\beta_{g}\comp (\alpha_{c}\hcomp F(\xi,1)) =
(\alpha_{c'}\hcomp F(1,\xi))\comp \beta_{f},
$$
or geometrically:
$$
\xy
0;<22ex,-0ex>:<0ex,7ex>::
\POS 0+(0,0)*+{F(c',c)}="a",
\POS 0+(1,1)*+{F(c,c)}="b",
\POS 0+(2,0)*+{G}="c",
\POS 0+(1,-1)*+{F(c',c')}="b'",
\ar @/r^14ex/ "a";"b" |*+{F(f,1)}="sf1"
\ar @/r^4ex/  "a";"b" |*+{F(g,1)}="sg1"
\ar @/r^1ex/  "b";"c"  ^{\alpha_{c}}="x"
\ar @/r^6ex/ "a";"b'" |*+{F(1,f)}="s1f"
\ar @/r_4ex/ "a";"b'" |*+{F(1,g)}="s1g"
\ar @/r_1ex/ "b'";"c" _{\alpha_{c'}}="y"
\ar @{} "sf1";"sg1" |(.65){\dir{=>}}|(.4)*+{F(\xi,1)}
\ar @{} "s1f";"s1g" |(.65){\dir{=>}}|(.4)*+{F(1,\xi)}
\POS 0+(1.2,1.5)*:a(-30)@_{=>}
\POS 0+(1.2,1.85)*{\beta_{f}},
\POS 0+(1.2,0)*:a(-40)@_{=>}
\POS 0+(1.2,0.35)*{\beta_{g}}
\endxy
$$
and the compatibility of the composition of $1$-cells in $\mathbf{C}$ with
the natural isomorphisms of $F$ is equivalent to
$$
\gamma^{F}_{(1,f),(1,g)}\comp
(\beta_{g}\hcomp F(1,f))\comp (\beta_{f}\hcomp F(g,1)) =
\beta_{g\comp f}\comp (\alpha_{c}\hcomp
\gamma^{F}_{(g,1),(f,1)}),
$$
or geometrically:
{\Large
$$
\def\labelstyle{\scriptstyle}     
\def\twocellstyle{\scriptstyle}
\def\objectstyle{\scriptstyle}
\xy
0;<95pt,-0pt>:<5pt,35pt>::
\POS 0+(0,0)*+{F(c'',c)}="a",
\POS 0+(1,1)*+{F(c',c)}="a1",
\POS 0+(1,-1)*+{F(c'',c')}="a2",
\POS 0+(2,1.8)*+{F(c,c)}="b1",
\POS 0+(2,-1.8)*+{F(c'',c'')}="b2",
\POS 0+(2,0)*+{F(c',c')}="e",
\POS 0+(3.5,0)*+{G}="f",
\ar "a";"a1"  |*+{F(g,1)}=""
\ar "a1";"b1" |*+{F(f,1)}=""
\ar "b1";"f" ^{\alpha_{c}}="alphac"
\ar "a1";"e"  |*+{F(1,f)}=""
\ar "e";"f"   _{\alpha_{c'}} ="alphac'"
\ar "a";"a2"  |*+{F(1,f)}=""
\ar "a2";"e"  |(.6)*+{F(g,1)}=""
\ar "a2";"b2" |*+{F(1,g)}=""
\ar "b2";"f" _{\alpha_{c''}} ="alphac''"
\ar @{} "b1";"e" |(.5){\dir{=>}}^{\beta_{f}}
\ar @{} "e";"b2" |(.5){\dir{=>}}^{\beta_{g}}
\ar @{} "b1";"e" |(.8){\dir{=>}}^(.8){\beta_{g\comp f}}
%
\ar @/va(-0)^30pt/ "a";"b1" |*+{F(g\comp f,1)}="sgf1"
\ar @/va(-0)^30pt/ "a";"b2" |*+{F(1,g\comp f)}="s1gf"
\ar @{} "a1";"sgf1"|(.5){\dir{=>}}_{\gamma^{F}}
\ar @{} "a2";"s1gf"  |(.5){\dir{=>}}_{\gamma^{F}}
\endxy
$$
}

In the next proposition we construct a pseudo-functor
$\Alg(\farg)\bprod \mathrm{Ter}(\farg)$ from the product category
$\mathbf{Sig}^{\opp}\bprod \mathbf{Sig}$ to $\mathbf{Cat}$ (obtained from the
contravariant functor $\Alg$ and the pseudo-functor $\mathrm{Ter}$),
and prove that the family
$\mathrm{Tr}=(\mathrm{Tr}^{\mathbf{\Sigma}})_{\mathbf{\Sigma}\in\mathbf{Sig}}$,
together with the family
$\theta=(\theta^{\mathbf{d}})_{\mathbf{d}\in\Mor(\mathbf{Sig})}$ is a
pseudo-extranatural transformation from the pseudo-functor
$\Alg(\farg)\bprod\mathrm{Ter}(\farg)$ to the functor
$\mathrm{K}_{\mathbf{Set}}$ (from $\mathbf{Sig}^{\opp}\bprod \mathbf{Sig}$ to
$\mathbf{Cat}$) that is constantly $\mathbf{Set}$.

\begin{proposition}\label{PdExtranatural}
There exists a pseudo-functor $\Alg(\farg)\bprod \mathrm{Ter}(\farg)$
from  $\mathbf{Sig}^{\opp}\bprod \mathbf{Sig}$ to $\mathbf{Cat}$,
obtained from the contravariant functor $\Alg$ and the pseudo-functor
$\mathrm{Ter}$, which sends a pair of signatures
$(\mathbf{\Sigma},\mathbf{\Lambda})$ to the category
$\mathbf{Alg}(\mathbf{\Sigma})\bprod\mathbf{Ter}(\mathbf{\Lambda})$, and a pair of
signature morphisms $(\mathbf{d},\mathbf{e})$ from
$(\mathbf{\Sigma},\mathbf{\Lambda})$ to $(\mathbf{\Sigma}',\mathbf{\Lambda}')$ in
$\mathbf{Sig}^{\opp}\bprod\mathbf{Sig}$ to the functor $\mathbf{d}^{\ast}\bprod
\mathbf{e}_{\diamond}$ from
$\mathbf{Alg}(\mathbf{\Sigma})\bprod\mathbf{Ter}(\mathbf{\Lambda})$ to
$\mathbf{Alg}(\mathbf{\Sigma}')\bprod\mathbf{Ter}(\mathbf{\Lambda}')$.

Furthermore, the family of functors
$\mathrm{Tr}=(\mathrm{Tr}^{\mathbf{\Sigma}})_{\mathbf{\Sigma}\in\mathbf{Sig}}$,
together with the family
$\theta=(\theta^{\mathbf{d}})_{\mathbf{d}\in\Mor(\mathbf{Sig})}$,
where $\theta^{\mathbf{d}}$ is the natural isomorphism of
Proposition~\ref{lemaPdextranatural}, is a pseudo-extranatural
transformation from the pseudo-functor
$\Alg(\farg)\bprod\mathrm{Ter}(\farg)$ to the functor
$\mathrm{K}_{\mathbf{Set}}$ from $\mathbf{Sig}^{\opp}\bprod
\mathbf{Sig}$ to $\mathbf{Cat}$ that is constantly $\mathbf{Set}$.

\end{proposition}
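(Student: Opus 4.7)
The plan is to break the statement into its two assertions and reduce each to verifications that can be assembled from the propositions already established, especially Propositions~\ref{pseudoPol}, \ref{funcTr} and~\ref{lemaPdextranatural}.

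First I would construct $\Alg(\farg)\bprod\mathrm{Ter}(\farg)$ on $\mathbf{Sig}^{\opp}\bprod \mathbf{Sig}$. On objects, set $(\mathbf{\Sigma},\mathbf{\Lambda})\mapsto \mathbf{Alg}(\mathbf{\Sigma})\bprod \mathbf{Ter}(\mathbf{\Lambda})$; on $1$-cells, $(\mathbf{d},\mathbf{e})\mapsto \mathbf{d}^{\ast}\bprod \mathbf{e}_{\diamond}$. Because $\Alg$ is a (strict) contravariant functor while $\mathrm{Ter}$ is a pseudo-functor with coherence data $(\gamma,\nu)$ supplied in Proposition~\ref{pseudoPol}, the product inherits isomorphic $2$-cells $\id_{\mathbf{d}^{\ast}}\bprod \gamma^{\mathbf{e},\mathbf{e}'}$ and $\id_{\mathbf{d}^{\ast}}\bprod \nu^{\mathbf{\Lambda}}$, and the coherence axioms for the pseudo-functor $\Alg(\farg)\bprod\mathrm{Ter}(\farg)$ reduce componentwise to those already known for $\mathrm{Ter}$ on the second factor and to trivial equalities on the first.

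For the second assertion, I would first recall that, because $\mathbf{Sig}$ is a $1$-category, the compatibility-with-$2$-cells condition in the definition of a pseudo-extranatural transformation is vacuous. What remains to check is that, for each signature $\mathbf{\Sigma}$, $\mathrm{Tr}^{\mathbf{\Sigma}}$ is a $1$-cell of $\mathbf{Cat}$ (immediate from Proposition~\ref{funcTr}); that, for each $\mathbf{d}\colon\mathbf{\Sigma}\mor \mathbf{\Lambda}$, the family $\theta^{\mathbf{d}}$ is an invertible $2$-cell of the appropriate shape (immediate from Proposition~\ref{lemaPdextranatural}); and finally the two coherence conditions with the identities $\nu$ and the composition isomorphisms $\gamma$ of the pseudo-functor $\Alg(\farg)\bprod\mathrm{Ter}(\farg)$.

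The main task will therefore be the composition axiom: given $\mathbf{d}\colon\mathbf{\Sigma}\mor\mathbf{\Lambda}$ and $\mathbf{e}\colon\mathbf{\Lambda}\mor\mathbf{\Omega}$, I would show that at each $(\mathbf{B},X)\in \mathbf{Alg}(\mathbf{\Omega})\bprod \mathbf{Ter}(\mathbf{\Sigma})$ the natural isomorphism $\theta^{\mathbf{e}\comp\mathbf{d}}_{\mathbf{B},X}$ equals the pasting
$$
\theta^{\mathbf{d}}_{\mathbf{e}^{\ast}(\mathbf{B}),X}\comp \theta^{\mathbf{e}}_{\mathbf{B},\mathbf{d}_{\diamond}(X)}
$$
modified by the $\mathrm{Ter}$-coherence isomorphism $\gamma^{\mathbf{d},\mathbf{e}}_{X}\colon \mathbf{e}_{\diamond}\mathbf{d}_{\diamond}(X)\iso (\mathbf{e}\comp\mathbf{d})_{\diamond}(X)$ and the evident identification $(\mathbf{e}\comp\mathbf{d})^{\ast}=\mathbf{d}^{\ast}\comp \mathbf{e}^{\ast}$. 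Unfolding $\theta^{\mathbf{d}}_{\mathbf{A},Y}=\theta^{\varphi}_{Y,A}$ as the hom-adjunction $\coprod_{\varphi}\ladj\Delta_{\varphi}$ of Proposition~1, this becomes the statement that the obvious hexagon of $\theta^{\varphi}$ and $\theta^{\psi}$ composes to $\theta^{\psi\comp\varphi}$ up to the canonical isomorphism $\coprod_{\psi}\coprod_{\varphi}X\iso \coprod_{\psi\comp\varphi}X$, which is precisely the content of the pseudo-functoriality of $\mathrm{MSet}^{\ttcoprod}$ established in Proposition~1, transported through the evident underlying-set considerations. The identity axiom reduces analogously to $\nu^{S}$. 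I anticipate that the bookkeeping in this hexagon is the only delicate part of the proof; once it is written out componentwise, Proposition~\ref{invarianza} applied twice (first to $\mathbf{d}$ and then to $\mathbf{e}$) together with the naturality of $\theta^{\varphi}$ and $\theta^{\psi}$ forces the equality on the generators $\eta_{X}$ and then on all terms, giving the required coherence and finishing the proof.
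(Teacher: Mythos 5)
Your proposal is correct and follows essentially the same route as the paper: both observe that the $2$-cell compatibility is vacuous since $\mathbf{Sig}$ is an ordinary category, and both reduce the remaining work to the compatibility of the composition of $1$-cells with the coherence isomorphisms, verified pointwise at a pair $(f,P)$ via the adjunction isomorphisms $\theta^{\varphi}$, $\theta^{\psi}$, $\theta^{\psi\comp\varphi}$ and the pseudo-functoriality data of $\mathrm{MSet}^{\ttcoprod}$ (equivalently, Proposition~\ref{invarianza} together with naturality). The paper records this as a single large commutative diagram and declares it a consequence of the definitions, which is exactly the bookkeeping you describe.
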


\begin{proof}
Because the structure of $2$-category of $\mathbf{Sig}$ is, in this case,
trivial, we need only show the compatibility with the natural
isomorphisms of the pseudo-functor $\Alg(\farg)\bprod
\mathrm{Ter}(\farg)$.

We restrict our attention to show the compatibility of the composition
of $1$-cells in $\mathbf{Sig}$ with the natural isomorphisms of
$\Alg(\farg)\bprod \mathrm{Ter}(\farg)$.  But for this, it is enough
to verify that, for every $f\colon \mathbf{A}\mor \mathbf{B}$ in
$\mathbf{Alg}(\mathbf{\Omega})$ and $P\colon X\mor Y$ in
$\mathbf{Ter}(\mathbf{\Sigma})$, the following diagram commutes %
\begin{narrow}{-8pt}{-8pt}
$$
\xy
0;<1ex,0ex>:<0ex,1ex>::
%
\POS 0+(8,0)
\xymatrix"d"@C=30ex@R=19ex@!0{
(A_{\varphi})_{\coprod_{\varphi}X}
  \ar[r]|*+{\theta^{\varphi}_{X,A_{\psi}}}
  \ar[d]|*+{\mathbf{d}_{\diamond}(P)^{\mathbf{e}^{\ast}(\mathbf{A})}}
  &
({A_{\psi}}_{\varphi})_{X}
  \ar[d]|*+{P^{\mathbf{d}^{\ast}(\mathbf{e}^{\ast}(\mathbf{A}))}}
   \\
(A_{\varphi})_{\coprod_{\varphi}Y}
  \ar[r]|*+{\theta^{\varphi}_{Y,A_{\psi}}}
  \ar[d]|*+{(f_{\varphi})_{\coprod_{\varphi}Y}}
  &
({A_{\psi}}_{\varphi})_{Y}
  \ar[d]|*+{({f_{\psi}}_{\varphi})_{Y}}
   \\
(B_{\psi})_{\coprod_{\varphi}Y}
  \ar[r]|*+{\theta^{\varphi}_{Y,B_{\psi}}}
  &
({B_{\psi}}_{\varphi})_{Y}
\save "d1,2"."d3,1"!C*+<5ex>\frm{}="d"\restore 
}
\POS (-13,-6)
\xymatrix"e"@C=30ex@R=19ex@!0{
A_{\coprod_{\psi}\coprod_{\varphi}X}
  \ar[r]|*+{(\gamma^{\mathbf{d},\mathbf{e}}_{X})^{\mathbf{A}}}
  \ar[d]|*+{\mathbf{e}_{\diamond}\comp\mathbf{d}_{\diamond}(P)^{\mathbf{A}}}&
A_{\coprod_{\psi\comp \varphi}X}
  \ar[d]|*+{\mathbf{e}_{\diamond}(\mathbf{d}_{\diamond}(P))^{\mathbf{A}}}  \\
A_{\coprod_{\psi}\coprod_{\varphi}Y}
  \ar[r]|*+{(\gamma^{\mathbf{d},\mathbf{e}}_{Y})^{\mathbf{A}}}
  \ar[d]|*+{f_{\coprod_{\psi}\coprod_{\varphi}Y}}&
A_{\coprod_{\psi\comp \varphi}Y}
  \ar[d]|*+{f_{\coprod_{\psi\comp \varphi}Y}}  \\
B_{\coprod_{\psi}\coprod_{\varphi}Y}
  \ar[r]|*+{(\gamma^{\mathbf{d},\mathbf{e}}_{Y})^{\mathbf{B}}} &
B_{\coprod_{\psi\comp\varphi}Y}
\save"e1,2"."e3,1"!C*+<10ex>\frm{}="e"\restore 
}
\ar "e1,1";"d1,1"|*-{\theta^{\psi}_{\coprod_{\varphi}X,A}}
\ar "e1,2";"d1,2"|*+{\theta^{\psi\comp\varphi}_{X,A}}
\ar "e2,1";"d2,1"|*-{\theta^{\psi}_{\coprod_{\varphi}Y,A}}
\ar "e2,2";"d2,2"|*+{\theta^{\psi\comp\varphi}_{Y,A}}
\ar "e3,1";"d3,1"|*-{\theta^{\psi}_{\coprod_{\varphi}Y,B}}
\ar "e3,2";"d3,2"|*+{\theta^{\psi\comp\varphi}_{Y,B}}
\endxy
$$
\end{narrow}
And this is so in consequence of the definitions of the involved
entities.
\end{proof}


The preceding proposition can be reformulated in a more compact form,
taking into account the directing principles of the institutional
frame of Goguen and Burstall in~\cite{gb86}, as asserting the
existence of a certain institution on the category $\mathbf{Set}$.  By
doing so the conceptual and structural richness involved in the
proposition is fully and elegantly reflected in the institution
structure.  However, to actually realize the noted reformulation we
should begin by stating a concept of institution that generalizes,
even more, that one defined by Goguen and Burstall in~\cite{gb86},
owing to reasons explained below.

Before we define the adequate concept of institution and justify
the underlying reasons for positing it, as a perhaps helpful
historical remark, it seems appropriate to recall that direct
ancestors of the concept of institution, as defined by Goguen and
Burstall in~\cite{gb84}, are, at the very least, that of \emph{regular
model-theoretic language} defined by Feferman in~\cite{f74}, pp.
155--156,
as a system
$$
L = (\mathrm{Typ}_{L},\mathrm{Str}_{L},\mathrm{Stc}_{L},\models_{L})
$$
where $\mathrm{Typ}_{L}$ is a non-empty set of similarity types,
called the \emph{admitted types} of $L$, and $\mathrm{Str}_{L}$,
$\mathrm{Stc}_{L}$, $\models_{L}$ are functions with domain
$\mathrm{Typ}_{L}$ such that for each admitted types $\tau$, $\tau'$:
\begin{enumerate}
\item[(i)] $\mathrm{Str}_{L}(\tau)$ is a sub-collection of
      $\mathrm{Str}(\tau)$, called the \emph{admitted structures for}
      $L(\tau)$,

\item[(ii)] $\mathrm{Stc}_{L}(\tau)$ is a collection called the
      \emph{sentences} of $L(\tau)$,

\item[(iii)] $\models_{L,\tau}$ is a sub-relation of
      $\mathrm{Str}_{L}(\tau)\times \mathrm{Stc}_{L}(\tau)$, called
      the \emph{satisfaction} (or \emph{truth}) \emph{relation of}
      $L(\tau)$,

\item[(iv)] \textsl{\textbf{Expansion}}. $\tau\subseteq \tau'\Rightarrow
      \mathrm{Stc}_{L}(\tau)\subseteq \mathrm{Stc}_{L}(\tau')$;
      $\mathfrak{M}'\in \mathrm{Str}_{L}(\tau')\Rightarrow
      \mathfrak{M}'\upharpoonright\tau\in \mathrm{Str}_{L}(\tau)$
      \text{ and } $\varphi\in \mathrm{Stc}_{L}(\tau)\Rightarrow
      [\mathfrak{M}'\upharpoonright\tau\models \varphi \Leftrightarrow
      \mathfrak{M}'\models \varphi]$,

\item[(v)] \textsl{Renaming.} Each $\tau\equiv_{\gamma}\tau'$ induces
      a $1-1$ correspondence
      $$
      \overline{\gamma}\colon \mathrm{Stc}(\tau)\mor \mathrm{Stc}(\tau')
      $$
      such that if $\mathfrak{M}\in \mathrm{Str}_{L}(\tau)$ and
      $\mathfrak{M}'\in \mathrm{Str}(\tau')$ and
      $\mathfrak{M}\equiv_{\gamma}\mathfrak{M}'$, then
      $\mathfrak{M}'\in \mathrm{Str}_{L}(\tau')$ and
      $\mathfrak{M}\models \varphi \Leftrightarrow
      \mathfrak{M}'\models \overline{\gamma}(\varphi)$, and

\item [(vi)] \textsl{Isomorphism.} If $\mathfrak{M}\in
      \mathrm{Str}_{L}(\tau)$ and $\mathfrak{M}'\in
      \mathrm{Str}(\tau)$ and  $\mathfrak{M}\cong \mathfrak{M}'$, then
      $\mathfrak{M}'\in\mathrm{Str}_{L}(\tau)$ and $\mathfrak{M}\models \varphi
      \Leftrightarrow \mathfrak{M}'\models \varphi$;
\end{enumerate}
and that of a \emph{logic} $\mathfrak{L}^{\ast}$
defined by Barwise in~\cite{b74}, pp.  234--235,
where he says that it consists of a
syntax and a semantics which fit together nicely.  The \emph{syntax
of} $\mathfrak{L}^{\ast}$ is a functor $\ast$ on some category
$\mathbf{C}$ of languages to the category of classes.  The functor $\ast$
satisfies the following axiom:
\begin{enumerate}
\item[] \textsl{Occurrence Axiom.} For every
     $\mathfrak{L}^{\ast}$-sentence $\varphi$ there is a smallest
     (under $\subseteq$) language $L_{\varphi}$ in $\mathbf{C}$ such
     that $\varphi\in L_{\varphi}^{\ast}$. If $i\colon
     L_{\varphi}\subseteq K$ is an inclusion morphism, so is $i^{\ast}\colon
     L_{\varphi}^{\ast}\subseteq K^{\ast}$.
\end{enumerate}

The \emph{semantics of} $\mathfrak{L}^{\ast}$ is a relation $\models$
such that if $\mathfrak{M}\models \varphi$, then $\mathfrak{M}$ is
an $L$-structure for some $L$ in $\mathbf{C}$ and $\varphi\in L^{\ast}$.
It satisfies the following axiom:
\begin{enumerate}
\item[] \textsl{Isomorphism Axiom.} If $\mathfrak{M}\models \varphi$
and $\mathfrak{M}\cong \mathfrak{N}$, then $\mathfrak{N}\models \varphi$.
\end{enumerate}

The syntax and semantics of $\mathfrak{L}^{\ast}$ fit together
according to the final axiom.
\begin{enumerate}
\item[] \textsl{\textbf{Translation Axiom}}. For every $\mathfrak{L}^{\ast}$
        sentence $\varphi$, every $K$-structure $\mathfrak{M}$
        and every morphism $\alpha\colon L_{\varphi}\mor K$
        $$
        \mathfrak{M}\models \alpha^{\ast}(\varphi) \text{\,\, iff \,\,}
        \mathfrak{M}^{-\alpha} \text{\, is an \,}
        L_{\varphi}\!\!-\!\!\text{structure\,} \text{\,\,and\,\,}
        \mathfrak{M}^{-\alpha}\models \varphi.
        $$
\end{enumerate}

We recall that Goguen and Burstall in~\cite{gb84}, p. 229, define
an \emph{institution} as a category $\mathbf{Sign}$, of signatures, a
functor $\mathrm{Sen}$ from $\mathbf{Sign}$ to $\mathbf{Set}$, giving
the set of \emph{sentences} over a given signature, a functor
$\mathrm{Mod}$ from $\mathbf{Sign}$ to $\mathbf{Cat}^{\mathrm{op}}$,
giving the category of \emph{models} of a given signature, and, for
each $\Sigma\in \mathbf{Sign}$, a satisfaction relation
$\models_{\Sigma}\subseteq\bb{\mathbf{Mod}(\Sigma)}\times
\mathrm{Sen}(\Sigma)$, where $\bb{\cdot}$ is the endofunctor of
$\mathbf{Cat}$ which sends a category to the discrete category on its set
of objects, such that, for each morphism $\varphi\colon \Sigma\mor
\Sigma'$, the
\begin{enumerate}
\item[]\textsl{\textbf{Satisfaction Condition}}.
$
\mathbf{M'}\models_{\Sigma'} \varphi(e) \text{\,\, iff \,\,}
\varphi(\mathbf{M'})\models_{\Sigma} e,
$
\end{enumerate}
holds for each $\mathbf{M'}\in \bb{\mathbf{Mod}(\Sigma')}$ and each
$e\in \mathrm{Sen}(\Sigma)$.  Later on, in~\cite{gb86}, p. 316, they
define an \emph{institution} as a category $\mathbf{Sign}$, of
signatures, a functor $\mathrm{Sen}$ from $\mathbf{Sign}$ to
$\mathbf{Cat}$ (observe the large-scale change from $\mathbf{Set}$ to
$\mathbf{Cat}$), giving \emph{sentences} and \emph{proofs} over a
given signature, a functor $\mathrm{Mod}$ from $\mathbf{Sign}$ to
$\mathbf{Cat}^{\mathrm{op}}$, giving the category of \emph{models} of
a given signature, and a satisfaction relation
$\models_{\Sigma}\subseteq\bb{\mathbf{Mod}(\Sigma)}\times
\bb{\mathbf{Sen}(\Sigma)}$, for each $\Sigma\in \bb{\mathbf{Sign}}$,
such that
\begin{enumerate}
\item[]\textsl{\textbf{Satisfaction Condition}}:
$
\mathbf{M'}\models_{\Sigma'}
\mathrm{Sen}(\varphi)s \text{\, iff \,}
\mathrm{Mod}(\varphi)\mathbf{M'}\models_{\Sigma} s,
$
for each $\varphi\colon \Sigma\mor \Sigma'$ in $\mathbf{Sign}$,
$\mathbf{M'}\in \bb{\mathbf{Mod}(\Sigma')}$ and $s\in
\bb{\mathbf{Sen}(\Sigma)}$, and
\end{enumerate}

\begin{enumerate}
\item[]\textsl{Soundness Condition}:
$ \text{if } \mathbf{M}\models_{\Sigma}s,
\text{ then } \mathbf{M}\models_{\Sigma} s',
$
for each $\mathbf{M}\in\bb{\mathbf{Mod}(\Sigma)}$ and $s\mor s'\in
\mathbf{Sen}(\Sigma)$.
\end{enumerate}

Besides, the same authors, in~\cite{gb86}, p.  327, define, for a
category $\mathbf{V}$, a \emph{generalized}
$\mathbf{V}$-\emph{institution} as a pair of functors $\mathrm{Mod}$,
from $\mathbf{Sign}^{\mathrm{op}}$ to $\mathbf{Cat}$, and
$\mathrm{Sen}$, from $\mathbf{Sign}$ to $\mathbf{Cat}$, with an
extranatural transformation $\models$ from
$\bb{\mathrm{Mod}(\cdot)}\times \mathrm{Sen}(\cdot)$ to $\mathbf{V}$.
Observe that the second concept of institution falls under this last
one because, taking as $\mathbf{V}$ the category $\mathbf{2}$, with
two objects and just one morphism not the identity, the existence of
an extranatural transformation from $\bb{\mathrm{Mod}(\cdot)}\times
\mathrm{Sen}(\cdot)$ to $\mathbf{2}$ is equivalent to the above
satisfaction and soundness conditions.


But it happens that terms and algebras are not only compatible with
signature changes, but also with the category structure on the terms
and the algebras.  From this it follows that the restriction imposed
by Goguen and Burstall in~\cite{gb86}, p.  327, to the concept of
institution, concretely, that the domain of the extranatural
transformation is $\bb{\mathrm{Mod}(\cdot)}\times
\mathrm{Sen}(\cdot)$, is a real loss of generality, that prevents to
reflect faithfully the involved complexity.  Therefore such a
restriction, at least in this case, is unsound and should be left out.
Thus, under these circumstances, we propose the following definitions
of $2$-institution and institution, both relative to a given category.

\begin{definition}
Let $\mathbf{C}$ be a category.  Then a $2$-\emph{institution on}
$\mathbf{C}$ is a quadruple $(\mathbf{Sig},\Mod,\Sen,(\alpha,\beta))$,
where
\begin{enumerate}
\item $\mathbf{Sig}$ is a $2$-category.

\item $\Mod\colon\mathbf{Sig}^{\opp}\mor \mathbf{Cat}$ a pseudo-functor.

\item $\Sen\colon\mathbf{Sig}\mor\mathbf{Cat}$ a pseudo-functor.

\item $(\alpha,\beta)\colon\Mod(\farg)\bprod\Sen(\farg)\mor
             \mathrm{K}_{\mathbf{C}}$ a pseudo-extranatural transformation.
\end{enumerate}
If $\mathbf{Sig}$ is an ordinary category, instead of a $2$-category, then
we will speak of an \emph{institution on} $\mathbf{C}$.
\end{definition}

\begin{remark}
The concept of $2$-institution is defined relative to a category,
i.e., it has meaning for a $0$-cell $\mathbf{C}$ of the $2$-category
$\mathbf{Cat} = 1\!-\!\mathbf{Cat}$ of categories, functors, and natural
transformations between functors.  Therefore, if it were necessary for
some application, the concept of $3$-institution ought to be defined
relative to a $0$-cell $\mathbf{C}$ of the $3$-category
$2\!-\!\mathbf{Cat}$ of $2$-categories, $2$-functors, $2$-natural
transformations and modifications between transformations, and so
forth.
\end{remark}

Actually, $2$-institutions and institutions, if they are understood as
pseudo-extra\-natural transformations, go beyond both the classical
conception of semantical truth defined (mathematically for the first
time, through a recursive definition of satisfaction of a formula in
an arbitrary relational system by a valuation of the variables in the
system) in Tarski and Vaught~\cite{tv57}, p.  85, and the latest
conception of institution in Goguen and Burstall~\cite{gb86}, p.  327.

From the above it follows, immediately, the following

\begin{corollary}
The quadruple $\mathfrak{Tm} =
(\mathbf{Sig},\Alg,\mathrm{Ter},(\mathrm{Tr},\theta))$ is an institution
on the category $\mathbf{Set}$, the so-called \emph{many-sorted term
institution}, or, to abbreviate, the \emph{term institution}.

\end{corollary}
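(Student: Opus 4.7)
The proof is essentially a matter of verifying that the quadruple $\mathfrak{Tm} = (\mathbf{Sig},\Alg,\mathrm{Ter},(\mathrm{Tr},\theta))$ fits, component by component, the definition of institution on $\mathbf{Set}$ just given; all the substantive work has already been carried out in the preceding propositions of this section, so the plan is merely to assemble those ingredients and match them with the four clauses of the definition. I would present the proof as a direct checklist.

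First, I would note that $\mathbf{Sig}$, defined as $\int^{\mathbf{Set}}\Sig$, is an ordinary category (rather than a $2$-category), which fulfils clause (1) of the definition of institution. Second, since $\Alg$ was shown earlier to be a (contravariant) functor from $\mathbf{Sig}$ to $\mathbf{Cat}$, it is in particular a pseudo-functor from $\mathbf{Sig}^{\opp}$ to $\mathbf{Cat}$ (with both families of coherence isomorphisms $\gamma$ and $\nu$ taken to be identities), which covers clause (2). Third, Proposition~\ref{pseudoPol} established exactly that $\mathrm{Ter}$ is a pseudo-functor from $\mathbf{Sig}$ to $\mathbf{Cat}$, which is clause (3).

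The fourth clause is the substantive one, and it is precisely the content of Proposition~\ref{PdExtranatural}: from the contravariant functor $\Alg$ and the pseudo-functor $\mathrm{Ter}$ we obtained a pseudo-functor $\Alg(\farg)\bprod\mathrm{Ter}(\farg)$ from $\mathbf{Sig}^{\opp}\bprod\mathbf{Sig}$ to $\mathbf{Cat}$, and the family $(\mathrm{Tr},\theta)$, with components $\mathrm{Tr}^{\mathbf{\Sigma}}$ of Proposition~\ref{funcTr} and $\theta^{\mathbf{d}}$ of Proposition~\ref{lemaPdextranatural}, was shown there to constitute a pseudo-extranatural transformation from $\Alg(\farg)\bprod\mathrm{Ter}(\farg)$ to the constant functor $\mathrm{K}_{\mathbf{Set}}$.

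Hence all four clauses of the definition are met, so $\mathfrak{Tm}$ is an institution on $\mathbf{Set}$, as asserted. The only real obstacle, namely verifying the pseudo-extranaturality condition (compatibility of $\theta$ with the composition of signature morphisms via the coherence isomorphisms $\gamma^{\mathbf{d},\mathbf{e}}$), was already handled diagram-chasing-wise in the proof of Proposition~\ref{PdExtranatural}, where the invariance of term realization under signature change was shown to fit together coherently with the natural isomorphisms of the composed pseudo-functor. Thus the present corollary is a direct consequence of that proposition and requires no additional argument.
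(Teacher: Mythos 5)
Your proposal is correct and matches the paper's own treatment: the paper states this corollary with no separate proof, presenting it as an immediate consequence of Proposition~\ref{PdExtranatural} together with the definition of institution on a category, which is exactly the clause-by-clause verification you carry out. Your only addition is making explicit the routine observations that $\mathbf{Sig}$ is an ordinary category and that the strict contravariant functor $\Alg$ qualifies as a pseudo-functor with identity coherence cells, which the paper leaves tacit.
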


\section{Many-sorted specifications and morphisms.}

In this section we begin by defining, for a signature
$\mathbf{\Sigma}$, the concept of $\mathbf{\Sigma}$-equation, but for
the generalized terms defined in the preceding section, the binary
relation of satisfaction between $\mathbf{\Sigma}$-algebras and
$\mathbf{\Sigma}$-equations, and the semantical consequence operators
$\Cn_{\mathbf{\Sigma}}$.  Then, after extending the translation of
generalized terms up to generalized equations, we prove the
corresponding satisfaction condition, and define a pseudo-functor
$\mathrm{LEq}$ which assigns (among others) to a signature
$\mathbf{\Sigma}$, the discrete category associated to the set of all
labelled $\mathbf{\Sigma}$-equations, that enables us to get the
many-sorted equational institution $\mathfrak{LEq}$.

Following this, in order to show that the semantical consequence
operators $\Cn_{\mathbf{\Sigma}}$, associated to the different
signatures $\mathbf{\Sigma}$, are the components of a pseudo-functor,
$\mathrm{Cn}$, from the category of signatures to a convenient
$2$-category of monads, we define, by means of the construction of
Ehresmann-Grothendieck, the category of many-sorted closure spaces.
Then we prove that the pseudo-functor $\mathrm{Cn}$, in its turn, is
part of an entailment system, but for a concept of entailment system
that generalizes that defined by Meseguer in~ \cite{m89}, pp.
282--283.

After this we define, for the generalized terms, the concepts of
many-sorted specification and of many-sorted specification morphism,
from which we get the corresponding category, denoted by
$\mathbf{Spf}$.  Then by extending some of the notions and
constructions previously developed for the category $\mathbf{Sig}$ up
to the category $\mathbf{Spf}$, we get $\mathfrak{Spf}$, the
many-sorted specification institution on $\mathbf{Set}$.  Besides, we
prove that there exists a morphism from $\mathfrak{Spf}$ to
$\mathfrak{Tm}$, the many-sorted term institution on $\mathbf{Set}$,
which, together with the canonical embedding of $\mathfrak{Tm}$ into
$\mathfrak{Spf}$, makes of $\mathfrak{Tm}$ a retract of
$\mathfrak{Spf}$.  We point out that, conveniently generalized, the
many-sorted specification morphisms will be used, together with some
other concepts, in the last section, to prove the equivalence between
the many-sorted specifications of Hall and Bénabou.


We now define the equations over a given signature through the
morphisms of the category of terms for the signature, what it means
for an equation to be valid in an algebra, and the consequence
operator on the $\mathrm{ms}$-set of the equations.

\begin{definition}
Let $\mathbf{\Sigma}$ be a signature, $X$, $Y$ two $S$-sorted
sets and $\mathbf{A}$ a $\mathbf{\Sigma}$-algebra.
\begin{enumerate}
\item A $\mathbf{\Sigma}$-\emph{equation of type} $(X,Y)$ is a
      pair $(P,Q)\colon X\mor Y$ of parallel morphisms in
      $\mathbf{Ter}(\mathbf{\Sigma}) =
      \mathbf{Kl}(\mathbb{T}_{\mathbf{\Sigma}})^{\mathrm{op}}$, hence
      $(P,Q)\in \mathrm{Hom}(Y,\mathrm{T}_{\mathbf{\Sigma}}(X))^{2}$,
      and a $\mathbf{\Sigma}$-\emph{equation} is a
      $\mathbf{\Sigma}$-equation of type $(X,Y)$ for some $S$-sorted
      sets $X$, $Y$.  We denote by $\Eq(\mathbf{\Sigma})$ the
      $(\boldsymbol{\mathcal{U}}^{S})^{2}$-sorted set of the
      $\mathbf{\Sigma}$-equations, i.e., the family
      $(\mathrm{Hom}(Y,\mathrm{T}_{\mathbf{\Sigma}}(X))^{2})_{X,Y\in
      \boldsymbol{\mathcal{U}}^{S}}$. 

\item We say that a $\mathbf{\Sigma}$-equation $(P,Q)\colon X\mor Y$
      is \emph{valid} in $\mathbf{A}$ (or that $\mathbf{A}$ is a
      \emph{model} of $(P,Q)$, or also that $\mathbf{A}$
      \emph{satisfies} $(P,Q)$),
      $\mathbf{A}\models^{\mathbf{\Sigma}}_{X,Y}(P,Q)$, iff, for every
      $s\in S$ and $y\in Y_{s}$,
      $\mathbf{A}\models^{\mathbf{\Sigma}}_{X,s}(P_{s}(y),Q_{s}(y))$,
      i.e., $(P_{s}(y))^{\mathbf{A}} = (Q_{s}(y))^{\mathbf{A}}$.  We extend
      this satisfaction relation between $\mathbf{\Sigma}$-algebras and
      $\mathbf{\Sigma}$-equations to $\mathbf{\Sigma}$-algebras $\mathbf{A}$ and
      families $\mathcal{E}\subseteq\Eq(\mathbf{\Sigma})$ by agreeing that
      $\mathbf{A}\models^{\mathbf{\Sigma}}\mathcal{E}$ iff, for every $X,Y\in
      \boldsymbol{\mathcal{U}}^{S}$ and $(P,Q)\in \mathcal{E}_{X,Y}$, we have that
      $\mathbf{A}\models^{\mathbf{\Sigma}}_{X,Y}(P,Q)$.

\item We denote by $\Cn_{\mathbf{\Sigma}}$ the operator on the
      $(\boldsymbol{\mathcal{U}}^{S})^{2}$-sorted set
      $\Eq(\mathbf{\Sigma})$ which assigns
      to $\mathcal{E}\subseteq\Eq(\mathbf{\Sigma})$ the
      $(\boldsymbol{\mathcal{U}}^{S})^{2}$-sorted set
      $\Cn_{\mathbf{\Sigma}}(\mathcal{E})$,
      where, for every $X,Y\in \boldsymbol{\mathcal{U}}^{S}$ and $(P,Q)\in
      \Eq(\mathbf{\Sigma})_{X,Y}$,
      $(P,Q)\in\Cn_{\mathbf{\Sigma}}(\mathcal{E})_{X,Y}$ iff, for
      every $\mathbf{\Sigma}$-algebra $\mathbf{A}$, if
      $\mathbf{A}\models^{\mathbf{\Sigma}}\mathcal{E}$, then
      $\mathbf{A}\models^{\mathbf{\Sigma}}_{X,Y}(P,Q)$.  We call
      $\Cn_{\mathbf{\Sigma}}(\mathcal{E})$ the
      $(\boldsymbol{\mathcal{U}}^{S})^{2}$-sorted
      set of the \emph{semantical consequences} of $\mathcal{E}$.
\end{enumerate}

\end{definition}

If we keep in mind that for a term $P\colon X\mor Y$ for
$\mathbf{\Sigma}$ of type $(X,Y)$, $P^{\mathbf{A}}$, the term
operation on $\mathbf{A}$ determined by $P$, is the mapping from
$A_{X}$ to $A_{Y}$ which assigns to an $S$-sorted mapping $f\colon
X\mor A$ precisely $f^{\sharp}\comp P\colon Y\mor A$, then we get
the following convenient characterization of the relation
$\mathbf{A}\models^{\mathbf{\Sigma}}_{X,Y}(P,Q)$:
$$
\mathbf{A}\models^{\mathbf{\Sigma}}_{X,Y}(P,Q) \quad \text{ iff }\quad
P^{\mathbf{A}} = Q^{\mathbf{A}}.
$$
Besides, by the Completeness Theorem in~\cite{cs05}, for
$\mathcal{E}\subseteq\Eq(\mathbf{\Sigma})$, we have that
$\Cn_{\mathbf{\Sigma}}(\mathcal{E})$ is precisely
$\mathrm{Cg}^{\Pi}_{\mathbf{Ter}(\mathbf{\Sigma})}(\mathcal{E})$,
i.e., the smallest $\Pi$-compatible congruence on
$\mathbf{Ter}(\mathbf{\Sigma})$ that contains $\mathcal{E}$, where
the superscript $\Pi$ in the operator
$\mathrm{Cg}^{\Pi}_{\mathbf{Ter}(\mathbf{\Sigma})}$ abbreviates
\lq\lq product\rq\rq.  Therefore the operator
$\Cn_{\mathbf{\Sigma}}$ on the
$(\boldsymbol{\mathcal{U}}^{S})^{2}$-sorted set
$\Eq(\mathbf{\Sigma})$ is a closure operator.

\begin{remark}
It is true that, for a signature $\mathbf{\Sigma}$, in order to
equationally characterize the varieties (resp., the finitary
varieties) of $\mathbf{\Sigma}$-algebras it is enough to consider
the $S$-finite (resp., the finite) subsets of an arbitrary, but
fixed, $S$-sorted set $V^{S}$ with a countable infinity of
variables in each coordinate.  However, the generalized terms and
equations proposed in this paper, besides containing as particular
cases the ordinary terms and equations, respectively, have proved
their worth, e.g., in the proof of the Completeness Theorem for
monads in categories of sorted sets in~\cite{cs05}, and also to
attain a truly category-theoretical understanding of the subject
matter (through the theory of monads as sketched at the end of the
sixth section).  Moreover, the generalized terms and equations
have the advantage over the ordinary terms and equations of being
automatically dualizable, thus allowing the definition of the
generalized coterms and coequations, from which it is easily
obtainable, e.g., a Completeness Theorem for comonads in
categories of sorted sets.
\end{remark}

By recalling that every signature morphism $\mathbf{d}$ from
$\mathbf{\Sigma}$ to $\mathbf{\Lambda}$ determines a functor
$\mathbf{d}_{\diamond}$ from $\mathbf{Ter}(\mathbf{\Sigma})$ to
$\mathbf{Ter}(\mathbf{\Lambda})$, and taking into account the above
definition of the equations for a signature, we next formalize the
procedure of translation, by means of a signature morphism, of
equations for a signature into equations for another signature in the
following

\begin{definition}
Let $\mathbf{d}\colon \mathbf{\Sigma}\mor \mathbf{\Lambda}$ be a
signature morphism.  Then $\mathbf{d}$ induces an
$\mathrm{ms}$-mapping
$$
((\tcoprod_{\varphi})^{2},\mathbf{d}_{\diamond}^{2}) \colon
((\boldsymbol{\mathcal{U}}^{S})^{2},\mathrm{Eq}(\mathbf{\Sigma})) \mor
((\boldsymbol{\mathcal{U}}^{T})^{2},\mathrm{Eq}(\mathbf{\Lambda})),
$$
the so called \emph{translation of equations for} $\mathbf{\Sigma}$
\emph{into equations for} $\mathbf{\Lambda}$ \emph{relative to}
$\mathbf{d}$, where
\begin{enumerate}
\item $(\coprod_{\varphi})^{2}$ is the mapping from
      $(\boldsymbol{\mathcal{U}}^{S})^{2}$ to
      $(\boldsymbol{\mathcal{U}}^{T})^{2}$ which sends a pair of
      $S$-sorted sets $(X,Y)$ to the pair
      $(\coprod_{\varphi}X,\coprod_{\varphi}Y)$ of $T$-sorted sets,
      and

\item $\mathbf{d}_{\diamond}^{2}$ the
      $(\boldsymbol{\mathcal{U}}^{S})^{2}$-sorted mapping which to a
      $\mathbf{\Sigma}$-equation $(P,Q)$ of type $(X,Y)$ assigns the
      $\mathbf{\Lambda}$-equation
      $(\mathbf{d}_{\diamond}(P),\mathbf{d}_{\diamond}(Q))$ of type
      $(\coprod_{\varphi}X,\coprod_{\varphi}Y)$.
\end{enumerate}

\end{definition}

Once defined the translation of equations, we prove in the following
lemma the invariance of the relation of satisfaction under signature
change, also known, for those following the terminology coined by
Goguen and Burstall in~\cite{gb84}, p.  229, as the \emph{satisfaction
condition}.

\begin{lemma}\label{lemaSatisfaccion}
Let $\mathbf{d}\colon \mathbf{\Sigma}\mor \mathbf{\Lambda}$ be a signature
morphism, $(P,Q)$ a $\mathbf{\Sigma}$-equa\-tion of type $(X,Y)$ and
$\mathbf{A}$ a $\mathbf{\Lambda}$-algebra.  Then we have that
$$
\mathbf{d}^{\ast}(\mathbf{A})\models^{\mathbf{\Sigma}}_{X,Y} (P,Q)\text{ iff }
\mathbf{A}\models^{\mathbf{\Lambda}}_{\coprod_{\varphi}\!X,\coprod_{\varphi}\!Y}
(\mathbf{d}_{\diamond}(P),\mathbf{d}_{\diamond}(Q)).
$$
\end{lemma}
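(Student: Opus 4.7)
The plan is to reduce the satisfaction condition to the content of Proposition~\ref{invarianza}, which already encapsulates the invariance of the realization of generalized terms under signature change. The statement then follows from elementary bijection cancellation.

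First I would unfold both sides of the claimed equivalence via the characterization $\mathbf{B}\models^{\mathbf{\Sigma}}_{X,Y}(P,Q)$ iff $P^{\mathbf{B}}=Q^{\mathbf{B}}$ (as mappings of the appropriate hom-sets). This reduces the lemma to showing
\[
P^{\mathbf{d}^{\ast}(\mathbf{A})}=Q^{\mathbf{d}^{\ast}(\mathbf{A})}
\quad\text{iff}\quad
\mathbf{d}_{\diamond}(P)^{\mathbf{A}}=\mathbf{d}_{\diamond}(Q)^{\mathbf{A}}.
\]

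Second, I would apply Proposition~\ref{invarianza} twice, once to $P$ and once to $Q$. This yields the two commutative squares
\[
\theta^{\varphi}_{Y,A}\comp \mathbf{d}_{\diamond}(P)^{\mathbf{A}}
= P^{\mathbf{d}^{\ast}(\mathbf{A})}\comp \theta^{\varphi}_{X,A},
\qquad
\theta^{\varphi}_{Y,A}\comp \mathbf{d}_{\diamond}(Q)^{\mathbf{A}}
= Q^{\mathbf{d}^{\ast}(\mathbf{A})}\comp \theta^{\varphi}_{X,A},
\]
where $\theta^{\varphi}_{X,A}$ and $\theta^{\varphi}_{Y,A}$ are the components of the natural isomorphism of the adjunction $\coprod_{\varphi}\ladj\Delta_{\varphi}$, and hence are bijections.

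Third, since bijections can be cancelled on both sides, the equality $P^{\mathbf{d}^{\ast}(\mathbf{A})}=Q^{\mathbf{d}^{\ast}(\mathbf{A})}$ is equivalent to $\mathbf{d}_{\diamond}(P)^{\mathbf{A}}=\mathbf{d}_{\diamond}(Q)^{\mathbf{A}}$, which is what was required. There is essentially no obstacle here: all the real work has been done in establishing Proposition~\ref{invarianza} (which itself was reduced to the Kronecker-delta case via Corollary~\ref{fundPdextranatural} and the colimit preservation of $\coprod_{\varphi}$). The present lemma is, in effect, the two-variable corollary of that invariance statement.
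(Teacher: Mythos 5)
Your proposal is correct and follows essentially the same route as the paper: the paper's proof likewise unfolds both sides via the characterization $\mathbf{A}\models^{\mathbf{\Sigma}}_{X,Y}(P,Q)$ iff $P^{\mathbf{A}}=Q^{\mathbf{A}}$ and then invokes Proposition~\ref{invarianza}. Your explicit remark about cancelling the bijections $\theta^{\varphi}_{X,A}$ and $\theta^{\varphi}_{Y,A}$ merely spells out what the paper leaves implicit.
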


\begin{proof}
The condition
$\mathbf{d}^{\ast}(\mathbf{A})\models^{\mathbf{\Sigma}}_{X,Y} (P,Q)$
is equivalent to $P^{\mathbf{d}^{\ast}(\mathbf{A})} =
Q^{\mathbf{d}^{\ast}(\mathbf{A})}$ but this condition, by
Proposition~\ref{invarianza}, is equivalent to
$\mathbf{d}_{\diamond}(P)^{\mathbf{A}} =
\mathbf{d}_{\diamond}(Q)^{\mathbf{A}}$, therefore it is also
equivalent to the condition
$\mathbf{A}\models^{\mathbf{\Lambda}}_{\coprod_{\varphi}\!X,\coprod_{\varphi}\!Y}
(\mathbf{d}_{\diamond}(P),\mathbf{d}_{\diamond}(Q))$.
\end{proof}

Related to the quasi-triviality of the (short and conceptual) proof of
Lemma~\ref{lemaSatisfaccion} (as a consequence, essentially, of the
fact that it is ultimately rooted in
Proposition~\ref{invarianza}), perhaps, it would be convenient to
recall that Goguen and Burstall, in~\cite{gb84}, p.  228, have omitted
the corresponding proof because they qualify it as being not
entirely trivial.

Everything we have made up to this point suggest that the many-sorted
term institution introduced in the preceding section, if an
institution is understood as meaning a pseudo-extranatural
transformation, neither is a useless institution nor a trivial step in
a natural process of evolution of the concept of institution, by
adaptation to situations not noticed explicitly until now.  And this
can indicate that it is not unreasonable to give support to the view
that the many-sorted term institution $\mathfrak{Tm}$ is, because of
its primitivity and elementariness, in fact, more basic, or fundamental,
than the many-sorted equational institution, defined immediately
below.

To construct the many-sorted equational institution we now define a
pseudo-functor $\mathrm{LEq}$ on the category of signatures.  In order
to do so (and also to define, later on, another pseudo-functor that
will contribute to the construction of a certain entailment system
founded on the family of consequences), we need to assume, besides the
Grothendieck universe $\boldsymbol{\mathcal{U}}$, the existence of
another one $\boldsymbol{\mathcal{V}}$ such that
$\boldsymbol{\mathcal{U}}\in \boldsymbol{\mathcal{V}}$.  The new
Grothendieck universe $\boldsymbol{\mathcal{V}}$ will be used to
construct the appropriate $2$-categories where the just named
pseudo-functors will take its values.  Therefore, to exclude any
misunderstanding, we agree to denote those categories $\mathbf{C}$
properly depending on $\boldsymbol{\mathcal{V}}$ by
$\mathbf{C}_{\boldsymbol{\mathcal{V}}}$.  However, since the additional
assumption of a universe $\boldsymbol{\mathcal{V}}$ such that
$\boldsymbol{\mathcal{U}}\in\boldsymbol{\mathcal{V}}$, will be used,
almost, exclusively in this section, we do not label those categories
depending on $\boldsymbol{\mathcal{U}}$ with the subscript
$\boldsymbol{\mathcal{U}}$, such as has been done until now.

\begin{definition}
We denote by $\mathrm{LEq}$ the pseudo-functor from
$\mathbf{Sig}$ to $\mathbf{Cat}_{\boldsymbol{\mathcal{V}}}$ given by the
following data
\begin{enumerate}
\item The object mapping of $\mathrm{LEq}$ is that which sends a
      signature $\mathbf{\Sigma}$ to the discrete category
      $\mathbf{LEq}(\mathbf{\Sigma})$ canonically associated to the set
      $$
      \textstyle \bigcup_{X,Y\in \boldsymbol{\mathcal{U}}}
      (\mathrm{Hom}(Y,\mathrm{T}_{\mathbf{\Sigma}}(X))^{2}\times \{(X,Y)\})
      $$
      of \emph{labelled} $\mathbf{\Sigma}$-\emph{equations}, i.e., the set of all
      pairs $((P,Q),(X,Y))$ with $(P,Q)$ a $\mathbf{\Sigma}$-equation
      of type $(X,Y)$, for some $X,Y\in \boldsymbol{\mathcal{U}}$.

\item The morphism mapping of $\mathrm{LEq}$ is that which sends
      a signature morphism $\mathbf{d}$ from $\mathbf{\Sigma}$ to
      $\mathbf{\Lambda}$ to the functor $\mathrm{LEq}(\mathbf{d})$
      from $\mathbf{LEq}(\mathbf{\Sigma})$ to
      $\mathbf{LEq}(\mathbf{\Lambda})$ which assigns to the labelled
      equation $((P,Q),(X,Y))$ in $\mathbf{LEq}(\mathbf{\Sigma})$ the
      labelled equation
      $$\textstyle\mathrm{LEq}(\mathbf{d})((P,Q),(X,Y)) =
      ((\mathbf{d}_{\diamond}(P),\mathbf{d}_{\diamond}(Q)),
      (\coprod_{\varphi}X,\coprod_{\varphi}Y))
      $$
      in $\mathbf{LEq}(\mathbf{\Lambda})$.
\end{enumerate}

\end{definition}

\begin{corollary}
The quadruple $\mathfrak{LEq} =
(\mathbf{Sig},\Alg,\mathrm{LEq},(\models,\theta))$ is an institution
on $\mathbf{2}$, the so-called \emph{many-sorted equational
institution}, or, to abbreviate, the \emph{equational institution}.

\end{corollary}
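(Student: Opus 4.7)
The plan is to unpack the definition of an institution on $\mathbf{2}$ and verify each of the four ingredients. The category $\mathbf{Sig}$, the contravariant functor $\Alg\colon\mathbf{Sig}\to\mathbf{Cat}$ and the pseudo-functor $\mathrm{LEq}\colon\mathbf{Sig}\to\mathbf{Cat}_{\boldsymbol{\mathcal{V}}}$ are already at hand from the preceding constructions, so the real work is to exhibit the pseudo-extranatural transformation $(\models,\theta)\colon\Alg(\farg)\bprod\mathrm{LEq}(\farg)\Rightarrow\mathrm{K}_{\mathbf{2}}$ and to check the coherence axioms demanded by the definition of pseudo-extranatural transformation between pseudo-functors.

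For each signature $\mathbf{\Sigma}$ I would define the component $\models^{\mathbf{\Sigma}}\colon\mathbf{Alg}(\mathbf{\Sigma})\bprod\mathbf{LEq}(\mathbf{\Sigma})\to\mathbf{2}$ on objects by sending $(\mathbf{A},((P,Q),(X,Y)))$ to $1$ precisely when $P^{\mathbf{A}}=Q^{\mathbf{A}}$ and to $0$ otherwise; on morphisms, since $\mathbf{LEq}(\mathbf{\Sigma})$ is discrete, the only arrows are of the form $(f,\mathrm{id})$ and functoriality reduces to the behaviour of the truth value of an equation along a $\mathbf{\Sigma}$-homomorphism $f\colon\mathbf{A}\to\mathbf{B}$, controlled by the identity $P^{\mathbf{B}}\comp f_{X}=f_{Y}\comp P^{\mathbf{A}}$ proved in the lemma preceding Proposition~\ref{funcTr}. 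Next, for a signature morphism $\mathbf{d}\colon\mathbf{\Sigma}\to\mathbf{\Lambda}$, the extranatural $2$-cell $\theta^{\mathbf{d}}$ should be a natural transformation between the two composites
$$
\models^{\mathbf{\Lambda}}\comp(\mathrm{Id}_{\mathbf{Alg}(\mathbf{\Lambda})}\bprod\mathrm{LEq}(\mathbf{d}))
\quad\text{and}\quad
\models^{\mathbf{\Sigma}}\comp(\mathbf{d}^{\ast}\bprod\mathrm{Id}_{\mathbf{LEq}(\mathbf{\Sigma})}).
$$
Evaluated at $(\mathbf{A},((P,Q),(X,Y)))$ the equality of these two functors asks exactly whether
$$
\mathbf{A}\models^{\mathbf{\Lambda}}_{\tcoprod_{\varphi}X,\tcoprod_{\varphi}Y}
(\mathbf{d}_{\diamond}(P),\mathbf{d}_{\diamond}(Q))
\iff
\mathbf{d}^{\ast}(\mathbf{A})\models^{\mathbf{\Sigma}}_{X,Y}(P,Q),
$$
which is precisely the content of Lemma~\ref{lemaSatisfaccion}. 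One can therefore take $\theta^{\mathbf{d}}$ to be the identity natural transformation, in strict analogy with the way $\theta^{\mathbf{d}}$ appears as the adjunction isomorphism in Proposition~\ref{lemaPdextranatural} for the term institution $\mathfrak{Tm}$.

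It then remains to check the pseudo-extranaturality coherence axioms of Definition~\ref{defLaxDinatural}. Compatibility with $2$-cells of $\mathbf{Sig}$ is automatic because $\mathbf{Sig}$ carries only identity $2$-cells; compatibility with composition of signature morphisms and with identities reduces, via the definition of $\mathrm{LEq}(\mathbf{d})$ in terms of $\mathbf{d}_{\diamond}$, to the coherence isomorphisms $\gamma^{\mathbf{d},\mathbf{e}}$ and $\nu^{\mathbf{\Sigma}}$ of the pseudo-functor $\mathrm{Ter}$ already produced in Proposition~\ref{pseudoPol}, combined with Proposition~\ref{invarianza} to transport these isomorphisms through the realization functor. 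The step I expect to be the main obstacle is the definition of $\models^{\mathbf{\Sigma}}$ as a genuine functor to $\mathbf{2}$: one must show that for every $\mathbf{\Sigma}$-homomorphism $f\colon\mathbf{A}\to\mathbf{B}$ the induced pair of truth values is comparable by the unique non-identity morphism of $\mathbf{2}$, which is exactly the reflection of the soundness-type passage along $f$ encapsulated by the commuting triangle in the lemma referenced above; once this is in place, the remainder of the verification is a routine bookkeeping built on Lemma~\ref{lemaSatisfaccion} and on the coherence data of $\mathrm{Ter}$ from Proposition~\ref{pseudoPol}.
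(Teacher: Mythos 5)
Your decomposition of the claim is the natural one, and for most of the ingredients it is sound: the $2$-cells $\theta^{\mathbf{d}}$ are forced to be identities once the two composite functors agree on objects, which is exactly the content of Lemma~\ref{lemaSatisfaccion}, and every coherence condition you list at the end holds automatically because $\mathbf{2}$ is a poset, so that between any two functors into $\mathbf{2}$ there is at most one natural transformation; your detour through the coherence data of $\mathrm{Ter}$ from Proposition~\ref{pseudoPol} is therefore unnecessary, though harmless.

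The genuine gap is at the step you yourself single out as the main obstacle, and the resolution you propose for it fails. The identity $P^{\mathbf{B}}\comp f_{X}=f_{Y}\comp P^{\mathbf{A}}$ does not make the truth values of $\mathbf{A}\models^{\mathbf{\Sigma}}_{X,Y}(P,Q)$ and $\mathbf{B}\models^{\mathbf{\Sigma}}_{X,Y}(P,Q)$ comparable in a fixed direction along an arbitrary homomorphism $f\colon\mathbf{A}\mor\mathbf{B}$: from $P^{\mathbf{A}}=Q^{\mathbf{A}}$ it only yields that $P^{\mathbf{B}}$ and $Q^{\mathbf{B}}$ agree on the image of $f_{X}$, and from $P^{\mathbf{B}}=Q^{\mathbf{B}}$ only that $f_{Y}\comp P^{\mathbf{A}}=f_{Y}\comp Q^{\mathbf{A}}$. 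Concretely, for the single-sorted signature with no operations and the equation identifying two distinct variables, the unique map from a one-element algebra into a two-element algebra goes from a model to a non-model, while the unique map from a two-element algebra onto a one-element algebra goes from a non-model to a model; whichever orientation one gives the non-identity arrow of $\mathbf{2}$, one of these two homomorphisms has no image in $\mathbf{2}$, so the assignment you describe is not a functor on $\mathbf{Alg}(\mathbf{\Sigma})\bprod\mathbf{LEq}(\mathbf{\Sigma})$. The component of the pseudo-extranatural transformation has to be taken on $\bb{\mathbf{Alg}(\mathbf{\Sigma})}\bprod\mathbf{LEq}(\mathbf{\Sigma})$, with the category of algebras discretized as in Goguen and Burstall (both factors are then discrete, since $\mathbf{LEq}(\mathbf{\Sigma})$ already is); once this adjustment is made, the whole content of the corollary reduces to the satisfaction condition of Lemma~\ref{lemaSatisfaccion}, and the remainder of your verification goes through.
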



Before examining the many-sorted specifications and the many-sorted
specification morphisms, it seems worthwhile to give a
category-theoretic look at the concept of equational consequence.
Concretely, what we want to establish now is the following:
\begin{enumerate}
\item[(1)] That the closure operators $\Cn_{\mathbf{\Sigma}}$
      are, essentially, the components of a suitable pseudo-functor
      $\Cn$ from the category $\mathbf{Sig}$, of signatures, to a
      convenient $2$-category
      $\mathbf{Mnd}_{\boldsymbol{\mathcal{V}},\mathrm{alg}}$, of
      monads, for a Grothendieck universe $\boldsymbol{\mathcal{V}}$
      such that $\boldsymbol{\mathcal{U}}\in
      \boldsymbol{\mathcal{V}}$, obtained by properly choosing the
      $2$-cells in the $2$-category
      $\mathbf{Mnd}_{\boldsymbol{\mathcal{V}}}$, of monads for
      $\boldsymbol{\mathcal{V}}$, and

\item[(2)] That the pseudo-functor $\Cn$ is in fact part of an entailment
     system (understood in a more general sense than that defined by
     Meseguer in~ \cite{m89}, pp. 282--283).
\end{enumerate}

However, in order to do what has been just enumerated, we should begin
by a thorough investigation of the building blocks that are involved
and constitute the basis for the understanding of the equational
consequence from the category-theoretical standpoint.  Explicitly,
this means that we should carry through the following:
\begin{enumerate}
\item To assign to each set of sorts $S$ the corresponding
      category $\mathbf{ClSp}(S)$, of $S$-sorted closure spaces, and
      to associate to an arbitrary mapping $\varphi\colon S\mor T$,
      from a set of sorts $S$ into a like one $T$, the corresponding
      functor $\Delta^{\mathrm{cl}}_{\varphi}$ from $\mathbf{ClSp}(T)$ to
      $\mathbf{ClSp}(S)$, and all in such a way that both procedures give
      rise to a contravariant functor $\Delta^{\mathrm{cl}}$ from
      $\mathbf{Sig}$ to $\mathbf{Cat}$, and

\item To get the category $\mathbf{MClSp}$, of many-sorted
      closure spaces and morphism between them, by applying the
      construction of Ehresmann-Grothen\-dieck to the contravariant
      functor $\Delta^{\mathrm{cl}}$.
\end{enumerate}

After having completed the above, the pseudo-functor $\Cn$ will
be obtained by using the fact that the category $\mathbf{MClSp}$ can be
identified to a subcategory of the $2$-category
$\mathbf{Mnd}_{\boldsymbol{\mathcal{V}},\mathrm{alg}}$.

\begin{definition}
Let $A$ be an $S$-sorted set.
\begin{enumerate}
\item An $S$-\emph{closure system on} $A$ is a subset
      $\mathcal{C}$ of $\Sub(A)$, the set of all $S$-sorted sets $X$
      such that, for every $s\in S$, $X_{s}\subseteq A_{s}$,
      abbreviated to $X\subseteq A$, that satisfies the following
      conditions
      \begin{enumerate}
      \item $A\in\mathcal{C}$.

      \item For every $\mathcal{D}\incl\mathcal{C}$, if $\mathcal{D}\neq\vacio$,
            then $\inter\mathcal{D} = (\inter_{D\in
            \mathcal{D}}D_{s})_{s\in S}\in\mathcal{C}$.
      \end{enumerate}
      We denote by $\mathrm{ClSy}(A)$ the set of the $S$-closure
      systems on $A$ and by $\mathbf{ClSy}(A)$ the same set but
      ordered by inclusion.  We call the pairs of the form
      $(A,\mathcal{C})$, with $\mathcal{C}\in \mathrm{ClSy}(A)$,
      $S$-\emph{closure system spaces}.

\item An $S$-\emph{closure operator on} $A$ is an operator $J$
      on $\mathrm{Sub}(A)$, i.e., a mapping from $\mathrm{Sub}(A)$ to
      $\mathrm{Sub}(A)$, such that, for every $X,Y\subseteq A$,
      satisfies the following conditions
      \begin{enumerate}
      \item $X\subseteq J(X)$, i.e., $J$ is extensive.

      \item If $X\subseteq Y$, then $J(X)\subseteq J(Y)$,
            i.e., $J$ is isotone.

      \item $J(J(X)) = J(X)$, i.e., $J$ is idempotent.

      \end{enumerate}
      We denote by $\mathrm{ClOp}(A)$ the set of the $S$-closure
      operators on $A$ and by $\mathbf{ClOp}(A)$ the same set but
      ordered by the relation $\leq$, where, for $J$ and $K$ in
      $\mathrm{ClOp}(A)$, we have that $J\leq K$ if, for every
      $X\subseteq A$, $J(X)\subseteq K(X)$.  We call the pairs of the
      form $(A,J)$, with $J\in \mathrm{ClOp}(A)$,
      $S$-\emph{closure operator spaces}.

\end{enumerate}
\end{definition}

\begin{example}
For a $\mathbf{\Sigma}$-algebra $\mathbf{A}$, the set
$\mathrm{Sub}(\mathbf{A})$, of subalgebras of $\mathbf{A}$, is an
(algebraic) $S$-closure system on the $S$-sorted set $A$, and the
operator $\mathrm{Sg}_{\mathbf{A}}$, of generated subalgebra for
$\mathbf{A}$, is an (algebraic) $S$-closure operator on $A$.
\end{example}

\begin{example}
For a $\mathbf{\Sigma}$-algebra $\mathbf{A}$, the set
$\mathrm{Cgr}(\mathbf{A})$, of congruences on $\mathbf{A}$, is an
(algebraic) $S$-closure system on the $S$-sorted set $A\times A =
(A_{s}\times A_{s})_{s\in S}$, and the operator
$\mathrm{Cg}_{\mathbf{A}}$, of generated congruence for $\mathbf{A}$,
is an (algebraic) $S$-closure operator on $A\times A$.
\end{example}

\begin{example}
For a signature $\mathbf{\Sigma}$, the operator
$\Cn_{\mathbf{\Sigma}}$, of semantical consequence for
$\mathbf{\Sigma}$, is a $(\boldsymbol{\mathcal{U}}^{S})^{2}$-closure
operator on $\Eq(\mathbf{\Sigma})$.
\end{example}

As in the single-sorted case, also in the many-sorted case, for a set
of sorts $S$, every $S$-closure system $\mathcal{C}$ on an $S$-sorted set
$A$, when ordered by inclusion, induces a complete lattice
$\boldsymbol{\mathcal{C}} = (\mathcal{C},\subseteq)$.  Moreover, the
ordered sets $\mathbf{ClOp}(A)$, of $S$-closure operators on $A$, and
$\mathbf{ClSy}(A)$, of $S$-closure systems on $A$, are complete
lattices and anti\-isomor\-phic through the mapping $\Fix$ from
$\mathbf{ClOp}(A)$ to $\mathbf{ClSy}(A)$ which sends an $S$-closure
operator $J$ on $A$ to the $S$-closure system $\Fix(J) = \{ X\subseteq
A\mid J(X) = X \}$ on $A$, of the \emph{fixed points of} $J$.

After having defined, for a set of sorts $S$, the objects of interest,
i.e., in this case the $S$-closure system spaces  and the
$S$-closure operator spaces, we proceed to define the morphisms that are
suitable for these mathematical constructs.

\begin{definition}\label{morclsp}
Let $S$ be a set of sorts, $A$, $B$ two $S$-sorted sets, $\mathcal{C}$ and
$\mathcal{D}$ closure systems on $A$ and $B$, respectively, and $J$ and $K$
closure operators on $A$ and $B$, respectively.
\begin{enumerate}
\item An $S$-\emph{continuous mapping} from the $S$-closure
      system space $(A,\mathcal{C})$ to the $S$-closure system space
      $(B,\mathcal{D})$ is a triple
      $((A,\mathcal{C}),f,(B,\mathcal{D}))$, denoted by $f\colon
      (A,\mathcal{C})\mor(B,\mathcal{D})$, where $f$ is an $S$-mapping
      from $A$ to $B$ such that, for every $D\in \mathcal{D}$, $
      f^{-1}[D]\in \mathcal{C}$.

\item An $S$-\emph{continuous mapping} from the $S$-closure
      operator space $(A,J)$ to the $S$-closure operator space $(B,K)$
      is a triple $((A,J),f,(B,K))$, denoted by $f\colon
      (A,J)\mor(B,K)$, where $f$ is an $S$-mapping from $A$ to $B$
      such that, for every $X\subseteq A$, $f[J(X)]\subseteq K(f[X])$.
\end{enumerate}
\end{definition}

%

\begin{example}
For a $\mathbf{\Sigma}$-homomorphism $f$ from $\mathbf{A}$ to
$\mathbf{B}$ and an $S$-sorted subset $X$ of $A$, we have that
$f[\mathrm{Sg}_{\mathbf{A}}(X)] = \mathrm{Sg}_{\mathbf{B}}(f[X])$.
Therefore the $\mathbf{\Sigma}$-homomorphism $f$ induces an
$S$-continuous (and closed) mapping from
$(A,\mathrm{Sg}_{\mathbf{A}})$ to $(B,\mathrm{Sg}_{\mathbf{B}})$.
\end{example}

\begin{example}
For a $\mathbf{\Sigma}$-homomorphism $f$ from $\mathbf{A}$ to
$\mathbf{B}$ and a congruence $\Psi$ on $\mathbf{B}$, we have that
$(f\times f)^{-1}[\Psi]\in \mathrm{Cgr}(\mathbf{A})$.  Therefore from
the $\mathbf{\Sigma}$-homomorphism $f$ we get the $S$-continuous
mapping $f\times f$ from $(A\times A,\mathrm{Cgr}(\mathbf{A}))$ to
$(B\times B,\mathrm{Cgr}(\mathbf{B}))$.
\end{example}

\begin{example}
Later on, after having defined the category
$\mathbf{MClSp}_{\boldsymbol{\mathcal{V}}}$, we will prove that every
signature morphism $\mathbf{d}$ from a signature $\mathbf{\Sigma}$ to
a signature $\mathbf{\Lambda}$ induces a
$(\boldsymbol{\mathcal{U}}^{S})^{2}$-continuous mapping from
$(\Eq(\mathbf{\Sigma}),\Cn_{\mathbf{\Sigma}})$ to
$(\Eq(\mathbf{\Lambda})_{(\coprod_{\varphi})^{2}},
(\Cn_{\mathbf{\Lambda}})_{(\coprod_{\varphi})^{2}})$.
\end{example}

Also as for the single-sorted case, for every set of sorts $S$, there
exists, up to a concrete isomorphism, a category of $S$-closure spaces,
with objects given by an $S$-sorted set and, alternative, but
equivalently, an $S$-closure system or an $S$-closure operator
on it.

\begin{proposition}
Let $S$ be a set of sorts. Then we have that
\begin{enumerate}
\item The $S$-closure system spaces together with the
      $S$-continuous mappings between them, as defined in the first
      part of Definition~\ref{morclsp}, constitute a category
      $\mathbf{ClSySp}(S)$.  Furthermore, from $\mathbf{ClSySp}(S)$ to
      $\mathbf{Set}^{S}$ the forgetful functor, which sends an
      $S$-continuous mapping $f$ from $(A,\mathcal{C})$ to
      $(B,\mathcal{D})$ to the $S$-sorted mapping $f$ from $A$ to $B$,
      is faithful.  Therefore $\mathbf{ClSySp}(S)$ is a concrete
      category on $\mathbf{Set}^{S}$.

\item The $S$-closure operator spaces together with the
      $S$-continuous mappings between them, as defined in the second
      part of Definition~\ref{morclsp}, constitute a category
      $\mathbf{ClOpSp}(S)$.  Furthermore, from $\mathbf{ClOpSp}(S)$ to
      $\mathbf{Set}^{S}$ the forgetful functor, which sends an
      $S$-continuous mapping $f$ from $(A,J)$ to $(B,K)$ to the
      $S$-sorted mapping $f$ from $A$ to $B$, is faithful.  Therefore
      $\mathbf{ClOpSp}(S)$ is a concrete category on
      $\mathbf{Set}^{S}$.

\item The categories $\mathbf{ClOpSp}(S)$ and $\mathbf{ClSySp}(S)$ are
      concretely isomorphic, through the functor which sends the
      $S$-continuous mapping $f$ from $(A,J)$ to $(B,K)$ to the
      $S$-continuous mapping $f$ from $(A,\Fix(J))$ to $(B,\Fix(K))$.
\end{enumerate}
\end{proposition}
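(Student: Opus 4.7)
The plan is to verify first that $\mathbf{ClSySp}(S)$ and $\mathbf{ClOpSp}(S)$ are categories concrete over $\mathbf{Set}^S$, and then exhibit mutually inverse concrete functors between them, one of which is the functor given in the statement.

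For the first two parts, I would check the category axioms. For $\mathbf{ClSySp}(S)$, the identity $S$-sorted mapping $\mathrm{id}_A$ is trivially $S$-continuous from $(A,\mathcal{C})$ to itself since $\mathrm{id}_A^{-1}[C] = C$; composition of $S$-continuous mappings is $S$-continuous because, for $f\colon(A,\mathcal{C})\mor(B,\mathcal{D})$ and $g\colon(B,\mathcal{D})\mor(E,\mathcal{F})$ and $F\in\mathcal{F}$, we have $(g\comp f)^{-1}[F]=f^{-1}[g^{-1}[F]]\in\mathcal{C}$; associativity and unitality are inherited from $\mathbf{Set}^S$. For $\mathbf{ClOpSp}(S)$, the identity $\mathrm{id}_A$ satisfies $\mathrm{id}_A[J(X)]=J(X)=J(\mathrm{id}_A[X])$, and for composition $(g\comp f)[J(X)]=g[f[J(X)]]\subseteq g[K(f[X])]\subseteq L(g[f[X]])$. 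In both cases the forgetful functor to $\mathbf{Set}^S$ is faithful because morphisms are triples whose equality is determined by the underlying $S$-sorted mapping, making both categories concrete over $\mathbf{Set}^S$.

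For the concrete isomorphism I would define a functor $\Phi\colon\mathbf{ClOpSp}(S)\mor\mathbf{ClSySp}(S)$ by $\Phi(A,J)=(A,\Fix(J))$ and $\Phi(f)=f$. Well-definedness on objects is already granted by the antiisomorphism $\Fix\colon\mathbf{ClOp}(A)\mor\mathbf{ClSy}(A)$ stated in the text. The crucial point, and the main obstacle, is well-definedness on morphisms: given an $S$-continuous $f\colon(A,J)\mor(B,K)$ and $D\in\Fix(K)$, I must show $f^{-1}[D]\in\Fix(J)$. Setting $X=f^{-1}[D]$, extensivity gives $X\subseteq J(X)$; conversely, from $f[X]\subseteq D$ and the operator-continuity $f[J(X)]\subseteq K(f[X])\subseteq K(D)=D$, it follows that $J(X)\subseteq f^{-1}[D]=X$, so $J(X)=X$. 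Functoriality of $\Phi$ is then immediate from its action being the identity on underlying mappings.

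For the inverse $\Psi\colon\mathbf{ClSySp}(S)\mor\mathbf{ClOpSp}(S)$ I would use the assignment $\Psi(A,\mathcal{C})=(A,J_{\mathcal{C}})$ with $J_{\mathcal{C}}(X)=\inter\{\,C\in\mathcal{C}\mid X\subseteq C\,\}$, and $\Psi(f)=f$. On objects this is inverse to $\Phi$ by the aforementioned antiisomorphism, since $\Fix(J_{\mathcal{C}})=\mathcal{C}$ and $J_{\Fix(J)}=J$. For well-definedness on morphisms: if $f\colon(A,\mathcal{C})\mor(B,\mathcal{D})$ is $S$-continuous in the system sense and $D\in\mathcal{D}$ satisfies $f[X]\subseteq D$, then $f^{-1}[D]\in\mathcal{C}$ and $X\subseteq f^{-1}[D]$, hence $J_{\mathcal{C}}(X)\subseteq f^{-1}[D]$, i.e.\ $f[J_{\mathcal{C}}(X)]\subseteq D$; intersecting over all such $D$ gives $f[J_{\mathcal{C}}(X)]\subseteq J_{\mathcal{D}}(f[X])$, as required. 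Since both $\Phi$ and $\Psi$ act as the identity on underlying $S$-sorted mappings, they commute strictly with the forgetful functors to $\mathbf{Set}^S$, and being mutually inverse on objects and morphisms they constitute the desired concrete isomorphism.
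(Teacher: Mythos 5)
Your proof is correct; the paper in fact states this proposition without proof, treating it as routine, and your verification supplies exactly the standard argument one would expect. In particular you correctly isolate and handle the only non-trivial point, namely that continuity transfers across the $\Fix$ correspondence in both directions (showing $f^{-1}[D]\in\Fix(J)$ for $D\in\Fix(K)$ via $f[J(X)]\subseteq K(f[X])\subseteq K(D)=D$ with $X=f^{-1}[D]$, and conversely recovering $f[J_{\mathcal{C}}(X)]\subseteq J_{\mathcal{D}}(f[X])$ by intersecting over the closed sets above $f[X]$), which is all that is needed beyond the order-antiisomorphism between $\mathbf{ClOp}(A)$ and $\mathbf{ClSy}(A)$ already recorded in the text.
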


From now on, and for a set of sorts $S$, by the category of
$S$-closure spaces, denoted by $\mathbf{ClSp}(S)$, we will refer,
indistinctly, to anyone of the categories $\mathbf{ClSySp}(S)$ or
$\mathbf{ClOpSp}(S)$.

We point out that, once more, as for the single-sorted case, for every
set of sorts $S$, the forgetful functor from the category
$\mathbf{ClSp}(S)$ to the  category $\mathbf{Set}^{S}$ has left and right
adjoints and constructs limits and colimits.

After associating to a set of sorts $S$ the category
$\mathbf{ClSp}(S)$ of $S$-closure spaces, we prove next that a mapping
$\varphi\colon S\mor T$ induces an adjunction
$\coprod^{\mathrm{cl}}_{\varphi}\ladj \Delta^{\mathrm{cl}}_{\varphi}$
from $\mathbf{ClSp}(S)$ to $\mathbf{ClSp}(T)$.

For the proof of the above assertion it will be shown to be useful
to introduce the following notational conventions.  Let
$\varphi\colon S\mor T$ be a mapping and $\tcoprod_{\varphi}\ladj
\Delta_{\varphi}$ the adjunction from $\mathbf{Set}^{S}$ to
$\mathbf{Set}^{T}$ induced by $\varphi$, then
\begin{enumerate}
\item For a $T$-sorted set $B$ and a subset $\mathcal{D}$ of
      $\mathrm{Sub}(B)$, $\Delta_{\varphi}[\mathcal{D}]$ denotes the
      subset $\{\,D_{\varphi}\mid D\in \mathcal{D}\,\}$ of
      $\mathrm{Sub}(B_{\varphi})$, and

\item For an $S$-sorted set $A$ and a subset $\mathcal{C}$ of
      $\mathrm{Sub}(A)$, $\coprod_{\varphi}[\mathcal{C}]$ denotes the
      subset $\{\,\tcoprod_{\varphi}C\mid C\in \mathcal{C}\,\}$ of
      $\mathrm{Sub}(\tcoprod_{\varphi}A)$.
\end{enumerate}

\begin{proposition}
Let $\varphi\colon S\mor T$ be a mapping.  Then from
$\mathbf{ClSp}(T)$ to $\mathbf{ClSp}(S)$ there exists a functor
$\Delta^{\mathrm{cl}}_{\varphi}$ defined as follows
\begin{enumerate}
\item $\Delta^{\mathrm{cl}}_{\varphi}$ sends $(B,\mathcal{D})$ in
      $\mathbf{ClSp}(T)$ to $(B_{\varphi},\Delta_{\varphi}[\mathcal{D}])$
      in $\mathbf{ClSp}(S)$.

\item $\Delta^{\mathrm{cl}}_{\varphi}$ sends a $T$-continu\-ous
      mapping $f\colon (B,\mathcal{D})\mor (B',\mathcal{D}')$ to the
      $S$-continu\-ous mapping $f_{\varphi}\colon
      (B_{\varphi},\Delta_{\varphi}[\mathcal{D}])\mor
      (B'_{\varphi},\Delta_{\varphi}[\mathcal{D}'])$.
\end{enumerate}

\end{proposition}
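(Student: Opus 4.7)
My plan is to work with the closure-system presentation of $\mathbf{ClSp}$, since the action on objects is stated in those terms, and to derive functoriality from the corresponding properties of the underlying set-level relabelling functor $\Delta_{\varphi}\colon\mathbf{Set}^{T}\mor\mathbf{Set}^{S}$.

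First, I would check that $\Delta_{\varphi}[\mathcal{D}]$ is an $S$-closure system on $B_{\varphi}$. The top element is present because $B\in\mathcal{D}$ forces $B_{\varphi}\in\Delta_{\varphi}[\mathcal{D}]$. For stability under nonempty intersections, given $\vacio\neq\mathcal{E}\incl\Delta_{\varphi}[\mathcal{D}]$, choose $\mathcal{F}\incl\mathcal{D}$ with $\mathcal{E}=\Delta_{\varphi}[\mathcal{F}]$; then the key computation is that, for every $s\in S$,
\[
\bigl(\textstyle\inter_{D\in\mathcal{F}}D_{\varphi}\bigr)_{s}
= \textstyle\inter_{D\in\mathcal{F}}D_{\varphi(s)}
= \bigl(\textstyle\inter_{D\in\mathcal{F}}D\bigr)_{\varphi(s)}
= \bigl((\textstyle\inter_{D\in\mathcal{F}}D)_{\varphi}\bigr)_{s},
\]
so $\inter\mathcal{E}=(\inter\mathcal{F})_{\varphi}$, which lies in $\Delta_{\varphi}[\mathcal{D}]$ because $\mathcal{D}$ is closed under nonempty intersections. (This is the component-wise shadow of the fact that $\Delta_{\varphi}$, being a right adjoint to $\tcoprod_{\varphi}$, preserves limits.)

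Next, I would verify $S$-continuity of $f_{\varphi}$. For any $D'\in\mathcal{D}'$, a coordinate-wise computation shows
\[
(f_{\varphi})^{-1}[D'_{\varphi}]
= \bigl((f_{\varphi(s)})^{-1}[D'_{\varphi(s)}]\bigr)_{s\in S}
= \bigl(f^{-1}[D']\bigr)_{\varphi},
\]
and $T$-continuity of $f$ gives $f^{-1}[D']\in\mathcal{D}$, so the right-hand side belongs to $\Delta_{\varphi}[\mathcal{D}]$. Hence every member of $\Delta_{\varphi}[\mathcal{D}']$ pulls back into $\Delta_{\varphi}[\mathcal{D}]$, which is precisely $S$-continuity.

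Finally, functoriality is immediate from the already-established functoriality of $\Delta_{\varphi}\colon\mathbf{Set}^{T}\mor\mathbf{Set}^{S}$ (Proposition on $\mathrm{MSet}$ early in the paper): $(\id_{B})_{\varphi}=\id_{B_{\varphi}}$ and $(g\comp f)_{\varphi}=g_{\varphi}\comp f_{\varphi}$, so $\Delta^{\mathrm{cl}}_{\varphi}$ preserves identities and compositions. There is no genuine obstacle here; the only thing worth being careful about is the pointwise manipulation of images/preimages and intersections of $S$-sorted subsets, which is where the coordinate-wise calculations displayed above play their role.
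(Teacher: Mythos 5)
Your proof is correct and follows essentially the same route as the paper's: both arguments rest on the two coordinate-wise identities $(\inter_{i}Y^{i})_{\varphi}=\inter_{i}Y^{i}_{\varphi}$ (so that $\Delta_{\varphi}[\mathcal{D}]$ is closed under nonempty intersections) and $(f_{\varphi})^{-1}[Y'_{\varphi}]=(f^{-1}[Y'])_{\varphi}$ (so that $f_{\varphi}$ is $S$-continuous). You merely make explicit two points the paper leaves tacit, namely the presence of the top element $B_{\varphi}$ and the preservation of identities and composites, both of which are immediate.
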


\begin{proof}
Let $\mathcal{D}$ be a $T$-closure system on $B$, then
$\Delta_{\varphi}[\mathcal{D}]$ is an $S$-closure system on
$B_{\varphi}$, because, for every family
$(Y^{i})_{i\in I}$ of $T$-sorted sets, we have that %
$$
(\textstyle{\bigcap}_{i\in I}Y^{i})_{\varphi} =
              \textstyle{\bigcap}_{i\in I}Y^{i}_{\varphi}.
$$
Besides, if $f\colon(B,\mathcal{D})\mor (B',\mathcal{D}')$ is a
$T$-continuous mapping and
$Y_{\varphi}'\in\Delta_{\varphi}[\mathcal{D}']$, then $Y'\in
\mathcal{D}'$ and $f^{-1}[Y']\in \mathcal{D}$, thus
$\Delta_{\varphi}(f^{-1}[Y'])\in\Delta_{\varphi}[\mathcal{D}]$.
But $\Delta_{\varphi}(f^{-1}[Y'])$ is identical to
$\Delta_{\varphi}(f)^{-1}[Y_{\varphi}']$, therefore $f_{\varphi}$
is an $S$-continuous mapping.
\end{proof}

\begin{proposition}
Let $\varphi\colon S\mor T$ be a mapping.  Then from $\mathbf{ClSp}(S)$ to
$\mathbf{ClSp}(T)$ there exists a functor
$\coprod^{\mathrm{cl}}_{\varphi}$ defined as follows
\begin{enumerate}
\item $\coprod^{\mathrm{cl}}_{\varphi}$ sends $(A,\mathcal{C})$ in $\mathbf{ClSp}(S)$
      to $(\coprod_{\varphi}A,\coprod_{\varphi}[\mathcal{C}])$ in $\mathbf{ClSp}(T)$.

\item $\coprod^{\mathrm{cl}}_{\varphi}$ sends an $S$-continu\-ous
      mapping $f\colon (A,\mathcal{C})\mor (A',\mathcal{C}')$ to the
      $T$-continu\-ous mapping $\coprod_{\varphi}f\colon
      (\coprod_{\varphi}A,\coprod_{\varphi}[\mathcal{C}])\mor
      (\coprod_{\varphi}A',\coprod_{\varphi}[\mathcal{C}'])$.
\end{enumerate}

\end{proposition}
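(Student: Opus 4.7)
The plan is to check three things in sequence: (i) that the object assignment lands in $\mathbf{ClSp}(T)$, (ii) that the morphism assignment yields a $T$-continuous mapping, and (iii) that identities and composites are preserved. Since the underlying assignments come from the already-defined functor $\coprod_{\varphi}\colon \mathbf{Set}^{S}\mor\mathbf{Set}^{T}$, step (iii) will be automatic once (i) and (ii) are settled; the real content lies in (i) and (ii), and these both rest on a single computational lemma about how $\coprod_{\varphi}$ interacts with intersections and inverse images.

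The key lemma I would prove first is the following: for any family $(C^{i})_{i\in I}$ of $S$-sorted subsets of $A$, and any $S$-sorted mapping $f\colon A\mor A'$ with $D'\incl A'$, one has
$$
\textstyle\coprod_{\varphi}(\binter_{i\in I}C^{i})
= \binter_{i\in I}\coprod_{\varphi}C^{i},
\qquad
(\coprod_{\varphi}f)^{-1}[\coprod_{\varphi}D']
= \coprod_{\varphi}(f^{-1}[D']).
$$
Both follow coordinatewise from the explicit description $(\coprod_{\varphi}X)_{t} = \coprod_{s\in\varphi^{-1}[t]}X_{s}$: an element $(x,s)$ with $s\in\varphi^{-1}[t]$ lies in the left-hand side exactly when $x$ lies in every $C^{i}_{s}$ (resp.\ when $f_{s}(x)\in D'_{s}$), which is precisely membership in the right-hand side.

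Granting this lemma, step (i) is immediate: $\coprod_{\varphi}A\in\coprod_{\varphi}[\mathcal{C}]$ because $A\in\mathcal{C}$, and for any non-empty $\mathcal{D}\incl\coprod_{\varphi}[\mathcal{C}]$, writing $\mathcal{D} = \{\,\coprod_{\varphi}C^{i}\mid i\in I\,\}$ with $C^{i}\in\mathcal{C}$, the first identity gives $\binter\mathcal{D} = \coprod_{\varphi}(\binter_{i\in I}C^{i})\in\coprod_{\varphi}[\mathcal{C}]$, since $\binter_{i\in I}C^{i}\in\mathcal{C}$. Step (ii) is equally direct: every element of $\coprod_{\varphi}[\mathcal{C}']$ has the shape $\coprod_{\varphi}D'$ with $D'\in\mathcal{C}'$, and by the second identity $(\coprod_{\varphi}f)^{-1}[\coprod_{\varphi}D'] = \coprod_{\varphi}(f^{-1}[D'])$, which lies in $\coprod_{\varphi}[\mathcal{C}]$ since $f^{-1}[D']\in\mathcal{C}$ by $S$-continuity of $f$.

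Finally, for step (iii), the identity $S$-continuous mapping on $(A,\mathcal{C})$ is sent to $\coprod_{\varphi}(\id_{A}) = \id_{\coprod_{\varphi}A}$, and for a composable pair $f,g$ of $S$-continuous mappings, $\coprod_{\varphi}(g\comp f) = \coprod_{\varphi}g\comp \coprod_{\varphi}f$; both assertions hold because $\coprod_{\varphi}$ is already a functor at the level of sorted sets, and steps (i)--(ii) ensure that these equalities take place in $\mathbf{ClSp}(T)$. The only mildly delicate point is making sure the notation $\coprod_{\varphi}[\mathcal{C}]$ is treated correctly as an image in $\Sub(\coprod_{\varphi}A)$ rather than as some abstract family, so that the intersection in step (i) is literally the intersection of members of $\Sub(\coprod_{\varphi}A)$; once this is fixed, the whole argument reduces to the two coordinatewise computations above.
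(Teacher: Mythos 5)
Your proposal is correct and follows essentially the same route as the paper's own proof, which likewise rests on the two identities $\coprod_{\varphi}\bigcap_{i\in I}X^{i} = \bigcap_{i\in I}\coprod_{\varphi}X^{i}$ and $(\coprod_{\varphi}f)^{-1}[\coprod_{\varphi}X'] = \coprod_{\varphi}(f^{-1}[X'])$, verified coordinatewise from $(\coprod_{\varphi}X)_{t} = \coprod_{s\in\varphi^{-1}[t]}X_{s}$. Your additional explicit check of identities and composites is harmless but not needed beyond the observation that $\coprod_{\varphi}$ is already a functor on sorted sets.
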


\begin{proof}
Let $\mathcal{C}$ be an $S$-closure system on $A$, then
$\coprod_{\varphi}[\mathcal{C}]$ is a $T$-closure system on
$\coprod_{\varphi}A$, because, for every family
$(X^{i})_{i\in I}$ of $S$-sorted sets, we have that %
$$
\tcoprod_{\varphi}\inter_{i\in I}X^{i} =
\inter_{i\in I}\tcoprod_{\varphi}X^{i}.
$$
Besides, if $f\colon(A,\mathcal{C})\mor (A',\mathcal{C}')$ is an an
$S$-continuous mapping and
$\coprod_{\varphi}X'\in\coprod_{\varphi}[\mathcal{C}']$, then $X'$ belongs
to $\mathcal{C'}$ y $f^{-1}[X']\in \mathcal{C}$, thus
$\coprod_{\varphi}f^{-1}[X']\in\coprod_{\varphi}[\mathcal{C}]$.  But
$\coprod_{\varphi}f^{-1}[X']$ is identical to
$(\coprod_{\varphi}f)^{-1}[\coprod_{\varphi}X']$, therefore
$\coprod_{\varphi}f$ is a $T$-continuous mapping.
\end{proof}

\begin{proposition}
Let $\varphi\colon S\mor T$ be a mapping.  Then the functor
$\coprod^{\mathrm{cl}}_{\varphi}$ is left adjoint to the functor
$\Delta^{\mathrm{cl}}_{\varphi}$.
\end{proposition}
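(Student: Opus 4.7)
The plan is to lift the underlying adjunction $\coprod_{\varphi}\ladj\Delta_{\varphi}$ between $\mathbf{Set}^{S}$ and $\mathbf{Set}^{T}$ to the level of closure spaces, by showing that its unit $\eta^{\varphi}$ and counit $\varepsilon^{\varphi}$ (or equivalently, the natural bijection $\theta^{\varphi}$) restrict to morphisms of $S$-closure spaces. Concretely, for an $S$-closure space $(A,\mathcal{C})$, I will prove that the canonical $S$-sorted mapping $\eta^{\varphi}_{A}\colon A\mor (\coprod_{\varphi}A)_{\varphi}$ is $S$-continuous when viewed as an arrow $(A,\mathcal{C})\mor\Delta^{\mathrm{cl}}_{\varphi}(\coprod^{\mathrm{cl}}_{\varphi}(A,\mathcal{C}))$, and that it provides the universal arrow from $(A,\mathcal{C})$ to $\Delta^{\mathrm{cl}}_{\varphi}$.

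For continuity of $\eta^{\varphi}_{A}$, observe that a typical element of $\Delta_{\varphi}[\tcoprod_{\varphi}[\mathcal{C}]]$ has the form $(\tcoprod_{\varphi}C)_{\varphi}$ for some $C\in\mathcal{C}$; the inverse image under $\eta^{\varphi}_{A}$ in sort $s$ consists of those $a\in A_{s}$ such that $(a,s)\in\tcoprod_{x\in \varphi^{-1}[\varphi(s)]}C_{x}$, which is simply $C_{s}$. Hence $(\eta^{\varphi}_{A})^{-1}[(\tcoprod_{\varphi}C)_{\varphi}]=C\in\mathcal{C}$, so $\eta^{\varphi}_{A}$ is continuous. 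For the universal property, let $f\colon(A,\mathcal{C})\mor\Delta^{\mathrm{cl}}_{\varphi}(B,\mathcal{D})$ be an $S$-continuous mapping, and let $f^{\flat}=\varepsilon^{\varphi}_{B}\comp\tcoprod_{\varphi}f\colon\tcoprod_{\varphi}A\mor B$ be the adjoint transpose at the $\mathbf{Set}$-level. I must check that $f^{\flat}$ is $T$-continuous from $(\tcoprod_{\varphi}A,\tcoprod_{\varphi}[\mathcal{C}])$ to $(B,\mathcal{D})$. For $D\in\mathcal{D}$, by continuity of $f$ we have $C:=f^{-1}[D_{\varphi}]\in\mathcal{C}$; then unwinding the definition of $f^{\flat}$ at sort $t$, one computes $((f^{\flat})^{-1}[D])_{t}=\tcoprod_{s\in\varphi^{-1}[t]}f_{s}^{-1}[D_{\varphi(s)}]=\tcoprod_{s\in\varphi^{-1}[t]}C_{s}=(\tcoprod_{\varphi}C)_{t}$, so $(f^{\flat})^{-1}[D]=\tcoprod_{\varphi}C\in\tcoprod_{\varphi}[\mathcal{C}]$.

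The converse direction, namely that whenever $g\colon(\tcoprod_{\varphi}A,\tcoprod_{\varphi}[\mathcal{C}])\mor(B,\mathcal{D})$ is $T$-continuous the mapping $g^{\sharp}=\Delta_{\varphi}g\comp\eta^{\varphi}_{A}$ is $S$-continuous, follows symmetrically: for $D\in\mathcal{D}$, by continuity of $g$ there exists $C\in\mathcal{C}$ with $g^{-1}[D]=\tcoprod_{\varphi}C$; then the $s$-th component of $(g^{\sharp})^{-1}[D_{\varphi}]$ consists of those $a\in A_{s}$ with $(a,s)\in g^{-1}[D]_{\varphi(s)}=\tcoprod_{x\in\varphi^{-1}[\varphi(s)]}C_{x}$, which is exactly $C_{s}$. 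Therefore $(g^{\sharp})^{-1}[D_{\varphi}]=C\in\mathcal{C}$. This shows that the bijection $\theta^{\varphi}_{A,B}\colon\mathrm{Hom}(\tcoprod_{\varphi}A,B)\iso\mathrm{Hom}(A,B_{\varphi})$ of the underlying adjunction restricts to a bijection between the continuous mappings, so that $\eta^{\varphi}_{A}$ is a universal arrow from $(A,\mathcal{C})$ to $\Delta^{\mathrm{cl}}_{\varphi}$.

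The naturality of the restricted bijection in both variables is inherited from that of $\theta^{\varphi}$, since the ``forgetful'' functors $\mathbf{ClSp}(S)\mor\mathbf{Set}^{S}$ and $\mathbf{ClSp}(T)\mor\mathbf{Set}^{T}$ are faithful. Consequently $\coprod^{\mathrm{cl}}_{\varphi}\ladj\Delta^{\mathrm{cl}}_{\varphi}$, as desired. The only non-routine checks are the two inverse-image computations above; the main obstacle is just bookkeeping to identify $(\tcoprod_{\varphi}C)_{\varphi(s)}$ correctly so that the canonical insertion $a\mapsto(a,s)$ lands in it exactly when $a\in C_{s}$, and the rest is formal adjunction transposition.
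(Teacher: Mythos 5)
Your proof is correct and follows essentially the same route as the paper: both arguments show that the natural bijection $\theta^{\varphi}$ of the underlying adjunction $\coprod_{\varphi}\ladj\Delta_{\varphi}$ restricts to a bijection between continuous mappings, via exactly the two inverse-image computations $(f^{\flat})^{-1}[D]=\tcoprod_{\varphi}f^{-1}[D_{\varphi}]$ and $(g^{\sharp})^{-1}[D_{\varphi}]=C$ where $g^{-1}[D]=\tcoprod_{\varphi}C$. Your additional packaging through the unit $\eta^{\varphi}_{A}$ as a universal arrow is a cosmetic reformulation of the same argument.
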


\begin{proof}
The  natural isomorphism $\theta^{\varphi}$ of the adjunction
$\coprod_{\varphi}\ladj \Delta_{\varphi}$ also happens to be a natural
isomorphism %
$$
\Hom((A,\mathcal{C}),(B_{\varphi},\Delta_{\varphi}[\mathcal{D}]))
\iso
\Hom((\tcoprod_{\varphi}A,\tcoprod_{\varphi}[\mathcal{C}]),(B,\mathcal{D})),
$$
for every $(A,\mathcal{C})$ in $\mathbf{ClSp}(S)$ and every $(B,\mathcal{D})$ in
$\mathbf{ClSp}(T)$.

Let $f$ be an $S$-continuous mapping from $(A,\mathcal{C})$ to
$(B_{\varphi},\Delta_{\varphi}[\mathcal{D}])$, and $Y\in \mathcal{D}$.
Since $Y_{\varphi}\in\Delta_{\varphi}[\mathcal{D}]$ and $f$ is
continuous, we have that $f^{-1}[Y_{\varphi}]\in \mathcal{C}$ and
$\coprod_{\varphi}f^{-1}[Y_{\varphi}]\in\coprod_{\varphi}[\mathcal{C}]$.
 But $\coprod_{\varphi}f^{-1}[Y_{\varphi}]$ is identical to
$((\theta^{\varphi})^{-1}(f))^{-1}[Y]$, because %
\begin{align*}
((\theta^{\varphi})^{-1}(f))^{-1}[Y]
&=
(
\{  (a,s)\in\tcoprod_{\varphi}(A)_{t} \mid a\in A_{s},\, \varphi(s)=t,\,
f_{s}(a)\in Y_{t}
\}
)_{t\in T}
\\
&=
(
\{  (a,s)\in\tcoprod_{\varphi}(A)_{t} \mid a\in f^{-1}[Y_{\varphi}]_{s},\,
\varphi(s)=t
\}
)_{t\in T}
\\
&=
(
\tcoprod_{s\in\varphi^{-1}[t]}f^{-1}[Y_{\varphi}]_{s}
)_{t\in T}
\\
&=
\tcoprod_{\varphi}f^{-1}[Y_{\varphi}],
\end{align*}
therefore
$(\theta^{\varphi})^{-1}(f)$ is a $T$-continuous mapping.

Reciprocally, let us suppose that $g$ is a $T$-continuous mapping from
$(\tcoprod_{\varphi}A,\tcoprod_{\varphi}[\mathcal{C}])$ to $(B,\mathcal{D})$. Let
$Y_{\varphi}\in\Delta_{\varphi}[\mathcal{D}]$ be, then $Y\in \mathcal{D}$ and
$g^{-1}[Y]\in\coprod_{\varphi}[\mathcal{C}]$. But we have that %
\begin{align*}
g^{-1}[Y]
&=
(\{
(a,s)\in\tcoprod_{\varphi}(A)_{t} \mid g_{t}(a,s)\in Y_{t}
\})_{t\in T}
\\
&=
(
\tcoprod_{s\in\varphi^{-1}[t]}
\{
a\in A_{s} \mid g_{\varphi(s)}(a,s)\in Y_{\varphi(s)}
\}
)_{t\in T}
\\
&=
\tcoprod_{\varphi}
(
(
\{  a\in A_{s} \mid g_{\varphi(s)}(a,s)\in Y_{\varphi(s)}  \}
)_{s\in S}
),
\end{align*}
and, additionally,
\begin{align*}
(
\{  a\in A_{s} \mid g_{\varphi(s)}(a,s)\in Y_{\varphi(s)}  \}
)_{s\in S}
&=
(
\{  a\in A_{s} \mid \theta^{\varphi}(g)_{s}(a)\in Y_{\varphi(s)}  \}
)_{s\in S}
\\
&=
(\theta^{\varphi}(g))^{-1}[Y_{\varphi}],
\end{align*}
thus
$g^{-1}[Y]=\coprod_{\varphi}(\theta^{\varphi}(g))^{-1}[Y_{\varphi}]$,
therefore $(\theta^{\varphi}(g))^{-1}[Y_{\varphi}]\in \mathcal{C}$ and
$\theta^{\varphi}(g)$ is an $S$\nobreakdash-continuous mapping.
\end{proof}

The functors $\Delta^{\mathrm{cl}}_{\varphi}$ and
$\coprod^{\mathrm{cl}}_{\varphi}$ can, obviously, also be defined for
$S$-closure operators.  Actually, the definition for the functor
$\coprod^{\mathrm{cl}}_{\varphi}$, as shown in the following
proposition, is immediate.

\begin{proposition}
Given a mapping $\varphi\colon S\mor T$ and an $S$-closure space
$(A,J)$, the pair $(\coprod_{\varphi}A,J_{\varphi})$ is a $T$-closure
space, where the operator $J_{\varphi}$ on $\coprod_{\varphi}A$ assigns
to $\coprod_{\varphi}X$, for $X\incl A$, the $T$-sorted set
$\coprod_{\varphi}J(X)$.
\end{proposition}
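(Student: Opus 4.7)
The plan is to establish that the assignment $\coprod_{\varphi} X \mapsto \coprod_{\varphi} J(X)$ in fact defines an operator on all of $\mathrm{Sub}(\coprod_{\varphi}A)$, and then transport the three defining properties of a closure operator across this assignment, exploiting the fact that $\coprod_{\varphi}$ induces an order-isomorphism at the level of subsets.

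\medskip

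First I would verify the well-definedness of $J_{\varphi}$. The observation I need is that the functor $\coprod_{\varphi}\colon \mathbf{Set}^{S}\mor \mathbf{Set}^{T}$ restricts to an order-iso\-mor\-phism $\coprod_{\varphi}\colon \mathbf{Sub}(A)\mor \mathbf{Sub}(\coprod_{\varphi}A)$ between the complete lattices of $S$-sorted and $T$-sorted subsets, respectively. Indeed, an arbitrary $T$-sorted subset $Y$ of $\coprod_{\varphi}A = (\coprod_{s\in\varphi^{-1}[t]}A_{s})_{t\in T}$ decomposes, coordinate by coordinate, as a disjoint union $Y_{t} = \coprod_{s\in\varphi^{-1}[t]}Y_{t,s}$ with $Y_{t,s}\incl A_{s}$; the assignment $X_{s} = Y_{\varphi(s),s}$ defines an $S$-sorted subset of $A$ with $\coprod_{\varphi}X = Y$, and this inverse assignment clearly reverses inclusions in both directions. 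Thus every $T$-sorted subset of $\coprod_{\varphi}A$ is of the form $\coprod_{\varphi}X$ for a unique $X\incl A$, and $J_{\varphi}$ is a well-defined operator on $\mathrm{Sub}(\coprod_{\varphi}A)$.

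\medskip

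Next I would verify in turn the three closure axioms. For extensivity, from $X\incl J(X)$ and the fact that $\coprod_{\varphi}$ preserves inclusions I get $\coprod_{\varphi}X\incl \coprod_{\varphi}J(X)$, i.e., $\coprod_{\varphi}X\incl J_{\varphi}(\coprod_{\varphi}X)$. For isotonicity, if $\coprod_{\varphi}X\incl \coprod_{\varphi}Y$ then, by the order-isomorphism just noted, $X\incl Y$, hence $J(X)\incl J(Y)$, and applying $\coprod_{\varphi}$ once more yields $J_{\varphi}(\coprod_{\varphi}X)\incl J_{\varphi}(\coprod_{\varphi}Y)$. Finally, for idempotency, a direct computation using the definition and the idempotency of $J$ gives
\[
J_{\varphi}(J_{\varphi}(\textstyle\coprod_{\varphi}X)) =
J_{\varphi}(\textstyle\coprod_{\varphi}J(X)) =
\textstyle\coprod_{\varphi}J(J(X)) =
\textstyle\coprod_{\varphi}J(X) =
J_{\varphi}(\textstyle\coprod_{\varphi}X),
\]
concluding the proof.

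\medskip

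There is essentially no obstacle here; the only delicate point is verifying that every $T$-sorted subset of $\coprod_{\varphi}A$ really is of the form $\coprod_{\varphi}X$, which is the step that guarantees that $J_{\varphi}$ is defined on the whole of $\mathrm{Sub}(\coprod_{\varphi}A)$ rather than merely on a subcollection. Once this order-theoretic bijection is in hand, the closure axioms for $J_{\varphi}$ are immediate transports, via $\coprod_{\varphi}$, of the corresponding axioms for $J$.
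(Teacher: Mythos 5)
Your proof is correct and follows essentially the same route as the paper, whose entire argument is the one-line observation that $\mathrm{Sub}(A)\iso \mathrm{Sub}(\coprod_{\varphi}A) = \coprod_{\varphi}[\mathrm{Sub}(A)]$, so that $J_{\varphi}$ is defined on all of $\mathrm{Sub}(\coprod_{\varphi}A)$. You merely make explicit the transport of the three closure axioms, which the paper leaves tacit.
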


\begin{proof}
The definition of the operator $J_{\varphi}$ is sound, because
$\Sub(A)\iso \Sub(\coprod_{\varphi}A) = \coprod_{\varphi}[\Sub(A)]$.
\end{proof}

However, the corresponding definition for the functor
$\Delta^{\mathrm{cl}}_{\varphi}$ is more involved, because for a
$T$-sorted set $B$, we only have, in general, that
$\Delta_{\varphi}[\Sub(B)]\subseteq\Sub(B_{\varphi})$.

\begin{proposition}
Given a mapping $\varphi\colon S\mor T$ and a $T$-closure space
$(B,K)$, the pair $(B_{\varphi},K_{\varphi})$ is an $S$-closure space,
where the operator $K_{\varphi}$ on $B_{\varphi}$ is defined as
follows
$$
K_{\varphi}\nfunction
{\mathrm{Sub}(B_{\varphi})}{\mathrm{Sub}(B_{\varphi})}
{Y}{K((\textstyle{\bigcup}_{s\in \varphi^{-1}[t]}Y_{s})_{t\in T})_{\varphi}}
$$
\end{proposition}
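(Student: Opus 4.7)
The plan is to verify the three defining properties of an $S$-closure operator (extensivity, isotonicity, and idempotence) for $K_{\varphi}$, relying on the corresponding properties of $K$. To streamline notation, for $Y\in\mathrm{Sub}(B_{\varphi})$ I would write $\widetilde{Y}$ for the $T$-sorted subset of $B$ defined by $\widetilde{Y}_{t} = \bigcup_{s\in\varphi^{-1}[t]}Y_{s}$, so that $K_{\varphi}(Y) = K(\widetilde{Y})_{\varphi}$. Observe at once two basic facts: (i) the assignment $Y\mapsto\widetilde{Y}$ is monotone; (ii) for every $s\in S$, $\widetilde{Y}_{\varphi(s)}\supseteq Y_{s}$ by definition, since $s\in\varphi^{-1}[\varphi(s)]$.

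Extensivity is then a one-line computation: for each $s\in S$, $K_{\varphi}(Y)_{s} = K(\widetilde{Y})_{\varphi(s)} \supseteq \widetilde{Y}_{\varphi(s)} \supseteq Y_{s}$, using extensivity of $K$ and the remark above. Isotonicity is equally easy: if $Y\subseteq Y'$, then $\widetilde{Y}\subseteq \widetilde{Y'}$ componentwise, hence $K(\widetilde{Y})\subseteq K(\widetilde{Y'})$ by isotonicity of $K$, and restriction along $\varphi$ preserves inclusions.

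The main work is idempotence, $K_{\varphi}(K_{\varphi}(Y)) = K_{\varphi}(Y)$. Setting $Z = \widetilde{K_{\varphi}(Y)}$, one computes that $Z_{t} = \bigcup_{s\in\varphi^{-1}[t]}K(\widetilde{Y})_{\varphi(s)}$, which equals $K(\widetilde{Y})_{t}$ when $t\in\mathrm{Im}(\varphi)$ and equals $\emptyset$ otherwise. The key observation is the sandwich $\widetilde{Y}\subseteq Z\subseteq K(\widetilde{Y})$: the right inclusion is immediate from the formula for $Z$, while the left inclusion uses that $\widetilde{Y}_{t}=\emptyset$ whenever $t\notin\mathrm{Im}(\varphi)$ (since $\varphi^{-1}[t]$ is empty), and otherwise $\widetilde{Y}_{t}\subseteq K(\widetilde{Y})_{t} = Z_{t}$ by extensivity of $K$. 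Applying $K$ to this sandwich and using isotonicity plus idempotence of $K$ yields $K(\widetilde{Y})\subseteq K(Z)\subseteq K(K(\widetilde{Y})) = K(\widetilde{Y})$, hence $K(Z)=K(\widetilde{Y})$. Restricting to $B_{\varphi}$ gives $K_{\varphi}(K_{\varphi}(Y)) = K(Z)_{\varphi} = K(\widetilde{Y})_{\varphi} = K_{\varphi}(Y)$.

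The only delicate point — and the place where one must be careful — is precisely this last step: the operation $Y\mapsto\widetilde{Y}$ is not a two-sided inverse to restriction along $\varphi$, since it is zero on sorts $t\notin\mathrm{Im}(\varphi)$, so $\widetilde{K_{\varphi}(Y)}$ need not equal $K(\widetilde{Y})$. The sandwich argument circumvents this mismatch by exploiting that $\Delta_{\varphi}$ only reads the values of a $T$-sorted set on $\mathrm{Im}(\varphi)$, so equality there suffices after applying $K$ one more time and using idempotence.
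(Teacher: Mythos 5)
Your proof is correct, but it takes a genuinely different route from the paper's. The paper disposes of the proposition in one stroke: it observes that $K_{\varphi}$ is the composite $\Delta_{\varphi,B}\comp K\comp \bigcup_{\varphi,B}$, where $\bigcup_{\varphi,B}\colon \mathrm{Sub}(B_{\varphi})\to\mathrm{Sub}(B)$ (your $Y\mapsto\widetilde{Y}$) is isotone and left adjoint to the restriction map $\Delta_{\varphi,B}\colon X\mapsto X_{\varphi}$, and then invokes (implicitly) the general fact that conjugating a closure operator through a Galois connection $F\dashv G$ as $G\comp K\comp F$ again yields a closure operator. Your argument unwinds exactly this: extensivity via the unit $Y\subseteq(\widetilde{Y})_{\varphi}$, and your ``sandwich'' $\widetilde{Y}\subseteq Z\subseteq K(\widetilde{Y})$ is precisely the counit inequality $\widetilde{X_{\varphi}}\subseteq X$ applied to $X=K(\widetilde{Y})$, combined with idempotence of $K$. (Incidentally, your left inclusion $\widetilde{Y}\subseteq Z$ follows in one line from the already-established extensivity $Y\subseteq K_{\varphi}(Y)$ and monotonicity of $Y\mapsto\widetilde{Y}$, with no case split on $\mathrm{Im}(\varphi)$ needed.) What each approach buys: yours is elementary and self-contained, and correctly isolates the one genuinely delicate point, namely that $\widetilde{(\cdot)}$ is not a two-sided inverse to restriction; the paper's is shorter and exposes the structural reason the construction works, which it then reuses wholesale when embedding $\mathbf{MClSp}$ into the $2$-category of monads.
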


\begin{proof}
The definition of the operator $K_{\varphi}$ as the composition of the
mappings in the diagram
$$
\xymatrix@C=70pt{
\Sub(B_{\varphi})
  \ar[r]^{K_{\varphi}}
  \ar[d]_{\textstyle{\union}_{\varphi,B}} &
\Sub(B_{\varphi})
    \\
\Sub(B)
  \ar[r]_{K} &
\Sub(B)
\ar[u]_{\Delta_{\varphi,B}}
}
$$
is sound, because the mapping $\union_{\varphi,B}$ from
$\Sub(B_{\varphi})$ to $\Sub(B)$, which sends a subset $Y$ of
$\Sub(B_{\varphi})$ to the subset
$(\union_{s\in\varphi^{-1}[t]}Y_{s})_{t\in T}$ of $B$, is isotone and
has, precisely, as right adjoint, to the mapping $\Delta_{\varphi,B}$
from $\Sub(B)$ to $\Sub(B_{\varphi})$, which sends a subset $X$ of $B$
to the subset $X_{\varphi}$ of $B_{\varphi}$.
\end{proof}

For a mapping $\varphi\colon S\mor T$, the functors
$\Delta^{\mathrm{cl}}_{\varphi}$, from $\mathbf{ClSp}(T)$ to
$\mathbf{ClSp}(S)$, and $\coprod^{\mathrm{cl}}_{\varphi}$, from
$\mathbf{ClSp}(S)$ to $\mathbf{ClSp}(T)$, are the components,
respectively, of a contravariant functor and of a pseudo-functor, from
$\mathbf{Set}$ to $\mathbf{Cat}$.  In particular, by applying the
construction of Ehresmann-Grothendieck to the contravariant
functor we will get a category of many-sorted closure spaces.

\begin{proposition}
There exists a contravariant functor $\Delta^{\mathrm{cl}}$ from
$\mathbf{Set}$ to $\mathbf{Cat}$ which sends a set $S$ to $\Delta^{\mathrm{cl}}(S)
= \mathbf{ClSp}(S)$, the category of $S$-closure spaces, and a mapping
$\varphi\colon S\mor T$ to the functor
$\Delta^{\mathrm{cl}}_{\varphi}\colon \mathbf{ClSp}(T)\mor \mathbf{ClSp}(S)$
defined as follows
\begin{enumerate}
\item $\Delta^{\mathrm{cl}}_{\varphi}$ assigns to a $T$-closure
      space $(B,\mathcal{D})$ the $S$-closure space
      $(B_{\varphi},\Delta_{\varphi}[\mathcal{D}])$.

\item $\Delta^{\mathrm{cl}}_{\varphi}$ assigns to a
      $T$-continuous mapping $f$ from $(B,\mathcal{D})$ to $(B',\mathcal{D}')$
      the $S$-continuous mapping $f_{\varphi}$ from
      $(B_{\varphi},\Delta_{\varphi}[\mathcal{D}])$ to
      $(B'_{\varphi},\Delta_{\varphi}[\mathcal{D}'])$.
\end{enumerate}

\end{proposition}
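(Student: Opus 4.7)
My plan is to verify the two contravariant functoriality axioms, leveraging both the preceding proposition (which already establishes, for each mapping $\varphi\colon S\mor T$, that $\Delta^{\mathrm{cl}}_{\varphi}$ is a functor from $\mathbf{ClSp}(T)$ to $\mathbf{ClSp}(S)$) and the fact that $\mathrm{MSet}$ is already a contravariant functor from $\mathbf{Set}$ to $\mathbf{Cat}$. Thus the only remaining work consists in checking that the assignment $\varphi\mapsto \Delta^{\mathrm{cl}}_{\varphi}$ preserves identities and reverses composition, which reduces, via the forgetful construction to the underlying $S$-sorted sets, to the analogous properties for the assignment $\varphi\mapsto \Delta_{\varphi}$.

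First I would verify the identity law: for an object $(B,\mathcal{D})$ in $\mathbf{ClSp}(S)$, the relabeling along $\id_{S}$ gives $B_{\id_{S}} = B\comp \id_{S} = B$, and for every $D\in \mathcal{D}$, $D_{\id_{S}} = D$, whence $\Delta_{\id_{S}}[\mathcal{D}] = \mathcal{D}$; hence $\Delta^{\mathrm{cl}}_{\id_{S}}(B,\mathcal{D}) = (B,\mathcal{D})$. An analogous observation for a continuous mapping $f\colon (B,\mathcal{D})\mor (B',\mathcal{D}')$ yields $\Delta^{\mathrm{cl}}_{\id_{S}}(f) = f$, so $\Delta^{\mathrm{cl}}_{\id_{S}} = \Id_{\mathbf{ClSp}(S)}$.

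Next, for mappings $\varphi\colon S\mor T$ and $\psi\colon T\mor U$, and an object $(C,\mathcal{E})$ in $\mathbf{ClSp}(U)$, I would use the set-theoretic identity $C_{\psi\comp\varphi} = C\comp (\psi\comp\varphi) = (C\comp\psi)\comp\varphi = (C_{\psi})_{\varphi}$; applying this pointwise to each $E\in \mathcal{E}$ yields $E_{\psi\comp\varphi} = (E_{\psi})_{\varphi}$, so that
\[
\Delta_{\psi\comp\varphi}[\mathcal{E}]
= \{\, E_{\psi\comp\varphi}\mid E\in \mathcal{E}\,\}
= \{\, (E_{\psi})_{\varphi}\mid E\in \mathcal{E}\,\}
= \Delta_{\varphi}[\Delta_{\psi}[\mathcal{E}]].
\]
Therefore $\Delta^{\mathrm{cl}}_{\psi\comp\varphi}(C,\mathcal{E}) = (\Delta^{\mathrm{cl}}_{\varphi}\comp \Delta^{\mathrm{cl}}_{\psi})(C,\mathcal{E})$. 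For a $U$-continuous mapping $g\colon (C,\mathcal{E})\mor (C',\mathcal{E}')$, the same pointwise identity gives $g_{\psi\comp\varphi} = (g_{\psi})_{\varphi}$, so $\Delta^{\mathrm{cl}}_{\psi\comp\varphi}(g) = \Delta^{\mathrm{cl}}_{\varphi}(\Delta^{\mathrm{cl}}_{\psi}(g))$.

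Since every required compatibility of closure systems and continuity under relabeling was already discharged in the preceding proposition, and the compatibility with composition of mappings reduces to a routine unfolding of definitions, no real obstacle arises: the statement is essentially a consequence of the contravariant functoriality of $\mathrm{MSet}$ together with the fact that the closure-system structure is transported along $\varphi\mapsto \Delta_{\varphi}$ in a way that commutes with composition of relabelings.
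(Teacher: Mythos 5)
Your proof is correct and fills in exactly what the paper leaves implicit: the paper states this proposition without proof, having already established in the preceding proposition (with proof) that each $\Delta^{\mathrm{cl}}_{\varphi}$ is a well-defined functor, and treating the identity and composition laws for $\varphi\mapsto\Delta^{\mathrm{cl}}_{\varphi}$ as routine consequences of $B_{\id_{S}}=B$ and $C_{\psi\comp\varphi}=(C_{\psi})_{\varphi}$. Your verification of those two laws, on objects and on morphisms, is precisely the omitted content and is carried out correctly.
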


\begin{definition}
The category $\mathbf{MClSp}$ of \emph{many-sorted closure spaces and
continuous mappings}, obtained by applying the construction of
Ehresmann-Grothendieck to the contravariant functor
$\Delta^{\mathrm{cl}}$ from $\mathbf{Set}$ to $\mathbf{Cat}$, is $\mathbf{MClSp} =
\int^{\mathbf{Set}}\Delta^{\mathrm{cl}}$.
\end{definition}

Therefore $\mathbf{MClSp}$ has as objects the triples
$(S,A,\mathcal{C})$, where $S$ is a set and $(A,\mathcal{C})$ an
$S$-closure space, and as morphisms from $(S,A,\mathcal{C})$ to
$(T,B,\mathcal{D})$ the triples
$((S,A,\mathcal{C}),(\varphi,f),(T,B,\mathcal{D}))$, abbreviated to
$(\varphi,f)\colon (S,A,\mathcal{C})\mor (T,B,\mathcal{D})$, where
$(\varphi,f)$ is such that $\varphi\colon S\mor T$ is a mapping and
$f\colon (A,\mathcal{C})\mor
(B_{\varphi},\Delta_{\varphi}[\mathcal{D}])$ an $S$\nobreakdash-continuous
mapping.  From now on, to shorten terminology, we will say
\emph{closure space} and \emph{continuous mapping}, instead of
\emph{many-sorted closure space} and \emph{many-sorted continuous
mapping}, respectively, when this is unlikely to cause confusion.

The forgetful functor from the category $\mathbf{MClSp}$ to the
category $\mathbf{MSet}$ has left and right adjoints and constructs
limits and colimits, exactly as for the forgetful functor from the
category $\mathbf{ClSp}(S)$ to the category $\mathbf{Set}^{S}$.  These
results follow from the following two lemmas.

\begin{lemma}\label{oplift}
Let $(S,A)$ be a $\mathrm{ms}$-set, $(S_{i},A^{i},\mathcal{C}^{i})_{
i\in I}$ a family of closure spaces and $(\varphi_{I},f^{I}) =
(\varphi_{i},f^{i})_{i\in I}$ a family of $\mathrm{ms}$-mappings,
where, for every $i\in I$, $(\varphi_{i},f^{i})$ is a
$\mathrm{ms}$-mapping from $(S,A)$ to $(S_{i},A^{i})$, i.e.,
$\varphi_{i}$ is a mapping from $S$ to $S_{i}$ and $f^{i} =
(f^{i}_{s})_{s\in S}$ an $S$-sorted mapping from $A$ to
$A^{i}_{\varphi_{i}} = (A^{i}_{\varphi_{i}(s)})_{s\in S}$.  Then there
exists a uniquely determined closure system $\mathcal{C}$ on $A$,
denoted by
$\text{L}^{(\varphi_{I},f^{I})}(S_{i},A^{i},\mathcal{C}^{i})_{i\in I}$
and called the optimal lift of $(S_{i},A^{i},\mathcal{C}^{i})_{i\in
I}$ through $(\varphi_{I},f^{I})$, such that:
\begin{enumerate}
\item For every $i\in I$, $(\varphi_{i},f^{i})\colon
      (S,A,\text{L}^{(\varphi_{I},f^{I})}(S_{i},A^{i},\mathcal{C}^{i})_{
      i\in I})\mor(S_{i},A^{i},\mathcal{C}^{i})$ is a continuous
      mapping.

\item For every closure space $(T,B,\mathcal{D})$ and every
      $\mathrm{ms}$-mapping $(\psi,g)$ from $(T,B)$ to $(S,A)$, if,
      for every $i\in I$, $(\varphi_{i},f^{i})\comp (\psi,g)$ is a
      continuous mapping from $(T,B,\mathcal{D})$ to
      $(S_{i},A^{i},\mathcal{C}^{i})$, then
      $(\psi,g)$ is a continuous mapping from $(T,B,\mathcal{D})$ to
      $(S,A,\text{L}^{(\varphi_{I},f^{I})}(S_{i},A^{i},\mathcal{C}^{i})_{
      i\in I})$.
\end{enumerate}

Besides, we have that:
\begin{enumerate}
\item For every closure system $\mathcal{C}$ on $A$:
     $$
      \text{L}^{(\mathrm{id}_{S},\mathrm{id}_{A})}(S,A,\mathcal{C}) = \mathcal{C}.
     $$
\item If, for every $i\in I$,
      $(S_{i,m},A^{i,m},\mathcal{C}^{i,m})_{m\in M_{i}}$ is a
      family of closure spaces, $(\varphi_{i,M_{i}},g^{i,M_{i}}) =
      (\varphi_{i,m},g^{i,m})_{m\in M_{i}}$ a family of
      $\mathrm{ms}$-mappings, where, for every $m\in M_{i}$,
      $(\varphi_{i,m},g^{i,m})$ is a $\mathrm{ms}$-mapping from
      $(S_{i},A^{i})$ to $(S_{i,m},A^{i,m})$ and $\mathcal{C}^{i} =
      \text{L}^{(\varphi_{i,M_{i}},g^{i,M_{i}})}
      (S_{i,m},A^{i,m},\mathcal{C}^{i,m})_{m\in M_{i}}$, then
      $$
        \text{L}^{((\varphi_{i,M_{i}},g^{i,M_{i}})\comp
        (\varphi_{I},f^{I}))_{i\in I}}
        (S_{i,m},A^{i,m},\mathcal{C}^{i,m})_{
        (i,m)\in\coprod_{i\in I}M_{i}} =
        \text{L}^{(\varphi_{I},f^{I})}(S_{i},A^{i},\mathcal{C}^{i})_{i\in I}.
      $$
\end{enumerate}
\end{lemma}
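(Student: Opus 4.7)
The plan is to construct $\mathcal{C}$ as the smallest $S$-closure system on $A$ that contains all the preimages $(f^{i})^{-1}[D_{\varphi_{i}}]$ for $i\in I$ and $D\in \mathcal{C}^{i}$. Since an arbitrary intersection of $S$-closure systems on $A$ is again an $S$-closure system, I would simply define
$$
\mathcal{C} = \inter\{\,\mathcal{C}'\in \mathrm{ClSy}(A)\mid (f^{i})^{-1}[D_{\varphi_{i}}]\in \mathcal{C}'\text{ for every } i\in I,\, D\in \mathcal{C}^{i}\,\},
$$
which is well defined because $\Sub(A)$ itself belongs to this collection. Condition (1), that each $(\varphi_{i},f^{i})$ is continuous from $(S,A,\mathcal{C})$ to $(S_{i},A^{i},\mathcal{C}^{i})$, holds tautologically by construction.

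For the universal property (2), given $(\psi,g)\colon (T,B)\mor (S,A)$ such that every composite $(\varphi_{i},f^{i})\comp (\psi,g) = (\varphi_{i}\comp \psi,(f^{i})_{\psi}\comp g)$ is continuous into $(S_{i},A^{i},\mathcal{C}^{i})$, I would first record the sortwise identity
$$
g^{-1}[((f^{i})^{-1}[D_{\varphi_{i}}])_{\psi}] = ((f^{i})_{\psi}\comp g)^{-1}[D_{\varphi_{i}\comp \psi}],
$$
valid for every $i\in I$ and $D\in \mathcal{C}^{i}$ because $((f^{i})^{-1}[D_{\varphi_{i}}])_{\psi}$ and $((f^{i})_{\psi})^{-1}[D_{\varphi_{i}\comp\psi}]$ agree at every sort $t\in T$. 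The hypothesis then places $g^{-1}[G_{\psi}]$ in $\mathcal{D}$ for every generator $G$ of $\mathcal{C}$; since $g^{-1}[(\farg)_{\psi}]$ preserves arbitrary intersections and sends $A$ to $B\in \mathcal{D}$, and $\mathcal{D}$ is closed under arbitrary intersections, it follows that $g^{-1}[C_{\psi}]\in \mathcal{D}$ for every $C\in \mathcal{C}$, proving that $(\psi,g)$ is continuous into $(S,A,\mathcal{C})$. Uniqueness of $\mathcal{C}$ is an immediate consequence of (2): applying the universal property in both directions to the identity $\mathrm{ms}$-mapping on $(S,A)$ between two candidate lifts forces them to coincide.

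The first consequence, $\text{L}^{(\id_{S},\id_{A})}(S,A,\mathcal{C}) = \mathcal{C}$, drops out of this uniqueness, since $\mathcal{C}$ itself visibly satisfies (1) and (2) for the singleton family $(\id_{S},\id_{A})$. For the transitivity identity, I would verify that the outer lift $\text{L}^{(\varphi_{I},f^{I})}(S_{i},A^{i},\mathcal{C}^{i})_{i\in I}$ also satisfies the universal property characterizing the iterated lift: given $(\psi,g)$ whose composites with every $(\varphi_{i,m},g^{i,m})\comp (\varphi_{i},f^{i})$ are continuous, the hypothesis $\mathcal{C}^{i} = \text{L}^{(\varphi_{i,M_{i}},g^{i,M_{i}})}(S_{i,m},A^{i,m},\mathcal{C}^{i,m})_{m\in M_{i}}$ together with (2) applied to each fixed $i$ makes $(\varphi_{i},f^{i})\comp(\psi,g)$ continuous into $(S_{i},A^{i},\mathcal{C}^{i})$, and a second application of (2) to the family $(\varphi_{I},f^{I})$ yields continuity of $(\psi,g)$ into the outer lift; uniqueness for the iterated lift then identifies the two closure systems.

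The main obstacle I expect is not conceptual but purely bookkeeping: one must track the relabelling functors $\Delta_{\varphi}$ acting on subsets across three different sort indices $S$, $T$, and the $S_{i}$, and verify sortwise that preimage commutes with $\Delta_{(\farg)}$ along composed $\mathrm{ms}$-mappings; in particular, the identification $(D_{\varphi_{i}})_{\psi} = D_{\varphi_{i}\comp\psi}$ on subsets, and the compatibility of the generator description of $\mathcal{C}$ with the continuity tests, have to be stated sortwise to avoid conflating $\Delta_{\psi}\comp \Delta_{\varphi_{i}}$ on $\mathrm{ms}$-sets with its effect on $\Sub(\farg)$.
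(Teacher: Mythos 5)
Your construction is exactly the paper's: the paper defines the optimal lift as the closure system on $A$ generated by $\bigcup_{i\in I}\{\,(f^{i})^{-1}[C]\mid C\in \Delta_{\varphi_{i}}[\mathcal{C}^{i}]\,\}$ and dismisses the remaining verifications as obvious consequences. Your proposal is correct and simply supplies the details (the sortwise preimage identity, the reduction of the universal property to generators, and uniqueness via the identity mapping) that the paper leaves implicit.
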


\begin{proof}
To show that there exists the optimal lift of
$(S_{i},A^{i},\mathcal{C}^{i})_{i\in I}$ through the family
$(\varphi_{I},f^{I})$ of $\mathrm{ms}$-mappings, it is enough to take as
$L^{(\varphi_{I},f^{I})}(S_{i},A^{i},\mathcal{C}^{i})_{i\in I}$
the closure system on $A$ generated by $ \bigcup_{i\in
I}\{\,(f^{i})^{-1}[C]\mid C\in
\Delta_{\varphi_{i}}[\mathcal{C}^{i}]\,\}$.

The remaining parts are an obvious consequence of the first part.
\end{proof}

\begin{lemma}\label{co-oplift}
Let $(S,A)$ be $\mathrm{ms}$-set, $(S_{i},A^{i},\mathcal{C}^{i})_{
i\in I}$ a family of closure spaces and $(\varphi_{I},f^{I}) =
(\varphi_{i},f^{i})_{i\in I}$ a family of $\mathrm{ms}$-mappings,
where, for every $i\in I$, $(\varphi_{i},f^{i})$ is a
$\mathrm{ms}$-mapping from $(S_{i},A^{i})$ to $(S,A)$, i.e.,
$\varphi_{i}$ is a mapping from $S_{i}$ to $S$ and $f^{i} =
(f^{i}_{s})_{s\in S_{i}}$ an $S_{i}$-sorted mapping from $A^{i}$ to
$A_{\varphi_{i}} = (A_{\varphi_{i}(s)})_{s\in S_{i}}$.  Then there
exists a uniquely determined closure system $\mathcal{C}$ on $A$,
denoted by
$\text{L}_{(\varphi_{I},f^{I})}(S_{i},A^{i},\mathcal{C}^{i})_{i\in I}$
and called the co-optimal lift of $(S_{i},A^{i},\mathcal{C}^{i})_{i\in
I}$ through $(\varphi_{I},f^{I})$, such that:
\begin{enumerate}
\item For every $i\in I$, $(\varphi_{i},f^{i})\colon
      (S_{i},A^{i},\mathcal{C}^{i})\mor
      (S,A,\text{L}_{(\varphi_{I},f^{I})}(S_{i},A^{i},\mathcal{C}^{i})_{
      i\in I})$ is a continuous mapping.

\item For every closure space $(T,B,\mathcal{D})$ and every
      $\mathrm{ms}$-mapping $(\psi,g)$ from $(S,A)$ to $(T,B)$, if,
      for every $i\in I$, $(\psi,g)\comp (\varphi_{i},f^{i})$ is a
      continuous mapping from $(S_{i},A^{i},\mathcal{C}^{i})$ to
      $(T,B,\mathcal{D})$, then $(\psi,g)$ is a continuous mapping from
      $(S,A,
      \text{L}_{(\varphi_{I},f^{I})}(S_{i},A^{i},\mathcal{C}^{i})_{
      i\in I})$ to $(T,B,\mathcal{D})$.
\end{enumerate}

Besides, we have that:
\begin{enumerate}
\item For every closure system $\mathcal{C}$ on $A$:
     $$
      \text{L}_{(\mathrm{id}_{S},\mathrm{id}_{A})}(S,A,\mathcal{C}) = \mathcal{C}.
     $$

\item If, for every $i\in I$,
      $(S_{i,m},A^{i,m},\mathcal{C}^{i,m})_{m\in M_{i}}$ is a
      family of closure spaces, $(\varphi_{i,M_{i}},g^{i,M_{i}}) =
      (\varphi_{i,m},g^{i,m})_{m\in M_{i}}$ a family of
      $\mathrm{ms}$-mappings, where, for every $m\in M_{i}$,
      $(\varphi_{i,m},g^{i,m})$ is a $\mathrm{ms}$-mapping from
      $(S_{i,m},A^{i,m})$ to $(S_{i},A^{i})$
      and $\mathcal{C}^{i} = \text{L}_{(\varphi_{i,M_{i}},g^{i,M_{i}})}
      (S_{i,m},A^{i,m},\mathcal{C}^{i,m})_{m\in M_{i}}$, then
      $$
        \text{L}_{((\varphi_{I},f^{I})\comp
        (\varphi_{i,M_{i}},g^{i,M_{i}}))_{i\in I}}
        (S_{i,m},A^{i,m},\mathcal{C}^{i,m})_{
        (i,m)\in\coprod_{i\in I}M_{i}} =
        \text{L}_{(\varphi_{I},f^{I})}(S_{i},A^{i},\mathcal{C}^{i})_{i\in I}.
      $$
\end{enumerate}
\end{lemma}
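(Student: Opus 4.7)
The plan is to mirror the construction in the proof of Lemma~\ref{oplift}, dualizing from ``generated by preimages'' to ``cut out by preimage conditions''. For the family of $\mathrm{ms}$-mappings $(\varphi_{I},f^{I}) = (\varphi_{i},f^{i})_{i\in I}$ going \emph{into} $(S,A)$, I would define
$$
\text{L}_{(\varphi_{I},f^{I})}(S_{i},A^{i},\mathcal{C}^{i})_{i\in I} =
\bigl\{\,C\incl A \bigm| \forall i\in I,\ (f^{i})^{-1}[C_{\varphi_{i}}]\in\mathcal{C}^{i}\,\bigr\}.
$$
The intuition is clear: a subset $C$ of $A$ is declared ``closed'' precisely when its preimages along all the $f^{i}$'s, after relabelling by $\varphi_{i}$, are already closed in the corresponding $\mathcal{C}^{i}$. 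This is the largest closure system on $A$ that makes every $(\varphi_{i},f^{i})$ continuous, so it is the right candidate for co-optimality.

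First I would verify that $\mathcal{C} = \text{L}_{(\varphi_{I},f^{I})}(S_{i},A^{i},\mathcal{C}^{i})_{i\in I}$ is actually an $S$-closure system on $A$. Containment of $A$ follows from $(f^{i})^{-1}[A_{\varphi_{i}}] = A^{i} \in \mathcal{C}^{i}$. Stability under non-empty intersections follows because the functor $\Delta_{\varphi_{i}}$ preserves limits (so $(\bigcap \mathcal{D})_{\varphi_{i}} = \bigcap_{D\in\mathcal{D}} D_{\varphi_{i}}$), because preimage commutes with intersections, and because each $\mathcal{C}^{i}$ is closed under non-empty intersections. Continuity of each $(\varphi_{i},f^{i})\colon (S_{i},A^{i},\mathcal{C}^{i})\mor (S,A,\mathcal{C})$ is then built into the definition: given $D\in\Delta_{\varphi_{i}}[\mathcal{C}]$, write $D = C_{\varphi_{i}}$ with $C\in\mathcal{C}$, and then $(f^{i})^{-1}[D]\in \mathcal{C}^{i}$ by hypothesis.

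For the universal property, suppose $(\psi,g)\colon (S,A)\mor (T,B)$ is a $\mathrm{ms}$-mapping such that each composite $(\psi,g)\comp (\varphi_{i},f^{i}) = (\psi\comp\varphi_{i},\,g_{\varphi_{i}}\comp f^{i})$ is continuous into $(T,B,\mathcal{D})$. I want to show $(\psi,g)\colon (S,A,\mathcal{C})\mor (T,B,\mathcal{D})$ is continuous, i.e., that for every $E\in\mathcal{D}$, $g^{-1}[E_{\psi}]\in\mathcal{C}$. Unpacking membership in $\mathcal{C}$, I must check that $(f^{i})^{-1}[(g^{-1}[E_{\psi}])_{\varphi_{i}}]\in\mathcal{C}^{i}$ for every $i$. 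But a coordinate-wise computation shows
$$
(f^{i})^{-1}[(g^{-1}[E_{\psi}])_{\varphi_{i}}] =
(g_{\varphi_{i}}\comp f^{i})^{-1}[E_{\psi\comp\varphi_{i}}],
$$
and the right-hand side belongs to $\mathcal{C}^{i}$ by the hypothesis of continuity of each composite. Uniqueness of $\mathcal{C}$ with these two properties follows from the fact that it is, by construction, the \emph{largest} such closure system: any other closure system satisfying the universal property must be contained in it, and conversely every closure system making all $(\varphi_{i},f^{i})$ continuous must lie in $\mathcal{C}$, so the universal property pins it down uniquely.

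The two auxiliary clauses are essentially bookkeeping: the first, $\text{L}_{(\mathrm{id}_{S},\mathrm{id}_{A})}(S,A,\mathcal{C})=\mathcal{C}$, is immediate from the definition. The transitivity clause reduces, after unwinding definitions, to the observation that $C\incl A$ lies in the iterated lift iff, for every $i$ and every $m\in M_{i}$, the preimage $(g^{i,m})^{-1}[((f^{i})^{-1}[C_{\varphi_{i}}])_{\varphi_{i,m}}]$ lies in $\mathcal{C}^{i,m}$, which in turn matches exactly the defining condition for membership in the single lift along the composed family $((\varphi_{I},f^{I})\comp (\varphi_{i,M_{i}},g^{i,M_{i}}))_{i\in I}$. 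I expect no real obstacle: the only mildly delicate point is keeping the indexings straight when relabelling by $\varphi_{i}$ the $S$-sorted set $A$ down to the $S_{i}$-sorted set $A_{\varphi_{i}}$, and recognising that the formal identity $(f^{i})^{-1}[(g^{-1}[E_{\psi}])_{\varphi_{i}}] = (g_{\varphi_{i}}\comp f^{i})^{-1}[E_{\psi\comp\varphi_{i}}]$ is the categorical shadow of the equality $\Delta_{\varphi_{i}}\comp \Delta_{\psi} = \Delta_{\psi\comp\varphi_{i}}$ already used tacitly throughout the section.
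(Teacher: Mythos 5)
Your proof is correct, but it takes a genuinely different route from the paper's. You construct the co-optimal lift \emph{explicitly}, as the collection of all $C\incl A$ with $(f^{i})^{-1}[C_{\varphi_{i}}]\in\mathcal{C}^{i}$ for every $i\in I$ --- i.e., the largest closure system on $A$ rendering every $(\varphi_{i},f^{i})$ continuous --- and then check directly that it is a closure system, that the universal property holds via the coordinatewise identity $(f^{i})^{-1}[(g^{-1}[E_{\psi}])_{\varphi_{i}}] = (g_{\varphi_{i}}\comp f^{i})^{-1}[E_{\psi\comp\varphi_{i}}]$, and that the two auxiliary clauses unwind definitionally; all of these verifications are sound. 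The paper instead bootstraps from Lemma~\ref{oplift}: it forms the set $\Lambda$ of all closure systems $\mathcal{L}$ on $A$ making every $(\varphi_{i},f^{i})$ continuous (nonempty because the optimal lift of the empty family, namely $\{A\}$, belongs to it) and defines the co-optimal lift as the \emph{optimal} lift of the $\Lambda$-indexed family of identities $(\mathrm{id}_{S},\mathrm{id}_{A})\colon (S,A)\to(S,A,\mathcal{L})$, obtaining the universal property by observing that the optimal lift of $(T,B,\mathcal{D})$ along $(\psi,g)$ again lies in $\Lambda$. The two constructions agree --- your $\mathcal{C}$ is the maximum of $\Lambda$, hence coincides with the closure system generated by $\bigcup\Lambda$ --- but yours buys an explicit description of the closed sets (which in particular makes the transitivity clause a one-line unwinding), while the paper's buys economy, delegating the closure-system axioms and the preimage computation to the already-proved optimal-lift lemma. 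One small tightening: your uniqueness remark is best phrased by applying each of the two universal properties to $(\mathrm{id}_{S},\mathrm{id}_{A})$ regarded as a mapping into the other candidate space, yielding mutual inclusion of the two closure systems.
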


\begin{proof}
To show that there exists the co-optimal lift of
$(S_{i},A^{i},\mathcal{C}^{i})_{i\in I}$ through the family
$(\varphi_{I},f^{I})$ of $\mathrm{ms}$-mappings, let $\Lambda$
be the set of all closure systems $\mathcal{L}$ on $A$ such that, for
every $i\in I$, $(\varphi_{i},f^{i})$ is a continuous mapping from
$(S_{i},A^{i},\mathcal{C}^{i})$ to $(S,A,\mathcal{L})$.  The set
$\Lambda$ is nonempty since the empty optimal lift is in $\Lambda$.
Next, let $\mathcal{C}$ be the optimal lift of the $\Lambda$-indexed
family $(\mathrm{id}_{S},\mathrm{id}_{A})\colon (S,A)\mor
(S,A,\mathcal{L})$, $(T,B,\mathcal{D})$ a closure space and $(\psi,g)$
a $\mathrm{ms}$-mapping from $(S,A)$ to $(T,B)$ such that, for every
$i\in I$, $(\psi,g)\comp (\varphi_{i},f^{i})$ is a continuous mapping
from $(S_{i},A^{i},\mathcal{C}^{i})$ to $(T,B,\mathcal{D})$.  Let
$\mathcal{L}$ be the optimal lift of $(T,B,\mathcal{D})$ through the
$\mathrm{ms}$-mapping $(\psi,g)$.  Since $\mathcal{L}$
is optimal, $\mathcal{L}\in\Lambda$ and $(\psi,g)$ is a
continuous mapping from $(S,A,\mathcal{C})$ to $(T,B,\mathcal{D})$
because it is the composition of $(\mathrm{id}_{S},\mathrm{id}_{A})$ and
$(\psi,g)$.

The remaining parts are an obvious consequence of the first part.
\end{proof}

From Lemmas~\ref{oplift} $\!\!\And\!\!$ \ref{co-oplift} it follows, as
announced above, immediately the following

\begin{corollary}
The forgetful functor from $\mathbf{MClSp}$ to $\ct{MSet}$ has left
and right adjoints and constructs limits and colimits.
\end{corollary}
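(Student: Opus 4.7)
The approach is to recognize that the two preceding lemmas exhibit the forgetful functor $\mathrm{U}\colon \mathbf{MClSp} \to \mathbf{MSet}$ as a topological functor, from which both the existence of the adjoints and the construction of limits and colimits follow by standard arguments. Concretely, Lemma~\ref{oplift} asserts that every $\mathrm{U}$-structured source $((\varphi_i, f^i)\colon (S,A) \to \mathrm{U}(S_i, A^i, \mathcal{C}^i))_{i \in I}$ admits a unique initial lift, namely the object carrying the closure system $\text{L}^{(\varphi_{I}, f^{I})}(S_i, A^i, \mathcal{C}^i)_{i \in I}$, and dually Lemma~\ref{co-oplift} provides final lifts of $\mathrm{U}$-structured sinks.

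I would then construct the right and left adjoints by applying these lifts to empty families. The right adjoint $\mathrm{R}\colon \mathbf{MSet} \to \mathbf{MClSp}$ sends $(S,A)$ to $(S, A, \mathcal{C}_{\mathrm{ind}})$, where $\mathcal{C}_{\mathrm{ind}}$ is the optimal lift of the empty source issuing from $(S,A)$ (the indiscrete closure system $\{A\}$); the universal property---that every $\mathrm{ms}$-mapping $(\psi, g)\colon \mathrm{U}(T,B,\mathcal{D}) \to (S,A)$ lifts uniquely to a continuous mapping $(T,B,\mathcal{D}) \to \mathrm{R}(S,A)$---is precisely clause (2) of Lemma~\ref{oplift} applied vacuously. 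Dually, the left adjoint $\mathrm{L}$ sends $(S,A)$ to the co-optimal lift of the empty sink, i.e., the discrete closure system $\mathrm{Sub}(A)$, with universal property furnished by clause (2) of Lemma~\ref{co-oplift}.

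To see that $\mathrm{U}$ constructs limits, I would argue as follows. Given a small diagram $D\colon \mathbf{J} \to \mathbf{MClSp}$, first form the projective limit $((S,A), (p_j)_{j \in \mathbf{J}})$ of $\mathrm{U} \comp D$ in $\mathbf{MSet}$, which exists since $\mathbf{MSet}$ is bicomplete. Taking the initial lift of the source $(p_j\colon (S,A) \to \mathrm{U}(D(j)))_{j \in \mathbf{J}}$ via Lemma~\ref{oplift} yields a closure space $(S, A, \mathcal{C})$ whose projections to the $D(j)$ are continuous and whose underlying $\mathrm{ms}$-set is $(S,A)$. Given any competing cone $((\psi, g_{j})\colon (T,B,\mathcal{D}) \to D(j))_{j \in \mathbf{J}}$ in $\mathbf{MClSp}$, the limit property in $\mathbf{MSet}$ produces a unique mediating $\mathrm{ms}$-mapping to $(S,A)$, and clause (2) of Lemma~\ref{oplift} asserts that this mediator is automatically continuous into $(S, A, \mathcal{C})$. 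Thus the initial lift is the limit in $\mathbf{MClSp}$ and $\mathrm{U}$ preserves it on the nose; uniqueness of the initial lift gives the construction property. Colimits are handled symmetrically using Lemma~\ref{co-oplift}.

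The main obstacle, although essentially routine, lies in the correct bookkeeping when composing lifted mediators---one must ensure that the candidate mediator into the lifted object is both the unique mediator downstairs and yields a continuous mapping upstairs. This is exactly the content of the stability clauses (the second enumerated properties) in both lemmas, which were designed to make such transferrals transparent; with them in hand, the verification reduces to invoking the universal property in $\mathbf{MSet}$ and then promoting the mediator via the optimality (resp. co-optimality) of the lift.
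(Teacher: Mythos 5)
Your proposal is correct and follows exactly the route the paper intends: the paper derives the corollary immediately from Lemmas~\ref{oplift} and \ref{co-oplift}, and your argument is the standard unfolding of that citation (adjoints from lifts of empty sources/sinks, limits and colimits by lifting from $\mathbf{MSet}$ via optimal and co-optimal lifts). The only difference is that you spell out the verifications the paper leaves tacit.
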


We observe that the properties of the category $\mathbf{MClSp}$ could,
eventually, be useful in order to facilitate the construction of
logical systems dealing simultaneously with objects of two, or more,
types, under the hypothesis of the existence of some kind of
interaction between them (reflected at the model-theoretical level by
the existence of, e.g., adjoint situations).

\begin{remark}
All of the results stated by Feitosa and D'Ottaviano in~\cite{fd01}
(compare with those stated a long time ago by Brown in~\cite{djb69},
by Brown and Suszko in~\cite{bs73}, and by Porte in~\cite{jP65},
especially those in Chapter 12, pp.  83--96) that have to do with
closure spaces, continuous mappings, optimal and co-optimal lifts, and
completeness and co-completeness of the category of closure spaces,
fall, as a very particular case, under the just developed theory,
because what they call \emph{logics} are, simply, ordinary (not
many-sorted) closure spaces.  Besides, by defining the appropriate
subcategories of $\mathbf{MClSp}$, the many-sorted counterparts of the
remaining results in~\cite{fd01} are also, easily, provable from the
above generalized theory about many-sorted closure spaces.
\end{remark}

To make the family
$(\mathrm{Cn}_{\mathbf{\Sigma}})_{\mathbf{\Sigma}\in \mathbf{Sig}}$ of
closure operators the components of a pseudo-functor $\mathrm{Cn}$
from $\mathbf{Sig}$ to a convenient $2$-category of monads, we begin
by proving, for a Grothendieck universe $\boldsymbol{\mathcal{V}}$
such that $\boldsymbol{\mathcal{U}}\in \boldsymbol{\mathcal{V}}$, the
existence of two mappings from the sets of objects and morphisms of
$\mathbf{Sig}$ to the respective sets of objects and morphisms of the
category $\mathbf{MClSp}_{\boldsymbol{\mathcal{V}}}$, from which we
will get the pseudo-functor $\mathrm{Cn}$.

\begin{proposition}
Let $\boldsymbol{\mathcal{V}}$ be a Grothendieck universe such that
$\boldsymbol{\mathcal{U}}\in \boldsymbol{\mathcal{V}}$.  Then there
exists a pair of mappings, both denoted by $\Cn$, one from the set of
objects of $\mathbf{Sig}$ to the set of objects of the category
$\mathbf{MClSp}_{\boldsymbol{\mathcal{V}}}$, of closure spaces for
$\boldsymbol{\mathcal{V}}$, which sends $\mathbf{\Sigma}$ to the
closure space $\Cn(\mathbf{\Sigma}) =
((\boldsymbol{\mathcal{U}}^{S})^{2},\Eq(\mathbf{\Sigma}),\Cn_{\mathbf{\Sigma}})$,
and the other from the set of morphisms of $\mathbf{Sig}$ to the set
of morphism of $\mathbf{MClSp}_{\boldsymbol{\mathcal{V}}}$, which
sends $\mathbf{d}\colon \mathbf{\Sigma}\mor \mathbf{\Lambda}$ to the
continuous mapping
$$
\Cn(\mathbf{d}) =
((\textstyle{\coprod}_{\varphi})^{2},\mathbf{d}_{\diamond}^{2})\colon
((\boldsymbol{\mathcal{U}}^{S})^{2},\Eq(\mathbf{\Sigma}),\Cn_{\mathbf{\Sigma}})\mor
((\boldsymbol{\mathcal{U}}^{T})^{2},\Eq(\mathbf{\Lambda}),\Cn_{\mathbf{\Lambda}}).
$$
\end{proposition}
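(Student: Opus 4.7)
The plan is to verify two things: (i) the object mapping is well-defined, i.e., $((\boldsymbol{\mathcal{U}}^S)^2,\Eq(\mathbf{\Sigma}),\Cn_{\mathbf{\Sigma}})$ is indeed an object of $\mathbf{MClSp}_{\boldsymbol{\mathcal{V}}}$; and (ii) for every signature morphism $\mathbf{d}\colon\mathbf{\Sigma}\mor\mathbf{\Lambda}$, the pair $((\coprod_{\varphi})^2,\mathbf{d}_{\diamond}^2)$ is a morphism of $\mathbf{MClSp}_{\boldsymbol{\mathcal{V}}}$ between the corresponding objects. Part (i) is essentially already granted by the remark that $\Cn_{\mathbf{\Sigma}}$ coincides with $\mathrm{Cg}^{\Pi}_{\mathbf{Ter}(\mathbf{\Sigma})}$, hence is a closure operator on the $(\boldsymbol{\mathcal{U}}^S)^2$-sorted set $\Eq(\mathbf{\Sigma})$; one only has to observe that the passage to $\boldsymbol{\mathcal{V}}$ places this closure space, whose underlying set of sorts $(\boldsymbol{\mathcal{U}}^S)^2$ is a $\boldsymbol{\mathcal{U}}$-large but $\boldsymbol{\mathcal{V}}$-small set, in $\mathbf{MClSp}_{\boldsymbol{\mathcal{V}}}$.

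For part (ii), the first component of the morphism, $(\coprod_{\varphi})^2\colon(\boldsymbol{\mathcal{U}}^S)^2\mor(\boldsymbol{\mathcal{U}}^T)^2$, is a mapping of sorts in $\boldsymbol{\mathcal{V}}$. The second component $\mathbf{d}_{\diamond}^2$ is a $(\boldsymbol{\mathcal{U}}^S)^2$-sorted mapping from $\Eq(\mathbf{\Sigma})$ to $\Eq(\mathbf{\Lambda})_{(\coprod_{\varphi})^2}$, and what remains is to check continuity: for every $(\boldsymbol{\mathcal{U}}^S)^2$-sorted subfamily $\mathcal{E}\subseteq\Eq(\mathbf{\Sigma})$, one must establish the inclusion $\mathbf{d}_{\diamond}^2[\Cn_{\mathbf{\Sigma}}(\mathcal{E})]\subseteq(\Cn_{\mathbf{\Lambda}})_{(\coprod_{\varphi})^2}(\mathbf{d}_{\diamond}^2[\mathcal{E}])$. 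Unfolding the definition of the induced operator $(\Cn_{\mathbf{\Lambda}})_{(\coprod_{\varphi})^2}$ as the closure operator arising from pulling back the codomain along $(\coprod_{\varphi})^2$, and observing that the relevant $T$-sorted union yields exactly the set of translated equations obtained from $\mathbf{d}_{\diamond}^2[\mathcal{E}]$ considered as a subfamily of $\Eq(\mathbf{\Lambda})$, the inclusion to prove becomes: if $(P,Q)\in\Cn_{\mathbf{\Sigma}}(\mathcal{E})_{X,Y}$, then for every $\mathbf{\Lambda}$-algebra $\mathbf{B}$ satisfying all the equations in $\mathbf{d}_{\diamond}^2[\mathcal{E}]$, one has $\mathbf{B}\models^{\mathbf{\Lambda}}_{\coprod_{\varphi}X,\coprod_{\varphi}Y}(\mathbf{d}_{\diamond}(P),\mathbf{d}_{\diamond}(Q))$.

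The key ingredient at this point is the satisfaction condition of Lemma~\ref{lemaSatisfaccion}. Indeed, by that lemma, $\mathbf{B}\models^{\mathbf{\Lambda}}\mathbf{d}_{\diamond}^2[\mathcal{E}]$ if and only if $\mathbf{d}^{\ast}(\mathbf{B})\models^{\mathbf{\Sigma}}\mathcal{E}$; the assumption $(P,Q)\in\Cn_{\mathbf{\Sigma}}(\mathcal{E})_{X,Y}$ then yields $\mathbf{d}^{\ast}(\mathbf{B})\models^{\mathbf{\Sigma}}_{X,Y}(P,Q)$, and applying Lemma~\ref{lemaSatisfaccion} once more gives $\mathbf{B}\models^{\mathbf{\Lambda}}_{\coprod_{\varphi}X,\coprod_{\varphi}Y}(\mathbf{d}_{\diamond}(P),\mathbf{d}_{\diamond}(Q))$, as required. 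Thus the continuity of $((\coprod_{\varphi})^2,\mathbf{d}_{\diamond}^2)$ is a direct corollary of the satisfaction condition combined with the very definition of $\Cn$ in terms of models.

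The only genuine obstacle in the argument is notational, namely the need to correctly interpret the induced closure operator $(\Cn_{\mathbf{\Lambda}})_{(\coprod_{\varphi})^2}$ on $\Eq(\mathbf{\Lambda})_{(\coprod_{\varphi})^2}$: one must match the schematic definition of the pullback operator $K_{\varphi}(Y)=K((\bigcup_{s\in\varphi^{-1}[t]}Y_s)_{t\in T})_{\varphi}$ to the case at hand, where $\varphi$ is replaced by $(\coprod_{\varphi})^2$ and $K$ by $\Cn_{\mathbf{\Lambda}}$, and verify that the fibered union of the image family $\mathbf{d}_{\diamond}^2[\mathcal{E}]$ reassembled along $(\coprod_{\varphi})^2$ is semantically equivalent (as a set of $\mathbf{\Lambda}$-equations) to $\mathbf{d}_{\diamond}^2[\mathcal{E}]$ itself, so that the set of $\mathbf{\Lambda}$-algebras validating it is exactly the set appearing in Lemma~\ref{lemaSatisfaccion}. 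Once this bookkeeping is done, the rest of the proof is the two-line application of the satisfaction condition sketched above, and preservation of identities and composites (needed later to upgrade $\Cn$ to a pseudo-functor) will follow routinely from the functoriality properties of $\mathrm{Ter}$ and the compatibility of $\coprod_{\varphi}$ with composition of mappings of sorts.
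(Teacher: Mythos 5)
Your proposal is correct and follows essentially the same route as the paper: the paper likewise restricts attention to the continuity of $\mathbf{d}_{\diamond}^{2}$, reduces it to showing that $(P,Q)\in\Cn_{\mathbf{\Sigma}}(\mathcal{E})_{X,Y}$ implies $(\mathbf{d}_{\diamond}(P),\mathbf{d}_{\diamond}(Q))\in\Cn_{\mathbf{\Lambda}}(\mathbf{d}_{\diamond}^{2}[\mathcal{E}])_{\coprod_{\varphi}X,\coprod_{\varphi}Y}$, and proves this by the same double application of the satisfaction condition of Lemma~\ref{lemaSatisfaccion}. Your explicit attention to unwinding the pulled-back closure operator $(\Cn_{\mathbf{\Lambda}})_{(\coprod_{\varphi})^{2}}$ is a point the paper leaves implicit, but it does not change the argument.
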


\begin{proof}
We restrict ourselves to prove that $\mathbf{d}_{\diamond}^{2}$ is a
$(\boldsymbol{\mathcal{U}}^{S})^{2}$-continuous mapping from
$(\Eq(\mathbf{\Sigma}),\Cn_{\mathbf{\Sigma}})$ to
$(\Eq(\mathbf{\Lambda})_{(\coprod_{\varphi})^{2}},
(\Cn_{\mathbf{\Lambda}})_{(\coprod_{\varphi})^{2}})$, i.e., that, for
every $\mathcal{E}\subseteq \Eq(\mathbf{\Sigma})$, $X, Y\in
\boldsymbol{\mathcal{U}}^{S}$, and $(P,Q)\in
\Eq(\mathbf{\Sigma})_{X,Y}$, we have that
$$
(P,Q)\in\Cn_{\mathbf{\Sigma}}(\mathcal{E})_{X,Y}\quad \text{only if}\quad
(\mathbf{d}_{\diamond}(P),\mathbf{d}_{\diamond}(Q))\in\Cn_{\mathbf{\Lambda}}
(\mathbf{d}_{\diamond}^{2}[\mathcal{E}])_{\coprod_{\varphi}X,\coprod_{\varphi}Y}.
$$
If $(P,Q)\in\Cn_{\mathbf{\Sigma}}(\mathcal{E})_{X,Y}$,  then
$(\mathbf{d}_{\diamond}(P),\mathbf{d}_{\diamond}(Q))\in\Cn_{\mathbf{\Lambda}}
(\mathbf{d}_{\diamond}^{2}[\mathcal{E}])_{\coprod_{\varphi}X,\coprod_{\varphi}Y}$,
because, for every $\mathbf{\Lambda}$-algebra $\mathbf{A}$, from
$\mathbf{A}\models^{\mathbf{\Lambda}}\mathbf{d}_{\diamond}^{2}[\mathcal{E}]$,
by Lemma~\ref{lemaSatisfaccion}, it follows that
$\mathbf{d}^{\ast}(\mathbf{A})\models^{\mathbf{\Sigma}}\mathcal{E}$,
hence
$\mathbf{d}^{\ast}(\mathbf{A})\models^{\mathbf{\Sigma}}_{X,Y}(P,Q)$,
therefore
$\mathbf{A}\models^{\mathbf{\Lambda}}_{\coprod_{\varphi}X,\coprod_{\varphi}Y}
(\mathbf{d}_{\diamond}(P),\mathbf{d}_{\diamond}(Q))$.
\end{proof}

But $\Cn$ does not determine a functor from $\mathbf{Sig}$ to
$\mathbf{MClSp}_{\boldsymbol{\mathcal{V}}}$, because, for example, for
two composable morphisms $\mathbf{d}\colon \mathbf{\Sigma}\mor
\mathbf{\Lambda}$ and $\mathbf{e}\colon \mathbf{\Lambda}\mor
\mathbf{\Omega}$, it is not true, generally, that $\Cn(\mathbf{e}\comp
\mathbf{d}) = \Cn(\mathbf{e})\comp \Cn(\mathbf{d})$.  However, by
defining the adequate $1$-cells and $2$-cells, we will get a
$2$-category $\mathbf{Mnd}_{\boldsymbol{\mathcal{V}},\mathrm{alg}}$,
defined below, that will act as the target $2$-category for a
pseudo-functor defined on $\mathbf{Sig}$ and itself obtained from
$\Cn$.


Since the target $2$-category we want to determine,
$\mathbf{Mnd}_{\boldsymbol{\mathcal{V}},\mathrm{alg}}$, will be obtained
from the similar $2$-category $\mathbf{Mnd}_{\mathrm{alg}}$, simply, by
changing the Grothendieck universe from $\boldsymbol{\mathcal{U}}$ to
$\boldsymbol{\mathcal{V}}$, we proceed next to define this last
$2$-category.

We begin by recalling the concept of adjoint square and one of the
fundamental facts about it, i.e., that the adjoint squares are endowed
with a structure of double category (more details about this subject
matter can be found in~\cite{jwG66}, \cite{jmm65}, and \cite{pP71}),
since to define $\mathbf{Mnd}_{\mathrm{alg}}$ it will be required.

\begin{definition}(Cf.,\cite{jwG66}, pp. 144--145)
An \emph{adjoint square} is a triple
$$
(F\ladj G,(J,\lambda,H),F'\ladj G'),
$$
where the adjoints $F\ladj G$ and $F'\ladj G'$ and the functors $J$
and $H$ are related as in the diagram
$$
\xymatrix@C=50pt@R=45pt{
*++{\mathbf{C}}\xyn{1} &
*++{\mathbf{D}}\xyn{2} \\
*++{\mathbf{C}'}\xyn{3} &
*++{\mathbf{D}'}\xyn{4}
\ar@<+1.5ex>@{<- }"1";"2"^{G}
\ar@{}"1";"2"|{\uadj}
\ar@<-1.5ex>@{ ->}"1";"2"_{F}
\ar@{ ->}"1";"3"_{J}
\ar@{ ->}"2";"4"^{H}
\ar@<+1.5ex>@{<- }"3";"4"^{G'}
\ar@{}"3";"4"|{\uadj}
\ar@<-1.5ex>@{ ->}"3";"4"_{F'}
}
$$
and $\lambda$ is a matrix
$$
\lambda=
  \left(
  \begin{matrix}
  \lambda_{0}\colon F'J\cel HF &\lambda_{1}\colon J\cel G'HF \\
  \lambda_{2}\colon F'JG\cel H &\lambda_{3}\colon JG\cel G'H
  \end{matrix}
\right)
$$
of compatible $2$-cells, i.e., a matrix of natural transformations as
indicated such that
$$
\begin{aligned}
\lambda_{0} &= (\lambda_{2}F)(F'J\eta)
             = (\varepsilon'HF)(F'\lambda_{1})
             = (\varepsilon'HF)(F'\lambda_{3}F)(F'J\eta),  \\
\lambda_{1} &= (G'\lambda_{0})(\eta'J)
             = (G'\lambda_{2}F)(\eta'J\eta)
             = (\lambda_{3}F)(J\eta),  \\
\lambda_{2} &= (H\varepsilon)(\lambda_{0}G)
             = (\varepsilon'H\varepsilon)(F'\lambda_{1}G)
             = (\varepsilon'H)(F'\lambda_{3}),  \\
\lambda_{3} &= (G'H\varepsilon)(G'\lambda_{0}G)(\eta'JG)
             = (G'\lambda_{2})(\eta'JG)
             = (G'H\varepsilon)(\lambda_{1}G),
\end{aligned}
$$
where $\eta\colon 1\cel GF$ and $\varepsilon\colon FG\cel 1$ are the
unit and counit of $F\ladj G$, and $\eta'\colon 1\cel G'F'$ and
$\varepsilon'\colon F'G'\cel 1$ the unit and counit of $F'\ladj G'$.
\end{definition}

In the following proposition it is stated that the adjoint squares
form a double category. We do not give a proof of this proposition,
since one by Gray can be found in~\cite{jwG66}, pp.  146--149.

However, following the proposition we will give explicit details about
the defining data of the double category of adjoint squares, to
obviate the search in the original sources (\cite{jwG66},
\cite{jmm65}, and \cite{pP71}), and because some of them will be
needful below (when defining the algebraic morphisms between monads
and the algebraic transformations from an algebraic morphism into a
like one).

\begin{proposition}
The adjoint squares constitute a double category, denoted by
$\mathbf{AdFun}$.
\end{proposition}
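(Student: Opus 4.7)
The plan is to unpack the four-layered structure of a double category and exhibit its ingredients explicitly for adjoint squares, deferring the tedious bookkeeping to the bijection between an adjoint square and any one of its four component natural transformations. The 0-cells will be the categories themselves; the horizontal 1-cells from $\mathbf{C}$ to $\mathbf{D}$ will be the adjunctions $F\ladj G\colon \mathbf{C}\mor\mathbf{D}$; the vertical 1-cells from $\mathbf{C}$ to $\mathbf{C}'$ will be ordinary functors $J\colon \mathbf{C}\mor\mathbf{C}'$; and the 2-cells filling a square of such 1-cells will be the adjoint squares $(F\ladj G,(J,\lambda,H),F'\ladj G')$. The first task is to observe that the compatibility relations listed in the definition mean that each of $\lambda_{0},\lambda_{1},\lambda_{2},\lambda_{3}$ is determined by any other, so in practice I can check equalities of adjoint squares by checking equalities of any single $\lambda_{i}$ component.

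Next I would define the identities. The horizontal identity 1-cell on $\mathbf{C}$ is the trivial adjunction $\mathrm{Id}_{\mathbf{C}}\ladj \mathrm{Id}_{\mathbf{C}}$ with unit and counit the identities; the vertical identity 1-cell on $\mathbf{C}$ is $\mathrm{Id}_{\mathbf{C}}$. The horizontal identity 2-cell on a vertical 1-cell $J\colon \mathbf{C}\mor\mathbf{C}'$ has matrix with all $\lambda_{i}$ equal to the identity natural transformation of $J$, and analogously for the vertical identity 2-cell on an adjunction. Horizontal composition of two 2-cells which share a common vertical middle edge is defined by pasting the adjunctions (using $F''F\ladj GG''$) and composing the corresponding $\lambda$-matrices by horizontal juxtaposition; vertical composition is defined dually by stacking vertical 1-cells via $J'\comp J$ and pasting matrices vertically. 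In each case the simplest way to specify the resulting matrix is to write down one component (e.g.\ $\lambda_{0}$ for horizontal composition and $\lambda_{3}$ for vertical composition) and recover the other three by the formulas.

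Third, I would verify that the composed matrix really is an adjoint square, i.e.\ its components satisfy the four compatibility equations. Using the bijection mentioned above this reduces to a single natural-transformation computation, which follows by routine diagram chasing using the triangle identities of the two adjunctions being pasted. After this, associativity and unit laws for both compositions reduce to associativity and unit laws of ordinary composition of natural transformations, again checked on one component. Finally, the interchange law, which is the only non-formal axiom, is verified by computing both the \emph{horizontal-then-vertical} and \emph{vertical-then-horizontal} composites of a $2\times 2$ array of adjoint squares on, say, their $\lambda_{0}$-components, where both reduce to the same pasting of horizontal natural transformations between the outer functors.

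The main obstacle is the heavy notational burden rather than any conceptual difficulty: one must keep track simultaneously of two adjunctions, of two vertical functors, and of a matrix of four natural transformations related by eight equations, and then show that all this data behaves coherently under both kinds of composition. The shortcut is precisely the observation that an adjoint square is recoverable from any one of the $\lambda_{i}$ components together with the ambient adjunctions, so that all verifications can be done on a single preferred component; with that reduction in hand the verifications are essentially the same as those for the double category of mates, which is exactly the structure Gray works out in~\cite{jwG66}, pp.\ 146--149, and to which I would refer for the detailed calculations.
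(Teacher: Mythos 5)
Your proposal is correct and follows essentially the same route as the paper: the paper likewise only records the data of $\mathbf{AdFun}$ (Ad-/Fun-domains and codomains, identities, and the explicit matrices for the Ad- and Fun-compositions) and defers the verification of the double-category axioms to Gray~\cite{jwG66}, pp.~146--149, which is exactly the reduction you invoke. Your observation that any single $\lambda_{i}$ determines the whole matrix is the same bijection the paper records later (for the algebraic case) as the commutative square of bijections between the four $\Nat$-sets, so nothing is missing.
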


As announced above, we recall the definition of the data
that occur in the double category $\mathbf{AdFun}$.

Given an adjoint square $(F\ladj G,(J,\lambda,H),F'\ladj G')$ its
Ad-\emph{domain} and Ad-\emph{codomain} in $\mathbf{AdFun}$ are $F\ladj G$
and $F'\ladj G'$, respectively, and its Fun-\emph{domain} and
Fun-\emph{codomain} in $\mathbf{AdFun}$ are $J$ and $H$, respectively.

The Ad-\emph{identities} and Fun-\emph{identities} are represented by
the following adjoint squares
$$
\xymatrix@C=50pt@R=45pt{
*++{\mathbf{C}}\xyn{1} &
*++{\mathbf{C}}\xyn{2} \\
*++{\mathbf{C}'}\xyn{3} &
*++{\mathbf{C}'}\xyn{4}
\ar@<+1.5ex>@{<- }"1";"2"^{1}
\ar@{}"1";"2"|{\uadj}
\ar@<-1.5ex>@{ ->}"1";"2"_{1}
\ar@{ ->}"1";"3"_{J}
\ar@{ ->}"2";"4"^{J}
\ar@<+1.5ex>@{<- }"3";"4"^{1}
\ar@{}"3";"4"|{\uadj}
\ar@<-1.5ex>@{ ->}"3";"4"_{1}
\ar @{} "1";"4"
 |{
   \left(
   \begin{matrix}
   J & J \\ J & J
   \end{matrix}
   \right)
  }
}
\qquad\text{and}\qquad
\xymatrix@C=50pt@R=45pt{
*++{\mathbf{C}}\xyn{1} &
*++{\mathbf{D}}\xyn{2} \\
*++{\mathbf{C}}\xyn{3} &
*++{\mathbf{D}}\xyn{4}
\ar@<+1.5ex>@{<- }"1";"2"^{G}
\ar@{}"1";"2"|{\uadj}
\ar@<-1.5ex>@{ ->}"1";"2"_{F}
\ar@{ ->}"1";"3"_{1}
\ar@{ ->}"2";"4"^{1}
\ar@<+1.5ex>@{<- }"3";"4"^{G}
\ar@{}"3";"4"|{\uadj}
\ar@<-1.5ex>@{ ->}"3";"4"_{F}
\ar @{} "1";"4"
 |{
   \left(
   \begin{matrix}
   F & \eta \\ \varepsilon & G
   \end{matrix}
   \right)
  }
}
$$
The Ad-\emph{composition} of two adjoint squares
$$
\xymatrix@C=50pt@R=45pt{
*++{\mathbf{C}}\xyn{1} &
*++{\mathbf{D}}\xyn{2} &
*++{\mathbf{E}}\xyn{3}\\
*++{\mathbf{C}'}\xyn{4} &
*++{\mathbf{D}'}\xyn{5} &
*++{\mathbf{E}'}\xyn{6}
\ar@<+1.5ex>@{<- }"1";"2"^{G}
\ar@{}"1";"2"|{\uadj}
\ar@<-1.5ex>@{ ->}"1";"2"_{F}
\ar@<+1.5ex>@{<- }"2";"3"^{R}
\ar@{}"2";"3"|{\uadj}
\ar@<-1.5ex>@{ ->}"2";"3"_{L}
\ar@{ ->}"1";"4"_{J}
\ar@{ ->}"2";"5"^{H}
\ar@{ ->}"3";"6"^{M}
\ar@<+1.5ex>@{<- }"4";"5"^{G'}
\ar@{}"4";"5"|{\uadj}
\ar@<-1.5ex>@{ ->}"4";"5"_{F'}
\ar@<+1.5ex>@{<- }"5";"6"^{R'}
\ar@{}"5";"6"|{\uadj}
\ar@<-1.5ex>@{ ->}"5";"6"_{L'}
\ar @{} "1";"5"|{\lambda}
\ar @{} "2";"6"|{\delta}
}
$$
is the adjoint square $(LF\ladj
GR,(J,\delta\adcomp\lambda,M),L'F'\ladj G'R')$ where
$\delta\adcomp\lambda$ is the matrix
$$
\delta\adcomp\lambda=
  \left(
  \begin{matrix}
  (\delta_{0}F)(L'\lambda_{0}) &(G'\delta_{1}F)\lambda_{1} \\
  \delta_{2}(L'\lambda_{2}R) &(G'\delta_{3})(\lambda_{3}R)
  \end{matrix}
\right).
$$
And the Fun-\emph{composition} of two adjoint squares
$$
\xymatrix@C=50pt@R=45pt{
*++{\mathbf{C}}\xyn{1} &
*++{\mathbf{D}}\xyn{2} \\
*++{\mathbf{C}'}\xyn{3} &
*++{\mathbf{D}'}\xyn{4}\\
*++{\mathbf{C}''}\xyn{5} &
*++{\mathbf{D}''    }\xyn{6}
\ar@<+1.5ex>@{<- }"1";"2"^{G}
\ar@{}"1";"2"|{\uadj}
\ar@<-1.5ex>@{ ->}"1";"2"_{F}
\ar@{ ->}"1";"3"_{J}
\ar@{ ->}"2";"4"^{H}
\ar@<+1.5ex>@{<- }"3";"4"^{G'}
\ar@{}"3";"4"|{\uadj}
\ar@<-1.5ex>@{ ->}"3";"4"_{F'}
\ar@{ ->}"3";"5"_{J'}
\ar@{ ->}"4";"6"^{H'}
\ar@<+1.5ex>@{<- }"5";"6"^{G''}
\ar@{}"5";"6"|{\uadj}
\ar@<-1.5ex>@{ ->}"5";"6"_{F''}
\ar @{} "1";"4"|{\lambda}
\ar @{} "3";"6"|{\lambda'}
}
$$
is the adjoint square $(F\ladj
G,(J'J,\lambda'\funcomp\lambda,H'H),F''\ladj G'')$ where
$\lambda'\funcomp\lambda$ is the matrix
$$
\lambda'\funcomp\lambda=
  \left(
  \begin{matrix}
  (H'\lambda_{0})(\lambda'_{0}J)
  &(G'H'\varepsilon' HF)(\lambda'_{1}\lambda_{1}) \\
  (\lambda'_{2}\lambda_{2})(F''J'\eta'JG)
  &(\lambda'_{3}H)(J'\lambda_{3})
  \end{matrix}
\right).
$$

To attain our aim of defining the $2$-category
$\mathbf{Mnd}_{\mathrm{alg}}$, we continue by defining the concept of
monad and by stating, for a pair of monads and an adjunction between
the underlying categories of the monads, the existence of a
commutative square of bijections between four sets of natural
transformations obtained from the monads and the adjunction, as well
as conditions of compatibility on the matrices of natural
transformations arranged in the pattern of the just named commutative
square of bijections.

\begin{definition}
By a \emph{monad} we understand a pair $(\mathbf{C},\mathbb{T})$, with
$\mathbf{C}$ a category and $\mathbb{T} = (T,\eta,\mu)$ a monad in
$\mathbf{C}$.
\end{definition}

\begin{proposition}\label{AlgAdSq}
Let $(\mathbf{C},\mathbb{T})$ and $(\mathbf{C}',\mathbb{T}')$ be two monads
and $(J, K,\maol{\eta},\maol{\varepsilon})\colon \mathbf{C}\mor \mathbf{C}'$
an adjunction.  Then for the following diagram
$$
\begin{aligned}
\xymatrix@C=60pt@R=40pt{
*++{\mathbf{C}}\xyn{1} &
*++{\mathbf{C}}\xyn{2} \\
*++{\mathbf{C}'}\xyn{3} &
*++{\mathbf{C}'}\xyn{4}
\ar@{ ->}"1";"2"^{T}
\ar@{ ->}"3";"4"_{T'}
\ar@<+1.5ex>@{<- }"1";"3"^{K}
\ar@{}"1";"3"|{\ladj}
\ar@<-1.5ex>@{ ->}"1";"3"_{J}
\ar@<+1.5ex>@{<- }"2";"4"^{K}
\ar@{}"2";"4"|{\ladj}
\ar@<-1.5ex>@{ ->}"2";"4"_{J}
}
\end{aligned}
$$
there exists, by Corollary I,6.6 stated by Gray in~\cite{jwG66}, p.
143, a commutative square of bijections
$$
\xymatrix@C=30pt@R=30pt{
\Nat(JT,T'J)
\ar[r]^{\iso}
\ar[d]_{\iso} &
\Nat(T,KT'J)
\ar[d]^{\iso}  \\
\Nat(JTK,T')
\ar[r]_{\iso} &
\Nat(TK,KT')  }
$$
Furthermore, the following conditions on the natural transformations in the
matrix
$$
\lambda=
  \left(
  \begin{matrix}
  \lambda_{0}\colon JT\cel T'J &\lambda_{1}\colon T\cel KT'J \\
  \lambda_{2}\colon JTK\cel T' &\lambda_{3}\colon TK\cel KT'
  \end{matrix}
\right)
$$
are compatible with the above bijections:
\Smmallmatrix
\begin{enumerate}
\xymatrixcolsep = {37pt}
\xymatrixrowsep = {33pt}
\item The natural transformations $\lambda_{0}\colon  JT\cel T' J$
such that %
$$
\xymatrix{
{}
  \ar@`{{**{}?(1)+<10pt,35pt>},{?(0)+<-10pt,35pt>}}%
       "1,1";"1,2"^{1}|{}="1"
  \ar[r]|*+{T}="2"
  \ar[d]|*+{J}
  &
{}
  \ar[d]|*+{J}
  \ar@{}[ld]|{\dir{=>}}|{\hspace{4ex}\lambda_{0}}
  \\
{}
  \ar[r]_{T'} &
{}
  \ar @{} "1";"2" |{\dir{=>}}|{\hspace{3ex}\eta}
}
\hspace{2pt}\xymatrix{{}\ar@{}[d]|(.5){=} \\ {}}\hspace{2pt}
\xymatrix{
{}
  \ar[r]^{1}|{}="1"
  \ar[d]|*+{J} &
{}\xyn{b}
  \ar[d]|*+{J}
  \\
{}\xyn{c}
  \ar[r]|*+{1}="2"
  \ar@`{{**{} ?(1)+<10pt,-35pt>}, {?(0)+<-10pt,-35pt>}}%
      "2,1";"2,2"_{T'}|{}="3" &
{}
  \ar @{} "b";"c" |{\dir{=>}}|{\hspace{3ex}J}
  \ar @{} "2";"3" |{\dir{=>}}|{\hspace{3ex}\eta'}
}
\hspace{12pt}
\xymatrix{
{}
  \ar[r]^{T}
  \ar[d]|*+{J} &
{}
  \ar[r]^{T}
  \ar[d]|*+{J}
  \ar@{}[ld]|{\dir{=>}}|{\hspace{4ex}\lambda_{0}} &
{}
  \ar[d]|*+{J}
  \ar@{}[ld]|{\dir{=>}}|{\hspace{4ex}\lambda_{0}}
  \\
{}
  \ar[r]|*+{T'}
  \ar@`{{**{} ?(1)+<10pt,-35pt>}, {?(0)+<-10pt,-35pt>}}%
      [rr]_{T'}|{}="a" &
{}
  \ar[r]|*+{T'} &
{}
\ar @{} "2,2";"a" |{\dir{=>}}|{\hspace{3ex}\mu'}
}
\hspace{2pt}\xymatrix{{}\ar@{}[d]|(.5){=} \\ {}}\hspace{2pt}
\xymatrix{
{}
  \ar@`{{**{}?(1)+<10pt,35pt>},{?(0)+<-10pt,35pt>}}%
    [r]^{TT}|{}="1"
  \ar[r]|*+{T}="2"
  \ar[d]|*+{J} &
{}
  \ar[d]|*+{J}
  \ar@{}[ld]|{\dir{=>}}|{\hspace{4ex}\lambda_{0}}
  \\
{}
  \ar[r]_{T'} &
{}
\ar @{} "1";"2" |{\dir{=>}}|{\hspace{3ex}\mu}
}
$$

\item The natural transformations $\lambda_{1}\colon T\cel KT' J$
such that %
$$
\xymatrix{
{}
  \ar@`{{**{}?(1)+<10pt,35pt>},{?(0)+<-10pt,35pt>}}%
     [r]^{1}|{}="1"
  \ar[r]|*+{T}="2"
  \ar[d]|*+{J} &
{}
  \\
{}
  \ar[r]_{T'}|{}="3" &
{}
  \ar[u]|*+{K}
\ar @{} "1";"2" |{\dir{=>}}|{\hspace{3ex}\eta}
\ar @{} "2";"3" |{\dir{=>}}|{\hspace{4ex}\lambda_{1}}
}
\hspace{2pt}\xymatrix{{}\ar@{}[d]|(.5){=} \\ {}}\hspace{2pt}
\xymatrix{
{}
  \ar[r]^{1}|{}="1"
  \ar[d]|*+{J} &
{}
  \\
{}
  \ar[r]|*+{1}="2"
  \ar@`{{**{} ?(1)+<10pt,-35pt>}, {?(0)+<-10pt,-35pt>}}%
    [r]_{T'}|{}="3" &
{}
  \ar[u]|*+{K}
\ar @{} "1";"2" |{\dir{=>}}|{\hspace{4ex}\maol{\eta}}
\ar @{} "2";"3" |{\dir{=>}}|{\hspace{4ex}\eta'}
}
\hspace{12pt}
\xymatrix{
{}
  \ar[r]^{T}|{}="1"
  \ar[d]|*+{J} &
{}
  \ar[r]^{1}|{}="3" &
{}
  \ar[r]^{T}|{}="5"
  \ar[d]|*+{ J} &
{}
  \\
{}
  \ar[r]_{T'}|{}="2"
  \ar@`{{**{} ?(1)+<10pt,-45pt>}, {?(0)+<-10pt,-45pt>}}%
      [rrr]_{T'}="z"&
{}
  \ar[u]|*+{ K}
  \ar[r]_{1}|{}="4" &
{}
  \ar[r]_{T'}|{}="6" &
{}
  \ar[u]|*+{K}
\ar @{} "1";"2" |{\dir{=>}}|{\hspace{4ex}\lambda_{1}}
\ar @{} "3";"4" |{\dir{=>}}|{\hspace{3ex}\maol{\varepsilon}}
\ar @{} "5";"6" |{\dir{=>}}|{\hspace{4ex}\lambda_{1}}
\ar @{} "4";"z" |{\dir{=>}}|{\hspace{3ex}\mu'}
}
\hspace{2pt}\xymatrix{{}\ar@{}[d]|(.5){=} \\ {}}\hspace{2pt}
\xymatrix{
{}
  \ar@`{{**{}?(1)+<10pt,35pt>},{?(0)+<-10pt,35pt>}}%
    [r]^{TT}|{}="1"
  \ar[r]|*+{T}="2"
  \ar[d]|*+{J} &
{}
  \\
{}
  \ar[r]_{T'}|{}="3" &
{}
  \ar[u]|*+{K}
\ar @{} "1";"2" |{\dir{=>}}|{\hspace{3ex}\mu}
\ar @{} "2";"3" |{\dir{=>}}|{\hspace{4ex}\lambda_{1}}
}
$$

\item The natural transformations $\lambda_{2}\colon  JT K\cel T'$
such that  %
$$
\xymatrix{
{}
  \ar@`{{**{}?(1)+<10pt,35pt>},{?(0)+<-10pt,35pt>}}%
     [r]^{1}|{}="1"
  \ar[r]|*+{T}="2"
  &
{}
  \ar[d]|*+{J}\\
{}
  \ar[r]_{T'}|{}="3"
  \ar[u]|*+{K} &
{}
\ar @{} "1";"2" |{\dir{=>}}|{\hspace{3ex}\eta}
\ar @{} "2";"3" |{\dir{=>}}|{\hspace{4ex}\lambda_{2}}
}
\hspace{2pt}\xymatrix{{}\ar@{}[d]|(.5){=} \\ {}}\hspace{2pt}
\xymatrix{
{}
  \ar[r]^{1}|{}="1" &
{}
  \ar[d]|*+{J}
  \\
{}
  \ar[u]|*+{K}
  \ar[r]^{1}="2"
  \ar@`{{**{} ?(1)+<10pt,-35pt>}, {?(0)+<-10pt,-35pt>}}%
    [r]_{T'}="3" &
{}
\ar @{} "1";"2" |{\dir{=>}}|{\hspace{3ex}\maol{\varepsilon}}
\ar @{} "2";"3" |{\dir{=>}}|{\hspace{3.5ex}\eta'}
}
\hspace{12pt}
\xymatrix{
{}
  \ar[r]^{T}|{}="1" &
{}
  \ar[r]^{1}|{}="3"
  \ar[d]|*+{J} &
{}
  \ar[r]^{T}|{}="5" &
{}
  \ar[d]|*+{J}
  \\
{}
  \ar[u]|*+{K}
  \ar[r]_{T'}|{}="2"
  \ar@`{{**{} ?(1)+<10pt,-45pt>}, {?(0)+<-10pt,-45pt>}}%
      [rrr]_{T'}="z"&
{}
  \ar[r]_{1}|{}="4" &
{}
  \ar[u]|*+{K}
  \ar[r]_{T'}|{}="6" &
{}
\ar @{} "1";"2" |{\dir{=>}}|{\hspace{4ex}\lambda_{2}}
\ar @{} "3";"4" |{\dir{=>}}|{\hspace{3ex}\maol{\eta}}
\ar @{} "5";"6" |{\dir{=>}}|{\hspace{4ex}\lambda_{2}}
\ar @{} "4";"z" |{\dir{=>}}|{\hspace{3ex}\mu'}
}
\hspace{2pt}\xymatrix{{}\ar@{}[d]|(.5){=} \\ {}}\hspace{2pt}
\xymatrix{
{}
  \ar@`{{**{}?(1)+<10pt,35pt>},{?(0)+<-10pt,35pt>}}%
    [r]^{TT}|{}="1"
  \ar[r]|*+{T}="2" &
{}
  \ar[d]|*+{J}
  \\
{}
  \ar[u]|*+{K}
  \ar[r]_{T'}|{}="3" &
{}
\ar @{} "1";"2" |{\dir{=>}}|{\hspace{3ex}\mu}
\ar @{} "2";"3" |{\dir{=>}}|{\hspace{4ex}\lambda_{2}}
}
$$

\item The natural transformations $\lambda_{3}\colon TK\cel KT'$
such that %
$$
\xymatrix{
{}
  \ar@`{{**{}?(1)+<10pt,35pt>},{?(0)+<-10pt,35pt>}}%
       "1,1";"1,2"^{1}|{}="1"
  \ar[r]|*+{T}="2"
  \ar@{}[rd]|{\dir{=>}}|{\hspace{4ex}\lambda_{3}} &
{}
  \\
{}
  \ar[u]|*+{K}
  \ar[r]_{T'} &
{}
  \ar[u]|*+{K}
\ar @{} "1";"2" |{\dir{=>}}|{\hspace{3ex}\eta}
}
\hspace{2pt}\xymatrix{{}\ar@{}[d]|(.5){=} \\ {}}\hspace{2pt}
\xymatrix{
{}
  \ar[r]^{1}|{}="1"
  &
{}
  \\
{}
  \ar[u]|*+{K}
  \ar[r]|*+{1}="2"
  \ar@`{{**{} ?(1)+<10pt,-35pt>}, {?(0)+<-10pt,-35pt>}}%
      "2,1";"2,2"_{T'}|{}="3"&
{}
  \ar[u]|*+{K}
\ar @{} "1,1";"2,2" |{\dir{=>}}|{\hspace{4ex}K}
\ar @{} "2";"3" |{\dir{=>}}|{\hspace{4ex}\eta'}
}
\hspace{12pt}
\xymatrix{
{}
  \ar[r]^{T}
  \ar@{}[rd]|{\dir{=>}}|{\hspace{4ex}\lambda_{3}} &
{}
  \ar[r]^{T}
  \ar@{}[rd]|{\dir{=>}}|{\hspace{4ex}\lambda_{3}} &
{}
  \\
{}
  \ar[u]|*+{K}
  \ar[r]_{T'}
  \ar@`{{**{} ?(1)+<10pt,-35pt>}, {?(0)+<-10pt,-35pt>}}%
      [rr]_{T'}|{}="a"&
{}
  \ar[u]|*+{K}
  \ar[r]_{T'} &
{}
  \ar[u]|*+{K}
\ar @{} "2,2";"a" |{\dir{=>}}^{\mu'}
}
\hspace{2pt}\xymatrix{{}\ar@{}[d]|(.5){=} \\ {}}\hspace{2pt}
\xymatrix{
{}
  \ar@`{{**{}?(1)+<10pt,35pt>},{?(0)+<-10pt,35pt>}}%
    [r]^{TT}|{}="1"
  \ar@{}[rd]|{\dir{=>}}|{\hspace{4ex}\lambda_{3}}
  \ar[r]|*+{T}="2" &
{}
  \\
{}
  \ar[u]|*+{K}
  \ar[r]_{T'} &
{}
  \ar[u]|*+{K}
\ar @{} "1";"2" |{\dir{=>}}|{\hspace{4ex}\mu}
}
$$
\end{enumerate}
\end{proposition}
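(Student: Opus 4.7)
The first assertion, namely the existence and commutativity of the square of bijections, is explicitly cited from Gray's Corollary I,6.6 in~\cite{jwG66}, p.~143, so I will invoke it directly; explicitly, writing $\maol{\eta}$ and $\maol{\varepsilon}$ for the unit and counit of $J\ladj K$, the four bijections take the form
\begin{align*}
\lambda_1 &= (K\lambda_0)(\maol{\eta}T), &
\lambda_2 &= (T'\maol{\varepsilon})(\lambda_0 K), \\
\lambda_3 &= (KT'\maol{\varepsilon})(\lambda_1 K) &
           &= (K\lambda_2)(\maol{\eta}TK),
\end{align*}
with inverses obtained by the evident transpositions, e.g.\ $\lambda_0=(\maol{\varepsilon}T'J)(J\lambda_1)=(\lambda_2 J)(JT\maol{\eta})$. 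The substantive task in the proposition is to verify that, under any one of these bijections, the unit and multiplication conditions displayed for the source $\lambda_i$ are equivalent to the correspondingly displayed conditions for the target $\lambda_j$.

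The plan is to proceed by a uniform diagram chase. Starting from one of the displayed equalities, say the unit condition for $\lambda_0$, I would apply the formula of the bijection to both sides (for the passage $\lambda_0\mapsto\lambda_1$, whisker with $K$ on the left and paste in $\maol{\eta}T$ on the right), then simplify by combining naturality of $\maol{\eta}$ and $\maol{\varepsilon}$ against the monad data $\eta,\mu,\eta',\mu'$ with the triangle identities $(K\maol{\varepsilon})(\maol{\eta}K)=\id_K$ and $(\maol{\varepsilon}J)(J\maol{\eta})=\id_J$. After cancellation, what remains is precisely the displayed equality for $\lambda_1$. The same technique, with the evident symmetric whiskerings, handles the pairs $\lambda_0\leftrightarrow\lambda_2$, $\lambda_2\leftrightarrow\lambda_3$ and $\lambda_1\leftrightarrow\lambda_3$; commutativity of the square then forces coherence across the four corners, so only three of the four directions really have to be checked by hand.

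No step is conceptually deep: everything reduces to naturality and the triangle identities. The main obstacle is combinatorial, since each of the four conditions comprises two pasting equalities and each equality contains many whiskered $2$-cells. To manage this I would prove once, as a reusable schema, that any equality of shape $\beta\comp\lambda_0=\lambda_0\comp J\alpha$ transposes under $\lambda_0\mapsto\lambda_1$ to $K\beta\comp\lambda_1=\lambda_1\comp\alpha$, for $\alpha,\beta$ built from $\eta,\mu,\eta',\mu'$; and three analogous schemata for the other bijections. Each of the eight displayed equalities is an instance of one such schema, so the whole verification reduces to a single generic naturality-plus-triangle-identity argument applied four times.
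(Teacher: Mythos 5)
Your proposal is correct in substance, and there is in fact no proof in the paper to compare it against: the proposition is stated without a proof environment, the square of bijections being cited from Gray's Corollary I,6.6 and the compatibility of the four sets of axioms being treated as part of the same recalled material (just as the preceding proposition, on the double category $\mathbf{AdFun}$, is explicitly left to Gray). Your explicit mate formulas agree with the relations the paper records in its definition of adjoint square once one specializes both $F\ladj G$ and $F'\ladj G'$ to $J\ladj K$ and the two horizontal functors to $T$ and $T'$, and the transposition-by-naturality-plus-triangle-identities argument is the standard one: for instance, from $\lambda_{0}\comp(J\eta)=\eta'J$ one gets
$$
\lambda_{1}\comp\eta=(K\lambda_{0})\comp(\maol{\eta}T)\comp\eta
=(K\lambda_{0})\comp(KJ\eta)\comp\maol{\eta}
=(K\eta'J)\comp\maol{\eta}
$$
by naturality of $\maol{\eta}$, and the converse uses $\lambda_{0}=(\maol{\varepsilon}T'J)\comp(J\lambda_{1})$ together with the triangle identity $(\maol{\varepsilon}J)\comp(J\maol{\eta})=\mathrm{id}_{J}$.

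The one point you should tighten is the ``reusable schema.'' As stated it only covers equalities in which $\lambda_{0}$ occurs linearly, whereas the multiplication axiom
$$
\lambda_{0}\comp(J\mu)=(\mu'J)\comp(T'\lambda_{0})\comp(\lambda_{0}T)
$$
has $\lambda_{0}$ appearing twice on the right-hand side, so it is not an instance of $\beta\comp\lambda_{0}=\lambda_{0}\comp J\alpha$. What you actually need is the stronger (but equally standard) fact that the mate correspondence is functorial with respect to horizontal and vertical pasting of squares, so that the mate of the composite $(T'\lambda_{0})\comp(\lambda_{0}T)$ is the corresponding pasting of mates of $\lambda_{0}$; with that in hand the eight verifications reduce, as you describe, to naturality and the triangle identities, and commutativity of the square of bijections lets you skip one of the four edges.
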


Next we proceed to define the concept of algebraic morphism between
monads, the composition of algebraic morphisms, and the identity
algebraic morphisms at the monads, from which we will get the
\emph{horizontal} component of the $2$-category under consideration.

\begin{definition}
Let $(\mathbf{C},\mathbb{T})$ and $(\mathbf{C}',\mathbb{T}')$ be two
monads.  An \emph{algebraic morphism}, or, to shorten terminology, an
\emph{alg-morphism} from $(\mathbf{C},\mathbb{T})$ to
$(\mathbf{C}',\mathbb{T}')$ is an adjoint square $(J\ladj
K,(T,\lambda,T'),J\ladj K)$, also denoted by $(J\ladj K,\lambda)$, or,
geometrically, by
$$
\begin{aligned}
\xymatrix@C=60pt@R=40pt{
*++{\mathbf{C}}\xyn{1} &
*++{\mathbf{C}}\xyn{2} \\
*++{\mathbf{C}'}\xyn{3} &
*++{\mathbf{C}'}\xyn{4}
\ar@{ ->}"1";"2"^{T}
\ar@{ ->}"3";"4"_{T'}
\ar@<+1.5ex>@{<- }"1";"3"^{K}
\ar@{}"1";"3"|{\ladj}
\ar@<-1.5ex>@{ ->}"1";"3"_{J}
\ar@<+1.5ex>@{<- }"2";"4"^{K}
\ar@{}"2";"4"|{\ladj}
\ar@<-1.5ex>@{ ->}"2";"4"_{J}
\ar @{} "1";"4"|{\lambda}
}
\end{aligned}
$$
such that the components of the matrix
$\lambda = \left(
\begin{smallmatrix}
\lambda_{0} & \lambda_{1} \\ \lambda_{2} & \lambda_{3}
\end{smallmatrix}
\right)$ are compatible as in
the last proposition.  Then taking as objects the monads
$(\mathbf{C},\mathbb{T})$ such that $\mathbf{C}$ is in
$\boldsymbol{\mathcal{U}}$, as morphisms between monads the
alg-morphisms, as the identity at a monad $(\mathbf{C},\mathbb{T})$
the pair $\left(1\ladj 1, \left(
\begin{smallmatrix}
T & T \\ T & T
\end{smallmatrix}
\right)\right)$, and as composition of two alg-morphisms precisely
their Ad-composition as adjoint squares, we get a category, that
we denote by $\mathbf{Mnd}_{\mathrm{alg}}$.
\end{definition}

We now define for two alg-morphisms $(J\ladj K,\lambda)$ and $(J'\ladj
K',\lambda')$ from $(\mathbf{C},\mathbb{T})$ to $(\mathbf{C}',\mathbb{T}')$
the concept of algebraic transformation from $(J\ladj K,\lambda)$ to
$(J'\ladj K',\lambda')$ as well as the vertical and horizontal
composition of algebraic transformations. This will constitute
the \emph{vertical} component of the $2$-category under consideration.

\begin{definition}
Let
$$
\begin{aligned}
\xymatrix@C=60pt@R=40pt{
*++{\mathbf{C}}\xyn{1} &
*++{\mathbf{C}}\xyn{2} \\
*++{\mathbf{C}'}\xyn{3} &
*++{\mathbf{C}'}\xyn{4}
\ar@{ ->}"1";"2"^{T}
\ar@{ ->}"3";"4"_{T'}
\ar@<+1.5ex>@{<- }"1";"3"^{K}
\ar@{}"1";"3"|{\ladj}
\ar@<-1.5ex>@{ ->}"1";"3"_{J}
\ar@<+1.5ex>@{<- }"2";"4"^{K}
\ar@{}"2";"4"|{\ladj}
\ar@<-1.5ex>@{ ->}"2";"4"_{J}
\ar @{} "1";"4"|{\lambda}
}
\end{aligned}
\qquad \text{and}\qquad
\begin{aligned}
\xymatrix@C=60pt@R=40pt{
*++{\mathbf{C}}\xyn{1} &
*++{\mathbf{C}}\xyn{2} \\
*++{\mathbf{C}'}\xyn{3} &
*++{\mathbf{C}'}\xyn{4}
\ar@{ ->}"1";"2"^{T}
\ar@{ ->}"3";"4"_{T'}
\ar@<+1.5ex>@{<- }"1";"3"^{K'}
\ar@{}"1";"3"|{\ladj}
\ar@<-1.5ex>@{ ->}"1";"3"_{J'}
\ar@<+1.5ex>@{<- }"2";"4"^{K'}
\ar@{}"2";"4"|{\ladj}
\ar@<-1.5ex>@{ ->}"2";"4"_{J'}
\ar @{} "1";"4"|{\lambda'}
}
\end{aligned}
$$
be two alg-morphisms from $(\mathbf{C},\mathbb{T})$ to
$(\mathbf{C}',\mathbb{T}')$.  Then an \emph{algebraic transformation}, or,
to shorten terminology, an \emph{alg-transformation}, from $(J\ladj
K,\lambda)$
to $(J'\ladj K',\lambda')$ is an adjoint square %
$$
\begin{aligned}
\xymatrix@C=60pt@R=40pt{
*++{\mathbf{C}}\xyn{1} &
*++{\mathbf{C}}\xyn{2} \\
*++{\mathbf{C}'}\xyn{3} &
*++{\mathbf{C}'}\xyn{4}
\ar@{ ->}"1";"2"^{1}
\ar@{ ->}"3";"4"_{T'}
\ar@<+1.5ex>@{<- }"1";"3"^{K}
\ar@{}"1";"3"|{\ladj}
\ar@<-1.5ex>@{ ->}"1";"3"_{J}
\ar@<+1.5ex>@{<- }"2";"4"^{K'}
\ar@{}"2";"4"|{\ladj}
\ar@<-1.5ex>@{ ->}"2";"4"_{J'}
\ar @{} "1";"4"|{\xi}
}
\end{aligned}
$$
such that
$$
\begin{aligned}
\xymatrix@C=42pt@R=40pt{
*+{\mathbf{C}}\xyn{1} &
*+{\mathbf{C}}\xyn{2} &
*+{\mathbf{C}}\xyn{3} \\
*+{\mathbf{C}'}\xyn{4} &
*+{\mathbf{C}'}\xyn{5} &
*+{\mathbf{C}'}\xyn{6} \\
*+{\mathbf{C}'}\xyn{7} &&
*+{\mathbf{C}'}\xyn{8}
\ar@{ ->}"1";"2"^{T}
\ar@{ ->}"2";"3"^{1}
\ar@{ ->}"4";"5"|*+{T'}
\ar@{ ->}"5";"6"|*+{T'}
\ar@{ ->}"7";"8"_{T'}
\ar@<-6pt>@{ ->}"1";"4"_{J}
\ar@{}"1";"4"|{\ladj}
\ar@<+6pt>@{<- }"1";"4"^{K}
\ar@<-6pt>@{ ->}"2";"5"_{J}
\ar@{}"2";"5"|{\ladj}
\ar@<+6pt>@{<- }"2";"5"^{K}
\ar@<-6pt>@{ ->}"3";"6"_{J'}
\ar@{}"3";"6"|{\ladj}
\ar@<+6pt>@{<- }"3";"6"^{K'}
\ar@<-6pt>@{ ->}"4";"7"_{1}
\ar@{}"4";"7"|{\ladj}
\ar@<+6pt>@{<- }"4";"7"^{1}
\ar@<-6pt>@{ ->}"6";"8"_{1}
\ar@{}"6";"8"|{\ladj}
\ar@<+6pt>@{<- }"6";"8"^{1}
\ar @{} "1";"5"|{\lambda}
\ar @{} "2";"6"|{\xi}
\ar @{} "4";"8"|{\mu'}
}
\end{aligned}
\hspace{3pt} = \hspace{3pt}
\begin{aligned}
\xymatrix@C=42pt@R=40pt{
*+{\mathbf{C}}\xyn{1} &
*+{\mathbf{C}}\xyn{2} &
*+{\mathbf{C}}\xyn{3} \\
*+{\mathbf{C}'}\xyn{4} &
*+{\mathbf{C}'}\xyn{5} &
*+{\mathbf{C}'}\xyn{6} \\
*+{\mathbf{C}'}\xyn{7} &&
*+{\mathbf{C}'}\xyn{8}
\ar@{ ->}"1";"2"^{1}
\ar@{ ->}"2";"3"^{T}
\ar@{ ->}"4";"5"|*+{T'}
\ar@{ ->}"5";"6"|*+{T'}
\ar@{ ->}"7";"8"_{T'}
\ar@<-6pt>@{ ->}"1";"4"_{J}
\ar@{}"1";"4"|{\ladj}
\ar@<+6pt>@{<- }"1";"4"^{K}
\ar@<-6pt>@{ ->}"2";"5"_{J'}
\ar@{}"2";"5"|{\ladj}
\ar@<+6pt>@{<- }"2";"5"^{K'}
\ar@<-6pt>@{ ->}"3";"6"_{J'}
\ar@{}"3";"6"|{\ladj}
\ar@<+6pt>@{<- }"3";"6"^{K'}
\ar@<-6pt>@{ ->}"4";"7"_{1}
\ar@{}"4";"7"|{\ladj}
\ar@<+6pt>@{<- }"4";"7"^{1}
\ar@<-6pt>@{ ->}"6";"8"_{1}
\ar@{}"6";"8"|{\ladj}
\ar@<+6pt>@{<- }"6";"8"^{1}
\ar @{} "1";"5"|{\xi}
\ar @{} "2";"6"|{\lambda'}
\ar @{} "4";"8"|{\mu'}
}
\end{aligned}
$$
i.e., such that $\mu'\adcomp (\xi \funcomp \lambda ) = \mu' \adcomp
(\lambda' \funcomp \xi)$.

For every alg-morphism $(J\ladj K,\lambda)\colon
(\mathbf{C},\mathbb{T})\mor (\mathbf{C}',\mathbb{T}')$, the
\emph{identity} at $(J\ladj K,\lambda)$ is the adjoint square
determined by the matrix %
$$
  \left(
  \begin{matrix}
  \eta'J & K\eta'J \eta^{J\ladj K} \\ \eta'\varepsilon^{J\ladj K} & K\eta'
  \end{matrix}
\right),
$$
where $\eta^{J\ladj K}$ is the unit and $\varepsilon^{J\ladj K}$ the
counit of the adjunction $J\ladj K$.

The \emph{vertical} composition of two alg-tranformations as in the
following diagram%
$$
\xymatrix@C=90pt{
(\mathbf{C},\mathbb{T})
  \ar @/^25pt/ [r]^{(J \ladj K ,\lambda )}|{}="0"
  \ar         [r]|(.5){(J'\ladj K',\lambda' )}="1"
  \ar @/_25pt/  [r]_{(J''\ladj K'',\lambda'')}|{}="2"
&
(\mathbf{C}',\mathbb{T}'),
\ar @{}"0";"1"|{\dir{~>}}^{\,\xi }
\ar @{}"1";"2"|{\dir{~>}}^{\,\xi'}
}
$$
denoted by $\xi'\tcomp\xi $,  is the adjoint square $\mu'\adcomp
(\xi'\funcomp \xi )$. %
%
%
%
%

The \emph{horizontal} composition of two alg-transformations as in the
following diagram%
$$
\xymatrix@C=85pt{
(\mathbf{C},\mathbb{T})
  \ar @/^14pt/ [r]^{(J \ladj K ,\lambda )}|{}="0"
  \ar @/_14pt/  [r]_{(J'\ladj K',\lambda' )}|{}="1"
&
(\mathbf{C}',\mathbb{T}')
  \ar @/^14pt/ [r]^{(J''\ladj K'',\lambda'')}|{}="2"
  \ar @/_14pt/  [r]_{(J'''\ladj K''',\lambda''' )}|{}="3"
&
(\mathbf{C}'',\mathbb{T}''),
\ar @{}"0";"1"|{\dir{~>}}^{\,\xi }
\ar @{}"2";"3"|{\dir{~>}}^{\,\xi'}
}
$$
denoted by $\xi'\thcomp\xi $, is the adjoint square
$$
\mu'\adcomp
(\lambda''\funcomp \xi')\adcomp \xi  = \mu'\adcomp
(\xi'\funcomp \lambda''' )\adcomp \xi.
$$
%
%
%
%
\end{definition}

From the horizontal and vertical components just
defined, it follows immediately the following

\begin{proposition}
The monads whose underlying category is in $\boldsymbol{\mathcal{U}}$,
together with the alg-morphisms between monads, and the
alg-trans\-for\-ma\-tions from an alg-morphism into a like one
determine a 2-category, denoted by $\mathbf{Mnd}_{\mathrm{alg}}$.
\end{proposition}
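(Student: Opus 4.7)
The plan is to leverage the double category structure of $\mathbf{AdFun}$ (adjoint squares) already invoked in the text, and show that the data of monads, alg-morphisms, and alg-transformations form a sub-structure of $\mathbf{AdFun}$ that, after taking vertical composition to involve $\mu'$, satisfies the 2-category axioms. Once the underlying bookkeeping is handled by $\mathbf{AdFun}$, the remaining verifications reduce to the monad laws for $\mathbb{T}'$ and the compatibility conditions of Proposition~\ref{AlgAdSq}.

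First I would check the horizontal (1-cell) data: that alg-morphisms are closed under Ad-composition and that the chosen identity $\bigl(1\ladj 1,\bigl(\begin{smallmatrix}T&T\\T&T\end{smallmatrix}\bigr)\bigr)$ satisfies the compatibility conditions of Proposition~\ref{AlgAdSq}. Closure under Ad-composition is almost automatic from the formula for $\delta\adcomp\lambda$: each component of the composite matrix satisfies the unit and multiplication conditions because the components of $\lambda$ and $\delta$ already do, and the monad laws for $\mathbb{T},\mathbb{T}',\mathbb{T}''$ assemble the resulting pasting diagrams. Associativity and unitality of Ad-composition are inherited from $\mathbf{AdFun}$.

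Next I would fix a pair of monads and verify that alg-morphisms and alg-transformations between them form a hom-category. For the vertical composition $\xi'\tcomp\xi = \mu'\adcomp(\xi'\funcomp\xi)$, associativity follows from the associativity of $\funcomp$ in $\mathbf{AdFun}$ combined with the associativity of $\mu'$ (i.e.\ $\mu'\comp(\mu'\ast T')=\mu'\comp(T'\ast\mu')$). The proposed identity alg-transformation at $(J\ladj K,\lambda)$, with matrix $\bigl(\begin{smallmatrix}\eta'J & K\eta'J\eta^{J\ladj K}\\ \eta'\varepsilon^{J\ladj K} & K\eta'\end{smallmatrix}\bigr)$, acts as a neutral element via the left and right unit laws $\mu'\comp(\eta'\ast T')=1_{T'}=\mu'\comp(T'\ast\eta')$. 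What must also be checked is that $\xi'\tcomp\xi$ again satisfies the compatibility $\mu'\adcomp(\,\farg\funcomp\lambda)=\mu'\adcomp(\lambda''\funcomp\,\farg)$, which is a diagrammatic manipulation using the compatibility already assumed for $\xi$ and $\xi'$ together with associativity of $\mu'$.

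Then I would define horizontal composition at the level of 1-cells (this is just Ad-composition of adjoint squares, already confirmed above) and verify that the horizontal composition of two alg-transformations is well-defined and is itself an alg-transformation. Well-definedness amounts to showing that the two formulas given,
\[
\mu'\adcomp(\lambda''\funcomp\xi')\adcomp\xi \;=\; \mu'\adcomp(\xi'\funcomp\lambda''')\adcomp\xi,
\]
agree; this is a direct consequence of the compatibility condition satisfied by $\xi'$. The compatibility of $\xi'\thcomp\xi$ with the outer monads follows by pasting these equalities and invoking the compatibility of $\xi$.

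The expected main obstacle is the interchange law, which in the present context reads
\[
(\xi_2'\tcomp\xi_1')\thcomp(\xi_2\tcomp\xi_1) \;=\; (\xi_2'\thcomp\xi_2)\tcomp(\xi_1'\thcomp\xi_1).
\]
My plan is to reduce this to the interchange between $\adcomp$ and $\funcomp$ in $\mathbf{AdFun}$ (which holds because $\mathbf{AdFun}$ is a double category), after which the only extra ingredient is absorbing the two copies of $\mu'$ produced by the two vertical compositions on each side into a single composite via associativity of $\mu'$; the unit laws then equate the result to the single $\mu'$ that appears in each horizontal composition. Finally, restricting to monads whose underlying category lies in $\boldsymbol{\mathcal{U}}$ guarantees the hom-categories of alg-morphisms and alg-transformations are legitimately $\boldsymbol{\mathcal{V}}$-small, so that $\mathbf{Mnd}_{\mathrm{alg}}$ is, as asserted, a $2$-category.
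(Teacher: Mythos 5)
Your plan is correct and follows exactly the route the paper intends: the paper in fact gives no proof at all, asserting that the result ``follows immediately'' from the horizontal and vertical components just defined and from the double category structure of $\mathbf{AdFun}$, and your outline supplies precisely the verifications (closure of alg-morphisms under Ad-composition, hom-category axioms via the monad laws for $\mu'$, well-definedness of horizontal composition from the compatibility condition, and interchange reduced to the double-category interchange of $\adcomp$ and $\funcomp$) that the paper leaves implicit.
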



\begin{definition}
We denote by $\mathbf{Mnd}_{\boldsymbol{\mathcal{V}},\mathrm{alg}}$ the
$2$-category with objects the monads $(\mathbf{C},\mathbb{T})$ such that
$\mathbf{C}$ is in $\boldsymbol{\mathcal{V}}$, $1$-cells the alg-morphisms
and $2$-cells the alg-trans\-for\-ma\-tions between alg-morphisms.
\end{definition}

\begin{proposition}
The category $\mathbf{MClSp}_{\boldsymbol{\mathcal{V}}}$ can be
identified to a subcategory of (the underlying category of) the
$2$-category $\mathbf{Mnd}_{\boldsymbol{\mathcal{V}},\mathrm{alg}}$.
\end{proposition}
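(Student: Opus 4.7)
The plan is to construct an embedding functor $\Phi\colon \mathbf{MClSp}_{\boldsymbol{\mathcal{V}}}\mor \mathbf{Mnd}_{\boldsymbol{\mathcal{V}},\mathrm{alg}}$ that is injective on objects and faithful on morphisms. On objects, $\Phi$ sends a many-sorted closure space $(S,A,J)$ (using the closure-operator presentation) to the monad $(\mathbf{Sub}(A),\mathbb{J})$, where $\mathbf{Sub}(A)$ is the poset of $S$-sorted subsets of $A$ regarded as a small category (a $\boldsymbol{\mathcal{V}}$-category, thanks to $\boldsymbol{\mathcal{U}}\in \boldsymbol{\mathcal{V}}$), and $\mathbb{J} = (J,\eta^{J},\mu^{J})$ is the idempotent monad whose functor $J$ is monotone by isotonicity, whose unit $\eta^{J}_{X}\colon X\incl J(X)$ encodes extensivity, and whose multiplication $\mu^{J}_{X}\colon J(J(X)) = J(X)$ encodes idempotency.

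On morphisms, given $(\varphi,f)\colon (S,A,J)\mor (T,B,K)$, let $\widehat{f}\colon \coprod_{\varphi}A\mor B$ be the $T$-sorted mapping corresponding to $f$ under the adjunction $\coprod_{\varphi}\ladj \Delta_{\varphi}$. Using the canonical lattice isomorphism $\mathbf{Sub}(A)\iso \mathbf{Sub}(\coprod_{\varphi}A)$, the direct- and inverse-image operators of $\widehat{f}$ yield an adjunction $\widehat{f}[\farg]\ladj \widehat{f}^{-1}[\farg]$ from $\mathbf{Sub}(A)$ to $\mathbf{Sub}(B)$; this is the horizontal datum of the adjoint square $\Phi(\varphi,f)$. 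The continuity inequality $\widehat{f}[J(X)]\incl K(\widehat{f}[X])$, for $X\incl A$, provides the 2-cell $\lambda_{0}$, and the remaining components $\lambda_{1},\lambda_{2},\lambda_{3}$ are forced by the bijections of Proposition \ref{AlgAdSq}. Because $\mathbf{Sub}(A)$ and $\mathbf{Sub}(B)$ are posets, any two parallel monotone maps are related by at most one 2-cell, so every compatibility condition in Proposition \ref{AlgAdSq} is satisfied vacuously, and $\Phi(\varphi,f)$ is a legitimate alg-morphism.

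Functoriality then reduces to two clean verifications: the identity morphism $(\mathrm{id}_{S},\mathrm{id}_{A})$ goes to the Ad-identity on $(\mathbf{Sub}(A),\mathbb{J})$, since direct and inverse images of an identity mapping are the identity of $\mathbf{Sub}(A)$; and for composable morphisms $(\varphi,f)$ and $(\psi,g)$ the equality $\widehat{g_{\varphi}\comp f} = \widehat{g}\comp \textstyle\coprod_{\psi}\widehat{f}$ shows that the adjoint parts of the Ad-composition coincide with $\Phi$ of the composite, while the 2-cell parts coincide automatically by poset uniqueness. Injectivity on objects and faithfulness on morphisms then follow by recovering $A$, $S$ and $J$ from the monad $(\mathbf{Sub}(A),\mathbb{J})$, and $\widehat{f}$ from the direct-image functor $\widehat{f}[\farg]$ evaluated on singletons. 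The main technical obstacle I foresee is the bookkeeping required to align the single continuity inequality of Definition \ref{morclsp} with the four-entry matrix $\lambda$ appearing in the definition of an alg-morphism, and to track how the many-sorted relabelling via $\coprod_{\varphi}$ and $\Delta_{\varphi}$ interacts with Ad-composition; conceptually, however, everything is controlled by the fact that $\mathbf{Sub}(A)$ is a poset, which forces all higher coherences to hold automatically.
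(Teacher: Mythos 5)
Your proposal is correct and follows essentially the same route as the paper: the object assignment $(S,A,J)\mapsto(\mathbf{Sub}(A),\mathbb{T}_{J})$, the adjoint square on a continuous mapping built from the direct/inverse-image adjunction (your $\widehat{f}[\farg]$ coincides with the paper's $\union_{\varphi,B}\comp f[\cdot]$), and the observation that the poset structure of $\mathbf{Sub}(A)$ renders the matrix $\lambda$ and all coherence conditions trivial. Your explicit identification of the continuity inequality with $\lambda_{0}$ and the functoriality/faithfulness checks are details the paper leaves implicit, but the argument is the same.
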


\begin{proof}
It is enough to assign to a closure space $(S,A,J)$ the monad
$(\mathbf{Sub}(A),\mathbb{T}_{J})$, where $\mathbf{Sub}(A)$ is the
category determined by the ordered set $(\mathrm{Sub}(A),\subseteq)$
and $\mathbb{T}_{J}$ the monad on $\mathbf{Sub}(A)$ obtained from the
$S$-sorted closure operator $J$ on $A$, as in~\cite{sM98}, p.  139;
and to a continuous mapping $(\varphi,f)$ from $(S,A,J)$ to $(T,B,K)$
the $\mathrm{alg}$-morphism
$$
\xymatrix@C=100pt@R=50pt{
*++{\mathbf{Sub}(A)}\xyn{1} &
*++{\mathbf{Sub}(A)}\xyn{2} \\
*++{\mathbf{Sub}(B)}\xyn{3} &
*++{\mathbf{Sub}(B)}\xyn{4}
\ar@{ ->}"1";"2"^{T_{J}}
\ar@{ ->}"3";"4"_{T_{K}}
\ar@<+1.5ex>@{<- }"1";"3"^{f^{-1}[\cdot]\comp\Delta_{\varphi,B}}
\ar@{}"1";"3"|{\ladj}
\ar@<-1.5ex>@{ ->}"1";"3"_{\union_{\varphi,B}\comp f[\cdot]}
\ar@<+1.5ex>@{<- }"2";"4"^{f^{-1}[\cdot]\comp\Delta_{\varphi,B}}
\ar@{}"2";"4"|{\ladj}
\ar@<-1.5ex>@{ ->}"2";"4"_{\union_{\varphi,B}\comp f[\cdot]}
%
}
$$
from $(\mathbf{Sub}(A),\mathbb{T}_{J})$ to
$(\mathbf{Sub}(B),\mathbb{T}_{K})$.  Observe that we have not written
the matrix $\lambda$ of the alg-morphism because, in this case, it is
trivial.
\end{proof}

\begin{remark}
Taking into account the just stated proposition, we can induce, in a
derived way, for two continuous mappings $(\varphi,f)$ and $(\psi,g)$
from a closure space $(S,A,J)$ into a like one $(T,B,K)$, a notion of
alg-transformation from $(\varphi,f)$ to $(\psi,g)$.  But, because the
underlying categories of the monads associated to the given closure
spaces are complete lattices, hence preorders, there will be at most
an alg-trans\-for\-ma\-tion from the first continuous mapping to the
second one.  Actually, there will be an alg-transformation from
$(\varphi,f)$ to $(\psi,g)$ exactly if, for every $X\subseteq A$, we
have that $\bigcup_{\psi,B}g[X]\subseteq K(\bigcup_{\varphi,B}f[X])$.
\end{remark}

\begin{proposition}
Let $\boldsymbol{\mathcal{V}}$ be a Grothendieck universe such that
$\boldsymbol{\mathcal{U}}\in \boldsymbol{\mathcal{V}}$.  Then there
exists a pseudo-functor, also denoted by $\Cn$, from $\mathbf{Sig}$ to
$\mathbf{Mnd}_{\boldsymbol{\mathcal{V}},\mathrm{alg}}$ that has as
components, essentially, the consequence operators
$\mathrm{Cn}_{\mathbf{\Sigma}}$, for the different signatures
$\mathbf{\Sigma}$.
\end{proposition}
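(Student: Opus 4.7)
The plan is to exploit the fact that every monad in the image of the embedding $\mathbf{MClSp}_{\boldsymbol{\mathcal{V}}}\hookrightarrow\mathbf{Mnd}_{\boldsymbol{\mathcal{V}},\mathrm{alg}}$ has as underlying category the preorder $\mathbf{Sub}(E)$ associated to some $\mathrm{ms}$-set $E$, so that between any two parallel $1$-cells in this subcategory there is at most one $2$-cell. Thus once the object and morphism mappings are specified and the existence of the coherence isomorphisms is established, all coherence axioms of a pseudo-functor will hold automatically.

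First I would define the object mapping by setting, for each signature $\mathbf{\Sigma}$,
$$
\Cn(\mathbf{\Sigma})=\bigl(\mathbf{Sub}(\Eq(\mathbf{\Sigma})),\mathbb{T}_{\Cn_{\mathbf{\Sigma}}}\bigr),
$$
i.e., the image in $\mathbf{Mnd}_{\boldsymbol{\mathcal{V}},\mathrm{alg}}$ of the closure space $((\boldsymbol{\mathcal{U}}^{S})^{2},\Eq(\mathbf{\Sigma}),\Cn_{\mathbf{\Sigma}})$. Next, on a signature morphism $\mathbf{d}\colon \mathbf{\Sigma}\mor\mathbf{\Lambda}$ I would define $\Cn(\mathbf{d})$ as the alg-morphism associated, through the same embedding, to the continuous mapping $((\textstyle\coprod_{\varphi})^{2},\mathbf{d}_{\diamond}^{2})$ provided by the preceding proposition.

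Next I would build the natural isomorphisms $\gamma^{\mathbf{d},\mathbf{e}}$ and $\nu^{\mathbf{\Sigma}}$. The key point is that $\Cn$ fails to be a strict functor precisely because, for composable $\mathbf{d}\colon\mathbf{\Sigma}\mor\mathbf{\Lambda}$ and $\mathbf{e}\colon\mathbf{\Lambda}\mor\mathbf{\Omega}$, one has $\coprod_{\psi\comp\varphi}\ncong\coprod_{\psi}\comp\coprod_{\varphi}$ on the nose, though they are naturally isomorphic via $\gamma^{\varphi,\psi}$ coming from the pseudo-functor $\mathrm{MSet}^{\ttcoprod}$; and likewise $\coprod_{\id_{S}}\ncong\Id$ although related via $\nu^{S}$. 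In the preorder world inhabited by the monads $\Cn(\mathbf{\Xi})$, to exhibit an invertible $2$-cell between two parallel alg-morphisms it suffices to verify the existence of alg-transformations in both directions, i.e., to check two pairs of inclusions of the form
$$
\textstyle\Cn_{\mathbf{\Omega}}\bigl((\mathbf{e}\comp\mathbf{d})_{\diamond}^{2}[\mathcal{E}]\bigr)\,\subseteq\,\Cn_{\mathbf{\Omega}}\bigl(\mathbf{e}_{\diamond}^{2}[\mathbf{d}_{\diamond}^{2}[\mathcal{E}]]\bigr)
$$
and its reverse (modulo transport along $\gamma^{\varphi,\psi}$), and similarly for the unit. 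These inclusions follow from idempotence and isotony of the closure operators, from Lemma~\ref{lemaSatisfaccion} (satisfaction condition), and from the compatibility of $\mathbf{d}_{\diamond}$ with composition and identities established via the natural transformation $\eta^{\mathbf{d}}$ of Proposition~\ref{fundPdextranaturalbis}, which ensures that $(\mathbf{e}\comp\mathbf{d})_{\diamond}$ and $\mathbf{e}_{\diamond}\comp\mathbf{d}_{\diamond}$ differ only by transport along the canonical isomorphism $\gamma^{\mathbf{d},\mathbf{e}}$ of Proposition~\ref{pseudoPol}.

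The main obstacle, and really the only nontrivial verification, is the compatibility of the $2$-cells $\gamma^{\mathbf{d},\mathbf{e}}$ and $\nu^{\mathbf{\Sigma}}$ with the matrix structure of alg-morphisms: one must check that the four components $\lambda_{0},\lambda_{1},\lambda_{2},\lambda_{3}$ of the adjoint squares involved are intertwined by the candidate $2$-cell, i.e., that the corresponding adjoint square is indeed an alg-transformation in the sense required. However, once again, because the target monads live over preorders, every diagram of $2$-cells between parallel $1$-cells commutes trivially, so the compatibility conditions listed in Proposition~\ref{AlgAdSq}, together with the coherence pentagon and triangle axioms for pseudo-functors, hold automatically. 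The remaining work is a routine verification, using the definition of $\mathbf{d}_{\diamond}$ and the pseudo-functoriality of $\mathrm{MSet}^{\ttcoprod}$ already established, that the required inclusions of closed sets of equations hold; no substantial further computation is needed.
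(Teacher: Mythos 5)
Your proposal follows essentially the same route as the paper: the object mapping sends $\mathbf{\Sigma}$ to $(\mathbf{Sub}(\Eq(\mathbf{\Sigma})),\mathbb{T}_{\Cn_{\mathbf{\Sigma}}})$ and the morphism mapping sends $\mathbf{d}$ to the alg-morphism induced by the continuous mapping $((\coprod_{\varphi})^{2},\mathbf{d}_{\diamond}^{2})$ through the embedding of $\mathbf{MClSp}_{\boldsymbol{\mathcal{V}}}$ into $\mathbf{Mnd}_{\boldsymbol{\mathcal{V}},\mathrm{alg}}$, with all coherence automatic because the underlying categories are preorders. The paper simply declares the rest ``obvious,'' whereas you correctly fill in the one substantive check, namely that $\Cn_{\mathbf{\Omega}}((\mathbf{e}\comp\mathbf{d})_{\diamond}^{2}[\mathcal{E}])$ and $\Cn_{\mathbf{\Omega}}(\mathbf{e}_{\diamond}^{2}[\mathbf{d}_{\diamond}^{2}[\mathcal{E}]])$ coincide, which the paper itself establishes only later in the proof that $\mathbf{Spf}$ is a category.
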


\begin{proof}
We restrict ourselves to define the object and morphism mappings of
the pseudo-functor $\Cn$.  The object mapping of $\Cn$ is that which
sends a signature $\mathbf{\Sigma}$ to the monad
$(\mathbf{Sub}(\Eq(\mathbf{\Sigma})),\mathbb{T}_{\Cn_{\mathbf{\Sigma}}})$,
and the morphism mapping of $\Cn$ is that which sends a signature
morphism $\mathbf{d}\colon \mathbf{\Sigma}\mor \mathbf{\Lambda}$ to
the $\mathrm{alg}$-morphism
$$
(\textstyle{\bigcup}_{(\coprod_{\varphi})^{2},\Eq(\mathbf{\Lambda})}
\comp \mathbf{d}_{\diamond}^{2}[\cdot] \ladj
(\mathbf{d}_{\diamond}^{2})^{-1}[\cdot] \comp
\Delta_{(\coprod_{\varphi})^{2},\Eq(\mathbf{\Lambda})})
$$
from the monad
$(\mathbf{Sub}(\Eq(\mathbf{\Sigma})),\mathbb{T}_{\Cn_{\mathbf{\Sigma}}})$
to the monad
$(\mathbf{Sub}(\Eq(\mathbf{\Lambda})),\mathbb{T}_{\Cn_{\mathbf{\Lambda}}})$.

So defined, it is obvious that $\Cn$ is a pseudo-functor from
$\mathbf{Sig}$ to $\mathbf{Mnd}_{\boldsymbol{\mathcal{V}},\mathrm{alg}}$.
\end{proof}

Relying on the results just stated we propose the following notion of
entailment system, that generalizes that one by Meseguer
in~\cite{m89}, pp.  282--283.

\begin{definition}
An \emph{entailment system} is a quintuple $\mathcal{E} =
(\mathbf{Sig},\mathrm{T},\mathrm{L},\mathrm{M},\Cn)$ with $\mathbf{Sig}$ a
$\boldsymbol{\mathcal{U}}$-category whose objects are called
\emph{signatures}, $\mathrm{T}$ a pseudo-functor from $\mathbf{Sig}$ to
$\mathbf{Cat}$, $\mathrm{L}$ a functor from $\mathbf{Cat}$ to
$\mathbf{Set}_{\boldsymbol{\mathcal{V}}}$, $\mathrm{M}$ a functor from
$\mathbf{Set}_{\boldsymbol{\mathcal{V}}}$ to
$\mathbf{Set}_{\boldsymbol{\mathcal{V}}}$, and $\Cn$ a pseudo-functor from
$\mathbf{Sig}$ to $\mathbf{Mnd}_{\boldsymbol{\mathcal{V}},\mathrm{alg}}$ such
that the following diagram commutes up to isomorphism
$$
\xymatrix@C=16ex@R=12ex{
\mathbf{Sig}
\ar[rr]^{\Cn}
\ar[d]_{\mathrm{T}}
\ar[rd]^{\mathrm{Sen}}& {} &
\mathbf{Mnd}_{\boldsymbol{\mathcal{V}},\mathrm{alg}}
\ar[d]^{\pi_{0}} \\
\mathbf{Cat}
\ar[r]_{L} &
\mathbf{Set}_{\boldsymbol{\mathcal{V}}}
\ar[r]_{\mathrm{M}} &
\mathbf{Set}_{\boldsymbol{\mathcal{V}}}
}
$$
where $\mathrm{Sen} = \mathrm{L}\comp\mathrm{T}$ and $\pi_{0}$ the
functor from $\mathbf{Mnd}_{\boldsymbol{\mathcal{V}},\mathrm{alg}}$ to
$\mathbf{Set}_{\boldsymbol{\mathcal{V}}}$ extracting the underlying set of
the set of objects of the first component of its pairs.
\end{definition}

Taking into account that, for every signature $\mathbf{\Sigma}$, we
have that $\mathrm{Sub}(\Eq(\mathbf{\Sigma}))$ and
$\mathrm{Sub}(\coprod\Eq(\mathbf{\Sigma}))$ are isomorphic, we get the
following

\begin{corollary}
The quintuple $\mathfrak{Eqcn} =
(\mathbf{Sig},\mathrm{Ter},\mathrm{L},\mathrm{M},\Cn)$ with
$\mathbf{Sig}$ the category of signatures, $\mathrm{L}$ the functor
from $\mathbf{Cat}$ to $\mathbf{Set}_{\boldsymbol{\mathcal{V}}}$ which
sends a $\boldsymbol{\mathcal{U}}$-category $\mathbf{C}$ to the
$\boldsymbol{\mathcal{V}}$-small set $\coprod_{x,y\in
\mathbf{C}}\mathrm{Hom}(x,y)^{2}$, and $\mathrm{M}$ the covariant
power set endofunctor of $\mathbf{Set}_{\boldsymbol{\mathcal{V}}}$, is
an entailment system, the so-called \emph{many-sorted equational
consequence} entailment system, or \emph{equational consequence}
entailment system.
\end{corollary}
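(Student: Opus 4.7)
The plan is to verify each component demanded by the preceding definition of entailment system and then check the single coherence condition, namely that the displayed diagram commutes up to isomorphism. Three of the five data are already in hand: $\mathbf{Sig}$ is a $\boldsymbol{\mathcal{U}}$-category by construction, $\mathrm{Ter}\colon\mathbf{Sig}\to\mathbf{Cat}$ is the pseudo-functor of Proposition~\ref{pseudoPol}, and $\Cn\colon\mathbf{Sig}\to\mathbf{Mnd}_{\boldsymbol{\mathcal{V}},\mathrm{alg}}$ is the pseudo-functor just constructed. Moreover, $\mathrm{M}$ is the covariant power-set endofunctor on $\mathbf{Set}_{\boldsymbol{\mathcal{V}}}$, which is a standard functor (the universe $\boldsymbol{\mathcal{V}}$ is needed because $\mathrm{Sub}(\mathrm{Eq}(\mathbf{\Sigma}))$ is $\boldsymbol{\mathcal{V}}$-small but not $\boldsymbol{\mathcal{U}}$-small).

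First I would check that $\mathrm{L}$ is indeed a functor from $\mathbf{Cat}$ to $\mathbf{Set}_{\boldsymbol{\mathcal{V}}}$. On objects $\mathrm{L}(\mathbf{C})=\coprod_{x,y\in\mathbf{C}}\mathrm{Hom}_{\mathbf{C}}(x,y)^{2}$, which is $\boldsymbol{\mathcal{V}}$-small whenever $\mathbf{C}$ is a $\boldsymbol{\mathcal{U}}$-category. On a functor $F\colon\mathbf{C}\to\mathbf{D}$, one defines $\mathrm{L}(F)$ by $((P,Q),(x,y))\longmapsto ((F(P),F(Q)),(F(x),F(y)))$; preservation of identities and composites is immediate from the corresponding properties of $F$, so $\mathrm{L}$ is a (strict) functor. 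In particular, taking $\mathbf{C}=\mathbf{Ter}(\mathbf{\Sigma})$ gives
\[
\mathrm{Sen}(\mathbf{\Sigma})
=\mathrm{L}(\mathrm{Ter}(\mathbf{\Sigma}))
=\coprod_{X,Y\in\boldsymbol{\mathcal{U}}^{S}}\mathrm{Hom}(Y,\mathrm{T}_{\mathbf{\Sigma}}(X))^{2}
=\textstyle\coprod\mathrm{Eq}(\mathbf{\Sigma}),
\]
so $\mathrm{Sen}$ assigns to $\mathbf{\Sigma}$ exactly the (unsorted) set of all labelled $\mathbf{\Sigma}$-equations, as expected.

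Next I would produce the required isomorphism $\pi_{0}\circ\Cn\;\cong\;\mathrm{M}\circ\mathrm{Sen}$. On a signature $\mathbf{\Sigma}$ one has
\[
(\mathrm{M}\circ\mathrm{Sen})(\mathbf{\Sigma})
=\mathrm{Sub}(\textstyle\coprod\mathrm{Eq}(\mathbf{\Sigma})),\qquad
(\pi_{0}\circ\Cn)(\mathbf{\Sigma})
=\mathrm{Sub}(\mathrm{Eq}(\mathbf{\Sigma})),
\]
and the hint preceding the corollary (namely, that for any sorted set the lattice of sorted subsets is canonically isomorphic to the power-set of its disjoint union) provides a canonical bijection $\iota_{\mathbf{\Sigma}}$ between these two sets. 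Explicitly, $\iota_{\mathbf{\Sigma}}$ sends a subset of the disjoint union $\coprod\mathrm{Eq}(\mathbf{\Sigma})$ to the $(\boldsymbol{\mathcal{U}}^{S})^{2}$-sorted family of its sections, with inverse the disjoint union of components.

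Finally, I would check naturality of $\iota$ in the signature morphism $\mathbf{d}\colon\mathbf{\Sigma}\to\mathbf{\Lambda}$. On one side, $\mathrm{M}(\mathrm{L}(\mathbf{d}_{\diamond}))$ is the direct-image map $E\mapsto\{((\mathbf{d}_{\diamond}(P),\mathbf{d}_{\diamond}(Q)),(\coprod_{\varphi}X,\coprod_{\varphi}Y))\mid((P,Q),(X,Y))\in E\}$ from $\mathrm{Sub}(\coprod\mathrm{Eq}(\mathbf{\Sigma}))$ to $\mathrm{Sub}(\coprod\mathrm{Eq}(\mathbf{\Lambda}))$. On the other side, $\pi_{0}(\Cn(\mathbf{d}))$ is the left adjoint $\bigcup_{(\coprod_{\varphi})^{2},\mathrm{Eq}(\mathbf{\Lambda})}\circ\mathbf{d}_{\diamond}^{2}[\cdot]$ defining the alg-morphism, which by construction is exactly the sorted direct image of $\mathbf{d}_{\diamond}^{2}$; under the canonical bijection $\iota$ these two direct-image operations coincide on the nose. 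The commutativity up to isomorphism of the coherence diagram follows, and compatibility with the pseudo-functorial structure of $\mathrm{Ter}$ and $\Cn$ is inherited from the same isomorphism $\iota$. The only delicate point is the naturality of $\iota$ with respect to the mixing operation $\bigcup\circ f[\cdot]$; but this reduces to the trivial fact that direct image on sorted sets and direct image on the disjoint union agree through $\iota$, so no real obstacle arises.
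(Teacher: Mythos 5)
Your proposal is correct and follows essentially the same route as the paper, which offers no proof beyond the remark immediately preceding the corollary that $\mathrm{Sub}(\Eq(\mathbf{\Sigma}))$ and $\mathrm{Sub}(\coprod\Eq(\mathbf{\Sigma}))$ are canonically isomorphic; your identification of $\pi_{0}\comp\Cn$ with $\mathrm{Sub}(\Eq(\farg))$ and of $\mathrm{M}\comp\mathrm{Sen}$ with $\mathrm{Sub}(\coprod\Eq(\farg))$, together with the naturality check for the direct-image maps, is exactly the content the paper leaves implicit.
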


Observe that the many-sorted equational consequence entailment system
$\mathfrak{Eqcn}$ embodies the essentials of the syntactical many-sorted
equational deduction.


After clarifying category-theoretically the concept of equational
consequence, we proceed to define the concept of many-sorted
specification and that of many-sorted specification morphism.

\begin{definition}
A \emph{many-sorted specification} is a pair
$(\mathbf{\Sigma},\ec{E})$, where $\mathbf{\Sigma}$ is a signature
while $\ec{E}\incl\Eq(\mathbf{\Sigma})$.  A \emph{many-sorted
specification morphism} from $(\mathbf{\Sigma},\ec{E})$ to
$(\mathbf{\Lambda},\ec{H})$ is a signature morphism
$\mathbf{d}\colon\mathbf{\Sigma}\mor\mathbf{\Lambda}$ such that
$\mathbf{d}_{\diamond}^{2}[\ec{E}]\incl
\Cn_{\mathbf{\Lambda}}(\ec{H})$.  From now on, to shorten
terminology, we will say \emph{specification} and
\emph{specification morphism} instead of \emph{many-sorted
specification} and \emph{many-sorted specification morphism},
respectively.  Besides, if in a specification
$(\mathbf{\Sigma},\ec{E})$ the set $\ec{E}$ of equations is
closed, i.e., $\Cn_{\mathbf{\Sigma}}(\ec{E}) = \ec{E}$, then we
call $(\mathbf{\Sigma},\ec{E})$ a \emph{theory}.  To abbreviate,
we write, sometimes, $\cl{\ec{E}}$ instead of
$\Cn_{\mathbf{\Sigma}}(\ec{E})$.
\end{definition}

\begin{proposition}
The specifications and the specification morphisms determine a
category denoted as $\mathbf{Spf}$.
\end{proposition}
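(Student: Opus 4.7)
The plan is to show that the category structure on $\mathbf{Spf}$ is inherited from $\mathbf{Sig}$, so the only real work is to verify that the class of specification morphisms is closed under the identities and composition of $\mathbf{Sig}$. That is, we take as identity on a specification $(\mathbf{\Sigma},\ec{E})$ the underlying identity signature morphism $\id_{\mathbf{\Sigma}}$, and as composition of $\mathbf{d}\colon(\mathbf{\Sigma},\ec{E})\mor(\mathbf{\Lambda},\ec{H})$ and $\mathbf{e}\colon(\mathbf{\Lambda},\ec{H})\mor(\mathbf{\Omega},\ec{K})$ the composition $\mathbf{e}\comp\mathbf{d}$ in $\mathbf{Sig}$. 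Associativity and the identity laws will then be immediate from the corresponding laws in $\mathbf{Sig}$.

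For the identity, I would use the natural isomorphism $\nu^{\mathbf{\Sigma}}$ of the pseudo-functor $\mathrm{Ter}$, whose component at an $S$-sorted set $X$ is an isomorphism $X\iso\coprod_{\id_{S}}X$ in $\mathbf{Ter}(\mathbf{\Sigma})$. Given $(P,Q)\in\ec{E}_{X,Y}$, this yields that $(\id_{\mathbf{\Sigma}})_{\diamond}(P)$ and $(\id_{\mathbf{\Sigma}})_{\diamond}(Q)$ are obtained from $P,Q$ by pre- and post-composition with isomorphisms in $\mathbf{Ter}(\mathbf{\Sigma})$; by Lemma \ref{realizdiamondcompcomm} their realizations on any $\mathbf{\Sigma}$-algebra $\mathbf{A}$ agree iff those of $P,Q$ do, so $\mathbf{A}\models^{\mathbf{\Sigma}}(P,Q)$ iff $\mathbf{A}\models^{\mathbf{\Sigma}}((\id_{\mathbf{\Sigma}})_{\diamond}(P),(\id_{\mathbf{\Sigma}})_{\diamond}(Q))$. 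Hence $(\id_{\mathbf{\Sigma}})_{\diamond}^{2}[\ec{E}]\incl\Cn_{\mathbf{\Sigma}}(\ec{E})$.

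For composition, suppose $\mathbf{d}_{\diamond}^{2}[\ec{E}]\incl\Cn_{\mathbf{\Lambda}}(\ec{H})$ and $\mathbf{e}_{\diamond}^{2}[\ec{H}]\incl\Cn_{\mathbf{\Omega}}(\ec{K})$. By the proposition exhibiting $\Cn$ as a pseudo-functor into $\mathbf{Mnd}_{\boldsymbol{\mathcal{V}},\mathrm{alg}}$, the mapping $\mathbf{e}_{\diamond}^{2}$ is $(\boldsymbol{\mathcal{U}}^{T})^{2}$-continuous from $(\Eq(\mathbf{\Lambda}),\Cn_{\mathbf{\Lambda}})$ to the suitable relabelling of $(\Eq(\mathbf{\Omega}),\Cn_{\mathbf{\Omega}})$; concretely, this was established via the Satisfaction Condition (Lemma \ref{lemaSatisfaccion}) and it yields the implication $\mathbf{e}_{\diamond}^{2}[\Cn_{\mathbf{\Lambda}}(\ec{H})]\incl\Cn_{\mathbf{\Omega}}(\mathbf{e}_{\diamond}^{2}[\ec{H}])$. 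Combining,
\begin{equation*}
\mathbf{e}_{\diamond}^{2}[\mathbf{d}_{\diamond}^{2}[\ec{E}]]\incl
\mathbf{e}_{\diamond}^{2}[\Cn_{\mathbf{\Lambda}}(\ec{H})]\incl
\Cn_{\mathbf{\Omega}}(\mathbf{e}_{\diamond}^{2}[\ec{H}])\incl
\Cn_{\mathbf{\Omega}}(\Cn_{\mathbf{\Omega}}(\ec{K}))=
\Cn_{\mathbf{\Omega}}(\ec{K}),
\end{equation*}
using the isotony and idempotency of $\Cn_{\mathbf{\Omega}}$. Finally, the natural isomorphism $\gamma^{\mathbf{d},\mathbf{e}}$ of the pseudo-functor $\mathrm{Ter}$ identifies $(\mathbf{e}\comp\mathbf{d})_{\diamond}$ with $\mathbf{e}_{\diamond}\comp\mathbf{d}_{\diamond}$ up to isomorphism, and, by the same argument as for the identity case, this isomorphism preserves validity in every $\mathbf{\Omega}$-algebra, so $(\mathbf{e}\comp\mathbf{d})_{\diamond}^{2}[\ec{E}]\incl\Cn_{\mathbf{\Omega}}(\ec{K})$ as well.

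The main obstacle, as the outline suggests, is the bookkeeping around the pseudo-functoriality of $\mathrm{Ter}$: neither $(\id_{\mathbf{\Sigma}})_{\diamond}$ nor $(\mathbf{e}\comp\mathbf{d})_{\diamond}$ is strictly equal to the obvious thing, only isomorphic to it via $\nu$ and $\gamma$. The clean way to handle this uniformly is to observe that if $(P,Q)$ and $(P',Q')$ are related by $P'=\alpha\dcomp P\dcomp\beta$, $Q'=\alpha\dcomp Q\dcomp\beta$ for isomorphisms $\alpha,\beta$ in the term category, then $(P,Q)\in\Cn(\Gamma)$ iff $(P',Q')\in\Cn(\Gamma)$ for any $\Gamma$, an immediate consequence of Lemma \ref{realizdiamondcompcomm} together with the semantic characterisation of $\Cn$. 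Once this invariance lemma is in hand, the identity and composition closure properties, and hence the category axioms, follow routinely.
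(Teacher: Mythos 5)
Your proof is correct and follows essentially the same route as the paper: the same chain of inclusions $\mathbf{e}_{\diamond}^{2}[\mathbf{d}_{\diamond}^{2}[\ec{E}]]\incl\mathbf{e}_{\diamond}^{2}[\Cn_{\mathbf{\Lambda}}(\ec{H})]\incl\Cn_{\mathbf{\Omega}}(\mathbf{e}_{\diamond}^{2}[\ec{H}])\incl\Cn_{\mathbf{\Omega}}(\ec{F})$, the same appeal to the continuity of $\mathbf{e}_{\diamond}^{2}$ established via the satisfaction condition, and the same preliminary observation that $(\mathbf{e}\comp\mathbf{d})_{\diamond}$ and $\mathbf{e}_{\diamond}\comp\mathbf{d}_{\diamond}$ (being isomorphic rather than equal) translate an equation to semantically equivalent equations. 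You are in fact slightly more complete than the paper, which explicitly treats only closure under composition and omits the analogous check for identities that you carry out via $\nu^{\mathbf{\Sigma}}$.
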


\begin{proof}
We restrict ourselves to prove that the composition of specification
morphisms is a specification morphism.

Before proving this, let us remark that if
$\mathbf{d}\colon\mathbf{\Sigma}\mor\mathbf{\Lambda}$ and
$\mathbf{e}\colon\mathbf{\Lambda}\mor \mathbf{\Omega}$ are signature morphisms,
$(P,Q)$ a $\mathbf{\Sigma}$-equation of type $(X,Y)$ and $\mathbf{C}$ a
$\mathbf{\Omega}$-algebra, then
$\mathbf{e}_{\diamond}(\mathbf{d}_{\diamond}(P))^{\mathbf{C}} =
\mathbf{e}_{\diamond}(\mathbf{d}_{\diamond}(Q))^{\mathbf{C}}$ iff
$(\mathbf{e}\comp\mathbf{d})_{\diamond}(P)^{\mathbf{C}} =
(\mathbf{e}\comp\mathbf{d})_{\diamond}(Q)^{\mathbf{C}}$.  Therefore, for every
family of $\mathbf{\Sigma}$-equations \ec{E}, we have that
$\Cn_{\mathbf{\Omega}}(\mathbf{e}_{\diamond}^{2}[\mathbf{d}_{\diamond}^{2}[\ec{E}]])=
\Cn_{\mathbf{\Omega}}((\mathbf{e}\comp\mathbf{d})_{\diamond}^{2}[\ec{E}])$.  Now, if
$\mathbf{d}\colon (\mathbf{\Sigma},\ec{E})\mor(\mathbf{\Lambda},\ec{H})$ and
$\mathbf{e}\colon (\mathbf{\Lambda},\ec{H})\mor (\mathbf{\Omega},\ec{F})$ are
specification morphisms, then
$$
\mathbf{e}_{\diamond}^{2}[\mathbf{d}_{\diamond}^{2}[\ec{E}]] \incl
\mathbf{e}_{\diamond}^{2}[\Cn_{\mathbf{\Lambda}}(\ec{H})] \incl
\Cn_{\mathbf{\Omega}}(\mathbf{e}_{\diamond}^{2}[\ec{H}]) \incl
\Cn_{\mathbf{\Omega}}(\ec{F}),
$$
from which the proposition follows.
\end{proof}

\begin{remark}
The category $\mathbf{Th}_{\mathrm{b}}$ with objects the theories and
morphisms from one theory to another the, so-called by Bénabou
in~\cite{jB68}, p. (sub) 27, \emph{banal} morphisms (also known as
\emph{axiom-preserving} morphisms), is
$$
\mathbf{Th}_{\mathrm{b}} = \int^{\mathbf{Sig}}\mathrm{Fix}\comp \mathrm{Cn},
$$
where $\mathrm{Fix}$ is the contravariant functor from
$\mathbf{Mnd}_{\boldsymbol{\mathcal{V}},\mathrm{alg}}$ to
$\mathbf{Cat}_{\boldsymbol{\mathcal{V}}}$ which sends a monad
$(\mathbf{C},\mathbb{T})$ for $\boldsymbol{\mathcal{V}}$, to the
preordered set $\mathbf{Fix}(\mathbb{T}) =
(\mathrm{Fix}(\mathbb{T}),\preccurlyeq)$, of the fixed points of
$\mathbb{T}$, being $\mathrm{Fix}(\mathbb{T})$ the set of all
$\mathbb{T}$-algebras $(A,\delta)$ such that the structural morphism
$\delta$ from $\mathrm{T}(A)$ to $A$ is an isomorphism, and
$\preccurlyeq$ the preorder on $\mathrm{Fix}(\mathbb{T})$ defined by
imposing that $(A,\delta) \preccurlyeq (A',\delta')$ iff there exists
a $\mathbb{T}$-homomorphism from $(A,\delta)$ to $(A',\delta')$.
Therefore, informally speaking, we can say that the world of theories,
$\mathbf{Th}_{\mathrm{b}}$, is the totalization over $\mathbf{Sig}$ of
the fixed points of the consequences.
\end{remark}

We state next some, obvious, relations between the categories
$\mathbf{Sig}$ and $\mathbf{Spf}$ that, notwithstanding, will show to
be useful shortly afterwards.  Every signature $\mathbf{\Sigma}$
determines the specification $(\mathbf{\Sigma},\vacio)$, the so-called
\emph{indiscrete specification}, from which we get an inclusion
functor
$$
\mathrm{sp}_{\mathrm{i}}\colon\mathbf{Sig}\mor\mathbf{Spf}
$$
that, in its turn, is left adjoint to the forgetful functor
$$
\mathrm{sig}\colon\mathbf{Spf}\mor\mathbf{Sig}
$$
which sends an specifi\-cation $(\mathbf{\Sigma},\ec{E})$ to the
underlying signature $\mathbf{\Sigma}$.  Besides, $\mathbf{Sig}$ is a
retract of $\mathbf{Spf}$, i.e.,
$\mathrm{sig}\comp\mathrm{sp}_{\mathrm{i}} =
\mathrm{Id}_{\mathbf{Sig}}$.

The functor $\mathrm{sig}$, on its part, has a right adjoint
$$
\mathrm{sp}_{\mathrm{d}}\colon\mathbf{Sig}\mor\mathbf{Spf}
$$
which sends a signature $\mathbf{\Sigma}$ to
$(\mathbf{\Sigma},\Eq(\mathbf{\Sigma}))$, the so-called \emph{discrete
specification}.

What we want now is to lift the contravariant functor $\Alg$ from
$\mathbf{Sig}$ to $\mathbf{Cat}$ to the category $\mathbf{Spf}$,
by assigning, in particular, to a specification
$(\mathbf{\Sigma},\ec{E})$ the category
$\mathbf{Alg}(\mathbf{\Sigma},\ec{E})$ of its models.

\begin{proposition}
There exists a contravariant functor $\Alg^{\mathrm{sp}}$ from $\mathbf{Spf}$
to $\mathbf{Cat}$ defined as follows
\begin{enumerate}
\item $\Alg^{\mathrm{sp}}$ sends a specification
      $(\mathbf{\Sigma},\ec{E})$ to the category
      $\mathrm{Alg}^{\mathrm{sp}}(\mathbf{\Sigma},\ec{E}) =
      \mathbf{Alg}(\mathbf{\Sigma},\ec{E})$ of its models, i.e., the full
      subcategory of $\mathbf{Alg}(\mathbf{\Sigma})$ determined by those
      $\mathbf{\Sigma}$\nobreakdash-algebras which satisfy all the equations in $\ec{E}$.

\item $\Alg^{\mathrm{sp}}$ sends a specification
      morphism $\mathbf{d}$ from $(\mathbf{\Sigma},\ec{E})$ to
      $(\mathbf{\Lambda},\ec{H})$ to the functor
      $\Alg^{\mathrm{sp}}(\mathbf{d}) = \mathbf{d}^{\ast}$ from
      $\mathbf{Alg}(\mathbf{\Lambda},\ec{H})$ to
      $\mathbf{Alg}(\mathbf{\Sigma},\ec{E})$, obtained from the functor
      $\mathbf{d}^{\ast}$ from $\mathbf{Alg}(\mathbf{\Lambda})$ to
      $\mathbf{Alg}(\mathbf{\Sigma})$ by bi-restriction.
\end{enumerate}

\end{proposition}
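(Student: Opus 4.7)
The plan is to show that the definitions in (1) and (2) are well posed and that functoriality is inherited from the functor $\Alg$ already constructed on $\mathbf{Sig}$. The object assignment is unproblematic: for a specification $(\mathbf{\Sigma},\mathcal{E})$, the subclass of $\mathbf{\Sigma}$-algebras satisfying every equation in $\mathcal{E}$ is closed under homomorphisms in the trivial sense, so the full subcategory $\mathbf{Alg}(\mathbf{\Sigma},\mathcal{E})$ of $\mathbf{Alg}(\mathbf{\Sigma})$ is well defined.

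The main content lies in the morphism assignment, where I must check that if $\mathbf{d}\colon(\mathbf{\Sigma},\mathcal{E})\to(\mathbf{\Lambda},\mathcal{H})$ is a specification morphism and $\mathbf{B}\in\mathbf{Alg}(\mathbf{\Lambda},\mathcal{H})$, then $\mathbf{d}^{\ast}(\mathbf{B})\in\mathbf{Alg}(\mathbf{\Sigma},\mathcal{E})$. Let $(P,Q)\in\mathcal{E}_{X,Y}$ be arbitrary. By Lemma~\ref{lemaSatisfaccion} (the satisfaction condition) we have
$$
\mathbf{d}^{\ast}(\mathbf{B})\models^{\mathbf{\Sigma}}_{X,Y}(P,Q)\ \text{ iff }\ \mathbf{B}\models^{\mathbf{\Lambda}}_{\coprod_{\varphi}X,\coprod_{\varphi}Y}(\mathbf{d}_{\diamond}(P),\mathbf{d}_{\diamond}(Q)).
$$
Now, by the definition of specification morphism, $\mathbf{d}_{\diamond}^{2}[\mathcal{E}]\subseteq\mathrm{Cn}_{\mathbf{\Lambda}}(\mathcal{H})$, so $(\mathbf{d}_{\diamond}(P),\mathbf{d}_{\diamond}(Q))\in\mathrm{Cn}_{\mathbf{\Lambda}}(\mathcal{H})$. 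Since $\mathbf{B}\models^{\mathbf{\Lambda}}\mathcal{H}$, the very definition of the semantical consequence operator $\mathrm{Cn}_{\mathbf{\Lambda}}$ yields $\mathbf{B}\models^{\mathbf{\Lambda}}_{\coprod_{\varphi}X,\coprod_{\varphi}Y}(\mathbf{d}_{\diamond}(P),\mathbf{d}_{\diamond}(Q))$, whence $\mathbf{d}^{\ast}(\mathbf{B})$ satisfies $(P,Q)$. Since $(P,Q)$ was arbitrary in $\mathcal{E}$, this shows $\mathbf{d}^{\ast}$ sends objects of $\mathbf{Alg}(\mathbf{\Lambda},\mathcal{H})$ into $\mathbf{Alg}(\mathbf{\Sigma},\mathcal{E})$; because both are full subcategories and $\mathbf{d}^{\ast}$ already sends $\mathbf{\Lambda}$-homomorphisms to $\mathbf{\Sigma}$-homomorphisms, the bi-restriction is automatic.

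Finally, functoriality and contravariance are inherited from $\Alg$: for composable specification morphisms $\mathbf{d},\mathbf{e}$ we have $(\mathbf{e}\comp\mathbf{d})^{\ast}=\mathbf{d}^{\ast}\comp\mathbf{e}^{\ast}$ as functors between the ambient algebra categories, and these identities plainly transfer to the bi-restrictions; similarly $(\id_{(\mathbf{\Sigma},\mathcal{E})})^{\ast}=\id_{\mathbf{Alg}(\mathbf{\Sigma},\mathcal{E})}$. The only potential obstacle is the verification that $\mathbf{d}^{\ast}$ really lands in the restricted category, i.e.\ the interaction of the consequence operator $\mathrm{Cn}_{\mathbf{\Lambda}}$ with the translation $\mathbf{d}_{\diamond}^{2}$ and with satisfaction; this is precisely handled by the satisfaction condition (Lemma~\ref{lemaSatisfaccion}) together with the definition of specification morphism, so no further machinery is required.
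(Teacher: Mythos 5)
Your argument is correct and coincides with the paper's own proof: the key step in both is that $\mathbf{B}\models^{\mathbf{\Lambda}}\ec{H}$ implies $\mathbf{B}\models^{\mathbf{\Lambda}}\Cn_{\mathbf{\Lambda}}(\ec{H})\supseteq\mathbf{d}_{\diamond}^{2}[\ec{E}]$, after which the satisfaction condition (Lemma~\ref{lemaSatisfaccion}) gives $\mathbf{d}^{\ast}(\mathbf{B})\models^{\mathbf{\Sigma}}\ec{E}$. Your additional remarks on the object assignment and on functoriality being inherited by bi-restriction are routine and consistent with what the paper leaves implicit.
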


\begin{proof}
Let $\mathbf{B}$ be a $\mathbf{\Lambda}$-algebra such that
$\mathbf{B}\models^{\mathbf{\Lambda}} \ec{H}$.  Then
$\mathbf{B}\models^{\mathbf{\Lambda}} \Cn_{\mathbf{\Lambda}}(\ec{H})$,
therefore $\mathbf{B}\models^{\mathbf{\Lambda}}
\mathbf{d}_{\diamond}^{2}[\ec{E}]$ hence, by
Lemma~\ref{lemaSatisfaccion},
$\mathbf{d}^{\ast}(\mathbf{B})\models^{\mathbf{\Sigma}} \ec{E}$.
\end{proof}

\begin{remark}
By applying the construction of Ehresmann-Grothendieck to the
contravariant functor $\Alg^{\mathrm{sp}}$ from $\mathbf{Spf}$ to
$\mathbf{Cat}$ we get the category $\mathbf{Alg}^{\mathrm{sp}} =
\int^{\mathbf{Spf}}\Alg^{\mathrm{sp}}$ into which is embedded the
category $\mathbf{Alg}$ as a retract (because $\mathbf{Sig}$ is a
retract of $\mathbf{Spf}$).
\end{remark}

On the other hand, taking care of the Completeness Theorem
in~\cite{cs05}, every family of equations
$\ec{E}\subseteq\Eq(\mathbf{\Sigma})$ determines a congruence on
the category $\mathbf{Ter}(\mathbf{\Sigma})$, hence a quotient
category $\mathbf{Ter}(\mathbf{\Sigma})/\cl{\ec{E}}$.  Besides,
this procedure can be completed, as stated in the following
proposition, up to a pseudo-functor $\mathrm{Ter}^{\mathrm{sp}}$
from $\mathbf{Spf}$ to $\mathbf{Cat}$, and the restriction of
$\mathrm{Ter}^{\mathrm{sp}}$ to $\mathbf{Sig}$ is precisely the
pseudo-functor $\mathrm{Ter}$.

\begin{proposition}
There exists a pseudo-functor $\mathrm{Ter}^{\mathrm{sp}}$ from
$\mathbf{Spf}$ to $\mathbf{Cat}$ defined as follows
\begin{enumerate}
\item $\mathrm{Ter}^{\mathrm{sp}}$ sends a
      specification $(\mathbf{\Sigma},\ec{E})$ to the
      category $\Ter^{\mathrm{sp}}(\mathbf{\Sigma},\ec{E}) =
      \mathbf{Ter}(\mathbf{\Sigma},\ec{E})$, where
      $\mathbf{Ter}(\mathbf{\Sigma},\ec{E})$
      is the quotient category
      $\mathbf{Ter}(\mathbf{\Sigma})/\cl{\ec{E}}$.

\item $\mathrm{Ter}^{\mathrm{sp}}$ sends a specification
      morphism $\mathbf{d}$ from $(\mathbf{\Sigma},\ec{E})$ to
      $(\mathbf{\Lambda},\ec{H})$ to the functor
      $\Ter^{\mathrm{sp}}(\mathbf{d})$, also occasionally denoted by
      $\mathbf{d}_{\diamond}$, from the quotient category
      $\mathbf{Ter}(\mathbf{\Sigma},\ec{E}) =
      \mathbf{Ter}(\mathbf{\Sigma})/\cl{\ec{E}}$ to the quotient
      category $\mathbf{Ter}(\mathbf{\Lambda},\ec{H}) =
      \mathbf{Ter}(\mathbf{\Lambda})/\cl{\ec{H}}$, which assigns to a
      morphism $[P]_{\cl{\ec{E}}}$ from $X$ to $Y$ in
      $\mathbf{Ter}(\mathbf{\Sigma},\ec{E})$ the morphism
      $$
      \Ter^{\mathrm{sp}}(\mathbf{d})([P]_{\cl{\ec{E}}}) =
      [\mathbf{d}_{\diamond}(P)]_{\cl{\ec{H}}}\colon \tcoprod_{\varphi}X\mor
      \tcoprod_{\varphi}Y
      $$
      in $\mathbf{Ter}(\mathbf{\Lambda},\ec{H})$.

\end{enumerate}

\end{proposition}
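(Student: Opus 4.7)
The plan is to reduce everything to the pseudo-functor $\mathrm{Ter}$ on $\mathbf{Sig}$ (Proposition~\ref{pseudoPol}) by checking that each of the pieces defining it descends to the quotient categories $\mathbf{Ter}(\mathbf{\Sigma})/\cl{\ec{E}}$. First, I would note that for any specification $(\mathbf{\Sigma},\ec{E})$, by the Completeness Theorem cited in the excerpt, $\cl{\ec{E}}=\Cn_{\mathbf{\Sigma}}(\ec{E})=\mathrm{Cg}^{\Pi}_{\mathbf{Ter}(\mathbf{\Sigma})}(\ec{E})$ is a $\Pi$-compatible congruence on $\mathbf{Ter}(\mathbf{\Sigma})$, so the quotient category $\mathbf{Ter}(\mathbf{\Sigma},\ec{E})=\mathbf{Ter}(\mathbf{\Sigma})/\cl{\ec{E}}$ is well-defined and comes equipped with a canonical projection functor $\pr_{\cl{\ec{E}}}\colon\mathbf{Ter}(\mathbf{\Sigma})\mor\mathbf{Ter}(\mathbf{\Sigma},\ec{E})$.

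The central step is the well-definedness of $\Ter^{\mathrm{sp}}(\mathbf{d})$ on equivalence classes, i.e., that for a specification morphism $\mathbf{d}\colon(\mathbf{\Sigma},\ec{E})\mor(\mathbf{\Lambda},\ec{H})$, the functor $\mathbf{d}_{\diamond}\colon\mathbf{Ter}(\mathbf{\Sigma})\mor\mathbf{Ter}(\mathbf{\Lambda})$ sends pairs in $\cl{\ec{E}}$ to pairs in $\cl{\ec{H}}$. This is exactly what couples Lemma~\ref{lemaSatisfaccion} with the defining condition of a specification morphism. Concretely, given $(P,Q)\in\cl{\ec{E}}_{X,Y}$, I would take an arbitrary $\mathbf{\Lambda}$-algebra $\mathbf{B}$ with $\mathbf{B}\models^{\mathbf{\Lambda}}\ec{H}$; then $\mathbf{B}\models^{\mathbf{\Lambda}}\cl{\ec{H}}\supseteq\mathbf{d}_{\diamond}^{2}[\ec{E}]$, so by Lemma~\ref{lemaSatisfaccion} $\mathbf{d}^{\ast}(\mathbf{B})\models^{\mathbf{\Sigma}}\ec{E}$ and hence $\mathbf{d}^{\ast}(\mathbf{B})\models^{\mathbf{\Sigma}}_{X,Y}(P,Q)$; applying Lemma~\ref{lemaSatisfaccion} once more yields $\mathbf{B}\models^{\mathbf{\Lambda}}_{\coprod_{\varphi}X,\coprod_{\varphi}Y}(\mathbf{d}_{\diamond}(P),\mathbf{d}_{\diamond}(Q))$, which gives $(\mathbf{d}_{\diamond}(P),\mathbf{d}_{\diamond}(Q))\in\cl{\ec{H}}$. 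Consequently, the composite $\pr_{\cl{\ec{H}}}\comp\mathbf{d}_{\diamond}$ factors uniquely through $\pr_{\cl{\ec{E}}}$, producing the functor $\Ter^{\mathrm{sp}}(\mathbf{d})$; its functoriality is inherited directly from that of $\mathbf{d}_{\diamond}$ (Proposition~\ref{functorMorfismoSignaturas}) and the universal property of the quotient.

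For the pseudo-functoriality data, I would define $\gamma^{\mathbf{d},\mathbf{e}}$ and $\nu^{\mathbf{\Sigma}}$ at the specification level as the images under the appropriate quotient projections of the corresponding natural isomorphisms provided by Proposition~\ref{pseudoPol}. That is, for composable specification morphisms $\mathbf{d}\colon(\mathbf{\Sigma},\ec{E})\mor(\mathbf{\Lambda},\ec{H})$ and $\mathbf{e}\colon(\mathbf{\Lambda},\ec{H})\mor(\mathbf{\Omega},\ec{F})$, I would set the $X$-component of the new $\gamma^{\mathbf{d},\mathbf{e}}$ to be the class $[\gamma^{\mathbf{d},\mathbf{e}}_{X}]_{\cl{\ec{F}}}$ in $\mathbf{Ter}(\mathbf{\Omega},\ec{F})$, and analogously $\nu^{\mathbf{\Sigma}}_{X}=[\nu^{\mathbf{\Sigma}}_{X}]_{\cl{\ec{E}}}$. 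Naturality of these families and the fact that they remain isomorphisms are automatic, because projection functors preserve commutative squares and isomorphisms. The two pseudo-functorial coherence axioms (associativity and the two unit laws) then follow by applying the appropriate quotient projections to the corresponding identities already valid in $\mathrm{Ter}$ on $\mathbf{Sig}$.

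The main obstacle I anticipate is the well-definedness step above: once this descent of $\mathbf{d}_{\diamond}$ to the quotient is established, everything else reduces mechanically to applying $\pr_{\cl{(\cdot)}}$ to what is already known about $\mathrm{Ter}$. A secondary subtlety to double-check is that the $\Pi$-compatibility of $\cl{\ec{E}}$ really is enough to guarantee that the quotient $\mathbf{Ter}(\mathbf{\Sigma})/\cl{\ec{E}}$ supports the composition inherited from $\mathbf{Ter}(\mathbf{\Sigma})$; but this is exactly why the cited Completeness Theorem identifies $\Cn_{\mathbf{\Sigma}}$ with $\mathrm{Cg}^{\Pi}_{\mathbf{Ter}(\mathbf{\Sigma})}$, so no additional work is required beyond invoking it. Finally, restricting $\Ter^{\mathrm{sp}}$ along the inclusion $\mathrm{sp}_{\mathrm{i}}\colon\mathbf{Sig}\mor\mathbf{Spf}$ (which assigns $\ec{E}=\emptyset$, so $\cl{\ec{E}}$ is the identity congruence) recovers $\mathrm{Ter}$ on the nose, confirming that $\mathrm{Ter}^{\mathrm{sp}}$ is a genuine extension.
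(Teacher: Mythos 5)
Your proposal is correct and follows essentially the same route as the paper: the paper's (one-line) proof likewise reduces everything to the well-definedness of the action on classes, asserting $\ec{E}\incl\Ker(\mathrm{Pr}_{\cl{\ec{H}}}\comp\mathbf{d}_{\diamond})$, which is exactly the containment your double application of Lemma~\ref{lemaSatisfaccion} establishes (in the slightly stronger form $\mathbf{d}_{\diamond}^{2}[\cl{\ec{E}}]\incl\cl{\ec{H}}$). The coherence data $\gamma$ and $\nu$, which you handle by pushing the isomorphisms of Proposition~\ref{pseudoPol} through the projection functors, is left implicit in the paper but your treatment of it is the intended one.
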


\begin{proof}
Everything follows, essentially, from the fact that the action of
$\mathrm{Ter}^{\mathrm{sp}}(\mathbf{d})$ on $[P]_{\cl{\ec{E}}}$ is
well defined because $\mathcal{E}\subseteq
\mathrm{Ker}(\mathrm{Pr}_{\cl{\ec{H}}}\comp \mathbf{d}_{\diamond})$,
where $\mathrm{Pr}_{\cl{\ec{H}}}$ is the projection functor from
$\mathbf{Ter}(\mathbf{\Lambda})$ to the quotient category
$\mathbf{Ter}(\mathbf{\Lambda})/\cl{\ec{H}}$.
\end{proof}

After this we prove that the family of functors $\mathrm{Tr} =
(\mathrm{Tr}^{\mathbf{\Sigma}})_{\mathbf{\Sigma}\in \mathbf{Sig}}$,
defined in Proposition~\ref{funcTr}, can be lifted to the family
of functors $\mathrm{Tr}^{\mathrm{sp}} =
(\mathrm{Tr}^{\mathrm{sp},(\mathbf{\Sigma},\mathcal{E})})
_{(\mathbf{\Sigma},\mathcal{E})\in \mathbf{Spf}}$.

\begin{proposition}
Let $(\mathbf{\Sigma},\mathcal{E})$ be a specification.  Then from
the product category
$\mathbf{Alg}(\mathbf{\Sigma},\mathcal{E})\bprod
\mathbf{Ter}(\mathbf{\Sigma},\mathcal{E})$ to the category
$\mathbf{Set}$ there exists a functor
$\mathrm{Tr}^{\mathrm{sp},(\mathbf{\Sigma},\mathcal{E})}$ defined
as follows
\begin{enumerate}
\item $\mathrm{Tr}^{\mathrm{sp},(\mathbf{\Sigma},\mathcal{E})}$
      sends a pair $(\mathbf{A},X)$, formed by a
      $\mathbf{\Sigma}$-algebra $\mathbf{A}$ which satisfies
      $\mathcal{E}$ and an $S$-sorted set $X$, to the set
      $\mathrm{Tr}^{\mathrm{sp},(\mathbf{\Sigma},\mathcal{E})}(\mathbf{A},X)
      = A_{X}$ of the $S$-sorted mappings from $X$ to the underlying
      $S$-sorted set $A$ of $\mathbf{A}$,

\item $\mathrm{Tr}^{\mathrm{sp},(\mathbf{\Sigma},\mathcal{E})}$
      sends an arrow $(f,[P]_{\cl{\ec{E}}})$ from $(\mathbf{A},X)$ to
      $(\mathbf{B},Y)$ to the mapping
      $\mathrm{Tr}^{\mathrm{sp},(\mathbf{\Sigma},\mathcal{E})}(f,[P]_{\cl{\ec{E}}})
      = f_{P}$ from $A_{X}$ to $B_{Y}$.
       \end{enumerate}
\end{proposition}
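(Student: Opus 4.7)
The plan is to obtain $\mathrm{Tr}^{\mathrm{sp},(\mathbf{\Sigma},\mathcal{E})}$ essentially as a factorization of the already existing functor $\mathrm{Tr}^{\mathbf{\Sigma}}$ from Proposition~\ref{funcTr} through the canonical embedding $\mathbf{Alg}(\mathbf{\Sigma},\mathcal{E})\bprod\mathbf{Ter}(\mathbf{\Sigma},\mathcal{E})\hookrightarrow\mathbf{Alg}(\mathbf{\Sigma})\bprod\mathbf{Ter}(\mathbf{\Sigma})/\!\!\sim$, where $\sim$ is the relation on morphisms induced by $\cl{\ec{E}}$. Concretely, the object assignment is immediate since $\mathrm{Tr}^{\mathrm{sp},(\mathbf{\Sigma},\mathcal{E})}(\mathbf{A},X) = A_X = \mathrm{Tr}^{\mathbf{\Sigma}}(\mathbf{A},X)$, so the whole content of the proposition lies in (i) showing the morphism assignment is well-defined on equivalence classes, and (ii) checking the functorial axioms, with (i) being the only substantive point.

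First I would verify well-definedness, which is the main (and essentially only) obstacle. Given a morphism $(f,[P]_{\cl{\ec{E}}})$ from $(\mathbf{A},X)$ to $(\mathbf{B},Y)$ in $\mathbf{Alg}(\mathbf{\Sigma},\mathcal{E})\bprod\mathbf{Ter}(\mathbf{\Sigma},\mathcal{E})$, I need to show that if $P'$ is another representative of the class $[P]_{\cl{\ec{E}}}$, then $f_{P'} = f_P$. Recall from the definition of $f_P$ (the diagonal of the earlier commutative square) that $f_P = f_Y\comp P^{\mathbf{A}} = P^{\mathbf{B}}\comp f_X$, so it suffices to show that $P^{\mathbf{A}} = {P'}^{\mathbf{A}}$ and $P^{\mathbf{B}} = {P'}^{\mathbf{B}}$. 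Now $[P]_{\cl{\ec{E}}} = [P']_{\cl{\ec{E}}}$ means $(P,P')\in\cl{\ec{E}} = \Cn_{\mathbf{\Sigma}}(\mathcal{E})$, and since by hypothesis $\mathbf{A},\mathbf{B}\in\mathbf{Alg}(\mathbf{\Sigma},\mathcal{E})$ both satisfy $\mathcal{E}$, we have $\mathbf{A}\models^{\mathbf{\Sigma}}_{X,Y}(P,P')$ and $\mathbf{B}\models^{\mathbf{\Sigma}}_{X,Y}(P,P')$ by the very definition of the semantical consequence operator, whence $P^{\mathbf{A}} = {P'}^{\mathbf{A}}$ and $P^{\mathbf{B}} = {P'}^{\mathbf{B}}$, as required.

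Once well-definedness is secured, functoriality is routine. For identities, since the identity at $(\mathbf{A},X)$ in the product category is $(\mathrm{id}_{\mathbf{A}},[\eta_X]_{\cl{\ec{E}}})$, Lemma~\ref{realizdiamondcompcomm}(2) gives $\eta_X^{\mathbf{A}} = \mathrm{id}_{A_X}$, and together with $(\mathrm{id}_{\mathbf{A}})_X = \mathrm{id}_{A_X}$ we obtain $(\mathrm{id}_{\mathbf{A}})_{\eta_X} = \mathrm{id}_{A_X}$. For composition, given composable arrows $(f,[P]_{\cl{\ec{E}}})\colon(\mathbf{A},X)\mor(\mathbf{B},Y)$ and $(g,[Q]_{\cl{\ec{E}}})\colon(\mathbf{B},Y)\mor(\mathbf{C},Z)$ in $\mathbf{Alg}(\mathbf{\Sigma},\mathcal{E})\bprod\mathbf{Ter}(\mathbf{\Sigma},\mathcal{E})$, their composite is $(g\comp f,[Q\diamond P]_{\cl{\ec{E}}})$ (the composition in $\mathbf{Ter}(\mathbf{\Sigma},\mathcal{E})$ being induced by $\diamond$), and the identity $(g\comp f)_{Q\diamond P} = g_Q\comp f_P$ is precisely what was established in the proof of Proposition~\ref{funcTr}, using Lemma~\ref{realizdiamondcompcomm}(1). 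Thus $\mathrm{Tr}^{\mathrm{sp},(\mathbf{\Sigma},\mathcal{E})}$ preserves identities and compositions, and is the desired functor.
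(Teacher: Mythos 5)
Your proof is correct and takes essentially the same route as the paper: the paper's own argument consists precisely of the well-definedness observation you make (that $[P]_{\cl{\ec{E}}} = [P']_{\cl{\ec{E}}}$ forces $P^{\mathbf{C}} = P'^{\mathbf{C}}$ for every $\mathbf{C}$ satisfying $\mathcal{E}$), treating the functoriality checks as inherited from Proposition~\ref{funcTr}. Your additional spelling out of the identity and composition axioms is consistent with, and merely more explicit than, what the paper leaves implicit.
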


\begin{proof}
Everything follows from the fact that the action of
$\mathrm{Tr}^{\mathrm{sp},(\mathbf{\Sigma},\mathcal{E})}$ on
$(f,[P]_{\cl{\ec{E}}})$ is well defined because from
$[P]_{\cl{\ec{E}}} = [Q]_{\cl{\ec{E}}}$ it follows that, for every
$\mathbf{\Sigma}$-algebra $\mathbf{C}$ which satisfies $\mathcal{E}$,
$P^{\mathbf{C}} = Q^{\mathbf{C}}$.
\end{proof}

Next we state that the family of natural isomorphisms $\theta =
(\theta^{\mathbf{d}})_{\mathbf{d}\in \mathrm{Mor}(\mathbf{Sig})}$,
defined in Proposition~\ref{lemaPdextranatural}, can
be lifted to the family of natural isomorphisms $\theta^{\mathrm{sp}}
= (\theta^{\mathrm{sp},\mathbf{d}})_{\mathbf{d}\in
\mathrm{Mor}(\mathbf{Spf})}$.

\begin{proposition}
Let $\mathbf{d}$ be a specification morphism from
$(\mathbf{\Sigma},\ec{E})$ to $(\mathbf{\Lambda},\ec{H})$.  Then
there exists a natural isomorphism
$\theta^{\mathrm{sp},\mathbf{d}} =
(\theta^{\mathrm{sp},\mathbf{d}}_{\mathbf{A},X})_{(\mathbf{A},X)\in
\mathbf{Alg}(\mathbf{\Lambda},\ec{H})\bprod
\mathbf{Ter}(\mathbf{\Sigma},\ec{E})}$ as shown in the following
diagram
$$
\xymatrix@C=20ex@R=10ex{
{\mathbf{Alg}(\mathbf{\Lambda},\ec{H})\bprod
\mathbf{Ter}(\mathbf{\Sigma},\ec{E})}
\ar[r]^-{\mathbf{d}^{\ast}\bprod\Id}
\ar[d]_{\Id\bprod \mathbf{d}_{\diamond}} &
{\mathbf{Alg}(\mathbf{\Sigma},\ec{E})\bprod
\mathbf{Ter}(\mathbf{\Sigma},\ec{E})} \xyn{d}
\ar[d]^{\mathrm{Tr}^{\mathrm{sp},(\mathbf{\Sigma},\ec{E})}} \\
{\mathbf{Alg}(\mathbf{\Lambda},\ec{H})\bprod
\mathbf{Ter}(\mathbf{\Lambda},\ec{H})} \xyn{c}
\ar[r]_-{\mathrm{Tr}^{\mathrm{sp},(\mathbf{\Lambda},\ec{H})}} &
\mathbf{Set}
\ar@{} "c";"d"|{\dir{=>}}^{\theta^{\mathrm{sp},\mathbf{d}}}
}
$$
where, for every $(\mathbf{A},X)\in
\mathbf{Alg}(\mathbf{\Lambda},\ec{H})\bprod
\mathbf{Ter}(\mathbf{\Sigma},\ec{E})$,
$\theta^{\mathrm{sp},\mathbf{d}}_{\mathbf{A},X}$ is
$\theta^{\varphi}_{X,A}$, i.e., the value at $(X,A)$ of the natural
isomorphism of the adjunction
$\coprod_{\varphi}\ladj\Delta_{\varphi}$.
\end{proposition}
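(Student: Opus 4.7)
The plan is to reduce this lifted statement to the already-established Proposition~\ref{lemaPdextranatural} by checking that the natural isomorphism $\theta^{\mathbf{d}}$ descends well to the quotient categories of terms and restricts appropriately to the full subcategories of models. Since objects of $\mathbf{Alg}(\mathbf{\Sigma},\ec{E})$ and $\mathbf{Alg}(\mathbf{\Lambda},\ec{H})$ are the same as objects in the corresponding categories of algebras (just satisfying extra equations), and objects of $\mathbf{Ter}(\mathbf{\Sigma},\ec{E})$, $\mathbf{Ter}(\mathbf{\Lambda},\ec{H})$ coincide with those of the unquotiented categories, I would define $\theta^{\mathrm{sp},\mathbf{d}}_{\mathbf{A},X}$ simply as $\theta^{\varphi}_{X,A}$, exactly as in the unlifted case. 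Each such component is an isomorphism of sets because $\theta^{\varphi}$ is a natural isomorphism from the adjunction $\coprod_{\varphi}\ladj \Delta_{\varphi}$.

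The first substantive step is to confirm that the two composite functors appearing in the diagram are in fact well defined at the specification level. Compare with the corresponding functors of Proposition~\ref{lemaPdextranatural}: $\mathbf{d}^{\ast}$ restricts to a functor $\mathbf{Alg}(\mathbf{\Lambda},\ec{H})\mor \mathbf{Alg}(\mathbf{\Sigma},\ec{E})$ by the preceding proposition about $\Alg^{\mathrm{sp}}$ (using Lemma~\ref{lemaSatisfaccion}); and $\mathbf{d}_{\diamond}$ descends to a functor $\mathbf{Ter}(\mathbf{\Sigma},\ec{E})\mor \mathbf{Ter}(\mathbf{\Lambda},\ec{H})$ by the preceding proposition about $\Ter^{\mathrm{sp}}$. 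Therefore both composite functors $\mathrm{Tr}^{\mathrm{sp},(\mathbf{\Sigma},\ec{E})}\comp (\mathbf{d}^{\ast}\bprod \Id)$ and $\mathrm{Tr}^{\mathrm{sp},(\mathbf{\Lambda},\ec{H})}\comp (\Id\bprod \mathbf{d}_{\diamond})$ exist and agree on objects with $(\mathbf{A},X)\mapsto (A_{\varphi})_{X}$ and $(\mathbf{A},X)\mapsto A_{\coprod_{\varphi}X}$, respectively, which are already connected by $\theta^{\varphi}_{X,A}$.

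The core step is to verify naturality with respect to a morphism $(f,[P]_{\cl{\ec{E}}})\colon (\mathbf{A},X)\mor (\mathbf{B},Y)$ in $\mathbf{Alg}(\mathbf{\Lambda},\ec{H})\bprod \mathbf{Ter}(\mathbf{\Sigma},\ec{E})$. The naturality square for $\theta^{\mathrm{sp},\mathbf{d}}$ becomes the square
\[
\theta^{\varphi}_{Y,B}\comp f_{\mathbf{d}_{\diamond}(P)} = (f_{\varphi})_{P}\comp \theta^{\varphi}_{X,A},
\]
after picking a representative $P$ of $[P]_{\cl{\ec{E}}}$, and this is precisely the square proved commutative in Proposition~\ref{lemaPdextranatural}. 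The one point requiring care is well-definedness: if $P'$ is another representative, i.e., $(P,P')\in \cl{\ec{E}}$, then since $\mathbf{A}$ satisfies $\ec{H}$ and hence, via $\mathbf{d}^{\ast}$, satisfies $\ec{E}$, we have $P^{\mathbf{d}^{\ast}(\mathbf{A})} = {P'}^{\mathbf{d}^{\ast}(\mathbf{A})}$, so $(f_{\varphi})_{P} = (f_{\varphi})_{P'}$; likewise $f_{\mathbf{d}_{\diamond}(P)} = f_{\mathbf{d}_{\diamond}(P')}$ because $\mathbf{d}_{\diamond}^{2}[\ec{E}]\subseteq \cl{\ec{H}}$ and $\mathbf{B}$ satisfies $\ec{H}$. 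Thus the naturality square is independent of the chosen representative.

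The step I expect to demand the most care is this independence of representatives, since it is where the definition of a specification morphism (the containment $\mathbf{d}_{\diamond}^{2}[\ec{E}]\subseteq \Cn_{\mathbf{\Lambda}}(\ec{H})$) and the satisfaction condition of Lemma~\ref{lemaSatisfaccion} truly enter, weaving together the syntactic quotienting on $\mathbf{Ter}$ and the semantic restriction on $\mathbf{Alg}$. Once that is in place, the natural isomorphism property and the fact that $\theta^{\mathrm{sp},\mathbf{d}}_{\mathbf{A},X}$ remains a bijection follow verbatim from the unlifted case, so no further computation is required.
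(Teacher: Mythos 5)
Your proof is correct, and it follows the route the paper evidently intends (the paper merely \emph{states} this proposition, offering no proof of its own): you take the components to be the unlifted $\theta^{\varphi}_{X,A}$, observe that the two composite functors are well defined at the specification level by the preceding propositions on $\Alg^{\mathrm{sp}}$ and $\mathrm{Ter}^{\mathrm{sp}}$, and reduce each naturality square to the one already proved in Proposition~\ref{lemaPdextranatural} after choosing a representative $P$ of $[P]_{\cl{\ec{E}}}$. Your check that the square is independent of the representative --- using $\mathbf{d}_{\diamond}^{2}[\ec{E}]\incl \Cn_{\mathbf{\Lambda}}(\ec{H})$ together with Lemma~\ref{lemaSatisfaccion} so that both $(f_{\varphi})_{P}$ and $f_{\mathbf{d}_{\diamond}(P)}$ depend only on the class of $P$ --- is exactly the point that needs care, and it is handled correctly (modulo the slight wording slip: it is $\mathbf{d}^{\ast}(\mathbf{A})$, not $\mathbf{A}$, that satisfies $\ec{E}$).
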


From these two last propositions it follows immediately the following

\begin{corollary}
The quadruple $\mathfrak{Spf} =
(\mathbf{Spf},\Alg^{\mathrm{sp}},\mathrm{Ter}^{\mathrm{sp}},
(\mathrm{Tr}^{\mathrm{sp}},\theta^{\mathrm{sp}}))$ is an institution
on the category $\mathbf{Set}$, the so-called \emph{ many-sorted specification
institution}, or, to abbreviate, the \emph{specification
institution}.
\end{corollary}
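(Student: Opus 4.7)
The plan is to verify, one by one, the four data items required by the definition of an institution on $\mathbf{Set}$. The first three—that $\mathbf{Spf}$ is a category, that $\Alg^{\mathrm{sp}}\colon \mathbf{Spf}^{\opp}\mor \mathbf{Cat}$ is a contravariant (pseudo-)functor, and that $\mathrm{Ter}^{\mathrm{sp}}\colon \mathbf{Spf}\mor \mathbf{Cat}$ is a pseudo-functor—are the content of the propositions immediately preceding this corollary and require nothing further. The only substantive task is to show that the pair $(\mathrm{Tr}^{\mathrm{sp}},\theta^{\mathrm{sp}})$ is a pseudo-extranatural transformation from the pseudo-functor $\Alg^{\mathrm{sp}}(\farg)\bprod \mathrm{Ter}^{\mathrm{sp}}(\farg)$, from $\mathbf{Spf}^{\opp}\bprod\mathbf{Spf}$ to $\mathbf{Cat}$, to the constant functor $\mathrm{K}_{\mathbf{Set}}$.

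First I would assemble the pseudo-functor $\Alg^{\mathrm{sp}}(\farg)\bprod \mathrm{Ter}^{\mathrm{sp}}(\farg)$ exactly as $\Alg(\farg)\bprod\mathrm{Ter}(\farg)$ was assembled on $\mathbf{Sig}^{\opp}\bprod\mathbf{Sig}$ in Proposition~\ref{PdExtranatural}: its coherence isomorphisms are inherited componentwise, since $\Alg^{\mathrm{sp}}$ is an honest contravariant functor (so contributes identity coherences) and $\mathrm{Ter}^{\mathrm{sp}}$ carries the same coherence data as $\mathrm{Ter}$, merely post-composed with projections onto quotient categories. The object and morphism mappings of $\mathrm{Tr}^{\mathrm{sp}}$ and the components $\theta^{\mathrm{sp},\mathbf{d}}_{\mathbf{A},X} = \theta^{\varphi}_{X,A}$ are already in hand from the two propositions just before the corollary; what remains is the verification of the extranaturality axioms.

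Since $\mathbf{Spf}$ has only trivial $2$-cells, the condition on $2$-cells is automatic, so I would focus on the coherence of $\theta^{\mathrm{sp}}$ with composition of specification morphisms. Given composable $\mathbf{d}\colon(\mathbf{\Sigma},\mathcal{E})\mor(\mathbf{\Lambda},\mathcal{H})$ and $\mathbf{e}\colon(\mathbf{\Lambda},\mathcal{H})\mor(\mathbf{\Omega},\mathcal{F})$, the required hexagonal identity reads, on components indexed by $(\mathbf{A},X)\in \mathbf{Alg}(\mathbf{\Omega},\mathcal{F})\bprod\mathbf{Ter}(\mathbf{\Sigma},\mathcal{E})$, as the commutativity of a square of mappings between the sets $A_{\coprod_{\psi\comp\varphi}X}$, $A_{\coprod_{\psi}\coprod_{\varphi}X}$, $(A_{\psi})_{\coprod_{\varphi}X}$ and $((A_{\psi})_{\varphi})_{X}$ which is literally the image, under the quotient projections, of the diagram already verified inside the proof of Proposition~\ref{PdExtranatural}. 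Naturality of $\theta^{\mathrm{sp},\mathbf{d}}$ against a morphism $(f,[P]_{\cl{\mathcal{E}}})$ is verified by the same route: pick any representative $P$, invoke Proposition~\ref{lemaPdextranatural} on $(f,P)$, and project; the answer is independent of the representative because $f$ satisfies $\mathcal{H}$ and hence $f_{P}$ identifies $\cl{\mathcal{H}}$-congruent terms, which is exactly the fact that underlies the well-definedness of $\mathrm{Tr}^{\mathrm{sp},(\mathbf{\Sigma},\mathcal{E})}$.

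The main obstacle, if there is one, is purely notational bookkeeping: one must track how the quotient projections $\mathrm{Pr}_{\cl{\mathcal{E}}}$ and $\mathrm{Pr}_{\cl{\mathcal{H}}}$ interact with $\mathbf{d}_{\diamond}$ and $\mathbf{e}_{\diamond}$, and confirm that every commutative diagram appearing in the argument for $\mathfrak{Tm}$ descends unambiguously to the quotients. No new geometric content is needed beyond what has been established for the term institution; the corollary is, in essence, a transport of Proposition~\ref{PdExtranatural} across the compatible quotients $\cl{\mathcal{E}}$, made legitimate by the very definition of a specification morphism, namely the inclusion $\mathbf{d}_{\diamond}^{2}[\mathcal{E}]\incl \Cn_{\mathbf{\Lambda}}(\mathcal{H})$ that was required throughout.
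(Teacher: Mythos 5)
Your proposal is correct and follows essentially the same route as the paper, which states this corollary without proof as an immediate consequence of the two preceding propositions (the liftings of $\mathrm{Tr}$ to $\mathrm{Tr}^{\mathrm{sp}}$ and of $\theta$ to $\theta^{\mathrm{sp}}$) together with the earlier results establishing $\mathbf{Spf}$, $\Alg^{\mathrm{sp}}$, and $\mathrm{Ter}^{\mathrm{sp}}$. You have simply made explicit the descent-to-quotients argument that the paper leaves implicit, and your identification of the key point---that the well-definedness on $\cl{\ec{E}}$-classes is exactly what the definition of specification morphism guarantees---matches the paper's reasoning in the proofs of those two propositions.
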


From the contravariant functor $\Alg^{\mathrm{sp}}$, from
$\mathbf{Spf}$ to $\mathbf{Cat}$, to the contravariant functor
$\Alg\comp \mathrm{sig}^{\mathrm{op}}$, between the same
categories, there exists a natural transformation, $\mathrm{In}$,
which sends a specification $(\mathbf{\Sigma},\ec{E})$ to the full
embedding $\mathrm{In}_{(\mathbf{\Sigma},\ec{E})}$ of
$\mathbf{Alg}(\mathbf{\Sigma},\ec{E})$ into
$\mathbf{Alg}(\mathbf{\Sigma})$. Besides, from the pseudo-functor
$\mathrm{Ter}\comp \mathrm{sig}$, from $\mathbf{Spf}$ to
$\mathbf{Cat}$, to the pseudo-functor
$\mathrm{Ter}^{\mathrm{sp}}$, between the same categories, there
exists a (strict) pseudo-natural transformation, $\mathrm{Pr}$,
given by the following data
\begin{enumerate}
\item For each specification
      $(\mathbf{\Sigma},\ec{E})$, the projection functor
      $\mathrm{Pr}_{\cl{\ec{E}}}$ from
      $\mathbf{Ter}(\mathbf{\Sigma})$ to the quotient category
      $\mathbf{Ter}(\mathbf{\Sigma})/\cl{\ec{E}}$.

\item For each specification morphism $\mathbf{d}$ from
      $(\mathbf{\Sigma},\ec{E})$ to $(\mathbf{\Lambda},\ec{H})$, the
      identity natural transformation, denoted in this case by
      $\mathrm{Pr}_{\mathbf{d}}$, from the functor
      $\mathrm{Pr}_{\cl{\ec{H}}}\comp (\mathrm{Ter}\comp
      \mathrm{sig})(\mathbf{d})$ to the functor
      $\mathrm{Ter}^{\mathrm{sp}}(\mathbf{d})\comp
      \mathrm{Pr}_{\cl{\ec{E}}}$, both from
      $\mathbf{Ter}(\mathbf{\Sigma})$ to
      $\mathbf{Ter}(\mathbf{\Lambda})/\cl{\ec{H}}$.
\end{enumerate}

Therefore we have obtained the following

\begin{corollary}
The pair $(\mathrm{sig},(\mathrm{In},\mathrm{Pr}))$ is a morphism of
institutions from the many-sorted specification institution
$\mathfrak{Spf}$ to the many-sorted term institution $\mathfrak{Tm}$.
\end{corollary}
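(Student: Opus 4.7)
The plan is to unpack what a morphism of institutions amounts to in the pseudo-extranatural setting introduced in this paper, and then to check each condition by tracing through the constructions of $\Alg^{\mathrm{sp}}$, $\mathrm{Ter}^{\mathrm{sp}}$, $\mathrm{Tr}^{\mathrm{sp}}$ and $\theta^{\mathrm{sp}}$. Concretely, a morphism from $\mathfrak{Spf}$ to $\mathfrak{Tm}$ must consist of: a functor $\mathrm{sig}\colon \mathbf{Spf}\mor \mathbf{Sig}$, a natural transformation $\mathrm{In}\colon \Alg^{\mathrm{sp}}\cel \Alg\comp \mathrm{sig}^{\opp}$ between the contravariant model pseudo-functors, a pseudo-natural transformation $\mathrm{Pr}\colon \mathrm{Ter}\comp \mathrm{sig}\cel \mathrm{Ter}^{\mathrm{sp}}$ between the sentence pseudo-functors, and a coherence condition expressing that the pseudo-extranatural transformation $(\mathrm{Tr}^{\mathrm{sp}},\theta^{\mathrm{sp}})$ is obtained from $(\mathrm{Tr},\theta)$ by whiskering with $\mathrm{In}$ and $\mathrm{Pr}$.

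First I would verify that $\mathrm{In}$ is a natural transformation. For a specification morphism $\mathbf{d}\colon(\mathbf{\Sigma},\ec{E})\mor(\mathbf{\Lambda},\ec{H})$, the square with vertices $\mathbf{Alg}(\mathbf{\Lambda},\ec{H})$, $\mathbf{Alg}(\mathbf{\Lambda})$, $\mathbf{Alg}(\mathbf{\Sigma},\ec{E})$ and $\mathbf{Alg}(\mathbf{\Sigma})$ commutes on the nose, since $\Alg^{\mathrm{sp}}(\mathbf{d})$ was defined precisely as the bi-restriction of $\Alg(\mathbf{d})=\mathbf{d}^{\ast}$ and the vertical edges are the full inclusions. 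Next I would verify pseudo-naturality of $\mathrm{Pr}$: by the definition of $\mathrm{Ter}^{\mathrm{sp}}(\mathbf{d})$ on equivalence classes as $[P]_{\cl{\ec{E}}}\mapsto [\mathbf{d}_{\diamond}(P)]_{\cl{\ec{H}}}$, the diagram with the two projection functors and the two $\mathbf{d}_{\diamond}$'s commutes strictly, so $\mathrm{Pr}_{\mathbf{d}}$ can indeed be taken to be the identity $2$-cell, and the compatibility with $\gamma^{\mathbf{d},\mathbf{e}}$ and $\nu^{\mathbf{\Sigma}}$ follows from the fact that the pseudo-functor structure of $\mathrm{Ter}^{\mathrm{sp}}$ is induced from that of $\mathrm{Ter}$ by passage to quotients.

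The main step is the verification of the coherence of the pseudo-extranatural transformations, which I would split in two. On the one hand, for each specification $(\mathbf{\Sigma},\ec{E})$, I would show that the triangle
\[
\xymatrix@C=8ex{
\mathbf{Alg}(\mathbf{\Sigma},\ec{E})\bprod \mathbf{Ter}(\mathbf{\Sigma})
  \ar[r]^-{\mathrm{In}\bprod\Id}
  \ar[d]_{\Id\bprod\mathrm{Pr}_{\cl{\ec{E}}}} &
\mathbf{Alg}(\mathbf{\Sigma})\bprod \mathbf{Ter}(\mathbf{\Sigma})
  \ar[d]^{\mathrm{Tr}^{\mathbf{\Sigma}}} \\
\mathbf{Alg}(\mathbf{\Sigma},\ec{E})\bprod \mathbf{Ter}(\mathbf{\Sigma},\ec{E})
  \ar[r]_-{\mathrm{Tr}^{\mathrm{sp},(\mathbf{\Sigma},\ec{E})}} &
\mathbf{Set}
}
\]
commutes. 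This reduces, on morphisms $(f,P)$, to the identity $f_{P}=f_{[P]_{\cl{\ec{E}}}}$, which in turn amounts to showing that if $(P,Q)\in\Cn_{\mathbf{\Sigma}}(\ec{E})$ and $\mathbf{A}\models^{\mathbf{\Sigma}}\ec{E}$, then $P^{\mathbf{A}}=Q^{\mathbf{A}}$; this is precisely the definition of $\Cn_{\mathbf{\Sigma}}$. On the other hand, for a specification morphism $\mathbf{d}$, I would check that $\theta^{\mathrm{sp},\mathbf{d}}$ equals the pasting of $\theta^{\mathbf{d}}$ with $\mathrm{In}$ and $\mathrm{Pr}$; but by construction both $2$-cells have the same component $\theta^{\varphi}_{X,A}$ at each $(\mathbf{A},X)$, so the equality is tautological.

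The only real obstacle I foresee is the notational one of writing down a clean definition of "morphism of institutions" in the generalized sense of pseudo-extranatural transformations, so that the compatibility condition to be checked is unambiguous; once this is laid out, each condition reduces to one of three facts already established in the paper, namely that $\mathbf{d}^{\ast}$ restricts, that $\mathbf{d}_{\diamond}$ passes to the quotient, and that $\Cn_{\mathbf{\Sigma}}$ preserves semantic equality of term operations on models. I do not anticipate any genuinely new computation.
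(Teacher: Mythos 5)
Your proposal is correct and follows the same route as the paper, which states this corollary without proof as an immediate consequence of the constructions of $\mathrm{In}$ (natural by bi-restriction of $\mathbf{d}^{\ast}$) and $\mathrm{Pr}$ (strictly pseudo-natural because $\mathbf{d}_{\diamond}$ descends to the quotient categories) given in the paragraph preceding it. Your explicit verification of the compatibility of $(\mathrm{Tr}^{\mathrm{sp}},\theta^{\mathrm{sp}})$ with $(\mathrm{Tr},\theta)$ --- reducing to the well-definedness of $f_{[P]_{\cl{\ec{E}}}}$ on models of $\ec{E}$ and to the tautological equality of the components $\theta^{\varphi}_{X,A}$ --- is exactly the content the paper leaves implicit, and your observation that the paper never formally states the definition of morphism for its generalized institutions is accurate.
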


\begin{remark}
Since, obviously, the many-sorted term institution $\mathfrak{Tm}$ is
embedded in the many-sorted specification institution
$\mathfrak{Spf}$, taking into account the just stated corollary, we
can assert that $\mathfrak{Tm}$ is a retract of $\mathfrak{Spf}$.
\end{remark}


\section{Hall and Bénabou algebras.}

The concept of many-sorted clone, that generalizes both that of
single-sorted clone axiomatized by P. Hall as a single-sorted
partial algebra subject to satisfy some laws (see e.g.,
\cite{pC81}, pp.  127 and 132, or \cite{mmt87}, pp.  136 and 143)
and by M. Lazard as a compositor (see~\cite{mL55}, p.  327), and
that of Boolean clone investigated, among others, by E. Post (see
e.g., \cite{eP21} and \cite{eP41}), was axiomatically defined by
Goguen and Meseguer (in~\cite{gm85}, pp.  318--319) as any
many-sorted algebra (of the appropriate signature) that satisfies
a system of many-sorted equational laws.  The corresponding
categories of many-sorted algebras, called categories of Hall
algebras, are the algebraic rendering of the categories of
finitary many-sorted algebraic theories of Bénabou, i.e., both
types of categories, as is well-known, are equivalent.

Our main aim in this section is to define, for each set of sorts,
through a system of many-sorted equational laws the, so-called,
Bénabou algebras as those many-sorted algebras that satisfy them, and
to prove that the corresponding category of Bénabou algebras, for a
given set of sorts, is \emph{isomorphic} to the category of finitary
many-sorted algebraic theories of Bénabou, for the same set of sorts.
Besides, we prove, directly, that the Hall and Bénabou algebras,
even having different specifi\-cation, are models of the essential
properties of the clones for the many-sorted operations, i.e., that
the respective categories of Hall and Bénabou algebras are
\emph{equivalent}.

The homomorphisms between Bénabou algebras, as we will show later on
(in the fifth section), are also adequate to define the composition of
the morphisms of Fujiwara between signatures, that are a strict
generalization of both the standard morphisms and the derivors
(defined in the fifth section) between signatures.  Informally, we can
say that the Bénabou algebras are to the composition of morphisms of
Fujiwara between signatures as the Hall algebras are to the
composition of derivors between signatures.


Before we define the Hall algebras as the models of a
specification, we agree that for a set of sorts $U$, a word
$x\in\fmon{U}$ and a standard $U$-sorted set of variables $V^{U} =
(\{\,v^{u}_{n}\mid n\in \mathbb{N}\,\})_{u\in U}$, $\vs{x}$ is the
$U$-sorted subset of $V^{U}$ defined, for every $u\in U$ as
$(\vs{x})_{u} = \{\,v^{u}_{i}\mid i\in x^{-1}[u]\,\}$, this will
apply, in particular, when $U = \fmon{S}\bprod S$ or $U =
\fmon{S}\bprod \fmon{S}$.

\begin{definition}
Let $S$ be a set of sorts and $V^{\Hall_{S}}$ the $\fmon{S}\times
S$-sorted set of variables $(V_{w,s})_{(w,s)\in\fmon{S}\times S}$
where, for every $(w,s)\in\fmon{S}\times S$, $V_{w,s} =
\{\,v^{w,s}_{n}\mid n\in \mathbb{N}\,\}$.  A \emph{Hall algebra for}
$S$ is a $\Hall_{S} = (\fmon{S}\bprod
S,\Sigma^{\Hall_{S}},\ec{E}^{\Hall_{S}})$-algebra, where
$\Sigma^{\Hall_{S}}$ is the $\fmon{S}\bprod S$-sorted signature, i.e.,
the $\fmon{(\fmon{S}\bprod S)}\bprod (\fmon{S}\bprod S)$-sorted set,
defined as follows:
\begin{enumerate}
\item[$\mathrm{HS}_{1}$.] For every $w\in \fmon{S}$ and $i\in\bb{w}$,
      $$
      \pi^{w}_{i}\colon\lambda\mor(w,w_{i}),
      $$
      where $\bb{w}$ is the \emph{length} of the word $w$ and $\lambda$ the
      \emph{empty word} in $\fmon{(\fmon{S}\bprod S)}$.

\item[$\mathrm{HS}_{2}$.] For every $u$, $w\in\fmon{S}$ and $s\in S$,
      $$
      \xi_{u,w,s}\colon ((w,s),(u,w_{0}),\ldots,(u,w_{\bb{w}-1}))\mor(u,s);
      $$
\end{enumerate}
while $\ec{E}^{\Hall_{S}}$ is the $\mathrm{ms}$-subset of
$\Eq(\Sigma^{\Hall_{S}}) =
(\mathrm{T}_{\Sigma^{\Hall_{S}}}(\vs{\ol{w}})_{(u,s)}^{2})_{(\ol{w},(u,s))\in
\fmon{(\fmon{S}\bprod S)}\bprod (\fmon{S}\bprod S)}$ defined as follows:
\begin{enumerate}
\item[$\mathrm{H}_{1}$.] \emph{Projection}.
      For every $u$, $w\in\fmon{S}$ and $i\in\bb{w}$, the equation

      $$
      \xi_{u,w,w_{i}}(\pi^{w}_{i},v^{u,w_{0}}_{0},\ldots,
           v^{u,w_{\bb{w}-1}}_{\bb{w}-1})=v^{u,w_{i}}_{i}
      $$
      of type
      $(((u,w_{0}),\ldots,(u,w_{\bb{w}-1})),(u,w_{i})).$
\item[$\mathrm{H}_{2}$.] \emph{Identity}.
      For every $u\in\fmon{S}$ and $j\in \bb{u}$, the equation
      $$
      \xi_{u,u,u_{j}}(v^{u,u_{j}}_{j},\pi^{u}_{0},\ldots,\pi^{u}_{\bb{u}-1})=
           v^{u,u_{j}}_{j}
      $$
      of type
      $(((u,u_{j})),(u,u_{j})).$
\item[$\mathrm{H}_{3}$.] \emph{Associativity}.
      For every $u$, $v$, $w\in\fmon{S}$ and $s\in S$, the
      equation
     \begin{align*}
      \xi_{u,v,s}(
      \xi_{v,w,s}(v^{w,s}_{0},v^{v,w_{0}}_{1},\ldots,
          v^{v,w_{\bb{w}-1}}_{\bb{w}}),
          v^{u,v_{0}}_{\bb{w}+1},
          \ldots,v^{u,v_{\bb{v}-1}}_{\bb{w}+\bb{v}}) = \\
      \begin{aligned}
       \xi_{u,w,s}(v^{w,s}_{0},
          &\xi_{u,v,w_{0}}(v^{v,w_{0}}_{1},v^{u,v_{0}}_{\bb{w}+1},
                   \ldots,v^{u,v_{\bb{v}-1}}_{\bb{w}+\bb{v}}),
          \ldots, \\
       &\xi_{u,v,w_{\bb{w}-1}}(v^{v,w_{\bb{w}-1}}_{\bb{w}},
                   v^{u,v_{0}}_{\bb{w}+1},
       \ldots,v^{u,v_{\bb{v}-1}}_{\bb{w}+\bb{v}}))
      \end{aligned}
     \end{align*}
     of type
     $(((w,s),(v,w_{0}),\ldots,(v,w_{\bb{w}-1}),
            (u,v_{0}),\ldots,(u,v_{\bb{v}-1})),(u,s)).$
\end{enumerate}

\begin{remark}
From $\mathrm{H}_{3}$, for $w=\lambda$, we get the invariance of
constant functions axiom in~\cite{gm85}: For every $u$, $v\in\fmon{S}$
and $s\in S$, we have the equation
$$
\xi_{u,v,s}(\xi_{v,\lambda,s}(v^{\lambda,s}_{0}),v^{u,v_{0}}_{1},
\ldots,v^{u,v_{\bb{v}-1}}_{\bb{v}})=
\xi_{u,\lambda,s}(v^{\lambda,s}_{0})
$$
of type $(((\lambda,s),(u,v_{0}),\ldots,(u,v_{\bb{v}-1})),(u,s))$.
\end{remark}

We call the formal constants $\pi^{w}_{i}$ \emph{projections}, and the
formal operations $\xi_{u,w,s}$ \emph{substitution operators}.
Furthermore, we denote by $\mathbf{Alg}(\mathrm{H}_{S})$ the category of
Hall algebras for $S$ and homomorphisms between Hall algebras.  Since
$\mathbf{Alg}(\mathrm{H}_{S})$ is a variety, the forgetful functor
$\mathrm{G}_{\Hall_{S}}$ from $\mathbf{Alg}(\mathrm{H}_{S})$ to
$\mathbf{Set}^{\fmon{S}\bprod S}$ has a left adjoint
$\mathbf{T}_{\Hall_{S}}$
$$
\xymatrix@=5pc{
\mathbf{Alg}(\mathrm{H}_{S}) \ar@<1.5ex>[r]^{\mathrm{G}_{\Hall_{S}}}
\ar@{}[r]|{\uadj} & \mathbf{Set}^{\fmon{S}\bprod S}
\ar@<1.5ex>[l]^{\mathbf{T}_{\Hall_{S}}}
}
$$
which assigns to an $\fmon{S}\bprod S$-sorted set $\Sigma$ the
corresponding free Hall algebra $\mathbf{T}_{\Hall_{S}}(\Sigma)$.
\end{definition}

For every $S$-sorted set $A$, $\mathrm{HOp}_{S}(A) =
(\mathrm{Hom}(A_{w},A_{s}))_{(w,s)\in\fmon{S}\bprod S}$, the
$\fmon{S}\times S$-sorted set of operation for $A$, is naturally
endowed with a structure of Hall algebra, as stated in the following
proposition, if we realize the projections as the true projections and
the substitution operators as the generalized composition of mappings.

\begin{proposition}
Let $A$ be an $S$-sorted set and $\mathbf{HOp}_{S}(A)$ the
$\Sigma^{\Hall_{S}}$-algebra with underlying $\mathrm{ms}$-set
$\mathrm{HOp}_{S}(A)$ and algebraic structure defined as follows
\begin{enumerate}
\item For every $w\in\fmon{S}$ and $i\in\bb{w}$,
      $(\pi^{w}_{i})^{\mathbf{HOp}_{S}(A)} =
      \pr^{A}_{w,i}\colon A_{w}\mor A_{w_{i}}$.

\item For every $u,w\in\fmon{S}$ and $s\in S$,
      $\xi_{u,w,s}^{\mathbf{HOp}_{S}(A)}$ is defined, for every $f\in
      A _{s}^{ A_{w} }$ and $g\in A^{A_{u}}_{w}$, as
      $\xi_{u,w,s}^{\mathbf{HOp}_{S}(A)}(f,g_{0},\ldots,g_{\bb{w}-1})
      = f\comp\tp{ g_{i}}_{i\in\bb{w} }$, where $\tp{
      g_{i}}_{i\in\bb{w}}$ is the unique mapping from $A_{u}$ to
      $A_{w}$ such that, for every $i\in \bb{w}$, $\pr^{A}_{w,i}\comp
      \tp{ g_{i}}_{i\in\bb{w}} = g_{i}$.
\end{enumerate}
Then $\mathbf{HOp}_{S}(A)$ is a Hall algebra,  the \emph{Hall
algebra for} $(S,A)$.
\end{proposition}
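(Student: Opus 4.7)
The plan is to verify, axiom by axiom, that the structure $\mathbf{HOp}_{S}(A)$ defined by realising $\pi^{w}_{i}$ as the canonical projection $\mathrm{pr}^{A}_{w,i}\colon A_{w}\mor A_{w_{i}}$ and $\xi_{u,w,s}$ as generalised composition satisfies each of the three families of equations $\mathrm{H}_{1}$, $\mathrm{H}_{2}$, $\mathrm{H}_{3}$. Since the underlying $\fmon{S}\bprod S$-sorted set is already given and the operations have just been specified, nothing remains except to check validity; in each case, this will reduce to the universal property of the cartesian product $A_{w}=\prod_{i\in\bb{w}}A_{w_{i}}$ together with the defining equations $\mathrm{pr}^{A}_{w,i}\comp\tp{g_{j}}_{j\in\bb{w}}=g_{i}$ of the tupling.

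For $\mathrm{H}_{1}$ (Projection), given $g_{j}\in A_{w_{j}}^{A_{u}}$ for $j\in\bb{w}$, one unfolds
$$
\xi_{u,w,w_{i}}^{\mathbf{HOp}_{S}(A)}(\mathrm{pr}^{A}_{w,i},g_{0},\ldots,g_{\bb{w}-1})
=\mathrm{pr}^{A}_{w,i}\comp \tp{g_{j}}_{j\in\bb{w}},
$$
which equals $g_{i}$ by the defining property of $\tp{g_{j}}_{j\in\bb{w}}$. For $\mathrm{H}_{2}$ (Identity), given $f\in A_{u_{j}}^{A_{u}}$, one computes
$$
\xi_{u,u,u_{j}}^{\mathbf{HOp}_{S}(A)}(f,\mathrm{pr}^{A}_{u,0},\ldots,\mathrm{pr}^{A}_{u,\bb{u}-1})
=f\comp \tp{\mathrm{pr}^{A}_{u,i}}_{i\in\bb{u}}=f\comp\mathrm{id}_{A_{u}}=f,
$$
the middle equality being the other half of the universal property of the product, namely that the tupling of the projections is the identity.

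The heart of the verification is $\mathrm{H}_{3}$ (Associativity). Given $f\in A_{s}^{A_{w}}$, $g_{i}\in A_{w_{i}}^{A_{v}}$ for $i\in\bb{w}$, and $h_{k}\in A_{v_{k}}^{A_{u}}$ for $k\in\bb{v}$, the left-hand side of the equation, after unfolding, is
$$
\bigl(f\comp \tp{g_{i}}_{i\in\bb{w}}\bigr)\comp \tp{h_{k}}_{k\in\bb{v}}
= f\comp\bigl(\tp{g_{i}}_{i\in\bb{w}}\comp\tp{h_{k}}_{k\in\bb{v}}\bigr),
$$
while the right-hand side is
$$
f\comp \tp{g_{i}\comp\tp{h_{k}}_{k\in\bb{v}}}_{i\in\bb{w}}.
$$
Thus it suffices to show that
$\tp{g_{i}}_{i\in\bb{w}}\comp\tp{h_{k}}_{k\in\bb{v}}=\tp{g_{i}\comp\tp{h_{k}}_{k\in\bb{v}}}_{i\in\bb{w}}$,
which, by the uniqueness clause of the universal property of $A_{w}$, is equivalent to checking, for each $i\in\bb{w}$, the equality
$\mathrm{pr}^{A}_{w,i}\comp\tp{g_{i}}_{i\in\bb{w}}\comp\tp{h_{k}}_{k\in\bb{v}}=g_{i}\comp\tp{h_{k}}_{k\in\bb{v}}$,
and this is immediate.

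The only genuinely delicate point is the bookkeeping: the variables $v^{w,s}_{0}, v^{v,w_{0}}_{1},\ldots,v^{u,v_{\bb{v}-1}}_{\bb{w}+\bb{v}}$ occurring in the statement of $\mathrm{H}_{3}$ must be correctly matched with $f$, the family $(g_{i})_{i\in\bb{w}}$ and the family $(h_{k})_{k\in\bb{v}}$, and one has to verify that the coarities of each sub-term agree with the sorts prescribed by the axiom. Once this matching is done, no substantive argument remains beyond the universal property invoked above, so I would expect the proof to be a routine, though notationally heavy, verification rather than a step requiring any new idea.
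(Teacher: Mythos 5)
Your verification is correct and is exactly the routine check the paper has in mind: the paper states this proposition without any proof at all, merely remarking beforehand that one realizes the projections as the true projections and the substitution operators as generalized composition. Your reduction of $\mathrm{H}_{1}$--$\mathrm{H}_{3}$ to the universal property of the product $A_{w}=\prod_{i\in\bb{w}}A_{w_{i}}$ (in particular the identity $\tp{g_{i}}_{i\in\bb{w}}\comp\tp{h_{k}}_{k\in\bb{v}}=\tp{g_{i}\comp\tp{h_{k}}_{k\in\bb{v}}}_{i\in\bb{w}}$ for associativity) supplies precisely the argument the authors leave to the reader.
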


\begin{remark}
The closed sets of the Hall algebra $\mathbf{HOp}_{S}(A)$ for $(S,A)$
are precisely the clones of (many-sorted) operations on the $S$-sorted
set $A$.
\end{remark}

For every $S$-sorted signature $\Sigma$, $\mathrm{HTer}_{S}(\Sigma) =
(\mathrm{T}_{\Sigma}(\vs{w})_{s})_{(w,s)\in \fmon{S}\bprod S} $ is also
endowed with a structure of Hall algebra that formalizes the concept
of substitution as stated in the following

\begin{proposition}
Let $\Sigma$ be an $S$-sorted signature and
$\mathbf{HTer}_{S}(\Sigma)$ the $\Sigma^{\Hall_{S}}$-algebra with
underlying $\mathrm{ms}$-set $\mathrm{HTer}_{S}(\Sigma)$ and algebraic
structure defined as follows
\begin{enumerate}
\item For every $w\in\fmon{S}$ and $i\in\bb{w}$,
      $(\pi^{w}_{i})^{\mathbf{HTer}_{S}(\Sigma)}$
      is the image under $\eta_{\vs{w},w_{i}}$ of
      the variable $v_{i}^{w_{i}}$, where $\eta_{\vs{w}} =
      (\eta_{\vs{w},s})_{s\in S}$ is the canonical embedding
      of $\vs{w}$ into $\mathrm{T}_{\Sigma}(\vs{w})$.

\item For every $u,w\in\fmon{S}$ and $s\in S$,
      $\xi_{u,w,s}^{\mathbf{HTer}_{S}(\Sigma)}$
      is the mapping
      $$\xi_{u,w,s}^{\mathbf{HTer}_{S}(\Sigma)}\nfunction
      {\mathrm{T}_{\Sigma}(\vs{w})_{s} \bprod
      \mathrm{T}_{\Sigma}(\vs{u})_{w_{0}} \bprod \cdots \bprod
      \mathrm{T}_{\Sigma}(\vs{u})_{w_{\bb{w}-1}}}
      {\mathrm{T}_{\Sigma}(\vs{u})_{s}}
      {(P,(Q_{i}\mid i\in\bb{w}))}
      {\mathcal{Q}^{\sharp}_{s}(P)}
     $$
     where, for $\mathcal{Q}$ the $S$-sorted mapping from $\vs{w}$
     to $\mathrm{T}_{\Sigma}(\vs{u})$ canonically associated to the family
     $(Q_{i}\mid i\in\bb{w})$, $\mathcal{Q}^{\sharp}$ is the
     unique homomorphism from $\mathbf{T}_{\Sigma}(\vs{w})$ into
     $\mathbf{T}_{\Sigma}(\vs{u})$ such that
     $\mathcal{Q}^{\sharp}\comp \eta_{\vs{w}} = \mathcal{Q}$.
\end{enumerate}

Then $\mathbf{HTer}_{S}(\Sigma)$ is a Hall algebra, the \emph{Hall
algebra for} $(S,\Sigma)$.
\end{proposition}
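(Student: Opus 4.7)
The plan is to verify that the three defining axioms of a Hall algebra, namely Projection ($\mathrm{H}_1$), Identity ($\mathrm{H}_2$), and Associativity ($\mathrm{H}_3$), are satisfied by the $\Sigma^{\Hall_S}$-algebra $\mathbf{HTer}_S(\Sigma)$, exploiting throughout the universal property of the free $\mathbf{\Sigma}$-algebra $\mathbf{T}_\Sigma(\vs{w})$ on $\vs{w}$. The only nontrivial datum to track is that for a family $(Q_i)_{i\in\bb{w}}$ with $Q_i\in\mathrm{T}_\Sigma(\vs{u})_{w_i}$, the associated $S$-sorted mapping $\mathcal{Q}\colon \vs{w}\mor\mathrm{T}_\Sigma(\vs{u})$ is characterized by $\mathcal{Q}_{w_i}(v_i^{w_i}) = Q_i$, so that the unique extension $\mathcal{Q}^\sharp\colon \mathbf{T}_\Sigma(\vs{w})\mor\mathbf{T}_\Sigma(\vs{u})$ satisfies $\mathcal{Q}^\sharp\comp\eta_{\vs{w}} = \mathcal{Q}$.

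For $\mathrm{H}_1$, evaluating the left-hand side in $\mathbf{HTer}_S(\Sigma)$ yields $\xi_{u,w,w_i}^{\mathbf{HTer}_S(\Sigma)}(\eta_{\vs{w},w_i}(v_i^{w_i}), Q_0,\ldots,Q_{\bb{w}-1}) = \mathcal{Q}^\sharp_{w_i}(\eta_{\vs{w},w_i}(v_i^{w_i})) = \mathcal{Q}_{w_i}(v_i^{w_i}) = Q_i$, which is the desired right-hand side. For $\mathrm{H}_2$, the family $(\pi^u_j)_{j\in\bb{u}}$ is realized as $(\eta_{\vs{u},u_j}(v_j^{u_j}))_{j\in\bb{u}}$, so the associated $S$-sorted mapping $\mathcal{P}\colon \vs{u}\mor \mathrm{T}_\Sigma(\vs{u})$ is precisely $\eta_{\vs{u}}$; hence $\mathcal{P}^\sharp = \mathrm{id}_{\mathbf{T}_\Sigma(\vs{u})}$ by uniqueness in the universal property, and therefore $\xi_{u,u,u_j}^{\mathbf{HTer}_S(\Sigma)}(Q,\pi^u_0,\ldots,\pi^u_{\bb{u}-1}) = \mathcal{P}^\sharp_{u_j}(Q) = Q$ for any term $Q$ of type $(\vs{u},u_j)$.

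For $\mathrm{H}_3$, fix $P\in\mathrm{T}_\Sigma(\vs{w})_s$, a family $(Q_i)_{i\in\bb{w}}$ with $Q_i\in\mathrm{T}_\Sigma(\vs{v})_{w_i}$, and a family $(R_j)_{j\in\bb{v}}$ with $R_j\in\mathrm{T}_\Sigma(\vs{u})_{v_j}$. Let $\mathcal{Q}\colon \vs{w}\mor\mathrm{T}_\Sigma(\vs{v})$ and $\mathcal{R}\colon \vs{v}\mor\mathrm{T}_\Sigma(\vs{u})$ be the associated $S$-sorted mappings. The left-hand side of the axiom computes as $\mathcal{R}^\sharp_s(\mathcal{Q}^\sharp_s(P)) = (\mathcal{R}^\sharp\comp\mathcal{Q}^\sharp)_s(P)$. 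The right-hand side involves first forming $Q'_i := \mathcal{R}^\sharp_{w_i}(Q_i)$ for each $i\in\bb{w}$, whose associated $S$-sorted mapping $\mathcal{Q}'\colon \vs{w}\mor\mathrm{T}_\Sigma(\vs{u})$ satisfies $\mathcal{Q}'_{w_i}(v_i^{w_i}) = \mathcal{R}^\sharp_{w_i}(Q_i) = (\mathcal{R}^\sharp\comp\mathcal{Q})_{w_i}(v_i^{w_i})$, i.e., $\mathcal{Q}' = \mathcal{R}^\sharp\comp\mathcal{Q}$; then the right-hand side equals $\mathcal{Q}'^{\sharp}_s(P)$. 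By uniqueness in the universal property applied to $\mathcal{R}^\sharp\comp\mathcal{Q}$, one concludes $\mathcal{Q}'^\sharp = \mathcal{R}^\sharp\comp\mathcal{Q}^\sharp$, and hence both sides agree.

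The only mild obstacle is keeping the indexing of variables $v_i^{w_i}$, $v_{\bb{w}+1}^{u,v_0}$, etc.\ aligned with the enumeration of generators in $\vs{w}$, $\vs{v}$, and $\vs{u}$; once this bookkeeping is fixed, every step is a direct application of the universal property of $\mathbf{T}_\Sigma(-)$, so no nontrivial combinatorial or categorical work is required beyond writing the identifications explicitly.
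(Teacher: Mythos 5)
Your verification is correct: the paper states this proposition without proof, treating it as routine, and your check of $\mathrm{H}_{1}$, $\mathrm{H}_{2}$, and $\mathrm{H}_{3}$ via the universal property of $\mathbf{T}_{\Sigma}(\vs{w})$ is exactly the argument the authors leave implicit (note also that the invariance-of-constants law is derived in the paper from $\mathrm{H}_{3}$ with $w=\lambda$, so nothing further needs checking). The three key identifications you use --- $\mathcal{Q}^{\sharp}\comp\eta_{\vs{w}}=\mathcal{Q}$ for $\mathrm{H}_{1}$, $\mathcal{P}=\eta_{\vs{u}}$ hence $\mathcal{P}^{\sharp}=\mathrm{id}$ for $\mathrm{H}_{2}$, and $(\mathcal{R}^{\sharp}\comp\mathcal{Q})^{\sharp}=\mathcal{R}^{\sharp}\comp\mathcal{Q}^{\sharp}$ for $\mathrm{H}_{3}$ --- are all sound, the last one being the standard Kleisli-composition identity for the free-algebra monad.
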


Our next goal is to prove that, for every $\fmon{S}\bprod S$-sorted
set $\Sigma$, $\mathbf{T}_{\Hall_{S}}(\Sigma)$, the free Hall algebra
on $\Sigma$, is isomorphic to $\mathbf{HTer}_{S}(\Sigma)$.  We remark
that the existence of this isomorphism is interesting because it
enables us, on the one hand, to get a more tractable description of
the terms in $\mathbf{T}_{\Hall_{S}}(\Sigma)$, and, on the other hand,
to give, in the fifth section, an alternative, but equivalent,
definition of the concept of derivor (defined by Goguen, Thatcher
and Wagner in~\cite{gtw76}, p.  86) between signatures.

To attain the goal just stated we define, for a Hall algebra
$\mathbf{A}$, an $S$-sorted signature $\Sigma$, an $\fmon{S}\bprod
S$-mapping $f\colon\Sigma\mor A$, and a word $u\in\fmon{S}$, the
concept of derived $\Sigma$-algebra of $\mathbf{A}$ for $(f,u)$, since
it will be used afterwards in the proof of the isomorphism between
$\mathbf{T}_{\Hall_{S}}(\Sigma)$ and $\mathbf{HTer}_{S}(\Sigma)$.

\begin{definition}
Let $\mathbf{A}$ be a Hall algebra and $\Sigma$ an $S$-sorted signature.
Then, for every $f\colon\Sigma\mor A$ and $u\in\fmon{S}$,
$\mathbf{A}^{f,u}$, the \emph{derived} $\Sigma$-\emph{algebra}
\emph{of} $\mathbf{A}$ \emph{for} $(f,u)$, is the $\Sigma$-algebra with
underlying $S$-sorted set $A^{f,u} = (A_{u,s})_{s\in S}$ and
algebraic structure $F^{f,u}$, defined, for every
$(w,s)\in\fmon{S}\bprod S$, as
$$
F^{f,u}_{w,s} \nfunction
{\Sigma_{w,s}}{\mathrm{HOp}_{w}(A^{f,u})_{s}}
{\sigma}{\nfunction
    {\prod_{i\in \bb{w}}A_{u,w_{i}}}
    {A_{u,s}}
    {(a_{0},\ldots,a_{\bb{w}-1})}
    {\xi_{u,w,s}^{\mathbf{A}}(f_{(w,s)}(\sigma),a_{0},\ldots,a_{\bb{w}-1})}
}
$$
where $\mathrm{HOp}_{w}(A^{f,u})_{s} =
A_{u,s}^{\prod_{i\in \bb{w}}A_{u,w_{i}}}$.  Furthermore, we denote
by $p^{u}$ the $S$-sorted mapping from $\vs{u}$ to $A^{f,u}$ defined, for
every $s\in S$ and $i\in \bb{u}$, as
$p^{u}_{s}(v^{s}_{i}) = (\pi^{u}_{i})^{\mathbf{A}}$, and by $\ext{(p^{u})}$
the unique homomorphism from $\mathbf{T}_{\Sigma}(\vs{u})$ to
$\mathbf{A}^{f,u}$ such that $\ext{(p^{u})}\comp \eta_{\vs{u}} = p^{u}$.
\end{definition}


\begin{remark}
For a $\Sigma$-algebra $\mathbf{B}=(B,G)$, we have that
$G\colon\Sigma\mor\mathrm{HOp}_{S}(B)$ and
$\mathbf{B}\iso\mathbf{HOp}_{S}(B)^{G,\lambda}$, where $\lambda$ is
the empty word on $S$.  Besides, for every $u\in\fmon{S}$, we have
that $\mathbf{Op}_{\vs{u}}(\mathbf{B}) \iso
\mathbf{HOp}_{S}(B)^{G,u}$.
\end{remark}

\begin{lemma}\label{L:aux}
Let $\Sigma$ be an $S$-sorted signature, $\mathbf{A}$ a Hall algebra,
$f\colon\Sigma\mor A$ and $u\in\fmon{S}$.  Then, for every
$(w,s)\in \fmon{S}\times S$, $P\in \mathrm{T}_{\Sigma}(\vs{w})_{s}$ and $a\in
\prod_{i\in \bb{w}}A_{u,w_{i}}$, we have that
$$
P^{\mathbf{A}^{f,u}}(a_{0},\ldots,a_{\bb{w}-1}) =
\xi_{u,w,s}^{\mathbf{A}}(\ext{(p^{w})}_{s}(P),a_{0},\ldots,a_{\bb{w}-1}).
$$
\end{lemma}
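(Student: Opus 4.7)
The plan is to proceed by structural induction on the term $P\in \mathrm{T}_{\Sigma}(\vs{w})_s$, with the base case handled by axiom $\mathrm{H}_1$ (Projection) and the inductive step by axiom $\mathrm{H}_3$ (Associativity).

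For the \emph{base case}, suppose $P = v^{w_i}_i$ for some $i\in\bb{w}$. Then the left-hand side is just the $i$-th projection mapping $A^{f,u}_w \to A^{f,u}_{w_i}$ evaluated at $a$, yielding $a_i$. On the right-hand side, by the definition of $p^w$ and $\ext{(p^w)}$, we have $\ext{(p^w)}_s(v^{w_i}_i)=p^w_{w_i}(v^{w_i}_i)=(\pi^w_i)^{\mathbf{A}}$, so the right-hand side equals $\xi^{\mathbf{A}}_{u,w,w_i}((\pi^w_i)^{\mathbf{A}},a_0,\ldots,a_{\bb{w}-1})$. Realizing the Projection axiom $\mathrm{H}_1$ in $\mathbf{A}$ at the valuation that sends $v^{u,w_j}_j$ to $a_j$ gives exactly $a_i$, matching the left-hand side.

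For the \emph{inductive step}, suppose $P=\sigma(Q_0,\ldots,Q_{\bb{v}-1})$ with $\sigma\in\Sigma_{v,s}$ and $Q_j\in \mathrm{T}_{\Sigma}(\vs{w})_{v_j}$. Using the definitions of $\mathbf{A}^{f,u}$ and $\mathbf{A}^{f,w}$, together with the fact that $\ext{(p^w)}$ is a $\Sigma$-homomorphism, the left-hand side expands to
\[
\xi^{\mathbf{A}}_{u,v,s}\bigl(f_{v,s}(\sigma),\,Q_0^{\mathbf{A}^{f,u}}(a),\,\ldots,\,Q_{\bb{v}-1}^{\mathbf{A}^{f,u}}(a)\bigr)
\]
while the right-hand side expands to
\[
\xi^{\mathbf{A}}_{u,w,s}\bigl(\xi^{\mathbf{A}}_{w,v,s}(f_{v,s}(\sigma),\ext{(p^w)}_{v_0}(Q_0),\ldots,\ext{(p^w)}_{v_{\bb{v}-1}}(Q_{\bb{v}-1})),\,a_0,\ldots,a_{\bb{w}-1}\bigr).
\]
Here is the one potentially delicate point: matching our data to the pattern of $\mathrm{H}_3$. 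Concretely, applying the Associativity axiom $\mathrm{H}_3$ in $\mathbf{A}$ with the role-substitution (paper's $u,v,w$) $\mapsto$ (our $u,w,v$)---i.e. realizing it at the valuation sending the outer term-variable to $f_{v,s}(\sigma)$, the middle block to the $\ext{(p^w)}_{v_j}(Q_j)$'s, and the inner block to the $a_i$'s---turns the right-hand side into
\[
\xi^{\mathbf{A}}_{u,v,s}\bigl(f_{v,s}(\sigma),\,\xi^{\mathbf{A}}_{u,w,v_0}(\ext{(p^w)}_{v_0}(Q_0),a_0,\ldots,a_{\bb{w}-1}),\,\ldots\bigr).
\]
Applying the induction hypothesis to each $Q_j$ collapses the inner $\xi^{\mathbf{A}}_{u,w,v_j}$-expressions to $Q_j^{\mathbf{A}^{f,u}}(a)$, yielding exactly the left-hand side.

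The main obstacle is purely bookkeeping: keeping straight the three words $u$, $w$, $v$ (together with their lengths and coordinate sorts) when aligning our expression with the pattern of axiom $\mathrm{H}_3$, and checking that the outer Hall operation $\xi_{u,w,s}^{\mathbf{A}}$ on the right-hand side plays the role of the paper's outer $\xi_{u,v,s}$ under this relabelling. The Identity axiom $\mathrm{H}_2$ is not needed here; it would only become relevant if one wished, additionally, to characterize $\ext{(p^u)}_s(P)$ as a fixed point when $u=w$.
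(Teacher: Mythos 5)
Your proposal is correct and follows essentially the same route as the paper: structural (algebraic) induction on $P$, with the variable case handled by realizing $\mathrm{H}_{1}$ at the valuation $v^{u,w_{j}}_{j}\mapsto a_{j}$, and the operation case by unfolding the derived structure of $\mathbf{A}^{f,u}$, invoking $\mathrm{H}_{3}$ under exactly the relabelling you describe, and collapsing the inner occurrences of $\xi^{\mathbf{A}}_{u,w,\cdot}$ via the induction hypothesis (together with the fact that $\ext{(p^{w})}$ is a homomorphism into $\mathbf{A}^{f,w}$). The only difference is presentational: the paper rewrites the left-hand side step by step into the right-hand side, applying the induction hypothesis before $\mathrm{H}_{3}$, whereas you expand both sides and meet in the middle.
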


\begin{proof}
By algebraic induction on the complexity of $P$.  If $P$ is a variable
$v_{i}^{s}$, with $i\in\bb{w}$, then
\begin{align*}
    v_{i}^{s,\mathbf{A}^{f,u}}(a_{0},\ldots,a_{\bb{w}-1})\ &=
    \ext{a}_{w_{i}}(v^{s}_{i})\\
    &=
    a_{i} \\
    &=
    \xi_{u,w,s}^{\mathbf{A}}((\pi^{w}_{i})^{\mathbf{A}},a_{0},\ldots,a_{\bb{w}-1} )
    \quad\text{($\mathrm{H}_{1}$)} \\
    &=
    \xi_{u,w,s}^{\mathbf{A}}(\ext{(p^{w})}_{s}(v^{s}_{i}),a_{0},\ldots,a_{\bb{w}-1} ).
\end{align*}
Let us assume that $P = \sigma(Q_{0},\ldots,Q_{\bb{x}-1})$, with
$\sigma\colon x\mor s$ and that, for every $j\in\bb{x}$,
$Q_{j}\in \mathrm{T}_{\Sigma}(\vs{w})_{x_{j}}$ fulfills the induction
hypothesis.  Then we have that
\begin{align*}
    &(\sigma(Q_{0},\ldots,Q_{\bb{x}-1}))^{\mathbf{A}^{f,u}}
      (a_{0},\ldots,a_{\bb{w}-1}) \\
    &=
    \sigma^{\mathbf{A}^{f,u}}
       (
       Q_{0}^{\mathbf{A}^{f,u}}(a_{0},\ldots,a_{\bb{w}-1}),
       \ldots,
       Q_{\bb{x}-1}^{\mathbf{A}^{f,u}}(a_{0},\ldots,a_{\bb{w}-1})
       )\\
    &=
    \xi_{u,x,s}^{\mathbf{A}}
       (
       f(\sigma),
       Q_{0}^{\mathbf{A}^{f,u}}(a_{0},\ldots,a_{\bb{w}-1}),
       \ldots,
       Q_{\bb{x}-1}^{\mathbf{A}^{f,u}}(a_{0},\ldots,a_{\bb{w}-1})
       )\\
   &=
   \begin{aligned}[t]
    \xi_{u,x,s}^{\mathbf{A}}
       (
       f(\sigma),
      &\xi_{u,w,x_{0}}^{\mathbf{A}}
            (\ext{(p^{w})}_{x_{0}}(Q_{0}),a_{0},\ldots,a_{\bb{w}-1}),
       \ldots, \\
      &\xi_{u,w,x_{\bb{x}-1}}^{\mathbf{A}}
            (\ext{(p^{w})}_{x_{\bb{x}-1}}(Q_{\bb{x}-1}),a_{0},\ldots,a_{\bb{w}-1})
       ) \quad\text{(Ind. hyp.)}
  \end{aligned}  \\
    &=
    \xi_{u,w,s}^{\mathbf{A}}
       (
         \xi_{w,x,s}^{\mathbf{A}}
           (f(\sigma),
            \ext{(p^{w})}_{x_{0}}(Q_{0}),
            \ldots,
            \ext{(p^{w})}_{x_{\bb{x}-1}}(Q_{\bb{x}-1})
           ),
         a_{0},
         \ldots,
         a_{\bb{w}-1}
       ) \text{($\mathrm{H}_{3}$)}   \\
    &=
    \xi_{u,w,s}^{\mathbf{A}}
       (
       \sigma^{\mathbf{A}_{w}}
          (
          \ext{(p^{w})}_{x_{0}}(Q_{0}),
          \ldots,
          \ext{(p^{w})}_{x_{\bb{x}-1}}(Q_{\bb{x}-1})
          ),
       a_{0},
       \ldots,
       a_{\bb{w}-1}
       ) \\
    &=
    \xi_{u,w,s}^{\mathbf{A}}
       (
       \ext{(p^{w})}_{s}(\sigma,Q_{0},\ldots,Q_{\bb{x}-1}),
       a_{0},
       \ldots,
       a_{\bb{w}-1}
       ) \\
    &=
    \xi_{u,w,s}^{\mathbf{A}}
       (
       \ext{(p^{w})}_{s}(P),
       a_{0},
       \ldots,
       a_{\bb{w}-1}
       ).
       \qedhere
\end{align*}
\end{proof}

Next we prove, as announced above, that, for every $\fmon{S}\bprod S$-sorted
set $\Sigma$, the Hall algebra for $(S,\Sigma)$ is isomorphic to the
free Hall algebra on $\Sigma$.

\begin{proposition}\label{iso:FrH-TerH}
Let $\Sigma$ be an $S$-sorted signature, i.e., an $\fmon{S}\bprod
S$-sorted set.  Then the Hall algebra $\mathbf{HTer}_{S}(\Sigma)$ is
isomorphic to $\mathbf{T}_{\Hall_{S}}(\Sigma)$.
\end{proposition}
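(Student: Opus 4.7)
The plan is to show that $\mathbf{HTer}_{S}(\Sigma)$, together with a natural insertion $\iota \colon \Sigma \mor \mathrm{HTer}_{S}(\Sigma)$, enjoys the universal property of the free Hall algebra on $\Sigma$; the asserted isomorphism will then follow at once from the uniqueness clause in the universal property of $\mathbf{T}_{\Hall_{S}}(\Sigma)$.

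First I would define the $\fmon{S}\bprod S$-sorted mapping $\iota$ by $\iota_{w,s}(\sigma) = \sigma(v^{w_{0}}_{0},\ldots,v^{w_{\bb{w}-1}}_{\bb{w}-1}) \in \mathrm{T}_{\Sigma}(\vs{w})_{s}$, i.e.\ the canonical term obtained by applying $\sigma$ to the generic variables of $\vs{w}$. Second, given an arbitrary Hall algebra $\mathbf{A}$ and an $\fmon{S}\bprod S$-sorted mapping $f \colon \Sigma \mor A$, I would invoke the derived $\Sigma$-algebras $\mathbf{A}^{f,u}$ introduced just above: for every $u \in \fmon{S}$, the $S$-sorted mapping $p^{u} \colon \vs{u} \mor A^{f,u}$ extends uniquely to a $\Sigma$-homomorphism $\ext{(p^{u})} \colon \mathbf{T}_{\Sigma}(\vs{u}) \mor \mathbf{A}^{f,u}$, and I would assemble these components into a single $\fmon{S}\bprod S$-sorted mapping $\hat{f} \colon \mathrm{HTer}_{S}(\Sigma) \mor A$ by setting $\hat{f}_{u,s} = \ext{(p^{u})}_{s}$.

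The core of the proof is checking that $\hat{f}$ is a Hall homomorphism. Preservation of the constants $\pi_{i}^{w}$ is immediate from the definition of $p^{w}$. Preservation of the substitution operators $\xi_{u,w,s}$ reduces, after unwinding the definition of $\xi_{u,w,s}^{\mathbf{HTer}_{S}(\Sigma)}$ as $\mathcal{Q}^{\sharp}_{s}(P)$, to the equation
$$
\ext{(p^{u})}_{s}(\mathcal{Q}^{\sharp}_{s}(P)) = \xi_{u,w,s}^{\mathbf{A}}\bigl(\ext{(p^{w})}_{s}(P), \ext{(p^{u})}_{w_{0}}(Q_{0}), \ldots, \ext{(p^{u})}_{w_{\bb{w}-1}}(Q_{\bb{w}-1})\bigr),
$$
which follows from Lemma L:aux applied to $P$ with valuations $a_{i} = \ext{(p^{u})}_{w_{i}}(Q_{i})$ together with the universal property of $\mathbf{T}_{\Sigma}(\vs{w})$, which identifies $\ext{(p^{u})} \comp \mathcal{Q}^{\sharp}$ with the unique $\Sigma$-homomorphism into $\mathbf{A}^{f,u}$ realising that valuation. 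The identity $\hat{f} \comp \iota = f$ then reduces, after the same unwinding, to $\xi^{\mathbf{A}}_{w,w,s}(f(\sigma), (\pi^{w}_{0})^{\mathbf{A}}, \ldots, (\pi^{w}_{\bb{w}-1})^{\mathbf{A}}) = f(\sigma)$, an instance of the Hall identity axiom $\mathrm{H}_{2}$.

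Uniqueness of $\hat{f}$ is then immediate from the observation that $\mathbf{HTer}_{S}(\Sigma)$ is generated, as a Hall algebra, by $\iota(\Sigma)$: a straightforward algebraic induction on the complexity of $P \in \mathrm{T}_{\Sigma}(\vs{u})_{s}$ shows that every $P$ either equals some $\pi_{i}^{u,\mathbf{HTer}_{S}(\Sigma)}$ or, when $P = \sigma(Q_{0},\ldots,Q_{\bb{w}-1})$, equals $\xi_{u,w,s}^{\mathbf{HTer}_{S}(\Sigma)}(\iota(\sigma), Q_{0}, \ldots, Q_{\bb{w}-1})$, so any Hall homomorphism on $\mathbf{HTer}_{S}(\Sigma)$ is determined by its values on $\iota(\Sigma)$. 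I expect the substitution-preservation computation to be the main technical obstacle, as it is there that the interplay between the Hall-algebraic structure of $\mathbf{A}$ and the universal properties of the free $\Sigma$-algebras $\mathbf{T}_{\Sigma}(\vs{u})$ is essentially engaged; everything else is routine once Lemma L:aux is available.
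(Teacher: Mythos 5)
Your proposal is correct and follows essentially the same route as the paper's own proof: it verifies the universal property of the free Hall algebra for $\mathbf{HTer}_{S}(\Sigma)$ via the same insertion of generators, the same assembly of $\hat{f}$ from the homomorphisms $\ext{(p^{u})}$ into the derived algebras $\mathbf{A}^{f,u}$, the same use of Lemma~\ref{L:aux} together with the identity $\ext{(p^{u})}\comp\mathcal{Q}^{\sharp} = (\ext{(p^{u})}\comp\mathcal{Q})^{\sharp}$ for the substitution operators, and the same appeal to $\mathrm{H}_{2}$ for $\hat{f}\comp\iota = f$. Your generation argument for uniqueness merely spells out what the paper dismisses as obvious.
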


\begin{proof}
It is enough to prove that $\mathbf{HTer}_{S}(\Sigma)$ has the
universal property of the free Hall algebra on $\Sigma$.  Therefore we
have to specify an $\fmon{S}\bprod S$-sorted mapping $h$ from $\Sigma$
to $\mathrm{HTer}_{S}(\Sigma)$ such that, for every Hall algebra
$\mathbf{A}$ and $\fmon{S}\bprod S$-sorted mapping $f$ from $\Sigma$
to $A$, there is a unique homomorphism $\mawh{f}$ from
$\mathbf{HTer}_{S}(\Sigma)$ to $\mathbf{A}$ such that $\mawh{f}\comp h =
f$.  Let $h$ be the $\fmon{S}\bprod S$-sorted mapping defined, for
every
$(w,s)\in\fmon{S}\bprod S$, as%
$$
h_{w,s}\nfunction
{\Sigma_{w,s}}{\mathrm{T}_{\Sigma}(\vs{w})_{s}}
{\sigma}{\sigma(v^{s}_{0},\ldots,v^{s}_{\bb{w}-1})}
$$
Let $\mathbf{A}$ be a Hall algebra, $f\colon\Sigma\mor A$ an
$\fmon{S}\bprod S$-sorted mapping and $\mawh{f}$ the $\fmon{S}\bprod
S$-sorted mapping from $\mathrm{HTer}_{S}(\Sigma)$ to $A$ defined, for
every $(w,s)\in\fmon{S}\bprod S$, as $\mawh{f}_{w,s} =
\ext{(p^{w})}_{s}$, where, we recall, \ext{(p^{w})} is the unique
homomorphism from $\mathbf{T}_{\Sigma}(\vs{w})$ to
$\mathbf{A}^{f,w}$ such that the following diagram commutes
$$
\xymatrix{
\vs{w}
  \ar[r]^-{\eta_{\vs{w}}}
  \ar[rd]_{p^{w}} &
\mathrm{T}_{\Sigma}(\vs{w})
  \ar[d]^{\ext{(p^{w})}} \\
 &
(A_{w,s})_{s\in S}
}
$$
Then $\mawh{f}$ is a homomorphism of Hall algebras, because, on the
one hand, for
$w\in\fmon{S}$ and $i\in\bb{w}$ we have that%
\begin{align*}
    \mawh{f}_{w,w_{i}}((\pi^{w}_{i})^{\mathbf{HTer}_{S}(\Sigma)})
    &=
    \mawh{f}_{w,w_{i}}(v^{s}_{i}) \\
    &=
    p^{w}_{w_{i}}(v^{s}_{i}) \\
    &=
    (\pi^{w}_{i})^{\mathbf{A}},
\end{align*}
and, on the other hand, for $P\in \mathrm{T}_{\Sigma}(\vs{w})_{s}$ and
$(Q_{i}\mid i\in \bb{w})\in \mathrm{T}_{\Sigma}(\vs{u})_{w}$ we have that %
\begin{align*}
    &\mawh{f}_{u,s}(
      \xi_{u,w,s}^{\mathbf{HTer}_{S}(\Sigma)}
        (P,Q_{0},\ldots,Q_{\bb{w}-1})
                   ) \\
    &=
    \ext{(p^{u})}_{s}(
      \mathcal{Q}^{\sharp}_{s}(P)
                     ) \\
    &=
    \ext{(\ext{(p^{u})}\comp \mathcal{Q})}_{s} (P)
    \qquad(because\, \ext{(p^{u})}\comp\mathcal{Q}^{\sharp} =
    (\ext{(p^{u})}\comp \mathcal{Q})^{\sharp})
    \\
    &=
    P^{\mathbf{A}^{f,u}}
      ( \ext{(p^{u})}_{w_{0}}(Q_{0}),\ldots,
        \ext{(p^{u})}_{w_{\bb{w}-1}}(Q_{\bb{w}-1})
      ) \\
    &=
    \xi_{u,w,s}^{\mathbf{A}}
      ( \ext{(p^{w})}_{s}(P),
        \ext{(p^{u})}_{w_{0}}(Q_{0}),\ldots,
        \ext{(p^{u})}_{w_{\bb{w}-1}}(Q_{\bb{w}-1})
      )
    \qquad(by\, \mathrm{Lemma}~\ref{L:aux})
      \\
    &=
    \xi_{u,w,s}^{\mathbf{A}}
      ( \mawh{f}_{w,s}(P),
        \mawh{f}_{u,w_{0}}(Q_{0}),\ldots,
        \mawh{f}_{u,w_{\bb{w}-1}}(Q_{\bb{w}-1})
      ).
\end{align*}
Therefore the $\fmon{S}\bprod S$-sorted mapping $\mawh{f}$ is a
homomorphism. Furthermore, $\mawh{f}\comp h = f$,
because, for every $w\in\fmon{S}$, $s\in S$, and
$\sigma\in \Sigma_{w,s}$, we have that%
\begin{align*}
    \mawh{f}_{w,s}(h_{w,s}(\sigma)) &=
    \ext{(p^{w})}_{s}(\sigma(v^{s}_{0},\ldots,v^{s}_{\bb{w}-1})) \\
    &=
    \sigma^{\mathbf{A}_{w}}
      (p^{w}_{{w}_{0}}(v^{s}_{0}),
        \ldots,
       p^{w}_{{w}_{\bb{w}-1}}(v^{s}_{\bb{w}-1})
      ) \\
    &=
    \xi_{w,w,s}^{\mathbf{A}}
      ( f_{(w,s)}(\sigma),
        (\pi^{w}_{0})^{\mathbf{A}},
        \ldots,
        (\pi^{w}_{\bb{w}-1})^{\mathbf{A}}
      ) \\
    &=
    f_{w,s}(\sigma) \quad\text{($\mathrm{H}_{2}$)}.
\end{align*}
It is obvious that $\mawh{f}$ is the unique homomorphism such that
$\mawh{f}\comp h = f$.  Henceforth $\mathbf{HTer}_{S}(\Sigma)$ is
isomorphic to $\mathbf{T}_{\Hall_{S}}(\Sigma)$.
\end{proof}

This isomorphism together with the adjunction
$\mathbf{T}_{\Hall_{S}}\ladj\mathrm{G}_{\Hall_{S}}$ has as a
consequence that, for every $S$-sorted set $A$ and $S$-sorted
signature $\Sigma$, the sets
$\mathrm{Hom}({\Sigma},\mathrm{HOp}_{S}(A))$ and
$\mathrm{Hom}(\mathbf{HTer}_{S}(\Sigma),\mathbf{HOp}_{S}(A))$ are
naturally isomorphic.  Actually, the isomorphism sends, for an
$S$-sorted set $A$, a structure of $\mathbf{\Sigma}$-algebra $F$ on
$A$, i.e., an $\mathrm{ms}$-mapping $F$ from $\Sigma$ to
$\mathrm{HOp}_{S}(A)$, to the homomorphism of Hall algebras
$\mathrm{HTr}^{(A,F)}_{S} = (\mathrm{Tr}^{\vs{w},(A,F)}_{s})_{(w,s)\in
\fmon{S}\times S}$, where, for every $w \in \fmon{S}$,
$\mathrm{Tr}^{\vs{w},(A,F)} = (\mathrm{Tr}^{\vs{w},(A,F)}_{s})_{s\in
S}$ is the unique homomorphism from $\mathbf{T}_{\Sigma}(\vs{w})$ to
$\mathbf{Op}_{\vs{w}}(A,F)\cong (A,F)^{A_{w}}$
such that  the following diagram commutes%
$$
\xymatrix{
\vs{w}
  \ar[r]^-{\eta_{\vs{w}}}
  \ar[rd]_{\p^{A}_{\vs{w}}} &
\mathrm{T}_{\Sigma}(\vs{w})
  \ar[d]^{\mathrm{Tr}^{\vs{w},(A,F)}}\\
 &
\mathrm{Op}_{\vs{w}}(A,F)\\
}
$$
where $\p^{A}_{\vs{w}}$ is the $S$-sorted mapping defined, for every
$s\in S$ and $v^{s}_{i}\in(\vs{w})_{s}$, as
$\p^{A}_{\vs{w},s}(v^{s}_{i}) = \pr^{A}_{w,i}$; while the inverse
isomorphism sends an homomorphism $h$ from $\mathbf{HTer}_{S}(\Sigma)$
to $\mathbf{HOp}_{S}(A)$ to the algebraic structure
$\mathrm{G}_{\Hall_{S}}(h)\comp \eta_{\Sigma}$ on $A$.



For a set of sorts $S$, the fundamental objects in the approach to
the many-sorted completeness theorem in~\cite{gm85}, i.e., the
Hall algebras for $S$, have an alternative, but equivalent,
description in terms of, what we call, B\'{e}nabou algebras for
$S$, that, as we will show below are more strongly linked to the
finitary many-theories algebraic theories than are the Hall
algebras.  Besides, the B\'{e}nabou algebras will be shown to be
more adequate in order to work with morphisms between signatures
more general than the standard ones.  Actually there exists an
equivalence between the category $\mathbf{Alg}(\mathrm{H}_{S})$,
of Hall algebras for $S$, and the category
$\mathbf{Alg}(\mathrm{B}_{S})$, of B\'{e}nabou algebras for $S$,
i.e., the category defined as follows.

\begin{definition}
Let $S$ be a set of sorts and $V^{\mathrm{B}_{S}}$ the
$(S^{\star})^{2}$-sorted set of variables $(V_{u,w})_{(u,w)\in
(S^{\star})^{2}}$ where, for every $(u,w)\in (S^{\star})^{2}$,
$V_{u,w} = \{\,v^{u,w}_{n}\mid n\in \mathbb{N}\,\}$.  A \emph{Bénabou
algebra for} $S$ is a $\mathrm{B}_{S} =
((S^{\star})^{2},\Sigma^{\mathrm{B}_{S}},\mathcal{E}^{\mathrm{B}_{S}})$-algebra,
where $\Sigma^{\mathrm{B}_{S}}$ is the $(S^{\star})^{2}$\nobreakdash-sorted
signature defined as follows:
\begin{enumerate}
\item[$\mathrm{BS}_{1}$.]  For the empty word $\lambda\in S^{\star}$,
      every $w\in S^{\star}$ and $i\in\lvert w \rvert$, where $\lvert w \rvert$
      is the domain of the word $w$, the formal operation of \emph{projection}:
      $$
      \pi^{w}_{i}\colon\lambda\mor(w,(w_{i})).
      $$

\item[$\mathrm{BS}_{2}$.]  For every $u$, $w\in S^{\star}$, the formal
      operation of \emph{finite tupling}:
      $$
      \langle\,\rangle_{u,w}\colon ((u,(w_{0})),\ldots,
      (u,(w_{\lvert w \rvert-1})))\mor (u,w).
      $$

\item[$\mathrm{BS}_{3}$.]  For every $u$, $x$, $w \in S^{\star}$, the formal
      operation of \emph{substitution}:
      $$
      \circ_{u,x,w}\colon ((u,x),(x,w)) \mor (u,w);
      $$
\end{enumerate}
while $\mathcal{E}^{\mathrm{B}_{S}}$ is the $\mathrm{ms}$-subset of
$\mathrm{Eq}(\Sigma^{\mathrm{B}_{S}}) = (\mathrm{T}_{\Sigma^{\mathrm{B}_{S}}}
(\mathbin{\downarrow}{\overline{w}})_{(u,x)}^{2})_{(\overline{w},(u,x))\in
((S^{\star})^{2})^{\star}\times (S^{\star})^{2}}$ defined as follows:
\begin{enumerate}
\item[$\mathrm{B}_{1}$.]  For every $u$, $w\in S^{\star}$ and
     $i\in\lvert w \rvert$, the equation:
   $$
   \pi^{w}_{i} \circ_{u,w,(w_{i})}
   \langle v^{u,(w_{0})}_{0},\ldots,
   v^{u,(w_{\lvert w \rvert-1})}_{\lvert w \rvert-1}\rangle_{u,w} =
   v^{u,(w_{i})}_{i},
   $$
of type $(((u,(w_{0})),\ldots,(u,(w_{\lvert w \rvert-1}))),(u,(w_{i})))$.

\item[$\mathrm{B}_{2}$.]  For every $u$, $w \in S^{\star}$, the
     equation:
   $$
   v^{u,w}_{0} \circ_{u,u,w} \langle \pi^{u}_{0}, \ldots,
   \pi^{u}_{\lvert u \rvert-1}\rangle_{u,u} = v^{u,w}_{0},
   $$
of type $(((u,w)),(u,w))$.

\item[$\mathrm{B}_{3}$.]  For every $u$, $w\in S^{\star}$, the
   equation:
   $$
   \langle \pi^{w}_{0}\circ_{u,w,w_{0}} v^{u,w}_{0}, \ldots,
      \pi^{w}_{\lvert w \rvert-1} \circ_{u,w,w_{\lvert w \rvert-1}}
      v^{u,w}_{0}\rangle_{u,w} = v^{u,w}_{0},
    $$
   of type $(((u,w)),(u,w))$.

\item[$\mathrm{B}_{4}$.]
   For every $w \in S^{\star}$, the equation:
   $$\langle \pi^{w}_{0}\rangle_{w,(w_{0})} = \pi^{w}_{0},$$
   of type $(((w,(w_{0}))),(w,(w_{0})))$.

\item[$\mathrm{B}_{5}$.]  For every $u$, $x$, $w$, $y \in S^{\star}$, the
   equation:
   $$
   v^{w,y}_{0} \circ_{u,w,y}
   ( v^{x,w}_{1} \circ_{u,x,w} v^{u,x}_{2} ) =
   ( v^{w,y}_{0} \circ_{x,w,y} v^{x,w}_{1} ) \circ_{u,x,y}
   v^{u,x}_{2},
   $$
   of type $(((w,y),(x,w),(u,x)),(u,y))$,
\end{enumerate}
where $v^{u,w}_{n}$ is the $n$-th variable of type $(u,w)$, $Q
\circ_{u,x,w} P$ is $\circ_{u,x,w} (P,Q)$, and
$\langle P_{0},\ldots,P_{\lvert w \rvert-1}\rangle_{u,w}$ is
$\langle\,\rangle_{u,w}(P_{0},\ldots,P_{\lvert w \rvert-1})$.

Since $\mathbf{Alg}(\mathrm{B}_{S})$ is a variety, the forgetful
functor $\mathrm{G}_{\Ben_{S}}$ from $\mathbf{Alg}(\mathrm{B}_{S})$ to
$\mathbf{Set}^{\fmon{S}\bprod \fmon{S}}$ has a left adjoint
$\mathbf{T}_{\Ben_{S}}$%
$$
\xymatrix@=5pc{
\mathbf{Alg}(\mathrm{B}_{S})
  \ar@<1.5ex>[r]^{\mathrm{G}_{\Ben_{S}}}
  \ar@{}[r]|{\uadj} &
\mathbf{Set}^{\fmon{S}\bprod \fmon{S}}
  \ar@<1.5ex>[l]^{\mathbf{T}_{\Ben_{S}}}
}
$$
which assigns to an $\fmon{S}\bprod \fmon{S}$-sorted set the
corresponding free Bénabou algebra.
\end{definition}

For every $S$-sorted set $A$, $\mathrm{BOp}_{S}(A) =
(\mathrm{Hom}(A_{w},A_{u}))_{(w,u)\in\fmon{S}\bprod \fmon{S}}$ is
endowed with a structure of Bénabou algebra as stated in the following

\begin{proposition}
Let $A$ be an $S$-sorted set and $\mathbf{BOp}_{S}(A)$ the
$\Sigma^{\Ben_{S}}$-algebra with underlying many-sorted set
$\mathrm{BOp}_{S}(A)$ and algebraic structure defined as follows
\begin{enumerate}
\item For every $w\in\fmon{S}$ and $i\in\bb{w}$,
      $(\pi^{w}_{i})^{\mathbf{BOp}_{S}(A)} =
      \pr^{A}_{w,i}\colon A_{w}\mor A_{(w_{i})}$.

\item For every $u,w\in\fmon{S}$,
      $\langle\,\rangle_{u,w}^{\mathbf{BOp}_{S}(A)}$ is defined, for every
      $(f_{0},\ldots,f_{\bb{w}-1})$ in
      $\prod_{i\in \bb{w}}\mathrm{Hom}(A_{w},A_{(w_{i})})$, as
      $\langle\,\rangle_{u,w}^{\mathbf{BOp}_{S}(A)}(f_{0},\ldots,f_{\bb{w}-1})
      = \tp{ f_{i}}_{i\in\bb{w} }$, where $\tp{
      f_{i}}_{i\in\bb{w}}$ is the unique mapping from $A_{u}$ to
      $A_{w}$ such that, for every $i\in \bb{w}$, $\pr^{A}_{w,i}\comp
      \tp{ f_{i}}_{i\in\bb{w}} = f_{i}$.

\item For every $u$, $x$, $w \in S^{\star}$,
      $\circ_{u,x,w}^{\mathbf{BOp}_{S}(A)}$ is defined as the
      composition of mappings.
\end{enumerate}

Then $\mathbf{BOp}_{S}(A)$ is a Bénabou algebra,  the \emph{Bénabou
algebra for} $(S,A)$.
\end{proposition}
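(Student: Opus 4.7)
The plan is to verify, one by one, that each of the five B\'enabou axioms $\mathrm{B}_{1}$--$\mathrm{B}_{5}$ holds in $\mathbf{BOp}_{S}(A)$. Since the carrier of $\mathbf{BOp}_{S}(A)$ is the family $(\mathrm{Hom}(A_{w},A_{u}))_{(w,u)\in \fmon{S}\bprod \fmon{S}}$, and since the operations are interpreted as the genuine categorical projections, tuplings, and composition of mappings between the products $A_{w}=\prod_{i\in\bb{w}}A_{w_{i}}$, each axiom reduces to a well-known fact about finite products in $\mathbf{Set}$ or to associativity of composition.

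First I would fix a valuation of the variables occurring in each axiom by the appropriate mappings between products, and compute both sides. For $\mathrm{B}_{1}$, taking $f_{i}\in \mathrm{Hom}(A_{u},A_{(w_{i})})$, the left-hand side is
$$\pr^{A}_{w,i}\comp\langle f_{0},\ldots,f_{\bb{w}-1}\rangle,$$
which equals $f_{i}$ by the defining property of the tupling $\langle f_{j}\rangle_{j\in\bb{w}}$. For $\mathrm{B}_{2}$, given $f\in\mathrm{Hom}(A_{w},A_{u})$, the inner tupling $\langle \pr^{A}_{u,0},\ldots,\pr^{A}_{u,\bb{u}-1}\rangle$ is, by uniqueness of the mediating morphism in the product $A_{u}$, the identity $\mathrm{id}_{A_{u}}$; hence $f\comp \mathrm{id}_{A_{u}}=f$. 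For $\mathrm{B}_{3}$, given $f\in\mathrm{Hom}(A_{u},A_{w})$, the family $(\pr^{A}_{w,i}\comp f)_{i\in\bb{w}}$ is, again by uniqueness in the universal property of $A_{w}$, equal to the tuple $\langle \pr^{A}_{w,i}\comp f\rangle_{i\in\bb{w}}=f$. For $\mathrm{B}_{4}$, a single-component tupling $\langle \pr^{A}_{w,0}\rangle_{w,(w_{0})}$ is necessarily $\pr^{A}_{w,0}$, since in the one-factor product $A_{(w_{0})}$ the mediating morphism into itself coincides with its unique component. Finally $\mathrm{B}_{5}$ is precisely the associativity of composition of the mappings interpreting the three substitution variables.

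Each verification is essentially a one-line computation, so there is no substantive obstacle; the only point requiring care is bookkeeping of indices, particularly in $\mathrm{B}_{1}$ and $\mathrm{B}_{3}$, to make sure the arities $(u,w)$, $(w,(w_{i}))$, and $(u,(w_{i}))$ line up correctly with the domains and codomains of the mappings involved, and to confirm that the interpretations of $\pi^{w}_{i}$, $\langle\,\rangle_{u,w}$, and $\circ_{u,x,w}$ under $\mathbf{BOp}_{S}(A)$ are in fact the projection, the tupling, and the composition that they are declared to be. Once that translation is fixed, $\mathrm{B}_{1}$ and $\mathrm{B}_{3}$ become the existence and uniqueness halves of the product universal property, $\mathrm{B}_{2}$ and $\mathrm{B}_{4}$ are its trivial special cases, and $\mathrm{B}_{5}$ is associativity of function composition. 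The conclusion that $\mathbf{BOp}_{S}(A)$ is a B\'enabou algebra then follows directly.
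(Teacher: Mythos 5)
Your proof is correct and is exactly the routine verification the paper leaves implicit: it states this proposition without proof, and the intended argument is precisely yours, namely that under the given interpretation the five axioms $\mathrm{B}_{1}$--$\mathrm{B}_{5}$ reduce to the universal property of the finite products $A_{w}=\prod_{i\in\bb{w}}A_{w_{i}}$ in $\mathbf{Set}$ (existence and uniqueness of the tupling for $\mathrm{B}_{1}$, $\mathrm{B}_{3}$, $\mathrm{B}_{4}$, the identity as tupling of projections for $\mathrm{B}_{2}$) together with associativity of composition for $\mathrm{B}_{5}$. One tiny bookkeeping remark of the kind you yourself anticipate: in your treatment of $\mathrm{B}_{2}$ the variable $v^{u,w}_{0}$ of sort $(u,w)$ should be valued in $\mathrm{Hom}(A_{u},A_{w})$ (as you correctly do in $\mathrm{B}_{3}$), not $\mathrm{Hom}(A_{w},A_{u})$, so that $f\comp\mathrm{id}_{A_{u}}$ typechecks; this does not affect the argument.
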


For every $S$-sorted signature $\Sigma$, $\mathrm{BTer}_{S}(\Sigma) =
(\mathrm{T}_{\mathbf{\Sigma}}(\vs{w})_{u})_{(w,u)\in \fmon{S}\bprod
\fmon{S}}$, that is naturally isomorphic to
$(\mathrm{Hom}(\vs{u},\mathrm{T}_{\mathbf{\Sigma}}(\vs{w}))_{(w,u)\in
\fmon{S}\bprod \fmon{S}}$, is endowed with a structure of Bénabou
algebra as stated in the following

\begin{proposition}
Let $\Sigma$ be an $S$-sorted signature and
$\mathbf{BTer}_{S}(\Sigma)$ the $\Sigma^{\Ben_{S}}$-algebra with
underlying many-sorted set $\mathrm{BTer}_{S}(\Sigma)$ and algebraic
structure that obtained, by transport of structure, from the algebraic
structure defined on the $\fmon{S}\bprod \fmon{S}$-sorted set
$(\mathrm{Hom}(\vs{u},\mathrm{T}_{\mathbf{\Sigma}}(\vs{w}))_{(w,u)\in
\fmon{S}\bprod \fmon{S}}$ as follows
\begin{enumerate}
\item For every $w\in\fmon{S}$ and $i\in\bb{w}$,
      $(\pi^{w}_{i})^{\mathbf{BTer}_{S}(\Sigma)}$ is the composition of
      the canonical embedding from $\vs{(w_{i})}$ to $\vs{w}$ and the
      canonical embedding from $\vs{w}$ to $\mathrm{T}_{\Sigma}(\vs{w})$.

\item For every $u,w\in\fmon{S}$,
      $\langle\,\rangle_{u,w}^{\mathbf{BTer}_{S}(\Sigma)}$ is the canonical
      isomorphism from the cartesian product $\prod_{i\in
      \bb{w}}\mathrm{Hom}(\vs{(w_{i})},\mathrm{T}_{\mathbf{\Sigma}}(\vs{u}))$ to
      $\mathrm{Hom}(\vs{w},\mathrm{T}_{\mathbf{\Sigma}}(\vs{u}))$.

\item For every $u$, $x$, $w \in S^{\star}$,
      $\comp_{u,x,w}^{\mathbf{BTer}_{S}(A)}$ is defined as the
      mapping which sends a pair
      $\mathcal{P}\in \mathrm{Hom}(\vs{x},\mathrm{T}_{\mathbf{\Sigma}}(\vs{u}))$ and
      $\mathcal{Q}\in \mathrm{Hom}(\vs{w},\mathrm{T}_{\mathbf{\Sigma}}(\vs{x}))$ to
      $\mathcal{P}^{\sharp}\comp\mathcal{Q}$.
\end{enumerate}

Then $\mathbf{BTer}_{S}(\Sigma)$ is a Bénabou algebra, the \emph{Bénabou
algebra for} $(S,\Sigma)$.
\end{proposition}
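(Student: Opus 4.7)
The plan is to verify, in order, that (i) the three families of operations are well-defined on the many-sorted set
$(\mathrm{Hom}(\vs{u},\mathrm{T}_{\mathbf{\Sigma}}(\vs{w})))_{(w,u)\in \fmon{S}\bprod \fmon{S}}$, and then (ii) each of the five B�nabou axioms $\mathrm{B}_{1}$--$\mathrm{B}_{5}$ holds after transport. Step (i) is immediate: the projections are canonical, the tupling is defined via the universal property of the cartesian product in $\mathbf{Set}^S$ applied to $\vs{w}=\coprod_{i\in\bb{w}}\vs{(w_{i})}$, and the substitution uses the universal property of the free algebra $\mathbf{T}_{\mathbf{\Sigma}}(\vs{x})$ to extend $\mathcal{P}\colon \vs{x}\mor \mathrm{T}_{\mathbf{\Sigma}}(\vs{u})$ to $\mathcal{P}^{\sharp}\colon \mathbf{T}_{\mathbf{\Sigma}}(\vs{x})\mor \mathbf{T}_{\mathbf{\Sigma}}(\vs{u})$, which is then composed with $\mathcal{Q}$. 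So the main work is in (ii).

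For the axioms, I would exploit two guiding facts. The \emph{first} is that, by the adjunction $\mathbf{T}_{\mathbf{\Sigma}}\ladj \mathrm{G}_{\mathbf{\Sigma}}$, the canonical tupling isomorphism $\mathrm{Hom}(\vs{w},\mathrm{T}_{\mathbf{\Sigma}}(\vs{u}))\iso \prod_{i\in \bb{w}}\mathrm{Hom}(\vs{(w_{i})},\mathrm{T}_{\mathbf{\Sigma}}(\vs{u}))$ sends a family $(\mathcal{P}_{i})_{i\in\bb{w}}$ to the mapping $\mathcal{P}$ whose value on $v^{w_{i}}_{i}\in (\vs{w})_{(w_{i})}$ is $\mathcal{P}_{i}(v^{w_{i}}_{i})$. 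The \emph{second} is the standard identity $(\mathcal{P}^{\sharp}\comp \mathcal{Q})^{\sharp} = \mathcal{P}^{\sharp}\comp \mathcal{Q}^{\sharp}$, together with $(\eta_{\vs{u}})^{\sharp} = \mathrm{id}_{\mathbf{T}_{\mathbf{\Sigma}}(\vs{u})}$. With these tools, $\mathrm{B}_{1}$ reduces to the composite $\langle \mathcal{P}_{i}\rangle^{\sharp}_{u,w}\comp \pi^{w}_{i} = \mathcal{P}_{i}$, which is a direct consequence of the defining universal property of tupling; $\mathrm{B}_{2}$ becomes $\mathrm{id}^{\sharp}_{\mathbf{T}_{\mathbf{\Sigma}}(\vs{u})}\comp \mathcal{P} = \mathcal{P}$; $\mathrm{B}_{3}$ says that a morphism $\mathcal{P}\colon \vs{w}\mor \mathrm{T}_{\mathbf{\Sigma}}(\vs{u})$ is recovered as the tupling of its components $\mathcal{P}^{\sharp}\comp \pi^{w}_{i}$, i.e., the defining surjectivity half of the tupling isomorphism; and $\mathrm{B}_{4}$, which concerns one-letter words, is a trivial instance of $\mathrm{B}_{3}$ since $\vs{(w_{0})} \cong \vs{w}$ when $w$ has length one (more precisely, it is the statement that tupling a single mapping returns that mapping, which is built into the natural isomorphism).

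The one axiom requiring honest verification is associativity $\mathrm{B}_{5}$. For $\mathcal{R}\colon \vs{y}\mor \mathrm{T}_{\mathbf{\Sigma}}(\vs{w})$, $\mathcal{Q}\colon \vs{w}\mor \mathrm{T}_{\mathbf{\Sigma}}(\vs{x})$, $\mathcal{P}\colon \vs{x}\mor \mathrm{T}_{\mathbf{\Sigma}}(\vs{u})$, it asks that $\mathcal{R}\circ_{u,w,y}(\mathcal{Q}\circ_{u,x,w}\mathcal{P}) = (\mathcal{R}\circ_{x,w,y}\mathcal{Q})\circ_{u,x,y}\mathcal{P}$. Expanding the definition of $\circ$, both sides evaluate to $\mathcal{P}^{\sharp}\comp \mathcal{Q}^{\sharp}\comp \mathcal{R}$ once one applies the identity $(\mathcal{P}^{\sharp}\comp \mathcal{Q})^{\sharp} = \mathcal{P}^{\sharp}\comp \mathcal{Q}^{\sharp}$ to rewrite the outer Kleisli extension on the right-hand side.

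The main obstacle is not conceptual but notational: keeping the indices $(u,x),(x,w),(w,y)$ straight, and performing the verifications of $\mathrm{B}_{1}$ and $\mathrm{B}_{5}$ on the right representatives (generators $v^{u,w}_{0}$ versus arbitrary terms) while remembering that the verification on generators suffices, by the freeness of $\mathbf{T}_{\Sigma^{\mathrm{B}_{S}}}(\mathbin{\downarrow}\overline{w})$ and the fact that the operations were obtained by transport from the canonical structure on $\mathrm{Hom}(\vs{u},\mathrm{T}_{\mathbf{\Sigma}}(\vs{w}))$. Alternatively, one could bypass almost all calculation by invoking the (categorical) equivalence between Hall and B�nabou algebras together with Proposition~\ref{iso:FrH-TerH}, translating the already-known Hall structure on $\mathrm{HTer}_{S}(\Sigma)$ into the present B�nabou structure; this route requires the functor realizing the equivalence, which is why a direct verification seems preferable at this stage.
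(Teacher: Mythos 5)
Your proposal is correct, and it supplies a verification that the paper in fact omits: the proposition is stated there with no accompanying proof, the check of the axioms being left implicit. Your direct verification on the $\mathrm{Hom}$-side presentation is the natural way to discharge it, and the key computations are right: $\mathrm{B}_{1}$--$\mathrm{B}_{4}$ reduce to the tupling isomorphism together with $\mathcal{P}^{\sharp}\comp\eta_{\vs{w}}=\mathcal{P}$, and $\mathrm{B}_{5}$ to the extension identity $(\mathcal{P}^{\sharp}\comp\mathcal{Q})^{\sharp}=\mathcal{P}^{\sharp}\comp\mathcal{Q}^{\sharp}$, both sides collapsing to $\mathcal{P}^{\sharp}\comp\mathcal{Q}^{\sharp}\comp\mathcal{R}$. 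The alternative route you mention at the end is essentially the one the paper leans on later: in identifying $\mathbf{T}_{\Ben_{S}}(\coprod_{1\bprod\between_{S}}\Sigma)$ with $\mathbf{BTer}_{S}(\Sigma)$ the authors write $\mathbf{BTer}_{S}(\Sigma)=F_{h,b}(\mathbf{HTer}_{S}(\Sigma))$, so the B\'{e}nabou structure is the image under the equivalence $F_{h,b}$ of the Hall structure. That route is not circular (that $F_{h,b}$ lands in B\'{e}nabou algebras is proved independently), but it merely shifts the burden to the equally unproved assertion that $\mathbf{HTer}_{S}(\Sigma)$ is a Hall algebra, so your self-contained argument is the better choice here.

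Three small repairs, none of which is a gap. First, the tupling isomorphism $\prod_{i\in\bb{w}}\mathrm{Hom}(\vs{(w_{i})},\mathrm{T}_{\mathbf{\Sigma}}(\vs{u}))\iso\mathrm{Hom}(\vs{w},\mathrm{T}_{\mathbf{\Sigma}}(\vs{u}))$ comes from the decomposition $\vs{w}\iso\coprod_{i\in\bb{w}}\vs{(w_{i})}$ in $\mathbf{Set}^{S}$, not from the adjunction $\mathbf{T}_{\mathbf{\Sigma}}\ladj\mathrm{G}_{\mathbf{\Sigma}}$; and since $\vs{(w_{i})}$ contains only the variable $v^{w_{i}}_{0}$ while the corresponding variable of $\vs{w}$ is $v^{w_{i}}_{i}$, the correct formula is $\mathcal{P}_{w_{i}}(v^{w_{i}}_{i})=(\mathcal{P}_{i})_{w_{i}}(v^{w_{i}}_{0})$ --- the relabelling matters when you unwind $\mathrm{B}_{1}$ and $\mathrm{B}_{4}$. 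Second, satisfaction of an equation means the two induced term operations agree on \emph{all} tuples of elements, so there is no reduction ``to generators'' at that level; your verifications do in fact quantify over arbitrary $\mathcal{P},\mathcal{Q},\mathcal{R}$, and what freeness genuinely buys is the identity $(\mathcal{P}^{\sharp}\comp\mathcal{Q})^{\sharp}=\mathcal{P}^{\sharp}\comp\mathcal{Q}^{\sharp}$ itself, both sides being $\mathbf{\Sigma}$-homomorphisms out of $\mathbf{T}_{\mathbf{\Sigma}}(\vs{w})$ agreeing on $\vs{w}$. Third, with the paper's convention $Q\circ_{u,x,w}P=\circ_{u,x,w}(P,Q)$, the extension identity gets applied to the nested composite $v^{w,y}_{0}\circ(v^{x,w}_{1}\circ v^{u,x}_{2})$, which is the left-hand side of $\mathrm{B}_{5}$ as printed, not the right; the conclusion is unaffected.
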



Next, after defining the category $\mathbf{BTh}_{\mathrm{f}}(S)$, of
finitary many-sorted algebraic theories of B\'{e}nabou (defined for
the first time in~\cite{jB68}), that generalize the finitary
single-sorted algebraic theories of Lawvere, we prove that there
exists an isomorphism between the category
$\mathbf{BTh}_{\mathrm{f}}(S)$ and the category
$\mathbf{Alg}(\mathrm{B}_{S})$.

\begin{definition}
We denote by $\mathbf{BTh}_{\mathrm{f}}(S)$ the category with objects
pairs $\boldsymbol{\mathcal{B}} = (\mathbf{B},p)$, where $\mathbf{B}$
is a category that has as objects the words on $S$ and $p$ a family
$(p^{w})_{w\in S^{\star}}$ such that, for every word $w\in S^{\star}$,
$p^{w}$ is a family $(p^{w}_{i}\colon w\mor (w_{i}))_{i\in\lvert w
\rvert}$ of morphisms in $\mathbf{B}$, the \emph{projections} for $w$,
where $(w_{i})$ is the word of length 1 on $S$ whose only letter is
$w_{i}$, such that $(w,p^{w})$ is a product in $\mathbf{B}$ of the
family of words $((w_{i}))_{i\in\lvert w \rvert}$, and as morphisms
from $\boldsymbol{\mathcal{B}}$ to $\boldsymbol{\mathcal{B}}'$
functors $F$ from $\mathbf{B}$ to $\mathbf{B'}$ such that the object
mapping of $F$ is the identity and the morphism mapping of $F$
preserves the projections, i.e., for every $w\in S^{\star}$ and
$i\in\lvert w \rvert$, $F(p^{w,\boldsymbol{\mathcal{B}}}_{i}) =
p^{w,\boldsymbol{\mathcal{B}}'}_{i}$.

\end{definition}

\begin{proposition}\label{isoBalgBth}
There exists an isomorphism from the category
$\mathbf{Alg}(\mathrm{B}_{S})$ to the category
$\mathbf{BTh}_{\mathrm{f}}(S)$.
\end{proposition}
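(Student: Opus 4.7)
The plan is to construct mutually inverse functors $\Phi\colon \mathbf{Alg}(\mathrm{B}_S) \mor \mathbf{BTh}_{\mathrm{f}}(S)$ and $\Psi\colon \mathbf{BTh}_{\mathrm{f}}(S) \mor \mathbf{Alg}(\mathrm{B}_S)$, and to check that on objects and on morphisms they compose to the respective identity functors. The basic idea is that the sorts $(u,w)$ of a B\'enabou algebra name the hom-sets between the words $u$ and $w$, the operation $\circ_{u,x,w}$ supplies categorical composition, $\pi^{w}_{i}$ supplies product projections, and $\langle\,\rangle_{u,w}$ supplies the universal factoring map through the product.

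First, I would define $\Phi$ on objects by sending a B\'enabou algebra $\mathbf{A} = (A,F)$ to the pair $(\mathbf{B}_{\mathbf{A}}, p^{\mathbf{A}})$ with $\mathrm{Ob}(\mathbf{B}_{\mathbf{A}}) = \fmon{S}$, $\mathrm{Hom}_{\mathbf{B}_{\mathbf{A}}}(u,w) = A_{u,w}$, identity $\mathrm{id}_{u} = \langle \pi^{u}_{0},\ldots,\pi^{u}_{\bb{u}-1}\rangle^{\mathbf{A}}_{u,u}$, composition $g \comp f := \circ^{\mathbf{A}}_{u,x,w}(f,g)$ for $f\in A_{u,x}$, $g\in A_{x,w}$, and projections $p^{w,\mathbf{A}}_{i} := \pi^{w,\mathbf{A}}_{i}$. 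I would verify the category axioms: associativity is exactly $\mathrm{B}_{5}$; the right unit law $v\comp \mathrm{id}_{u} = v$ is exactly $\mathrm{B}_{2}$; for the left unit law, one checks using $\mathrm{B}_{5}$ and $\mathrm{B}_{1}$ that $\pi^{w}_{i}\comp(\mathrm{id}_{w}\comp v) = (\pi^{w}_{i}\comp \mathrm{id}_{w})\comp v = \pi^{w}_{i}\comp v$, and then applies $\mathrm{B}_{3}$ to conclude $\mathrm{id}_{w}\comp v = v$. The product property of $(w,p^{w,\mathbf{A}})$ then falls out: tupling provides the factoring map, $\mathrm{B}_{1}$ gives the commutativity $\pi^{w}_{i}\comp \langle g_{0},\ldots\rangle = g_{i}$, and $\mathrm{B}_{3}$ (applied to a hypothetical second factoring) yields uniqueness. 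On morphisms, a B\'enabou-algebra homomorphism $h\colon \mathbf{A}\mor \mathbf{A}'$ is sent to the functor $\Phi(h)$ that is the identity on objects and equals $h_{u,w}$ on $\mathrm{Hom}(u,w)$; functoriality follows because $h$ commutes with $\circ_{u,x,w}$ and $\langle\,\rangle_{u,u}$, and preservation of $p^{w}_{i}$ is precisely the commutation of $h$ with the constant $\pi^{w}_{i}$.

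Second, I would define $\Psi$ by sending $(\mathbf{B},p)$ to the algebra $\mathbf{A}_{(\mathbf{B},p)}$ with $A_{u,w} = \mathrm{Hom}_{\mathbf{B}}(u,w)$, $\pi^{w}_{i} := p^{w}_{i}$, $\circ_{u,x,w}$ equal to categorical composition, and $\langle g_{0},\ldots,g_{\bb{w}-1}\rangle_{u,w}$ equal to the unique factoring morphism into the product $(w,p^{w})$. Then $\mathrm{B}_{1}$ is the commutativity of the product triangles, $\mathrm{B}_{3}$ is the uniqueness of that factorization, and $\mathrm{B}_{5}$ is the associativity of composition in $\mathbf{B}$. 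Axiom $\mathrm{B}_{2}$ reduces to the identity $\langle p^{u}_{0},\ldots,p^{u}_{\bb{u}-1}\rangle_{u,u} = \mathrm{id}_{u}$, which holds because $\mathrm{id}_{u}$ is the unique morphism $u\mor u$ with $p^{u}_{i}\comp\mathrm{id}_{u} = p^{u}_{i}$. Axiom $\mathrm{B}_{4}$ reduces to $\langle p^{w}_{0}\rangle_{w,(w_{0})} = p^{w}_{0}$, equivalently to $p^{(s)}_{0} = \mathrm{id}_{(s)}$ for every $s\in S$; this must be read as the natural normalization adopted for the singleton-family products in $\mathbf{BTh}_{\mathrm{f}}(S)$ (it is forced on the algebra side because an idempotent isomorphism is an identity, and it is consistently imposable on the theory side since $((s),\mathrm{id}_{(s)})$ is a product of $((s))$). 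On morphisms, a functor $F\colon(\mathbf{B},p)\mor(\mathbf{B}',p')$ that is the identity on objects and preserves projections is sent to the family of hom-mappings $F_{u,w}\colon \mathrm{Hom}_{\mathbf{B}}(u,w)\mor \mathrm{Hom}_{\mathbf{B}'}(u,w)$; functoriality of $F$ and preservation of projections give compatibility with $\circ_{u,x,w}$ and $\pi^{w}_{i}$, and compatibility with $\langle\,\rangle_{u,w}$ follows from the uniqueness of factoring maps into the (preserved) products.

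Finally, I would check $\Psi\comp\Phi = \mathrm{Id}_{\mathbf{Alg}(\mathrm{B}_{S})}$ and $\Phi\comp\Psi = \mathrm{Id}_{\mathbf{BTh}_{\mathrm{f}}(S)}$. On objects, the carriers and operations coincide with the original data by construction, with the tupling matching because it is characterized by the same universal property in both pictures (this uses $\mathrm{B}_{3}$ on one side and the product uniqueness on the other); on morphisms, both round-trips recover the original family of underlying set-maps. The main obstacle I anticipate is the careful bookkeeping around $\mathrm{B}_{4}$: one must verify that the single-letter projection normalization $p^{(s)}_{0} = \mathrm{id}_{(s)}$ is simultaneously forced by the algebra axioms (so that $\Phi$ lands in the normalized theories) and consistently available on the theory side (so that $\Psi$ is defined on all of $\mathbf{BTh}_{\mathrm{f}}(S)$ under this convention); once this is settled, the rest of the verification is a direct transcription between the equational presentation and the categorical universal properties.
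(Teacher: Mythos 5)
Your construction is precisely the paper's own: the paper defines the functor $B_{a,t}$ sending a B\'enabou algebra to the theory with hom-sets $B_{u,w}$, composition $\circ^{\mathbf{B}}_{u,x,w}$, identities $\langle(\pi^{w}_{i})^{\mathbf{B}}\mid i\in\lvert w\rvert\rangle_{w,w}$ and projections $(\pi^{w}_{i})^{\mathbf{B}}$, together with the evident inverse $B_{t,a}$, exactly as you do. Your one point of departure is a genuine improvement rather than a divergence: the paper's proof silently assumes what you isolate around $\mathrm{B}_{4}$, namely that the definition of $\mathbf{BTh}_{\mathrm{f}}(S)$ must be read with the normalization $p^{(s)}_{0}=\mathrm{id}_{(s)}$ (which is forced on the algebra side, since $\mathrm{B}_{4}$ with $w=(s)$ makes $p^{(s)}_{0}$ an idempotent isomorphism), for otherwise $B_{t,a}$ would only land in the full subcategory of normalized theories and one would obtain an equivalence rather than an isomorphism.
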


\begin{proof}
The isomorphism from $\mathbf{Alg}(\mathrm{B}_{S})$ to
$\mathbf{BTh}_{\mathrm{f}}(S)$ is the functor $B_{a,t}$ which to a
B\'{e}nabou algebra $\mathbf{B}$ assigns the B\'{e}nabou theory
$B_{a,t}(\mathbf{B})$ which has as underlying category that given by
the following data
\begin{enumerate}
\item The set of objects is $S^{\star}$ and, for $u,w\in
      S^{\star}$, $\mathrm{Hom}(u,w) = B_{u,w}$,

\item For every $w\in S^{\star}$,
      $\mathrm{id}_{w}=\langle(\pi^{w}_{i})^ {\mathbf{B}}\mid
      i\in\lvert w \rvert\rangle_{w,w}$,

\item If $P\colon u\mor x$, $Q\colon x\mor w$, then the composition
      of $P$ and  $Q$ is $\circ^{\mathbf{B}}_{u,x,w}(P,Q)$,
\end{enumerate}
and as underlying family of projections that given, for every $w\in
S^{\star}$, as $\pi^{w}=((\pi^{w}_{i})^{\mathbf{B}})_{i\in\lvert w
\rvert}$; and which to a morphism of B\'{e}nabou algebras $f\colon
\mathbf{B}\mor \mathbf{B'}$ assigns the morphism of B\'{e}nabou
theories $B_{a,t}(f)$ that to $P\colon w\mor u$ associates
$f_{w,u}(P)\colon w\mor u$.

The inverse of $B_{a,t}$ is the functor $B_{t,a}$ which to a
B\'{e}nabou theory $\boldsymbol{\mathcal{B}} = (\mathbf{B},p)$ assigns
the B\'{e}nabou algebra $B_{t,a}(\boldsymbol{\mathcal{B}})$ that has
\begin{enumerate}
\item As underlying $(S^{\star})^{2}$-sorted set the family
      $(\mathrm{Hom}_{\mathbf{B}}(w,u))_{(w,u)\in
      (S^{\star})^{2}}$, and

\item As structure of B\'{e}nabou algebra on
      $(\mathrm{Hom}_{\mathbf{B}}(w,u))_{(w,u)\in (S^{\star})^{2}}$
      that obtained by interpreting, for every $w\in S^{\star}$ and
      $i\in \lvert w \rvert$, $\pi^{w}_{i}$ as $p^{w}_{i}$, for every
      $u,w\in S^{\star}$, $\langle\,\rangle_{u,w}$ as the canonical
      mapping from $\prod_{i\in\lvert w
      \rvert}\mathrm{Hom}_{\mathbf{B}}(u,(w_{i}))$ to
      $\mathrm{Hom}_{\mathbf{B}}(u,w)$ obtained by the universal
      property of the product for $w$, and, for every $u,x,w\in
      S^{\star}$, $\circ_{u,x,w}$ as the composition in $\mathbf{B}$;
\end{enumerate}
and which to a morphism of B\'{e}nabou theories $F\colon
\boldsymbol{\mathcal{B}}\mor \boldsymbol{\mathcal{B}}'$ assigns the
morphism of B\'{e}nabou algebras $B_{t,a}(F)$, that for every $u,w\in
S^{\star}$, is the bi-restriction of $F$ to the corresponding hom-sets
$\mathrm{Hom}(u,w)$ and $\mathrm{Hom}(u,w)$.
\end{proof}

\begin{remark}
The isomorphism between $\mathbf{BTh}_{\mathrm{f}}(S)$ and
$\mathbf{Alg}(\mathrm{B}_{S})$ can be interpreted as meaning, and this can be
algebraically reassuring, that the category of finitary many-sorted
algebraic theories of B\'{e}nabou, a purely formal entity, has the form of
a category of models for a finitary many-sorted equational
presentation, a semantical, or substantial, entity, therefore
confirming, once more, that apparently \emph{form is substance}.
Moreover, the isomorphism shows that the B\'{e}nabou algebras are more
closely related to the finitary many-sorted algebraic theories of
B\'{e}nabou than are the Hall algebras.
\end{remark}


Next we prove, directly, that the categories
$\mathbf{Alg}(\mathrm{H}_{S})$ and $\mathbf{Alg}(\mathrm{B}_{S})$ of
Hall and Bénabou algebras, respectively, are equivalent.  Later on,
once we have defined the morphisms and transformations of Fujiwara, we
will get such an equivalence as a consequence of the existence of
both an equivalence between the specifications of Hall and Bénabou and
a pseudo-functor from the $2$-category of specifications to the
$2$-category $\mathbf{Cat}$.

\begin{proposition}\label{EquivCatHallBenS}
For every set of sorts $S$, the categories $\mathbf{Alg}(\mathrm{H}_{S})$ and
$\mathbf{Alg}(\mathrm{B}_{S})$ are equivalent.
\end{proposition}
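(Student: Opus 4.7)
The plan is to exhibit explicit functors $\mathcal{H}\colon \mathbf{Alg}(\mathrm{B}_{S})\mor \mathbf{Alg}(\mathrm{H}_{S})$ and $\mathcal{B}\colon \mathbf{Alg}(\mathrm{H}_{S})\mor \mathbf{Alg}(\mathrm{B}_{S})$ and verify that they form an equivalence. The guiding intuition comes from the typical model, $\mathbf{BOp}_{S}(A)\,vs.\,\mathbf{HOp}_{S}(A)$: a B\'enabou component $B_{u,w}=\mathrm{Hom}(A_{w},A_{u})$ is naturally isomorphic to $\prod_{i\in\bb{u}}\mathrm{Hom}(A_{w},A_{(u_{i})})=\prod_{i\in\bb{u}}H_{w,u_{i}}$, while $H_{w,s}$ sits inside the B\'enabou algebra as $B_{w,(s)}$. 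This suggests the assignments on the underlying many-sorted sets, which then force the interpretations of the primitive operations.

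First I would define $\mathcal{H}$. For a B\'enabou algebra $\mathbf{B}$ put $\mathcal{H}(\mathbf{B})_{w,s}=B_{w,(s)}$, and interpret the Hall symbols as $(\pi^{w}_{i})^{\mathcal{H}(\mathbf{B})}=(\pi^{w}_{i})^{\mathbf{B}}$ and $\xi^{\mathcal{H}(\mathbf{B})}_{u,w,s}(P,Q_{0},\ldots,Q_{\bb{w}-1})=P\circ^{\mathbf{B}}_{u,w,(s)}\langle Q_{0},\ldots,Q_{\bb{w}-1}\rangle^{\mathbf{B}}_{u,w}$. Hall's Projection axiom $\mathrm{H}_{1}$ is exactly $\mathrm{B}_{1}$; Hall's Identity axiom $\mathrm{H}_{2}$ follows by combining $\mathrm{B}_{2}$ (for $w=(u_{j})$) with the degenerate case $\mathrm{B}_{4}$; and Hall's Associativity $\mathrm{H}_{3}$ is the substance of $\mathrm{B}_{5}$ once the tupling $\langle\cdot,\ldots,\cdot\rangle$ has been inserted, using $\mathrm{B}_{3}$ to rewrite the inner tupling in the required form. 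On morphisms, $\mathcal{H}(f)_{w,s}=f_{w,(s)}$.

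Next I would define $\mathcal{B}$. For a Hall algebra $\mathbf{H}$, set $\mathcal{B}(\mathbf{H})_{u,w}=\prod_{i\in\bb{w}}H_{u,w_{i}}$ (so that $\mathcal{B}(\mathbf{H})_{u,(s)}=H_{u,s}$), and declare: $(\pi^{w}_{i})^{\mathcal{B}(\mathbf{H})}=(\pi^{w}_{i})^{\mathbf{H}}\in H_{w,w_{i}}=\mathcal{B}(\mathbf{H})_{w,(w_{i})}$; $\langle\,\rangle^{\mathcal{B}(\mathbf{H})}_{u,w}$ is the canonical isomorphism $\prod_{i\in\bb{w}}H_{u,w_{i}}\iso\mathcal{B}(\mathbf{H})_{u,w}$; and $\circ^{\mathcal{B}(\mathbf{H})}_{u,x,w}\bigl((P_{j})_{j\in\bb{x}},(Q_{i})_{i\in\bb{w}}\bigr)=\bigl(\xi^{\mathbf{H}}_{u,x,w_{i}}(Q_{i},P_{0},\ldots,P_{\bb{x}-1})\bigr)_{i\in\bb{w}}$. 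The verification of $\mathrm{B}_{1}$ is immediate from $\mathrm{H}_{1}$ coordinate by coordinate; $\mathrm{B}_{2}$ and $\mathrm{B}_{3}$ reduce to $\mathrm{H}_{2}$; $\mathrm{B}_{4}$ is trivial; and $\mathrm{B}_{5}$ is obtained by applying $\mathrm{H}_{3}$ in each of the $\bb{w}$ coordinates separately. On morphisms, $\mathcal{B}(h)_{u,w}=\prod_{i\in\bb{w}}h_{u,w_{i}}$; this is manifestly a B\'enabou homomorphism given the above interpretations.

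Finally I would check the natural isomorphisms. The composite $\mathcal{H}\comp\mathcal{B}$ is \emph{equal} to the identity on $\mathbf{Alg}(\mathrm{H}_{S})$, since $\mathcal{B}(\mathbf{H})_{w,(s)}=H_{w,s}$ and the operations are preserved on the nose. For the other composite, given a B\'enabou algebra $\mathbf{B}$ the component $\mathcal{B}(\mathcal{H}(\mathbf{B}))_{u,w}=\prod_{i\in\bb{w}}B_{u,(w_{i})}$, and I would define $\eta^{\mathbf{B}}_{u,w}\colon\mathcal{B}(\mathcal{H}(\mathbf{B}))_{u,w}\mor B_{u,w}$ by $(P_{i})_{i\in\bb{w}}\lmapsto\langle P_{0},\ldots,P_{\bb{w}-1}\rangle^{\mathbf{B}}_{u,w}$, with inverse $Q\mapsto\bigl((\pi^{w}_{i})^{\mathbf{B}}\circ^{\mathbf{B}}_{u,w,(w_{i})} Q\bigr)_{i\in\bb{w}}$. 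Axioms $\mathrm{B}_{1}$ and $\mathrm{B}_{3}$ say precisely that these maps are mutually inverse, and $\mathrm{B}_{4}$ together with $\mathrm{B}_{5}$ ensure that $\eta^{\mathbf{B}}$ is a homomorphism of B\'enabou algebras and natural in $\mathbf{B}$.

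The routine part is the bookkeeping of these interpretations on the operational symbols; the main obstacle, and where I would have to be most careful, is the verification that $\mathcal{B}$ lands in $\mathbf{Alg}(\mathrm{B}_{S})$, i.e.\ that $\mathrm{B}_{5}$ (the substitution-associativity law, which involves three different word parameters) really does follow from Hall's $\mathrm{H}_{3}$ when one unpacks the tupling coordinate by coordinate. Once this is settled, the rest is a matter of checking the identities displayed above.
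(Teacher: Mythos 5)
Your proposal is correct and follows essentially the same route as the paper: your $\mathcal{H}$ and $\mathcal{B}$ are precisely the paper's functors $F_{b,h}$ and $F_{h,b}$ (same underlying-set assignments, same interpretations of the operation symbols), with $\mathcal{H}\comp\mathcal{B}$ the identity on the nose and $\mathcal{B}\comp\mathcal{H}\cong\mathrm{Id}$ witnessed by the same tupling/projection maps, whose mutual inverseness rests on $\mathrm{B}_{1}$ and $\mathrm{B}_{3}$ exactly as you say. The only difference is one of emphasis: the paper spends its effort verifying the homomorphism property of the comparison maps rather than the axiom-chasing you flag as the main obstacle, but both verifications are of the same routine character.
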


\begin{proof}
The equivalence from $\mathbf{Alg}(\mathrm{H}_{S})$ to
$\mathbf{Alg}(\mathrm{B}_{S})$ is the functor $F_{h,b}$ which to a Hall
algebra $\mathbf{A}$ assigns the B\'{e}nabou algebra
$F_{h,b}(\mathbf{A})$ that has
\begin{enumerate}
\item As underlying $(S^{\star})^{2}$-sorted set
      $((A_{w})_{u})_{(w,u)\in(S^{\star})^{2}}$ where
      $A_{w}=(A_{w,s})_{s\in S}$ and $(A_{w})_{u} = \prod_{i\in \lvert
      u \rvert}A_{w,u_{i}}$, and

\item As structure of B\'{e}nabou algebra on
     $((A_{w})_{u})_{(w,u)\in(S^{\star})^{2}}$ that defined as
    \begin{align*}
    (\pi^{w}_{i})^{F_{h,b}(\mathbf{A})}=(&(\pi^{w}_{i})^{\mathbf{A}}),\\
    \langle (a_{0}),\ldots,(a_{\lvert w \rvert-1})\rangle_{u,w}^{F_{h,b}(\mathbf{A})} =
    (
    &\xi_{u,w,w_{0}}^{\mathbf{A}}(\pi^{w}_{0},a_{0},\ldots,a_{\lvert w \rvert-1}),
    \ldots, \\
    &\xi_{u,w,w_{\lvert w \rvert-1}}^{\mathbf{A}}(\pi^{w}_
    {\lvert w \rvert-1},a_{0},\ldots,
    a_{\lvert w \rvert-1})
    ), \\
    \circ^{F_{h,b}(\mathbf{A})}_{u,x,w}(a,b) =
      (&\xi^{\mathbf{A}}_{u,x,w_{0}}(b_{0},a_{0},\ldots,a_{\lvert x \rvert-1}),
      \ldots, \\
      &\xi^{\mathbf{A}}_{u,x,w_{\lvert w \rvert-1}}(b_{\lvert w \rvert-1},a_{0},\ldots,
      a_{\lvert x \rvert-1}));
    \end{align*}
\end{enumerate}
and which to a morphism $f\colon \mathbf{A}\mor \mathbf{B}$ of Hall
algebras assigns the morphism $F_{h,b}(f)=((f_{w})_{u})_{
(w,u)\in(S^{\star})^{2}}$ from $F_{h,b}(\mathbf{A})$ to
$F_{h,b}(\mathbf{B})$ defined, for $(a_{0},\ldots,a_{\lvert u
\rvert-1})$ in $(A_{w})_{u}$, as
$$
(a_{0},\ldots,a_{\lvert u \rvert-1})\longmapsto
(f_{w,u_{0}}(a_{0}),\ldots,f_{w,u_{\lvert u \rvert-1}}(a_{\lvert u \rvert-1}))).
$$

The quasi-inverse equivalence from $\mathbf{Alg}(\mathrm{B}_{S})$ to
$\mathbf{Alg}(\mathrm{H}_{S})$ is the functor $F_{b,h}$ which to a
B\'{e}nabou algebra $\mathbf{A}$ assigns the Hall algebra
$F_{b,h}(\mathbf{A})$ that has
\begin{enumerate}
    \item  As underlying $S^{\star}\times S$-sorted set
          $(A_{w,(s)})_{ (w,s)\in S^{\star}\times S}$ , and

    \item As structure of Hall algebra on $(A_{w,(s)})_{ (w,s)\in
    S^{\star}\times S}$ that defined as
    \begin{align*}
    (\pi^{w}_{i})^{F_{b,h}(\mathbf{A})}&=(\pi^{w}_{i})^{\mathbf{A}}, \\
    \xi_{u,w,s}^{F_{b,h}(\mathbf{A})}(a,a_{0},\ldots,a_{\lvert w \rvert-1})
    &=
    a\circ_{u,w,s}\langle a_{0},\ldots,a_{\lvert w \rvert-1}\rangle_{u,w};
    \end{align*}
\end{enumerate}
and which to a homomorphism $f\colon{\mathbf{A}}\mor \mathbf{B}$ of
B\'{e}nabou algebras assigns the bi-restriction of $f$ to
$F_{b,h}(\mathbf{A})$ and $F_{b,h}(\mathbf{B})$.

Next, for a Bénabou algebra $\mathbf{A}$, we prove that $\mathbf{A}$ and
$F_{h,b}(F_{b,h}(\mathbf{A}))$ are isomorphic.  Let
$f\colon\mathbf{A}\mor F_{h,b}(F_{b,h}(\mathbf{A}))$ be the $\fmon{S}\times
\fmon{S}$-sorted mapping defined, for $(u,w)\in\fmon{S}\times
\fmon{S}$ and $a\in A_{u,w}$, as
$$
a \mapsto
(
(\pi^{w}_{0})^{\mathbf{A}}\comp a, \ldots,
(\pi^{w}_{\bb{w}-1})^{\mathbf{A}}\comp a
).
$$
The definition is sound because, for $a\in A_{u,w}$, we have that
$(\pi^{w}_{i})^{\mathbf{A}}\comp a \in F_{b,h}(A)_{u,w_{i}}$, hence
$((\pi^{w}_{0})^{\mathbf{A}}\comp a, \ldots,
(\pi^{w}_{\bb{w}-1})^{\mathbf{A}}\comp a ) \in F_{h,b}(F_{b,h}(A))_{u,w}$.
Thus defined $f$ is a homomorphism, since we have, on the one hand, that
\begin{align*}
f((\pi^{w}_{i})^{\mathbf{A}})
&=
(\pi^{(w_{i})}_{0}\comp \pi^{w}_{i}) \\
&=
(\tp{\pi^{(w_{i})}_{0}}_{(w_{i}),(w_{i})}\comp \pi^{w}_{i})
&&\text{(by $\Ben_{4}$)}\\
&=
(\tp{\pi^{(w_{i})}_{0}\comp (\tp{\pi^{(w_{i})}_{0}}\comp
\pi^{w}_{i})}_{w,(w_{i})})
&&\text{(by $\Ben_{3}$)}\\
&=
(\tp{\pi^{(w_{i})}_{0}\comp \pi^{w}_{i}}_{w,(w_{i})})
&&\text{(by $\Ben_{2}$ and $\Ben_{5}$)}\\
&=
(\pi^{w}_{i})
&&\text{(by $\Ben_{3}$)}\\
&=
(\pi^{w}_{i})^{F_{h,b}(F_{b,h}(\mathbf{A}))},
\end{align*}
on the other hand, that%
\begin{align*}
f(\tp{a_{0},\ldots,a_{\bb{w}-1}}^{\mathbf{A}}_{u,w})
=
(
&(\pi^{w}_{0})^{\mathbf{A}}\comp \tp{a_{0},\ldots,a_{\bb{w}-1}}^{\mathbf{A}}_{u,w}
,\ldots,\\
&(\pi^{w}_{\bb{w}-1})^{\mathbf{A}}\comp \tp{a_{0},\ldots,a_{\bb{w}-1}}^{\mathbf{A}}
         _{u,w}
)\\
=
(&\xi^{F_{b,h}(\mathbf{A})}(
     (\pi^{w}_{0})^{F_{b,h}(\mathbf{A})},
     a_{0},\ldots,a_{\bb{w}-1}
    )
 ,\ldots,\\
 &\xi^{F_{b,h}(\mathbf{A})}(
     (\pi^{w}_{\bb{w}-1})^{F_{b,h}(\mathbf{A})},
     a_{0},\ldots,a_{\bb{w}-1}
    )
)\\
=
&\tp{(a_{0}),\ldots,(a_{\bb{w}-1})}
       ^{F_{h,b}(F_{b,h}(\mathbf{A}))}_{u,w}\\
=
&\tp{f(a_{0}),\ldots,f(a_{\bb{w}-1})}
       ^{F_{h,b}(F_{b,h}(\mathbf{A}))}_{u,w},
\end{align*}
and, lastly, that%
\begin{align*}
f(b\comp^{\mathbf{A}} a)
=
(
 &(\pi^{w}_{0})^{\mathbf{A}}\comp (b\comp a),\ldots,
 (\pi^{w}_{\bb{w}-1})^{\mathbf{A}}\comp (b\comp a)
)\\
=
(
 &(\pi^{w}_{0})^{\mathbf{A}}\comp b\comp \tp{a_{0},\ldots,a_{\bb{w}-1}}
 ,\ldots,\\
 &(\pi^{w}_{\bb{w}-1})^{\mathbf{A}}\comp b\comp \tp{a_{0},\ldots,a_{\bb{w}-1}}
)\\
=
(
 &f(b_{0})\comp \tp{a_{0},\ldots,a_{\bb{w}-1}}
 ,\ldots,\\
 &b(b_{\bb{w}-1})\comp \tp{a_{0},\ldots,a_{\bb{w}-1}}
)\\
=
(
 &\xi^{F_{b,h}(\mathbf{A})}(f(b_{0}),f(a_{0}),\ldots,f(a_{\bb{w}-1}))
 ,\ldots,\\
 &\xi^{F_{b,h}(\mathbf{A})}(f(b_{\bb{w}-1}),f(a_{0}),\ldots,f(a_{\bb{w}-1}))
)\\
=
\phantom{(} 
&f(b)\comp^{F_{h,b}(F_{b,h}(\mathbf{A}))}f(a).
\end{align*}

Reciprocally, let $g\colon F_{h,b}(F_{b,h}(\mathbf{A}))\mor
\mathbf{A}$ be the $\fmon{S}\times \fmon{S}$-sorted mapping defined, for
$(u,w)\in\fmon{S}\times \fmon{S}$ and $b\in F_{h,b}(F_{b,h}(A))$, as %
$$
b \mapsto
\tp{b_{0},\ldots,b_{\bb{w}-1}}^{\mathbf{A}}_{u,w}.
$$
The definition is sound because, for $b =
(b_{0},\ldots,b_{\bb{w}-1})\in F_{h,b}(F_{b,h}(A))$, we have that
$b_{i}\in F_{b,h}(A)_{u,w_{i}}$, hence $b_{i}\in A_{u,(w_{i})}$,
therefore $\tp{b_{0},\ldots,b_{\bb{w}-1}}^{\mathbf{A}}\in A_{u,w}$.  Thus
defined it is easy to prove that $g$ is a homomorphism.

Now we prove that the homomorphisms $f$ and $g$ are such that $g\comp
f = \mathrm{id}_{\mathbf{A}}$ and $f\comp g =
\mathrm{id}_{F_{h,b}(F_{b,h}(\mathbf{A}))}$.  On the one hand, if
$a\in A_{u,w}$, then, by $\mathrm{B}_{3}$, we have that
$$
\tp{(\pi^{w}_{0})^{\mathbf{A}}\comp a, \ldots,
(\pi^{w}_{\bb{w}-1})^{\mathbf{A}}\comp a } = a,
$$
hence $g\comp f = \mathrm{id}_{\mathbf{A}}$.  On the other hand, if
$b\in F_{h,b}(F_{b,h}(A))$, then $g_{u,w}$ sends $b$ to
$\tp{b_{0},\ldots,b_{\bb{w}-1}}^{\mathbf{A}}_{u,w}$, and $f_{u,w}$
sends $\tp{b_{0},\ldots,b_{\bb{w}-1}}^{\mathbf{A}}_{u,w}$ to
$$
((\pi^{w}_{0})^{F_{h,b}(F_{b,h}(\mathbf{A}))}\comp
    \tp{b_{0},\ldots,b_{\bb{w}-1}}^{\mathbf{A}}_{u,w}
   ,\ldots, \\
    (\pi^{w}_{\bb{w}-1})^{F_{h,b}(F_{b,h}(\mathbf{A}))}\comp
    \tp{b_{0},\ldots,b_{\bb{w}-1}}^{\mathbf{A}}_{u,w}),
$$
but this last coincides with
$$
((\pi^{w}_{0})^{F_{h,b}(\mathbf{A})}\comp
    \tp{b_{0},\ldots,b_{\bb{w}-1}}^{\mathbf{A}}_{u,w}
   ,\ldots, \\
    (\pi^{w}_{\bb{w}-1})^{F_{h,b}(\mathbf{A})}\comp
    \tp{b_{0},\ldots,b_{\bb{w}-1}}^{\mathbf{A}}_{u,w}),
$$
thus, by the axiom $\mathrm{B}_{1}$, we have that this, in its turn,
coincides with
$$
\tp{b_{0},\ldots,b_{\bb{w}-1}}^{\mathbf{A}}_{u,w},
$$
therefore $f_{u,w}\comp g_{u,w}(b) = b$.  From which we can assert
that $f\comp g = \mathrm{id}_{F_{h,b}(F_{b,h}(\mathbf{A}))}$.


Finally, for a Hall algebra $\mathbf{A}$ we have that $\mathbf{A}$ and
$F_{b,h}(F_{h,b}(\mathbf{A}))$ are identical, because $a\in
A_{w,s}$ iff $a\in F_{h,b}(A)_{w,(s)}$ iff $a\in
F_{b,h}F_{h,b}(A)_{w,s}$.
\end{proof}

In the following proposition, for a set of sorts $S$, we state some
relations among the just proved equivalence between the categories
$\mathbf{Alg}(\mathrm{H}_{S})$ and $\mathbf{Alg}(\mathrm{B}_{S})$, the
adjunctions $\mathbf{T}_{\Hall_{S}}\ladj \mathrm{G}_{\Hall_{S}}$ and
$\mathbf{T}_{\Ben_{S}}\ladj \mathrm{G}_{\Ben_{S}}$, and the adjunction
$\coprod_{1\bprod\between_{S}}\ladj \Delta_{1\bprod\between_{S}}$
determined by the mapping $1\bprod\between_{S}$ from $\fmon{S}\bprod
S$ to $\fmon{S}\bprod \fmon{S}$ which sends a pair $(w,s)$ in
$\fmon{S}\bprod S$ to the pair $(w,(s))$ in $\fmon{S}\bprod \fmon{S}$.
From these relations we will get as an easy, but interesting,
corollary, that, for every $\fmon{S}\bprod S$-sorted set $\Sigma$,
$\mathbf{T}_{\Ben_{S}}(\coprod_{1\bprod\between_{S}}\Sigma)$, the free
Bénabou algebra on $\coprod_{1\bprod\between_{S}}\Sigma$, is
isomorphic to $\mathbf{BTer}_{S}(\Sigma)$.

\begin{proposition}
Let $S$ be a set of sorts.  Then for the diagram
$$
\xymatrix@C=70pt@R=45pt{
*++{\mathbf{Set}^{\fmon{S}\bprod S} } \xyn{1} &
*++{\mathbf{Alg}(\mathrm{H}_{S})}\xyn{2} \\
*++{\mathbf{Set}^{\fmon{S}\bprod\fmon{S}}} \xyn{3} &
*++{\mathbf{Alg}(\mathrm{B}_{S})}\xyn{4}
\ar@<+1.5ex>@{<- }"1";"2"^{\mathrm{G}_{\Hall_{S}}}
\ar         @{}   "1";"2"|{\uadj}
\ar@<-1.5ex>@{ ->}"1";"2"_{\mathbf{T}_{\Hall_{S}}}
\ar@<+1.5ex>@{<- }"1";"3"^{\Delta_{1\bprod\between_{S}}}
\ar         @{}   "1";"3"|{\ladj}
\ar@<-1.5ex>@{ ->}"1";"3"_{\coprod_{1\bprod\between_{S}}}
\ar@<+1.5ex>@{<- }"2";"4"^{F_{b,h}}
\ar         @{}   "2";"4"|{\equiv}
\ar@<-1.5ex>@{ ->}"2";"4"_{F_{h,b}}
\ar@<+1.5ex>@{<- }"3";"4"^{\mathrm{G}_{\Ben_{S}}}
\ar         @{}   "3";"4"|{\uadj}
\ar@<-1.5ex>@{ ->}"3";"4"_{\mathbf{T}_{\Ben_{S}}}
}
$$
we have that
$
\Delta_{1\bprod\between_{S}}\comp \mathrm{G}_{\Ben_{S}} =
\mathrm{G}_{\Hall_{S}}\comp\, F_{b,h}
\text{\, and \,}
\mathbf{T}_{\Ben_{S}}\comp \tcoprod_{1\bprod\between_{S}} \iso F_{h,b} \comp
\mathbf{T}_{\Hall_{S}}.
$
\end{proposition}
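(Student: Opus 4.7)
The plan is to establish the equality of right adjoints first, directly from the definitions, and then to derive the displayed isomorphism of left adjoints as a formal consequence of the uniqueness of adjoints up to natural isomorphism.

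For the first identity $\Delta_{1\bprod\between_{S}}\comp \mathrm{G}_{\Ben_{S}} = \mathrm{G}_{\Hall_{S}}\comp F_{b,h}$, I would evaluate both sides on an arbitrary B\'enabou algebra $\mathbf{A}$ with underlying $\fmon{S}\bprod\fmon{S}$-sorted set $(A_{u,w})_{(u,w)\in \fmon{S}\bprod\fmon{S}}$. The left-hand composite first strips off the B\'enabou structure and then relabels along the mapping $1\bprod\between_{S}$ from $\fmon{S}\bprod S$ to $\fmon{S}\bprod\fmon{S}$ sending $(w,s)$ to $(w,(s))$, producing the sorted set $(A_{w,(s)})_{(w,s)\in \fmon{S}\bprod S}$. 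By the explicit definition of $F_{b,h}$ given in the proof of Proposition~\ref{EquivCatHallBenS}, the Hall algebra $F_{b,h}(\mathbf{A})$ has underlying $\fmon{S}\bprod S$-sorted set precisely $(A_{w,(s)})_{(w,s)\in \fmon{S}\bprod S}$, and $\mathrm{G}_{\Hall_{S}}$ simply forgets the Hall structure. A parallel, component-by-component computation on morphisms then finishes the verification of the functorial equality.

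The second identity follows by a standard adjoint argument. From $\coprod_{1\bprod\between_{S}}\ladj \Delta_{1\bprod\between_{S}}$ together with $\mathbf{T}_{\Ben_{S}}\ladj \mathrm{G}_{\Ben_{S}}$ one obtains the composite adjunction $\mathbf{T}_{\Ben_{S}}\comp \tcoprod_{1\bprod\between_{S}} \ladj \Delta_{1\bprod\between_{S}}\comp \mathrm{G}_{\Ben_{S}}$; analogously, the equivalence of Proposition~\ref{EquivCatHallBenS}, upgraded in the standard way to an adjoint equivalence $F_{h,b}\ladj F_{b,h}$, composed with $\mathbf{T}_{\Hall_{S}}\ladj \mathrm{G}_{\Hall_{S}}$, yields $F_{h,b} \comp \mathbf{T}_{\Hall_{S}} \ladj \mathrm{G}_{\Hall_{S}}\comp F_{b,h}$. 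Since the two right adjoints coincide by the first part, the two left adjoints are naturally isomorphic, which is exactly the desired statement. The only technical subtlety, such as it is, lies in promoting the bare equivalence to an adjoint equivalence, but this is entirely standard: given the natural isomorphisms $F_{b,h}\comp F_{h,b}\cong \mathrm{Id}$ and $F_{h,b}\comp F_{b,h}\cong \mathrm{Id}$ exhibited in the proof of Proposition~\ref{EquivCatHallBenS}, one replaces one of them by a suitable modification so that the triangle identities hold, whereupon the remainder of the argument is purely formal.
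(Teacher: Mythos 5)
Your proof is correct and follows essentially the same route as the paper's: the first equality is checked directly from the definitions of $F_{b,h}$ and the relabelling functor, and the isomorphism of left adjoints is then deduced from the fact that both composites are left adjoint to the same functor. The only addition is your explicit remark about promoting the equivalence $(F_{h,b},F_{b,h})$ to an adjoint equivalence, which the paper leaves tacit.
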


\begin{proof}
The equality $\Delta_{1\bprod\between_{S}}\comp \mathrm{G}_{\Ben_{S}} =
\mathrm{G}_{\Hall_{S}}\comp\, F_{b,h}$ follows from the definitions of the
functors involved.  Then, being
$\mathbf{T}_{\Ben_{S}}\comp \tcoprod_{1\bprod\between_{S}}$ and
$F_{h,b} \comp \mathbf{T}_{\Hall_{S}}$ left adjoints to the same
functor, we can assert that
$\mathbf{T}_{\Ben_{S}}\comp \tcoprod_{1\bprod\between_{S}} \iso
F_{h,b} \comp \mathbf{T}_{\Hall_{S}}$.
\end{proof}

\begin{corollary}\label{Beniso}
Let $\Sigma$ be an $S$-sorted signature.  Then the free Bénabou
algebra $\mathbf{T}_{\Ben_{S}}(\coprod_{1\bprod\between_{S}}\Sigma)$
on $\coprod_{1\bprod\between_{S}}\Sigma$ is isomorphic to the Bénabou
algebra $\mathbf{BTer}_{S}(\Sigma)$ for $(S,\Sigma)$.
\end{corollary}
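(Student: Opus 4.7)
The plan is to assemble the isomorphism from two facts already established: the natural isomorphism $\mathbf{T}_{\Ben_{S}}\comp \tcoprod_{1\bprod\between_{S}} \iso F_{h,b}\comp \mathbf{T}_{\Hall_{S}}$ from the preceding proposition, and the identification of the free Hall algebra on $\Sigma$ with the Hall algebra of terms, given by Proposition~\ref{iso:FrH-TerH}. Evaluating the preceding proposition at the $\fmon{S}\bprod S$-sorted set $\Sigma$ gives
\[
\mathbf{T}_{\Ben_{S}}(\tcoprod_{1\bprod\between_{S}}\Sigma) \;\iso\; F_{h,b}(\mathbf{T}_{\Hall_{S}}(\Sigma)),
\]
and composing with $F_{h,b}$ applied to the isomorphism $\mathbf{T}_{\Hall_{S}}(\Sigma)\iso \mathbf{HTer}_{S}(\Sigma)$ of Proposition~\ref{iso:FrH-TerH} yields $\mathbf{T}_{\Ben_{S}}(\tcoprod_{1\bprod\between_{S}}\Sigma) \iso F_{h,b}(\mathbf{HTer}_{S}(\Sigma))$. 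The remaining task is therefore to exhibit an isomorphism of B\'enabou algebras $F_{h,b}(\mathbf{HTer}_{S}(\Sigma)) \iso \mathbf{BTer}_{S}(\Sigma)$.

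For this comparison I would proceed by direct inspection of the two structures. On underlying $(S^{\star})^{2}$-sorted sets, the $(w,u)$-component of $F_{h,b}(\mathbf{HTer}_{S}(\Sigma))$ is $\prod_{i\in\bb{u}}\mathrm{T}_{\Sigma}(\vs{w})_{u_{i}}$, which is precisely the standard presentation of $\mathrm{Hom}(\vs{u},\mathrm{T}_{\Sigma}(\vs{w}))$, i.e. the underlying set of $\mathbf{BTer}_{S}(\Sigma)_{w,u}$. The candidate isomorphism is then the canonical transposition $(P_{0},\ldots,P_{\bb{u}-1})\mapsto \mathcal{P}$ with $\mathcal{P}_{u_{i}}(v^{u_{i}}_{i}) = P_{i}$ in each coordinate.

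I would then verify that this bijection is a homomorphism of B\'enabou algebras by checking the three structure operations. For the projections, unwinding the $F_{h,b}$ formula gives $(\pi^{w}_{i})^{F_{h,b}(\mathbf{HTer}_{S}(\Sigma))} = ((\pi^{w}_{i})^{\mathbf{HTer}_{S}(\Sigma)}) = (\eta_{\vs{w},w_{i}}(v^{w_{i}}_{i}))$, which corresponds under the transposition to $(\pi^{w}_{i})^{\mathbf{BTer}_{S}(\Sigma)}$. For tupling, the formula defining $\tp{\cdot}_{u,w}^{F_{h,b}(\mathbf{A})}$ applied to families of $1$-ary entries reduces, via axiom $\mathrm{H}_{1}$ in $\mathbf{HTer}_{S}(\Sigma)$, to the canonical tupling on $\mathrm{Hom}(\vs{w},\mathrm{T}_{\Sigma}(\vs{u}))$; and for composition, the formula $\comp^{F_{h,b}(\mathbf{A})}_{u,x,w}(a,b)_{j} = \xi_{u,x,w_{j}}^{\mathbf{A}}(b_{j},a_{0},\ldots,a_{\bb{x}-1})$ becomes, by the very definition of $\xi^{\mathbf{HTer}_{S}(\Sigma)}$, the operator $\mathcal{P}\mapsto\mathcal{P}^{\sharp}\comp \mathcal{Q}$ that defines $\comp_{u,x,w}$ on $\mathbf{BTer}_{S}(\Sigma)$.

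The only delicate point, and the one I would expect to take a little care, is the compatibility of the composition operator: the defining formula of $F_{h,b}$ expresses $\comp^{F_{h,b}(\mathbf{HTer}_{S}(\Sigma))}$ componentwise in terms of the substitution operators $\xi^{\mathbf{HTer}_{S}(\Sigma)}$, and matching this to $\mathcal{P}^{\sharp}\comp\mathcal{Q}$ rests on the compatibility between the universal property defining $\mathcal{P}^{\sharp}$ on $\mathbf{T}_{\Sigma}(\vs{x})$ and the action of substitution operators on the generators $v^{x_{i}}_{i}\in (\vs{x})_{x_{i}}$. Once this identification is made, concatenating the three isomorphisms gives $\mathbf{T}_{\Ben_{S}}(\tcoprod_{1\bprod\between_{S}}\Sigma)\iso \mathbf{BTer}_{S}(\Sigma)$, as required.
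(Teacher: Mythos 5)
Your argument is correct and follows essentially the same route as the paper: the paper's proof is the single line ``It follows after $\mathbf{BTer}_{S}(\Sigma) = F_{h,b}(\mathbf{HTer}_{S}(\Sigma))$'', which combines exactly your two ingredients, namely the isomorphism $\mathbf{T}_{\Ben_{S}}\comp \tcoprod_{1\bprod\between_{S}} \iso F_{h,b}\comp \mathbf{T}_{\Hall_{S}}$ and Proposition~\ref{iso:FrH-TerH}. The only difference is that the paper treats the final identification of $F_{h,b}(\mathbf{HTer}_{S}(\Sigma))$ with $\mathbf{BTer}_{S}(\Sigma)$ as a definitional equality, whereas you verify it operation by operation; that verification is sound and merely makes explicit what the paper leaves implicit.
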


\begin{proof}
It follows after $\mathbf{BTer}_{S}(\Sigma) =
F_{h,b}(\mathbf{HTer}_{S}(\Sigma))$.
\end{proof}

This corollary is interesting because it enables us, on the one hand,
to get a more tractable description of the terms in
$\mathbf{T}_{\Ben_{S}}(\coprod_{1\bprod\between_{S}}\Sigma)$, and, on
the other hand, to give, in the fifth section, an alternative, but
equivalent, definition of the concept of morphism of Fujiwara between
signatures.

\section{Morphisms of Fujiwara.}


In Mathematics it is standard to compare pairs of objects by means of
homomorphisms, i.e., mappings from one of them to the other which
relate, in a predetermined way, the primitive operations on the source
object to the corresponding primitive operations on the target object.
But there are natural examples of comparisons between objects, e.g.,
the derivations in ring theory (see~\cite{nJ68}, pp.  169--172), which
can only be stated by relating the primitive operations on the source
object to corresponding (\emph{families} of) \emph{derived} operations
on the target object, thus showing the necessity of conveniently
generalizing the ordinary concept of homomorphism.  Related to this,
Fujiwara, in~\cite{tF59}, proposed a theory of mappings between
algebraic systems, of not necessarily the same similarity type, under
which falls the classical concept of homomorphism, but also the above
mentioned derivations in ring theory, among others.

Before we outline, briefly, the theory developed by Fujiwara
in~\cite{tF59}, we agree that for a natural number $n\in \mathbb{N}$
and a standard infinite countable set of variables $V = \{\,v_{n}\mid
n\in \mathbb{N}\,\}$, $\vs{v_{n}}$ is the set $\{\,v_{i}\mid i\in
n\,\}$ of the first $n$ variables in $V$.

Fujiwara defines (in~\cite{tF59}, p.  155) for two single-sorted
signatures $\Sigma = (\Sigma_{n})_{n\in\mathbb{N}}$ and $\Lambda =
(\Lambda_{n})_{n\in\mathbb{N}}$, a morphism (that he calls
\emph{family of basic mapping-formulas}) from $\Sigma$ to $\Lambda$
as, essentially, a pair $(\Phi,P)$, where $\Phi =
\{\,\varphi_{\mu}\mid \mu\in p\,\}$ is a set of mapping variables, to
be replaced by mappings from a $\Sigma$-algebra to another
$\Sigma$-algebra derived from a $\Lambda$-algebra, and $P =
(P^{n})_{n\in \mathbb{N}}$ a family of mappings such that, for every
natural number $n\in \mathbb{N}$, it is the case that
$$
  P^{n}\nfunction
  {\Phi\times \Sigma_{n}}{\mathrm{T}_{\Lambda}(\Phi\times\vs{v_{n}})}
  {(\varphi_{\mu},\sigma)}{P^{n}_{\varphi_{\mu},\sigma}}
$$
i.e., $P^{n}$ sends a pair $(\varphi_{\mu},\sigma)$, with
$\varphi_{\mu}$ a mapping variable and $\sigma$ a formal $n$-ary
operation, to a term $P^{n}_{\varphi_{\mu},\sigma}$ for $\Lambda$ on
the set of variables $\Phi\times \vs{v_{n}}$.  To shorten the notation
we identify the variables $(\varphi_{\mu},v_{i})$ in $\Phi\times
\vs{v_{n}}$ to the expressions $\varphi_{\mu}(v_{i})$, for $\mu\in p$
and $i\in n$.  We remark that for Fujiwara the cardinal number of a
set of mapping variables is not necessarily finite.  However, for us,
leaving out some examples shown below, and in order not to complicate
still more the notation, they will be finite sets.

Let $(\Phi,P)$ be a morphism from $\Sigma$ to $\Lambda$, $\mathbf{B}$
a $\Lambda$-algebra, $n\in \mathbb{N}$ and $\sigma\in \Sigma_{n}$.
Then Fujiwara (in~\cite{tF59}, p.  155) assigns to the formal
$n$\nobreakdash-ary operation $\sigma$ the $n$-ary operation
$F_{\sigma}^{\mathbf{B}^{\Phi}}$ from $(B^{\Phi})^{n}$ to $B^{\Phi}$
obtained as the composition of the vertical mappings in the following
diagram
$$
\xymatrix@C=80pt@R=40pt{
(B^{\Phi})^{n}
  \ar[d]
  \ar`l[dd]+/l4pc/ `[dd]_{F_{\sigma}^{\mathbf{B}^{\Phi}}} [dd]
& {} \\
B^{\Phi\times \vs{v_{n}}}
  \ar[d]|{\tp{P^{n,\mathbf{B}}_{\varphi_{\mu},\sigma}}_{\mu\in p}}
  \ar[rd]^{P^{n,\mathbf{B}}_{\varphi_{\mu},\sigma}} \\
B^{\Phi}
  \ar[r]_{\pr_{\mu}}
 &
B \\
}$$
where we have that
\begin{enumerate}
\item For every $\mu\in p$, $\pr_{\mu}$ is the canonical
      projection from $B^{\Phi}$ to $B$,

\item For every $\mu\in p$,
      $P^{n,\mathbf{B}}_{\varphi_{\mu},\sigma}$ is the term operation
      on the $\Lambda$-algebra $\mathbf{B}$ determined by the term
      $P^{n}_{\varphi_{\mu},\sigma}\in
      \mathrm{T}_{\Lambda}(\Phi\times\vs{v_{n}})$,

\item The vertical mapping from $(B^{\Phi})^{n}$ to $B^{\Phi\times \vs{v_{n}}}$
      is the canonical isomorphism between both, and

\item $\tp{P^{n,\mathbf{B}}_{\varphi_{\mu},\sigma}}_{\mu\in p}$
      is the unique mapping from $B^{\Phi\times \vs{v_{n}}}$ to
      $B^{\Phi}$ such that, for every $\mu\in p$,
      $\pr_{\mu}\comp
      \tp{P^{n,\mathbf{B}}_{\varphi_{\mu},\sigma}}_{\mu\in p}
      = P^{n,\mathbf{B}}_{\varphi_{\mu},\sigma}$.
\end{enumerate}

In this way, Fujiwara (in~\cite{tF59}, p.  155) associates to every
$\Lambda$-algebra $\mathbf{B}$ a corresponding $\Sigma$-algebra
$P(\mathbf{B})$ with $B^{\Phi}$ as underlying set and
$(F_{\sigma}^{\mathbf{B}^{\Phi}})_{\sigma\in \bigcup_{n\in
\mathbb{N}}\Sigma_{n}}$ as family of structural operations.  However
he does not extend this association up to a functor from the category
$\mathbf{Alg}(\Lambda)$ to the category $\mathbf{Alg}(\Sigma)$
(probably reflecting that the time was not still ripe for a
generalized acceptance of the category-theoretical way of thinking in
mathematics).

For a morphism $(\Phi,P)$ from $\Sigma$ to $\Lambda$, a
$\Sigma$-algebra $\mathbf{A}$, and a $\Lambda$-algebra $\mathbf{B}$,
the association just recalled led Fujiwara to take (in~\cite{tF59},
pp.  155--156, through the so-called \emph{algebraic Taylor's
expansion theorem}), as $(\Phi,P)$-mappings from $\mathbf{A}$ to
$\mathbf{B}$ those families of mappings $(f_{\mu})_{\mu\in p}\in
\prod_{\mu\in p}\mathrm{Hom}(A,B)$ such that the mapping
$\tp{f_{\mu}}_{\mu\in p}$ from $A$ to $B^{\Phi}$ is a
$\Sigma$-homomorphism from the $\Sigma$-algebra $\mathbf{A}$ to the
$\Sigma$-algebra $P(\mathbf{B})$, or, what is equivalent, those
families $(f_{\mu})_{\mu\in p}$ such that, for every $n\in
\mathbb{N}$, every $\sigma\in \Sigma_{n}$ and any elements
$a_{0},\ldots,a_{n-1}$ in $A$, all the identities obtained
by substituting $(f_{\mu})_{\mu\in p}$,
$F_{\sigma}^{\mathbf{A}}$, $P_{\varphi_{\mu},\sigma}^{n,\mathbf{B}}$, and
$a_{0},\ldots,a_{n-1}$ for $(\varphi_{\mu})_{\mu\in p}$,
$\sigma$, $P_{\varphi_{\mu},\sigma}^{n}$, and
$v_{0},\ldots,v_{n-1}$, respectively, in all formal equations
$$
\varphi_{\mu}(\sigma(v_{0},\ldots,v_{n-1})) =
P_{\varphi_{\mu},\sigma}^{n}
\left(
\begin{matrix}
\varphi_{0}(v_{0}), & \cdots & ,\varphi_{0}(v_{n-1})\\
\hdotsfor{3} \\
\varphi_{\mu}(v_{0}), & \cdots & ,\varphi_{\mu}(v_{n-1})\\
\hdotsfor{3} \\
\varphi_{p-1}(v_{0}), & \cdots & ,\varphi_{p-1}(v_{n-1})
\end{matrix}
\right)
$$
are true in $\mathbf{B}$.

\begin{example}
Let $\Sigma = (\Sigma_{n})_{n\in\mathbb{N}}$ be a single-sorted
signature, $\Phi = \{\,\varphi_{\mu}\mid \mu\in m\,\}$, with $m\in
\mathbb{N}$, and $P = (P^{n})_{n\in \mathbb{N}}$ the family defined,
for every natural number $n\in \mathbb{N}$, as follows
$$
  P^{n}\nfunction
  {\Phi\times \Sigma_{n}}{\mathrm{T}_{\Sigma}(\Phi\times\vs{v_{n}})}
  {(\varphi_{\mu},\sigma)}
  {\sigma(\varphi_{\mu}(v_{0}),\ldots,\varphi_{\mu}(v_{n-1}))}
$$
Then, for a $\Sigma$-algebra $\mathbf{B}$, $P(\mathbf{B})$ is
$\mathbf{B}^{m}$, the direct $m$-power of $\mathbf{B}$.
\end{example}

\begin{example}
Let $\Sigma = (\Sigma_{n})_{n\in\mathbb{N}}$ be a single-sorted
signature such that $\Sigma_{2} = \{\,+,-,\cdot\,\}$ and $\Sigma_{n} =
\vacio$, if $n\neq 2$, $\Phi = \{\,\varphi_{\mu}\mid \mu\in
\mathbb{N}\,\}$, and $P = (P^{n})_{n\in \mathbb{N}}$ the family
defined, for  $n \neq 2$, as the unique mapping from $\vacio$ to
$\mathrm{T}_{\Sigma}(\Phi\times\vs{v_{n}})$, and, for $n = 2$, as follows
\begin{enumerate}
\item $P^{2}_{\varphi_{\mu},+} =
      \varphi_{\mu}(v_{0})+\varphi_{\mu}(v_{1})$.

\item $P^{2}_{\varphi_{\mu},-} =
      \varphi_{\mu}(v_{0})-\varphi_{\mu}(v_{1})$.

\item $P^{2}_{\varphi_{\mu},\cdot} =
      \sum_{i= 0}^{\mu}\varphi_{\mu-i}(v_{0})\cdot\varphi_{\mu}(v_{1})$.
\end{enumerate}
Then, for a ring $\mathbf{B}$, $P(\mathbf{B})$ can be considered as
the ring of formal power series over $\mathbf{B}$.
\end{example}

\begin{example}
Let $\Sigma = (\Sigma_{n})_{n\in\mathbb{N}}$ be a single-sorted
signature such that $\Sigma_{2} = \{\,+,-,\cdot\,\}$ and $\Sigma_{n} =
\vacio$, if $n\neq 2$, $\Phi = \{\,\varphi_{\mu,\nu}\mid (\mu,\nu)\in
m\times m\,\}$, with $m\in \mathbb{N}$, and $P = (P^{n})_{n\in
\mathbb{N}}$ the family defined, for $n \neq 2$, as the unique mapping
from $\vacio$ to $\mathrm{T}_{\Sigma}(\Phi\times\vs{v_{n}})$, and, for
$n = 2$, as follows
\begin{enumerate}
\item $P^{2}_{\varphi_{\mu,\nu},+} =
      \varphi_{\mu,\nu}(v_{0})+\varphi_{\mu,\nu}(v_{1})$.

\item $P^{2}_{\varphi_{\mu,\nu},-} =
      \varphi_{\mu,\nu}(v_{0})-\varphi_{\mu,\nu}(v_{1})$.

\item $P^{2}_{\varphi_{\mu,\nu},\cdot} =
      \sum_{\lambda=
      0}^{m-1}\varphi_{\mu,\lambda}(v_{0})\cdot\varphi_{\lambda,\nu}(v_{1})$.
\end{enumerate}
Then, for a ring $\mathbf{B}$, $P(\mathbf{B})$ can be considered as
the matrix ring of degree $m$ over $\mathbf{B}$.

We point out that under this example falls the concept of derivation
from a ring into a like one.  We recall that, for a pair of rings
$\mathbf{A}$ and $\mathbf{B}$ and two ring homomorphisms $f$ and $g$ from
$\mathbf{A}$ to $\mathbf{B}$, a mapping $d$ from $A$ to $B$ is called
an $(f,g)$-derivation from $\mathbf{A}$ to $\mathbf{B}$ iff, for every
$x,y\in A$, we have that
\begin{enumerate}
\item $d(x+y) = d(x)+ d(y)$.

\item $d(xy) = d(x)f(y)+ g(x)d(y)$.
\end{enumerate}
Thus defined $d$ is not a ring homomorphism from $\mathbf{A}$ to
$\mathbf{B}$, however, if $d$ is any $(f,g)$-derivation from
$\mathbf{A}$ to $\mathbf{B}$, then the matrix
$\left(
\begin{smallmatrix}
f & \kappa_{0} \\
d & g
\end{smallmatrix}
\right) $, where $\kappa_{0}$ is the mapping from $A$ to $B$ that is
constantly $0$, defines a ring homomorphism from the ring $\mathbf{A}$
to $P(\mathbf{B}) = \mathbf{B}^{2\times 2}$, the matrix ring of degree
$2$ over $\mathbf{B}$, i.e., the matrix is a $(\Phi,P)$-mapping from
$\mathbf{A}$ to $\mathbf{B}$.
\end{example}

\begin{example}
Let $\Sigma = (\Sigma_{n})_{n\in\mathbb{N}}$ be a single-sorted
signature such that $\Sigma_{2} = \{\,+,-,\cdot\,\}$ and $\Sigma_{n} =
\vacio$, if $n\neq 2$, $\Lambda = (\Lambda_{n})_{n\in\mathbb{N}}$ a
single-sorted signature such that $\Lambda_{2} = \{\,+',-',\cdot'\,\}$
and $\Lambda_{n} = \vacio$, if $n\neq 2$, $\Phi = \{\,\varphi\,\}$,
and $P = (P^{n})_{n\in \mathbb{N}}$ the family defined, for $n \neq
2$, as the unique mapping from $\vacio$ to
$\mathrm{T}_{\Lambda}(\Phi\times\vs{v_{n}})$, and, for $n = 2$, as follows
\begin{enumerate}
\item $P^{2}_{\varphi,+} =
      \varphi(v_{0})+'\varphi(v_{1})$.

\item $P^{2}_{\varphi,-} =
      \varphi(v_{0})-'\varphi(v_{1})$.

\item $P^{2}_{\varphi,\cdot} =
      \varphi(v_{0})\cdot'\varphi(v_{1})-'\varphi(v_{1})\cdot'\varphi(v_{0})$.
\end{enumerate}
Then, for a ring $\mathbf{B}$, $P(\mathbf{B})$ is a Lie ring.
\end{example}

After this Fujiwara proceeds, among other things, to define the
composition of morphisms between single-sorted signatures.  But for
this he introduces (in~\cite{tF59}, pp.  157--160) a certain notation
that we explain as follows.  Given a morphism $(\Phi,P)\colon
\Sigma\mor \Lambda$, an index $\mu\in p$, and a set $X$, we denote by
$F^{(\Phi,P),X}_{\varphi_{\mu}}$ the $\Sigma$-homomorphism from
$\mathbf{T}_{\Sigma}(X)$ to $\mathbf{T}_{\Lambda}(\Phi\times X)$
obtained as the composition of the vertical mappings in the following
diagram
$$
\xymatrix@C=80pt@R=40pt{
X
  \ar[r]^{\eta_{X}}
  \ar[rd]_{\eta^{\flat}_{\Phi\times X}} &
\mathrm{T}_{\Sigma}(X)
  \ar[d]|{(\eta^{\flat}_{\Phi\times X})^{\sharp}}
  \ar`r[dd]+/r4pc/ `[dd]^{F^{(\Phi,P),X}_{\varphi_{\mu}}} [dd] \\
{} &
\mathrm{T}_{\Lambda}(\Phi\times X)^{\Phi}
  \ar[d]^{\pr_{\mu}} \\
{} &
\mathrm{T}_{\Lambda}(\Phi\times X)
}
$$
where $\eta^{\flat}_{\Phi\times X}\colon X\mor
\mathrm{T}_{\Lambda}(\Phi\times X)^{\Phi}$ is the adjunct to
$\eta_{\Phi\times X}\colon \Phi\times X\mor
\mathrm{T}_{\Lambda}(\Phi\times X)$, and $\mathrm{pr}_{\mu}$ the
$\mu$-th projection from $\mathbf{T}_{\Lambda}(\Phi\times X)^{\Phi}$ to
$\mathbf{T}_{\Lambda}(\Phi\times X)$.

Now given $(\Phi,P)\colon \Sigma\mor \Lambda$ and $(\Psi,Q)\colon
\Lambda\mor \Omega$, with $\Phi = \{\,\varphi_{\mu}\mid \mu\in p\,\}$
and $\Psi = \{\,\psi_{\nu}\mid \nu\in q\,\}$, Fujiwara defines
(in~\cite{tF59}, p.  164) the composition of $(\Phi,P)$ and $(\Psi,Q)$
as the pair $(\Psi\times \Phi, Q\comp P)$, where, for every $n\in
\mathbb{N}$, $(Q\comp P)^{n}$ is the mapping
$$
  (Q\comp P)^{n}\nfunction
  {(\Psi\times\Phi)\times \Sigma_{n}}
  {\mathrm{T}_{\Omega}((\Psi\times\Phi)\times\vs{v_{n}})}
  {((\psi_{\nu},\varphi_{\mu}),\sigma)}
  {(Q\comp P)^{n}_{(\psi_{\nu},\varphi_{\mu}),\sigma} =
  F_{\psi_{\nu}}^{(\Psi,Q),\Phi\times \vs{v_{n}}}(P^{n}_{\varphi_{\mu},\sigma})}
$$
from this he obtains (in~\cite{tF59}, pp.  165--166), for every
$\Omega$-algebra $\mathbf{C}$, an isomorphism between the
$\Sigma$-algebras $Q(P(\mathbf{C}))$ and $Q\comp P(\mathbf{C})$.
However he does not consider in~\cite{tF59} neither the corresponding
category with objects the single-sorted signatures, morphisms between
single-sorted signatures the families of basic mapping-formulas, and
composition the just defined composition of morphisms, nor, obviously,
the contravariant pseudo-functor defined on such a category of
signatures, although the essential components of the subject matter
are there (and patiently waiting).


In this section, following the work by Fujiwara in~\cite{tF59}, we
generalize the morphisms in $\mathbf{Sig}$ in such a way that a
signature morphism from a signature into a like one, to be called from
now on a \emph{morphism of Fujiwara}, or more briefly, a
\emph{polyderivor}, will consist of two suitably related mappings: On the
one hand, a mapping which relates the sets of sorts of the signatures
and assigns to each sort in the first, a derived sort in the second,
i.e., a word on the set of sorts of the second, and, on the other
hand, a mapping which assigns to each formal operation in the first, a
family of terms in the second, all in such a way that both
transformations are compatible.  This type of signature morphism, from
which we will get a category $\mathbf{Sig}_{\mathfrak{pd}}$, with
the same objects that $\mathbf{Sig}$, will allow us to generalize,
concordantly, the morphisms between algebras.

We will also prove that the category $\mathbf{Sig}_{\mathfrak{pd}}$
of signatures and polyderivors is isomorphic to the Kleisli category
for a monad in $\mathbf{Sig}$, and that fact will confirm, to some
extend, the naturalness of the concept of polyderivor.  Furthermore,
the contravariant functor $\Alg\colon \mathbf{Sig}\mor
\mathbf{Cat}$ will be lifted up to a contravariant pseudo-functor
$\Alg_{\mathfrak{pd}}\colon \mathbf{Sig}_{\mathfrak{pd}}\mor
\mathbf{Cat}$ and, by applying, once more, the construction of
Ehresmann-Grothendieck, we will get the category
$\mathbf{Alg}_{\mathfrak{pd}}$ of algebras and algebra morphisms
that will have the polyderivors as a component.  In the same way, the
pseudo-functor $\Ter\colon \mathbf{Sig}\mor \mathbf{Cat}$ will be
lifted up to a pseudo-functor $\mathrm{Ter}_{\mathfrak{pd}} \colon
\mathbf{Sig_{\mathfrak{pd}}} \mor \mathbf{Cat}$.

On the other hand, to account exactly for the invariant character of
the realization of terms in algebras under the polyderivors, we prove the
existence of a pseudo-extranatural transformation
$(\mathrm{Tr},\theta)$ from a pseudo-functor
$\Alg_{\mathfrak{pd}}(\farg)\bprod\mathrm{Ter}_{\mathfrak{pd}}(\farg)$,
on $\mathbf{Sig}^{\mathrm{op}}_{\mathfrak{pd}}\times
\mathbf{Sig}_{\mathfrak{pd}}$ to $\mathbf{Cat}$, to the functor
$\mathrm{K}_{\mathbf{Set}}$, between the same categories, that is
constantly $\mathbf{Set}$.


Before we define the polyderivors, we agree that
$\mathbb{T}_{\star}=(\mathrm{T}_{\star},\between,\cncat)$ is the
standard monad in $\mathbf{Set}$ for the monoid specification, where,
for every set $S$, $\mathrm{T}_{\star}(S) = \fmon{S}$ is the set
$\bigcup_{n\in \mathbb{N}}S^{n}$, $\between_{S}\colon S\mor \fmon{S}$
the inclusion of $S$ into $\fmon{S}$, and $\cncat_{S}\colon
\ffmon{S}\mor\fmon{S}$ the merging of strings of words to words.  To
simplify the notation, we will write $(s)$ instead of
$\between_{S}\!\!(s)$.  Furthermore, if $\varphi\colon S\mor \fmon{T}$,
then $\ext{\varphi}\colon \fmon{S}\mor\fmon{T}$ is the underlying
mapping of the canonical extension of $\varphi$ to the free monoid
$\mathbf{T}_{\star}(S)$ on $S$ and $\fmon{\varphi}$ the unique monoid
homomorphism from $\mathbf{T}_{\star}(S)$ to
$\mathbf{T}_{\star}(T^{\star})$, the free monoid on the underlying set
of the free monoid on $T$, such that $\fmon{\varphi}\comp \between_{S}
= \between_{\fmon{T}}\comp \varphi$.

\begin{definition}
Let $\mathbf{\Sigma}$ and $\mathbf{\Lambda}$ be signatures.  A
\emph{polyderivor} from $\mathbf{\Sigma}$ to $\mathbf{\Lambda}$ is a pair $\mathbf{d}
= (\varphi,d)$, where $\varphi\colon S\mor\fmon{T} $ while $d\colon
\Sigma\mor
\mathrm{BTer}_{T}(\Lambda)_{\ext{\varphi}\bprod\varphi}$.
\end{definition}

Therefore, if $\mathbf{d}\colon\mathbf{\Sigma}\mor \mathbf{\Lambda}$
is a polyderivor, then, for every $(w,s)\in\fmon{S}\bprod S$, we have that
$$
d_{w,s}\colon\Sigma_{w,s}\mor
\mathrm{BTer}_{T}(\Lambda)_{\ext{\varphi}(w),\varphi(s)}=
\mathrm{T}_{\mathbf{\Lambda}}(\vs{\ext{\varphi}(w)})_{\varphi(s)},
$$
and, given that $\Delta_{\ext{\varphi}\bprod\varphi}=\Delta_{1 \bprod
\between_{S}} \comp \Delta_{\ext{\varphi}\bprod\ext{\varphi}}$ and the
functor $\tcoprod_{1 \bprod \between_{S}}$ is left adjoint to the
functor $\Delta_{1 \bprod \between_{S}}$, $d$ is, essentially, an
$\fmon{S}\bprod\fmon{S}$-sorted mapping
$$
\theta^{1\bprod \between_{S}}(d)\colon \tcoprod_{1 \bprod
\between_{S}}\Sigma\mor
\mathrm{BTer}_{T}(\Lambda)_{\ext{\varphi}\bprod\ext{\varphi}}.
$$
From now on, for every polyderivor $\mathbf{d}$, we identify $d$ and
$\theta^{1\bprod \between_{S}}(d)$.

\begin{remark}
For every word $w$ in $\fmon{S}$, $\ext{\varphi}(w) =
\varphi(w_{0})\cncat\cdots \cncat\varphi(w_{\bb{w}-1})$ is a word on
$T$ and if we agree that, for every $\alpha\in \bb{w}$, $p_{\alpha} =
\bb{\varphi(w_{\alpha})}$, then, for every $\alpha\in \bb{w}$ and
$i\in \sum_{\alpha\in \bb{w}}p_{\alpha}$, we have that
$$
\ext{\varphi}(w)_{i} = \varphi(w_{\alpha})_{i-\sum_{\beta\in
\alpha}p_{\beta}}\quad \text{iff} \quad \textstyle
\sum_{\beta\in \alpha}p_{\beta}\leq i\leq
\sum_{\beta\in \alpha+1}p_{\beta}-1.
$$
Hence $\ext{\varphi}(w)$ has the configuration
\begin{multline*}
\ext{\varphi}(w)=
  (
  \overbrace{
    \ext{\varphi}(w)_{0},\ldots,\ext{\varphi}(w)_{p_{0}-1}
    }^{\varphi(w_{0})}
  ,\ldots  \\
  \ldots ,
  \overbrace{
  \ext{\varphi}(w)_{\sum_{\beta\in \alpha}p_{\beta}}, \ldots,
  \ext{\varphi}(w)_{\sum_{\beta\in \alpha+1}p_{\beta}-1}
    }^{\varphi(w_{\alpha})}
  , \ldots \\
  \ldots ,
  \overbrace{
    \ext{\varphi}(w)_{\sum_{\beta\in \bb{w}-1}p_{\beta}},
    \ldots, \ext{\varphi}(w)_{\sum_{\beta\in \bb{w}}p_{\beta}-1}
    }^{\varphi(w_{\bb{w}-1})}
    ).
\end{multline*}
Then, for a formal operation $\sigma\colon w\mor s$ in $\Sigma$, it is
useful to think about the family of terms $d_{w,s}(\sigma)\in
\mathrm{T}_{\mathbf{\Lambda}}(\vs{\ext{\varphi}(w)})_{\varphi(s)}$,
denoted by $d(\sigma)\colon\ext{\varphi}(w)\mor\varphi(s)$, as being
graphi\-cal\-ly represented by
$$
d(\sigma):
\begin{pmatrix}
 \ext{\varphi}(w)_{0} & \cdots & \ext{\varphi}(w)_{p_{0}-1}  \\
 \vdots & \ddots & \vdots \\
 \ext{\varphi}(w)_{\sum_{\beta\in \bb{w}-1}p_{\beta}} & \cdots &
 \ext{\varphi}(w)_{\sum_{\beta\in \bb{w}}p_{\beta}-1}
\end{pmatrix}
\mor
(\varphi(s)_{0},\ldots,\varphi(s)_{\bb{\varphi(s)}-1}).
$$
\end{remark}

For every signature $\mathbf{\Lambda}$, $\mathrm{BTer}_{T}(\Lambda)$
is the underlying $\mathrm{ms}$-set of $\mathbf{BTer}_{T}(\Lambda)$,
the Bénabou algebra for $(T,\Lambda)$, and
$\mathbf{BTer}_{T}(\Lambda)$ is isomorphic to
$\mathbf{T}_{\Ben_{T}}(\coprod_{1\bprod\between_{T}}\Lambda)$, by
Corollary~\ref{Beniso}, hence the polyderivors can also be defined as
pairs $\mathbf{d} = (\varphi,d)$, where $\varphi\colon S\mor \fmon{T}$
while $d$ is an $\fmon{S}\bprod S$-sorted mapping from $\Sigma$ to
$\mathrm{T}_{\Ben_{T}}(\coprod_{1\bprod\between_{T}}\Lambda)_{\ext{\varphi}\bprod
\varphi}$, or, equivalently, an $\fmon{S}\bprod \fmon{S}$-sorted
mapping from $\coprod_{1\bprod\between_{S}}\Sigma$ to
$\mathrm{T}_{\Ben_{T}}(\coprod_{1\bprod\between_{T}}\Lambda)_{\ext{\varphi}\bprod
\ext{\varphi}}$.


\begin{example}
Let $\mathbf{\Sigma}$ be a signature and $p\in \mathbb{N}$.  Then
taking
\begin{enumerate}
\item As $\varphi\colon S\mor \fmon{S}$ the mapping which sends
      $s\in S$ to the word $\cncat_{\mu\in p}(s)$ and,

\item For $(w,s)\in \fmon{S}\times S$, as $d_{w,s}$ the mapping
      from $\Sigma_{w,s}$ to
      $\mathrm{T}_{\mathbf{\Sigma}}(\vs{\ext{\varphi}(w)})_{s}^{p}$
      (because, in this case,
      $\mathrm{T}_{\mathbf{\Sigma}}(\vs{\ext{\varphi}(w)})_{\varphi(s)} =
      \mathrm{T}_{\mathbf{\Sigma}}(\vs{\ext{\varphi}(w)})_{s}^{p}$),
      which sends $\sigma\in\Sigma_{w,s}$ to
      $$
        (\sigma(v_{0}^{w_{0}}, v_{p}^{w_{1}},\ldots,
                v_{(\bb{w}-1)p}^{w_{\bb{w}-1}}),\ldots,
         \sigma(v_{p-1}^{w_{0}}, v_{2p-1}^{w_{1}},\ldots,
                v_{\bb{w}p-1}^{w_{\bb{w}-1}})),
      $$
      in $\mathrm{T}_{\mathbf{\Sigma}}(\vs{\ext{\varphi}(w)})_{s}^{p}$,
\end{enumerate}
we have that $\mathbf{d} = (\varphi,d)$ is a polyderivor from
$\mathbf{\Sigma}$ into itself.  Later on, after having defined the
category $\mathbf{Sig}_{\mathfrak{pd}}$, of signatures and polyderivors,
and the pseudo-functor $\mathrm{Alg}_{\mathfrak{pd}}$ from
$\mathbf{Sig}_{\mathfrak{pd}}$ to $\mathbf{Cat}$, we will see that, for
a signature $\mathbf{\Sigma}$, a natural number $p\in \mathbb{N}$, and
a $\mathbf{\Sigma}$-algebra $\mathbf{B}$, the result of the action of
$\mathrm{Alg}_{\mathfrak{pd}}$ on $\mathbf{B}$ is $\mathbf{B}^{p}$.

For more examples of polyderivors we refer to the last section of this
paper, where we consider polyderivors between the (many-sorted) signatures
of Hall and Bénabou.
\end{example}

\begin{example}
Let $\Sigma = (\Sigma_{n})_{n\in\mathbb{N}}$ and $\Lambda =
(\Lambda_{n})_{n\in\mathbb{N}}$ be two single-sorted signatures and
$(\Phi,P)$, with $\Phi = \{\,\varphi_{\mu}\mid \mu\in p\,\}$, a
family of basic mapping-formulas from $\Sigma$ to $\Lambda$ as defined
by Fujiwara in~\cite{tF59}, p.  155.  Then by associating
\begin{enumerate}
\item To the single-sorted signatures $\Sigma$ and $\Lambda$,
      the signatures $(1,(\Sigma_{n,0})_{(n,0)\in \fmon{1}\times 1})$
      and $(1,(\Lambda_{n,0})_{(n,0)\in \fmon{1}\times 1})$,
      respectively, where, for every $n\in \fmon{1}\cong \mathbb{N}$,
      $\Sigma_{n,0} = \Sigma_{n}$ and $\Lambda_{n,0} = \Lambda_{n}$,
      and

\item To the morphism $(\Phi,P)$ the pair $(\kappa_{p},d)$,
      where $\kappa_{p}$ is the mapping from $1$ to $\fmon{1}$ which
      sends $0\in 1$ to $p\in \fmon{1}$ and $d$ the $\fmon{1}\times
      1$-sorted mapping from $(\Sigma_{n,0})_{(n,0)\in \fmon{1}\times
      1}$ to
      $(\mathrm{T}_{\mathbf{\Lambda}}(\vs{\ext{\kappa_{p}}(n)})_{\kappa_{p}(0)})
      _{(n,0)\in \fmon{1}\times 1}\cong
      (\mathrm{T}_{\mathbf{\Lambda}}(\Phi\times \vs{v_{n}})^{p})_{n\in
      \mathbb{N}}$ which, for $n\in \fmon{1}$, sends $\sigma\in
      \Sigma_{n}$ to $d_{n,0}(\sigma) =
      (P^{n}_{\varphi_{0},\sigma},\ldots,P^{n}_{\varphi_{p-1},\sigma})$,
    \end{enumerate}
we have that the families of basic mapping-formulas defined by
Fujiwara for the single-sorted case fall, obviously, under the concept
of polyderivor.  Consequently, all the examples we have put in this
section before the definition of polyderivor, once reformulated as just
said, are also examples of polyderivors.
\end{example}

\begin{example}
A standard signature morphism from a signature $(S,\Sigma)$ into a
like one $(T,\Lambda)$, as defined in the first section, is a pair of
mappings $(\varphi,d)$, where $\varphi\colon S\mor T$ is a mapping in
$\mathbf{Set}$, which sends sort symbols to sort symbols, while
$d\colon \Sigma\mor \Lambda_{\fmon{\varphi}\bprod \varphi}$ is a
morphism in $\mathbf{Sig}(S)$, which sends formal operations to formal
operations, respecting the assignment of sorts.  But from
$\varphi\colon S\mor T$ we get $\between_{T}\comp\varphi\colon
S\mor\fmon{T}$, and from $d\colon \Sigma\mor
\Lambda_{\fmon{\varphi}\bprod \varphi}$, because there exists a
canonical embedding from $\Lambda_{\fmon{\varphi}\bprod \varphi}$ into
$(\coprod_{1\bprod\between_{T}}\Lambda)_{(\between_{T}\comp\varphi)^{\sharp}
\times(\between_{T}\comp\varphi)}$, we get the composite mapping
$$
\xymatrix@C=20pt{
\Sigma
\ar[r]^-{d} &
\Lambda_{\fmon{\varphi}\bprod \varphi}
\ar[r]^{} &
(\coprod_{1\bprod\between_{T}}\Lambda)_{(\between_{T}\comp\varphi)^{\sharp}
\times(\between_{T}\comp\varphi)}
\ar[r]^{} &
\mathrm{T}_{\Ben_{T}}
(\coprod_{1\bprod\between_{T}}\Lambda)_{(\between_{T}\comp\varphi)^{\sharp}
\times(\between_{T}\comp\varphi)}.
}
$$
In this way we have assigned to every standard signature morphism a
corresponding polyderivor from the source to the target signature of the
signature morphism.  Thus the standard signature morphisms fall under
the concept of polyderivor.
\end{example}

Our next goal is to define the composition of polyderivors in order to get
the category $\mathbf{Sig}_{\mathfrak{pd}}$, of signatures and
polyderivors.  To attain the just stated goal we need to recall beforehand
the concept of derivor from a signature into a like one as defined by
Goguen, Thatcher and Wagner in~\cite{gtw76}, p.  86, and to set out
some of its properties.

\begin{definition}
Let $\mathbf{\Sigma}$ and $\mathbf{\Lambda}$ be signatures.  Then a
\emph{derivor from} $\mathbf{\Sigma}$ \emph{to} $\mathbf{\Lambda}$ is
a pair $\mathbf{d} = (\varphi,d)$, with $\varphi\colon S\mor T$ and
$d\colon \Sigma\mor \mathrm{HTer}_{T}(\Lambda)_{\fmon{\varphi}\bprod
\varphi}$.
\end{definition}

Therefore, if $\mathbf{d}\colon\mathbf{\Sigma}\mor\mathbf{\Lambda}$ is
a derivor, then, for every $(w,s)\in\fmon{S}\bprod S$, we have that
$$
d_{w,s}\colon \Sigma_{w,s}\mor
\mathrm{HTer}_{T}(\Lambda)_{\fmon{\varphi}(w),\varphi(s)}=
\mathrm{T}_{\Lambda}(\vs{\fmon{\varphi}(w)})_{\varphi(s)}
$$
sends a formal operation $\sigma\colon w\mor s$ to a term
$d_{w,s}(\sigma)\colon\fmon{\varphi}(w)\mor \varphi(s)$, i.e., a term
for $\Lambda$ of type $(\vs{\fmon{\varphi}(w)},\varphi(s))$, and all in
such a way that the arities and coarities are preserved, modulus the
correspondence between the sorts given by the mapping $\varphi$.

For a signature $\mathbf{\Lambda}$, we have that
$\mathrm{HTer}_{T}(\Lambda)$ is the underlying $\mathrm{ms}$-set of
$\mathbf{HTer}_{T}(\Lambda)$, the Hall algebra for $(T,\Lambda)$.  But
$\mathbf{HTer}_{T}(\Lambda)$ is isomorphic to
$\mathbf{T}_{\Hall_{T}}(\Lambda)$, the free
$\Hall_{T}$-algebra on $\Lambda$, by
Proposition~\ref{iso:FrH-TerH}.  Consequently the derivors can be
defined, alternative, but equivalently, as pairs $\mathbf{d} =
(\varphi,d)$ with $\varphi\colon S\mor T$ and $d\colon\Sigma\mor
\mathrm{T}_{\Hall_{T}}(\Lambda)$.  Thus, taking into account the
equivalence between the categories $\mathbf{Alg}(\Hall_{T})$ and
$\mathbf{Alg}(\Ben_{T})$, we can state the following

\begin{corollary}
Every derivor is a polyderivor (although, obviously, not every polyderivor is
a derivor).
\end{corollary}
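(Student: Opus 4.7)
The proof proceeds by constructing, from a given derivor, a polyderivor with the same source and target signatures, and by noting that this correspondence is not surjective.

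Given a derivor $(\varphi, d)$ from $\mathbf{\Sigma}$ to $\mathbf{\Lambda}$, the plan is to define the associated polyderivor $(\varphi', d')$ as follows. For the sort component I would set $\varphi' = \between_{T}\comp \varphi\colon S\mor \fmon{T}$, which sends each $s\in S$ to the one-letter word $(\varphi(s))$. By the universal property of the free monoid, the extension $\ext{\varphi'}\colon \fmon{S}\mor \fmon{T}$ then coincides, as a function, with $\fmon{\varphi}$, since both are monoid homomorphisms that agree on generators.

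For the operation component, I would observe that for every $(w,s)\in \fmon{S}\bprod S$, the codomain of $d_{w,s}$, namely $\mathrm{HTer}_{T}(\Lambda)_{\fmon{\varphi}(w),\varphi(s)} = \mathrm{T}_{\mathbf{\Lambda}}(\vs{\fmon{\varphi}(w)})_{\varphi(s)}$, is canonically isomorphic to the codomain required of $d'_{w,s}$, namely $\mathrm{BTer}_{T}(\Lambda)_{\ext{\varphi'}(w),\varphi'(s)} = \mathrm{T}_{\mathbf{\Lambda}}(\vs{\fmon{\varphi}(w)})_{(\varphi(s))}$, the latter being a one-fold product over the single-letter word $(\varphi(s))$. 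I would then define $d'_{w,s}$ as $d_{w,s}$ post-composed with this canonical bijection. Conceptually this amounts to embedding $\mathrm{HTer}_{T}(\Lambda)$ into $\mathrm{BTer}_{T}(\Lambda)$ along the map $1\bprod\between_{T}\colon \fmon{T}\bprod T\mor \fmon{T}\bprod \fmon{T}$, and it can equivalently be phrased through Proposition~\ref{iso:FrH-TerH}, Corollary~\ref{Beniso}, and the equivalence of Proposition~\ref{EquivCatHallBenS}, by transporting $d$ from the free Hall algebra $\mathbf{T}_{\Hall_{T}}(\Lambda)$ across $F_{h,b}$ to the corresponding free B\'{e}nabou algebra $\mathbf{T}_{\Ben_{T}}(\coprod_{1\bprod\between_{T}}\Lambda)$.

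Finally, for the parenthetical observation, it suffices to exhibit a single polyderivor that is not a derivor: the $p$-power polyderivor on any non-trivial signature from the example immediately following the definition of polyderivor, taken with $p\neq 1$, has sort component $\varphi\colon S\mor \fmon{S}$ sending each $s$ to $\cncat_{\mu\in p}(s)$, a word of length $p$, which cannot factor through $\between_{S}\colon S\mor \fmon{S}$. The only genuine obstacle in the whole argument is conceptual rather than technical, namely recognising that the generalisation from a sort $s\in T$ to a word on $T$ (for the codomain of the sort component), and correspondingly from a single term to a tuple of terms (for the operation component), is exactly the passage along $1\bprod\between_{T}$ from Hall-indexing to B\'{e}nabou-indexing; once this observation is made, both pieces of the proof are routine.
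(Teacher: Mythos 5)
Your proposal is correct and follows essentially the same route as the paper: the paper justifies the corollary by rephrasing derivors as maps into the free Hall algebra (Proposition~\ref{iso:FrH-TerH}) and then invoking the Hall--B\'enabou correspondence, which is exactly the passage along $1\bprod\between_{T}$ that you make explicit by setting $\varphi'=\between_{T}\comp\varphi$ and identifying $\mathrm{T}_{\mathbf{\Lambda}}(\vs{\fmon{\varphi}(w)})_{\varphi(s)}$ with its one-fold B\'enabou-indexed counterpart. Your explicit counterexample for the parenthetical claim (the $p$-power polyderivor with $p\neq 1$) is a welcome addition the paper leaves implicit.
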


\begin{remark}
For a \emph{single-sorted} signature $\Sigma$, a derivor
from $\Sigma$ to $\Sigma$, i.e., an endoderivor of $\Sigma$, is,
essentially, a morphism $d$ in $\mathbf{Set}^{\mathbb{N}}$ from
$\Sigma$ to $(\mathrm{T}_{\Sigma}(\vs{v_{n}}))_{n\in \mathbb{N}}$.
Therefore, for every $n\in \mathbb{N}$, we have that
$$
d_{n}\colon \Sigma_{n}\mor \mathrm{T}_{\Sigma}(\vs{v_{n}})
$$
sends a formal operation $\sigma\in \Sigma_{n}$ to a term
$d(\sigma)\in \mathrm{T}_{\Sigma}(\vs{v_{n}})$.  From this it follows
that the concept of \emph{hypersubstitution} as defined, e.g., by Denecke
and Wismath in~\cite{dw00}, p.  13 (and itself the main tool from
which it is constructed the theory of \emph{hyperidentities}), is a
particular case of the concept of derivor and, consequently, also of
that of polyderivor.
\end{remark}

The following two are examples of derivors that originate in the work
by Higman and B.H. Neumann in group theory (see~\cite{hn52}).

\begin{example}
Let $\Sigma = (\Sigma_{n})_{n\in\mathbb{N}}$ be a single-sorted
signature such that $\Sigma_{0} = \{\,1\,\}$, $\Sigma_{2} = \{\,/\,\}$
and $\Sigma_{n} = \vacio$, if $n\neq 0, 2$, $\Lambda =
(\Lambda_{n})_{n\in\mathbb{N}}$ a single-sorted signature such that
$\Lambda_{0} = \{\,1\,\}$, $\Lambda_{1} = \{\,{}^{-1}\,\}$,
$\Lambda_{2} = \{\,\cdot\,\}$ and $\Lambda_{n} = \vacio$, if $n\neq
0,1,2$, and $d = (d_{n})_{n\in \mathbb{N}}$ the mapping from $\Sigma$
to $\Lambda$ whose $n$-th coordinate mapping is, for $n \neq 0, 2$,
the unique mapping from $\vacio$ to
$\mathrm{T}_{\Lambda}(\vs{v_{n}})$, and, for $n = 0, 2$, that defined
as follows
\begin{enumerate}
\item $d_{0}(1) = 1$;

\item $d_{2}(/)  = v_{0}\cdot v_{1}^{-1}$.
\end{enumerate}
Then $d$ is a derivor from $\Sigma$ to $\Lambda$.  This derivor
defines $/$, the \lq\lq division\rq\rq, in terms of $\cdot$, the
\lq\lq multiplication\rq\rq, and ${}^{-1}$, the \lq\lq inverse\rq\rq.
\end{example}

\begin{example}
For the same two single-sorted signatures as in the preceding example
let $e = (e_{n})_{n\in \mathbb{N}}$ be the mapping from $\Lambda$ to
$\Sigma$ whose $n$-th coordinate mapping is, for $n \neq 0,1, 2$, the
unique mapping from $\vacio$ to $\mathrm{T}_{\Sigma}(\vs{v_{n}})$,
and, for $n = 0,1, 2$, that defined as follows
\begin{enumerate}
\item $e_{0}(1) = 1$;

\item $e_{1}({}^{-1})  = 1/ v_{0}$;

\item $e_{2}(\cdot)  = v_{0}/(1/ v_{1})$.
\end{enumerate}
Then $e$ is a derivor from $\Lambda$ to $\Sigma$.  This derivor
defines $\cdot$, the \lq\lq multiplication\rq\rq, and ${}^{-1}$, the
\lq\lq inverse\rq\rq, in terms of $/$, the \lq\lq division\rq\rq.
\end{example}

The next two examples of derivors come from the proof by M. H. Stone
of the subsumption of the theory of Boolean algebras under the theory
of rings (see~\cite{mst36}, pp. 43--48).

\begin{example}
Let $\Sigma = (\Sigma_{n})_{n\in\mathbb{N}}$ be a single-sorted
signature such that $\Sigma_{0} = \{\,0,1\,\}$, $\Sigma_{1} =
\{\,'\,\}$, $\Sigma_{2} = \{\,\wedge,\vee\,\}$ and $\Sigma_{n} =
\vacio$, if $n\neq 0,1, 2$, $\Lambda = (\Lambda_{n})_{n\in\mathbb{N}}$
a single-sorted signature such that $\Lambda_{0} = \{\,0,1\,\}$,
$\Lambda_{1} = \{\,-\,\}$, $\Lambda_{2} = \{\,\cdot,+\,\}$ and
$\Lambda_{n} = \vacio$, if $n\neq 0,1,2$, and $d = (d_{n})_{n\in
\mathbb{N}}$ the mapping from $\Sigma$ to $\Lambda$ whose $n$-th
coordinate mapping is, for $n \neq 0,1, 2$, the unique mapping from
$\vacio$ to $\mathrm{T}_{\Lambda}(\vs{v_{n}})$, and, for $n = 0,1, 2$,
that defined as follows
\begin{enumerate}
\item $d_{0}(0) = 0$ and $d_{0}(1) = 1$;

\item $d_{1}(')  = 1+ v_{0}$;

\item $d_{2}(\wedge)  = v_{0}\cdot v_{1}$ and
      $d_{2}(\vee)  = v_{0}+ v_{1} + v_{0}\cdot v_{1}$.
\end{enumerate}
Then $d$ is a derivor from $\Sigma$ to $\Lambda$.  This derivor
defines the \lq\lq Boolean algebra\rq\rq op\-era\-tions in
terms of the \lq\lq Boolean ring\rq\rq op\-era\-tions.
\end{example}

\begin{example}
For the same two single-sorted signatures as in the preceding example let
$e = (e_{n})_{n\in \mathbb{N}}$ be the mapping from $\Lambda$ to $\Sigma$
whose $n$-th coordinate mapping is, for $n \neq 0,1, 2$, the unique
mapping from $\vacio$ to $\mathrm{T}_{\Sigma}(\vs{v_{n}})$, and, for
$n = 0,1, 2$, that defined as follows
\begin{enumerate}
\item $e_{0}(0) = 0$ and $e_{0}(1) = 1$;

\item $e_{1}(-)  = v_{0}$;

\item $e_{2}(\cdot)  = v_{0}\wedge v_{1}$ and
      $e_{2}(+)  = (v_{0}\wedge v_{1}') \vee (v_{0}'\wedge v_{1})$.
\end{enumerate}
Then $e$ is a derivor from $\Lambda$ to $\Sigma$.  This derivor
defines the \lq\lq Boolean ring\rq\rq op\-era\-tions in terms of the
\lq\lq Boolean algebra\rq\rq op\-era\-tions.
\end{example}

Another example of derivor is that provided by Gödel (see~\cite{kG33}) in
his work about an interpretation of the intuitionistic propositional
logic into a modal extension of the classical propositional logic.

\begin{example}
Let $\Sigma = (\Sigma_{n})_{n\in\mathbb{N}}$ be a single-sorted
signature such that $\Sigma_{1} = \{\,\neg_{\mathrm{i}}\,\}$,
$\Sigma_{2} =
\{\,\wedge_{\mathrm{i}},\vee_{\mathrm{i}},\to_{\mathrm{i}}\,\}$ and
$\Sigma_{n} = \vacio$, if $n\neq 1, 2$, $\Lambda =
(\Lambda_{n})_{n\in\mathbb{N}}$ a single-sorted signature such that
$\Lambda_{1} = \{\,\neg_{\mathrm{c}},\Box\,\}$, $\Lambda_{2} =
\{\,\wedge_{\mathrm{c}},\vee_{\mathrm{c}},\to_{\mathrm{c}}\,\}$, and
$\Lambda_{n} = \vacio$, if $n\neq 1,2$, and $g = (g_{n})_{n\in
\mathbb{N}}$ the family defined, for $n \neq 1, 2$, as the unique
mapping from $\vacio$ to $\mathrm{T}_{\Lambda}(\vs{v_{n}})$, and, for
$n = 1, 2$, as follows
\begin{enumerate}
\item $g_{1}(\neg_{\mathrm{i}})  = \neg_{\mathrm{c}}\Box v_{0}$.

\item $g_{2}(\wedge_{\mathrm{i}})  = v_{0}\wedge_{\mathrm{c}} v_{1}$.

\item $g_{2}(\vee_{\mathrm{i}})  = \Box v_{0}\vee_{\mathrm{c}} \Box v_{1}$.

\item $g_{2}(\to_{\mathrm{i}})  = \Box v_{0}\to_{\mathrm{c}} \Box v_{1}$.
\end{enumerate}
Then $d$ is a derivor from $\Sigma$ to $\Lambda$.  This derivor
defines the \lq\lq intuitionistic\rq\rq connectives in terms of the
\lq\lq classical\rq\rq connectives together with $\Box$, the operator
of \lq\lq necessity\rq\rq.
\end{example}

Next we proceed to define the composition of derivors and to prove
that the corresponding category of signatures and derivors,
denoted by $\mathbf{Sig}_{\mathfrak{d}}$, is isomorphic to the
Kleisli category for a monad $\mathbb{T}_{\mathfrak{d}}$ in
$\mathbf{Sig}$.  This last result means, in other words, that the
derivors are indiscernible from the morphisms of the Kleisli
category for the monad $\mathbb{T}_{\mathfrak{d}}$ in
$\mathbf{Sig}$, thus confirming its mathematical naturalness.
By proceeding in this way we, on the one hand, move one step forward,
from the standpoint of category theory, in the investigation of some
of the most notable positive properties of the category
$\mathbf{Sig}_{\mathfrak{d}}$, with regard to what has been done
in~\cite{gtw76}, and, on the other hand, get a model for the
subsequent work we have to do with the polyderivors.

\begin{remark}
The derivors, that set up relations between signatures, and the
\emph{simple} entailment system morphisms (see~\cite{m89}, p.  297,
for the definition of this concept), that do a similar, although more
sophisticated, task, but for entailment systems, are related through
the following proportion: derivors are to standard signature morphisms
as simple entailment system morphisms are to standard (\emph{plain})
entailment system morphisms.  Besides, the proportionality arrives, as
a matter of fact, to the point that the simple entailment system
morphisms are also, essentially, the morphisms of a Kleisli category
for a monad in $\mathbf{Ent}$, the category of entailment systems and
standard (plain) entailment system morphisms.
\end{remark}

We point out that the definition of the composition of derivors, in
strong contrast with that of polyderivors below, is based on the standard
specification morphisms between Hall specifications.  Actually, if
instead of starting from a mapping $\varphi\colon S\mor \fmon{T}$, as
is the case for the polyderivors, we start from an ordinary mapping
$\varphi\colon S\mor T$, then, as we state next, we get a functor
$(\fmon{\varphi}\bprod \varphi,h^{\varphi})^{\ast}$ from the category
$\mathbf{Alg}(\mathrm{H}_{T})$, of Hall algebras for $T$, to the
category $\mathbf{Alg}(\mathrm{H}_{S})$, of Hall algebras for $S$ (and
the existence of such a functor will follow from that of a
specification morphism from $(\fmon{S}\times
S,\Sigma^{\Hall_{S}},\ec{E}^{\Hall_{S}})$ to $(\fmon{T}\times
T,\Sigma^{\Hall_{T}},\ec{E}^{\Hall_{T}})$).  This functor, in its
turn, will allows us to endow with a structure of Hall algebra for $S$
to the many-sorted set
$\mathrm{HTer}_{T}(\Lambda)_{\fmon{\varphi}\bprod \varphi}$, from
which the composition of derivors will be defined explicitly.

\begin{proposition}
Let $\varphi\colon S\mor T$ be a mapping.  Then the $\fmon{S}\bprod
S$-sorted mapping
$$
h^{\varphi} \colon \Sigma^{\Hall_{S}} \mor
\Sigma^{\Hall_{T}}_{\fmon{\varphi}\bprod \varphi}
$$
defined as follows
\begin{enumerate}
\item For every $w\in \fmon{S}$ and $i\in\bb{w}$,
      $h^{\varphi}(\pi^{w}_{i}) = \pi^{\fmon{\varphi}(w)}_{i}$,

\item For every $u,\,w\in\fmon{S}$ and $s\in S$,
       $h^{\varphi}(\xi_{u,w,s}) =
       \xi_{\fmon{\varphi}(u),\fmon{\varphi}(w),\varphi(s)}$,
\end{enumerate}
is such that $(\fmon{\varphi}\bprod \varphi,h^{\varphi}) \colon
(\fmon{S}\bprod S,\Sigma^{\Hall_{S}},\ec{E}^{\Hall_{S}}) \mor
(\fmon{T}\bprod T,\Sigma^{\Hall_{T}},\ec{E}^{\Hall_{T}})$ is a
specification morphism.  Therefore $\varphi\colon S\mor T$ induces a
functor $(\fmon{\varphi}\bprod \varphi,h^{\varphi})^{\ast}$ from
$\mathbf{Alg}(\mathrm{H}_{T})$ to $\mathbf{Alg}(\mathrm{H}_{S})$ which sends
$\mathbf{HTer}_{T}(\Lambda)$, the free Hall algebra on a $T$-sorted
signature $\Lambda$, to a Hall algebra for $S$, with
$\mathrm{HTer}_{T}(\Lambda)_{\fmon{\varphi}\bprod\varphi}$ as
underlying $\fmon{S}\bprod S$-sorted set.
\end{proposition}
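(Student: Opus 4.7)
The plan is to verify first that $h^{\varphi}$ is correctly typed as an $\fmon{S}\bprod S$-sorted mapping, and then that the induced translation preserves each of the defining equations of $\mathrm{H}_{S}$, from which the specification morphism status will follow, and the functorial consequence will be immediate by applying the previously constructed contravariant functor $\Alg^{\mathrm{sp}}$.

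First, I would check that $h^{\varphi}$ lands in $\Sigma^{\Hall_{T}}_{\fmon{\varphi}\bprod\varphi}$ with the proper biarities. For $\pi^{w}_{i}\in \Sigma^{\Hall_{S}}_{\lambda,(w,w_{i})}$, the image $\pi^{\fmon{\varphi}(w)}_{i}$ must lie in $\Sigma^{\Hall_{T}}_{\lambda,(\fmon{\varphi}(w),\varphi(w_{i}))}$; since $\fmon{\varphi}$ is applied letter-wise on words, $\fmon{\varphi}(w)_{i}=\varphi(w_{i})$, and the check is automatic. Analogously, $\xi_{u,w,s}$ has biarity $(((w,s),(u,w_{0}),\ldots,(u,w_{\bb{w}-1})),(u,s))$, and translating each component by $\fmon{\varphi}\bprod\varphi$ gives exactly the biarity of $\xi_{\fmon{\varphi}(u),\fmon{\varphi}(w),\varphi(s)}$, again using that $\fmon{\varphi}$ is a monoid homomorphism that acts letter-wise. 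So $(\fmon{\varphi}\bprod\varphi,h^{\varphi})$ is at the very least a signature morphism in $\mathbf{Sig}$.

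The heart of the proof is then to show, for each of $\mathrm{H}_{1}$, $\mathrm{H}_{2}$, $\mathrm{H}_{3}$, that the translated equation lies in $\Cn_{(\fmon{T}\bprod T,\Sigma^{\Hall_{T}})}(\ec{E}^{\Hall_{T}})$. The crucial observation is that the Hall specification is \emph{schematic} in the set of sorts: each axiom is stated uniformly over words and sorts, so applying the signature morphism $(\fmon{\varphi}\bprod\varphi,h^{\varphi})$ and using the translation $\mathbf{d}_{\diamond}$ of generalized terms from the preceding section sends the instance of $\mathrm{H}_{k}$ labeled by $(u,w,s)$ (resp.\ by $(u,w,i)$, etc.) to the instance of $\mathrm{H}_{k}$ labeled by $(\fmon{\varphi}(u),\fmon{\varphi}(w),\varphi(s))$ (resp.\ by $(\fmon{\varphi}(u),\fmon{\varphi}(w),i)$). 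The verification amounts to rewriting each axiom, replacing $\pi^{w}_{i}$, $\xi_{u,w,s}$, and the variables $v^{u,s}_{j}$ by their images under the translation; since $\fmon{\varphi}(w\cncat v)=\fmon{\varphi}(w)\cncat\fmon{\varphi}(v)$ and word lengths are preserved by $\fmon{\varphi}$, the displayed identity transforms term-for-term into the analogous Hall $T$-axiom, which is a member of $\ec{E}^{\Hall_{T}}$. The main (mild) obstacle is bookkeeping with the indices $\bb{w}+\bb{v}$ appearing in $\mathrm{H}_{3}$, but this is purely clerical since lengths are invariants under $\fmon{\varphi}$.

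With the specification morphism established, the second assertion is then a direct application of the contravariant functor $\Alg^{\mathrm{sp}}\colon\mathbf{Spf}\mor\mathbf{Cat}$ already constructed: it yields a functor $(\fmon{\varphi}\bprod\varphi,h^{\varphi})^{\ast}\colon\mathbf{Alg}(\mathrm{H}_{T})\mor\mathbf{Alg}(\mathrm{H}_{S})$. For the underlying $\fmon{S}\bprod S$-sorted set of its value at $\mathbf{HTer}_{T}(\Lambda)$, I would simply invoke the definition of the reduct functor $\mathbf{d}^{\ast}$ from the first section, which relabels the underlying $\mathrm{ms}$-set by precomposition with $\fmon{\varphi}\bprod\varphi$, giving precisely $\mathrm{HTer}_{T}(\Lambda)_{\fmon{\varphi}\bprod\varphi}$ as announced.
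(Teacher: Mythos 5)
Your proposal is correct, and it is exactly the routine verification the paper leaves implicit (the proposition is stated without proof there): the key points are that $\fmon{\varphi}$ acts letter-wise and preserves lengths, so $h^{\varphi}$ respects biarities and carries each instance of $\mathrm{H}_{1}$--$\mathrm{H}_{3}$ to the corresponding $T$-instance (up to the canonical relabelling of variables absorbed by $\Cn$), after which $\Alg^{\mathrm{sp}}$ and the definition of the reduct give the functor and the stated underlying $\fmon{S}\bprod S$-sorted set. Nothing is missing.
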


For a derivor $\mathbf{d}\colon \mathbf{\Sigma}\mor
\mathbf{\Lambda}$, the $\mathrm{ms}$-mapping $d$ from $\Sigma$ to
$\mathrm{HTer}_{T}(\Lambda)_{\fmon{\varphi}\bprod\varphi}$ can be
lifted to a homomorphism of Hall algebras $\ext{d}$ from
$\mathbf{HTer}_{S}(\Sigma)$ to
$\mathbf{HTer}_{T}(\Lambda)_{\fmon{\varphi}\bprod\varphi}$, whose
underlying $\fmon{S}\bprod S$-sorted mapping determines a translation
of terms for $\mathbf{\Sigma}$ to terms for $\mathbf{\Lambda}$.  In
particular, for every $(w,s)\in\fmon{S}\bprod S$, $\ext{d}_{w,s}$
assigns to terms in $\mathrm{T}_{\mathbf{\Sigma}}(\vs{w})_{s}$ terms in
$\mathrm{T}_{\mathbf{\Lambda}}(\vs{\ext{\varphi}(w)})_{\varphi(s)}$.

Before we define immediately below the composition of derivors we
recall that $\mathbf{\Sigma}$, $\mathbf{\Lambda}$, $\mathbf{\Omega}$,
and $\mathbf{\Xi}$ denote the signatures $(S,\Sigma)$, $(T,\Lambda)$,
$(U,\Omega)$, and $(X,\Xi)$, respectively, and $\mathbf{d}$,
$\mathbf{e}$, and $\mathbf{h}$ denote the derivors $(\varphi,d)$,
$(\psi,e)$, and $(\gamma,h)$, respectively.

\begin{definition}
Let $\mathbf{d}\colon \mathbf{\Sigma}\mor \mathbf{\Lambda}$ and  $\mathbf{e}\colon
\mathbf{\Lambda}\mor \mathbf{\Omega}$ be derivors.  Then
$\mathbf{e}\comp\mathbf{d}$, the composition of $\mathbf{d}$ and
$\mathbf{e}$, is the derivor $(\psi\comp\varphi, \ext{e}_{\fmon{\varphi}\bprod
\varphi}\comp d)$, where $\ext{e}_{\fmon{\varphi}\bprod \varphi}\comp d$ is
obtained from
$$
\begin{aligned}
\xymatrix@C=40pt@R=30pt{
\Lambda
  \ar[r]^-{\eta_{\Lambda}^{\Hall_{T}}}
  \ar[rd]_-{e} &
\mathrm{HTer}_{T}(\Lambda) \ar[d]^{\ext{e}}
  \\
& \mathrm{HTer}_{U}(\Omega)_{\fmon{\psi}\bprod \psi}
}
\end{aligned}
\quad \text{as} \quad\;\;
\begin{aligned}
\xymatrix@C=40pt@R=30pt{
\mathrm{HTer}_{T}(\Lambda)_{\fmon{\varphi}\bprod \varphi}
\ar[d]^{\ext{e}_{\fmon{\varphi}\bprod \varphi}} &
\Sigma \ar[l]_-{d} \\
{\mathrm{HTer}_{U}(\Omega)_{\fmon{\psi}\bprod\psi}}_{\fmon{\varphi}\bprod\varphi}
}
\end{aligned}
$$
where $\ext{e}$ is the extension of $e$ to the free Hall algebra
on $\Lambda$.

For every signature $\mathbf{\Sigma}$, the identity at $\mathbf{\Sigma}$ is
$(\id_{S},\eta_{\Sigma}^{\Hall_{S}})$.
\end{definition}

The preceding definition allows us to get a corresponding, and
explicit, category of signatures and derivors.

\begin{proposition}
The signatures together with the derivors determine a category, that
we denote by $\mathbf{Sig}_{\mathfrak{d}}$.
\end{proposition}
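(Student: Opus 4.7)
The plan is to verify directly the three category axioms for $\mathbf{Sig}_{\mathfrak{d}}$ (well-definedness of composition, the two identity laws, and associativity), exploiting at each step the universal property of the free Hall algebra $\mathbf{HTer}_{T}(\Lambda) \iso \mathbf{T}_{\Hall_{T}}(\Lambda)$ from Proposition~\ref{iso:FrH-TerH} together with the preceding proposition giving, for each $\varphi\colon S\mor T$, the specification morphism $(\fmon{\varphi}\bprod\varphi, h^{\varphi})$. The sort component of the composition is just composition of ordinary mappings, so in each case the content of the axiom reduces to a statement about the data component and the relabelling functors $\Delta_{\fmon{\varphi}\bprod\varphi}$.

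First I would check that the composition $\mathbf{e}\comp\mathbf{d}$ is indeed a derivor: the mapping $\ext{e}_{\fmon{\varphi}\bprod\varphi}\comp d$ takes values in $(\mathrm{HTer}_{U}(\Omega)_{\fmon{\psi}\bprod\psi})_{\fmon{\varphi}\bprod\varphi}$, which coincides with $\mathrm{HTer}_{U}(\Omega)_{\fmon{(\psi\comp\varphi)}\bprod(\psi\comp\varphi)}$ thanks to the functoriality of $(\farg)^{\star}\bprod(\farg)$ on $\mathbf{Set}$, so the pair $(\psi\comp\varphi,\ext{e}_{\fmon{\varphi}\bprod\varphi}\comp d)$ is of the correct type. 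The right identity law $\mathbf{d}\comp(\id_{S},\eta_{\Sigma}^{\Hall_{S}}) = \mathbf{d}$ follows because $\ext{d}\comp\eta_{\Sigma}^{\Hall_{S}} = d$ by definition of the extension, and the left identity law $(\id_{T},\eta_{\Lambda}^{\Hall_{T}})\comp\mathbf{d} = \mathbf{d}$ follows because the extension of the unit $\eta_{\Lambda}^{\Hall_{T}}$ to $\mathbf{HTer}_{T}(\Lambda)$ is the identity homomorphism (the unique Hall-algebra endomorphism of $\mathbf{HTer}_{T}(\Lambda)$ that restricts to $\eta_{\Lambda}^{\Hall_{T}}$ along itself).

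For associativity, given $\mathbf{d}\colon\mathbf{\Sigma}\mor\mathbf{\Lambda}$, $\mathbf{e}\colon\mathbf{\Lambda}\mor\mathbf{\Omega}$, $\mathbf{h}\colon\mathbf{\Omega}\mor\mathbf{\Xi}$, both $(\mathbf{h}\comp\mathbf{e})\comp\mathbf{d}$ and $\mathbf{h}\comp(\mathbf{e}\comp\mathbf{d})$ have sort component $\gamma\comp\psi\comp\varphi$; after pre-composing with $d$, the equality of their data components reduces to the identity
\[
\ext{(\ext{h}_{\fmon{\psi}\bprod\psi}\comp e)} \;=\; \ext{h}_{\fmon{\psi}\bprod\psi}\comp\ext{e}
\]
of Hall-algebra homomorphisms from $\mathbf{HTer}_{T}(\Lambda)$ to $\mathbf{HTer}_{X}(\Xi)_{\fmon{(\gamma\comp\psi)}\bprod(\gamma\comp\psi)}$. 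Both sides are Hall-algebra homomorphisms (the right one because $\ext{h}_{\fmon{\psi}\bprod\psi}$ is one, thanks to the preceding proposition applied to $\psi$), and both restrict along $\eta_{\Lambda}^{\Hall_{T}}$ to the $\fmon{T}\bprod T$-sorted mapping $\ext{h}_{\fmon{\psi}\bprod\psi}\comp e$ (using $\ext{e}\comp\eta_{\Lambda}^{\Hall_{T}} = e$); by the universal property of the free Hall algebra on $\Lambda$, they coincide.

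The main obstacle is purely bookkeeping: keeping track of how the relabelling functor $\Delta_{\fmon{\varphi}\bprod\varphi}$ interacts both with the extension operator $(\farg)^{\#}$ and with the composition of further relabellings, in order to justify commutativity of the relevant squares between the many-sorted sets $\mathrm{HTer}_{(\farg)}(\farg)$ obtained at each stage of the composition. Once this is in place, it may also be worth observing (toward the subsequent identification with a Kleisli category) that the whole argument can be packaged by defining an endofunctor $\mathrm{T}_{\mathfrak{d}}$ on $\mathbf{Sig}$ with $\mathrm{T}_{\mathfrak{d}}(S,\Sigma) = (S,\mathrm{HTer}_{S}(\Sigma))$, unit $(\id_{S},\eta_{\Sigma}^{\Hall_{S}})$, and multiplication extracted from the Hall-algebra structure of $\mathbf{HTer}_{S}(\Sigma)$; the three verifications above are then exactly the three monad axioms transported to the Kleisli category, which is $\mathbf{Sig}_{\mathfrak{d}}$.
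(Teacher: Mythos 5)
Your proposal is correct and follows essentially the same route as the paper: the well-definedness of the composition via the identity $(\mathrm{HTer}_{U}(\Omega)_{\fmon{\psi}\bprod\psi})_{\fmon{\varphi}\bprod\varphi}=\mathrm{HTer}_{U}(\Omega)_{\fmon{(\psi\comp\varphi)}\bprod(\psi\comp\varphi)}$, and associativity reduced to $\ext{(\ext{h}_{\fmon{\psi}\bprod\psi}\comp e)}=\ext{h}_{\fmon{\psi}\bprod\psi}\comp\ext{e}$, which you correctly justify by the universal property of the free Hall algebra (the paper uses this identity silently in its computation chain). You additionally verify the identity laws, which the paper omits, and your closing remark about packaging the argument as the Kleisli category for a monad in $\mathbf{Sig}$ is exactly what the paper does in the proposition that follows.
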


\begin{proof}
We restrict ourselves to prove that the composition of derivors is a
derivor and that the composition is associative.

We have that %
\begin{align*}
 {\mathrm{HTer}_{U}(\Omega)_{\fmon{\psi}\bprod\psi}}_{\fmon{\varphi}\bprod\varphi}
&=
 \mathrm{HTer}_{U}(\Omega)_{(\fmon{\psi}\bprod\psi)
 \comp(\fmon{\varphi}\bprod\varphi)} \\
&=
 \mathrm{HTer}_{U}(\Omega)_{(\fmon{\psi}
 \comp\fmon{\varphi})\bprod(\psi\comp\varphi)} \\
&=
 \mathrm{HTer}_{U}(\Omega)_{\fmon{(\psi\comp\varphi)}\bprod (\psi\comp\varphi)},
\end{align*}
hence the composition of derivors is a derivor.

Given the situation described by the following diagram
$$
\xymatrix{
\mathbf{\Sigma} \ar[r]^{\mathbf{d}} &
\mathbf{\Lambda} \ar[r]^{\mathbf{e}} &
\mathbf{\Omega} \ar[r]^{\mathbf{h}} &
\mathbf{\Xi},
}$$
we have that
\begin{align*}
 \mathbf{h}\comp(\mathbf{e}\comp\mathbf{d})
 &=
 \mathbf{h}\comp(\psi\comp\varphi,
   \ext{e}_{\fmon{\varphi}\bprod\varphi}\comp d) \\
 &=
 (\gamma\comp(\psi\comp\varphi),
  \ext{h}_{\fmon{(\psi\comp\varphi)}\bprod
               {(\psi\comp\varphi)}}
  \comp(\ext{e}_{\fmon{\varphi}\bprod\varphi}\comp d)) \\
 &=
 ((\gamma\comp\psi)\comp\varphi,
 {\ext{h}_{\fmon{\psi}\bprod\psi}}
         _{\fmon{\varphi}\bprod\varphi}
  \comp(\ext{e}_{\fmon{\varphi}\bprod\varphi}\comp d)) \\
 &=
 ((\gamma\comp\psi)\comp\varphi,
 ({\ext{h}_{\fmon{\psi}\bprod\psi}}
          _{\fmon{\varphi}\bprod\varphi}
  \comp \ext{e}_{\fmon{\varphi}\bprod\varphi})\comp d) \\
 &=
 ((\gamma\comp\psi)\comp\varphi,
 ({\ext{h}_{\fmon{\psi}\bprod\psi}}
  \comp \ext{e})_{\fmon{\varphi}\bprod\varphi}\comp d) \\
 &=
 ((\gamma\comp\psi)\comp\varphi,
 (\ext{{\ext{h}_{\fmon{\psi}\bprod\psi}}
  \comp e)}_{\fmon{\varphi}\bprod\varphi}\comp d) \\
 &=
 (\gamma\comp\psi,\ext{h}_{\fmon{\psi}\bprod\psi}\comp e)
 \comp \mathbf{d} \\
 &=
 (\mathbf{h}\comp\mathbf{e})\comp\mathbf{d},
\end{align*}
therefore the composition of derivors is associative.
\end{proof}


We point out that the category $\mathbf{Sig}_{\mathfrak{d}}$ of signatures
and derivors can be obtained, naturally, as an isomorphic copy of the
Kleisli category for a monad in $\mathbf{Sig}$.  This is founded on the fact
that, for every set of sorts $S$, we have the adjunction
$\mathbf{T}_{\Hall_{S}}\ladj\mathrm{G}_{\Hall_{S}}$, from which we get the
monad $\mathbb{T}_{\Hall_{S}} =
(\mathrm{T}_{\Hall_{S}},\eta^{\Hall_{S}},\mu^{\Hall_{S}})$ in
$\mathbf{Set}^{\fmon{S}\times S}$, that canonically induces the monad in
$\mathbf{Sig}$ at issue.

\begin{proposition}
The triple $\mathbb{T}_{\mathfrak{d}} =
(\mathfrak{d},\eta^{\mathfrak{d}},\mu^{\mathfrak{d}})$, where
\begin{enumerate}
\item $\mathfrak{d}$ is the functor which sends a signature
      $\mathbf{\Sigma}$ to the signature $(S,\mathrm{T}_{\Hall_{S}}(\Sigma))$, and
      a signature morphism $\mathbf{d}$ from $\mathbf{\Sigma}$ to
      $\mathbf{\Lambda}$ to the signature morphism $(\varphi,\ext{d})$
      from $(S,\mathrm{T}_{\Hall_{S}}(\Sigma))$ to
      $(T,\mathrm{T}_{\Hall_{T}}(\Lambda))$,

\item $\eta^{\mathfrak{d}}_{\mathbf{\Sigma}} =
      (\mathrm{id}_{S},\eta^{\Hall_{S}}_{\Sigma})$, with
      $\eta^{\Hall_{S}}_{\Sigma}$ the value at $\Sigma$ of the unit
      $\eta^{\Hall_{S}}$ of the monad $\mathbb{T}_{\Hall_{S}}$, and

\item $\mu^{\mathfrak{d}}_{\mathbf{\Sigma}} =
      (\mathrm{id}_{S},\mu^{\Hall_{S}}_{\Sigma})$, with
      $\mu^{\Hall_{S}}_{\Sigma}$ the value at $\Sigma$ of the
      multiplication
      $\mu^{\Hall_{S}}$ of the monad $\mathbb{T}_{\Hall_{S}}$,
\end{enumerate}
is a monad in $\mathbf{Sig}$ and the categories
$\mathbf{Sig}_{\mathfrak{d}}$ and
$\mathbf{Kl}(\mathbb{T}_{\mathfrak{d}})$ are isomorphic.

\end{proposition}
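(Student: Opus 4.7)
The plan is to verify separately the two assertions: that $\mathbb{T}_{\mathfrak{d}}=(\mathfrak{d},\eta^{\mathfrak{d}},\mu^{\mathfrak{d}})$ is a monad in $\mathbf{Sig}$, and that $\mathbf{Sig}_{\mathfrak{d}}$ is isomorphic to the Kleisli category of this monad. The guiding idea is that everything reduces, fiber by fiber over $\mathbf{Set}$, to the already known monad $\mathbb{T}_{\Hall_{S}}=(\mathrm{T}_{\Hall_{S}},\eta^{\Hall_{S}},\mu^{\Hall_{S}})$ on $\mathbf{Set}^{\fmon{S}\bprod S}$ arising from the adjunction $\mathbf{T}_{\Hall_{S}}\ladj \mathrm{G}_{\Hall_{S}}$, provided one checks compatibility with the reindexing functors $\Delta_{\fmon{\varphi}\bprod\varphi}$ that glue these fibers into $\mathbf{Sig}$.

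First I would verify that $\mathfrak{d}$ is a well-defined endofunctor of $\mathbf{Sig}$. On objects this is clear. On morphisms $\mathbf{d}=(\varphi,d)\colon\mathbf{\Sigma}\mor\mathbf{\Lambda}$, the second component $\ext{d}$ is the unique Hall$_S$-homomorphism from $\mathbf{T}_{\Hall_{S}}(\Sigma)$ to the pullback of $\mathbf{T}_{\Hall_{T}}(\Lambda)$ along the specification morphism $(\fmon{\varphi}\bprod\varphi,h^{\varphi})$, extending the map $\eta^{\Hall_{T}}_{\Lambda,\fmon{\varphi}\bprod\varphi}\comp d\colon \Sigma\mor \mathrm{T}_{\Hall_{T}}(\Lambda)_{\fmon{\varphi}\bprod\varphi}$. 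Preservation of identities and of composition follows by uniqueness of the extension applied to the obvious commutative diagrams. Naturality of $\eta^{\mathfrak{d}}$ and $\mu^{\mathfrak{d}}$ reduces, once the first components (mappings between sort sets) are discarded, to the naturality of $\eta^{\Hall_{S}}$ and $\mu^{\Hall_{S}}$ in each fiber together with the commutation of those natural transformations with the reindexing functors $\Delta_{\fmon{\varphi}\bprod\varphi}$; this last commutation is a direct consequence of the specification morphism $(\fmon{\varphi}\bprod\varphi,h^{\varphi})$ being a morphism in $\mathbf{Spf}$. The three monad axioms then follow coordinatewise from the corresponding axioms of $\mathbb{T}_{\Hall_{S}}$.

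For the isomorphism, I would define $F\colon\mathbf{Sig}_{\mathfrak{d}}\mor \mathbf{Kl}(\mathbb{T}_{\mathfrak{d}})$ as the identity on objects, and on morphisms via Proposition~\ref{iso:FrH-TerH}: a derivor $(\varphi,d)\colon\mathbf{\Sigma}\mor\mathbf{\Lambda}$ has $d\colon \Sigma\mor \mathrm{HTer}_{T}(\Lambda)_{\fmon{\varphi}\bprod\varphi}$, and the isomorphism $\mathbf{HTer}_{T}(\Lambda)\iso \mathbf{T}_{\Hall_{T}}(\Lambda)$ converts $d$ bijectively into an $\fmon{S}\bprod S$-sorted mapping $\bar{d}\colon \Sigma\mor\mathrm{T}_{\Hall_{T}}(\Lambda)_{\fmon{\varphi}\bprod\varphi}$, which is exactly a signature morphism $(\varphi,\bar{d})\colon \mathbf{\Sigma}\mor\mathfrak{d}(\mathbf{\Lambda})$, i.e.~a morphism of $\mathbf{Kl}(\mathbb{T}_{\mathfrak{d}})$. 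Identities match because $\eta^{\Hall_{S}}_{\Sigma}$ is precisely the insertion of generators $h$ occurring in the proof of Proposition~\ref{iso:FrH-TerH}. To check preservation of composition one must show that the derivor composite $(\psi\comp\varphi,\ext{e}_{\fmon{\varphi}\bprod\varphi}\comp d)$ corresponds under $F$ to the Kleisli composite $\mu^{\mathfrak{d}}_{\mathbf{\Omega}}\comp\mathfrak{d}(\mathbf{e})\comp\mathbf{d}$; the key observation is that the Hall-homomorphism extension $\ext{e}$ of $e\colon \Lambda\mor \mathrm{HTer}_{U}(\Omega)_{\fmon{\psi}\bprod\psi}$ corresponds, under the isomorphism of Proposition~\ref{iso:FrH-TerH}, to the Kleisli extension $\mu^{\Hall_{U}}_{\Omega,\fmon{\psi}\bprod\psi}\comp \mathrm{T}_{\Hall_{U}}(\bar{e})$, which is a general feature of any monad arising from a free--forgetful adjunction.

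The main obstacle will not be any individual verification, which is in each case a diagram chase, but the bookkeeping required to keep track of two independent layers of structure simultaneously: the fiberwise monadic structure of each $\mathbb{T}_{\Hall_{S}}$ and the reindexing along $\varphi\colon S\mor T$ provided by the specification morphism $(\fmon{\varphi}\bprod\varphi,h^{\varphi})$. Concretely, the delicate step is showing that the Kleisli multiplication $\mu^{\Hall_{T}}$ commutes with pullback along $(\fmon{\varphi}\bprod\varphi,h^{\varphi})$ in such a way that the composite $\ext{e}_{\fmon{\varphi}\bprod\varphi}\comp d$ used in the definition of derivor composition coincides, after the identification of Proposition~\ref{iso:FrH-TerH}, with the Kleisli composition in $\mathbb{T}_{\mathfrak{d}}$.
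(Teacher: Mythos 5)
The paper states this proposition without any proof, deferring the real work to the harder polyderivor analogue (Proposition~\ref{isoSigfujKlfuj}), so your proposal is being measured against an omitted argument rather than a competing one. Your reconstruction is correct, and it is organized around exactly the two load-bearing facts: first, that $\mathrm{T}_{\Hall_{T}}(\Lambda)_{\fmon{\varphi}\bprod\varphi}$ is again a Hall algebra for $S$ via the specification morphism $(\fmon{\varphi}\bprod\varphi,h^{\varphi})$, which is what makes $\mathfrak{d}$ functorial on all of $\mathbf{Sig}$ and lets the fiberwise units and multiplications of the monads $\mathbb{T}_{\Hall_{S}}$ assemble into natural transformations $\eta^{\mathfrak{d}}$ and $\mu^{\mathfrak{d}}$; second, that under the isomorphism $\mathbf{HTer}_{T}(\Lambda)\iso\mathbf{T}_{\Hall_{T}}(\Lambda)$ of Proposition~\ref{iso:FrH-TerH} the derivor composite $\ext{e}_{\fmon{\varphi}\bprod\varphi}\comp d$ goes to the Kleisli composite, which reduces to the identity $\ext{f}=\mu_{Y}\comp T(f)$ valid for any monad induced by a free--forgetful adjunction. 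It is worth noting that you verify more than the paper does even in the polyderivor case, whose proof only unpacks a Kleisli morphism into the data of a polyderivor and leaves the agreement of the two composition laws and of the identities implicit; your explicit check that the insertion of generators $h$ of Proposition~\ref{iso:FrH-TerH} matches $\eta^{\Hall_{S}}_{\Sigma}$, and that the two compositions coincide, is precisely the content that actually needs to be said, and your closing remark correctly locates the only delicate point in the interaction between $\mu^{\Hall_{T}}$ and the reindexing $\Delta_{\fmon{\varphi}\bprod\varphi}$.
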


\begin{remark}
Almost all the results about the categories $\mathbf{Sig}$,
$\mathbf{Alg}$ and $\mathbf{Spf}$ established in the second and third
section, suitably extended, are also valid for the corresponding
categories $\mathbf{Sig}_{\mathfrak{d}}$,
$\mathbf{Alg}_{\mathfrak{d}}$ and $\mathbf{Spf}_{\mathfrak{d}}$.  But
being the derivors a particular case of the polyderivors, we restrict
ourselves to unfold those results only for the polyderivors.
\end{remark}

We state now a lemma from which the existence of coproducts in
$\mathbf{Sig}_{\mathfrak{d}}$ follows immediately.

\begin{lemma}\label{KlCoprod}
Let $\mathbb{T}$ be a monad in a category $\mathbf{C}$.  If
$\mathbf{C}$ has coproducts, then $\mathbf{Kl}(\mathbb{T})$ has
coproducts.
\end{lemma}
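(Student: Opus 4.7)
The plan is to exploit the standard free/forgetful adjunction $F_{\mathbb{T}} \ladj G_{\mathbb{T}}$ between $\mathbf{C}$ and $\mathbf{Kl}(\mathbb{T})$, which yields that $F_{\mathbb{T}}$ preserves colimits. Since $F_{\mathbb{T}}$ is the identity on objects (sending $A\in\mathbf{C}$ to $A$ viewed as a Kleisli object) and sends a morphism $f\colon A\mor B$ of $\mathbf{C}$ to $\eta_{B}\comp f\colon A\mor T(B)$, the image of a $\mathbf{C}$-coproduct will become a Kleisli coproduct. I would however prefer to give the argument directly, since it is short and self-contained, and avoids appealing to general adjoint-colimit lemmas.

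Concretely, let $(A_{i})_{i\in I}$ be a family of objects of $\mathbf{Kl}(\mathbb{T})$, i.e., of $\mathbf{C}$, and let $(A,(\iota_{i})_{i\in I})$ be a coproduct of this family in $\mathbf{C}$. The first step is to propose $(A,(\eta_{A}\comp \iota_{i})_{i\in I})$ as a coproduct in $\mathbf{Kl}(\mathbb{T})$, where each $\eta_{A}\comp \iota_{i}$ is viewed as a Kleisli morphism from $A_{i}$ to $A$. Then, given a cocone $(f_{i}\colon A_{i}\mor T(B))_{i\in I}$ in $\mathbf{Kl}(\mathbb{T})$, the universal property of the $\mathbf{C}$-coproduct furnishes a unique morphism $f\colon A\mor T(B)$ in $\mathbf{C}$ such that $f\comp \iota_{i} = f_{i}$ for every $i\in I$; this $f$ will be the required mediator in $\mathbf{Kl}(\mathbb{T})$.

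The second, and main, step is to verify that $f$ actually satisfies the Kleisli factorization $f\diamond (\eta_{A}\comp \iota_{i}) = f_{i}$. Unfolding the Kleisli composition one computes
\begin{equation*}
f\diamond (\eta_{A}\comp \iota_{i}) \;=\; \mu_{B}\comp T(f)\comp \eta_{A}\comp \iota_{i} \;=\; \mu_{B}\comp \eta_{T(B)}\comp f\comp \iota_{i} \;=\; f\comp \iota_{i} \;=\; f_{i},
\end{equation*}
where the second equality uses the naturality of $\eta$ and the third uses the monad identity $\mu_{B}\comp \eta_{T(B)} = \id_{T(B)}$. Uniqueness of $f$ in $\mathbf{Kl}(\mathbb{T})$ follows from uniqueness in $\mathbf{C}$: if $f'\colon A\mor T(B)$ is any Kleisli morphism satisfying $f'\diamond (\eta_{A}\comp \iota_{i}) = f_{i}$, the same calculation yields $f'\comp \iota_{i} = f_{i}$, so $f'=f$ by the $\mathbf{C}$-universal property.

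No serious obstacle is expected; the only point requiring a moment of care is the bookkeeping distinction between $f$ viewed as a $\mathbf{C}$-morphism $A\mor T(B)$ and as a Kleisli morphism $A\mor B$, and the use of the monad unit law together with naturality of $\eta$ to collapse the Kleisli composition. Once these are written down cleanly, the proof is a direct verification of the universal property and requires nothing beyond the definitions of $\mathbf{Kl}(\mathbb{T})$ and of coproducts in $\mathbf{C}$.
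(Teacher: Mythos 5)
Your proposal is correct and follows essentially the same route as the paper: take the coproduct in $\mathbf{C}$, equip it with the injections $\eta\comp\mathrm{in}_{i}$, obtain the mediating Kleisli morphism from the $\mathbf{C}$-universal property, and verify the factorization and its uniqueness via the unit law and naturality of $\eta$ (the paper packages this same computation as the neutrality of $\eta$ for $\diamond$). No gaps.
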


\begin{proof}
Let $(X_{i})_{i\in I}$ be a family of objects in
$\mathbf{Kl}(\mathbb{T})$.  Then $\coprod_{i\in I}X_{i}$, together with
the family of morphisms $(\eta_{\coprod_{i\in
I}X_{i}}\circ\mathrm{in}_{i})_{i\in I}$ where, for $i\in I$, $\mathrm{in}_{i}$ is
the structural morphism from $X_{i}$ into $\coprod_{i\in I}X_{i}$, is a
coproduct in $\mathbf{Kl}(\mathbb{T})$ of $(X_{i})_{i\in I}$.

Let $(f_{i}\colon X_{i}\mor Y)_{i\in I}$ be a family of morphisms in
$\mathbf{Kl}(\mathbb{T})$.  Then, by the universal property of the
coproduct, there exists a unique morphism $[f_{i}]_{i\in I}$, in
$\mathbf{C}$, from $\coprod_{i\in I}X_{i}$ into $T(Y)$ such that, for every
$i\in I$, $[f_{i}]_{i\in I}\circ \mathrm{in}_{i} = f_{i}$.  Furthermore, we have
that
\begin{alignat*}{2}
[f_{i}]_{i\in I} &= [f_{i}]_{i\in I}\diamond \eta_{\coprod_{i\in
                     I}X_{i}} & &\quad \text{(since $\eta_{\coprod_{i\in
                     I}X_{i}}$ is neutral for $\diamond$)}\\
                 &= \mu_{Y}\circ T([f_{i}]_{i\in I})
                    \circ \eta_{\coprod_{i\in I}X_{i}} &
                    &\quad \text{(by definition
                    of $\diamond$)}.
\end{alignat*}
Therefore we get the commutative diagram
$$
\xymatrix{
X_{i}
  \ar[r]^-{\mathrm{in}_{i}}
  \ar[rd]_{f_{i}}  &
\coprod_{i\in I}X_{i}
  \ar[r]^{\eta_{\coprod_{i\in I}X_{i}}}
  \ar[d]^{[f_{i}]_{i\in I}}  &
T(\coprod_{i\in I}X_{i})
  \ar[d]^{T([f_{i}]_{i\in I})} \\
&
T(Y) &
T(T(Y))
  \ar[l]^{\mu_{Y}}
}
$$
and we can assert that, for every $i\in I$,
$[f_{i}]_{i\in I}\diamond(\eta_{\coprod_{i\in I}X_{i}}\circ
\mathrm{in}_{i}) = f_{i}$. To prove the uniqueness, let $g$ be
a morphism, in $\mathbf{Kl}(\mathbb{T})$, from $\coprod_{i\in I}X_{i}$
into $Y$ such that, for every $i\in I$,
$g\diamond(\eta_{\coprod_{i\in I}X_{i}}\circ
\mathrm{in}_{i}) = f_{i}$. Then, for $i\in I$, we have that
\begin{alignat*}{2}
f_{i} &= g\diamond (\eta_{\coprod_{i\in I}X_{i}}\circ \mathrm{in}_{i}) \\
      &= \mu_{Y}\circ T(g)\circ (\eta_{\coprod_{i\in I}X_{i}}\circ  \mathrm{in}_{i})
         & &\quad \text{(by definition of $\diamond$)} \\
      &= (\mu_{Y}\circ T(g)\circ\eta_{\coprod_{i\in I}X_{i}})\circ
          \mathrm{in}_{i} \\
      &= (g\diamond \eta_{\coprod_{i\in I}X_{i}})\circ \mathrm{in}_{i}
         & &\quad \text{(by definition of $\diamond$)}\\
      &= g\circ \mathrm{in}_{i}
         & &\quad \text{(since $\eta_{\coprod_{i\in I}X_{i}}$
         is neutral for $\diamond$)},
\end{alignat*}
thus $g = [f_{i}]_{i\in I}$.  Therefore $[f_{i}]_{i\in I}\colon
\coprod_{i\in I}X_{i}\mor Y$ in $\mathbf{Kl}(\mathbb{T})$ satisfies
the universal property.
\end{proof}

\begin{corollary}
The category $\mathbf{Sig}_{\mathfrak{d}}$ has coproducts.
\end{corollary}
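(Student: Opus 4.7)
The plan is essentially a direct application of the immediately preceding machinery. From the bicompleteness of $\mathbf{Sig}$ (established as a proposition in the second section) we know in particular that $\mathbf{Sig}$ has coproducts. By the preceding proposition, the category $\mathbf{Sig}_{\mathfrak{d}}$ is isomorphic to $\mathbf{Kl}(\mathbb{T}_{\mathfrak{d}})$, the Kleisli category for the monad $\mathbb{T}_{\mathfrak{d}} = (\mathfrak{d},\eta^{\mathfrak{d}},\mu^{\mathfrak{d}})$ in $\mathbf{Sig}$. Applying Lemma~\ref{KlCoprod} to the base category $\mathbf{Sig}$ and the monad $\mathbb{T}_{\mathfrak{d}}$ yields that $\mathbf{Kl}(\mathbb{T}_{\mathfrak{d}})$ has coproducts, and hence so does $\mathbf{Sig}_{\mathfrak{d}}$ by transport along the isomorphism.

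Concretely, I would note that for a family $(\mathbf{\Sigma}^{i})_{i\in I}$ of signatures, a coproduct in $\mathbf{Sig}_{\mathfrak{d}}$ is given by $\tcoprod_{i\in I}\mathbf{\Sigma}^{i}$, the coproduct taken in $\mathbf{Sig}$, equipped with the family of derivors obtained by postcomposing the structural coproduct injections $\mathrm{in}_{i}\colon\mathbf{\Sigma}^{i}\mor \tcoprod_{i\in I}\mathbf{\Sigma}^{i}$ in $\mathbf{Sig}$ with the identity derivors $\eta^{\mathfrak{d}}_{\coprod \mathbf{\Sigma}^{i}}$; i.e., with $(\mathrm{id},\eta^{\Hall}_{\coprod \Sigma^{i}}\comp \mathrm{in}_{i})$ in the notation of the preceding proposition. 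The universal property follows verbatim from the proof of Lemma~\ref{KlCoprod}: given a cocone $(\mathbf{d}^{i}\colon \mathbf{\Sigma}^{i}\mor \mathbf{\Lambda})_{i\in I}$ of derivors, the unique mediating derivor is obtained via the universal property of the coproduct in $\mathbf{Sig}$ applied to the underlying signature morphisms $\mathbf{d}^{i}\colon \mathbf{\Sigma}^{i}\mor \mathfrak{d}(\mathbf{\Lambda})$.

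There is no genuine obstacle in this argument, as the work has already been done in Lemma~\ref{KlCoprod} and in the isomorphism with the Kleisli category. The only point requiring minor attention is to make the identifications explicit: one must check that the Kleisli composition in $\mathbf{Kl}(\mathbb{T}_{\mathfrak{d}})$ corresponds, under the isomorphism with $\mathbf{Sig}_{\mathfrak{d}}$, to the composition of derivors defined via $(\psi\comp\varphi,\ext{e}_{\fmon{\varphi}\bprod \varphi}\comp d)$, but this is precisely the content of the preceding proposition. With that identification in hand, the corollary is immediate.
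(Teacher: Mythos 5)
Your argument is exactly the paper's proof: coproducts in $\mathbf{Sig}$, Lemma~\ref{KlCoprod} applied to the monad $\mathbb{T}_{\mathfrak{d}}$, and transport along the isomorphism $\mathbf{Sig}_{\mathfrak{d}}\cong\mathbf{Kl}(\mathbb{T}_{\mathfrak{d}})$. The explicit description of the coproduct cocone is a correct unwinding of the lemma but adds nothing beyond what the paper already records there.
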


\begin{proof}
Because $\mathbf{Sig}$ has coproducts, the category
$\mathbf{Kl}(\mathbb{T}_{\mathfrak{d}})$, by Lemma~\ref{KlCoprod}, has
also coproducts.  Therefore, since $\mathbf{Sig}_{\mathfrak{d}}$ and
$\mathbf{Kl}(\mathbb{T}_{\mathfrak{d}})$ are isomorphic, the category
$\mathbf{Sig}_{\mathfrak{d}}$ has coproducts.
\end{proof}


After having stated the above facts about the derivors, we are in the
position to introduce the definition of the composition of two
polyderivors, in order to get the corresponding category
$\mathbf{Sig}_{\mathfrak{pd}}$ of signatures and polyderivors.  To this end
we begin by stating that every mapping $\varphi\colon S\mor \fmon{T}$
gives rise to a functor
$(\ext{\varphi}\bprod\ext{\varphi},b^{\varphi})^{\ast}$ from
$\mathbf{Alg}(\mathrm{B}_{T})$, the category of Bénabou algebras for
$T$, to $\mathbf{Alg}(\mathrm{B}_{S})$, the category of Bénabou
algebras for $S$ (observe that such a functor is induced not by a
standard specification morphism between Bénabou specifications, but by
a \emph{derivor} $b^{\varphi}$ between the corresponding Bénabou
signatures).  This functor, in its turn, will allow us to endow the
many-sorted set $\mathrm{BTer}_{T}(\Lambda)_{\ext{\varphi}\bprod
\ext{\varphi}}$ with a structure of Bénabou algebra for $S$, from
which the definition of the composition of polyderivors will follow.

\begin{proposition}\label{derivorFuji}
Let $\varphi$ be a mapping from $S$ to $\fmon{T}$.  Then the
$\fmon{({(\fmon{S})}^{2})}\bprod {(\fmon{S})}^{2}$-sorted mapping
$$
b^{\varphi} \colon \Sigma^{\Ben_{S}} \mor
\mathrm{HTer}_{\fmon{T}\times \fmon{T}}(\Sigma^{\Ben_{T}})
_{\fmon{(\ext{\varphi}\bprod\ext{\varphi})}\bprod
(\ext{\varphi}\bprod\ext{\varphi})}
$$
defined as follows
\begin{enumerate}
\item For every $w\in \fmon{S}$ and $\alpha\in\bb{w}$,
      $b^{\varphi}(\pi^{w}_{\alpha})$ is the
      $\mathbf{\Sigma}^{\Ben_{T}}$-term %
      $$
      \tp{\pi^{\ext{\varphi}(w)}_{\sum_{\beta\in
      \alpha}p_{\beta}},\ldots, \pi^{\ext{\varphi}(w)}_{\sum_{\beta\in
      \alpha+1}p_{\beta}-1}}_{\ext{\varphi}(w),\varphi(w_{\alpha})}
      $$
      of type %
      $\lambda\mor(\ext{\varphi}(w),(\varphi(w_{\alpha})))$,

\item For every $u,\,w\in\fmon{S}$,
      $b^{\varphi}(\tp{\,}_{u,w})$ is the
      $\mathbf{\Sigma}^{\Ben_{T}}$-term %
      \begin{align*}
      \tp{
      &\pi^{\varphi(w_{0})}_{0} \comp
      v^{(\ext{\varphi}(u),\varphi(w_{0}))}_{0},
      \ldots,
      \pi^{\varphi(w_{0})}_{\bb{\varphi(w_{0})}-1} \comp
      v^{(\ext{\varphi}(u),\varphi(w_{0}))}_{0},
      \ldots,\\
      &\pi^{\varphi(w_{i})}_{0} \comp
      v^{(\ext{\varphi}(u),\varphi(w_{i}))}_{i},
      \ldots,
      \pi^{\varphi(w_{i})}_{\bb{\varphi(w_{i})}-1} \comp
      v^{(\ext{\varphi}(u),\varphi(w_{i}))}_{i},
      \ldots,\\
      &\pi^{\varphi(w_{\bb{w}-1})}_{0} \comp
      v^{(\ext{\varphi}(u),\varphi(w_{\bb{w}-1}))}_{\bb{w}-1},
      \ldots,
      \pi^{\varphi(w_{\bb{w}-1})}_{\bb{\varphi(w_{\bb{w}-1})}-1} \comp
      v^{(\ext{\varphi}(u),\varphi(w_{\bb{w}-1}))}_{\bb{w}-1}
      }
      \end{align*}
      of type $((\ext{\varphi}(u),\varphi(w_{0})), \ldots,
      (\ext{\varphi}(u),\varphi(w_{\bb{w}-1}))) \mor
      (\ext{\varphi}(u),\ext{\varphi}(w))$,

\item  For every $u,\,x,\,w \in \fmon{S}$,
       $b^{\varphi}(\comp_{u,x,w})$ is the
       $\mathbf{\Sigma}^{\Ben_{T}}$-term %
       $$
       \comp_{\ext{\varphi}(u),\ext{\varphi}(x),\ext{\varphi}(w)}
       (v^{(\ext{\varphi}(u),\ext{\varphi}(x))}_{0},
       v^{(\ext{\varphi}(x),\ext{\varphi}(w))}_{1})
       $$
       of type
       $((\ext{\varphi}(u),\ext{\varphi}(x)),
       (\ext{\varphi}(x),\ext{\varphi}(w)))
       \mor (\ext{\varphi}(u),\ext{\varphi}(w))$,
\end{enumerate}
is such that $(\ext{\varphi}\bprod\ext{\varphi},b^{\varphi})\colon
(\fmon{S}\bprod \fmon{S},\Sigma^{\Ben_{S}}, \mathcal{E}^{\Ben_{S}})
\mor (\fmon{T}\bprod\fmon{T},\Sigma^{\Ben_{T}},
\mathcal{E}^{\Ben_{T}})$ is a specification morphism.
Therefore $\varphi\colon S\mor \fmon{T}$ induces a functor
$(\ext{\varphi}\bprod\ext{\varphi},b^{\varphi})^{\ast}$ from
$\mathbf{Alg}(\mathrm{B}_{T})$ to $\mathbf{Alg}(\mathrm{B}_{S})$ which sends
$\mathbf{BTer}_{T}(\Lambda)$, the free Bénabou algebra on the
$T$\nobreakdash-sorted signature $\Lambda$, to a Bénabou algebra for $S$, with
$\mathrm{BTer}_{T}(\Lambda)_{\ext{\varphi}\bprod\ext{\varphi}}$ as
underlying $\fmon{S}\bprod \fmon{S}$-sorted set.
\end{proposition}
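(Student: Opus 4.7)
The plan is to prove the proposition in three stages: first checking the typing of $b^{\varphi}$ on each formal operation of $\Sigma^{\Ben_{S}}$, then verifying that the translation $b^{\varphi}$ respects each of the five B\'{e}nabou axioms $\mathrm{B}_{1}$ through $\mathrm{B}_{5}$ (so as to obtain a specification morphism in the derivor sense), and finally deducing the induced functor on B\'{e}nabou algebras from the general machinery of derivors.

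For the first stage, I would simply inspect each of the three families of generators of $\Sigma^{\Ben_{S}}$. The formal projection $\pi^{w}_{\alpha}\colon \lambda\mor(w,(w_{\alpha}))$ is sent under $\ext{\varphi}\bprod\ext{\varphi}$ to coarity $(\ext{\varphi}(w),\varphi(w_{\alpha}))$; since $\varphi(w_{\alpha})$ has length $p_{\alpha}$, the tuple $\tp{\pi^{\ext{\varphi}(w)}_{\sum_{\beta\in\alpha}p_{\beta}},\ldots,\pi^{\ext{\varphi}(w)}_{\sum_{\beta\in\alpha+1}p_{\beta}-1}}_{\ext{\varphi}(w),\varphi(w_{\alpha})}$ realises precisely that coarity. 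The tupling operator and the substitution operator are handled similarly; here one only needs to match the bookkeeping of the words $\ext{\varphi}(u)$, $\ext{\varphi}(w)$, and $\ext{\varphi}(x)$ with the arities of the assigned terms. To keep notation manageable I would introduce the abbreviation $\rho^{w}_{\alpha} = \tp{\pi^{\ext{\varphi}(w)}_{i}}_{\sum_{\beta\in\alpha}p_{\beta}\leq i<\sum_{\beta\in\alpha+1}p_{\beta}}$ for the ``compound projection'' that plays the r\^{o}le of $\pi^{w}_{\alpha}$ under the mapping $\varphi$.

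For the second stage, I would verify the preservation of the axioms one by one, computing in each case the image of both sides under the canonical extension $\ext{b^{\varphi}}$ of $b^{\varphi}$ to the free Hall algebra, and checking that both images coincide modulo $\mathcal{E}^{\Ben_{T}}$. Axioms $\mathrm{B}_{1}$ and $\mathrm{B}_{2}$ amount, after translation, to generalisations of the B\'{e}nabou projection and identity axioms applied to the compound projections $\rho^{w}_{\alpha}$, and reduce to $\mathrm{B}_{1}$ and $\mathrm{B}_{2}$ in $\Sigma^{\Ben_{T}}$ together with finitely many applications of $\mathrm{B}_{3}$. Axiom $\mathrm{B}_{3}$, the ``eta-rule'' for tupling, is the most delicate because a single $S$-sort letter $w_{\alpha}$ expands to the $p_{\alpha}$ letters of $\varphi(w_{\alpha})$; the required equality is obtained by iterating $\mathrm{B}_{3}$ in $\Sigma^{\Ben_{T}}$ along the word $\ext{\varphi}(w)$ and then applying $\mathrm{B}_{5}$ to rearrange the compositions. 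Axiom $\mathrm{B}_{4}$ reduces directly to $\mathrm{B}_{4}$ in $\Sigma^{\Ben_{T}}$ once one applies $\mathrm{B}_{3}$ to replace the single compound projection by its canonical expansion. Axiom $\mathrm{B}_{5}$, associativity, follows immediately from $\mathrm{B}_{5}$ in $\Sigma^{\Ben_{T}}$ since $b^{\varphi}$ sends $\comp_{u,x,w}$ to $\comp_{\ext{\varphi}(u),\ext{\varphi}(x),\ext{\varphi}(w)}$ applied to the variables.

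For the third stage, once $(\ext{\varphi}\bprod\ext{\varphi},b^{\varphi})$ is a specification morphism, Proposition~\ref{leftadjsigmorph} (or, equivalently, the general construction of the reduct functor induced by any specification morphism given in the third section) supplies the functor $(\ext{\varphi}\bprod\ext{\varphi},b^{\varphi})^{\ast}\colon\mathbf{Alg}(\mathrm{B}_{T})\mor\mathbf{Alg}(\mathrm{B}_{S})$. Evaluated at $\mathbf{BTer}_{T}(\Lambda)$, this functor produces a B\'{e}nabou algebra for $S$ whose underlying $\fmon{S}\bprod\fmon{S}$-sorted set is, by construction of the reduct, precisely $\mathrm{BTer}_{T}(\Lambda)_{\ext{\varphi}\bprod\ext{\varphi}}$. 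The main obstacle will be the verification of axiom $\mathrm{B}_{3}$ under translation: it requires iterating $\mathrm{B}_{3}$ along a word and then applying $\mathrm{B}_{5}$ to collapse the resulting nested compositions back into a single tuple, and the indexing across the decomposition $\bb{\ext{\varphi}(w)}=\sum_{\alpha}p_{\alpha}$ is the source of all the notational complexity.
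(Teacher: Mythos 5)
The paper states this proposition without any proof, so there is nothing to compare your argument against step by step; what can be judged is whether your plan is sound, and it is. The three-stage structure (typing of $b^{\varphi}$ on the generators; syntactic derivation of the translated axioms $\mathrm{B}_{1}$--$\mathrm{B}_{5}$ from $\mathcal{E}^{\Ben_{T}}$ via the canonical extension of $b^{\varphi}$ to the free Hall algebra on $\Sigma^{\Ben_{S}}$; appeal to the reduct functor) is exactly what the definition of specification morphism for derivors requires, and your accounting of which target axioms feed which source axiom is essentially right: the whole point is that the compound projections $\rho^{w}_{\alpha}$ behave like genuine projections by virtue of $\mathrm{B}_{1}$, $\mathrm{B}_{3}$ and $\mathrm{B}_{5}$ in the target. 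Two remarks. First, your citation of Proposition~\ref{leftadjsigmorph} in the third stage is slightly off: that proposition constructs the \emph{left adjoint} $\mathbf{d}_{\ast}$ to the reduct functor, not the reduct functor itself; what you need is the reduct functor induced by a derivor between specifications (the extension of $\Alg^{\mathrm{sp}}$ to $\mathbf{Sig}_{\mathfrak{d}}$, which the paper only sketches), and your parenthetical hedge does cover this. Second, there is a shorter semantic route that avoids the equational computations entirely: by the Completeness Theorem it suffices to check that for every B\'enabou algebra $\mathbf{B}$ for $T$ the reduct along $b^{\varphi}$ satisfies $\mathcal{E}^{\Ben_{S}}$, and under the isomorphism of Proposition~\ref{isoBalgBth} this amounts to the single observation that in the B\'enabou theory associated to $\mathbf{B}$ the object $\ext{\varphi}(w)$ together with the compound projections $\rho^{w}_{\alpha}$ is a product of the family $(\varphi(w_{\alpha}))_{\alpha\in\bb{w}}$ (a product of products is a product), whence all five axioms hold automatically. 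Your syntactic route is correct but will be considerably more laborious, particularly for $\mathrm{B}_{3}$, exactly as you anticipate.
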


For a polyderivor $\mathbf{d}\colon \mathbf{\Sigma}\mor \mathbf{\Lambda}$, we can
extend the $\fmon{S}\bprod \fmon{S}$-sorted mapping $d$ from
$\coprod_{1\bprod\between_{S}}\Sigma$ to
$\mathrm{BTer}_{T}(\Lambda)_{\ext{\varphi}\bprod\ext{\varphi}}$ to a
homomorphism of Bénabou algebras $\ext{d}$ from
$\mathrm{BTer}_{S}(\Sigma)$ to
$\mathrm{BTer}_{T}(\Lambda)_{\ext{\varphi}\bprod\ext{\varphi}}$, whose
underlying $\fmon{S}\bprod\fmon{S}$-mapping determines a translation
of terms for $\mathbf{\Sigma}$ into terms for $\mathbf{\Lambda}$.  In
particular, for every $(w,s)\in\fmon{S}\bprod S$, $\ext{d}_{w,(s)}$
assigns to terms in $\mathrm{T}_{\mathbf{\Sigma}}(\vs{w})_{s}$ terms in
$\mathrm{T}_{\mathbf{\Lambda}}(\vs{\ext{\varphi}(w)})_{\varphi(s)}$, in such a way
that to a variable $v^{s}_{\alpha}$ in $\vs{w}$ associates the family
of variables
$$
(v^{\ext{\varphi}(w)_{\sum_{\beta\in
\alpha}p_{\beta}}}_{\sum_{\beta\in \alpha}p_{\beta}}, \ldots,
v^{\ext{\varphi}(w)_{\sum_{\beta\in
\alpha+1}p_{\beta}-1}}_{\sum_{\beta\in \alpha+1}p_{\beta}-1}).
$$

Before we define next the composition of polyderivors we recall that
$\mathbf{\Sigma}$, $\mathbf{\Lambda}$, $\mathbf{\Omega}$, and
$\mathbf{\Xi}$ denote the signatures $(S,\Sigma)$, $(T,\Lambda)$,
$(U,\Omega)$, and $(X,\Xi)$, respectively, and $\mathbf{d}$,
$\mathbf{e}$, and $\mathbf{h}$ denote the polyderivors $(\varphi,d)$,
$(\psi,e)$, and $(\gamma,h)$, respectively.

\begin{definition}
Let $\mathbf{d}\colon \mathbf{\Sigma}\mor \mathbf{\Lambda}$ and
$\mathbf{e}\colon \mathbf{\Lambda}\mor \mathbf{\Omega}$ be polyderivors.
Then the \emph{composition} of $\mathbf{d}$ and $\mathbf{e}$, denoted
by $\mathbf{e}\comp\mathbf{d}$, is the morphism
$(\ext{\psi}\comp\varphi,
\ext{e}_{\ext{\varphi}\bprod\ext{\varphi}}\comp d)$, where the first
component $\ext{\psi}\comp\varphi$ is a mapping from $S$ to $\fmon{U}$
and $\ext{e}_{\ext{\varphi}\bprod\ext{\varphi}}\comp d$ is obtained
from
$$
\begin{aligned}
\xymatrix@C=40pt@R=30pt{
\coprod_{1\bprod\between_{T}}\Lambda
  \ar[r]^-{\eta_{\coprod_{1\bprod\between_{T}}\Lambda}^{\Ben_{T}}}
  \ar[rd]_-{e} &
\mathrm{BTer}_{T}(\Lambda) \ar[d]^{\ext{e}}
  \\
& \mathrm{BTer}_{U}(\Omega)_{\ext{\psi}\bprod \ext{\psi}}
}
\end{aligned}
\,\, \text{as} \,\,
\begin{aligned}
\xymatrix@C=40pt@R=30pt{
\mathrm{BTer}_{T}(\Lambda)_{\ext{\varphi}\bprod \ext{\varphi}}
\ar[d]^{\ext{e}_{\ext{\varphi}\bprod \ext{\varphi}}}&
\coprod_{1\bprod\between_{S}}\Sigma
\ar[l]_-{d}  \\
{\mathrm{BTer}_{U}(\Omega)_{\ext{\psi}\bprod\ext{\psi}}}_
{\ext{\varphi}\bprod\ext{\varphi}}
}
\end{aligned}
$$
and, for every signature $\mathbf{\Sigma}$, the \emph{identity} at
$\mathbf{\Sigma}$ is the polyderivor
$(\between_{S},\eta^{\Ben_{S}}_{\Sigma})$.
\end{definition}

From this definition we get the corresponding category of signatures
and polyderivors.

\begin{proposition}
The signatures together with the polyderivors determine a category, that
we denote by $\mathbf{Sig}_{\mathfrak{pd}}$.
\end{proposition}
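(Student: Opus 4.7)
The plan is to check the three category axioms directly: that composition of two polyderivors is a polyderivor, that it is associative, and that the proposed identities act as units. The organization follows the model given just above for derivors, but now the first components live in the Kleisli category $\mathbf{Kl}(\mathbb{T}_{\star})$ of the free-monoid monad rather than in $\mathbf{Set}$, so the monad laws of $\mathbb{T}_{\star}$ must be invoked at every step.

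First I would verify well-typedness of $\mathbf{e}\comp\mathbf{d}$. The first component $\ext{\psi}\comp\varphi\colon S\mor \fmon{U}$ is well-defined. For the second, by Proposition~\ref{derivorFuji} the mapping $\psi\colon T\mor\fmon{U}$ yields a functor $(\ext{\psi}\bprod\ext{\psi},b^{\psi})^{\ast}$ from $\mathbf{Alg}(\mathrm{B}_{T})$ to $\mathbf{Alg}(\mathrm{B}_{U})$, under which $\mathbf{BTer}_{U}(\Omega)$ is sent to a B\'enabou algebra whose underlying $\fmon{T}\bprod\fmon{T}$-sorted set is $\mathrm{BTer}_{U}(\Omega)_{\ext{\psi}\bprod\ext{\psi}}$. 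Extending the $\fmon{T}\bprod\fmon{T}$-sorted mapping $e$ gives a B\'enabou homomorphism $\ext{e}\colon \mathbf{BTer}_{T}(\Lambda)\mor \mathbf{BTer}_{U}(\Omega)_{\ext{\psi}\bprod\ext{\psi}}$, and reindexing by $\ext{\varphi}\bprod\ext{\varphi}$ yields $\ext{e}_{\ext{\varphi}\bprod\ext{\varphi}}$ with codomain
$(\mathrm{BTer}_{U}(\Omega)_{\ext{\psi}\bprod\ext{\psi}})_{\ext{\varphi}\bprod\ext{\varphi}} = \mathrm{BTer}_{U}(\Omega)_{(\ext{\psi}\comp\ext{\varphi})\bprod(\ext{\psi}\comp\ext{\varphi})}$. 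The key point is now that the monad law $\ext{(\ext{\psi}\comp\varphi)}=\ext{\psi}\comp\ext{\varphi}$ for $\mathbb{T}_{\star}$ identifies this codomain with $\mathrm{BTer}_{U}(\Omega)_{\ext{(\ext{\psi}\comp\varphi)}\bprod\ext{(\ext{\psi}\comp\varphi)}}$, so the composite $\ext{e}_{\ext{\varphi}\bprod\ext{\varphi}}\comp d$ has precisely the shape required for a polyderivor from $\mathbf{\Sigma}$ to $\mathbf{\Omega}$.

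For associativity, given $\mathbf{\Sigma}\xrightarrow{\mathbf{d}}\mathbf{\Lambda}\xrightarrow{\mathbf{e}}\mathbf{\Omega}\xrightarrow{\mathbf{h}}\mathbf{\Xi}$, associativity of the first component is the associativity of Kleisli composition in $\mathbf{Kl}(\mathbb{T}_{\star})$, i.e.\ $\ext{(\ext{\gamma}\comp\psi)}\comp\varphi = \ext{\gamma}\comp(\ext{\psi}\comp\varphi)$, again by the monad law. For the second component I would argue exactly as in the displayed derivor calculation, replacing $\fmon{\varphi}$ by $\ext{\varphi}$ throughout: the two pivotal identities are that reindexing commutes with composition of $\fmon{T}\bprod\fmon{T}$-sorted mappings, i.e.\ $(\ext{h}_{\ext{\psi}\bprod\ext{\psi}}\comp \ext{e})_{\ext{\varphi}\bprod\ext{\varphi}} = \ext{h}_{\ext{\psi}\bprod\ext{\psi}}{}_{,\,\ext{\varphi}\bprod\ext{\varphi}}\comp \ext{e}_{\ext{\varphi}\bprod\ext{\varphi}}$, and that extension respects composition on the nose, i.e.\ $\ext{(\ext{h}_{\ext{\psi}\bprod\ext{\psi}}\comp e)}=\ext{h}_{\ext{\psi}\bprod\ext{\psi}}\comp \ext{e}$. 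The first holds functorially; the second holds because $\ext{h}_{\ext{\psi}\bprod\ext{\psi}}$ is a B\'enabou homomorphism, so the composite $\ext{h}_{\ext{\psi}\bprod\ext{\psi}}\comp\ext{e}$ is a B\'enabou homomorphism agreeing with $\ext{h}_{\ext{\psi}\bprod\ext{\psi}}\comp e$ on generators, whence it is the unique extension by the universal property of $\mathbf{BTer}_{T}(\Lambda)\cong\mathbf{T}_{\Ben_{T}}(\coprod_{1\bprod\between_{T}}\Lambda)$ established in Corollary~\ref{Beniso}.

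Finally, for the unit laws, take $\id_{\mathbf{\Sigma}}=(\between_{S},\eta^{\Ben_{S}}_{\Sigma})$. In $\mathbf{d}\comp\id_{\mathbf{\Sigma}}$, the first component becomes $\ext{\varphi}\comp\between_{S}=\varphi$ by the unit axiom $\ext{\varphi}\comp\between_{S}=\varphi$ of $\mathbb{T}_{\star}$, and the second becomes $\ext{d}_{\id}\comp\eta^{\Ben_{S}}_{\Sigma}=\ext{d}\comp\eta^{\Ben_{S}}_{\Sigma}=d$ by the unit axiom of the free-B\'enabou adjunction. The argument for $\id_{\mathbf{\Lambda}}\comp\mathbf{d}=\mathbf{d}$ is analogous, using $\ext{\between_{T}}=\mathrm{id}_{\fmon{T}}$ and the fact that $\ext{\eta^{\Ben_{T}}_{\Lambda}}$ is the identity homomorphism of $\mathbf{BTer}_{T}(\Lambda)$. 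The principal obstacle, and the step that deserves the most care, is the identity $\ext{(\ext{h}_{\ext{\psi}\bprod\ext{\psi}}\comp e)}=\ext{h}_{\ext{\psi}\bprod\ext{\psi}}\comp \ext{e}$ in the associativity computation, since it is precisely where the full strength of the B\'enabou-algebraic lift of a mapping $\psi\colon T\mor\fmon{U}$ (and not merely $\psi\colon T\mor U$, as would suffice for derivors) is needed.
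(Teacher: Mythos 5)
Your proposal is correct and follows essentially the same route as the paper's own proof: well-typedness of the composite via the reindexing identity $(\mathrm{BTer}_{U}(\Omega)_{\ext{\psi}\bprod\ext{\psi}})_{\ext{\varphi}\bprod\ext{\varphi}}=\mathrm{BTer}_{U}(\Omega)_{\ext{(\ext{\psi}\comp\varphi)}\bprod\ext{(\ext{\psi}\comp\varphi)}}$, unit laws from $\ext{\between}=\mathrm{id}$ together with the unit of the free--B\'enabou adjunction, and associativity by the same chain of equalities resting on functoriality of reindexing and on $\ext{(\ext{h}_{\ext{\psi}\bprod\ext{\psi}}\comp e)}=\ext{h}_{\ext{\psi}\bprod\ext{\psi}}\comp\ext{e}$, which you justify (as the paper implicitly does) by the universal property of the free B\'enabou algebra. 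Your explicit framing of the first components as Kleisli composition for $\mathbb{T}_{\star}$ is a harmless and accurate gloss that the paper only makes explicit in the subsequent proposition on $\mathbb{T}_{\mathfrak{pd}}$.
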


\begin{proof}
To begin with we prove that the composition of polyderivors is a polyderivor.
\begin{align*}
(\mathrm{BTer}_{U}(\Omega)_{\ext{\psi}\bprod\ext{\psi}})_
{\ext{\varphi}\bprod\ext{\varphi}}
&=
\mathrm{BTer}_{U}(\Omega)_{(\ext{\psi}\bprod\ext{\psi})\comp
            (\ext{\varphi}\bprod\ext{\varphi})} \\
&=
\mathrm{BTer}_{U}(\Omega)_{(\ext{\psi}\comp\ext{\varphi})\bprod
            (\ext{\psi}\comp\ext{\varphi})}  \\
&=
\mathrm{BTer}_{U}(\Omega)_{\ext{(\ext{\psi}\comp\varphi)}\bprod
            \ext{(\ext{\psi}\comp\varphi)}}.
\end{align*}
Next we prove that the identities are true identities.
\begin{align*}
\mathbf{d}\comp(\between_{S},\eta^{\Ben_{S}}_{\Sigma})
  &=
(\ext{\varphi}\comp\between_{S},\ext{d}_{\ext{\between_{S}}\bprod\ext{\between_{S}}}
    \comp\eta_{\Sigma}^{\Ben_{S}}) \\
  &=
\mathbf{d}, \\[5pt]
(\between_{T},\eta_{\Lambda}^{\Ben_{T}})\comp \mathbf{d}
  &=
(\ext{\between_{T}}\comp\varphi,{\ext{(\eta_{\Lambda}^{\Ben_{T}})}}
      _{\ext{\varphi}\bprod\ext{\varphi}}
    \comp d) \\
  &=
\mathbf{d}.
\end{align*}
Finally we prove that the composition is associative.  Given the
morphisms
$$
\xymatrix@C=50pt@R=40pt{
\mathbf{\Sigma}
\ar[r]^{\mathbf{d}} &
\mathbf{\Lambda}
\ar[r]^{\mathbf{e}} &
\mathbf{\Omega}
\ar[r]^{\mathbf{h}} &
\mathbf{\Xi},
}
$$
we have that
\begin{align*}
 \mathbf{h}\comp(\mathbf{e}\comp\mathbf{d})
 &=
 \mathbf{h}\comp(\ext{\psi}\comp\varphi,
   \ext{e}_{\ext{\varphi}\bprod\ext{\varphi}}\comp d) \\
 &=
 (\ext{\gamma}\comp(\ext{\psi}\comp\varphi),
  \ext{h}_{\ext{(\ext{\psi}\comp\varphi)}\bprod
           \ext{(\ext{\psi}\comp\varphi)}}
  \comp(\ext{e}_{\ext{\varphi}\bprod\ext{\varphi}}\comp d)) \\
 &=
 (\ext{(\ext{\gamma}\comp\psi)}\comp\varphi,
 {\ext{h}_{\ext{\psi}\bprod\ext{\psi}}}
         _{\ext{\varphi}\bprod\ext{\varphi}}
  \comp(\ext{e}_{\ext{\varphi}\bprod\ext{\varphi}}\comp d)) \\
 &=
 (\ext{(\ext{\gamma}\comp\psi)}\comp\varphi,
 ({\ext{h}_{\ext{\psi}\bprod\ext{\psi}}}
         _{\ext{\varphi}\bprod\ext{\varphi}}
  \comp\ext{e}_{\ext{\varphi}\bprod\ext{\varphi}})\comp d) \\
 &=
 (\ext{(\ext{\gamma}\comp\psi)}\comp\varphi,
 ({\ext{h}_{\ext{\psi}\bprod\ext{\psi}}}
         _{\ext{\varphi}\bprod\ext{\varphi}}
  \comp\ext{e})_{\ext{\varphi}\bprod\ext{\varphi}}\comp d) \\
 &=
 (\ext{(\ext{\gamma}\comp\psi)}\comp\varphi,
 (\ext{{\ext{h}_{\ext{\psi}\bprod\ext{\psi}}}
         _{\ext{\varphi}\bprod\ext{\varphi}}
  \comp e)}_{\ext{\varphi}\bprod\ext{\varphi}}\comp d) \\
 &=
 (\ext{\gamma}\comp\psi,\ext{h}_{\ext{\psi}\bprod\ext{\psi}}\comp e)
 \comp \mathbf{d} \\
 &=
 (\mathbf{h}\comp\mathbf{e})\comp\mathbf{d}.
 \qedhere
\end{align*}
\end{proof}

\begin{remark}
From the fact that $\mathbf{Sig}_{\mathfrak{pd}}$ is a category it
follows at once that, for every signature $\mathbf{\Sigma}$ in
$\mathbf{Sig}_{\mathfrak{pd}}$, the set of all endopolyderivors of
$\mathbf{\Sigma}$, $\mathrm{End}_{\mathfrak{pd}}(\mathbf{\Sigma})$, is
the underlying set of a monoid, denoted by
$\mathbf{End}_{\mathfrak{pd}}(\mathbf{\Sigma})$.  Since the monoid
$\mathbf{End}_{\mathfrak{d}}(\mathbf{\Sigma})$, of hypersubstitutions
of $\mathbf{\Sigma}$, i.e, the monoid of endoderivors of
$\mathbf{\Sigma}$, is embedded (in general, strictly) in the monoid
$\mathbf{End}_{\mathfrak{pd}}(\mathbf{\Sigma})$, we conclude that
$\mathbf{End}_{\mathfrak{pd}}(\mathbf{\Sigma})$ can serve as a basis to
develop a doubly generalized (because of the use of many-sorted
signatures and endopolyderivors, instead of single-sorted signatures and
endoderivors,) theory of hyperidentities.  But we leave this task for
another occasion.
\end{remark}

%
%


Having shown above that the concept of derivor, because of its
reducibility to that of morphism of a Kleisli category for a monad
in $\mathbf{Sig}$, is mathematically natural, one could also
expect to show the mathematical naturalness of the notion of
polyderivor by proving that the category $\mathbf{Sig}_{\mathfrak{pd}}$
is obtainable as an isomorphic copy of the Kleisli category for
some monad in $\mathbf{Sig}$.  This is actually true, however the
procedure we should follow to determine such a monad is more
involved than the one, relatively simple, we have followed for the
derivors.  This is due to the fact that, for a signature
$\mathbf{\Sigma} = (S,\Sigma)$, the pair
$(\fmon{S}\bprod\fmon{S},\mathrm{BTer}_{S}(\Sigma))$ is not a
signature, because $\mathrm{BTer}_{S}(\Sigma)$ is an
$\fmon{S}\bprod\fmon{S}$-sorted set, but not an
$\fmon{({(\fmon{S})}^{2})}\bprod {(\fmon{S})}^{2}$-sorted set.

The approach we offer to prove the existence of the monad in
$\mathbf{Sig}$ whose Kleisli category is isomorphic to
$\mathbf{Sig}_{\mathfrak{pd}}$ will be based, on the one hand, on the
functor
$$
\Delta_{\cncat_{S}\bprod 1}\colon
\mathbf{Set}^{\fmon{S}\bprod\fmon{S}}\mor
\mathbf{Set}^{\ffmon{S}\bprod\fmon{S}}
$$
which sends $\fmon{S}\bprod\fmon{S}$-sorted sets to
$\fmon{S}$-signatures, therefore, for an $S$-sorted signature
$\Sigma$, we will have that $\Delta_{\cncat_{S}\bprod
1}(\mathrm{BTer}_{S}(\Sigma))$ is a $\fmon{S}$-signature, and, on the
other hand, of the fact that, for every set of sorts $S$, the
adjunction $\mathbf{T}_{\Ben_{S}}\ladj\mathrm{G}_{\Ben_{S}}$, determines a
monad on $\mathbf{Set}^{\fmon{S}\bprod\fmon{S}}$ denoted as
$\mathbb{T}_{\Ben_{S}} = (\mathrm{T}_{\Ben_{S}},\eta^{\Ben_{S}},\mu^{\Ben_{S}})$.

\begin{proposition}\label{isoSigfujKlfuj}
There exists a monad
$\mathbb{T}_{\mathfrak{pd}} =
(\mathfrak{pd},\eta^{\mathfrak{pd}},\mu^{\mathfrak{pd}})$
in $\mathbf{Sig}$ such that the categories $\mathbf{Sig}_{\mathfrak{pd}}$ and
$\mathbf{Kl}(\mathbb{T}_{\mathfrak{pd}})$ are isomorphic.
\end{proposition}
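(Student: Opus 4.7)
The plan is to build $\mathbb{T}_{\mathfrak{pd}}$ so that polyderivors $\mathbf{\Sigma}\mor\mathbf{\Lambda}$ are literally signature morphisms $\mathbf{\Sigma}\mor\mathfrak{pd}(\mathbf{\Lambda})$ in $\mathbf{Sig}$. The obstacle is that a polyderivor $(\varphi,d)$ has $\varphi\colon S\mor\fmon{T}$ and $d$ valued in $\mathrm{BTer}_{T}(\Lambda)$, which is $\fmon{T}\bprod\fmon{T}$-sorted and hence not a $T$-signature. The device pointed to at the end of the excerpt, the functor $\Delta_{\cncat_{T}\bprod 1}$, converts $\mathrm{BTer}_{T}(\Lambda)$ into a genuine signature over the enlarged alphabet $\fmon{T}$, and together with the monad $\mathbb{T}_{\Ben_{T}}$ and the isomorphism $\mathrm{BTer}_{T}(\Lambda)\iso\mathrm{T}_{\Ben_{T}}(\coprod_{1\bprod\between_{T}}\Lambda)$ of Corollary~\ref{Beniso} it supplies all the required material.

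On objects I set $\mathfrak{pd}(\mathbf{\Sigma})=(\fmon{S},\Delta_{\cncat_{S}\bprod 1}(\mathrm{BTer}_{S}(\Sigma)))$, a signature over $\fmon{S}$. The arithmetic identity $\cncat_{T}\comp\fmon{\varphi}=\ext{\varphi}$, valid for any $\varphi\colon S\mor\fmon{T}$, gives the equality
\[
(\Delta_{\cncat_{T}\bprod 1}(\mathrm{BTer}_{T}(\Lambda)))_{\fmon{\varphi}\bprod\varphi}
=\mathrm{BTer}_{T}(\Lambda)_{\ext{\varphi}\bprod\varphi},
\]
so a signature morphism $\mathbf{\Sigma}\mor\mathfrak{pd}(\mathbf{\Lambda})$ is literally a polyderivor $\mathbf{\Sigma}\mor\mathbf{\Lambda}$; this identification will supply the Kleisli bijection on morphisms. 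I would then extend $\mathfrak{pd}$ to a functor using the functoriality of $\mathrm{BTer}_{S}$ through $\coprod_{1\bprod\between_{S}}$ and the naturality of $\Delta_{\cncat_{S}\bprod 1}$, and take as unit $\eta^{\mathfrak{pd}}_{\mathbf{\Sigma}}=(\between_{S},\eta^{\Ben_{S}}_{\Sigma})$, which under the identification is precisely the identity polyderivor at $\mathbf{\Sigma}$.

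The multiplication $\mu^{\mathfrak{pd}}_{\mathbf{\Sigma}}\colon\mathfrak{pd}(\mathfrak{pd}(\mathbf{\Sigma}))\mor\mathfrak{pd}(\mathbf{\Sigma})$ must encode polyderivor composition. I would assemble it from $\cncat_{S}\colon\ffmon{S}\mor\fmon{S}$ on the first coordinate and, on the second, from $\mu^{\Ben_{S}}$ mediated by the transport-of-structure functor $(\ext{\gamma}\bprod\ext{\gamma},b^{\gamma})^{\ast}$ of Proposition~\ref{derivorFuji}---the very mechanism used to produce the extension $\ext{e}_{\ext{\varphi}\bprod\ext{\varphi}}\comp d$ in the definition of $\mathbf{e}\comp\mathbf{d}$. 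The monad laws then decouple: the monoid axioms for $\mathbf{T}_{\star}(S)$ discharge the first coordinate, while the monad laws for $\mathbb{T}_{\Ben_{S}}$ together with the naturality of $b^{\gamma}$ in $\gamma$ discharge the second.

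The hard part, and the real content of the argument, is checking that under the identification the Kleisli composite $\mu^{\mathfrak{pd}}_{\mathbf{\Omega}}\comp\mathfrak{pd}(\mathbf{e})\comp\mathbf{d}$ unwinds to the polyderivor $(\ext{\psi}\comp\varphi,\ext{e}_{\ext{\varphi}\bprod\ext{\varphi}}\comp d)$ given by the explicit composition rule for polyderivors. This reduces to a single naturality square asserting that the extension-and-restriction operation $e\mapsto\ext{e}_{\ext{\varphi}\bprod\ext{\varphi}}$ agrees, under the bijection, with the composite of $\mathfrak{pd}(\mathbf{e})$ and $\mu^{\mathfrak{pd}}_{\mathbf{\Omega}}$. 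Once this square is settled, the associativity and unit laws already verified for polyderivor composition in $\mathbf{Sig}_{\mathfrak{pd}}$ translate directly into the monad laws for $\mathbb{T}_{\mathfrak{pd}}$, and the isomorphism $\mathbf{Sig}_{\mathfrak{pd}}\iso\mathbf{Kl}(\mathbb{T}_{\mathfrak{pd}})$ is then identity on objects and the already-established bijection on morphisms.
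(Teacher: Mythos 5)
Your proposal follows essentially the same route as the paper: $\mathfrak{pd}(\mathbf{\Sigma})$ is the $\fmon{S}$-signature obtained by relabelling the (free) B\'enabou term algebra along $\cncat_{S}\bprod 1$, the unit is $(\between_{S},\eta^{\Ben_{S}}_{\Sigma})$, the multiplication combines $\cncat_{S}$ with $\mu^{\Ben_{S}}$ transported via the derivor $b^{\cncat_{S}}$ of Proposition~\ref{derivorFuji}, and the hom-set identification rests on the same identity $\cncat_{T}\comp\fmon{\varphi}=\ext{\varphi}$ used in the paper's closing computation. The construction and the points you defer (naturality of the multiplication, compatibility of Kleisli composition with polyderivor composition) match the paper's proof, which is itself sketchy on exactly those verifications.
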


\begin{proof}
Let $\mathfrak{pd}$ be the endofunctor of $\mathbf{Sig}$ defined as follows
\begin{enumerate}
\item $\mathfrak{pd}$ sends a signature $\mathbf{\Sigma}$ to
      $$
      (\fmon{S},\mathrm{T}_{\Ben_{S}}(\textstyle{\coprod}_{1\bprod\between_{S}}\Sigma)
      _{\cncat_{S}\bprod 1})).
      $$
\item $\mathfrak{pd}$ sends a signature morphism $\mathbf{d}$ from
      $\mathbf{\Sigma}$ to $\mathbf{\Lambda}$ to
      $$
      (\fmon{\varphi},(\ext{d})_{\cncat_{S}\bprod
      1})\colon
      (\fmon{S},\mathrm{T}_{\Ben_{S}}(\textstyle{\coprod}_{1\bprod\between_{S}}\Sigma)
      _{\cncat_{S}\bprod 1}))\mor
     (\fmon{T},\mathrm{T}_{\Ben_{T}}(\textstyle{\coprod}_{1\bprod\between_{T}}\Lambda)
     _{\cncat_{T}\bprod 1})),
     $$
     where
     $\mathrm{T}_{\Ben_{S}}(\coprod_{1\bprod\between_{S}}\Sigma)_{\cncat_{S}\bprod
     1}$ is the value in $\Sigma$ of the functor %
     $$
     \xymatrix@C=40pt@R=30pt{
     \mathbf{Set}^{\fmon{S}\bprod S}
     \ar[r]^{\coprod_{1\bprod\between_{S}}}&
     \mathbf{Set}^{\fmon{S}\bprod \fmon{S}}
     \ar[r]^{\mathrm{T}_{\Ben_{S}}}&
     \mathbf{Set}^{\fmon{S}\bprod \fmon{S}}
     \ar[r]^{\Delta_{\cncat_{S}\bprod 1}}&
     \mathbf{Set}^{\ffmon{S}\bprod \fmon{S}}.
     }
     $$
\end{enumerate}

After having defined the endofunctor $\mathfrak{pd}$ of $\mathbf{Sig}$, we
proceed to define the unit $\eta^{\mathfrak{pd}}$ and multiplication
$\mu^{\mathfrak{pd}}$ of the monad $\mathbb{T}_{\mathfrak{pd}}$.

Let $\mathbf{\Sigma}$ be a signature.  Then we have that
$\eta^{\mathfrak{pd}}_{\mathbf{\Sigma}}$, the component of the unit
$\eta^{\mathfrak{pd}}$ of the purported monad
$\mathbb{T}_{\mathfrak{pd}}$ in $\mathbf{\Sigma}$, is the signature
morphism $(\between_{S},\eta^{\Ben_{S}}_{\Sigma})$, i.e., the
value in $\Sigma$ of the unit of the monad
$\mathbb{T}_{\Ben_{S}}=(\mathrm{T}_{\Ben_{S}},\eta^{\Ben_{S}},\mu^{\Ben_{S}})$
in $\mathbf{Set}^{\fmon{S}\bprod\fmon{S}}$, obtained from the adjunction
$\mathbf{T}_{\Ben_{S}}\ladj\mathrm{G}_{\Ben_{S}}$.  On the other hand, we want
$\mu^{\mathfrak{pd}}_{\mathbf{\Sigma}}$, the component of the
multiplication $\mu^{\mathfrak{pd}}$ of the purported monad
$\mathbb{T}_{\mathfrak{pd}}$ in $\mathbf{\Sigma}$, to be a morphism
as in the following diagram
$$
\xymatrix{
  (\ffmon{S},\mathrm{T}_{\Ben_{\fmon{S}}}(\tcoprod_{1\bprod\between_{\fmon{S}}}
  (\mathrm{T}_{\Ben_{S}}(\tcoprod_{1\bprod\between_{S}}\Sigma)_{\cncat_{S}\bprod 1})
  )_{\cncat_{\fmon{S}}\bprod 1})
  \ar[d]_{\mu^{\mathfrak{pd}}_{\mathbf{\Sigma}}}\\
  (\fmon{S},\mathrm{T}_{\Ben_{S}}(\tcoprod_{1\bprod\between_{S}}\Sigma)_{\cncat_{S}\bprod 1})
}
$$
The first coordinate of $\mu^{\mathfrak{pd}}_{\mathbf{\Sigma}}$ is $\cncat_{S}$,
the multiplication of the monad $\mathbb{T}_{\star}$.  To get the second
coordinate of $\mu^{\mathfrak{pd}}_{\mathbf{\Sigma}}$ we have to define a natural
transformation $\alpha$ as in the following diagram
$$
\xymatrix@C=19ex@R=8ex{
&\xymn{\mathbf{Set}^{\ffmon{S}\bprod \ffmon{S}}}{5}&
\xymn{\mathbf{Set}^{\ffmon{S}\bprod \ffmon{S}}}{6}&
\xymn{\mathbf{Set}^{\fffmon{S}\bprod \ffmon{S}}}{7}\\
&\xymn{\mathbf{Set}^{\ffmon{S}\bprod \fmon{S}}}{4}\\
\xymn{\mathbf{Set}^{\fmon{S}\bprod \fmon{S}}}{2}&
\xymn{\mathbf{Set}^{\fmon{S}\bprod \fmon{S}}}{3}&
\xymn{\mathbf{Set}^{\fmon{S}\bprod \fmon{S}}}{8}&
\xymn{\mathbf{Set}^{\ffmon{S}\bprod \fmon{S}}}{9}\\
\xymn{\mathbf{Set}^{\fmon{S}\bprod S}}{1}
\ar "1";"2" ^{\coprod_{1\bprod\between_{S}}}
\ar "2";"3" ^{\mathrm{T}_{\Ben_{S}}}
\ar "3";"4" ^{\Delta_{\cncat_{S}\bprod 1}}
\ar "3";"8" ^{\mathrm{T}_{\Ben_{S}}}
\ar "4";"5" ^{\coprod_{1\bprod\between_{\fmon{S}}}}
\ar "5";"6" ^{\mathrm{T}_{\Ben_{\fmon{S}}}}
\ar "6";"7" ^{\Delta_{\cncat_{\fmon{S}}\bprod 1}}
\ar "8";"9" ^{\Delta_{\cncat_{S}\bprod 1}}
\ar "8";"6" |*+{\Delta_{\cncat_{S}\bprod \cncat_{S}}}
\ar "9";"7" |*_{\Delta_{\cncat_{\fmon{S}}\bprod \cncat_{S}}}
\ar @/_30pt/ "2";"8" |*+{\mathrm{T}_{\Ben_{S}}}="b"
\ar @{} "5";"8" |{\dir{=>}}^{\alpha}
\ar @{} "6";"9" |{=}
\ar @{} "3";"b" |{\dir{=>}}^{\mu^{\Ben_{S}}}
}
$$
Let $\Theta$ be an $\fmon{S}\bprod\fmon{S}$-sorted set.  Then
$\mathrm{T}_{\Ben_{S}}(\Theta)_{\cncat_{S}\bprod \cncat_{S}}$ has a natural
structure of $\mathbf{\Sigma}^{\Ben_{\fmon{S}}}$-algebra, obtained
from the $\fmon{(\ffmon{S}\bprod\ffmon{S})}\bprod
(\ffmon{S}\bprod\ffmon{S})$-sorted mapping
$b^{\cncat_{S}} \colon
  \Sigma^{\Ben_{\fmon{S}}}
  \mor
  \mathrm{Ter}_{\fmon{S}\times\fmon{S}}(\Sigma^{\Ben_{S}})
    _{\fmon{(\cncat_{S}\bprod\cncat_{S})}\bprod
    (\cncat_{S}\bprod\cncat_{S})}
$
by applying Proposition~\ref{derivorFuji} to the mapping
$\cncat_{S}\colon \ffmon{S}\mor \fmon{S}$.


On the other hand, for every $\fmon{S}\bprod\fmon{S}$-sorted set
$\Theta$, we have an $\ffmon{S}\bprod\ffmon{S}$-sorted mapping
$f_{\Theta}$ from
$\coprod_{1\bprod\between_{\fmon{S}}}(\Delta_{\cncat_{S}\bprod
1}(\Theta))$ to $\Delta_{\cncat_{S}\bprod
\cncat_{S}}(\mathrm{T}_{\Ben_{S}}(\Theta))$ which, for every\
$(\ol{u},\ol{w})\in \ffmon{S}\bprod\ffmon{S}$, assigns to an element
$P$, the image of $P$ under the inclusion $\eta^{\Ben_{S}}_{\Theta}$
of $\Theta$ into $\mathrm{T}_{\Ben_{S}}(\Theta)$.  The definition is sound
because, in this case, $\ol{w}$ has the form $(w)$, $P$ is in
$\Theta_{\cncat_{S}u,w}$ and $\eta^{\Ben_{S}}_{\Theta}(P)$ belongs to
$\Delta_{\cncat_{S}\bprod \cncat_{S}}(\mathrm{T}_{\Ben_{S}}(\Theta))$.  Then
the extension $\ext{f_{\Theta}}$ of $f_{\Theta}$ to
$\mathrm{T}_{\Ben_{\fmon{S}}}(\coprod_{1\bprod\between_{\fmon{S}}}
(\Delta_{\cncat_{S}\bprod 1}(\Theta)))$ is the component at $\Theta$
of the natural transformation $\alpha$.

Therefore, the second coordinate of $\mu^{\mathfrak{pd}}_{\mathbf{\Sigma}}$
is the value at $\Sigma$ of the natural transformation%
$$
(
\Delta_{\cncat_{\fmon{S}}\bprod \cncat_{S}} \hcomp
\Delta_{\cncat_{S} \bprod 1} \hcomp
\mu^{\Ben_{S}} \hcomp
\tcoprod_{1 \bprod \between_{S}}
)\comp (
\Delta_{\cncat_{\fmon{S}}\bprod 1} \hcomp
\alpha \hcomp
\mathrm{T}_{\Ben_{S}} \hcomp
\tcoprod_{1 \bprod \between_{S}}
).
$$

Finally we prove that the categories $\mathbf{Sig}_{\mathfrak{pd}}$ and
$\mathbf{Kl}(\mathbb{T}_{\mathfrak{pd}})$ are isomorphic.

A morphism $\mathbf{d}\colon \mathbf{\Sigma}\mor \mathbf{\Lambda}$ in
$\mathbf{Kl}(\mathbb{T}_{\mathfrak{pd}})$ is a morphism  $\mathbf{d}\colon
\mathbf{\Sigma}\mor \mathfrak{pd}(\mathbf{\Lambda})$ in $\mathbf{Sig}$, hence
$\varphi\colon S\mor \fmon{T}$ and
\begin{align*}
d\colon \Sigma\mor
& \Delta_{\fmon{\varphi}\bprod\varphi}(\mathrm{T}_{\Ben_{T}}
         (\tcoprod_{1\bprod\between_{T}}\Lambda)
    _{\cncat_{T}\bprod 1})  \\
&= \Delta_{\ext{\varphi}\bprod \varphi}
     (\mathrm{T}_{\Ben_{T}}
         (\tcoprod_{1\bprod\between_{T}}\Lambda)) \\
&\iso
\Delta_{\ext{\varphi}\bprod \varphi}
     (\mathrm{BTer}_{T}
         (\Lambda)),
\end{align*}
that is exactly the definition of polyderivor in $\mathbf{Sig}_{\mathfrak{pd}}$.
\end{proof}


\begin{remark}
From the existence of the monad $\mathbb{T}_{\mathfrak{pd}}$ in
$\mathbf{Sig}$ it follows the existence of an adjunction
$F_{\mathfrak{pd}}\ladj G_{\mathfrak{pd}}$ from $\mathbf{Sig}$ to
$\mathbf{Kl}(\mathbb{T}_{\mathfrak{pd}})$ which, in its turn, defines
in $\mathbf{Sig}$ exactly the monad $\mathbb{T}_{\mathfrak{pd}}$
(recall that the functor $F_{\mathfrak{pd}}$ from $\mathbf{Sig}$ to
$\mathbf{Kl}(\mathbb{T}_{\mathfrak{pd}})$ sends a signature morphism
$\mathbf{d}$ from $\mathbf{\Sigma}$ to $\mathbf{\Lambda}$ in
$\mathbf{Sig}$ to the composite signature morphism
$$
\xymatrix{
\mathbf{\Sigma}
\ar[r]^-{\mathbf{d}} &
\mathbf{\Lambda}
\ar[r]^-{\eta^{\mathfrak{pd}}_{\mathbf{\Lambda}}} &
\mathfrak{pd}(\Lambda)
}
$$
in $\mathbf{Kl}(\mathbb{T}_{\mathfrak{pd}})$; and that the functor
$G_{\mathfrak{pd}}$ from $\mathbf{Kl}(\mathbb{T}_{\mathfrak{pd}})$ to
$\mathbf{Sig}$ sends a morphism $\mathbf{d}$ from $\mathbf{\Sigma}$ to
$\mathbf{\Lambda}$ in $\mathbf{Kl}(\mathbb{T}_{\mathfrak{pd}})$ to the
composite signature morphism
$$
\xymatrix{
\mathfrak{pd}(\mathbf{\Sigma})
\ar[r]^-{\mathfrak{pd}(\mathbf{d})} &
\mathfrak{pd}(\mathfrak{pd}(\mathbf{\Lambda}))
\ar[r]^-{\mu^{\mathfrak{pd}}_{\mathbf{\Lambda}}} &
\mathfrak{pd}(\Lambda)
}
$$
in $\mathbf{Sig}$).  Therefore, from the functor $F_{\mathfrak{pd}}$
and taking into account the isomorphism between
$\mathbf{Kl}(\mathbb{T}_{\mathfrak{pd}})$ and
$\mathbf{Sig}_{\mathfrak{pd}}$, we get, automatically, a result
stated in a laborious way in a previous example: that the concept of
standard signature morphism is a particular case of that of polyderivor.
\end{remark}

\begin{corollary}
The category $\mathbf{Sig}_{\mathfrak{pd}}$ has coproducts.
\end{corollary}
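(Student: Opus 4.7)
The plan is to proceed exactly as in the analogous corollary just established for $\mathbf{Sig}_{\mathfrak{d}}$, substituting the monad $\mathbb{T}_{\mathfrak{pd}}$ for $\mathbb{T}_{\mathfrak{d}}$. That earlier corollary shows the template: to get coproducts in a category of signatures and generalized morphisms, it suffices to realize that category as (isomorphic to) a Kleisli category over $\mathbf{Sig}$ and then invoke Lemma~\ref{KlCoprod}.

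First, I would recall that $\mathbf{Sig}$ is bicomplete (established earlier via $\mathbf{Sig}=\int^{\mathbf{Set}}\mathrm{Sig}$ together with the bicompleteness propositions based on the construction of Ehresmann-Grothendieck), so in particular $\mathbf{Sig}$ has coproducts. Next, by Proposition~\ref{isoSigfujKlfuj}, there exists a monad $\mathbb{T}_{\mathfrak{pd}}=(\mathfrak{pd},\eta^{\mathfrak{pd}},\mu^{\mathfrak{pd}})$ in $\mathbf{Sig}$ such that $\mathbf{Sig}_{\mathfrak{pd}}\iso \mathbf{Kl}(\mathbb{T}_{\mathfrak{pd}})$. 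Applying Lemma~\ref{KlCoprod} to the category $\mathbf{Sig}$ and the monad $\mathbb{T}_{\mathfrak{pd}}$, the Kleisli category $\mathbf{Kl}(\mathbb{T}_{\mathfrak{pd}})$ has coproducts; transporting along the isomorphism of Proposition~\ref{isoSigfujKlfuj}, the category $\mathbf{Sig}_{\mathfrak{pd}}$ has coproducts.

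For the reader who prefers an explicit description, I would unpack the recipe supplied by the proof of Lemma~\ref{KlCoprod}: given a family of signatures $(\mathbf{\Sigma}^{i})_{i\in I}$ in $\mathbf{Sig}_{\mathfrak{pd}}$, a coproduct is given by the $\mathbf{Sig}$-coproduct $\coprod_{i\in I}\mathbf{\Sigma}^{i}$ equipped with the family of structural polyderivors $(\eta^{\mathfrak{pd}}_{\coprod_{i\in I}\mathbf{\Sigma}^{i}}\comp\mathrm{in}_{i})_{i\in I}$, and the cotupling of a family of polyderivors $(\mathbf{d}^{i}\colon\mathbf{\Sigma}^{i}\mor\mathbf{\Lambda})_{i\in I}$ is the unique $[\mathbf{d}^{i}]_{i\in I}$ in $\mathbf{Sig}$ from $\coprod_{i\in I}\mathbf{\Sigma}^{i}$ to $\mathfrak{pd}(\mathbf{\Lambda})$ given by the universal property of the coproduct in $\mathbf{Sig}$, reinterpreted as a polyderivor via the isomorphism with $\mathbf{Kl}(\mathbb{T}_{\mathfrak{pd}})$.

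There is no real obstacle: everything needed has been assembled, namely the bicompleteness of $\mathbf{Sig}$, the existence of the Kleisli representation $\mathbf{Sig}_{\mathfrak{pd}}\iso \mathbf{Kl}(\mathbb{T}_{\mathfrak{pd}})$, and the transfer Lemma~\ref{KlCoprod}. The only step requiring the slightest care is checking that the isomorphism of Proposition~\ref{isoSigfujKlfuj} is truly an isomorphism of categories (so that it preserves and reflects colimits on the nose), but this was already explicitly verified there.
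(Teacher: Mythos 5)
Your proposal is correct and follows exactly the paper's own argument: $\mathbf{Sig}$ has coproducts, Lemma~\ref{KlCoprod} gives coproducts in $\mathbf{Kl}(\mathbb{T}_{\mathfrak{pd}})$, and the isomorphism of Proposition~\ref{isoSigfujKlfuj} transports them to $\mathbf{Sig}_{\mathfrak{pd}}$. The explicit unpacking of the coproduct cocone is a harmless elaboration beyond what the paper records.
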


\begin{proof}
Because $\mathbf{Sig}$ has coproducts, the category
$\mathbf{Kl}(\mathbb{T}_{\mathfrak{pd}})$, by Lemma~\ref{KlCoprod}, has
also coproducts.  Therefore, since $\mathbf{Sig}_{\mathfrak{pd}}$ and
$\mathbf{Kl}(\mathbb{T}_{\mathfrak{pd}})$ are isomorphic, the category
$\mathbf{Sig}_{\mathfrak{pd}}$ has coproducts.
\end{proof}



After stating that a polyderivor, as was the case for a derivor, is
nothing more (nor less) than a morphism of a Kleisli category for
a convenient monad in $\mathbf{Sig}$, therefore confirming
category-theoretically its naturalness, our next goal is to lift
the contravariant functor $\Alg\colon \mathbf{Sig}\mor
\mathbf{Cat}$ up to a contravariant pseudo-functor
$\Alg_{\mathfrak{pd}}\colon \mathbf{Sig}_{\mathfrak{pd}}\mor
\mathbf{Cat}$, that will allow us, by applying, once more, the
construction of Ehresmann-Grothendieck, to get a new category of
algebras $\mathbf{Alg}_{\mathfrak{pd}}$ into which is embedded the
category $\mathbf{Alg}$.  But to achieve the just stated objective
we should define beforehand some auxiliary functors and natural
transformations.

\begin{proposition}\label{isonatcncat}
Let $S$ be a set of sorts. Then we have that
\begin{enumerate}
\item There exists an \emph{expansion} functor
      $(\farg)^{\nat_{S}}$ from $\mathbf{Set}^{S}$ to
      $\mathbf{Set}^{\fmon{S}}$ which sends an $S$-sorted set $A =
      (A_{s})_{s\in S}$ to the $\fmon{S}$-sorted set $A^{\nat_{S}} =
      (A_{w})_{w\in \fmon{S}}$, and an $S$-sorted mapping $f$ from $A$
      to $B$ to the $\fmon{S}$-sorted mapping $f^{\nat_{S}} =
      (f_{w})_{w\in \fmon{S}}$ from $(A_{w})_{w\in \fmon{S}}$ to
      $(B_{w})_{w\in \fmon{S}}$.  If $A$ is an $S$-sorted set and
      $f\colon A\mor B$ an $S$-sorted mapping, then we say that
      $A^{\nat_{S}}$ and $f^{\nat_{S}}$ are the \emph{expansions of}
      $A$ and $f$, respectively, \emph{to the words on} $S$ and, to
      simplify the notation, we will write $A^{\nat}$ and $f^{\nat}$
      instead of $A^{\nat_{S}}$ and $f^{\nat_{S}}$, respectively.

\item From the contravariant functor $\mathrm{MSet}$, from
      $\mathbf{Set}$ to $\mathbf{Cat}$, to the contravariant functor
      $\mathrm{MSet}\comp \mathrm{T}_{\star}^{\opp}$ between the same
      categories, where $\mathrm{T}_{\star}^{\opp}$ is the composite
      of $\mathbf{T}_{\star}^{\opp}$ (the dual of the free monoid
      functor $\mathbf{T}_{\star}$ from $\mathbf{Set}$ to $\mathbf{Mon}$, the
      category of monoids), and $G_{\mathbf{Mon}}$ (the forgetful functor
      from $\mathbf{Mon}$ to $\mathbf{Set}$), there exists a natural
      transformation $(\farg)^{\nat}$ which sends a set $S$ to the
      expansion functor $(\farg)^{\nat_{S}}$ from $\mathbf{Set}^{S}$ to
      $\mathbf{Set}^{\fmon{S}}$.

\item There exists a natural isomorphism $\iota_{S}$ from the
      functor $(\farg)^{\nat_{\fmon{S}}} \comp (\farg)^{\nat_{S}}$ to
      the functor $\Delta_{\cncat_{S}}\comp(\farg)^{\nat_{S}}$, both
      from the category $\mathbf{Set}^{S}$ to the category
      $\mathbf{Set}^{\ffmon{S}}$.
\end{enumerate}
\end{proposition}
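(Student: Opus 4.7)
The plan is to verify each of the three clauses by direct computation from the definitions, with the universal property of products playing the structural role throughout. For part (1) I would, given $A\in\mathbf{Set}^{S}$, set $A^{\nat_{S}}_{w} = A_{w} = \tprod_{i\in\bb{w}}A_{w_{i}}$ for each word $w\in\fmon{S}$, and for an $S$-sorted mapping $f\colon A\mor B$ set $(f^{\nat_{S}})_{w} = f_{w} = \tprod_{i\in\bb{w}}f_{w_{i}}$, i.e.\ the unique mapping satisfying $\pr^{B}_{w,i}\comp f_{w} = f_{w_{i}}\comp \pr^{A}_{w,i}$ for every $i\in\bb{w}$. The identities $(\id_{A})^{\nat_{S}} = \id_{A^{\nat_{S}}}$ and $(g\comp f)^{\nat_{S}} = g^{\nat_{S}}\comp f^{\nat_{S}}$ then follow, coordinate by coordinate, from the uniqueness clause of the universal property of the product.

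For part (2), given $\varphi\colon S\mor T$, I would verify the equality $\Delta_{\fmon{\varphi}}\comp (\farg)^{\nat_{T}} = (\farg)^{\nat_{S}}\comp \Delta_{\varphi}$ of functors $\mathbf{Set}^{T}\mor\mathbf{Set}^{\fmon{S}}$. On objects this reduces to the identities $\bb{\fmon{\varphi}(w)} = \bb{w}$ and $\fmon{\varphi}(w)_{i} = \varphi(w_{i})$, which are immediate from the definition of $\fmon{\varphi}$ as the image of $\varphi$ under the free monoid functor and yield, for every $B\in\mathbf{Set}^{T}$ and $w\in\fmon{S}$,
$$
(\Delta_{\varphi}B)^{\nat_{S}}_{w} = \tprod_{i\in\bb{w}}B_{\varphi(w_{i})} = \tprod_{j\in\bb{\fmon{\varphi}(w)}}B_{\fmon{\varphi}(w)_{j}} = \Delta_{\fmon{\varphi}}(B^{\nat_{T}})_{w}.
$$
The analogous equality on $T$-sorted mappings is again a consequence of the uniqueness clause in the universal property of the product.

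For part (3), I would construct the component $\iota_{S,A}$ of $\iota_{S}$ at $A\in\mathbf{Set}^{S}$ by giving, for every $\ol{w}\in\ffmon{S}$, the canonical flattening isomorphism
$$
\bigl((A^{\nat_{S}})^{\nat_{\fmon{S}}}\bigr)_{\ol{w}} = \tprod_{i\in\bb{\ol{w}}}\tprod_{j\in\bb{\ol{w}_{i}}} A_{(\ol{w}_{i})_{j}} \;\iso\; \tprod_{k\in\bb{\cncat_{S}(\ol{w})}} A_{\cncat_{S}(\ol{w})_{k}} = \Delta_{\cncat_{S}}(A^{\nat_{S}})_{\ol{w}},
$$
induced by the set bijection between $\tcoprod_{i\in\bb{\ol{w}}}\bb{\ol{w}_{i}}$ and $\bb{\cncat_{S}(\ol{w})}$ that sends $(i,j)$ to $\sum_{\alpha<i}\bb{\ol{w}_{\alpha}}+j$, i.e.\ to the position of the letter $(\ol{w}_{i})_{j}$ inside the concatenated word $\cncat_{S}(\ol{w})$. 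Naturality of $\iota_{S}$ in $A$ then follows from the fact that this set bijection is independent of $A$: for any $f\colon A\mor B$, the mappings $((f^{\nat_{S}})^{\nat_{\fmon{S}}})_{\ol{w}}$ and $\Delta_{\cncat_{S}}(f^{\nat_{S}})_{\ol{w}}$ act coordinatewise by $f$, so the two square sides agree via $\iota_{S,A,\ol{w}}$ and $\iota_{S,B,\ol{w}}$ by the universal property of the product.

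There is no genuine obstacle; the only mildly delicate point is the bookkeeping in part (3), where one must keep the iterated product indexed by $(i,j)$ and the flattened product indexed by $k$ in a uniform correspondence, but this is controlled precisely by the definition of $\cncat_{S}$ as the multiplication of the free monoid monad $\mathbb{T}_{\star}$ on $\mathbf{Set}$.
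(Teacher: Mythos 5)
Your proof is correct and follows essentially the same route as the paper's: the strict commutation $(B_{\varphi})^{\nat_{S}} = (B^{\nat_{T}})_{\fmon{\varphi}}$ for part (2), and for part (3) the componentwise canonical isomorphism $A^{\nat\nat}_{\ol{w}}\iso A^{\nat}_{\cncat\ol{w}}$ given by the flattening of the iterated product (the paper writes it as $\tp{\pr_{\alpha_{j}}\comp\pr_{\alpha}}$). The extra details you supply --- functoriality in part (1) and the naturality check in part (3) --- are exactly the steps the paper leaves implicit.
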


\begin{proof}
We restrict ourselves to prove the second and third parts of the
proposition.

(2) $(\farg)^{\nat}$ is a natural transformation from $\mathrm{MSet}$
to $\mathrm{MSet}\comp \mathrm{T}_{\star}^{\opp}$ since, for a mapping
$\varphi\colon S\mor T$, the following diagram commutes
$$
\xymatrix@C=70pt@R=40pt{
\mathbf{Set}^{S}
\ar[r]^{(\farg)^{\nat_{S}}} &
\mathbf{Set}^{\fmon{S}}   \\
\mathbf{Set}^{T}
\ar[u]^{\Delta_{\varphi}}
\ar[r]_{(\farg)^{\nat_{T}}} &
\mathbf{Set}^{\fmon{T}}
\ar[u]_{\Delta_{\fmon{\varphi}}}
}
$$
Observe, in particular, that for a $T$-sorted set $B$, we have that
$(B_{\varphi})^{\nat_{S}} = (B^{\nat_{T}})_{\fmon{\varphi}}$.

(3) It is enough to define, for every $S$-sorted set $A$, the
component $(\iota_{S})_{A}$ of $\iota_{S}$ at $A$, as the
$\ffmon{S}$-isomorph\-ism $(\iota_{S})_{A}\colon A^{\nat\nat}\mor
(A^{\nat})_{\cncat}$ that has as $\ol{w}$-th coordinate, for  $\ol{w} =
(w_{\alpha})_{\alpha\in \bb{\ol{w}}}\in\ffmon{S}$, the canonical isomorphism%
$$
\xymatrix@C=22ex{%
A^{\nat\nat}_{\ol{w}} = \tprod_{\alpha\in\bb{\ol{w}}}
\tprod_{j\in\bb{w_{\alpha}}}A_{{w_{\alpha}}_{j}}
\ar[r]^{\tp{ \pr_{\alpha_{j}}
\comp \pr_{\alpha}}_{\alpha\in\bb{\ol{w}},j\in\bb{w_{\alpha}} }
         } &
\tprod_{\alpha\in\bb{\ol{w}},\,j\in\bb{w_{\alpha}}}A_{{w_{\alpha}}_{j}}
=
A^{\nat}_{\cncat \ol{w}},
}
$$
where $\pr_{\alpha}\colon A_{\ol{w}}\mor A_{w_{\alpha}}$ and
$\pr_{\alpha_{j}}\colon A_{w_{\alpha}} \mor
A_{{w_{\alpha}}_{j}}$ are the canonical projections. To simplify the
notation we will write $\iota^{A}$ instead of $(\iota_{S})_{A}$.
\end{proof}

\begin{corollary}
Let $\varphi\colon S\mor \fmon{T}$ and $\psi\colon T\mor \fmon{U}$ be
mappings.  Then, for every $T$-sorted set $B$ and $U$-sorted set $C$,
we have that
\begin{enumerate}
\item $((B^{\nat_{T}})^{\nat_{\fmon{T}}})_{\fmon{\varphi}}$, denoted
      by ${B_{\fmon{\varphi}}}$, and
      $(B^{\nat_{T}})_{\ext{\varphi}}$, denoted by $B_{\ext{\varphi}}$,
      are isomorphic $\fmon{S}$-sorted sets.

\item $(((C^{\nat_{U}})_{\psi})^{\nat_{T}})_{\varphi}$, denoted
      by ${C_{\psi}}_{\varphi}$, and
      $(C^{\nat_{U}})_{\ext{\psi}\comp\varphi}$, denoted by
      $C_{\ext{\psi}\comp\varphi}$, are isomorphic $S$-sorted sets.

\item There exists an isomorphism $\kappa_{\varphi}^{B}\colon
      \mathrm{BOp}_{T}(B)_{\ext{\varphi}\bprod\ext{\varphi}} \mor
      \mathrm{BOp}_{S}(B_{\varphi})$, where, to simplify, we have written
      $B_{\varphi}$ instead of $(B^{\nat_{T}})_{\varphi}$.
\end{enumerate}

\end{corollary}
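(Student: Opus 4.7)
The plan is to derive all three isomorphisms from the natural isomorphism $\iota_{T}\colon (\farg)^{\nat_{\fmon{T}}}\comp (\farg)^{\nat_{T}}\iso \Delta_{\cncat_{T}}\comp (\farg)^{\nat_{T}}$ of Proposition~\ref{isonatcncat}, together with the naturality of the expansion transformation $(\farg)^{\nat}$ stated in its second part, by using the key identity $\ext{\varphi}=\cncat_{T}\comp \fmon{\varphi}$, which follows from the definitions of $\ext{\varphi}$ and $\fmon{\varphi}$.

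For (1), I would evaluate $\iota_{T}$ at the $T$-sorted set $B$, obtaining an isomorphism $(B^{\nat_{T}})^{\nat_{\fmon{T}}}\iso (B^{\nat_{T}})_{\cncat_{T}}$ in $\mathbf{Set}^{\ffmon{T}}$, and then apply the functor $\Delta_{\fmon{\varphi}}$ to both sides. The right-hand side becomes $((B^{\nat_{T}})_{\cncat_{T}})_{\fmon{\varphi}}=(B^{\nat_{T}})_{\cncat_{T}\comp\fmon{\varphi}}=(B^{\nat_{T}})_{\ext{\varphi}}$, while the left-hand side is, by definition, $((B^{\nat_{T}})^{\nat_{\fmon{T}}})_{\fmon{\varphi}}$, which gives the desired $\fmon{S}$-sorted isomorphism.

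For (2), I would first invoke the naturality square of $(\farg)^{\nat}$ at the mapping $\psi\colon T\mor \fmon{U}$ applied to the $U$-sorted set $C^{\nat_{U}}$, which yields the equality $((C^{\nat_{U}})_{\psi})^{\nat_{T}}=((C^{\nat_{U}})^{\nat_{\fmon{U}}})_{\fmon{\psi}}$. Pulling this back along $\varphi$ and then applying the same $\iota$-argument as in part (1) (now for $\iota_{U}$ evaluated at $C$, and pulling back along $\fmon{\psi}\comp \varphi\colon S\mor \ffmon{U}$) produces successively the chain of isomorphisms $((C^{\nat_{U}})_{\psi})^{\nat_{T}})_{\varphi}\iso (C^{\nat_{U}})_{\cncat_{U}\comp\fmon{\psi}\comp\varphi}=(C^{\nat_{U}})_{\ext{\psi}\comp\varphi}$, again using $\ext{\psi}=\cncat_{U}\comp \fmon{\psi}$.

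For (3), the heart of the matter is the observation that, for any $w'\in \fmon{S}$, the word $\ext{\varphi}(w')$ is precisely the concatenation $\varphi(w'_{0})\cncat\cdots\cncat\varphi(w'_{|w'|-1})$, so that $B_{\ext{\varphi}(w')}=\prod_{i\in |w'|}\prod_{j\in |\varphi(w'_{i})|}B_{\varphi(w'_{i})_{j}}$, which by reassociation of products is canonically isomorphic to $\prod_{i\in |w'|}B_{\varphi(w'_{i})}=(B_{\varphi})_{w'}$; this is, in fact, exactly the component of $\iota^{B^{\nat_{T}}}$ transported through $\fmon{\varphi}$ and is therefore already supplied by part (1). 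Taking hom-sets and bundling these component isomorphisms together coordinate-wise over $\fmon{S}\bprod \fmon{S}$ then yields the required $\kappa_{\varphi}^{B}\colon \mathrm{BOp}_{T}(B)_{\ext{\varphi}\bprod\ext{\varphi}}\mor \mathrm{BOp}_{S}(B_{\varphi})$. The only mild obstacle is keeping the notational bookkeeping transparent—i.e., making sure the universally introduced reassociation isomorphism is indeed the image under $\Delta_{\fmon{\varphi}}$ of $\iota^{B^{\nat_{T}}}$—but once the naturality square and the identity $\ext{\varphi}=\cncat_{T}\comp \fmon{\varphi}$ are in hand, each item of the corollary reduces to a single diagrammatic application of these tools.
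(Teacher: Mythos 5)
Your proposal is correct and follows essentially the same route as the paper: all three items are obtained from the natural isomorphism $\iota$ of the proposition on expansions, whiskered/pulled back along $\fmon{\varphi}$ (resp.\ $\Delta_{\varphi}\comp\Delta_{\fmon{\varphi}\mathpunct{}}$ in the second item) using the identity $\ext{\varphi}=\cncat_{T}\comp\fmon{\varphi}$, and item (3) is exactly conjugation of hom-sets by the component isomorphisms $\iota^{B}_{\fmon{\varphi}(w)}$ from item (1), which is how the paper defines $(\kappa^{B}_{\varphi})_{w,u}$.
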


\begin{proof}
(1) The isomorphism is $\iota_{\fmon{\varphi}}^{B}= (
\iota_{\fmon{\varphi}(w)}^{B}\colon B_{\fmon{\varphi}(w)} \mor
  B_{\ext{\varphi}(w)}
)_{w\in\fmon{S}}
$,
obtained from the natural isomorphism of the following diagram%
$$
\xymatrix{
\mathbf{Set}^{T}
  \ar@/^6ex/[rr]^{(\farg)^{\nat_{\fmon{T}}}\comp(\farg)^{\nat_{T}}}|{}="i0"
  \ar[r]_{(\farg)^{\nat_{T}}} &
\mathbf{Set}^{\fmon{T}}\xyn{i1}
  \ar[r]^{\Delta_{\cncat_{T}}}
  \ar@/_6ex/[rr]_{\Delta_{\ext{\varphi}}}|{}="y" &
\mathbf{Set}^{\ffmon{T}}\xyn{x}
  \ar[r]^{\Delta_{\fmon{\varphi}}} &
\mathbf{Set}^{\fmon{S}}
\ar @{} "i0";"i1"|{\dir{=>}}^{\,\iota}
\ar @{} "x";"y"|{\dir{ = }}
}
$$

(2) The isomorphism is $ \iota_{\fmon{\psi}\comp\varphi}^{C}= (
\iota_{\fmon{\psi}\comp\varphi(s)}^{C}\colon
C_{\fmon{\psi}\comp\varphi(s)} \mor
  C_{\ext{\psi}\comp\varphi(s)}
)_{s\in S}
$,
obtained from the natural isomorphism of the following diagram%
$$
\xymatrix@C=40pt{
\mathbf{Set}^{U}
  \ar@/^6ex/[rr]^{(\farg)^{\nat_{\fmon{U}}}\comp(\farg)^{\nat_{U}}}|{}="i0"
  \ar[r]_{(\farg)^{\nat_{U}}} &
\mathbf{Set}^{\fmon{U}}\xyn{i1}
  \ar[r]^{\Delta_{\cncat_{U}}}
  \ar@/_6ex/[rr]_{\Delta_{\ext{\psi}}}|{}="y" &
\mathbf{Set}^{{\ffmon{U}}}\xyn{x}
  \ar[r]^{\Delta_{\fmon{\psi}}} &
\mathbf{Set}^{\fmon{T}}
  \ar[r]^{\Delta_{\varphi}} &
\mathbf{Set}^{S}
\ar @{} "i0";"i1"|{\dir{=>}}^{\,\iota}
\ar @{} "x";"y"|{\dir{ = }}
}
$$

(3) It is enough to define, for two words $w,u\in \fmon{S}$,
$(\kappa_{\varphi}^{B})_{w,u}$, i.e., the component at $(w,u)$ of
$\kappa_{\varphi}^{B}$, as the isomorphism from
$\mathrm{Hom}(B_{\ext{\varphi}(w)},B_{\ext{\varphi}(u)})$ to
$\mathrm{Hom}(B_{\fmon{\varphi}(w)},B_{\fmon{\varphi}(u)})$ which sends
a mapping $h\colon B_{\ext{\varphi}(w)}\mor B_{\ext{\varphi}(u)}$ to
the composite mapping
$$
\xymatrix{
{B_{\fmon{\varphi}(w)}}
  \ar[r]^{ \iota_{\fmon{\varphi}(w)}^{B}} &
B_{\ext{\varphi}(w)}
  \ar[r]^{h} &
B_{\ext{\varphi}(u)}
  \ar[r]^{(\iota_{\fmon{\varphi}(u)}^{B})^{-1}} &
{B_{\fmon{\varphi}(u)}},
}
$$
where, we recall, $B_{\fmon{\varphi}(w)} = \textstyle\prod_{i\in
\bb{w}}B_{\varphi(w_{i})}$, and $B_{\fmon{\varphi}(u)} =
\textstyle\prod_{j\in \bb{u}}B_{\varphi(u_{j})}$.
\end{proof}

Once defined the above auxiliary functors and natural transformations
we prove in the following proposition that the polyderivors between
signatures determine functors, in the opposite direction, from the
category of algebras associated to the target signature to the
category of algebras associated to the source signature. These
functors will be the components of the morphism mapping of the
contravariant pseudo-functor $\Alg_{\mathfrak{pd}}$ from
$\mathbf{Sig}_{\mathfrak{pd}}$ to $\mathbf{Cat}$.

\begin{proposition}
Let $\mathbf{d}\colon \mathbf{\Sigma}\mor\mathbf{\Lambda}$ be a
morphism in $\mathbf{Sig}_{\mathfrak{pd}}$.  Then there exists a
functor $\Alg_{\mathfrak{pd}}(\mathbf{d}) =
\mathbf{d}^{\ast}_{\mathfrak{pd}}$ from
$\mathbf{Alg}(\mathbf{\Lambda})$ to $\mathbf{Alg}(\mathbf{\Sigma})$
defined as follows
\begin{enumerate}
\item $\mathbf{d}^{\ast}_{\mathfrak{pd}}$ assigns to a
      $\mathbf{\Lambda}$-algebra $\mathbf{B} = (B,G)$ the
      $\mathbf{\Sigma}$-algebra
      $\mathbf{d}^{\ast}_{\mathfrak{pd}}(\mathbf{B}) =
      (B_{\varphi},G^{\mathbf{d}})$, where $G^{\mathbf{d}}$ is $
      \kappa_{\varphi}^{B}\comp \ext{G}_{\ext{\varphi}\bprod
      \ext{\varphi}}\comp d $, obtained from
      $$
      \begin{aligned}
      \xymatrix@C=40pt@R=30pt{
      \coprod_{1\bprod\between_{T}}\Lambda
      \ar[r]^-{\eta_{\coprod_{1\bprod\between_{T}}\Lambda}^{\Ben_{T}}}
      \ar[rd]_-{G} &
      \mathrm{BTer}_{T}(\Lambda)
      \ar[d]^{\ext{G}}\\
      & \mathrm{BOp}_{T}(B)
      }
      \end{aligned}
      \quad \text{as} \quad \,\,
      \begin{aligned}
      \xymatrix@C=40pt@R=30pt{
      \mathrm{BTer}_{T}(\Lambda)_{\ext{\varphi}\bprod \ext{\varphi}}
      \ar[d]_{\ext{G}_{\ext{\varphi}\bprod \ext{\varphi}}} &
      \coprod_{1\bprod\between_{S}}\Sigma
      \ar[l]_-{d}\\
      \mathrm{BOp}_{T}(B)_{\ext{\varphi}\bprod \ext{\varphi}}
      \ar[r]_{\kappa_{\varphi}^{B}} &
      \mathrm{BOp}_{S}(B_{\varphi})
      }
      \end{aligned}
      $$

\item $\mathbf{d}^{\ast}_{\mathfrak{pd}}$ assigns to a
      $\mathbf{\Lambda}$-homomorphism $f$ from $\mathbf{B}$ to
      $\mathbf{B}'$ the $\mathbf{\Sigma}$-homo\-morphism
      $\mathbf{d}^{\ast}_{\mathfrak{pd}}(f) = f_{\varphi}$ from
      $\mathbf{d}^{\ast}_{\mathfrak{pd}}(\mathbf{B})$ to
      $\mathbf{d}^{\ast}_{\mathfrak{pd}}(\mathbf{B}')$.
\end{enumerate}
\end{proposition}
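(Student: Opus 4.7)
The plan is to verify, in succession, that (i) the object assignment $\mathbf{B}\mapsto (B_{\varphi},G^{\mathbf{d}})$ does produce a genuine $\mathbf{\Sigma}$-algebra, (ii) the morphism assignment $f\mapsto f_{\varphi}$ produces a $\mathbf{\Sigma}$-homomorphism, and (iii) the resulting assignment is functorial in $f$. Throughout, we rely on the equivalences and formal calculations already set up in this section: the isomorphism $\mathbf{BTer}_{T}(\Lambda)\iso\mathbf{T}_{\Ben_{T}}(\coprod_{1\bprod\between_{T}}\Lambda)$ (Corollary~\ref{Beniso}), which lets us extend $G$ canonically to a B\'enabou algebra homomorphism $\ext{G}\colon\mathbf{BTer}_{T}(\Lambda)\mor \mathbf{BOp}_{T}(B)$, and the isomorphism $\kappa_{\varphi}^{B}\colon \mathbf{BOp}_{T}(B)_{\ext{\varphi}\bprod\ext{\varphi}}\mor\mathbf{BOp}_{S}(B_{\varphi})$ produced just above.

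For step (i) I would first type-check: $d$ viewed as an $\fmon{S}\bprod\fmon{S}$-sorted mapping lands in $\mathrm{BTer}_{T}(\Lambda)_{\ext{\varphi}\bprod\ext{\varphi}}$; applying $\ext{G}_{\ext{\varphi}\bprod\ext{\varphi}}$ lands in $\mathrm{BOp}_{T}(B)_{\ext{\varphi}\bprod\ext{\varphi}}$; and $\kappa_{\varphi}^{B}$ produces an element of $\mathrm{BOp}_{S}(B_{\varphi})$. Restricting to the single-letter coarities $(s)$ for $s\in S$ (which is the image under the adjunction $\tcoprod_{1\bprod\between_{S}}\ladj \Delta_{1\bprod\between_{S}}$ of the original $d$) we obtain exactly an $\fmon{S}\bprod S$-sorted mapping $\Sigma\mor \mathrm{HOp}_{S}(B_{\varphi})$, i.e.~a structure of $\mathbf{\Sigma}$-algebra on $B_{\varphi}$.

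Step (ii) is the main obstacle. Given a $\mathbf{\Lambda}$-homomorphism $f\colon \mathbf{B}\mor \mathbf{B}'$, for every $\sigma\colon w\mor s$ in $\Sigma$ I must verify that
\[
(f_{\varphi})_{s}\comp G^{\mathbf{d}}(\sigma)= (G')^{\mathbf{d}}(\sigma)\comp (f_{\varphi})_{w}.
\]
The strategy is to reduce this to the fact, recalled in the second section, that $\mathbf{\Lambda}$-homomorphisms commute with term operations. More precisely, since $d(\sigma)\in \mathrm{T}_{\mathbf{\Lambda}}(\vs{\ext{\varphi}(w)})_{\varphi(s)}$, one may view $G^{\mathbf{d}}(\sigma)$ and $(G')^{\mathbf{d}}(\sigma)$ as (tuples of) term realizations of the components of $d(\sigma)$ on $\mathbf{B}$ and $\mathbf{B}'$ respectively; the identity $u_{s}\comp P^{\mathbf{A}}=P^{\mathbf{B}}\comp u_{\vs{w}}$ for term operations, applied coordinate-wise, produces the required square after we conjugate by the components of the isomorphism $\kappa_{\varphi}^{B}$ (which, being natural in $B$ in the appropriate sense, commutes with $f_{\varphi}$). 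Unfolding the definitions, the commuting square expresses exactly that $\ext{G}$ and $\ext{G'}$ intertwine with the precomposition mapping induced by $f$ on the B\'enabou algebras $\mathbf{BOp}_{T}(B)$ and $\mathbf{BOp}_{T}(B')$, which is the content of $f$ being a $\mathbf{\Lambda}$-homomorphism.

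Step (iii) is immediate: $\mathbf{d}^{\ast}_{\mathfrak{pd}}(f)=f_{\varphi}$ is the $S$-sorted family obtained from $f$ by relabelling via $\varphi$ and forming finite products coordinate-wise, so $\mathrm{id}_{\mathbf{B}}$ is sent to $\mathrm{id}_{B_{\varphi}}$ and $\mathbf{d}^{\ast}_{\mathfrak{pd}}(g\comp f)=(g\comp f)_{\varphi}=g_{\varphi}\comp f_{\varphi}=\mathbf{d}^{\ast}_{\mathfrak{pd}}(g)\comp \mathbf{d}^{\ast}_{\mathfrak{pd}}(f)$. Combined with (i) and (ii), this yields the desired functor $\mathbf{d}^{\ast}_{\mathfrak{pd}}\colon \mathbf{Alg}(\mathbf{\Lambda})\mor\mathbf{Alg}(\mathbf{\Sigma})$.
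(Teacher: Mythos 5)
Your proposal is correct and follows essentially the same route as the paper's proof: the object part is a type-check, the homomorphism condition is reduced to the fact that $\mathbf{\Lambda}$-homomorphisms commute with the term operations realizing $d(\sigma)$, conjugated by the naturality of the comparison isomorphism ($\iota$, hence $\kappa_{\varphi}^{B}$), and functoriality follows from the coordinate-wise product decomposition of $f_{\varphi}$. The paper organizes the middle step as a commuting cube whose back, top, bottom, and lateral faces force the front face, but the underlying ingredients are the ones you identify.
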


\begin{proof}
It is obvious that $G^{\mathbf{d}}$, as defined, is an algebraic
structure on $B_{\varphi}$.

Following this we prove that if $f$ is a
$\mathbf{\Lambda}$-homomorphism from $\mathbf{B}$ to $\mathbf{B}'$,
then $f_{\varphi}$ is a $\mathbf{\Sigma}$-homomorphism from
$\mathbf{d}^{\ast}_{\mathfrak{pd}}(\mathbf{B})$ to
$\mathbf{d}^{\ast}_{\mathfrak{pd}}(\mathbf{B}')$.

Let $\sigma\colon w\mor s$ be an  operation in $\Sigma$.  Then
in the following diagram
$$
 \xymatrix@C=40pt@R=20pt{
 &
{B_{\ext{\varphi}(w)}}
  \ar[rr]^{(\ext{G}_{\ext{\varphi}\bprod\ext{\varphi}}\comp d)_{w,s}(\sigma)}
  \ar[dd]_(.65){{f_{\ext{\varphi}(w)}}} & &
{B_{\ext{\varphi}(s)}}
  \ar[rd]^(.45){(\iota^{B}_{\fmon{\varphi}(s)})^{-1}}
  \ar[dd]^(.65){{f_{\ext{\varphi}(s)}}} \\
{B_{\varphi}}_{w}
  \ar[ru]^{\iota^{B}_{\fmon{\varphi}(w)}}
  \ar[rrrr]^{G^{\mathbf{d}}_{\sigma}}
  \ar[dd]_{{f_{\varphi}}_{w}} & & & &
{B_{\varphi}}_{s}
  \ar[dd]^{{f_{\varphi}}_{s}} \\
 &
{B'_{\ext{\varphi}(w)}}
  \ar[rr]_{(\ext{{G'}}_{\ext{\varphi}\bprod\ext{\varphi}}\comp d)_{w,s}(\sigma)} & &
{B'_{\ext{\varphi}(s)}}
  \ar[rd]^(.45){(\iota^{B'}_{\fmon{\varphi}(s)})^{-1}}  \\
{B'_{\varphi}}_{w}
  \ar[ru]^{\iota^{{B'}}_{\fmon{\varphi}(w)}}
  \ar[rrrr]_{{G'}^{\mathbf{d}}_{\sigma}} & & & &
{B'_{\varphi}}_{s} }
$$
the back face commutes because $f$ is a morphism, the top and bottom
faces commute by definition and the left and right faces commute
because $\iota$ is natural.  Hence, the front face commutes, but this
means that $f_{\varphi}$ is a $\mathbf{\Sigma}$-homo\-mor\-phism from
$\mathbf{d}^{\ast}_{\mathfrak{pd}}(\mathbf{B})$ to
$\mathbf{d}^{\ast}_{\mathfrak{pd}}(\mathbf{B}')$.

Finally we prove that $\mathbf{d}^{\ast}_{\mathfrak{pd}}$ is a
functor.  We restrict ourselves to verify that
$\mathbf{d}^{\ast}_{\mathfrak{pd}}$ preserves compositions.  Let
$f\colon \mathbf{B}\mor\mathbf{B}'$ and $g\colon
\mathbf{B}'\mor\mathbf{B}''$ be two
$\mathbf{\Lambda}$-homo\-mor\-phisms and $s\in S$, then we have that
\begin{align*}
    {(g\comp f)_{\varphi}}_{s} &=
    (g\comp f)_{\varphi(s)} \\
    &= (g\comp f)_{\varphi(s)_{0}}\bprod\cdots\bprod
         (g\comp f)_{\varphi(s)_{\bb{\varphi(s)}-1}} \\
    &= (g_{\varphi(s)_{0}}\comp f_{\varphi(s)_{0}}) \bprod\cdots\bprod
       (g_{\varphi(s)_{\bb{\varphi(s)}-1}} \comp f_{\varphi(s)_{\bb{\varphi(s)}-1}}) \\
    &= (g_{\varphi(s)_{0}}\bprod\cdots\bprod g_{\varphi(s)_{\bb{\varphi(s)}-1}})
       \comp
       (f_{\varphi(s)_{0}}\bprod\cdots\bprod f_{\varphi(s)_{\bb{\varphi(s)}-1}}) \\
    &= g_{\varphi(s)} \comp f_{\varphi(s)}\\
    &= {g_{\varphi}}_{s}\comp {f_{\varphi}}_{s}.
    \qedhere
\end{align*}
\end{proof}

Given a polyderivor $\mathbf{d}\colon
\mathbf{\Sigma}\mor\mathbf{\Lambda}$, a $\mathbf{\Lambda}$-algebra
$\mathbf{B}=(B,G)$ and an operation $\sigma\in\Sigma_{w,s}$, if we
agree that $w$ is the word $(s_{i})_{i\in m}$, that, for every $i\in
m$, $\varphi(s_{i})$ is the word $(t_{i,j})_{j\in n_{i}}$, and that
$\varphi(s)$ is the word $(t_{k})_{k\in p}$,
then we have that $\ext{\varphi}(w)$ is the word
$$
(t_{0,0},\ldots,t_{0,n_{0}-1},\ldots, t_{m-1,0},
\ldots, t_{m-1,n_{m-1}-1})
$$
and that $d(\sigma)\colon \ext{\varphi}(w)\mor\varphi(s)$ is a family
of terms $P = (P_{0},\ldots,P_{p-1})$ such that, for every $k\in p$,
$P_{k}\colon \ext{\varphi}(w)\mor t_{k}$.  Thus the realization of
$d(\sigma)$ in $\mathbf{B}$,
$\ext{G}_{\ext{\varphi}\bprod\ext{\varphi}}(P)$, is the term operation
$P^{\mathbf{B}}=\tp{P^{\mathbf{B}}_{0},\ldots,P^{\mathbf{B}}_{p-1}}$ of type %
$$
B_{t_{0,0}}\bprod\cdots\bprod B_{t_{0,n_{0}-1}}\bprod\cdots\bprod B_{t_{m-1,0}}
  \bprod\cdots\bprod B_{t_{m-1,n_{m-1}-1}}
\mor  B_{t_{0}}\bprod\cdots\bprod B_{t_{p-1}}
$$
that by composition with the isomorphism from ${B_{\varphi}}_{w}$ to
$B_{\ext{\varphi}(w)}$ provides the operation
$G^{\mathbf{d}}(\sigma)$
$$
\xymatrix@R=25pt{
{(B_{t_{0,0}}\bprod\cdots\bprod B_{t_{0,n_{0}-1}})
\bprod\cdots\bprod
(B_{t_{m-1,0}}\bprod\cdots\bprod B_{t_{m-1,n_{m-1}-1}})}
\ar[d]^{\iota^{B}_{(\varphi(s_{0}),\ldots,\varphi(s_{m-1}))}}
\\
{B_{t_{0,0}}\bprod\cdots\bprod B_{t_{0,n_{0}-1}}\bprod\cdots
\bprod B_{t_{m-1,0}}\bprod\cdots\bprod B_{t_{m-1,{n_{m-1}}-1}}}
\ar[d]^{P^{\mathbf{B}}} \\
B_{t_{0}}\bprod\cdots\bprod B_{t_{p-1}}
}
$$

It is now when we can prove that the contravariant functor $\Alg$ from
$\mathbf{Sig}$ to $\mathbf{Cat}$, defined in the second section, can
be lifted to a contravariant pseudo-functor $\Alg_{\mathfrak{pd}}$ from
$\mathbf{Sig}_{\mathfrak{pd}}$ to $\mathbf{Cat}$, and, as was the case
there, by applying to this contravariant pseudo-functor the
construction of Ehresmann-Grothendieck, we get the new category
$\mathbf{Alg}_{\mathfrak{pd}}$.

\begin{proposition}\label{defPseudofunctorAlgfuj}
There exists a contravariant pseudo-functor $\Alg_{\mathfrak{pd}}$ from
$\mathbf{Sig}_{\mathfrak{pd}}$ to the $2$-category $\mathbf{Cat}$ given by the
following data
\begin{enumerate}
\item The object mapping of $\Alg_{\mathfrak{pd}}$ is that which sends a
      signature $\mathbf{\Sigma}$ to
      $\Alg_{\mathfrak{pd}}(\mathbf{\Sigma}) = \mathbf{Alg}(\mathbf{\Sigma})$.

\item The morphism mapping of $\Alg_{\mathfrak{pd}}$ is that which sends a
      polyderivor $\mathbf{d}$ from $\mathbf{\Sigma}$ to $\mathbf{\Lambda}$ to
      $\mathbf{d}^{\ast}_{\mathfrak{pd}}\colon \mathbf{Alg}(\mathbf{\Lambda})\mor
      \mathbf{Alg}(\mathbf{\Sigma})$.

\item For every $\mathbf{d}\colon \mathbf{\Sigma}\mor \mathbf{\Lambda}$ and
      $\mathbf{e}\colon \mathbf{\Lambda}\mor \mathbf{\Omega}$, the natural
      isomorphism $\gamma^{\mathbf{d},\mathbf{e}}$ from
      $\mathbf{e}^{\ast}_{\mathfrak{pd}}\comp \mathbf{d}^{\ast}_{\mathfrak{pd}}$
      to $(\mathbf{e}\comp\mathbf{d})^{\ast}_{\mathfrak{pd}}$ is that which is
      defined, for every $\mathbf{\Omega}$-algebra $\mathbf{C}$, as the
      isomorphism $\iota_{\fmon{\psi}\comp\varphi}^{C}$.

\item For every $\mathbf{\Sigma}$, the natural isomorphism
      $\nu^{\mathbf{\Sigma}}$ from $\Id_{\mathbf{Alg}(\mathbf{\Sigma})}$ to
      $(\between_{S},\eta^{\Ben_{S}}_{\Sigma})^{\ast}_{\mathfrak{pd}}$
      is that which is defined, for every $\mathbf{\Sigma}$-algebra
      $\mathbf{A}$, as the canonical isomorphism $\delta_{S}^{A}\colon A\mor
      (A_{(s)})_{s\in S}$.
\end{enumerate}
\end{proposition}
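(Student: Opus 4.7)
The plan is to verify, in turn, the four ingredients required for a contravariant pseudo-functor: that the stated morphism mapping is functorial (already established in the preceding proposition), that the components $\gamma^{\mathbf{d},\mathbf{e}}_{\mathbf{C}} = \iota^{C}_{\fmon{\psi}\comp\varphi}$ and $\nu^{\mathbf{\Sigma}}_{\mathbf{A}} = \delta^{A}_{S}$ are in each case $\mathbf{\Sigma}$-homomorphisms, that these components are natural in $\mathbf{C}$ and $\mathbf{A}$ respectively, and finally that the coherence axioms hold.

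First I would unfold the two $\mathbf{\Sigma}$-algebras connected by $\gamma^{\mathbf{d},\mathbf{e}}_{\mathbf{C}}$. Given $\mathbf{C} = (C,H)$ an $\mathbf{\Omega}$-algebra, the underlying $S$-sorted set of $\mathbf{e}^{\ast}_{\mathfrak{pd}}(\mathbf{d}^{\ast}_{\mathfrak{pd}}(\mathbf{C}))$ is $((C^{\nat_U})_{\psi})^{\nat_T}_{\varphi}$, while that of $(\mathbf{e}\comp\mathbf{d})^{\ast}_{\mathfrak{pd}}(\mathbf{C})$ is $(C^{\nat_U})_{\ext{\psi}\comp\varphi}$; the corollary after Proposition~\ref{isonatcncat} supplies the isomorphism between them. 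The substance of step two is to check that this isomorphism intertwines the two algebraic structures. Here I would use that the composition of polyderivors is defined through $\ext{e}_{\ext{\varphi}\bprod\ext{\varphi}}\comp d$ together with the fact (Proposition~\ref{derivorFuji}) that the functor induced by $\varphi\colon S\mor\fmon{T}$ between the categories of B�nabou algebras is induced by a specification morphism, hence preserves all substitution operators. The comparison isomorphism $\kappa^{C}_{\psi\comp\varphi}$ can be reassembled out of $\kappa^{C_\psi}_{\varphi}$, $\kappa^{C}_{\psi}$ and the canonical isomorphisms $\iota$, and it is essentially this reassembly that turns $G^{\mathbf{e}\comp\mathbf{d}}$ into the composite structure $(H^{\mathbf{e}})^{\mathbf{d}}$ after transport along $\iota^{C}_{\fmon{\psi}\comp\varphi}$. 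For $\nu^{\mathbf{\Sigma}}$ the analogous check is easier, since $(\between_{S}, \eta^{\Ben_S}_\Sigma)$ reproduces the original structure modulo the canonical one-letter wrapping $A_s \cong A_{(s)}$ witnessed by $\delta^{A}_{S}$.

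Next I would establish naturality. For a $\mathbf{\Omega}$-homomorphism $f\colon \mathbf{C}\to\mathbf{C}'$ the equation
\[
\iota^{C'}_{\fmon{\psi}\comp\varphi}\comp \mathbf{d}^{\ast}_{\mathfrak{pd}}(\mathbf{e}^{\ast}_{\mathfrak{pd}}(f)) = (\mathbf{e}\comp\mathbf{d})^{\ast}_{\mathfrak{pd}}(f)\comp \iota^{C}_{\fmon{\psi}\comp\varphi}
\]
reduces sortwise to naturality of the product-of-projections bijections that compose $\iota$, which is part of the third clause of Proposition~\ref{isonatcncat}; the naturality of $\nu^{\mathbf{\Sigma}}$ is analogous for the single-letter expansion.

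The hard part will be the two coherence axioms. For a triple $\mathbf{d}\colon\mathbf{\Sigma}\mor\mathbf{\Lambda}$, $\mathbf{e}\colon\mathbf{\Lambda}\mor\mathbf{\Omega}$, $\mathbf{h}\colon\mathbf{\Omega}\mor\mathbf{\Xi}$ with first components $\varphi,\psi,\gamma$, the pentagonal axiom translates into the equality, for every $\mathbf{\Xi}$-algebra $\mathbf{D}$, of two $S$-sorted isomorphisms
\[
\iota^{D}_{\ext{(\ext{\gamma}\comp\psi)}\comp\varphi}\comp \mathbf{d}^{\ast}_{\mathfrak{pd}}(\iota^{D}_{\fmon{\gamma}\comp\psi}) \quad\text{versus}\quad \iota^{D}_{\fmon{\ext{\gamma}\comp\psi}\comp\varphi}\comp\iota^{{D_{\ext{\gamma}}}}_{\fmon{\psi}\comp\varphi}
\]
between $((D^{\nat\nat\nat})_{\gamma})_{\psi})_{\varphi}$ and the iterated-expansion object $D^{\nat}_{\ext{\ext{\gamma}\comp\psi}\comp\varphi}$. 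I would reduce this to a purely set-theoretic identity by expressing both sides as the canonical product-of-projections bijection into a common cartesian product of basic fibres of $D$, invoking the associativity of $\cncat$ (i.e.\ that $\mathbb{T}_{\star}$ is a monad) to identify indexing sets. The two unit axioms are similar but shorter, relying on the left- and right-unit laws for $\cncat$ and on the identity law of the free B�nabou algebra structure $\eta^{\Ben_{T}}_{\Lambda}$. Once the set-theoretic identities are in hand, the algebraic compatibility is automatic because every arrow involved has already been shown to be a $\mathbf{\Sigma}$-homomorphism, and in $\mathbf{Alg}(\mathbf{\Sigma})$ two homomorphisms agreeing as $S$-sorted mappings coincide.
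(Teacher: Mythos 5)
Your proposal is correct and follows essentially the same route as the paper: the heart of the matter is the verification that $\iota^{C}_{\fmon{\psi}\comp\varphi}$ is a $\mathbf{\Sigma}$-homomorphism between $({C_{\psi}}_{\varphi},{H^{\mathbf{e}}}^{\mathbf{d}})$ and $(C_{\ext{\psi}\comp\varphi},H^{\mathbf{e}\comp\mathbf{d}})$, carried out exactly as you describe by unfolding both structures through $\ext{e}_{\ext{\varphi}\bprod\ext{\varphi}}\comp d$ and reassembling $\kappa^{C}_{\ext{\psi}\comp\varphi}$ from $\kappa^{C_{\psi}}_{\varphi}$, $(\kappa^{C}_{\psi})_{\ext{\varphi}\bprod\ext{\varphi}}$ and the canonical isomorphism $\iota$, then concluding from bijectivity. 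The paper's proof in fact stops there, so your additional sketches of naturality and of the coherence axioms (reduced to set-theoretic identities among canonical product isomorphisms via the monad laws of $\mathbb{T}_{\star}$, with algebraic compatibility automatic once the components are known to be homomorphisms) supply details the paper leaves tacit, and they are sound.
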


\begin{proof}
Given $\mathbf{d}\colon \mathbf{\Sigma}\mor\mathbf{\Lambda}$ and
$\mathbf{e}\colon \mathbf{\Lambda}\mor \mathbf{\Omega}$,
we prove that%
$$
\iota_{\fmon{\psi}\comp\varphi}^{C}\colon
({C_{\psi}}_{\varphi}, {H^{\mathbf{e}}}^{{}^{\mathbf{d}}})\mor
(C_{\ext{\psi}\comp\varphi},H^{(\ext{\psi}\comp\varphi,
\ext{e}_{\ext{\psi}\bprod\ext{\psi}}\comp d)})
$$
is an isomorphism of $\mathbf{\Sigma}$-algebras, and for this it is enough
to prove that it is a morphism, because
$\iota_{\fmon{\psi}\comp\varphi}^{C}$ is a bijection.

But the following diagram commutes
$$
\xymatrix@C=35pt@R=25pt{
& \Sigma
  \ar[dl]_{{H^{\mathbf{e}}}^{{}^{\mathbf{d}}}}
  \ar[dr]^{\,\,H^{\mathbf{e}\comp\mathbf{d}}} \\
  \mathrm{BOp}_{\fmon{S}}({C_{\psi}}_{\varphi})
  \ar[rr]_{\Op_{\Ben}(\iota^{C}_{\fmon{\psi}\comp\varphi})} &&
  \mathrm{BOp}_{S}(C_{\ext{\psi}\comp\varphi})
}
$$
where $\Op_{\Ben}(\iota^{C}_{\fmon{\psi}\comp\varphi})$ is the
isomorphism from $\mathrm{BOp}_{\fmon{S}}({C_{\psi}}_{\varphi})$ to
$\mathrm{BOp}_{S}(C_{\ext{\psi}\comp\varphi})$ induced by the isomorphism
$\iota^{C}_{\fmon{\psi}\comp\varphi}$, because, on the one hand, we
have that
\begin{align*}
  {H^{\mathbf{e}}}^{{}^{\mathbf{d}}} &=
  \kappa_{\varphi}^{C_{\psi}}
    \comp \ext{(H^{\mathbf{e}})}_{\ext{\varphi}\bprod\ext{\varphi}}
    \comp d \\
  &=
  \kappa_{\varphi}^{C_{\psi}}
    \comp \ext{
      (\kappa_{\psi}^{C}\comp
       \ext{H}_{\ext{\psi}\bprod\ext{\psi}}\comp
       e)
    }_{\ext{\varphi}\bprod\ext{\varphi}}
    \comp d \\
  &=
  \kappa_{\varphi}^{C_{\psi}}
    \comp
      (\kappa_{\psi}^{C}\comp
       \ext{H}_{\ext{\psi}\bprod\ext{\psi}}\comp
       \ext{e})
    _{\ext{\varphi}\bprod\ext{\varphi}}
    \comp d \\
  &=
  \kappa_{\varphi}^{C_{\psi}}
    \comp
       (\kappa_{\psi}^{C})_{\ext{\varphi}\bprod\ext{\varphi}}\comp
       (\ext{H}_{\ext{\psi}\bprod\ext{\psi}})_{\ext{\varphi}\bprod\ext{\varphi}}\comp
       \ext{e}_{\ext{\varphi}\bprod\ext{\varphi}}
    \comp d \\
  &=
  \kappa_{\varphi}^{C_{\psi}}
    \comp
       (\kappa_{\psi}^{C})_{\ext{\varphi}\bprod\ext{\varphi}}\comp
       \ext{H}_{\ext{(\ext{\psi}\comp\varphi)}\bprod\ext{(\ext{\psi}\comp\varphi)}}
       \comp\ext{e}_{\ext{\varphi}\bprod\ext{\varphi}}
    \comp d,
\intertext{on the other hand, that}
  H^{\mathbf{e}\comp\mathbf{d}} &=
  H^{(\ext{\psi}\comp\varphi,\ext{e}_{\ext{\varphi}\bprod\ext{\varphi}}\comp
  d)} \\
  &=
  \kappa_{\ext{\psi}\comp\varphi}^{C}\comp
  \ext{H}_{\ext{(\ext{\psi}\comp\varphi)}\bprod\ext{(\ext{\psi}\comp\varphi)}}
  \comp\ext{e}_{\ext{\varphi}\bprod\ext{\varphi}} \comp d,
\end{align*}
and, lastly, that the following diagram commutes%
$$
\xymatrix@C=35pt@R=25pt{
(\mathrm{BOp}_{U}(C)_{\ext{\psi}\bprod\ext{\psi}})_{\ext{\varphi}\bprod\ext{\varphi}}
\ar[d]_{(\kappa^{C}_{\psi})_{\ext{\varphi}\bprod\ext{\varphi}}}
\ar[ddr]^{\kappa^{C}_{\ext{\psi}\comp\varphi}} \\
\mathrm{BOp}_{T}(C_{\psi})_{\ext{\varphi}\bprod\ext{\varphi}}
\ar[d]_{\kappa^{C_{\psi}}_{\varphi}} \\
\mathrm{BOp}_{\fmon{S}}({C_{\psi}}_{\varphi})
\ar[r]_{\Op_{\Ben}(\iota^{C}_{\fmon{\psi}\comp\varphi})} &
\mathrm{BOp}_{S}(C_{\ext{\psi}\comp\varphi})
}
$$
\end{proof}

\begin{definition}
The category $\mathbf{Alg}_{\mathfrak{pd}}$ of
algebras and morphisms, obtained by applying the
Ehresmann-Grothendieck construction to the contravariant
pseudo-functor $\Alg_{\mathfrak{pd}}$, is $\mathbf{Alg}_{\mathfrak{pd}} =
\int^{\mathbf{Sig}_{\mathfrak{pd}}}\Alg_{\mathfrak{pd}}$.
\end{definition}

Therefore the category $\mathbf{Alg}_{\mathfrak{pd}}$ has as objects
the pairs $(\mathbf{\Sigma},\mathbf{A})$, with $\mathbf{\Sigma}$ a
signature and $\mathbf{A}$ a $\mathbf{\Sigma}$-algebra, and as
morphisms from $(\mathbf{\Sigma},\mathbf{A})$ to
$(\mathbf{\Lambda},\mathbf{B})$, the pairs $(\mathbf{d},h)$, with
$\mathbf{d}$ a polyderivor from $\mathbf{\Sigma}$ to $\mathbf{\Lambda}$
and $h$ a $\mathbf{\Sigma}$-homomorphism from $\mathbf{A}$ to
$\mathbf{d}^{\ast}_{\mathfrak{pd}}(\mathbf{B})$.  Hence, for every
$(w,s)\in \fmon{S}\times S$ and $\sigma\in \Sigma_{w,s}$ the following
diagram commutes
$$
\xymatrix@C=80pt@R=40pt{
A_{w}
  \ar[r]^-{h_{w}}
  \ar[d]_{F_{\sigma}} &
(\prod_{j\in n_{0}}B_{t_{0},j})\times\cdots\times
(\prod_{j\in n_{m-1}}B_{t_{m-1},j})
  \ar[d]^{G^{\mathbf{d}}(\sigma)}
  \\
A_{s}
  \ar[r]_{h_{s}} &
\prod_{k\in p}B_{t_{k}}
}
$$
where we have agreed that $w$ is the word $(s_{i})_{i\in m}$, that,
for every $i\in m$, $\varphi(s_{i})$ is the word $(t_{i,j})_{j\in
n_{i}}$, and that $\varphi(s)$ is the word $(t_{k})_{k\in p}$.


\begin{example}
Let $\mathbf{\Sigma}$ be a signature, $p\in \mathbb{N}$, and
$\mathbf{d}= (\varphi,d)$ the polyderivor from $\mathbf{\Sigma}$ into
itself, where
\begin{enumerate}
\item $\varphi\colon S\mor \fmon{S}$ is the mapping which sends
      $s\in S$ to the word $\cncat_{\mu\in p}(s)$ and,

\item For $(w,s)\in \fmon{S}\times S$, $d_{w,s}$ is the mapping
      from $\Sigma_{w,s}$ to
      $\mathrm{T}_{\mathbf{\Sigma}}(\vs{\ext{\varphi}(w)})_{s}^{p}$
      which sends $\sigma\in\Sigma_{w,s}$ to
      $$
        (\sigma(v_{0}^{w_{0}}, v_{p}^{w_{1}},\ldots,
                v_{(\bb{w}-1)p}^{w_{\bb{w}-1}}),\ldots,
         \sigma(v_{p-1}^{w_{0}}, v_{2p-1}^{w_{1}},\ldots,
                v_{\bb{w}p-1}^{w_{\bb{w}-1}})),
      $$
      in $\mathrm{T}_{\mathbf{\Sigma}}(\vs{\ext{\varphi}(w)})_{s}^{p}$.
\end{enumerate}
Then, for the polyderivor $\mathbf{d}$ and two $\mathbf{\Sigma}$-algebras
$\mathbf{A}$ and $\mathbf{B}$, we have that
$(\mathbf{d},\left<h^{\mu}\right>_{\mu\in p})$, where, for every
$\mu\in p$, $h^{\mu} = (h^{\mu}_{s})_{s\in S}$ is a
$\mathbf{\Sigma}$-homomorphism from $\mathbf{A}$ to $\mathbf{B}$, is a
morphism from $(\mathbf{\Sigma},\mathbf{A})$ to
$(\mathbf{\Sigma},\mathbf{B})$, because
$\mathbf{d}^{\ast}_{\mathfrak{pd}}(\mathbf{B}) = \mathbf{B}^{p}$.

\end{example}

The following example of morphism between algebras (stated using the
terminology of Fujiwara) although redundant, since it is a particular
instance of the preceding one, is provided because, being refered to
single-sorted signatures and algebras, it is by far less troublesome
and more easily graspable.

\begin{example}
Let $\Sigma = (\Sigma_{n})_{n\in\mathbb{N}}$ be a single-sorted
signature, $\Phi = \{\,\varphi_{\mu}\mid \mu\in m\,\}$, and $P =
(P^{n})_{n\in \mathbb{N}}$ the family defined, for every natural
number $n\in \mathbb{N}$, as follows
$$
  P^{n}\nfunction
  {\Phi\times \Sigma_{n}}{\mathrm{T}_{\Sigma}(\Phi\times\vs{v_{n}})}
  {(\varphi_{\mu},\sigma)}
  {\sigma(\varphi_{\mu}(v_{0}),\ldots,\varphi_{\mu}(v_{n-1}))}
$$
Then, for the polyderivor $\mathbf{d}$ associated to $(\Phi,P)$ and two
$\Sigma$-algebras $\mathbf{A}$ and $\mathbf{B}$, we have
that $(\mathbf{d},\left<h_{\mu}\right>_{\mu\in p})$, where, for every
$\mu\in p$, $h_{\mu}$ is a $\Sigma$-homomorphism from
$\mathbf{A}$ to $\mathbf{B}$, is a morphism from
$(\Sigma,\mathbf{A})$ to $(\Sigma,\mathbf{B})$,
because $\mathbf{d}^{\ast}_{\mathfrak{pd}}(\mathbf{B}) =
\mathbf{B}^{m}$.
\end{example}

\begin{example}
Let $\Sigma = (\Sigma_{n})_{n\in\mathbb{N}}$ be a single-sorted
signature such that $\Sigma_{2} = \{\,+,-,\cdot\,\}$ and $\Sigma_{n} =
\vacio$, if $n\neq 2$, $\Phi = \{\,\varphi_{\mu,\nu}\mid (\mu,\nu)\in
2\times 2\,\}$, and $P = (P^{n})_{n\in \mathbb{N}}$ the family
defined, for  $n \neq 2$, as the unique mapping from $\vacio$ to
$\mathrm{T}_{\Sigma}(\Phi\times\vs{v_{n}})$, and, for $n = 2$, as follows
\begin{enumerate}
\item $P^{2}_{\varphi_{\mu,\nu},+} =
      \varphi_{\mu,\nu}(v_{0})+\varphi_{\mu,\nu}(v_{1})$.

\item $P^{2}_{\varphi_{\mu,\nu},-} =
      \varphi_{\mu,\nu}(v_{0})-\varphi_{\mu,\nu}(v_{1})$.

\item $P^{2}_{\varphi_{\mu,\nu},\cdot} =
      \sum_{\lambda=
      0}^{1}\varphi_{\mu,\lambda}(v_{0})\cdot\varphi_{\lambda,\nu}(v_{1})$.
\end{enumerate}
Then, for the polyderivor $\mathbf{d}$ associated to $(\Phi,P)$ and two
rings $\mathbf{A}$ and $\mathbf{B}$, we have
that $\left(\mathbf{d},\left(
\begin{smallmatrix}
f & \kappa_{0} \\
d & g
\end{smallmatrix}
\right) \right)$, where $f$ and $g$ are two ring homomorphisms from
$\mathbf{A}$ to $\mathbf{B}$, $\kappa_{0}$ is the mapping from $A$ to
$B$ that is constantly $0$, and $d$ an $(f,g)$-derivation from
$\mathbf{A}$ to $\mathbf{B}$, is a morphism from $(\Sigma,\mathbf{A})$
to $(\Sigma,\mathbf{B})$, because
$\mathbf{d}^{\ast}_{\mathfrak{pd}}(\mathbf{B}) = \mathbf{B}^{2\times
2}$, the matrix ring of degree $2$ over $\mathbf{B}$.
\end{example}

\begin{example}
Taking as set of sorts $S$ a set with two elements
$\{\,\mathsf{s},\mathsf{v}\,\}$, where the sort $\mathsf{s}$ is for
\lq\lq scalar\rq\rq and the sort $\mathsf{v}$ for \lq\lq vector\rq\rq,
and as $S$-sorted signature $\Sigma$ that one adequate for the concept
of vectorial space (with the scalar field variable), we have that for
a $\mathbf{K}$-vectorial space $\mathbf{V}$ and a
$\mathbf{K}'$-vectorial space $\mathbf{V}'$, if $f$ is a homomorphism
from the field $\mathbf{K}$ to the field $\mathbf{K}'$ and $g$ is an
$f$-semilineal morphism from the $\mathbf{K}$-vectorial space $\mathbf{V}$
to the $\mathbf{K}'$-vectorial space $\mathbf{V}'$, then
$(\mathbf{id}_{\mathbf{\Sigma}},(f,g))$ is a morphism from
$(\mathbf{\Sigma},(\mathbf{K},\mathbf{V}))$ to
$(\mathbf{\Sigma},(\mathbf{K}',\mathbf{V}'))$.

More examples like this one can be obtained, e.g., from the concept of
$\mathbf{R}$-module, for a ring $\mathbf{R}$, from that of
$\mathbf{G}$-set, for a group $\mathbf{G}$, from that of
$\mathbf{M}$-set, for a monoid $\mathbf{M}$, or from metric or
pseudo-metric spaces.
\end{example}

Additional examples related to computer sciences can be found
in~\cite{gtw76}.


The contravariant pseudo-functor $\Alg_{\mathfrak{pd}}$ is not only
useful to construct the category $\mathbf{Alg}_{\mathfrak{pd}}$.
Actually, as we prove in that which follows, it together with a
pseudo-functor $\mathrm{Ter}_{\mathfrak{pd}}$ from
$\mathbf{Sig}_{\mathfrak{pd}}$ to $\mathbf{Cat}$, and a
pseudo-extranatural transformation $(\mathrm{Tr},\theta)$ (from a
pseudo-functor on $\mathbf{Sig}^{\mathrm{op}}_{\mathfrak{pd}}\times
\mathbf{Sig}_{\mathfrak{pd}}$ to $\mathbf{Cat}$, induced by
$\Alg_{\mathfrak{pd}}$ and $\mathrm{Ter}_{\mathfrak{pd}}$, to the
functor, between the same categories, constantly $\mathbf{Set}$),
determine an institution $\mathfrak{Tm}_{\mathfrak{pd}} =
(\mathbf{Sig}_{\mathfrak{pd}},\mathrm{Alg}_{\mathfrak{pd}},
\mathrm{Ter}_{\mathfrak{pd}},(\mathrm{Tr},\theta))$ on $\mathbf{Set}$,
the so-called \emph{many-sorted term institution of Fujiwara}.

We define next some auxiliary functors and natural transformations
that we will use afterwards to prove, on the one hand, that there
exists a pseudo-functor $\mathrm{Ter}_{\mathfrak{pd}}$ from the
category $\mathbf{Sig}_{\mathfrak{pd}}$ to the $2$-category
$\mathbf{Cat}$, which generalizes the pseudo-functor $\mathrm{Ter}$
from the category $\mathbf{Sig}$ to the $2$-category $\mathbf{Cat}$,
and, on the other hand, that the category
$\mathbf{Alg}_{\mathfrak{pd}}$ has coproducts.

\begin{proposition}\label{adjuncionDaggerNatS}
Let $S$ be a set of sorts.  Then we have that
\begin{enumerate}
\item There exists a \emph{compression} functor $(\farg)^{\dagger_{S}}$
      from $\mathbf{Set}^{\fmon{S}}$ to $\mathbf{Set}^{S}$, left adjoint to the
      expansion functor $(\farg)^{\nat_{S}}$, defined, for every
      $\fmon{S}$-sorted set $C$ and $s\in S$, as follows
      $$
      C^{\dagger_{S}}_{s} =
      \bigcup\nolimits_{\substack{w\in \fmon{S}\And \\ w^{-1}[s]\neq
      \vacio}}(C_{w}\times \{w\}\times w^{-1}[s]),
      $$
      and, for every $\fmon{S}$-mapping $f\colon C\mor C'$, $s\in
      S$ and $(c,w,i)$ in $C^{\dagger_{S}}_{s}$, as follows
      $$
      f^{\dagger_{S}}_{s}(c,w,i) = (f_{w}(c),w,i).
      $$

\item From the contravariant functor
      $\mathrm{MSet}\comp \mathrm{T}_{\star}^{\opp}$, from
      $\mathbf{Set}$ to $\mathbf{Cat}$, to the contravariant functor
      $\mathrm{MSet}$ between the same
      categories, there exists a natural transformation
      $(\farg)^{\dagger}$ which sends a set $S$ to the
      compression functor $(\farg)^{\dagger_{S}}$ from
      $\mathbf{Set}^{\fmon{S}}$ to $\mathbf{Set}^{S}$.

\item There exists a natural isomorphism $\zeta_{S}$ from the
      functor $(\farg)^{\dagger_{S}} \comp
      (\farg)^{\dagger_{\fmon{S}}}$ to the functor
      $(\farg)^{\dagger_{S}}\comp\tcoprod_{\cncat_{S}}$.
\end{enumerate}
\end{proposition}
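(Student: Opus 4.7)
The plan is to treat the three parts in order: establish the adjunction in (1), promote compression to a sort-natural transformation in (2), and exhibit the announced natural isomorphism in (3).

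For part (1), I would first verify that the formula for $(\farg)^{\dagger_S}$ on morphisms is functorial (the action $(c,w,i)\mapsto(f_w(c),w,i)$ visibly preserves identities and composites), and then supply the adjunction bijection explicitly. Sending $g\colon C^{\dagger_S}\mor A$ to $g^\flat\colon C\mor A^{\nat_S}$ with $g^\flat_w(c)=(g_{w_i}(c,w,i))_{i\in\bb{w}}\in A_w$, and, conversely, $h\colon C\mor A^{\nat_S}$ to $h^\sharp$ with $h^\sharp_s(c,w,i)=\pr_i(h_w(c))$, yields a pair of mutually inverse bijections natural in both $C$ and $A$. A conceptual alternative that shortcuts the triangle identities is to realise $(\farg)^{\dagger_S}$ as the composite $\tcoprod_{p_2}\comp\Delta_{p_1}$, where $I_S=\{(w,i)\mid w\in\fmon{S},\,i\in\bb{w}\}$ and $p_1\colon I_S\mor\fmon{S}$, $p_2\colon I_S\mor S$ are the obvious projections; one dually observes that $(\farg)^{\nat_S}=\prod_{p_1}\comp\Delta_{p_2}$, after which the adjunction follows by composing the standard adjunctions $\tcoprod_{p_2}\ladj\Delta_{p_2}$ and $\Delta_{p_1}\ladj\prod_{p_1}$.

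For part (2), the functors $(\farg)^{\dagger_S}$ themselves are the $S$-components of the purported transformation. To supply the $\varphi$-square for $\varphi\colon S\mor T$, I would define, at each $\fmon{T}$-sorted set $D$, the $S$-sorted mapping
$$
(\Delta_{\fmon{\varphi}}D)^{\dagger_S}\mor \Delta_{\varphi}(D^{\dagger_T}),\qquad (d,w,i)\lmapsto (d,\fmon{\varphi}(w),i),
$$
which is well defined because $\fmon{\varphi}(w)_i=\varphi(w_i)$, so when $w_i=s$ the target triple sits in the fiber over $\varphi(s)$. Naturality in $D$ is transparent from the pointwise formula for $(\farg)^{\dagger_S}$ on morphisms, and the coherence conditions for identities and composites of $\varphi$ reduce to $\fmon{1_S}=1_{\fmon{S}}$ and $\fmon{\psi\comp\varphi}=\fmon{\psi}\comp\fmon{\varphi}$. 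These components are in general neither injective nor surjective (they fail to be so precisely when $\varphi$ is not bijective), so the transformation is genuinely not a natural isomorphism, which is consistent with the statement only claiming a natural transformation.

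For part (3), I would define the component of $\zeta_S$ at an $\ffmon{S}$-sorted set $E$ by reshuffling the indexing data. Unpacking the two sides, an element of $((E^{\dagger_{\fmon{S}}})^{\dagger_S})_s$ is a tuple $((e,\ol{w},j),w,i)$ with $e\in E_{\ol{w}}$, $\ol{w}_j=w$ and $w_i=s$, while an element of $((\tcoprod_{\cncat_S}E)^{\dagger_S})_s$ is a tuple $((e,\ol{w}),w,\bar{\imath})$ with $e\in E_{\ol{w}}$, $\cncat_S(\ol{w})=w$ and $\cncat_S(\ol{w})_{\bar{\imath}}=s$. The classical bijection $(j,i)\leftrightarrow \bar{\imath}=i+\tsum_{k<j}\bb{\ol{w}_k}$ between pairs locating a position inside the $j$-th block of $\ol{w}$ and positions in the concatenation $\cncat_S(\ol{w})$ matches the two sets bijectively, and naturality in $E$ is immediate since the bijection depends only on the shape of $\ol{w}$, not on $E$'s values. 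The main obstacle I expect is purely combinatorial bookkeeping: maintaining strict discipline about what position indexes what word (keeping separate the roles of $\ol{w}\in\ffmon{S}$ and of its letters $\ol{w}_k\in\fmon{S}$), so that the bijection of (3) and the coherence squares of (2) are transparently correct.
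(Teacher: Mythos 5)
Your proof is correct, and it diverges from the paper's in an interesting way. For part (1) your explicit pair of mutually inverse assignments is exactly what the paper does (it exhibits $\theta^{\dagger\nat}(f)_{w}(c)=(f_{w_{i}}(c,w,i))_{i\in\bb{w}}$ and its inverse $(c,w,i)\mapsto g_{w}(c)_{i}$ and stops there); your alternative factorization $(\farg)^{\dagger_{S}}=\tcoprod_{p_{2}}\comp\Delta_{p_{1}}$, $(\farg)^{\nat_{S}}=\prod_{p_{1}}\comp\Delta_{p_{2}}$ through the set of positions $I_{S}$ is not in the paper and buys the adjunction for free from the standard chain $\tcoprod_{p_{2}}\ladj\Delta_{p_{2}}$, $\Delta_{p_{1}}\ladj\prod_{p_{1}}$. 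The genuine difference is part (3): you build $\zeta_{S}$ by hand as the block-position bijection $(j,i)\leftrightarrow i+\sum_{k<j}\bb{\ol{w}_{k}}$ between the two explicit descriptions of the fibres, whereas the paper argues abstractly, invoking its earlier natural isomorphism $\iota_{S}\colon(\farg)^{\nat_{\fmon{S}}}\comp(\farg)^{\nat_{S}}\iso\Delta_{\cncat_{S}}\comp(\farg)^{\nat_{S}}$ and concluding by uniqueness of left adjoints that the two left adjoints $(\farg)^{\dagger_{S}}\comp(\farg)^{\dagger_{\fmon{S}}}$ and $(\farg)^{\dagger_{S}}\comp\tcoprod_{\cncat_{S}}$ must be isomorphic. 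The paper's route is shorter and recycles part (1); yours yields an explicit formula for the components of $\zeta_{S}$, which is what one actually needs when computing with it. Finally, the paper omits part (2) entirely (``we restrict ourselves to prove the first and third part''), so your construction of the comparison $(\Delta_{\fmon{\varphi}}D)^{\dagger_{S}}\mor\Delta_{\varphi}(D^{\dagger_{T}})$, $(d,w,i)\mapsto(d,\fmon{\varphi}(w),i)$, is the more informative treatment; you are right that these components are in general neither injective nor surjective, so the naturality squares commute only up to a canonical non-invertible $2$-cell --- i.e.\ what one really obtains is a lax natural transformation, a subtlety the statement (and the paper's silence) glosses over.
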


\begin{proof}
We restrict ourselves to prove the first and third part of the
proposition.

(1) For every $\fmon{S}$-sorted set $C$ and $S$-sorted set $A$, there
exists a natural isomorphism $\theta^{\dagger\nat}\colon
\mathrm{Hom}(C^{\dagger_{S}},A)\iso \mathrm{Hom}(C,A^{\nat_{S}})$
which assigns to an $S$-sorted mapping $f\colon C^{\dagger_{S}}\mor A$
the $\fmon{S}$- sorted mapping $\theta^{\dagger\nat}(f)$, defined, for
every $w\in \fmon{S}$ and $c\in C_{w}$, as
$\theta^{\dagger\nat}(f)_{w}(c) = (f_{w_{i}}(c,w,i))_{i\in\bb{w}}$.

Reciprocally, if $g\colon C\mor A^{\nat_{S}}$ is an $\fmon{S}$-sorted
mapping, then $(\theta^{\dagger\nat})^{-1}(g)$ is the $S$-sorted
mapping defined, for every $s\in S$ and $(c,w,i)\in
C^{\dagger_{S}}_{s}$, as $(\theta^{\dagger\nat})^{-1}(g)_{s}(c,w,i) =
g_{w}(c)_{i}$.

(3) By Proposition~\ref{isonatcncat}, the functors
$(\farg)^{\nat_{\fmon{S}}} \comp (\farg)^{\nat_{S}}$ and
$\Delta_{\cncat_{S}}\comp(\farg)^{\nat_{S}}$ are isomorphic.
Furthermore, $(\farg)^{\dagger_{S}} \comp (\farg)^{\dagger_{\fmon{S}}}$
is left adjoint to $(\farg)^{\nat_{\fmon{S}}} \comp
(\farg)^{\nat_{S}}$ and
$(\farg)^{\dagger_{S}}\comp\coprod_{\cncat_{S}}$ is left adjoint to
$\Delta_{\cncat_{S}}\comp(\farg)^{\nat_{S}}$, thus the functors
$(\farg)^{\dagger_{S}}\comp (\farg)^{\dagger_{\fmon{S}}}$ and
$(\farg)^{\dagger_{S}}\comp\tcoprod_{\cncat_{S}}$ are isomorphic. We
denote such a natural isomorphism by $\zeta_{S}$, and, to
simplify the notation, we will write $\zeta^{C}$ instead of
$(\zeta_{S})_{C}$ for the component of $\zeta_{S}$ at $C$.
\end{proof}

If $\varphi\colon S\mor\fmon{T}$ is a mapping, then from the
adjunctions $\coprod_{\varphi}\ladj\Delta_{\varphi}$ and
$(\farg)^{\dagger_{T}}\ladj (\farg)^{\nat_{T}}$, we get the
adjunction $\coprod_{\varphi}^{\dagger}\ladj
\Delta_{\varphi}^{\nat}$ as reflected in the following diagram
$$
\xymatrix@C=60pt@R=60pt{
{} & {\mathbf{Set}^{S}}
  \ar@/_1.5ex/[rd]_{\coprod_{\varphi}}
  \ar@{}[rd]|{\ladj}
  \ar@/_1.5ex/[rd];[]_{\Delta_{\varphi}}
  \ar@/_1.5ex/[ld]_{\coprod_{\varphi}^{\dagger}}
  \ar@{}[ld]|{\ladj}
  \ar@/_1.5ex/[ld];[]_{\Delta_{\varphi}^{\nat}}
  & {}\\
{\mathbf{Set}^{T}}
  \ar@/^1.5ex/[rr]^(.5){(\farg)^{\nat_{T}}}
  \ar@{}[rr]|(.5){\uadj}
  \ar@/^1.5ex/[rr];[]^(.5){(\farg)^{\dagger_{T}}}
  & {} &
{\mathbf{Set}^{\fmon{T}}}
}
$$
where, to simplify the notation, we have written
$\coprod_{\varphi}^{\dagger}$ instead of
$(\farg)^{\dagger_{T}}\comp\coprod_{\varphi}$ and
$\Delta_{\varphi}^{\nat}$ instead of $\Delta_{\varphi}\comp
(\farg)^{\nat_{T}}$.  Furthermore, we agree that
$\theta_{\varphi}^{\dagger\nat}$, $\eta_{\varphi}^{\dagger\nat}$, and
$\varepsilon_{\varphi}^{\dagger\nat}$ denote, respectively, the
natural isomorphism, the unit, and the counit of this composite
adjunction.

Since it could be of some help, next we recall the explicit
definitions of $\coprod_{\varphi}^{\dagger}$,
$\Delta_{\varphi}^{\nat}$, and $\theta_{\varphi}^{\dagger\nat}$.

\begin{enumerate}
\item The functor $\coprod_{\varphi}^{\dagger}$ assigns to an
      $S$-sorted set $A$ the $T$-sorted set
      $\coprod_{\varphi}^{\dagger}A$ whose $t$-th coordinate, for
      $t\in T$, is
      $$
      \textstyle (\coprod_{\varphi}^{\dagger}A)_{t} =
      \bigcup\nolimits_{\substack{x\in \fmon{T}\And \\ x^{-1}[t]\neq
      \vacio}}
      ((\bigcup_{s\in \varphi^{-1}[x]}(A_{s}\times \{s\}))\times
      \{x\}\times x^{-1}[t]),
      $$
      therefore $(\coprod_{\varphi}^{\dagger}A)_{t}$ has as members
      the ordered quadruples $(a,s,x,j)$ such that $a\in A_{s}$,
      $\varphi(s) = x$ and $\varphi(s)_{j} = t$.

\item The functor $\Delta_{\varphi}^{\nat}$ assigns to a
      $T$-sorted set $B$ the $S$-sorted set $\Delta_{\varphi}^{\nat}B
      = (B_{\varphi(s)})_{s\in S}$, where, we recall, $B_{\varphi(s)}
      = \prod_{i\in \bb{\varphi(s)}}B_{\varphi(s)_{i}}$.

\item For an $S$-sorted set $A$ and a $T$-sorted set $B$,
      the natural isomorphism $\theta_{\varphi}^{\dagger\nat}$
      of the adjunction
      $\coprod_{\varphi}^{\dagger}\ladj\Delta_{\varphi}^{\nat}$ sends
      a $T$-sorted mapping $f\colon \coprod_{\varphi}^{\dagger}A\mor
      B$ to the $S$-sorted mapping
      $\theta_{\varphi}^{\dagger\nat}(f)\colon A\mor
      \Delta_{\varphi}^{\nat}B$ that, for $s\in S$ and $a\in A_{s}$,
      is defined as follows
      $$
      \theta_{\varphi}^{\dagger\nat}(f)_{s}
      \nfunction
      {A_{s}}{B_{\varphi(s)}}
      {a}{(f_{\varphi(s)_{i}}(a,s,\varphi(s),i))_{i\in\bb{\varphi(s)}}}
      $$
      and $(\theta_{\varphi}^{\dagger\nat})^{-1}$ sends an $S$-sorted
      mapping $g\colon A\mor \Delta_{\varphi}^{\nat}B$ to the
      $T$-sorted mapping
      $(\theta_{\varphi}^{\dagger\nat})^{-1}(g)\colon
      \coprod_{\varphi}^{\dagger}A\mor B$ that, for $t\in T$, is
      defined as follows
      $$
      (\theta_{\varphi}^{\dagger\nat})^{-1}(g)_{t}
      \nfunction {(\coprod_{\varphi}^{\dagger}A)_{t}}{B_{t}}
      {(a,s,\varphi(s),i)}{g_{s}(a)_{i}}
      $$
\end{enumerate}

What we want to establish now is that the category
$\mathbf{Alg}_{\mathfrak{pd}}$ has coproducts and for this we begin by
proving that, for every polyderivor $\mathbf{d}\colon \mathbf{\Sigma}\mor
\mathbf{\Lambda}$, the functor $\mathbf{d}^{\ast}_{\mathfrak{pd}}$ from
$\mathbf{Alg}(\mathbf{\Lambda})$ to $\mathbf{Alg}(\mathbf{\Sigma})$
has a left adjoint $\mathbf{d}_{\ast}^{\mathfrak{pd}}$.

\begin{proposition}
Let $\mathbf{d}\colon \mathbf{\Sigma}\mor \mathbf{\Lambda}$ be a
polyderivor.  Then there exists a functor
$\mathbf{d}_{\ast}^{\mathfrak{pd}}$ from
$\mathbf{Alg}(\mathbf{\Sigma})$ to $\mathbf{Alg}(\mathbf{\Lambda})$
that is left adjoint to the functor $\mathbf{d}^{\ast}_{\mathfrak{pd}}$
from $\mathbf{Alg}(\mathbf{\Lambda})$ to
$\mathbf{Alg}(\mathbf{\Sigma})$.
\end{proposition}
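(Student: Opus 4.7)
The plan is to mimic the construction used in Proposition~\ref{leftadjsigmorph} for ordinary signature morphisms, but replacing the adjunction $\coprod_{\varphi}\ladj\Delta_{\varphi}$ by the composite adjunction $\coprod_{\varphi}^{\dagger}\ladj\Delta_{\varphi}^{\nat}$ constructed just before the statement, and taking into account that $d(\sigma)$ is now a family of terms rather than a single formal operation. Concretely, for a $\mathbf{\Sigma}$-algebra $\mathbf{A}=(A,F)$, I would set
$$
\mathbf{d}_{\ast}^{\mathfrak{pd}}(\mathbf{A}) \;=\;
\mathbf{T}_{\mathbf{\Lambda}}(\textstyle{\coprod}_{\varphi}^{\dagger}A)\big/\cl{R}{}^{\mathbf{A}},
$$
where $\cl{R}{}^{\mathbf{A}}$ is the congruence generated by the $T$-sorted relation $R^{\mathbf{A}}$ whose entries, for every $\sigma\colon w\mor s$ in $\Sigma$, every $a\in A_{w}$, and every $i\in\bb{\varphi(s)}$, consist of the pair
$$
\bigl((F_{\sigma}^{\mathbf{A}}(a),s,\varphi(s),i),\;
\widetilde{\alpha}_{\varphi(s)_{i}}\bigl(d(\sigma)_{i}\bigr)\bigr),
$$
where $\alpha\colon \vs{\ext{\varphi}(w)}\mor \mathrm{T}_{\mathbf{\Lambda}}(\coprod_{\varphi}^{\dagger}A)$ sends the variable sitting in position $j$ of $\ext{\varphi}(w)$ (which lies under the block $\varphi(w_{k})$) to the generator $(a_{k},w_{k},\varphi(w_{k}),j-\sum_{\beta<k}\bb{\varphi(w_{\beta})})$. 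These are exactly the equations needed to force that the $\varphi(s)$-tuple of classes representing $F_{\sigma}^{\mathbf{A}}(a)$ coincides, in $\mathbf{d}^{\ast}_{\mathfrak{pd}}(\mathbf{d}_{\ast}^{\mathfrak{pd}}(\mathbf{A}))$, with the result of applying $G^{\mathbf{d}}_{\sigma}$ (as defined in Proposition~\ref{defPseudofunctorAlgfuj}) to the tuple coming from $a$.

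Next I would define the action on morphisms exactly as in the proof of Proposition~\ref{leftadjsigmorph}: given a $\mathbf{\Sigma}$-homomorphism $f\colon \mathbf{A}\mor\mathbf{A}'$, the functoriality of $\coprod_{\varphi}^{\dagger}$ yields a $T$-sorted mapping $\coprod_{\varphi}^{\dagger}f$, whose induced $\mathbf{\Lambda}$-homomorphism $(\coprod_{\varphi}^{\dagger}f)^{@}$ verifies $R^{\mathbf{A}}\subseteq \Ker(\pr_{\cl{R}{}^{\mathbf{A}'}}\comp (\coprod_{\varphi}^{\dagger}f)^{@})$, since applying $(\coprod_{\varphi}^{\dagger}f)^{@}$ to each generating pair of $R^{\mathbf{A}}$ yields, on the left, the image of $F_{\sigma}^{\mathbf{A}'}(f_{w}(a))$ (using that $f$ commutes with $F_{\sigma}$) and, on the right, the corresponding derived term on the translated variables, a pair which belongs to $R^{\mathbf{A}'}$ by construction. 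The induced $\mathbf{d}_{\ast}^{\mathfrak{pd}}(f)$ between quotients is then the desired $\mathbf{\Lambda}$-homomorphism, and preservation of identities and composition is immediate.

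To exhibit the adjunction I would define the unit $\eta^{\mathbf{A}}\colon \mathbf{A}\mor \mathbf{d}^{\ast}_{\mathfrak{pd}}(\mathbf{d}_{\ast}^{\mathfrak{pd}}(\mathbf{A}))$ as the $S$-sorted mapping whose $s$-th component sends $a\in A_{s}$ to the tuple $(\,[(a,s,\varphi(s),i)]_{\cl{R}{}^{\mathbf{A}}}\,)_{i\in\bb{\varphi(s)}}$, i.e., the transpose under $\theta_{\varphi}^{\dagger\nat}$ of $\pr_{\cl{R}{}^{\mathbf{A}}}\comp \eta_{\coprod_{\varphi}^{\dagger}A}$. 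The relations generating $\cl{R}{}^{\mathbf{A}}$ are exactly what is needed to check that $\eta^{\mathbf{A}}$ is a $\mathbf{\Sigma}$-homomorphism when its codomain is equipped with the structure $G^{\mathbf{d}}$ from Proposition~\ref{defPseudofunctorAlgfuj}, since both sides of the relevant equation unravel (via the description of $G^{\mathbf{d}}_{\sigma}$ as the composite of $\iota^{B}_{\varphi(w)}$ with the realization of the tuple $d(\sigma)$) to the very pairs collected in $R^{\mathbf{A}}$.

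Finally, for the universal property, given a $\mathbf{\Lambda}$-algebra $\mathbf{B}$ and a $\mathbf{\Sigma}$-homomorphism $f\colon \mathbf{A}\mor \mathbf{d}^{\ast}_{\mathfrak{pd}}(\mathbf{B})$, I would take the transpose $\widehat{f}=(\theta_{\varphi}^{\dagger\nat})^{-1}(f)\colon \coprod_{\varphi}^{\dagger}A\mor B$, extend it to the $\mathbf{\Lambda}$-homomorphism $\widehat{f}^{\sharp}\colon \mathbf{T}_{\mathbf{\Lambda}}(\coprod_{\varphi}^{\dagger}A)\mor \mathbf{B}$, and then verify that $R^{\mathbf{A}}\subseteq \Ker(\widehat{f}^{\sharp})$; this reduces, upon unfolding, to the equality of $f_{s}(F^{\mathbf{A}}_{\sigma}(a))$ and $G^{\mathbf{d}}_{\sigma}(f_{w}(a))$, which holds precisely because $f$ is a $\mathbf{\Sigma}$-homomorphism into $\mathbf{d}^{\ast}_{\mathfrak{pd}}(\mathbf{B})$ (combined with the description of $G^{\mathbf{d}}_{\sigma}$ in terms of the realization of $d(\sigma)$ in $\mathbf{B}$). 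The factorization yields the unique $f^{\natural}\colon \mathbf{d}_{\ast}^{\mathfrak{pd}}(\mathbf{A})\mor \mathbf{B}$ with $\mathbf{d}^{\ast}_{\mathfrak{pd}}(f^{\natural})\comp \eta^{\mathbf{A}}=f$, establishing $\mathbf{d}_{\ast}^{\mathfrak{pd}}\ladj \mathbf{d}^{\ast}_{\mathfrak{pd}}$. The main obstacle will be the bookkeeping in step one: writing the generating pairs of $R^{\mathbf{A}}$ so that they correctly capture the tuple structure indexed by the letters of $\varphi(s)$ and the block decomposition of $\ext{\varphi}(w)$ into the sub-blocks $\varphi(w_{k})$, and then showing that the verifications in the universal property genuinely match the description of $G^{\mathbf{d}}_{\sigma}$ as $\kappa^{B}_{\varphi}\comp \widetilde{G}_{\ext{\varphi}\bprod\ext{\varphi}}\comp d$; once this is set up correctly, the remaining arguments run parallel to those of Proposition~\ref{leftadjsigmorph}.
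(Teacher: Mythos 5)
Your proposal is correct and follows essentially the same route as the paper: the object action is given by exactly the same quotient $\mathbf{T}_{\mathbf{\Lambda}}(\coprod_{\varphi}^{\dagger}A)/\cl{R}{}^{\mathbf{A}}$, with your substitution $\widetilde{\alpha}_{\varphi(s)_{i}}(d(\sigma)_{i})$ coinciding with the paper's $d(\sigma)_{i}(\mathbf{a})$ once the block decomposition of $\ext{\varphi}(w)$ is unwound. The paper in fact stops after defining the object action and defers the morphism action, unit, and universal property to the pattern of the earlier proposition for ordinary signature morphisms, so your write-up supplies precisely the details the paper leaves to the reader.
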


\begin{proof}
We restrict ourselves to define the action of
$\mathbf{d}_{\ast}^{\mathfrak{pd}}$ on the objects (because being the
remaining details like those of Proposition~\ref{leftadjsigmorph},
although more cumbersome, they can be left to the reader).  Let
$\mathbf{A}$ be a $\mathbf{\Sigma}$-algebra.  Then
$\mathbf{d}_{\ast}^{\mathfrak{pd}}(\mathbf{A})$ is the
$\mathbf{\Lambda}$-algebra defined as %
$
\mathbf{T}_{\mathbf{\Lambda}}(\tcoprod_{\varphi}^{\dagger}A)/\cl{R}{}^{\mathbf{A}}
$, where $\cl{R}{}^{\mathbf{A}}$ is the congruence on
$\mathbf{T}_{\mathbf{\Lambda}}(\coprod_{\varphi}^{\dagger}A)$
generated by the $T$-sorted relation $R^{\mathbf{A}}$, defined, for
every $t\in T$, as
$$
R^{\mathbf{A}}_{t} =
  \biggl\{
    \left( (F_{\sigma}^{\mathbf{A}}(a_{i}\mid i\in\bb{w}),s,\varphi(s),j) ,
    d(\sigma)_{j}(\mathbf{a})
    \right)
    \biggm|
    \begin{gathered}
    j\in \varphi(s)^{-1}[t],\, w\in\fmon{S},\,  \\
    s\in S,\,\sigma\in\Sigma_{w,s},\, a\in A_{w}
    \end{gathered}
  \biggr\},
$$
being $\mathbf{a}$ the matrix
$$
\mathbf{a} =
    \left(
    \begin{smallmatrix}
    (a_{0},w_{0},\varphi(w_{0}),0) & \cdots &
    (a_{0},w_{0},\varphi(w_{0}),\bb{\varphi(w_{0})}-1) \\
    \vdots & \ddots & \vdots \\
    (a_{\bb{w}-1},w_{\bb{w}-1},\varphi(w_{\bb{w}-1}),0) & \cdots &
    (a_{\bb{w}-1},w_{\bb{w}-1},\varphi(w_{\bb{w}-1}),\bb{\varphi(w_{\bb{w}-1})}-1)
    \end{smallmatrix}
    \right),
$$
and $d(\sigma)_{j}(\mathbf{a})$ the result of replacing the variables
in the term $d(\sigma)_{j}$ with the entries in the matrix
$\mathbf{a}$ (recall that, for $\sigma\in\Sigma_{w,s}$, we have agreed
that $d(\sigma) = d_{w,s}(\sigma)$, where $d_{w,s}(\sigma)\in
\mathrm{T}_{\mathbf{\Lambda}}(\vs{\varphi^{\sharp}(w)})_{\varphi(s)}$,
hence, for every $j\in \bb{\varphi(s)}$, $d(\sigma)_{j}\in
\mathrm{T}_{\mathbf{\Lambda}}(\vs{\varphi^{\sharp}(w)})_{\varphi(s)_{j}}$).
\end{proof}

\begin{proposition}
The category  $\mathbf{Alg}_{\mathfrak{pd}}$ has coproducts.
\end{proposition}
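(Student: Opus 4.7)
My plan is to mimic the proof of Proposition~\ref{Algcocomplete}, assembling three ingredients already at our disposal: that $\mathbf{Sig}_{\mathfrak{pd}}$ has coproducts (by the corollary to Proposition~\ref{isoSigfujKlfuj}); that every fiber $\mathbf{Alg}(\mathbf{\Sigma})$ is cocomplete (as used in the proof of Proposition~\ref{Algcocomplete}); and that every base-change functor $\mathbf{d}^{\ast}_{\mathfrak{pd}}$ admits a left adjoint $\mathbf{d}_{\ast}^{\mathfrak{pd}}$ (by the immediately preceding proposition). Morally this is the coproduct half of the \lq\lq locally reversible\rq\rq\ argument of Proposition~\ref{CoLimitesGrt}, adapted from a strict contravariant functor to a pseudo-functor.

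Given a family $((\mathbf{\Sigma}_{i}, \mathbf{A}_{i}))_{i\in I}$ of objects of $\mathbf{Alg}_{\mathfrak{pd}}$, I would first form the coproduct $(\mathbf{\Sigma}, (\mathbf{d}_{i})_{i\in I})$ of $(\mathbf{\Sigma}_{i})_{i\in I}$ in $\mathbf{Sig}_{\mathfrak{pd}}$. Next I would form the coproduct $(\mathbf{A}, (k_{i})_{i\in I})$ of the family $((\mathbf{d}_{i})_{\ast}^{\mathfrak{pd}}(\mathbf{A}_{i}))_{i\in I}$ in the cocomplete category $\mathbf{Alg}(\mathbf{\Sigma})$. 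Transposing each $k_{i}$ under the adjunction $(\mathbf{d}_{i})_{\ast}^{\mathfrak{pd}}\ladj (\mathbf{d}_{i})^{\ast}_{\mathfrak{pd}}$ yields a $\mathbf{\Sigma}_{i}$\nobreakdash-homomorphism $h_{i}\colon \mathbf{A}_{i}\mor (\mathbf{d}_{i})^{\ast}_{\mathfrak{pd}}(\mathbf{A})$, and the candidate coproduct in $\mathbf{Alg}_{\mathfrak{pd}}$ is $(\mathbf{\Sigma}, \mathbf{A})$ with coprojections $((\mathbf{d}_{i}, h_{i}))_{i\in I}$.

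To verify universality I would start from an arbitrary cocone $(((\mathbf{e}_{i}, g_{i})\colon (\mathbf{\Sigma}_{i}, \mathbf{A}_{i})\mor (\mathbf{\Lambda}, \mathbf{B})))_{i\in I}$. The universal property of $\mathbf{\Sigma}$ in $\mathbf{Sig}_{\mathfrak{pd}}$ supplies a unique polyderivor $\mathbf{e}\colon\mathbf{\Sigma}\mor\mathbf{\Lambda}$ with $\mathbf{e}\comp \mathbf{d}_{i} = \mathbf{e}_{i}$ for every $i\in I$. Composing $g_{i}$ with the component at $\mathbf{B}$ of the pseudo-functoriality isomorphism $\gamma^{\mathbf{d}_{i},\mathbf{e}}$ identifies the codomain of $g_{i}$ with $(\mathbf{d}_{i})^{\ast}_{\mathfrak{pd}}(\mathbf{e}^{\ast}_{\mathfrak{pd}}(\mathbf{B}))$; transposing along the adjunction then produces $\mathbf{\Sigma}$-homo\-mor\-phisms $\widehat{g_{i}}\colon (\mathbf{d}_{i})_{\ast}^{\mathfrak{pd}}(\mathbf{A}_{i})\mor \mathbf{e}^{\ast}_{\mathfrak{pd}}(\mathbf{B})$, and the universal property of $\mathbf{A}$ in $\mathbf{Alg}(\mathbf{\Sigma})$ supplies a unique $h\colon \mathbf{A}\mor \mathbf{e}^{\ast}_{\mathfrak{pd}}(\mathbf{B})$ with $h\comp k_{i} = \widehat{g_{i}}$ for every $i\in I$. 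The desired factorization is then $(\mathbf{e}, h)$, and its uniqueness follows in two stages from the uniqueness on each of the invoked universal properties, together with the bijectivity of the adjunction transpose.

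The hard part will be the pseudo-functorial bookkeeping: because $\Alg_{\mathfrak{pd}}$ is only a pseudo-functor, the identifications of $(\mathbf{e}\comp\mathbf{d}_{i})^{\ast}_{\mathfrak{pd}}(\mathbf{B})$ with $(\mathbf{d}_{i})^{\ast}_{\mathfrak{pd}}(\mathbf{e}^{\ast}_{\mathfrak{pd}}(\mathbf{B}))$, and of $\Id_{\mathbf{Alg}(\mathbf{\Sigma}_{i})}$ with $(\id_{\mathbf{\Sigma}_{i}})^{\ast}_{\mathfrak{pd}}$, must be effected by the natural isomorphisms $\gamma^{\mathbf{d},\mathbf{e}}$ and $\nu^{\mathbf{\Sigma}}$ of Proposition~\ref{defPseudofunctorAlgfuj} rather than by strict equalities, and consequently the transposed morphisms $\widehat{g_{i}}$ are only defined up to these canonical isomorphisms. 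The coherence axioms on $\gamma$ and $\nu$ (and their induced counterparts for the left adjoints) guarantee that any two ways of inserting these mediating isomorphisms agree, so the construction is well-defined and the universal property holds as stated in $\mathbf{Alg}_{\mathfrak{pd}}$.
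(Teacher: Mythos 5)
Your proposal is correct and follows essentially the same route as the paper: the paper simply cites (a particular case of) Proposition~\ref{CoLimitesGrt}, using that $\mathbf{Sig}_{\mathfrak{pd}}$ has coproducts, that each fiber $\mathbf{Alg}(\mathbf{\Sigma})$ does, and that $\Alg_{\mathfrak{pd}}$ is locally reversible via the left adjoints $\mathbf{d}_{\ast}^{\mathfrak{pd}}$. What you have done is unfold that citation into the explicit construction (coproduct of signatures, pushforward along the left adjoints, coproduct in the fiber, transposition), including the pseudo-functoriality bookkeeping that the paper's cited proposition, stated for strict contravariant functors, glosses over.
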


\begin{proof}
The category $\mathbf{Sig}_{\mathfrak{pd}}$ has coproducts.  For every
signature $\mathbf{\Sigma}$, the category
$\mathbf{Alg}(\mathbf{\Sigma})$ has coproducts.  The functor
$\mathrm{Alg}_{\mathfrak{pd}}$ is locally reversible.  Therefore, by a
particular case of Proposition~\ref{CoLimitesGrt}, 
the category
$\mathbf{Sig}_{\mathfrak{pd}}$ has coproducts.
\end{proof}

Our next goal is to state that every polyderivor induces a functor between
the associated categories of terms as was the case for the signature
morphisms.

\begin{proposition}\label{polyderivor induces 1-cell}
Let $\mathbf{d}\colon \mathbf{\Sigma}\mor \mathbf{\Lambda}$ be a polyderivor.
Then there exists a functor $\mathbf{d}_{\diamond}^{\mathfrak{pd}}$ from
$\mathbf{Ter}(\mathbf{\Sigma})$ to $\mathbf{Ter}(\mathbf{\Lambda})$ defined
as follows
\begin{enumerate}
\item $\mathbf{d}_{\diamond}^{\mathfrak{pd}}$ sends an $S$-sorted set
      $X$ to the $T$-sorted set
      $\mathbf{d}_{\diamond}^{\mathfrak{pd}}(X) =
      \coprod_{\varphi}^{\dagger}X$.

\item $\mathbf{d}_{\diamond}^{\mathfrak{pd}}$ sends a morphism
      $P$ from $X$ to $Y$ in $\mathbf{Ter}(\mathbf{\Sigma})$ to the
      morphism $\mathbf{d}_{\diamond}^{\mathfrak{pd}}(P) =
      (\theta_{\varphi}^{\dagger\nat})^{-1}(\eta^{\mathbf{d}}_{X}\comp
      P)$ from $\coprod_{\varphi}^{\dagger}X$ to
      $\coprod_{\varphi}^{\dagger}Y$, where
      $\theta_{\varphi}^{\dagger\nat}$ is the natural isomorphism of
      the adjunction $\coprod_{\varphi}^{\dagger}\ladj
      \Delta_{\varphi}^{\nat}$, $\eta^{\mathbf{d}}_{X}$ the
      $\mathbf{\Sigma}$-homo\-morphism from
      $\mathbf{T}_{\mathbf{\Sigma}}(X)$ to $\Delta_{\varphi}^{\nat}
      (\mathrm{T}_{\mathbf{\Lambda}}(\coprod_{\varphi}^{\dagger}X))$
      that extends the $S$-sorted mapping
      $\Delta_{\varphi}^{\nat}(\eta_{\coprod_{\varphi}^{\dagger}X})
      \comp (\eta_{\varphi}^{\dagger\nat})_{X}$ from $X$ to
      $\Delta_{\varphi}^{\nat}
      (\mathrm{T}_{\mathbf{\Lambda}}(\coprod_{\varphi}^{\dagger}X))$,
      as in the following commutative diagram
      $$
      \xymatrix@C=13ex@R=10ex{
      X \ar[r]^-{\eta_{X}}
      \ar[d]_{(\eta_{\varphi}^{\dagger\nat})_{X}} &
      \mathrm{T}_{\mathbf{\Sigma}}(X)
      \ar[d]^{\eta^{\mathbf{d}}_{X} = \ext
      {(\Delta_{\varphi}^{\nat}(\eta_{\coprod_{\varphi}^{\dagger}X})
      \comp (\eta_{\varphi}^{\dagger\nat})_{X})}} \\
      \Delta_{\varphi}^{\nat}\coprod_{\varphi}^{\dagger}X
      \ar[r]_-{\Delta_{\varphi}^{\nat}(\eta_{\coprod_{\varphi}^{\dagger}X})} &
      \Delta_{\varphi}^{\nat}(\mathrm{T}
      _{\mathbf{\Lambda}}(\coprod_{\varphi}^{\dagger}X))
      }
      $$
      and $\eta_{\varphi}^{\dagger\nat}$ the unit of the adjunction
      $\coprod_{\varphi}^{\dagger}\ladj \Delta_{\varphi}^{\nat}$.
\end{enumerate}
\end{proposition}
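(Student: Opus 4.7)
The plan is to proceed exactly in parallel with the proof of Proposition~\ref{functorMorfismoSignaturas}, with the adjunction $\coprod_{\varphi}\ladj\Delta_{\varphi}$ systematically replaced by the composite adjunction $\coprod_{\varphi}^{\dagger}\ladj \Delta_{\varphi}^{\nat}$ of Proposition~\ref{adjuncionDaggerNatS}. So I must verify two things: that $\mathbf{d}_{\diamond}^{\mathfrak{pd}}$ preserves identities, and that it preserves composition of terms.

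For the preservation of identities, I would apply the definition of $\mathbf{d}_{\diamond}^{\mathfrak{pd}}$ to $\eta_{X}$, so that $\mathbf{d}_{\diamond}^{\mathfrak{pd}}(\eta_{X}) = (\theta_{\varphi}^{\dagger\nat})^{-1}(\eta^{\mathbf{d}}_{X}\comp \eta_{X})$. By the defining property of $\eta^{\mathbf{d}}_{X}$ as the unique $\mathbf{\Sigma}$-homomorphism extending $\Delta_{\varphi}^{\nat}(\eta_{\coprod_{\varphi}^{\dagger}X})\comp (\eta_{\varphi}^{\dagger\nat})_{X}$, the composite $\eta^{\mathbf{d}}_{X}\comp\eta_{X}$ equals precisely $\Delta_{\varphi}^{\nat}(\eta_{\coprod_{\varphi}^{\dagger}X})\comp (\eta_{\varphi}^{\dagger\nat})_{X}$. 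Applying $(\theta_{\varphi}^{\dagger\nat})^{-1}$ and using the explicit form of the adjunction bijection recalled in the text, this simplifies to $\eta_{\coprod_{\varphi}^{\dagger}X}$, which is the identity at $\coprod_{\varphi}^{\dagger}X$ in $\mathbf{Ter}(\mathbf{\Lambda})$.

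For the preservation of composition, given $P\colon X\mor Y$ and $Q\colon Y\mor Z$ in $\mathbf{Ter}(\mathbf{\Sigma})$, I would unfold $\mathbf{d}_{\diamond}^{\mathfrak{pd}}(Q\dcomp P)$ and $\mathbf{d}_{\diamond}^{\mathfrak{pd}}(Q)\dcomp\mathbf{d}_{\diamond}^{\mathfrak{pd}}(P)$ using the Kleisli composition law $R\dcomp S = S^{\sharp}\comp R$. After invoking the naturality of $\theta_{\varphi}^{\dagger\nat}$, the equality of both sides is reduced to the single identity
\begin{equation*}
\eta^{\mathbf{d}}_{X}\comp P^{\sharp} \;=\; \Delta_{\varphi}^{\nat}(\mathbf{d}_{\diamond}^{\mathfrak{pd}}(P)^{\sharp})\comp \eta^{\mathbf{d}}_{Y},
\end{equation*}
which is an equation between $\mathbf{\Sigma}$-homomorphisms from $\mathbf{T}_{\mathbf{\Sigma}}(Y)$ to $\Delta_{\varphi}^{\nat}(\mathbf{T}_{\mathbf{\Lambda}}(\coprod_{\varphi}^{\dagger}X))$. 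By the universal property of the free algebra $\mathbf{T}_{\mathbf{\Sigma}}(Y)$, it suffices to verify equality after precomposition with $\eta_{Y}$, which is a direct computation: the left-hand side becomes $\eta^{\mathbf{d}}_{X}\comp P$, while the right-hand side unfolds through the definition of $\eta^{\mathbf{d}}_{Y}$ and of $\mathbf{d}_{\diamond}^{\mathfrak{pd}}(P) = (\theta_{\varphi}^{\dagger\nat})^{-1}(\eta^{\mathbf{d}}_{X}\comp P)$ to the same expression.

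The main obstacle will be keeping track of the diagrammatic bookkeeping associated with the composite adjunction $\coprod_{\varphi}^{\dagger}\ladj \Delta_{\varphi}^{\nat}$, since it is itself built from two adjunctions $\coprod_{\varphi}\ladj \Delta_{\varphi}$ and $(\farg)^{\dagger_{T}}\ladj (\farg)^{\nat_{T}}$, so the unit $\eta_{\varphi}^{\dagger\nat}$ is a composite of the units of the two constituent adjunctions. Once this is handled, the argument reduces, exactly as in Proposition~\ref{functorMorfismoSignaturas}, to the universal property of the free $\mathbf{\Sigma}$-algebra on a generating $\mathrm{ms}$-set, so the substantive content is purely the same universal-property check, merely dressed with more adjunction layers.
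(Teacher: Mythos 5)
Your proposal is correct and follows exactly the route the paper takes: the paper's own proof simply declares the argument ``structurally identical'' to that of Proposition~\ref{functorMorfismoSignaturas}, and your transcription of that argument through the composite adjunction $\coprod_{\varphi}^{\dagger}\ladj \Delta_{\varphi}^{\nat}$ --- reducing preservation of composition to the key identity $\eta^{\mathbf{d}}_{X}\comp P^{\sharp} = \Delta_{\varphi}^{\nat}(\mathbf{d}_{\diamond}^{\mathfrak{pd}}(P)^{\sharp})\comp \eta^{\mathbf{d}}_{Y}$ checked on generators --- is precisely what that structural identity amounts to. (The paper also notes an alternative proof via the factorization of $\mathbf{d}_{\diamond}^{\mathfrak{pd}}(P)$ through the counit, but that is offered only as a remark.)
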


\begin{proof}
The proof is structurally identical to that of
Proposition~\ref{functorMorfismoSignaturas}. 
However, we remark that
besides it, there is another alternative proof founded on the fact
that, for every term $P\colon X\mor Y$, the term
$\mathbf{d}_{\diamond}^{\mathfrak{pd}}(P)\colon
\coprod_{\varphi}^{\dagger}X \mor \coprod_{\varphi}^{\dagger}Y$ is the
composition of the morphisms in the following diagram
$$
\xymatrix@C=9.5 ex{
\coprod_{\varphi}^{\dagger}Y
\ar[r]^-{\coprod_{\varphi}^{\dagger}P} &
*+!<1.89ex,0ex>{\coprod_{\varphi}^{\dagger}\mathrm{T}_{\mathbf{\Sigma}}(X)}
\ar[r]^-{\coprod_{\varphi}^{\dagger}\eta^{\mathbf{d}}_{X}} &
\coprod_{\varphi}^{\dagger}\Delta_{\varphi}^{\nat}
(\mathrm{T}_{\mathbf{\Lambda}}(\coprod_{\varphi}^{\dagger}X))
\ar[d]^-{(\varepsilon_{\varphi}^{\dagger\nat})
_{\mathrm{T}_{\mathbf{\Lambda}}(\coprod_{\varphi}^{\dagger}X)}\,\,} \\
{} & {} &
\mathrm{T}_{\mathbf{\Lambda}}(\coprod_{\varphi}^{\dagger}X)  }
$$
where $(\varepsilon_{\varphi}^{\dagger\nat})
_{\mathrm{T}_{\mathbf{\Lambda}}(\coprod_{\varphi}^{\dagger}X)}$ is the
value at $\mathrm{T}_{\mathbf{\Lambda}}(\coprod_{\varphi}^{\dagger}X)$
of the counit of the adjunction
$\coprod_{\varphi}^{\dagger}\ladj\Delta_{\varphi}^{\nat}$.
\end{proof}

\begin{remark}
In the last proposition, $(\eta_{\varphi}^{\dagger\nat})_{X}$, the
value at $X$ of the unit of the adjunction
$\coprod_{\varphi}^{\dagger}\ladj\Delta_{\varphi}^{\nat}$, in its
$s$-th coordinate, assigns to $x\in X_{s}$, the family
$$
((x,s,\varphi(s),0),\ldots,(x,s,\varphi(s),\bb{\varphi(s)}-1)).
$$
We can say, informally, that to a variable $x\in X_{s}$ corresponds in
$\coprod_{\varphi}^{\dagger}X$ a family of variables of the form
$(x,s,\varphi(s),i)$, with sorts $\varphi(s)_{i}$, and that, for a
morphism $P\colon X\mor Y$ in $\mathbf{Ter}(\mathbf{\Sigma})$ and
$(y,s,\varphi(s),i)\in(\coprod_{\varphi}^{\dagger}Y)_{t}$,
$\mathbf{d}_{\diamond}^{\mathfrak{pd}}(P)_{\varphi(s)_{i}}(y,s,\varphi(s),i)$
is the term for $\mathbf{\Lambda}$ obtained by replacing, recursively,
in $P_{s}(y)$ the formal operations $\sigma\colon w\mor s$ by the
families of formal operations $d(\sigma)\colon
\ext{\varphi}(w)\mor\varphi(s)$ and the variables $x\in X_{s}$ by
families of variables $(x,s,\varphi(s),j)_{j\in\bb{\varphi(s)}}$.
\end{remark}

Before we state that the above construction can be lifted to a
pseudo-functor from the category $\mathbf{Sig}_{\mathfrak{pd}}$ to the
$2$-category $\mathbf{Cat}$, we point out that the relation of
satisfaction is also invariant under polyderivor change, i.e., that for
every polyderivor $\mathbf{d}\colon \mathbf{\Sigma}\mor \mathbf{\Lambda}$,
if $(P,Q)$ is a $\mathbf{\Sigma}$-equation of type $(X,Y)$ and
$\mathbf{A}$ a $\mathbf{\Lambda}$-algebra, then
$$
\mathbf{d}^{\ast}_{\mathfrak{pd}}(\mathbf{A})\models^{\mathbf{\Sigma}}_{X,Y}
(P,Q)\text{ iff } \mathbf{A}\models^{\mathbf{\Lambda}}_{\coprod_{\varphi}
^{\dagger}X,\coprod_{\varphi}^{\dagger}Y}
(\mathbf{d}_{\diamond}^{\mathfrak{pd}}(P),\mathbf{d}_{\diamond}^{\mathfrak{pd}}(Q)).
$$
This follows from the invariant character under signature change
through the polyderivors of the realization of terms as term operations in
arbitrary, but fixed, algebras, as stated in the following

\begin{proposition}
Let $\mathbf{d}\colon\mathbf{\Sigma}\mor\mathbf{\Lambda}$ be a
polyderivor.  Then, for every $\mathbf{\Lambda}$-algebra
$\mathbf{A}$ and term $P\colon X\mor Y$ for $\mathbf{\Sigma}$ of type
$(X,Y)$, the following diagram commutes
$$
\xymatrix@C=18ex@R=10ex{
(A_{\varphi})_{X}
\ar[r]^{P^{\mathbf{d}^{\ast}_{\mathfrak{pd}}(\mathbf{A})}} &
(A_{\varphi})_{Y} \\
A_{\coprod_{\varphi}^{\dagger}X}
\ar[r]_{\mathbf{d}_{\diamond}^{\mathfrak{pd}}(P)^{\mathbf{A}}}
\ar[u]^{(\theta_{\varphi}^{\dagger\nat})_{X,A}} &
A_{\coprod_{\varphi}^{\dagger}Y}
\ar[u]_{(\theta_{\varphi}^{\dagger\nat})_{Y,A}}
}
$$
where, to simplify the notation, we have written $A_{\varphi}$ instead
of the more accurate $\Delta^{\nat}_{\varphi}A$.
\end{proposition}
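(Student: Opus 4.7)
My plan is to reduce, in the spirit of the proof of Proposition~\ref{invarianza}, to the case where $Y$ is a delta of Kronecker, and then verify the resulting one-dimensional statement by structural induction on the term. Since $\coprod_{\varphi}^{\dagger}$ is left adjoint to $\Delta_{\varphi}^{\nat}$ (Proposition~\ref{adjuncionDaggerNatS}), it preserves colimits. Every $S$-sorted set $Y$ is canonically a coproduct of Kronecker deltas $\coprod_{s\in S,\,y\in Y_{s}}\delta^{s}$, so $\coprod_{\varphi}^{\dagger}Y$ decomposes accordingly as a coproduct of the sets $\coprod_{\varphi}^{\dagger}\delta^{s}$. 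Passing to $\mathrm{Hom}(\farg,A)$ on one side and to $\mathrm{Hom}(\farg,A_{\varphi})$ on the other converts these coproducts into products, and the four arrows of the square split coordinatewise into products indexed by the pairs $(s,y)$. It is therefore enough to verify the commutativity for $Y=\delta^{s}$, i.e., for an arbitrary term $P\in \mathrm{T}_{\mathbf{\Sigma}}(X)_{s}$, where, unfolding $\mathbf{d}_{\diamond}^{\mathfrak{pd}}(P) = (\theta_{\varphi}^{\dagger\nat})^{-1}(\eta^{\mathbf{d}}_{X}\comp P)$, the claim reduces to the identity
\[
\eta^{\mathbf{d}}_{X,s}(P)^{\mathbf{A}}(a) = P^{\mathbf{d}^{\ast}_{\mathfrak{pd}}(\mathbf{A})}((\theta_{\varphi}^{\dagger\nat})_{X,A}(a))
\qquad\text{for every } a\in A_{\coprod_{\varphi}^{\dagger}X},
\]
as an equality of tuples in $A_{\varphi(s)}=\prod_{i\in \bb{\varphi(s)}}A_{\varphi(s)_{i}}$. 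This is the polyderivor analogue of the right-hand diagram in part~(1) of Corollary~\ref{fundPdextranatural}.

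I would prove this by structural induction on $P$. For the base case $P=\eta_{X,s}(x)$ with $x\in X_{s}$, both sides unfold to the tuple $(a_{\varphi(s)_{i}}(x,s,\varphi(s),i))_{i\in\bb{\varphi(s)}}$ by the definition of $\eta^{\mathbf{d}}_{X}$ as the $\mathbf{\Sigma}$-homomorphism extending $\Delta_{\varphi}^{\nat}(\eta_{\coprod_{\varphi}^{\dagger}X})\comp (\eta_{\varphi}^{\dagger\nat})_{X}$ together with the explicit formula for $\theta_{\varphi}^{\dagger\nat}$. For the inductive step $P=\sigma(P_{0},\ldots,P_{\bb{w}-1})$ with $\sigma\in \Sigma_{w,s}$, the key observation is that the codomain $\Delta_{\varphi}^{\nat}\mathrm{T}_{\mathbf{\Lambda}}(\coprod_{\varphi}^{\dagger}X)$ of $\eta^{\mathbf{d}}_{X}$ is precisely the underlying $S$-sorted set of the $\mathbf{\Sigma}$-algebra $\mathbf{d}^{\ast}_{\mathfrak{pd}}(\mathbf{T}_{\mathbf{\Lambda}}(\coprod_{\varphi}^{\dagger}X))$, so that $\eta^{\mathbf{d}}_{X}$ commutes with the $\sigma$-operations on both sides. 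Unfolding this, together with the induction hypothesis applied to each $P_{k}$ and the definition of the algebraic structure $G^{\mathbf{d}} = \kappa_{\varphi}^{B}\comp \ext{G}_{\ext{\varphi}\bprod\ext{\varphi}}\comp d$ of $\mathbf{d}^{\ast}_{\mathfrak{pd}}(\mathbf{A})$, yields the equality at $\sigma(P_{0},\ldots,P_{\bb{w}-1})$.

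The main obstacle is the careful bookkeeping required in the inductive step to match the two indexings: the isomorphism $\kappa_{\varphi}^{B}$ must be checked to intertwine the iterated product structure of $B_{\ext{\varphi}(w)}$ (over which $\ext{G}$ realises the term-tuple $d(\sigma)$) with the nested product structure $(B_{\varphi})_{w}=\prod_{k\in\bb{w}}B_{\varphi(w_{k})}$ (on which $\sigma^{\mathbf{d}^{\ast}_{\mathfrak{pd}}(\mathbf{A})}$ acts), in a way compatible with the simultaneous substitution of the tuples $\eta^{\mathbf{d}}_{X,w_{k}}(P_{k})$ into $d(\sigma)$. Once this combinatorial compatibility is in place, the argument reduces to the exchange law $\ext{a}_{s}(P)=P^{\mathbf{A}}(a)$ and the commutation of $\mathbf{\Sigma}$-homomorphisms with term operations, both exactly as in the single-signature case treated in Proposition~\ref{invarianza}.
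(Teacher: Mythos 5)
Your proposal is correct, and its first half coincides with the paper's argument: the paper proves this proposition by declaring it ``analogous to'' Proposition~\ref{invarianza}, whose proof is exactly your reduction --- write $Y\iso\coprod_{s\in S,\,y\in Y_{s}}\delta^{s}$, use that $\coprod_{\varphi}^{\dagger}$ preserves coproducts as a left adjoint (Proposition~\ref{adjuncionDaggerNatS}), and turn the square into a product of squares indexed by $(s,y)$. Where you diverge is in the treatment of the delta case. The paper's route runs through the polyderivor analogue of Proposition~\ref{RealizacionNatural} and Corollary~\ref{fundPdextranatural}: one observes that $(\ext{a})_{\varphi}\comp\eta^{\mathbf{d}}_{X}$ and $\ext{\bigl((\theta_{\varphi}^{\dagger\nat})_{X,A}(a)\bigr)}$ are both $\mathbf{\Sigma}$-homomorphisms from $\mathbf{T}_{\mathbf{\Sigma}}(X)$ to $\mathbf{d}^{\ast}_{\mathfrak{pd}}(\mathbf{A})$ that agree on the generators $X$, hence are equal by the uniqueness clause of the universal property of the free algebra; the identity you isolate then falls out by evaluating at $P$. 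You instead prove that identity by structural induction on $P$. Both are sound, but the universal-property argument packages your induction once and for all and, more to the point, absorbs the step you correctly flag as the main obstacle: the compatibility of $\kappa_{\varphi}^{B}$ with the nested versus flattened product indexings never has to be checked coordinatewise, because it is already built into the statement that $\eta^{\mathbf{d}}_{X}$ and $\ext{(\cdot)}$ are homomorphisms into $\mathbf{d}^{\ast}_{\mathfrak{pd}}$ of the relevant algebras. Your explicit induction buys a concrete computation with $G^{\mathbf{d}}=\kappa_{\varphi}^{B}\comp\ext{G}_{\ext{\varphi}\bprod\ext{\varphi}}\comp d$, which is illuminating, at the price of a combinatorial verification that the slicker argument renders unnecessary.
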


\begin{proof}
The proof is analogous to that of Proposition~$16$.
\end{proof}



It is now when we can state that the pseudo-functor $\mathrm{Ter}$
from $\mathbf{Sig}$ to the $2$\nobreakdash-category $\mathbf{Cat}$, defined in the
second section, can be lifted up to a pseudo-functor
$\mathrm{Ter}_{\mathfrak{pd}}$ from $\mathbf{Sig}_{\mathfrak{pd}}$ to
the $2$-category $\mathbf{Cat}$.

\begin{proposition}\label{defPseudofunctorTerfuj}
There exists a pseudo-functor $\mathrm{Ter}_{\mathfrak{pd}}$ from
$\mathbf{Sig}_{\mathfrak{pd}}$ to the $2$-category $\mathbf{Cat}$ given by the
following data
\begin{enumerate}
\item The object mapping of $\mathrm{Ter}_{\mathfrak{pd}}$ is that which
      sends a signature $\mathbf{\Sigma}$ to
      $\mathrm{Ter}_{\mathfrak{pd}}(\mathbf{\Sigma}) = \mathbf{Ter}(\mathbf{\Sigma})$.

\item The morphism mapping of $\mathrm{Ter}_{\mathfrak{pd}}$ is that
      which sends a signature morphism $\mathbf{d}\colon
      \mathbf{\Sigma}\mor \mathbf{\Lambda}$ to
      $\mathbf{d}_{\diamond}^{\mathfrak{pd}}\colon \mathbf{Ter}(\mathbf{\Sigma})\mor
      \mathbf{Ter}(\mathbf{\Lambda})$.

\item For $\mathbf{d}\colon \mathbf{\Sigma}\mor \mathbf{\Lambda}$ and
      $\mathbf{e}\colon \mathbf{\Lambda}\mor \mathbf{\Omega}$, the natural
      isomorphism $\gamma^{\mathbf{d},\mathbf{e}}$ from the composite
      $\mathbf{e}_{\diamond}^{\mathfrak{pd}} \comp
      \mathbf{d}_{\diamond}^{\mathfrak{pd}}$ to
      $(\mathbf{e}\comp\mathbf{d})_{\diamond}^{\mathfrak{pd}}$ is that which is
      defined, for every $S$-sorted set $X$, as the isomorphism
      $\gamma^{\mathbf{d},\mathbf{e}}_{X}\colon
      \coprod_{\psi}^{\dagger}\coprod_{\varphi}^{\dagger}X \mor
      \coprod_{\ext{\psi} \comp \varphi}^{\dagger} X$ in
      $\mathbf{Ter}(\mathbf{\Omega})$ that corresponds to the $U$-sorted
      mapping
      $$
      \xymatrix@C=60pt@R=40pt{ \coprod_{\ext{\psi} \comp
      \varphi}^{\dagger}X \ar[r]^{\rho_{X}} &
      \coprod_{\psi}^{\dagger}\coprod_{\varphi}^{\dagger}X
      \ar[r]^-{\eta_{\coprod_{\psi}^{\dagger}\coprod_{\varphi}^{\dagger}X}}
      &
      \mathrm{T}_{\mathbf{\Omega}}(\coprod_{\psi}^{\dagger}\coprod_{\varphi}^{\dagger}X),
      }
      $$
      where $\rho$ is the isomorphism obtained from the following diagram
$$
  \xymatrix@C=12ex@R=8ex{
  \mathbf{Set}^{S}
     \ar`r[rddd]+/r185pt/`[rddd]^{\coprod_{\ext{\psi}\comp \varphi}}="a" [rddd]
    \ar[rd]^{\coprod_{\varphi}}
    \ar[d]_{\coprod_{\varphi}^{\dagger}}
    \\
  \mathbf{Set}^{T}
    \ar[rd]^{\coprod_{\psi}}
    \ar[d]_{\coprod_{\psi}^{\dagger}}
    &
  \mathbf{Set}^{\fmon{T}}
  \ar`r[dd]+/r135pt/`[dd]^{\coprod_{\ext{\psi}}}="b" [dd]
    \ar@{}[d]|{=}
    \ar[rd]^{\coprod_{\fmon{\psi}}}
    \ar[l]_{(\farg)^{\dagger_{T}}}
      \\
  \mathbf{Set}^{U}
     &
  \mathbf{Set}^{\fmon{U}}
    \ar[l]_{(\farg)^{\dagger_{U}}}
     &
  \mathbf{Set}^{\ffmon{U}}
    \ar[l]_{(\farg)^{\dagger_{\fmon{U}}}}
    \ar[ld]^{\coprod_{\cncat_{U}}}
    \\
  &
  \mathbf{Set}^{\fmon{U}}
    \ar@{}[u]|{\dir{=>}}_{(\zeta^{\fmon{U}})^{-1}}
    \ar[lu]^{(\farg)^{\dagger_{U}}}
  \ar@{}"a";"b"|{\dir{=>}}_{(\gamma^{\varphi,\ext{\psi}})^{-1}}
  \ar@{}"3,3";"b"|{=}
    }
$$
  and $\gamma$ the isomorphism
  associated to the pseudo-functor $\mathrm{MSet}^{\ttcoprod}$.

\item For $\mathbf{\Sigma}$, the natural isomorphism
      $\nu^{\mathbf{\Sigma}}$ from $\Id_{\mathbf{Ter}(\mathbf{\Sigma})}$ to
      $(\between_{S},\eta^{\Ben_{S}}_{\Sigma})^{\mathfrak{pd}}_{\diamond}$
      is that which is defined, for an $S$-sorted set $X$, as the
      isomorphism $\nu^{\mathbf{\Sigma}}_{X}$ from  $X$ to
      $\coprod_{\between_{S}}^{\dagger}X $ that corresponds to the
      $S$-sorted mapping $\eta_{X}\comp \tau^{S}_{X}$ from
      $\coprod_{\between_{S}}^{\dagger}X$ to $\mathrm{T}_{\mathbf{\Omega}}(X)$,
      where $\tau^{S}$ is the natural isomorphism from
      $(\farg)^{\dagger_{S}}\comp \coprod_{\between_{S}}$ to
      $\Id_{\mathbf{Set}^{S}}$
      defined, for an $S$-sorted set $X$, as the $S$-sorted mapping
      whose $s$-th coordinate, for $s\in S$, sends an
      $((a,s),(s),0)\in (\coprod_{\between_{S}}^{\dagger}X)_{s}$ to
      $(\tau^{S}_{X})_{s}((a,s),(s),0) = a$.
\end{enumerate}
\end{proposition}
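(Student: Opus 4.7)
The plan is to verify in turn that the four pieces of data displayed in the statement fit together as a pseudo-functor, reducing as much as possible to the already-proved ordinary case of $\mathrm{Ter}$ (Proposition~\ref{pseudoPol}) and to the isomorphism $\mathbf{Sig}_{\mathfrak{pd}}\cong\mathbf{Kl}(\mathbb{T}_{\mathfrak{pd}})$ of Proposition~\ref{isoSigfujKlfuj}. First I would note that the morphism mapping is already a functor by Proposition~\ref{polyderivor induces 1-cell}, so there is nothing extra to check on each hom-set. The remaining content is: (i) that $\gamma^{\mathbf{d},\mathbf{e}}_{X}$ and $\nu^{\mathbf{\Sigma}}_{X}$ really are morphisms of $\mathbf{Ter}(\mathbf{\Omega})$ and $\mathbf{Ter}(\mathbf{\Sigma})$, respectively, i.e.\ that the underlying $U$- and $S$-sorted mappings land in the correct free algebras; (ii) that they are natural in $X$; (iii) that each is componentwise an isomorphism; and (iv) that the two coherence axioms of a pseudo-functor hold.

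For (i)--(iii) I would argue as follows. The components of $\gamma^{\mathbf{d},\mathbf{e}}$ are obtained by composing the isomorphism $\rho_{X}\colon\coprod_{\ext{\psi}\comp\varphi}^{\dagger}X\mor\coprod_{\psi}^{\dagger}\coprod_{\varphi}^{\dagger}X$ supplied by the natural isomorphisms $\gamma^{\varphi,\ext{\psi}}$ (of the pseudo-functor $\mathrm{MSet}^{\ttcoprod}$) and $\zeta^{\fmon{U}}$ (of Proposition~\ref{adjuncionDaggerNatS}) with the unit of the monad $\mathbb{T}_{\mathbf{\Omega}}$; since $\eta$ embeds any sorted set as (iso)identities in the dual Kleisli category, this automatically produces an iso\-morphism in $\mathbf{Ter}(\mathbf{\Omega})$. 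Naturality of $\gamma^{\mathbf{d},\mathbf{e}}$ in $X$ reduces to the naturality of $\gamma^{\varphi,\ext{\psi}}$ and of $\zeta^{\fmon{U}}$, together with the naturality of $\eta$. The argument for $\nu^{\mathbf{\Sigma}}$ is entirely analogous, using $\tau^{S}$ in place of $\rho$.

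For (iv) I would transport the problem along the isomorphism $\mathbf{Sig}_{\mathfrak{pd}}\cong\mathbf{Kl}(\mathbb{T}_{\mathfrak{pd}})$. The composition of polyderivors has already been shown (in the proof of that isomorphism and in the definition of $\mathbf{Sig}_{\mathfrak{pd}}$) to be associative up to the canonical isomorphisms coming from $\gamma^{\varphi,\ext{\psi}}$ and $\zeta^{\fmon{U}}$; the pentagon axiom for $\gamma^{\mathbf{d},\mathbf{e}}$ is then a direct consequence of the pentagon axiom for $\mathrm{MSet}^{\ttcoprod}$ combined with the coherence between $\zeta$ and $\gamma$ (both of which are implicit in Proposition~\ref{adjuncionDaggerNatS}). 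The triangle axioms for $\nu^{\mathbf{\Sigma}}$ reduce similarly to the unit-coherence of $\mathrm{MSet}^{\ttcoprod}$ (Proposition~1, clause~(4)) together with the triangle identities of the adjunction $\coprod^{\dagger}_{\varphi}\ladj\Delta^{\nat}_{\varphi}$.

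The main obstacle I anticipate is (iv), and specifically the pentagon: the composite polyderivor $\mathbf{e}\comp\mathbf{d}$ is built via extension of $e$ to a homomorphism $\ext{e}$ of B\'enabou algebras and then reindexed along $\ext{\varphi}\bprod\ext{\varphi}$, and the putative $2$-cell $\gamma^{\mathbf{d},\mathbf{e}}$ glues the two free B\'enabou-algebra constructions to the single one on $\coprod_{\ext{\psi}\comp\varphi}^{\dagger}$. Making the pentagon axiom commute means verifying that, for three composable polyderivors, the two different ways of associating the triple iterate of free term algebras are identified by a unique iso\-morphism built from $\gamma^{\varphi,\ext{\psi}}$, $\zeta$, and the unit/multiplication of $\mathbb{T}_{\mathbf{\Xi}}$; this is essentially the coherence between the monad structure on $\mathrm{T}_{\star}$ (encoded by $\gamma$) and the distributive law relating $\coprod^{\dagger}$ with composition of signature translations. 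I expect to handle it by exhibiting both sides of the pentagon as the canonical comparison arrow between the two orders of Kleisli extension for $\mathbb{T}_{\mathfrak{pd}}$, and invoking the fact (already used in Proposition~\ref{isoSigfujKlfuj}) that these comparisons are uniquely determined by freeness.
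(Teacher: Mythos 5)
The paper gives no proof of this proposition at all: it relies on the convention announced in Section~2 that, for a pseudo-functor, only the object and morphism mappings are made explicit when the natural isomorphisms and coherence conditions are not in doubt (compare Proposition~\ref{pseudoPol}, also stated without proof, and Proposition~\ref{defPseudofunctorAlgfuj}, where the analogous verification for $\Alg_{\mathfrak{pd}}$ is carried out by a direct diagram chase showing that $\iota^{C}_{\fmon{\psi}\comp\varphi}$ is a homomorphism). Your plan therefore supplies a verification the paper omits, and its overall architecture --- reduce points (i)--(iii) to the set-level isomorphisms $\gamma^{\varphi,\ext{\psi}}$, $\zeta$, $\tau^{S}$ composed with the unit $\eta$, and reduce the coherence axioms to the Kleisli description of $\mathbf{Sig}_{\mathfrak{pd}}$ and uniqueness of extensions by freeness --- is sound and arguably cleaner than the componentwise computation the paper performs in the $\Alg_{\mathfrak{pd}}$ case. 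One point deserves more care than your sketch gives it: the naturality of $\gamma^{\mathbf{d},\mathbf{e}}$ and $\nu^{\mathbf{\Sigma}}$ must hold against \emph{arbitrary} morphisms $P\colon X\mor Y$ of $\mathbf{Ter}(\mathbf{\Sigma})$, i.e.\ sorted mappings $Y\mor\mathrm{T}_{\mathbf{\Sigma}}(X)$, not merely against mappings of sorted sets, so ``naturality of $\gamma^{\varphi,\ext{\psi}}$, $\zeta^{\fmon{U}}$ and $\eta$'' is not literally enough; you must additionally use that $\mathbf{d}_{\diamond}^{\mathfrak{pd}}(P)$ is defined as a canonical (Kleisli) extension, so that the two composites around the naturality square are homomorphic extensions of the same map on generators and hence coincide --- exactly the device used in the proof of Proposition~\ref{functorMorfismoSignaturas}. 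With that supplement, and with the pentagon handled as you propose by exhibiting both sides as the unique freeness-determined comparison (which is legitimate, since all the 2-cells involved are images under $\eta$ of bijections between the underlying sorted sets), the argument goes through.
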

%


Following this we state a lemma from which we will get a
pseudo-extranatural transformation that formalizes the invariant
character of the realization of terms in algebras relative to the
polyderivors between signatures.

\begin{lemma}\label{lemmaPdExtranaturalFuj}
Let $\mathbf{\Sigma}$ be a signature and
$\mawt{\mathbf{id}}_{\mathfrak{pd}(\mathbf{\Sigma})}
=(\id_{\fmon{S}},\mawt{\id}_{\mathfrak{pd}(\mathbf{\Sigma})})$ the
polyderivor from $\mathfrak{pd}(\mathbf{\Sigma})$ to $\mathbf{\Sigma}$,
where $\id_{\fmon{S}}\colon\fmon{S}\mor \fmon{S}$ is the identity at
$\fmon{S}$ while $\mawt{\id}_{\mathfrak{pd}(\mathbf{\Sigma})}$ is the
canonical isomorphism from $\mathrm{T}_{\Ben_{S}}
(\tcoprod_{1\bprod\between_{S}}\Sigma)_{\cncat_{S}\bprod 1}$ to
$\mathrm{BTer}_{S}
(\tcoprod_{1\bprod\between_{S}}\Sigma)_{\cncat_{S}\bprod 1}$.  Then
the family $(\theta^{\dagger\nat}_{X,A})_{(\mathbf{A},X)\in
\mathbf{Alg}(\mathbf{\Sigma})\bprod
\mathbf{Ter}(\mathfrak{pd}(\mathbf{\Sigma}))}$ is a natural isomorphism
as shown in the following diagram
$$
\xymatrix@C=50pt@R=40pt{
{\mathbf{Alg}(\mathbf{\Sigma})\bprod
\mathbf{Ter}(\mathfrak{pd}(\mathbf{\Sigma}))}
  \ar[r]^-{\alpha_{\mathbf{\Sigma}}
           \bprod \mathrm{Id}}
  \ar[d]_{\mathrm{Id}\bprod
          \beta_{\mathbf{\Sigma}}} &
{\mathbf{Alg}(\mathfrak{pd}(\mathbf{\Sigma}))\bprod
\mathbf{Ter}(\mathfrak{pd}(\mathbf{\Sigma}))}\xyn{d}
  \ar[d]^{\mathrm{Tr}^{\mathfrak{pd}(\mathbf{\Sigma})}} \\
{\mathbf{Alg}(\mathbf{\Sigma})\bprod
\mathbf{Ter}(\mathbf{\Sigma})}\xyn{c}
  \ar[r]_-{\mathrm{Tr}^{\mathbf{\Sigma}}} &
\mathbf{Set}
\ar@{} "c";"d"|{\dir{=>}}^{\theta^{^{\dagger\nat}}}
%
}$$
where, to abbreviate, $\alpha_{\mathbf{\Sigma}} =
(\mawt{\mathbf{id}}_{\mathfrak{pd}(\mathbf{\Sigma})})^{\ast}_{\mathfrak{pd}}$
and $\beta_{\mathbf{\Sigma}} =
(\mawt{\mathbf{id}}_{\mathfrak{pd}(\mathbf{\Sigma})})^{\mathfrak{pd}}_{\diamond}$.
\end{lemma}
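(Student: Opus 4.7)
The plan is to recognize the lemma as an instance, at the specific polyderivor $\mawt{\mathbf{id}}_{\mathfrak{pd}(\mathbf{\Sigma})}$ (whose underlying mapping of sorts is $\id_{\fmon{S}}$), of the general invariance statement for term realization under polyderivors proved just above, combined with the hom-set bijection of the adjunction $\coprod^{\dagger}_{\id_{\fmon{S}}}\ladj \Delta^{\nat}_{\id_{\fmon{S}}}$. First I would unwind the four corners of the square. Because $\varphi=\id_{\fmon{S}}$, one has, essentially, $\beta_{\mathbf{\Sigma}}(X)=\coprod^{\dagger}_{\id_{\fmon{S}}}X\iso X^{\dagger_{S}}$ by definition of $\mathbf{d}_{\diamond}^{\mathfrak{pd}}$ on objects (Proposition~\ref{polyderivor induces 1-cell}), and the underlying $\fmon{S}$-sorted set of $\alpha_{\mathbf{\Sigma}}(\mathbf{A})$ is $\Delta^{\nat}_{\id_{\fmon{S}}}A\iso A^{\nat_{S}}$ by the definition of $\mathbf{d}^{\ast}_{\mathfrak{pd}}$. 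Hence the two functors under comparison send $(\mathbf{A},X)$ to $\mathrm{Hom}(X^{\dagger_{S}},A)$ and $\mathrm{Hom}(X,A^{\nat_{S}})$, respectively, and the candidate component $\theta^{\dagger\nat}_{X,A}$ is, pointwise, nothing more than the adjunction bijection.

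Next I would verify naturality in the $\mathbf{Alg}(\mathbf{\Sigma})$-variable. Given a $\mathbf{\Sigma}$-homomorphism $f\colon \mathbf{A}\mor\mathbf{B}$ and an $\fmon{S}$-sorted set $X$, the required square
\begin{equation*}
(f^{\nat_{S}})_{X}\comp\theta^{\dagger\nat}_{X,A}=\theta^{\dagger\nat}_{X,B}\comp f_{X^{\dagger_{S}}}
\end{equation*}
is exactly dinaturality of the unit $\eta_{\id_{\fmon{S}}}^{\dagger\nat}$ in $A$, i.e., the naturality of $\theta_{\id_{\fmon{S}}}^{\dagger\nat}$ in its target argument; this is formal and routine, and also accounts for the fact that $\alpha_{\mathbf{\Sigma}}(f)=f^{\nat_{S}}$.

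Naturality in the $\mathbf{Ter}(\mathfrak{pd}(\mathbf{\Sigma}))$-variable is the substantive step and where the main obstacle lies: given a term $P\colon X\mor Y$ in $\mathbf{Ter}(\mathfrak{pd}(\mathbf{\Sigma}))$, one must show commutativity of
\begin{equation*}
\theta^{\dagger\nat}_{Y,A}\comp (\beta_{\mathbf{\Sigma}}(P))^{\mathbf{A}}=P^{\alpha_{\mathbf{\Sigma}}(\mathbf{A})}\comp\theta^{\dagger\nat}_{X,A}.
\end{equation*}
But this is literally the conclusion of the proposition immediately preceding the lemma (the analogue of Proposition~\ref{invarianza} for polyderivors), specialised to the polyderivor $\mawt{\mathbf{id}}_{\mathfrak{pd}(\mathbf{\Sigma})}$; indeed one just has to remark that $(\mawt{\mathbf{id}}_{\mathfrak{pd}(\mathbf{\Sigma})})^{\mathfrak{pd}}_{\diamond}(P)=\beta_{\mathbf{\Sigma}}(P)$ and $(\mawt{\mathbf{id}}_{\mathfrak{pd}(\mathbf{\Sigma})})^{\ast}_{\mathfrak{pd}}(\mathbf{A})=\alpha_{\mathbf{\Sigma}}(\mathbf{A})$, and that the natural isomorphism appearing there is $\theta^{\dagger\nat}_{\farg,A}$. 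The only thing to check carefully is that the structural isomorphism $\mawt{\id}_{\mathfrak{pd}(\mathbf{\Sigma})}$ between $\mathrm{T}_{\Ben_{S}}(\coprod_{1\bprod\between_{S}}\Sigma)_{\cncat_{S}\bprod 1}$ and $\mathrm{BTer}_{S}(\coprod_{1\bprod\between_{S}}\Sigma)_{\cncat_{S}\bprod 1}$ does not perturb the hypotheses of that proposition; this is immediate since the isomorphism is precisely the identification used to define $\mathfrak{pd}$.

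Finally, combining the two partial naturalities and invoking bifunctoriality of products of categories gives the joint naturality, so $(\theta^{\dagger\nat}_{X,A})_{(\mathbf{A},X)}$ is the desired natural transformation, and invertibility at each component, again by the adjunction bijection, upgrades it to a natural isomorphism.
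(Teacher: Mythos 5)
Your proof is correct and follows essentially the same route as the paper: the paper likewise obtains the lemma by specializing the pseudo-extranaturality of $(\mathrm{Tr},\theta)$ to the morphism $\mawt{\mathbf{id}}_{\mathfrak{pd}(\mathbf{\Sigma})}$, displaying the resulting commutative cube whose two faces are exactly your two partial naturalities (the one in the term variable being the polyderivor invariance proposition, the one in the algebra variable being naturality of $\theta^{\dagger\nat}$ together with $f$ being a homomorphism). If anything, your explicit identification of the term-variable naturality with the proposition immediately preceding the lemma is a slightly more precise citation than the paper's own appeal to Proposition~\ref{lemaPdextranatural}.
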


\begin{proof}
By Proposition~\ref{lemaPdextranatural}, 
we have that $\mathrm{Tr}$ is
a pseudo-extranatural transformation, hence, for the morphism
$\mawt{\mathbf{id}}_{\mathfrak{pd}(\mathbf{\Sigma})}\colon
\mathfrak{pd}(\mathbf{\Sigma})\mor\mathbf{\Sigma}$, the above diagram
iso-commutes. In particular, for a morphism $(f,P)\colon (\mathbf{A},X)\mor
(\mathbf{B},Y)$ in the product category $\mathbf{Alg}(\mathbf{\Sigma})\bprod
\mathbf{Ter}(\mathfrak{pd}(\mathbf{\Sigma}))$, we have the
configuration
$$
\xy
0;<1ex,0ex>:<0ex,1ex>::
%
\POS 0+(0,0)
\xymatrix@C=20ex@R=8ex@!0{
(\mathbf{A},X) \ar[r]^{(f,P)} & (\mathbf{B},Y)
\save"1,1"."1,2"!C*\frm{}="a"\restore 
}
\POS 0+(20,-10)
\xymatrix@C=21ex@R=8ex@!0{
(\alpha_{\mathbf{\Sigma}}(\mathbf{A}),X)
  \ar[r]^{(f^{\nat},P)} &
(\alpha_{\mathbf{\Sigma}}(\mathbf{B}),Y)
\save"1,1"."1,2"!C*+<5ex>\frm{}="b"\restore 
}
\POS 0+(-20,-10)
\xymatrix@C=21ex@R=8ex@!0{
(\mathbf{A},X^{\dagger})
  \ar[r]^{(f,\beta_{\mathbf{\Sigma}}(P))}&
(\mathbf{B},Y^{\dagger})
\save"1,1"."1,2"!C*+<5ex>\frm{}="c"\restore 
}
\POS 0+(14,-20)
\xymatrix"d"@C=11ex@R=11ex@!0{
&
A^{\nat}_{X}
  \ar[rd]|*+{P^{\alpha_{\mathbf{\Sigma}}(\mathbf{A})}}
  \ar[ld]|*+{f^{\nat}_{X}}
  \\
B^{\nat}_{X}
  \ar[rd]|*{P^{\alpha_{\mathbf{\Sigma}}(\mathbf{B})}}
  &&
A^{\nat}_{Y}
  \ar[ld]|*+{f^{\nat}_{Y}}
  \\
&B^{\nat}_{Y}
\save "d1,2"."d3,2"!C*+<1ex>\frm{}="d"\restore 
}
\POS (-14,-25)
\xymatrix"e"@C=12ex@R=11ex@!0{
&
A_{X^{\dagger}}
  \ar[rd]|*+{\beta_{\mathbf{\Sigma}}(P)^{\mathbf{A}}}
  \ar[ld]|*+{f_{X^{\dagger}}}
  \\
B_{X^{\dagger}}
  \ar[rd]|*+{\beta_{\mathbf{\Sigma}}(P)^{\mathbf{B}}}
  &&
A_{Y^{\dagger}}
  \ar[ld]|*+{f_{Y^{\dagger}}}
  \\
&
B_{Y^{\dagger}}
\save "e1,2"."e3,2"!C*+<5ex>\frm{}="e"\restore 
}
\ar "e1,2";"d1,2"|*+{\theta^{\dagger\nat}_{X,A}}
\ar "e2,1";"d2,1"|*+{\theta^{\dagger\nat}_{X,B}}
\ar "e2,3";"d2,3"|*+{\theta^{\dagger\nat}_{Y,A}}
\ar "e3,2";"d3,2"|*+{\theta^{\dagger\nat}_{Y,B}}
\ar @{|->}"a";"b"
\ar @{|->}"a";"c"
\ar @{|->}"b";"d"
\ar @{|->}"c";"e"
\endxy
$$
and $(\theta^{\dagger\nat}_{X,A})_{(\mathbf{A},X)\in
\mathbf{Alg}(\mathbf{\Sigma})\bprod
\mathbf{Ter}(\mathfrak{pd}(\mathbf{\Sigma}))}$ is a natural
isomorphism.
\end{proof}

\begin{remark}
The functors $\alpha_{\mathbf{\Sigma}} =
(\mawt{\mathbf{id}}_{\mathfrak{pd}(\mathbf{\Sigma})})^{\ast}_{\mathfrak{pd}}$
from $\mathbf{Alg}(\mathbf{\Sigma})$ to
$\mathbf{Alg}(\mathbf{\mathfrak{pd}(\Sigma}))$ are the components of a
natural transformation $\alpha$ from $\Alg$ into $\Alg\comp
\mathfrak{pd}^{\opp}$, both from $\ct{Sig}^{\opp}$ to $\ct{Cat}$,
as in the following diagram
$$
\xymatrix@C=40pt@R=8ex{
\ct{Sig}^{\opp}
\ar[rr]^{\mathfrak{pd}^{\opp}}
\ar[dr]_{\Alg}="x" &&
\ct{Sig}^{\opp}\xyn{y}
\ar[dl]^{\Alg} \\
& \ct{Cat}
\ar @{} "x"+<0pt,-7pt>;"y"+<0pt,-7pt>|{\dir{=>}}^*{\alpha}
}
$$
In its turn, the functors $\beta_{\mathbf{\Sigma}} =
(\mawt{\mathbf{id}}_{\mathfrak{pd}(\mathbf{\Sigma})})^{\mathfrak{pd}}_{\diamond}$
from $\mathbf{Ter}(\mathbf{\mathfrak{pd}(\Sigma}))$ to
$\mathbf{Ter}(\mathbf{\Sigma})$ are the components of a natural
transformation $\beta$ from $\mathrm{Ter}\comp \mathfrak{pd}$ into
$\mathrm{Ter}$, both from $\ct{Sig}$ to $\ct{Cat}$,
as in the following diagram
$$
\xymatrix@C=40pt@R=8ex{
\ct{Sig}
\ar[rr]^{\mathfrak{pd}}
\ar[dr]_{\mathrm{Ter}}="x" &&
\ct{Sig}\xyn{y}
\ar[dl]^{\mathrm{Ter}} \\
& \ct{Cat}
\ar @{} "y"+<0pt,-7pt>;"x"+<0pt,-7pt>|{\dir{=>}}_*{\beta}
}
$$
Besides, if for a polyderivor $\mathbf{d}\colon \mathbf{\Sigma}\mor
\mathbf{\Lambda}$ we denote by $\mawt{\mathbf{d}}\colon
\mathbf{\Sigma}\mor \mathfrak{pd}(\mathbf{\Lambda})$ the signatu\-re
morphism associated to $\mathbf{d}$, by the isomorphism between
$\mathbf{Sig}_{\mathfrak{pd}}$ and
$\mathbf{Kl}(\mathbb{T}_{\mathfrak{pd}})$ stated in
Proposition~\ref{isoSigfujKlfuj}, then we have that
\begin{enumerate}
\item $\mathbf{d}^{\ast}_{\mathfrak{pd}} =
      \mawt{\mathbf{d}}^{\ast}\comp \alpha_{\mathbf{\Lambda}}$, and

\item $\mathbf{d}_{\diamond}^{\mathfrak{pd}} =
      \beta_{\mathbf{\Lambda}}\comp \mawt{\mathbf{d}}_{\diamond}$.
\end{enumerate}
Therefore the morphism mappings of the pseudo-functors
$\mathrm{Alg}_{\mathfrak{pd}}$ and $\mathrm{Ter}_{\mathfrak{pd}}$ are
definable through the natural transformations $\alpha$ and $\beta$,
respectively.
\end{remark}

In the next proposition we construct a pseudo-functor
$\Alg_{\mathfrak{pd}}(\farg)\bprod \mathrm{Ter}_{\mathfrak{pd}}(\farg)$
from the product category $\mathbf{Sig}_{\mathfrak{pd}}^{\opp}\bprod
\mathbf{Sig}_{\mathfrak{pd}}$ to $\mathbf{Cat}$ (obtained from the
pseudo-functor $\Alg_{\mathfrak{pd}}$ and the pseudo-functor
$\mathrm{Ter}_{\mathfrak{pd}}$), and prove that the family
$\mathrm{Tr}=(\mathrm{Tr}^{\mathbf{\Sigma}})_{\mathbf{\Sigma}
\in\mathbf{Sig}_{\mathfrak{pd}}}$, together with the family
$\theta=(\theta^{\mathbf{d}})_{\mathbf{d}\in\Mor(\mathbf{Sig}_{\mathfrak{pd}})}$
is a pseudo-extranatural transformation from the pseudo-functor
$\Alg_{\mathfrak{pd}}(\farg)\bprod\mathrm{Ter}_{\mathfrak{pd}}(\farg)$
to the functor $\mathrm{K}_{\mathbf{Set}}$ (from
$\mathbf{Sig}_{\mathfrak{pd}}^{\opp}\bprod \mathbf{Sig}_{\mathfrak{pd}}$
to $\mathbf{Cat}$) that is constantly $\mathbf{Set}$.

\begin{proposition}\label{PdExtranaturalFuj}
There exists a pseudo-functor $\Alg_{\mathfrak{pd}}(\farg)\bprod
\mathrm{Ter}_{\mathfrak{pd}}(\farg)$ from the category
$\mathbf{Sig}^{\opp}_{\mathfrak{pd}}\bprod \mathbf{Sig}_{\mathfrak{pd}}$
to the $2$-category $\mathbf{Cat}$, obtained from the pseudo-functors
$\Alg_{\mathfrak{pd}}$ and $\mathrm{Ter}_{\mathfrak{pd}}$, which sends a
pair of signatures $(\mathbf{\Sigma},\mathbf{\Lambda})$ to the
category
$\mathbf{Alg}(\mathbf{\Sigma})\bprod\mathbf{Ter}(\mathbf{\Lambda})$,
and a pair of signature morphisms $(\mathbf{d},\mathbf{e})$ from
$(\mathbf{\Sigma},\mathbf{\Lambda})$ to
$(\mathbf{\Sigma}',\mathbf{\Lambda}')$ in
$\mathbf{Sig}_{\mathfrak{pd}}^{\opp}\bprod\mathbf{Sig}_{\mathfrak{pd}}$
to the functor $\mathbf{d}^{\ast}_{\mathfrak{pd}}\bprod
\mathbf{d}_{\diamond}^{\mathfrak{pd}}$ from
$\mathbf{Alg}(\mathbf{\Sigma})\bprod\mathbf{Ter}(\mathbf{\Lambda})$ to
$\mathbf{Alg}(\mathbf{\Sigma}')\bprod\mathbf{Ter}(\mathbf{\Lambda}')$.

Furthermore, the family of functors
$\mathrm{Tr}=(\mathrm{Tr}^{\mathbf{\Sigma}})
_{\mathbf{\Sigma}\in\mathbf{Sig}_{\mathfrak{pd}}}$,
together with the family
$\theta=(\theta^{\mathbf{d}})_{\mathbf{d}\in\Mor(\mathbf{Sig}_{\mathfrak{pd}})}$,
with $\theta^{\mathbf{d}}_{\mathbf{A},X}=\theta^{\dagger\nat}_{X,A}$, is a
pseudo-extranatural transformation from the pseudo-functor
$\Alg_{\mathfrak{pd}}(\farg)\bprod\mathrm{Ter}_{\mathfrak{pd}}(\farg)$
to the functor $\mathrm{K}_{\mathbf{Set}}$ from
$\mathbf{Sig}^{\opp}_{\mathfrak{pd}}\bprod \mathbf{Sig}_{\mathfrak{pd}}$ to
$\mathbf{Cat}$ that is constantly $\mathbf{Set}$.

\end{proposition}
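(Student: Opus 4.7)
The plan is to mirror the strategy used for Proposition~\ref{PdExtranatural}, taking advantage of the factorization of polyderivors through $\mathfrak{pd}$ provided by the isomorphism $\mathbf{Sig}_{\mathfrak{pd}}\iso \mathbf{Kl}(\mathbb{T}_{\mathfrak{pd}})$ of Proposition~\ref{isoSigfujKlfuj}. First, I would assemble the pseudo-functor $\Alg_{\mathfrak{pd}}(\farg)\bprod \mathrm{Ter}_{\mathfrak{pd}}(\farg)$ component-wise from the contravariant pseudo-functor $\Alg_{\mathfrak{pd}}$ of Proposition~\ref{defPseudofunctorAlgfuj} and the covariant pseudo-functor $\mathrm{Ter}_{\mathfrak{pd}}$ of Proposition~\ref{defPseudofunctorTerfuj}: its associator on a composable pair $((\mathbf{d},\mathbf{e}),(\mathbf{d}',\mathbf{e}'))$ is the cartesian product $\gamma^{\mathbf{d}'\comp \mathbf{d},\mathbf{d}'\comp\mathbf{d}}_{\Alg_{\mathfrak{pd}}}\times \gamma^{\mathbf{e},\mathbf{e}'}_{\mathrm{Ter}_{\mathfrak{pd}}}$ of the associators already defined there, and analogously for the unitor $\nu$. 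The pseudo-functor axioms for this product reduce componentwise to the pseudo-functor axioms already established in Propositions~\ref{defPseudofunctorAlgfuj} and \ref{defPseudofunctorTerfuj}, so this step is routine.

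Next I would verify that $(\mathrm{Tr},\theta)$ is a pseudo-extranatural transformation. The extranaturality square at a polyderivor $\mathbf{d}\colon \mathbf{\Sigma}\mor \mathbf{\Lambda}$ asserts that $\mathrm{Tr}^{\mathbf{\Sigma}}\comp(\mathbf{d}^{\ast}_{\mathfrak{pd}}\bprod\Id) \Rightarrow \mathrm{Tr}^{\mathbf{\Lambda}}\comp(\Id\bprod \mathbf{d}^{\mathfrak{pd}}_{\diamond})$ via $\theta^{\mathbf{d}}$. Using the factorizations recalled in the remark after Lemma~\ref{lemmaPdExtranaturalFuj}, namely $\mathbf{d}^{\ast}_{\mathfrak{pd}} = \mawt{\mathbf{d}}^{\ast}\comp \alpha_{\mathbf{\Lambda}}$ and $\mathbf{d}^{\mathfrak{pd}}_{\diamond} = \beta_{\mathbf{\Lambda}}\comp \mawt{\mathbf{d}}_{\diamond}$ where $\mawt{\mathbf{d}}\colon \mathbf{\Sigma}\mor \mathfrak{pd}(\mathbf{\Lambda})$ is the ordinary signature morphism representing $\mathbf{d}$, the square factors as a pasting of (i) the extranaturality square for $\mawt{\mathbf{d}}$ with respect to the original $\mathrm{Tr}$ (already known from Proposition~\ref{PdExtranatural}) and (ii) the extranaturality square for $\mawt{\mathbf{id}}_{\mathfrak{pd}(\mathbf{\Lambda})}$ established in Lemma~\ref{lemmaPdExtranaturalFuj}. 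The $2$-cell $\theta^{\mathbf{d}}$ with component $\theta^{\dagger\nat}_{X,A}$ is then the horizontal pasting of the corresponding $\theta^{\mawt{\mathbf{d}}}$ and $\theta^{\dagger\nat}$ instances, which is correct because $\theta^{\dagger\nat}$ is precisely the composite of the hom-isomorphisms $\theta$ and the compression/expansion adjunction unit, as in Proposition~\ref{adjuncionDaggerNatS}.

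The third and main step, and the most delicate one, is to verify the coherence of $\theta$ with the associator of the pseudo-functor $\Alg_{\mathfrak{pd}}(\farg)\bprod\mathrm{Ter}_{\mathfrak{pd}}(\farg)$. Concretely, for composable polyderivors $\mathbf{d}\colon \mathbf{\Sigma}\mor\mathbf{\Lambda}$ and $\mathbf{e}\colon \mathbf{\Lambda}\mor\mathbf{\Omega}$, and a $\mathbf{\Omega}$-algebra $\mathbf{A}$ with $X$ an $S$-sorted set, I need to show that the hexagonal coherence diagram
\begin{equation*}
\gamma^{\mathbf{d},\mathbf{e}}_{\Alg}\hcomp \theta^{\mathbf{e}\comp\mathbf{d}}
= (\theta^{\mathbf{d}}\hcomp \mathbf{e}^{\mathfrak{pd}}_{\diamond})\comp
  (\mathbf{d}^{\ast}_{\mathfrak{pd}}\hcomp \theta^{\mathbf{e}})\comp
  (\Alg\text{-assoc}\hcomp \gamma^{\mathbf{d},\mathbf{e}}_{\Ter})
\end{equation*}
commutes; that is, the two ways of going around the relevant hexagon agree. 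This reduces, after unwinding, to a compatibility between $\iota^{A}_{\ext{\psi}\comp \varphi}$ (the associator of $\Alg_{\mathfrak{pd}}$ from Proposition~\ref{defPseudofunctorAlgfuj}), $\gamma^{\mathbf{d},\mathbf{e}}_{X}$ (the associator of $\mathrm{Ter}_{\mathfrak{pd}}$ from Proposition~\ref{defPseudofunctorTerfuj}), and the iterated hom-adjunction isomorphisms $\theta^{\dagger\nat}_{X,A_{\ext{\psi}}}$, $\theta^{\dagger\nat}_{\coprod^{\dagger}_{\varphi}X,A}$, and $\theta^{\dagger\nat}_{X,A}$. The hard part is exactly this coherence check: it is a naturality-of-adjunction square combined with the functorial behaviour of the expansion/compression natural transformations $(\farg)^{\nat}$ and $(\farg)^{\dagger}$ under composition of mappings $\varphi\colon S\mor \fmon{T}$ and $\psi\colon T\mor \fmon{U}$. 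I would handle it by evaluating both legs on an arbitrary $a\in A_{\ext{\psi}\comp\varphi(X)}$ and tracking how the quadruples $(\farg,s,\varphi(s),i)$ of $\coprod^{\dagger}_{\varphi}X$ and the iterated quadruples of $\coprod^{\dagger}_{\psi}\coprod^{\dagger}_{\varphi}X$ get re-bracketed under $\rho_{X}$ defined in Proposition~\ref{defPseudofunctorTerfuj}, showing the equality coordinate-wise. Finally, compatibility with the unitor $\nu$ at each $\mathbf{\Sigma}$ reduces to the triviality that $\theta^{\dagger\nat}_{X,A}$ for $\varphi=\between_{S}$ is the canonical isomorphism $A_{X}\iso A^{\nat}_{X}$ compatible with $\tau^{S}_{X}$ and $\delta_{S}^{A}$, which is immediate from their explicit definitions.
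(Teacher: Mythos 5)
Your proposal follows essentially the same route as the paper's proof: the paper likewise reduces the extranaturality square for a polyderivor $\mathbf{d}$ to a pasting of the square for the associated ordinary signature morphism $\mawt{\mathbf{d}}$ (handled by the pseudo-extranaturality of $\mathrm{Tr}$ from the earlier proposition) with the square for $\mawt{\mathbf{id}}_{\mathfrak{pd}(\mathbf{\Lambda})}$ from Lemma~\ref{lemmaPdExtranaturalFuj}, using the factorizations $\mathbf{d}^{\ast}_{\mathfrak{pd}} = \mawt{\mathbf{d}}^{\ast}\comp \alpha_{\mathbf{\Lambda}}$ and $\mathbf{d}^{\mathfrak{pd}}_{\diamond} = \beta_{\mathbf{\Lambda}}\comp \mawt{\mathbf{d}}_{\diamond}$. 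Your additional attention to the coherence of $\theta$ with the associators and unitors is sound and in fact fills in a verification the paper dismisses as following "by the definitions of the involved entities."
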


\begin{proof}
We restrict ourselves to prove that, for every polyderivor $\mathbf{d}\colon
\mathbf{\Sigma}\mor \mathbf{\Lambda}$, the following diagram iso-commutes.
$$
\xymatrix@C=20ex@R=10ex{
{\mathbf{Alg}(\mathbf{\Lambda})\bprod\mathbf{Ter}(\mathbf{\Sigma})}
  \ar[r]^{\mathbf{d}^{\ast}_{\mathfrak{pd}}\bprod \mathrm{Id}}
  \ar[d]_{\mathrm{Id} \bprod \mathbf{d}_{\diamond}^{\mathfrak{pd}}} &
{\mathbf{Alg}(\mathbf{\Sigma})\bprod\mathbf{Ter}(\mathbf{\Sigma})}
  \ar[d]^{\mathrm{Tr}^{\mathbf{\Sigma}}} \\
{\mathbf{Alg}(\mathbf{\Lambda})\bprod\mathbf{Ter}(\mathbf{\Lambda})}
  \ar[r]_{\mathrm{Tr}^{\mathbf{\Lambda}}} &
\mathbf{Set}
}
$$
But in the following diagram, where, we recall, $\mawt{\mathbf{d}}\colon
\mathbf{\Sigma}\mor \mathfrak{pd}(\mathbf{\Lambda})$ is the signatu\-re
morphism associated to the polyderivor $\mathbf{d}\colon \mathbf{\Sigma}\mor
\mathbf{\Lambda}$, by Proposition~\ref{isoSigfujKlfuj},
$$
\xymatrix@C=11.5ex@R=12ex{
\xymn{\mathbf{Alg}(\mathbf{\Lambda})\bprod\mathbf{Ter}(\mathbf{\Sigma})} {1}
 &&&&
\xymn{\mathbf{Alg}(\mathbf{\Sigma})\bprod\mathbf{Ter}(\mathbf{\Sigma})}{2}
 \\
&&& \xymn[5ex,0ex]{\mathbf{Alg}(\mathfrak{pd}(\mathbf{\Lambda}))\bprod
\mathbf{Ter}(\mathbf{\Sigma})}{5} \\
&\xymn[-6ex,0ex]{\mathbf{Alg}(\mathbf{\Lambda})\bprod
\mathbf{Ter}(\mathfrak{pd}(\mathbf{\Lambda}))}{6}
&&\xymn[5ex,0ex]{\mathbf{Alg}(\mathfrak{pd}(\mathbf{\Lambda}))\bprod
\mathbf{Ter}(\mathfrak{pd}(\mathbf{\Lambda}))}{7}\\
\xymn{\mathbf{Alg}(\mathbf{\Lambda})\bprod\mathbf{Ter}(\mathbf{\Lambda})}{3}
 &&&&
\xymn[8ex,0ex]{\mathbf{Set}}{4}
\ar "1";"2"^{\mathbf{d}^{\ast}_{\mathfrak{pd}}\bprod \mathrm{Id}}
\ar "1";"5"|*+{\alpha_{\mathbf{\Lambda}}\bprod \mathrm{Id}}="a"
\ar "5";"2"|*+{\mawt{\mathbf{d}}^{\ast}\bprod \mathrm{Id}}
\ar "1"+<-7ex,-2ex>;"3"+<-7ex,+2ex>|*+{\mathrm{Id}\bprod
                                       \mathbf{d}_{\diamond}^{\mathfrak{pd}}}
\ar "1";"6"|*+{\mathrm{Id}\bprod \mawt{\mathbf{d}}_{\diamond}}
\ar "6";"3"^(.4){\mathrm{Id}\bprod \beta_{\mathbf{\Lambda}}}
\ar "5";"7"|*+{\mathrm{Id}\bprod \mawt{\mathbf{d}}_{\diamond}}="d"
\ar "6";"7"^-{\alpha_{\mathbf{\Lambda}}\bprod \mathrm{Id}}="b"
\ar "2"+<+8ex,-2ex>;"4"+<+0ex,+1.5ex>|*+{\mathrm{Tr}^{\mathbf{\Sigma}}}="e"
\ar "3";"4"_{\mathrm{Tr}^{\mathbf{\Lambda}}}="c"
\ar "7";"4"_(.4){\mathrm{Tr}^{\mathfrak{pd}(\mathbf{\Lambda})}}
\ar @{} "b";"c" |{(1)}
\ar @{} "d";"e" |{(2)}
}
$$
we have that the bottom trapezoid (1) iso-commutes by
Lemma~\ref{lemmaPdExtranaturalFuj}, the right-hand trapezoid (2)
iso-commutes because $\mathrm{Tr}$ is a pseudo-extranatural
transformation and the remaining subdiagrams commute by the
definitions of the involved entities.
\end{proof}

\begin{corollary}
The quadruple $\mathfrak{Tm}_{\mathfrak{pd}} =
(\mathbf{Sig}_{\mathfrak{pd}},\mathrm{Alg}_{\mathfrak{pd}},
\mathrm{Ter}_{\mathfrak{pd}},(\mathrm{Tr},\theta))$ is an institution
on $\mathbf{Set}$, the so-called \emph{many-sorted term institution of
Fujiwara}, or, to abbreviate, the \emph{term institution of Fujiwara}.
\end{corollary}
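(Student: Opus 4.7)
The proof plan is essentially bookkeeping: the corollary is a packaging statement asserting that the four data required by the definition of institution on $\mathbf{Set}$, as formulated a few pages earlier in the paper, are in fact available. Hence I will not prove anything substantive; I will merely enumerate the four components of $\mathfrak{Tm}_{\mathfrak{pd}}$ and cite the corresponding proposition for each.

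First, I would recall the definition: an institution on $\mathbf{Set}$ is a quadruple $(\mathbf{Sig},\Mod,\Sen,(\alpha,\beta))$ with $\mathbf{Sig}$ a category, $\Mod\colon\mathbf{Sig}^{\opp}\mor\mathbf{Cat}$ and $\Sen\colon\mathbf{Sig}\mor\mathbf{Cat}$ pseudo-functors, and $(\alpha,\beta)$ a pseudo-extranatural transformation from $\Mod(\farg)\bprod\Sen(\farg)$ to $\mathrm{K}_{\mathbf{Set}}$. I would then match each slot with the object we have constructed: the category slot is filled by $\mathbf{Sig}_{\mathfrak{pd}}$, already shown to be a category in Proposition~$2$ of this section; the contravariant pseudo-functor slot is filled by $\Alg_{\mathfrak{pd}}$, which is precisely what Proposition~\ref{defPseudofunctorAlgfuj} produces; and the covariant pseudo-functor slot is filled by $\mathrm{Ter}_{\mathfrak{pd}}$, which is precisely what Proposition~\ref{defPseudofunctorTerfuj} produces.

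Next I would observe that Proposition~\ref{PdExtranaturalFuj} supplies, in one stroke, both the combined pseudo-functor $\Alg_{\mathfrak{pd}}(\farg)\bprod\mathrm{Ter}_{\mathfrak{pd}}(\farg)$ on $\mathbf{Sig}_{\mathfrak{pd}}^{\opp}\bprod\mathbf{Sig}_{\mathfrak{pd}}$ and the pseudo-extranatural transformation $(\mathrm{Tr},\theta)$ from it to the constant functor $\mathrm{K}_{\mathbf{Set}}$. Since this is exactly the kind of transformation required by the definition of institution, the last slot is filled and there is nothing further to verify.

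Thus the proof reduces to a single sentence: \emph{By Propositions~\ref{defPseudofunctorAlgfuj}, \ref{defPseudofunctorTerfuj} and \ref{PdExtranaturalFuj}, the quadruple $(\mathbf{Sig}_{\mathfrak{pd}},\Alg_{\mathfrak{pd}},\mathrm{Ter}_{\mathfrak{pd}},(\mathrm{Tr},\theta))$ satisfies every clause of the definition of institution on $\mathbf{Set}$.} There is no obstacle and no remaining calculation; the real work was done in Proposition~\ref{PdExtranaturalFuj} (and, underlying it, in Lemma~\ref{lemmaPdExtranaturalFuj} together with the pseudo-extranaturality already established for the base case in Proposition~\ref{PdExtranatural}). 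The hardest step, then, is one that has already been carried out: verifying the coherence of the pseudo-extranaturality $(\mathrm{Tr},\theta)$ against the natural isomorphisms $\gamma^{\mathbf{d},\mathbf{e}}$ and $\nu^{\mathbf{\Sigma}}$ of the two pseudo-functors, which amounts to the commutativity of the composite diagram built from the naturality square of $\theta^{\dagger\nat}$ and from the decomposition $\mathbf{d}^{\ast}_{\mathfrak{pd}} = \mawt{\mathbf{d}}^{\ast}\comp\alpha_{\mathbf{\Lambda}}$ and $\mathbf{d}_{\diamond}^{\mathfrak{pd}} = \beta_{\mathbf{\Lambda}}\comp\mawt{\mathbf{d}}_{\diamond}$.
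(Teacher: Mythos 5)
Your proposal is correct and follows exactly the route the paper takes: the corollary is stated as an immediate consequence of Proposition~\ref{PdExtranaturalFuj}, with no further argument, since that proposition already supplies the combined pseudo-functor and the pseudo-extranatural transformation $(\mathrm{Tr},\theta)$, and the remaining slots of the definition of institution are filled by $\mathbf{Sig}_{\mathfrak{pd}}$, $\Alg_{\mathfrak{pd}}$ and $\mathrm{Ter}_{\mathfrak{pd}}$ as constructed earlier in the section. Your identification of where the substantive work lies (Proposition~\ref{PdExtranaturalFuj} together with Lemma~\ref{lemmaPdExtranaturalFuj}) is also accurate.
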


\begin{remark}
Since every standard signature morphism can be identified to a
polyderivor, the term institution is canonically embedded into the term
institution of Fujiwara.
\end{remark}

We close this section by constructing, for the category
$\mathbf{Sig}_{\mathfrak{pd}}$, the many-sorted equational institution
$\mathfrak{LEq}_{\mathfrak{pd}}$.  To do it we define a pseudo-functor
$\mathrm{LEq}_{\mathfrak{pd}}$ on $\mathbf{Sig}_{\mathfrak{pd}}$.

\begin{definition}
We denote by $\mathrm{LEq}_{\mathfrak{pd}}$ the pseudo-functor from
$\mathbf{Sig}_{\mathfrak{pd}}$ to
$\mathbf{Cat}_{\boldsymbol{\mathcal{V}}}$ given by the following data
\begin{enumerate}
\item The object mapping of $\mathrm{LEq}_{\mathfrak{pd}}$ is
      that which sends a signature $\mathbf{\Sigma}$ to the discrete
      category $\mathbf{LEq}(\mathbf{\Sigma})$ canonically associated
      to the set
      $$
      \textstyle \bigcup_{X,Y\in \boldsymbol{\mathcal{U}}}
      (\mathrm{Hom}(Y,\mathrm{T}_{\mathbf{\Sigma}}(X))^{2}\times \{(X,Y)\})
      $$
      of \emph{labelled} $\mathbf{\Sigma}$-\emph{equations}, i.e., the set of all
      pairs $((P,Q),(X,Y))$ with $(P,Q)$ a $\mathbf{\Sigma}$-equation
      of type $(X,Y)$, for some $X,Y\in \boldsymbol{\mathcal{U}}$.

\item The morphism mapping of $\mathrm{LEq}_{\mathfrak{pd}}$ is
      that which sends a polyderivor $\mathbf{d}$ from $\mathbf{\Sigma}$
      to $\mathbf{\Lambda}$ to the functor
      $\mathrm{LEq}_{\mathfrak{pd}}(\mathbf{d})$ from
      $\mathbf{LEq}(\mathbf{\Sigma})$ to
      $\mathbf{LEq}(\mathbf{\Lambda})$ which assigns to the labelled
      equation $((P,Q),(X,Y))$ in $\mathbf{LEq}(\mathbf{\Sigma})$ the
      labelled equation
      $$
      \textstyle\mathrm{LEq}_{\mathfrak{pd}}(\mathbf{d})((P,Q),(X,Y)) =
      ((\mathbf{d}_{\diamond}^{\mathfrak{pd}}(P),
      \mathbf{d}_{\diamond}^{\mathfrak{pd}}(Q)),
      (\coprod_{\varphi}^{\dagger}X,\coprod_{\varphi}^{\dagger}Y))
      $$
      in $\mathbf{LEq}(\mathbf{\Lambda})$.
\end{enumerate}
\end{definition}

\begin{corollary}
The quadruple $\mathfrak{LEq}_{\mathfrak{pd}} =
(\mathbf{Sig}_{\mathfrak{pd}},\mathrm{Alg}_{\mathfrak{pd}},
\mathrm{LEq}_{\mathfrak{pd}}, (\models,\theta))$ is an institution on
$\mathbf{2}$, the so-called \emph{many-sorted equational institution
of Fujiwara}, or, simply, the \emph{equational institution of
Fujiwara}.
\end{corollary}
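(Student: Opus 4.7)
The plan is to verify the four constituents required by the definition of an institution on $\mathbf{2}$: (i) the category $\mathbf{Sig}_{\mathfrak{pd}}$; (ii) the contravariant pseudo-functor $\mathrm{Alg}_{\mathfrak{pd}}$ from $\mathbf{Sig}_{\mathfrak{pd}}$ to $\mathbf{Cat}$; (iii) the pseudo-functor $\mathrm{LEq}_{\mathfrak{pd}}$ from $\mathbf{Sig}_{\mathfrak{pd}}$ to $\mathbf{Cat}_{\boldsymbol{\mathcal{V}}}$; and (iv) a pseudo-extranatural transformation $(\models,\theta)$ from the pseudo-functor $\Alg_{\mathfrak{pd}}(\farg)\bprod\mathrm{LEq}_{\mathfrak{pd}}(\farg)$ to the constant functor $\mathrm{K}_{\mathbf{2}}$. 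The first three items are already in place, namely by the definition of $\mathbf{Sig}_{\mathfrak{pd}}$, by Proposition~\ref{defPseudofunctorAlgfuj}, and by the definition of $\mathrm{LEq}_{\mathfrak{pd}}$ immediately preceding the corollary. Only the fourth item requires genuine work.

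First I would introduce, for each signature $\mathbf{\Sigma}$, the functor $\models^{\mathbf{\Sigma}}\colon\mathbf{Alg}(\mathbf{\Sigma})\bprod\mathbf{LEq}(\mathbf{\Sigma})\to\mathbf{2}$ sending a pair $(\mathbf{A},((P,Q),(X,Y)))$ to the truth value of $\mathbf{A}\models^{\mathbf{\Sigma}}_{X,Y}(P,Q)$ and sending morphisms to the unique admissible morphism of $\mathbf{2}$; functoriality on the second component is automatic since $\mathbf{LEq}(\mathbf{\Sigma})$ is discrete, while on the first component it reduces to the order-theoretic version of preservation of satisfaction, which follows from the fact that $\mathbf{\Sigma}$-homomorphisms commute with term operations. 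Next, for each polyderivor $\mathbf{d}\colon\mathbf{\Sigma}\to\mathbf{\Lambda}$, I would take $\theta^{\mathbf{d}}$ to be the identity $2$-cell witnessing the equality of truth values supplied by the satisfaction condition for polyderivors, established as a parenthetical remark just before Proposition~\ref{defPseudofunctorTerfuj}, namely
$$
\mathbf{d}^{\ast}_{\mathfrak{pd}}(\mathbf{A})\models^{\mathbf{\Sigma}}_{X,Y}(P,Q)\text{ iff }\mathbf{A}\models^{\mathbf{\Lambda}}_{\coprod_{\varphi}^{\dagger}X,\coprod_{\varphi}^{\dagger}Y}(\mathbf{d}_{\diamond}^{\mathfrak{pd}}(P),\mathbf{d}_{\diamond}^{\mathfrak{pd}}(Q)),
$$
itself a direct consequence of the invariance of term realization under polyderivor change proved just above that proposition.

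Finally I would check the coherence conditions for a pseudo-extranatural transformation. Compatibility with $2$-cells is vacuous because $\mathbf{Sig}_{\mathfrak{pd}}$ is being considered here as a plain category and because $\mathbf{2}$ has trivial $2$-structure; compatibility with the natural isomorphisms $\gamma^{\mathbf{d},\mathbf{e}}$ and $\nu^{\mathbf{\Sigma}}$ attached to the pseudo-functors $\Alg_{\mathfrak{pd}}$ and $\mathrm{LEq}_{\mathfrak{pd}}$ reduces, thanks to $\mathbf{2}$ being a preorder, to the mere equality of the relevant truth values; these equalities flow from the same sort of diagram-chase carried out in the proof of Proposition~\ref{PdExtranaturalFuj}, now applied to labelled equations rather than to terms. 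The main obstacle I foresee is precisely this coherence step, as one must transport the satisfaction predicate across the structural isomorphisms $\iota_{\fmon{\psi}\comp\varphi}^{C}$ and $\theta_{\varphi}^{\dagger\nat}$ governing the composition of polyderivors; the cleanest route is to view a labelled equation $((P,Q),(X,Y))$ as a pair of parallel morphisms in $\mathbf{Ter}(\mathbf{\Sigma})$ and then to deduce the desired coherence from the already-verified pseudo-extranaturality of $(\mathrm{Tr},\theta)$ in Proposition~\ref{PdExtranaturalFuj} together with the elementary observation that $P^{\mathbf{A}}=Q^{\mathbf{A}}$ iff $\mathbf{A}\models^{\mathbf{\Sigma}}_{X,Y}(P,Q)$.
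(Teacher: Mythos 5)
Your overall strategy is the one the paper intends: the corollary is stated there without any proof precisely because, once $\mathbf{Sig}_{\mathfrak{pd}}$, $\Alg_{\mathfrak{pd}}$ and $\mathrm{LEq}_{\mathfrak{pd}}$ are in place, the only substantive input is the satisfaction condition for polyderivors, which you correctly locate (the displayed equivalence just before the proposition establishing $\mathrm{Ter}_{\mathfrak{pd}}$ as a pseudo-functor, itself a consequence of the invariance of term realization under polyderivors), and the coherence conditions degenerate because $\mathbf{2}$ is a preorder. So your decomposition and your choice of key lemma are the right ones.

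There is, however, one step that fails as written: the claim that $\models^{\mathbf{\Sigma}}$ is functorial on the full category $\mathbf{Alg}(\mathbf{\Sigma})$ because $\mathbf{\Sigma}$-homomorphisms commute with term operations. Commutation gives $f_{Y}\comp P^{\mathbf{A}} = P^{\mathbf{B}}\comp f_{X}$ for $f\colon\mathbf{A}\mor\mathbf{B}$, so from $P^{\mathbf{A}}=Q^{\mathbf{A}}$ you only obtain $P^{\mathbf{B}}\comp f_{X}=Q^{\mathbf{B}}\comp f_{X}$, which does not yield $P^{\mathbf{B}}=Q^{\mathbf{B}}$ unless $f_{X}$ is surjective. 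In fact satisfaction of equations is not monotone along arbitrary homomorphisms in either direction (embed a one-element algebra, which satisfies every equation, into one that does not; or project a free algebra onto a quotient satisfying a nontrivial equation), so no orientation of the non-identity arrow of $\mathbf{2}$ makes $\models^{\mathbf{\Sigma}}$ a functor on $\mathbf{Alg}(\mathbf{\Sigma})\bprod\mathbf{LEq}(\mathbf{\Sigma})$. The repair is the convention the paper itself recalls from Goguen and Burstall, namely $\models_{\Sigma}\subseteq\bb{\mathbf{Mod}(\Sigma)}\times\cdots$: take the discrete category $\bb{\mathbf{Alg}(\mathbf{\Sigma})}$ in the first variable (as $\mathbf{LEq}(\mathbf{\Sigma})$ already is in the second), so that functoriality of $\models^{\mathbf{\Sigma}}$ becomes vacuous; the components $\theta^{\mathbf{d}}$ are then exactly the identities supplied by the satisfaction condition for polyderivors, and the extranaturality and coherence requirements reduce, as you say, to equalities of truth values that hold automatically in a preorder.
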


\section{Transformations of Fujiwara.}

Continuing the work begun in~\cite{tF59}, Fujiwara defines
in~\cite{tF60} an equivalence relation, the \emph{conjugation}, on the
set of families of basic mapping-formulas from a given single-sorted
signature into a like one, relative to a set of equations for the
target signature.  But to properly define such an equivalence relation
it is necessary to define beforehand, for two families of basic
mapping-formulas $(\Phi,P)$, with $\Phi = \{\,\varphi_{\mu}\mid \mu\in
p\,\}$, and $(\Psi,Q)$, with $\Psi = \{\,\psi_{\nu}\mid \nu\in q\,\}$,
from a single-sorted signature $\Sigma$ into a like one $\Lambda$, the
concept of transformation from $(\Phi,P)$ to $(\Psi,Q)$, relative to a
set of equations for the target signature.  One such transformation
will be a mapping $L\colon \Psi\mor \mathrm{T}_{\Lambda}(\Phi)$
subject to satisfy a certain compatibility condition that involves
$P$, $Q$, and the given set of equations.

Next we proceed to give a short, but precise, description of the
transformations between families of basic mapping-formulas and of the
derived equivalence relation of conjugation as stated by Fujiwara
in~\cite{tF60}.  Let $L$ be a mapping from $\Psi$ to
$\mathrm{T}_{\Lambda}(\Phi)$, i.e., a family $(L_{\nu})_{\nu\in q}$ of
terms for $\Lambda$ with variables in $\Phi$.  Then, for every $\nu\in
q$, we get
$$
  L_{\nu}^{\mathbf{T}_{\Lambda}(\Phi\times \vs{v_{n}})}\colon
  \mathrm{T}_{\Lambda}(\Phi\times \vs{v_{n}})^{\Phi}\mor
  \mathrm{T}_{\Lambda}(\Phi\times \vs{v_{n}}),
$$
the term operation on $\mathbf{T}_{\Lambda}(\Phi\times \vs{v_{n}})$
determined by $L_{\nu}$.  This follows from the fact that, for every
$\nu\in q$, $L_{\nu}$ belongs to $\mathrm{T}_{\Lambda}(\Phi)$, and because,
by the universal property of the free $\Lambda$-algebra on $\Phi$, we
have the $\Lambda$-homomorphism $(\mathrm{pr}_{\mu})_{\mu\in
p}^{\sharp}$ from $\mathbf{T}_{\Lambda}(\Phi)$ to
$\mathbf{T}_{\Lambda}(\Phi\times \vs{v_{n}})^{\mathbf{T}_{\Lambda}(\Phi\times
\vs{v_{n}})^{\Phi}}$, as seen in the following diagram
$$
\xymatrix@C=80pt@R=40pt{
{} &
\Psi
  \ar[d]^-{L} \\
\Phi
\ar[r]^{\eta_{\Phi}}
\ar[rd]_{(\mathrm{pr}_{\mu})_{\mu\in p}} &
\mathrm{T}_{\Lambda}(\Phi)
\ar[d]^{(\mathrm{pr}_{\mu})_{\mu\in p}^{\sharp}
        (= \mathrm{Tr}^{\Phi,\mathbf{T}_{\Lambda}(\Phi\times \vs{v_{n}})})} \\
{} &
\mathrm{T}_{\Lambda}(\Phi\times \vs{v_{n}})^{\mathrm{T}_{\Lambda}(\Phi\times
\vs{v_{n}})^{\Phi}}
}
$$
where, for every $\mu\in p$, $\mathrm{pr}_{\mu}$ is the $\mu$-th
projection from $\mathbf{T}_{\Lambda}(\Phi\times
\vs{v_{n}})^{\Phi}$ to $\mathbf{T}_{\Lambda}(\Phi\times \vs{v_{n}})$.

But, for every $n\in \mathbb{N}$ and $\sigma\in \Sigma_{n}$, we have
that $(P^{n}_{\varphi_{\mu},\sigma})_{\mu\in p}\in
\mathrm{T}_{\Lambda}(\Phi\times \vs{v_{n}})^{\Phi}$, hence, for every
$\nu\in q$, the term operation
$L_{\nu}^{\mathbf{T}_{\Lambda}(\Phi\times \vs{v_{n}})}$ is such that
$$
  L_{\nu}^{\mathbf{T}_{\Lambda}(\Phi\times \vs{v_{n}})}\nfunction
  {\mathrm{T}_{\Lambda}(\Phi\times \vs{v_{n}})^{\Phi}}
  {\mathrm{T}_{\Lambda}(\Phi\times \vs{v_{n}})}
  {(P^{n}_{\varphi_{\mu},\sigma})_{\mu\in p}}
  {L_{\nu}^{\mathbf{T}_{\Lambda}(\Phi\times
  \vs{v_{n}})}((P^{n}_{\varphi_{\mu},\sigma})_{\mu\in p})}
$$
i.e., it sends, in particular, the given family of terms
$(P^{n}_{\varphi_{\mu},\sigma})_{\mu\in p}$ in
$\mathrm{T}_{\Lambda}(\Phi\times \vs{v_{n}})$ to the term
$L_{\nu}^{\mathbf{T}_{\Lambda}(\Phi\times
\vs{v_{n}})}((P^{n}_{\varphi_{\mu},\sigma})_{\mu\in p})$ in
$\mathrm{T}_{\Lambda}(\Phi\times \vs{v_{n}})$.

On the other hand, given $n\in \mathbb{N}$ and $i\in n$ we have the
following commutative diagram
$$
\xymatrix{
\Phi \ar[r]^{\eta_{\Phi}}
\ar[d]_{\tp{\mathrm{id}_{\Phi},\kappa_{v_{i}}}} &
\mathrm{T}_{\Lambda}(\Phi)
\ar[d]^{\tp{\mathrm{id}_{\Phi},\kappa_{v_{i}}}^{@}} \\
\Phi\times \vs{v_{n}}
  \ar[r]_{\eta_{\Phi\times \vs{v_{n}}}} &
  \mathrm{T}_{\Lambda}(\Phi\times \vs{v_{n}})
}
$$
where $\tp{\mathrm{id}_{\Phi},\kappa_{v_{i}}}$ is the mapping which
sends $\varphi_{\mu}$ in $\Phi$ to $(\varphi_{\mu},v_{i})$ in
$\Phi\times \vs{v_{n}}$, and
$\tp{\mathrm{id}_{\Phi},\kappa_{v_{i}}}^{@} = (\eta_{\Phi\times
\vs{v_{n}}}\comp \tp{\mathrm{id}_{\Phi},\kappa_{v_{i}}})^{\sharp}$ is
the underlying mapping of the value of the functor
$\mathbf{T}_{\Lambda}$ in $\tp{\mathrm{id}_{\Phi},\kappa_{v_{i}}}$.
Then, from the family of mappings
$(\tp{\mathrm{id}_{\Phi},\kappa_{v_{i}}}^{@})_{i\in n}$, and since we
dispose of the mapping $L\colon \Psi\mor
\mathrm{T}_{\Lambda}(\Phi)$, we get the family of terms
$$
((\tp{\mathrm{id}_{\Phi},\kappa_{v_{i}}}^{@}(L_{\nu}))_{\nu\in
q})_{i\in n}\in (\mathrm{T}_{\Lambda}(\Phi\times
\vs{v_{n}})^{\Psi})^{n},
$$
or, because of the isomorphism $(\mathbf{T}_{\Lambda}(\Phi\times
\vs{v_{n}})^{\Psi})^{n}\cong \mathbf{T}_{\Lambda}(\Phi\times
\vs{v_{n}})^{\Psi\times \vs{v_{n}}}$, the matrix
$$
\begin{pmatrix}
    \tp{\mathrm{id}_{\Phi},\kappa_{v_{0}}}^{@}(L_{0}) & \cdots &
    \tp{\mathrm{id}_{\Phi},\kappa_{v_{n-1}}}^{@}(L_{0}) \\
    \vdots & \ddots & \vdots \\
    \tp{\mathrm{id}_{\Phi},\kappa_{v_{0}}}^{@}(L_{q-1}) & \cdots &
    \tp{\mathrm{id}_{\Phi},\kappa_{v_{n-1}}}^{@}(L_{q-1})
\end{pmatrix}
$$
or, equivalently, by the exchange law (which, in this case, says that,
for $i\in n$, $\nu\in q$,
valuations $\eta_{\Phi\times \vs{v_{n}}}\comp
\tp{\mathrm{id}_{\Phi},\kappa_{v_{i}}}\colon
\Phi\mor \mathrm{T}_{\Lambda}(\Phi\times \vs{v_{n}})$, and terms
$L_{\nu}\in \mathrm{T}_{\Lambda}(\Phi)$, we have that
$\tp{\mathrm{id}_{\Phi},\kappa_{v_{i}}}^{@}(L_{\nu}) =
L_{\nu}^{\mathbf{T}_{\Lambda}(\Phi\times \vs{v_{n}})}
(\varphi_{\mu}(v_{i})\mid \mu\in p)$), the matrix
$$
\begin{pmatrix}
L_{0}^{\mathbf{T}_{\Lambda}(\Phi\times \vs{v_{n}})}
(\varphi_{\mu}(v_{0})\mid \mu\in p) & \cdots &
L_{0}^{\mathbf{T}_{\Lambda}(\Phi\times \vs{v_{n}})}
(\varphi_{\mu}(v_{n-1})\mid \mu\in p) \\
\vdots & \ddots & \vdots \\
L_{q-1}^{\mathbf{T}_{\Lambda}(\Phi\times \vs{v_{n}})}
(\varphi_{\mu}(v_{0})\mid \mu\in p) & \cdots &
L_{q-1}^{\mathbf{T}_{\Lambda}(\Phi\times \vs{v_{n}})}
(\varphi_{\mu}(v_{n-1})\mid \mu\in p)
\end{pmatrix}
$$
with entries in $\mathrm{T}_{\Lambda}(\Phi\times \vs{v_{n}})$.

Besides, by hypothesis, for every $n\in \mathbb{N}$ and $\sigma\in
\Sigma_{n}$, we have the family of terms
$$
(Q^{n}_{\psi_{\nu},\sigma})_{\nu\in q}\colon \Psi \mor
\mathrm{T}_{\Lambda}(\Psi\times \vs{v_{n}}).
$$
Therefore, for every $\nu\in q$, we get
$$
  Q^{n,\mathbf{T}_{\Lambda}(\Phi\times
  \vs{v_{n}})}_{\psi_{\nu},\sigma}\colon
  \mathrm{T}_{\Lambda}(\Phi\times \vs{v_{n}})^{\Psi\times \vs{v_{n}}} \mor
  \mathrm{T}_{\Lambda}(\Phi\times \vs{v_{n}}),
$$
the term operation on $\mathbf{T}_{\Lambda}(\Phi\times \vs{v_{n}})$
determined by $Q^{n}_{\psi_{\nu},\sigma}$.  This follows from the fact
that, for every $\nu\in q$, $Q^{n}_{\psi_{\nu},\sigma}$ is in
$\mathrm{T}_{\Lambda}(\Psi\times \vs{v_{n}})$, and because, by the
universal property of the free $\Lambda$-algebra on $\Psi\times
\vs{v_{n}}$, we have the $\Lambda$-homomorphism
$(\mathrm{pr}_{\nu,i})_{(\nu,i)\in q\times n}^{\sharp}$ from
$\mathbf{T}_{\Lambda}(\Psi\times \vs{v_{n}})$ to
$\mathbf{T}_{\Lambda}(\Phi\times
\vs{v_{n}})^{\mathbf{T}_{\Lambda}(\Phi\times \vs{v_{n}})^{\Psi\times
\vs{v_{n}}}}$, as seen in the following diagram
$$
\xymatrix@C=70pt@R=40pt{
{} &
\Psi
\ar[d]^-{(Q^{n}_{\psi_{\nu},\sigma})_{\nu\in q}} \\
\Psi\times \vs{v_{n}}
\ar[r]^{\eta_{\Psi\times \vs{v_{n}}}}
\ar[rd]_{(\mathrm{pr}_{\nu,i})_{(\nu,i)\in
q\times n}} &
\mathrm{T}_{\Lambda}(\Psi\times \vs{v_{n}})
\ar[d]^{(\mathrm{pr}_{\nu,i})_{(\nu,i)\in
q\times n}^{\sharp}
       (= \mathrm{Tr}^{\Psi\times \vs{v_{n}},\mathbf{T}_{\Lambda}
         (\Phi\times \vs{v_{n}})})} \\
{} &
\mathrm{T}_{\Lambda}(\Phi\times \vs{v_{n}})^{\mathrm{T}_{\Lambda}(\Phi\times
\vs{v_{n}})^{\Psi\times \vs{v_{n}}}}
}
$$
where, for every $(\nu,i)\in q\times n$, $\mathrm{pr}_{\nu,i}$ is
the $(\nu,i)$-th projection from
$\mathbf{T}_{\Lambda}(\Phi\times \vs{v_{n}})^{\Psi\times\vs{v_{n}}}$
to $\mathbf{T}_{\Lambda}(\Phi\times \vs{v_{n}})$.

Let $\tp{Q^{n,\mathbf{T}_{\Lambda}(\Phi\times
\vs{v_{n}})}_{\psi_{\nu},\sigma}}_{\nu\in q}$ be the unique
$\Lambda$-homomorphism from
$\mathbf{T}_{\Lambda}(\Phi\times \vs{v_{n}})^{\Psi\times \vs{v_{n}}}$ to
$\mathbf{T}_{\Lambda}(\Phi\times \vs{v_{n}})^{\Psi}$ such that, for every
$\nu\in q$, the following diagram commutes
$$
\xymatrix{
\mathbf{T}_{\Lambda}(\Phi\times \vs{v_{n}})^{\Psi\times \vs{v_{n}}}
\ar[rd]^*[l]{Q_{\psi_{\nu},\sigma}^{n,\mathbf{T}_{\Lambda}(\Phi\times \vs{v_{n}})}}
\ar[d]_{\tp{Q_{\psi_{\nu},\sigma}^{n,\mathbf{T}_{\Lambda}(\Phi\times
\vs{v_{n}})}}_{\nu\in q}} &
{} \\
\mathbf{T}_{\Lambda}(\Phi\times \vs{v_{n}})^{\Psi}
\ar[r]_{\mathrm{pr}_{\nu}} & \mathbf{T}_{\Lambda}(\Phi\times \vs{v_{n}})
}
$$
Then, since
$$
((\tp{\mathrm{id}_{\Phi},\kappa_{v_{i}}}^{@}(L_{\nu}))_{\nu\in
q})_{i\in n}\in (\mathrm{T}_{\Lambda}(\Phi\times
\vs{v_{n}})^{\Psi})^{n}\cong \mathrm{T}_{\Lambda}(\Phi\times
\vs{v_{n}})^{\Psi\times \vs{v_{n}}},
$$
the family
$
\left( Q^{n,\mathbf{T}_{\Lambda}(\Phi\times
\vs{v_{n}})}_{\psi_{\nu},\sigma}
\left(
\begin{smallmatrix}
    \tp{\mathrm{id}_{\Phi},\kappa_{v_{0}}}^{@}(L_{0}) & \cdots &
    \tp{\mathrm{id}_{\Phi},\kappa_{v_{n-1}}}^{@}(L_{0}) \\
    \vdots & \ddots & \vdots \\
    \tp{\mathrm{id}_{\Phi},\kappa_{v_{0}}}^{@}(L_{q-1}) & \cdots &
    \tp{\mathrm{id}_{\Phi},\kappa_{v_{n-1}}}^{@}(L_{q-1})
\end{smallmatrix}
\right)
\right)_{\nu\in q}
$
belongs to $\mathrm{T}_{\Lambda}(\Phi\times
\vs{v_{n}})^{\Psi}$.

After stating these technical preliminaries, and assuming as given a
set of equations $\mathcal{H}$ for $\Lambda$, finally we are ready to
provide the definition proposed by Fujiwara in~\cite{tF60} of the concept
of transformation from $(\Phi,P)$ to $(\Psi,Q)$.  It will be a mapping
$L\colon \Psi\mor \mathrm{T}_{\Lambda}(\Phi)$, or what is equivalent
an element of $\mathrm{T}_{\Lambda}(\Phi)^{q}$, such that, for every
$\nu\in q$, the equation
$$
Q^{n,\mathbf{T}_{\Lambda}(\Phi\times
\vs{v_{n}})}_{\psi_{\nu},\sigma}
\left(
\begin{smallmatrix}
    \tp{\mathrm{id}_{\Phi},\kappa_{v_{0}}}^{@}(L_{0}) & \cdots &
    \tp{\mathrm{id}_{\Phi},\kappa_{v_{n-1}}}^{@}(L_{0}) \\
    \vdots & \ddots & \vdots \\
    \tp{\mathrm{id}_{\Phi},\kappa_{v_{0}}}^{@}(L_{q-1}) & \cdots &
    \tp{\mathrm{id}_{\Phi},\kappa_{v_{n-1}}}^{@}(L_{q-1})
\end{smallmatrix}
\right) =
L_{\nu}^{\mathbf{T}_{\Lambda}(\Phi\times
\vs{v_{n}})}((P^{n}_{\varphi_{\mu},\sigma})_{\mu\in p})
$$
holds (not necessarily strictly but only) modulus the set of equations
$\mathcal{H}$.

From this, for two single-sorted signatures $\Sigma$, $\Lambda$, and a
set of equations $\mathcal{H}$ for $\Lambda$, Fujiwara defines
in~\cite{tF60} an equivalence relation on the set of families of basic
mapping-formulas from $\Sigma$ to $\Lambda$ (that he calls
$B_{W}$-\emph{conjugacy}, with $W$ identified to $\Lambda$ and $B_{W}$
to $\mathcal{H}$), by saying that two families of basic
mapping-formulas $(\Phi,P)$ and $(\Psi, Q)$ from $\Sigma$ to $\Lambda$
are equivalent, relative to the set of equations $\mathcal{H}$ for
$\Lambda$, if there exists a transformation $L\colon \Psi\mor
\mathrm{T}_{\Lambda}(\Phi)$ from $(\Phi,P)$ to $(\Psi,Q)$ such that,
for every $(\Lambda,\mathcal{H})$-algebra $\mathbf{B}$ the mapping
$\tp{L^{\mathbf{B}}_{\nu}}_{\nu\in q}$, from
$P(\mathbf{B})$ to $Q(\mathbf{B})$ is an isomorphism.  To this we add
that Fujiwara in~\cite{tF60} considers the vertical composition of
transformations between families of basic mapping-formulas, in order
to prove the transitivity, but he does not consider the horizontal
composition of transformations between families of basic
mapping-formulas between single-sorted signatures (no wonder, in his
time there was not any explicit notion of $2$-category).


\begin{example}
For the derivors $d\colon \Sigma\mor \Lambda$ and $e\colon \Lambda\mor
\Sigma$ of Higman and B.H. Neumann defined in the preceding section,
where
\begin{enumerate}
\item $\Sigma_{0} = \{\,1\,\}$, $\Sigma_{2} = \{\,/\,\}$ and
      $\Sigma_{n} = \vacio$, if $n\neq 0, 2$,

\item $\Lambda_{0} = \{\,1\,\}$, $\Lambda_{1} =
      \{\,{}^{-1}\,\}$, $\Lambda_{2} = \{\,\cdot\,\}$ and $\Lambda_{n}
      = \vacio$, if $n\neq 0,1,2$,

\item $d$ defines $/$ in terms of $\cdot$ and ${}^{-1}$, and

\item $e$ defines $\cdot$ and ${}^{-1}$ in terms of $/$,
\end{enumerate}
if we take as set $\mathcal{H}$ of defining axioms, relative to the
signature $\Sigma$, that system given by the following (unusual) group
axioms
\begin{enumerate}
\item $v_{0}/((((v_{0}/v_{0})/v_{1})/v_{2})/(((v_{0}/v_{0})/v_{0})/v_{2})) =
       v_{1}$, and

\item $v_{0}/1 = v_{0}$,
\end{enumerate}
then, taking as $L$ the term in $\mathrm{T}_{\Sigma}(1)$ whose term
realization on any given $\Sigma$-algebra is, essentially, the
corresponding identity mapping, we get a transformation from the
endoderivor $e\comp d$ at $\Sigma$ to the identity endoderivor at
$\Sigma$ because the equation
$$
v_{0}/(1/(1/v_{1})) = v_{0}/ v_{1}
$$
holds, modulus $\mathcal{H}$.

There is also a similar transformation from the endoderivor $d\comp e$ at
$\Lambda$ to the identity endoderivor at $\Lambda$.
\end{example}

\begin{example}
For the derivors $d\colon \Sigma\mor \Lambda$ and $e\colon \Lambda\mor
\Sigma$ of M. H. Stone defined in the preceding section where
\begin{enumerate}
\item $\Sigma_{0} = \{\,0,1\,\}$, $\Sigma_{1} = \{\,'\,\}$,
      $\Sigma_{2} = \{\,\wedge,\vee\,\}$ and $\Sigma_{n} = \vacio$, if
      $n\neq 0,1, 2$,

\item $\Lambda_{0} = \{\,0,1\,\}$, $\Lambda_{1} = \{\,-\,\}$,
      $\Lambda_{2} = \{\,\cdot,+\,\}$ and $\Lambda_{n} = \vacio$, if
      $n\neq 0,1,2$,

\item $d$ defines the \lq\lq Boolean algebra\rq\rq
      op\-era\-tions in terms of the \lq\lq Boolean ring\rq\rq
      op\-era\-tions, and

\item $e$ defines the \lq\lq Boolean ring\rq\rq op\-era\-tions
      in terms of the \lq\lq Boolean algebra\rq\rq op\-era\-tions,
\end{enumerate}
if we take as set $\mathcal{H}$ of defining axioms,
relative to the signature $\Lambda$, that system given by the usual
axioms for a Boolean ring, then, taking as $L$ the term in
$\mathrm{T}_{\Lambda}(1)$ whose term realization on any given
$\Lambda$-algebra is, essentially, the corresponding identity mapping,
we get a transformation from the endoderivor $d\comp e$ at
$\Lambda$ to the identity endoderivor at $\Lambda$ because the
equations
\begin{enumerate}
\item $(v_{0}\cdot(1+v_{1}))+((1+v_{0})\cdot v_{1})+
       ((v_{0}\cdot(1+v_{1}))\cdot ((1+v_{0})\cdot v_{1})) =
       v_{0}+v_{1}$,

\item $v_{0}\cdot v_{1} = v_{0}\cdot v_{1}$, and

\item $-v_{0} = 1+v_{0}$,
\end{enumerate}
hold, modulus $\mathcal{H}$.
\end{example}

\begin{example}
In~\cite{tF60}, pp.  260--268, Fujiwara provides examples of
transformations for families of basic mapping-formulas of
derivation-type when the operations and equations are those
corresponding to commutative linear algebras over a given field of
characteristic zero.
\end{example}


Having just sketched the theory of Fujiwara about the transformations
between families of basic mapping-formulas, what we ultimately try to
do is to define (once we have at our disposal a convenient notion of
morphism from a specification into a like one, but for polyderivors), for
two morphisms $\mathbf{d}$, $\mathbf{e}$ from a specification
$(\mathbf{\Sigma},\mathcal{E})$ into a like one
$(\mathbf{\Lambda},\mathcal{H})$ a concept of transformation from
$\mathbf{d}$ to $\mathbf{e}$, as well as a vertical and a horizontal
composition for these transformations, and all that in such a way that
specifications, morphisms and transformations constitute a
$2$-category.  But to succeed in doing it we should begin, as we do in
this section, by defining a structure of $2$-category on the category
$\mathbf{Sig}_{\mathfrak{pd}}$, of signatures and polyderivors, through the
concept of transformation between polyderivors.

The transformations between polyderivors that we define below are a
generalization (up to the many-sorted case) of the above concept of
transformation between families of basic mapping-formulas, due to
Fujiwara.  But, because the polyderivors are, simply, morphisms from a
signature into a like one, and not morphisms between specifications
(where there are involved equations), they will satisfy, for every
formal operation, a strict equation, instead of an equation modulus a
set of equations for the target signature.  However, after we define
in the last section the adequate morphisms between specifications
(through the polyderivors between the underlying signatures of the
specifications), we will get, also in that section, the full
generalization of the theory of Fujiwara in~\cite{tF60}, as announced
above.

In this section we also prove that the transformations between
polyderivors determine natural transformations between the functors
associated to the polyderivors, that allow us to lift the pseudo-functors
$\Alg_{\mathfrak{pd}}$ and $\mathrm{Ter}_{\mathfrak{pd}}$ up to
$2$-functors, and hence to get, by applying a construction of
Ehresmann-Grothendieck to $\Alg_{\mathfrak{pd}}$, a $2$-category
$\mathbf{Alg}_{\mathfrak{pd}}$.  Besides, we prove that the
transformations between polyderivors are also compatible with the
realization of the terms in the algebras and we characterize this
through the concept of pseudo-extranatural transformation between
pseudo-functors on $2$\nobreakdash-categories.  From this we get that the
relation between terms and algebras is an example of $2$-institution.

In order to define and investigate the transformations between
polyderivors it will be shown to be convenient to make use of some
derived operations in the Bénabou algebras of terms for the
different signatures, concretely of those in the following

\begin{definition}
Let $S$ be a set of sorts.
\begin{enumerate}
\item For every $\ol{w}\in\ffmon{S}$ and $\alpha\in\bb{\ol{w}}$, let
      $\pi^{\ol{w}}_{\alpha}$ be the derived operation of type $\lambda\mor
      (\cncat\ol{w},\ol{w}_{\alpha})$ defined as
      $$
      \tp{\pi^{\cncat \ol{w}}_{\sum_{\beta\in \alpha}p_{\beta}},
      \ldots, \pi^{\cncat \ol{w}}_{\sum_{\beta\in \alpha+1}p_{\beta}-1}
       }_{\cncat\ol{w},\ol{w}_{\alpha}},
      $$
      where $\ol{w}$ is of the form
      $$
      ((\cdot,\ldots,\cdot),\ldots,
      \overbrace{(\underset{\sum_{\beta\in \alpha}p_{\beta}}{\cdot},\ldots,
      \underset{\sum_{\beta\in \alpha+1}p_{\beta}-1}\cdot)}^{\ol{w}_{\alpha}},
      \ldots,(\cdot,\ldots,\cdot)),
      $$
      and, for every $\alpha\in \bb{\ol{w}}$, $p_{\alpha} =
      \bb{\ol{w}_{\alpha}}$.
\item For every $u\in\fmon{S}$ and
      $\ol{w}\in\ffmon{S}$, let $\tp{\,}_{u,\ol{w}}$ be
      the derived operation of type $((u,\ol{w}_{0}),
      \ldots, (u,\ol{w}_{\bb{\ol{w}}-1})) \mor (u,\cncat\ol{w})$
      defined as
      \begin{align*}
      \tp{P_{0},\ldots,P_{\bb{\ol{w}}-1}}_{u,\ol{w}}=
      \tp{
      &\pi^{\ol{w}_{0}}_{0} \comp
       P_{0},
      \ldots,
      \pi^{\ol{w}_{0}}_{\bb{\ol{w}_{0}}-1} \comp
       P_{0},
      \ldots,\\
      &\pi^{\ol{w}_{\bb{\ol{w}}-1}}_{0} \comp
      P_{\bb{\ol{w}}-1},
      \ldots,
      \pi^{\ol{w}_{\bb{\ol{w}}-1}}_{\bb{\ol{w}_{\bb{\ol{w}}-1}}-1} \comp
       P_{\bb{\ol{w}}-1}
      }_{u,\cncat\ol{w}}.
      \end{align*}

\item For every $n\in \mathbb{N}$, and
      $\ol{u},\,\ol{w}\in{\fmon{S}}^{n}$, let
      $\concat_{\ol{u},\ol{w}}$ be the derived operation of type
      $((\ol{u}_{0},\ol{w}_{0}), \ldots,
      (\ol{u}_{n-1},\ol{w}_{n-1}))\mor
      (\concat{\ol{u}},\concat{\ol{w}})$ defined as
      $$
      \concat_{\ol{u},\ol{w}}(P_{0},\ldots,P_{n-1})=
      \tp{P_{0}\comp \pi^{\ol{u}}_{0},\ldots,
      P_{n-1}\comp\pi^{\ol{u}}_{n-1}}_{\cncat\ol{u},\ol{w}}.
      $$
\end{enumerate}
\end{definition}

From now on, to simplify the notation, we will omit some subscripts in
the expressions.  Moreover, for the operations of the form
$\concat_{\ol{u},\ol{w}}$ we adopt the infix notation, and we will
write $P_{0}\cncat\cdots\cncat P_{n-1}$ instead of
$\concat_{\ol{u},\ol{w}}(P_{0},\ldots,P_{n-1})$, the type, in its
turn, will be $\ol{u}_{0}\cncat\cdots \cncat\ol{u}_{n-1} \mor
\ol{w}_{0}\cncat\cdots \cncat\ol{w}_{n-1}$.

For the algebras of terms $\mathbf{BTer}_{S}(\Sigma)$, the operations
$\concat_{\ol{u},\ol{w}}$ are, essentially, the result of gathering
into a family the corresponding terms, relabelling adequately the
variables.

Recalling that the Bénabou algebras are, up to isomorphism, the
finitary many-sorted algebraic theories of Bénabou (see
Proposition~\ref{isoBalgBth}), from now on, we will represent the
composition of terms diagram\-mat\-ica\-lly, and the equality of two
coterminal paths composed of terms by asserting the commutativity of
the appropriate diagram.

\begin{definition}
Let $\mathbf{d}$ and $\mathbf{e}$ be polyderivors from $\mathbf{\Sigma}$
to $\mathbf{\Lambda}$.  A \emph{transforma\-tion from} $\mathbf{d}$
\emph{to} $\mathbf{e}$ is a choice function $\xi$ for
$(\mathrm{BTer}_{T}(\Lambda)_{\varphi(s),\psi(s)})_{s\in S} =
(\mathrm{T}_{\mathbf{\Lambda}}(\vs{\varphi(s)})_{\psi(s)})_{s\in S}$,
i.e., an element of $\prod_{s\in
S}\mathrm{T}_{\mathbf{\Lambda}}(\vs{\varphi(s)})_{\psi(s)}$, such
that, for every operation $\sigma\colon w\mor s$, the following
diagram commutes
$$
\xymatrix@C=10pt{
1
\ar[r]^-{\left<\xi_{s},d(\sigma)\right>}
\ar[d]_{\left<e(\sigma),\xi_{w}\right>} &
\mathrm{T}_{\mathbf{\Lambda}}(\vs{\varphi(s)})_{\psi(s)}\times
\mathrm{T}_{\mathbf{\Lambda}}(\vs{\varphi^{\sharp}(w)})_{\varphi(s)}
\ar[d]^{\comp}    \\
\mathrm{T}_{\mathbf{\Lambda}}(\vs{\psi^{\sharp}(w)})_{\psi(s)}\times
\mathrm{T}_{\mathbf{\Lambda}}(\vs{\varphi^{\sharp}(w)})_{\psi^{\sharp}(w)}
\ar[r]_-{\comp} &
\mathrm{T}_{\mathbf{\Lambda}}(\vs{\varphi^{\sharp}(w)})_{\psi(s)}
}
$$
or more briefly, such that
$$
\xi_{s}\comp d(\sigma) = e(\sigma)\comp \xi_{w},
$$
where $\xi_{w}$ is $\xi_{w_{0}}\concat\cdots\concat
\xi_{w_{\bb{w}-1}}$.  We agree upon writing $\xi\colon
\mathbf{d}\mdf\mathbf{e}$ to denote the fact that $\xi$ is a
transforma\-tion from $\mathbf{d}$ to $\mathbf{e}$.
\end{definition}

Therefore for a transformation $\xi = (\xi_{s})_{s\in S}$ from
$\mathbf{d}$ to $\mathbf{e}$ we have, in particular, that, for every
$s\in S$, $\xi_{s}\in
\mathrm{T}_{\mathbf{\Lambda}}(\vs{\varphi(s)})_{\psi(s)}$, i.e., that
$$
\xi_{s} = ((\xi_{s})_{0},\ldots,(\xi_{s})_{\bb{\psi(s)}-1})
$$
is a tuple of length $\bb{\psi(s)}$ such that, for every
$i\in\bb{\psi(s)}$, $(\xi_{s})_{i}$ is a term for $\mathbf{\Lambda}$
of type $(\vs{\varphi(s)},\psi(s)_{i})$.

Henceforth, we agree to represent the commutativity condition for a
transformation $\xi\colon \mathbf{d}\mdf\mathbf{e}$ between polyderivors
also by the following diagram
$$
\xymatrix@C=70pt{
\ext{\varphi}(w) \ar[r]^{d(\sigma)}
              \ar[d]_{\xi_{w}} &
\varphi(s) \ar[d]^{\xi_{s}}    \\
\ext{\psi}(w) \ar[r]_{e(\sigma)} &
\psi(s)
}
$$
where, we recall, $\xi_{w}$ arises as
$$
\xymatrix@C=70pt{
\varphi(w_{0})\cncat\cdots\cncat\varphi(w_{\bb{w}-1})
\ar[r]^-{\pi^{\fmon{\varphi}(w)}_{i}}
\ar[d]_{\xi_{w}} &
\varphi(w_{i})
\ar[d]^{\xi_{w_{i}}}    \\
\psi(w_{0})\cncat\cdots\cncat\psi(w_{\bb{w}-1})
\ar[r]_-{\pi^{\fmon{\psi}(w)}_{i}} &
\psi(w_{i})
}
$$

\begin{example}
Let $\mathbf{\Sigma}$ be a signature, $p, q\in \mathbb{N}$, and
$\mathbf{d}= (\varphi,d)$, $\mathbf{e}= (\psi,e)$ two polyderivors from
$\mathbf{\Sigma}$ into itself, such that
\begin{enumerate}
\item $\varphi\colon S\mor \fmon{S}$ is the mapping which sends
      $s\in S$ to the word $\cncat_{\mu\in p}(s)$ and,

\item For $(w,s)\in \fmon{S}\times S$, $d_{w,s}$ is the mapping
      from $\Sigma_{w,s}$ to
      $\mathrm{T}_{\mathbf{\Sigma}}(\vs{\ext{\varphi}(w)})_{s}^{p}$
      which sends $\sigma\in\Sigma_{w,s}$ to
      $$
      (\sigma(v_{0}^{w_{0}}, v_{p}^{w_{1}},\ldots,
              v_{(\bb{w}-1)p}^{w_{\bb{w}-1}}),\ldots,
       \sigma(v_{p-1}^{w_{0}}, v_{2p-1}^{w_{1}},\ldots,
              v_{\bb{w}p-1}^{w_{\bb{w}-1}})),
      $$
\end{enumerate}
and
\begin{enumerate}
\item $\psi\colon S\mor \fmon{S}$ is the mapping which sends
      $s\in S$ to the word $\cncat_{\nu\in q}(s)$ and,

\item For $(w,s)\in \fmon{S}\times S$, $e_{w,s}$ is the mapping
      from $\Sigma_{w,s}$ to
      $\mathrm{T}_{\mathbf{\Sigma}}(\vs{\ext{\psi}(w)})_{s}^{q}$
      which sends $\sigma\in\Sigma_{w,s}$ to
      $$
      (\sigma(v_{0}^{w_{0}}, v_{q}^{w_{1}},\ldots,
              v_{(\bb{w}-1)q}^{w_{\bb{w}-1}}),\ldots,
       \sigma(v_{q-1}^{w_{0}}, v_{2q-1}^{w_{1}},\ldots,
              v_{\bb{w}q-1}^{w_{\bb{w}-1}})).
      $$
\end{enumerate}
Then, for an arbitrary, but fixed, mapping $f = (f(\nu))_{\nu\in q}$
from the natural number $q$ to the natural number $p$, taking as $\xi$
the element of $\prod_{s\in
S}\mathrm{T}_{\mathbf{\Lambda}}(\vs{\varphi(s)})_{s}^{q}$ defined, for
every $s\in S$, as
$$
  \xi_{s} = (v^{s}_{f(0)},\ldots,v^{s}_{f(q-1)}),
$$
where, to simplify the notation, we have identified the variables in
$\vs{\varphi(s)}$ with their images in
$\mathrm{T}_{\mathbf{\Sigma}}(\vs{\varphi(s)})$ under
$\eta_{\vs{\varphi(s)}}$, we have that $\xi$ is a transformation from
$\mathbf{d}$, $\mathbf{e}$.  We point out that the working out of all
the details of this example, even if a little troublesome, helps to
grasp the functioning of the polyderivors and the transformations between
them.

For more examples of transformations between polyderivors we refer to the
last section of this paper.
\end{example}

\begin{remark}
In the just stated example, due to the intended meaning of $\xi$ as a
$\mathrm{ms}$\nobreakdash-mapping from the direct $p$-power to the direct
$q$-power of some $\mathbf{\Sigma}$-algebra, the mappings of the type
$f\colon q\mor p$ act by selecting the coordinates for going from the
first direct power to the second direct power.
\end{remark}

\begin{example}
Let $(\Phi,P)$, with $\Phi = \{\,\varphi_{\mu}\mid \mu\in p\,\}$, and
$(\Psi,Q)$, with $\Psi = \{\,\psi_{\nu}\mid \nu\in q\,\}$, be two
families of basic mapping-formulas from the single-sorted signature
$\Sigma$ to the single-sorted signature $\Lambda$, and $L\in
\mathrm{T}_{\mathbf{\Lambda}}(\Phi)^{q}$.  Then $L$ is a
transformation from the polyderivor associated to $(\Phi,P)$ to the
polyderivor associated to $(\Psi,Q)$ iff, for every $n\in \mathbb{N}$ and
every $\sigma\in \Sigma_{n}$, the following diagram commutes
$$
\xymatrix@C=10pt{
1
\ar[r]^-{\left<L,d_{n,0}(\sigma)\right>}
\ar[d]_{\left<e_{n,0}(\sigma),L_{n}\right>} &
\mathrm{T}_{\mathbf{\Lambda}}(\Phi)^{q}\times
\mathrm{T}_{\mathbf{\Lambda}}(\Phi\times\vs{v_{n}})^{p}
\ar[d]^{\comp}    \\
\mathrm{T}_{\mathbf{\Lambda}}(\Psi\times\vs{v_{n}})^{q}\times
\mathrm{T}_{\mathbf{\Lambda}}(\Phi\times\vs{v_{n}})^{\Psi\times\vs{v_{n}}}
\ar[r]_-{\comp} &
\mathrm{T}_{\mathbf{\Lambda}}(\Phi\times \vs{v_{n}})^{q}
}
$$
where
\begin{enumerate}
\item $d_{n,0}(\sigma) = (P^{n}_{\varphi_{0},\sigma},\ldots,
                          P^{n}_{\varphi_{p-1},\sigma})$,

\item $e_{n,0}(\sigma) = (Q^{n}_{\psi_{0},\sigma},\ldots,
                          Q^{n}_{\psi_{q-1},\sigma})$, and

\item $L_{n} = \left(
               \begin{smallmatrix}
               \tp{\mathrm{id}_{\Phi},\kappa_{v_{0}}}^{@}(L_{0}) & \cdots &
               \tp{\mathrm{id}_{\Phi},\kappa_{v_{n-1}}}^{@}(L_{0}) \\
               \vdots & \ddots & \vdots \\
               \tp{\mathrm{id}_{\Phi},\kappa_{v_{0}}}^{@}(L_{q-1}) & \cdots &
               \tp{\mathrm{id}_{\Phi},\kappa_{v_{n-1}}}^{@}(L_{q-1})
               \end{smallmatrix}
               \right).
               $
\end{enumerate}
Observe that in this case the right-down path in the diagram is the
family
$$
(L_{\nu}^{\mathbf{T}_{\Lambda}(\Phi\times
\vs{v_{n}})}((P^{n}_{\varphi_{\mu},\sigma})_{\mu\in p}))_{\nu\in q},
$$
while the down-right path is the family
$$
\left(Q^{n,\mathbf{T}_{\Lambda}(\Phi\times
\vs{v_{n}})}_{\psi_{\nu},\sigma} \left(
\begin{smallmatrix}
    \tp{\mathrm{id}_{\Phi},\kappa_{v_{0}}}^{@}(L_{0}) & \cdots &
    \tp{\mathrm{id}_{\Phi},\kappa_{v_{n-1}}}^{@}(L_{0}) \\
    \vdots & \ddots & \vdots \\
    \tp{\mathrm{id}_{\Phi},\kappa_{v_{0}}}^{@}(L_{q-1}) & \cdots &
    \tp{\mathrm{id}_{\Phi},\kappa_{v_{n-1}}}^{@}(L_{q-1})
\end{smallmatrix}
\right)\right)_{\nu\in q}.
$$
Therefore, without having a set of equations $\mathcal{H}$ for the
target single-sorted signature $\Lambda$, any transformation of
Fujiwara from a family of basic mapping-formulas into a like one is an
example of transformation between the polyderivors associated to the
families of basic mapping-formulas.
\end{example}

The commutativity condition in the above definition of transformation
from a polyderivor into a like one can be extended up to the terms, as
proved in the following

\begin{proposition}\label{deformacionesSimDer}
Let $\mathbf{d}$ and $\mathbf{e}$ be polyderivors from $\mathbf{\Sigma}$ to
$\mathbf{\Lambda}$ and $\xi\colon\mathbf{d}\mdf\mathbf{e}$ a
transformation.  Then, for every  term
$P\colon u\mor w$ in $\mathrm{BTer}_{S}(\Sigma)$, $\xi_{w}\comp
\ext{d}(P) = \ext{e}(P)\comp \xi_{u}$, i.e.,
the following diagram commutes
$$
\xymatrix@C=70pt{
\ext{\varphi}(u) \ar[r]^{\ext{d}(P)}
              \ar[d]_{\xi_{u}} &
\ext{\varphi}(w) \ar[d]^{\xi_{w}}    \\
\ext{\psi}(u) \ar[r]_{\ext{e}(P)} &
\ext{\psi}(w)
}
$$
\end{proposition}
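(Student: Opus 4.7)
The plan is to prove the equation $\xi_{w}\comp \ext{d}(P) = \ext{e}(P)\comp \xi_{u}$ by algebraic induction on the term $P$ in $\mathrm{BTer}_{S}(\Sigma)$. Recall from Corollary~\ref{Beniso} that $\mathbf{BTer}_{S}(\Sigma)$ is isomorphic to the free B\'enabou algebra $\mathbf{T}_{\Ben_{S}}(\coprod_{1\bprod\between_{S}}\Sigma)$, so every element is built by applying the B\'enabou-algebra operations (projections $\pi^{w}_{\alpha}$, tuplings $\tp{\,}_{u,w}$ and compositions $\comp_{u,x,w}$) starting from generators of the form $\sigma\colon w\mor(s)$ with $\sigma\in\Sigma_{w,s}$. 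Since $\ext{d}$ and $\ext{e}$ are B\'enabou-algebra homomorphisms into $\mathbf{BTer}_{T}(\Lambda)_{\ext{\varphi}\bprod\ext{\varphi}}$ and $\mathbf{BTer}_{T}(\Lambda)_{\ext{\psi}\bprod\ext{\psi}}$ respectively (with structures transported through the specification morphism $b^{\varphi}$, $b^{\psi}$ of Proposition~\ref{derivorFuji}), it suffices to check two base cases and two inductive steps.

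For the first base case, where $P$ is (the embedded image of) a generator $\sigma\in\Sigma_{w,s}$, the required identity $\xi_{(s)}\comp\ext{d}(\sigma)=\ext{e}(\sigma)\comp\xi_{w}$ reduces, modulo the canonical identification $\xi_{(s)}=\xi_{s}$, to the defining commutativity $\xi_{s}\comp d(\sigma)=e(\sigma)\comp\xi_{w}$ of a transformation. For the second base case, where $P=\pi^{w}_{\alpha}$, the image $\ext{d}(\pi^{w}_{\alpha})$ is by Proposition~\ref{derivorFuji} the block-projection
$\tp{\pi^{\ext{\varphi}(w)}_{\sum_{\beta<\alpha}p_{\beta}},\ldots,\pi^{\ext{\varphi}(w)}_{\sum_{\beta<\alpha+1}p_{\beta}-1}}_{\ext{\varphi}(w),\varphi(w_{\alpha})}$
which extracts the $\alpha$-th $\varphi(w_{\alpha})$-block out of $\ext{\varphi}(w)$, and analogously for $\ext{e}(\pi^{w}_{\alpha})$. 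Because $\xi_{w}$ is by definition the concatenation $\xi_{w_{0}}\cncat\cdots\cncat\xi_{w_{\bb{w}-1}}$, pre-composing the block-projection out of $\ext{\psi}(w)$ with $\xi_{w}$ yields exactly $\xi_{w_{\alpha}}$ post-composed with the block-projection out of $\ext{\varphi}(w)$; this is the universal property of $\cncat$ and is the identity we need.

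The inductive step for composition is immediate: if $P\colon u\mor x$ and $Q\colon x\mor w$ each satisfy the identity, then
\[
\xi_{w}\comp\ext{d}(Q\comp P)=\xi_{w}\comp\ext{d}(Q)\comp\ext{d}(P)=\ext{e}(Q)\comp\xi_{x}\comp\ext{d}(P)=\ext{e}(Q\comp P)\comp\xi_{u},
\]
using that $\ext{d}$ and $\ext{e}$ preserve $\comp$. For the inductive step on tupling, suppose $P_{i}\colon u\mor(w_{i})$ satisfies the identity for each $i\in\bb{w}$; then $\ext{d}(\tp{P_{0},\ldots,P_{\bb{w}-1}}_{u,w})=\tp{\ext{d}(P_{0}),\ldots,\ext{d}(P_{\bb{w}-1})}$ computed in $\mathbf{BTer}_{T}(\Lambda)_{\ext{\varphi}\bprod\ext{\varphi}}$, and the result follows by composing componentwise with $\xi_{u}$ on the right and $\xi_{w}=\xi_{w_{0}}\cncat\cdots\cncat\xi_{w_{\bb{w}-1}}$ on the left, using once more the universal property of $\cncat$ together with the inductive hypotheses applied to each $P_{i}$.

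The main technical obstacle will be the projection base case: one must be scrupulous with the bookkeeping of the indices $\sum_{\beta<\alpha}p_{\beta},\ldots,\sum_{\beta<\alpha+1}p_{\beta}-1$ defining the block and with how $\xi_{w}$, being a concatenation rather than a plain tuple, interacts with these block-selecting tuplings through the axioms of a B\'enabou algebra (essentially the $\Ben_{1}$--$\Ben_{3}$ identities applied to the transported structure). Once this verification is carried out, the rest of the induction is a straightforward diagram-chase using the functoriality of the extensions $\ext{d}$ and $\ext{e}$.
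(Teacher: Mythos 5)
Your proposal is correct and follows essentially the same route as the paper: algebraic induction in the free B\'enabou algebra $\mathbf{BTer}_{S}(\Sigma)$, with the generator case given directly by the defining condition on $\xi$, the projection case handled via the block-projections and the concatenation structure of $\xi_{w}$, and the tupling and composition cases following from the homomorphism property of $\ext{d}$ and $\ext{e}$. The paper's treatment of the projection and tupling steps is exactly the computation you outline, so no further comment is needed.
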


\begin{proof}
By algebraic induction in the Bénabou algebra
$\mathbf{\mathrm{BTer}}_{S}(\Sigma)$.  The basis of the induction
holds because it means that  $\xi$ is a transformation.

For the operations $\pi^{w}_{i}$, we have that $\ext{d}(\pi^{w}_{i}) =
\pi^{\fmon{\varphi}(w)}_{i}$, $\ext{e}(\pi^{w}_{i}) =
\pi^{\fmon{\psi}(w)}_{i}$, and
$$
\xi_{w_{i}}\comp \pi^{\fmon{\varphi}(w)}_{i}
= \pi^{\fmon{\psi}(w)}_{i}\comp \xi_{w},
$$
i.e., the following diagram commutes
$$
\xymatrix@C=70pt{
\ext{\varphi}(w) \ar[r]^{\pi^{\fmon{\varphi}(w)}_{i}}
              \ar[d]_{\xi_{w}} &
\varphi(w_{i}) \ar[d]^{\xi_{w_{i}}}    \\
\ext{\psi}(u) \ar[r]_{\pi^{\fmon{\psi}(w)}_{i}} &
\psi(w_{i})
}
$$

For the operations $\tp{\,}_{u,w}$, we have that
$$
\xi_{w}\comp
\ext{d}(\tp{P_{0},\ldots,P_{\bb{w}-1}}_{u,w}) =
\ext{e}(\tp{P_{0},\ldots,P_{\bb{w}-1}}_{u,w})\comp \xi_{u},
$$
i.e., that the following diagram commutes
$$
\xymatrixcolsep={23ex}
\xymatrixrowsep={8ex}
\xymatrix{
\ext{\varphi}(u) \ar[r]^{\ext{d}(\tp{P_{0},\ldots,P_{\bb{w}-1}}_{u,w})}
              \ar[d]_{\xi_{u}} &
\ext{\varphi}(w) \ar[d]^{\xi_{w}}    \\
\ext{\psi}(u) \ar[r]_{\ext{e}(\tp{P_{0},\ldots,P_{\bb{w}-1}}_{u,w})} &
\ext{\psi}(w)
}
$$
because
\begin{align*}
\xi_{w}\comp \ext{d}(\tp{P_{0},\ldots,P_{\bb{w}-1}}_{u,w})
&=
\tp{\ext{d}(P_{0}),\ldots,\ext{d}(P_{\bb{w}-1})}_{\ext{\varphi}(u),\fmon{\varphi}(w)}
\\
&=
\tp{(\xi_{w_{i}}\comp \ext{d}(P_{i})\mid i\in \bb{w})
   }_{\ext{\varphi}(u),\fmon{\varphi}(w)}
\\
&=
\tp{\ext{e}(P_{0})\comp \xi_{u}
    ,\ldots,
    \ext{e}(P_{\bb{w}-1})\comp \xi_{u}
   }_{\ext{\varphi}(u),\fmon{\varphi}(w)}
\\
&=
\tp{\ext{e}(P_{0})
    ,\ldots,
    \ext{e}(P_{\bb{w}-1})
   }_{\ext{\varphi}(u),\fmon{\varphi}(w)}
   \comp \xi_{u}
\\
&=
\ext{e}(\tp{P_{0},\ldots,P_{\bb{w}-1}}_{u,w}) \comp \xi_{u}.
\end{align*}

Finally, for the  operations $\comp_{u,x,w}$, it is obvious
that
$$
\xi_{x}\comp \ext{d}(Q)\comp \ext{d}(P) = \ext{e}(Q)\comp
\ext{e}(P)\comp \xi_{u},
$$
i.e., that the following diagram commutes
$$
\xymatrix@C=70pt{
\ext{\varphi}(u) \ar[r]^{\ext{d}(P)}
              \ar[d]_{\xi_{u}} &
\ext{\varphi}(w) \ar[r]^{\ext{d}(Q)}
              \ar[d]^{\xi_{w}} &
\ext{\varphi}(x) \ar[d]^{\xi_{x}}    \\
\ext{\psi}(u) \ar[r]_{\ext{e}(P)} &
\ext{\psi}(w) \ar[r]_{\ext{e}(Q)} &
\ext{\psi}(x)
}
$$
\end{proof}

What we want now is to endow the category
$\mathbf{Sig}_{\mathfrak{pd}}$ of signatures and polyderivors with a
structure of $2$-category.  For this we provide in the following
proposition the definitions of the horizontal and vertical composition
of the transformations between polyderivors, prove the law of Godement, and
define the identity transformations at the polyderivors.

\begin{proposition}
The signatures together with the polyderivors and the
trans\-for\-mations between the polyderivors have a structure of
$2$-category, denoted as $\mathbf{Sig}_{\mathfrak{pd}}$.
\end{proposition}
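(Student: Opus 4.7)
The plan is to verify, in turn, the four pieces of data required to give a $2$-category structure on $\mathbf{Sig}_{\mathfrak{pd}}$: (i) the vertical composition of $2$-cells; (ii) the identity $2$-cells; (iii) the horizontal composition of $2$-cells; and (iv) the Godement interchange law, plus the customary associativity and identity laws.

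For vertical composition, given two parallel polyderivors $\mathbf{d},\mathbf{e},\mathbf{f}\colon\mathbf{\Sigma}\mor\mathbf{\Lambda}$ with first components $\varphi,\psi,\chi\colon S\mor\fmon{T}$, and transformations $\xi\colon\mathbf{d}\mdf\mathbf{e}$ and $\zeta\colon\mathbf{e}\mdf\mathbf{f}$, I set $(\zeta\vcomp\xi)_{s} = \zeta_{s}\comp\xi_{s}$, where $\comp$ is the substitution in the B�nabou algebra $\mathbf{BTer}_{T}(\Lambda)$; this is typed correctly because $\xi_{s}\colon\varphi(s)\mor\psi(s)$ and $\zeta_{s}\colon\psi(s)\mor\chi(s)$. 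The transformation axiom for $\zeta\vcomp\xi$ is immediate by pasting the two commutative squares defining $\xi$ and $\zeta$, i.e., $\zeta_{s}\comp\xi_{s}\comp d(\sigma) = \zeta_{s}\comp e(\sigma)\comp\xi_{w} = f(\sigma)\comp\zeta_{w}\comp\xi_{w}$, noting that $(\zeta\vcomp\xi)_{w}=\zeta_{w}\comp\xi_{w}$ by distributivity of $\cncat$ over $\comp$ in the B�nabou algebra. The identity $2$-cell at $\mathbf{d}$ is the family $\id_{\mathbf{d},s} = \tp{\pi^{\varphi(s)}_{0},\ldots,\pi^{\varphi(s)}_{\bb{\varphi(s)}-1}}_{\varphi(s),\varphi(s)}$, i.e., the identity morphism $\varphi(s)\mor\varphi(s)$ in $\mathbf{BTer}_{T}(\Lambda)$; its transformation axiom and its neutrality for $\vcomp$ reduce to axioms $\mathrm{B}_{2}$ and $\mathrm{B}_{3}$ of B�nabou algebras, and associativity of $\vcomp$ is simply the associativity of $\comp$ in $\mathbf{BTer}_{T}(\Lambda)$ (axiom $\mathrm{B}_{5}$).

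For horizontal composition, consider $\xi\colon\mathbf{d}\mdf\mathbf{d}'\colon\mathbf{\Sigma}\mor\mathbf{\Lambda}$ with first components $\varphi,\varphi'\colon S\mor\fmon{T}$, and $\zeta\colon\mathbf{e}\mdf\mathbf{e}'\colon\mathbf{\Lambda}\mor\mathbf{\Omega}$ with first components $\psi,\psi'\colon T\mor\fmon{U}$. The composite $\mathbf{e}\comp\mathbf{d}$ has first component $\ext{\psi}\comp\varphi$, and similarly for $\mathbf{e}'\comp\mathbf{d}'$. For each $s\in S$ I define
\[
(\zeta\hcomp\xi)_{s} \;=\; \zeta_{\varphi'(s)}\comp \ext{e}(\xi_{s}) \;=\; \ext{e'}(\xi_{s})\comp \zeta_{\varphi(s)},
\]
where the equality of the two expressions is exactly Proposition~27 applied to the term $\xi_{s}\colon\varphi(s)\mor\varphi'(s)$ in $\mathbf{BTer}_{T}(\Lambda)$ with the transformation $\zeta$. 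Well-definedness of the second component (its type $\ext{\psi}(\varphi(s))\mor\ext{\psi'}(\varphi'(s))$) follows from the functoriality of $\ext{e}$ and $\ext{e'}$ together with naturality of $\zeta$. To verify the transformation axiom for $\zeta\hcomp\xi$ with respect to a formal operation $\sigma\colon w\mor s$, I paste the cube whose faces are: the transformation square for $\xi$ at $\sigma$; the image of that square under $\ext{e}$ (respectively $\ext{e'}$), which is again commutative since $\ext{e}$ and $\ext{e'}$ are homomorphisms of B�nabou algebras; the transformation square for $\zeta$ applied to the term $d(\sigma)$ (respectively $d'(\sigma)$), which is commutative by Proposition~27; and the definition of $(\mathbf{e}\comp\mathbf{d})(\sigma)$ and $(\mathbf{e}'\comp\mathbf{d}')(\sigma)$. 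The horizontal identity law at a polyderivor $\mathbf{d}$ gives $\id_{\mathbf{e}\comp\mathbf{d}}$ back, again using $\mathrm{B}_{2}$--$\mathrm{B}_{3}$.

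The main obstacle I anticipate is the Godement interchange law
\[
(\zeta'\vcomp\zeta)\hcomp(\xi'\vcomp\xi) \;=\; (\zeta'\hcomp\xi')\vcomp(\zeta\hcomp\xi),
\]
because unfolding both sides produces a long comparison of compositions of terms involving four extensions ($\ext{e},\ext{e'},\ext{e''}$ or combinations thereof). My plan is to reduce this to a single diagram chase by choosing, on each side, the representative of $\hcomp$ that makes the outer $\vcomp$ most transparent: for the left-hand side I use the first form $\zeta_{\varphi'(s)}\comp\ext{e}(\xi_{s})$ on the upper half and the second form $\ext{e''}(\xi'_{s})\comp\zeta'_{\varphi'(s)}$ on the lower half; then the equality collapses to the commutativity of the square obtained by applying Proposition~27 to $\xi'_{s}$ against $\zeta'$, together with the associativity $\mathrm{B}_{5}$ of $\comp$. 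The remaining associativity of $\hcomp$ (up to the canonical coherence of $\mathbf{Sig}_{\mathfrak{pd}}$ as obtained in Proposition~\ref{isoSigfujKlfuj}) follows from the naturality of the isomorphisms $\iota$ and the functoriality of $(\farg)^{\dagger}$, since $\hcomp$ is, via these identifications, the horizontal composition induced on the Kleisli $2$-cells for $\mathbb{T}_{\mathfrak{pd}}$.
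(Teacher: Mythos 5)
Your proposal is correct and follows essentially the same route as the paper: the same formulas for vertical composition ($\zeta_{s}\comp\xi_{s}$), horizontal composition ($\zeta_{\varphi'(s)}\comp\ext{e}(\xi_{s})$, equivalently $\ext{e'}(\xi_{s})\comp\zeta_{\varphi(s)}$), and identity $2$-cells (the tuples of projections), with the transformation axioms and the Godement interchange law checked by pasting the defining squares. Your explicit appeal to Proposition~\ref{deformacionesSimDer} for the equality of the two forms of $\hcomp$ and for the interchange law is exactly what the paper's diagrams encode implicitly, so the only cosmetic difference is that you cite the lemma where the paper draws the picture; the closing remark about Kleisli $2$-cells is unnecessary (and not established in the paper), since the $1$-cell composition is already strictly associative, but it does no harm.
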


\begin{proof}
\emph{Definition of the vertical composition.}
Given the configuration
$$
\xymatrix@C=20ex{
\mathbf{\Sigma}
            \ar@/^6ex/[r]|*+{\mathbf{d}}="m1"
            \ar       [r]|*+{\mathbf{e}}="m2"
            \ar@/_6ex/[r]|*+{\mathbf{h}}="m3" &
\mathbf{\Lambda}
\ar @{} "m1";"m2"|{\dir{~>}}^{\;\xi}
\ar @{} "m2";"m3"|{\dir{~>}}^{\;\chi}
}
$$
the vertical composition of $\xi$ and $\chi$, denoted by
$\chi\comp\xi$ and defined as
$$
\chi\comp\xi = (\chi_{s}\comp\xi_{s})_{s\in S},
$$
is a transformation from $\mathbf{d}$ to $\mathbf{h}$, because, for
every $\sigma\colon w\mor s$, the following diagram commutes
$$
\xymatrixcolsep = {16ex}        %
\xymatrixrowsep = {6ex}
\xymatrix{
\ext{\varphi}(w)\ar[r]^{d(\sigma)}\ar[d]_{\xi_{w}} & \varphi(s)\ar[d]^{\xi_{s}} \\
\ext{\psi}(w) \ar[r]|{e(\sigma)}\ar[d]_{\chi_{w}} &\psi(s)\ar[d]^{\chi_{s}}\\
\ext{\gamma}(w)  \ar[r]_{h(\sigma)} &\gamma(s)
}
$$

\emph{Definition of the horizontal composition.}
Given the configuration
$$
\xymatrix@C=20ex{
\mathbf{\Sigma}
  \ar@/^3ex/[r]|*+{\mathbf{d}}="f1"
  \ar@/_3ex/[r]|*+{\mathbf{e}}="f2" &
\mathbf{\Lambda}
  \ar@/^3ex/[r]|*+{\mathbf{h}}="f3"
  \ar@/_3ex/[r]|*+{\mathbf{i}}="f4" &
\mathbf{\Omega}
\ar @{} "f1";"f2"|{\dir{~>}}^{\,\xi}
\ar @{} "f3";"f4"|{\dir{~>}}^{\,\chi}
}
$$
the horizontal composition of $\xi$ and $\chi$, denoted by
$\chi\hcomp\xi$ and defined as
$$
\chi\hcomp\xi = (\chi_{\psi(s)}\comp \ext{h}(\xi_{s}))_{s\in S},
$$
or, equivalently, as $(\ext{i}(\xi_{s})\comp \chi_{\varphi(s)})_{s\in
S}$, is a transformation from $\mathbf{h}\comp\mathbf{d}$ to
$\mathbf{i}\comp\mathbf{e}$.  We have to prove that $\chi\hcomp\xi$ is
a transformation from $(\ext{\gamma}\comp\varphi,
\ext{h}_{\ext{\varphi}\bprod\ext{\varphi}}\comp d)$ to
$(\ext{\nu}\comp\psi, \ext{i}_{\ext{\psi}\bprod\ext{\psi}}\comp e)$,
i.e., that, for every $\sigma\colon w\mor s$, we have that
$$
(\chi\hcomp \xi)_{s}\comp \ext{h}(d(\sigma)) = \ext{i}(e(\sigma))
\comp (\chi\hcomp\xi)_{w}.
$$
But this happens since $\xi$, $\chi$ are transformations and
$\ext{h}$, $\ext{i}$ morphisms, i.e., because the following diagram
commutes $$\xymatrix@C=9ex@R=5ex{ & \ext{\gamma}(\ext{\varphi}(w))
\ar[rr]|*+{\ext{h}(d(\sigma))} \ar[dd]|(.3)*+{\chi_{\ext{\varphi}(w)}}
\ar[dl]|*+{\ext{h}(\xi_{w})} & & \ext{\gamma}(\varphi(s))
\ar[dd]|*+{\chi_{\varphi(s)}} \ar[dl]|*+{\ext{h}(\xi_{s})} \\
\ext{\gamma}(\ext{\psi}(w))
  \ar[rr]|(.65)*+{\ext{h}(e(\sigma))}
  \ar[dd]|*+{\chi_{\ext{\psi}(w)}} & &
\ext{\gamma}(\psi(s))
  \ar[dd]|(.3)*+{\chi_{\psi(s)}}  \\
&
\ext{\nu}(\ext{\varphi}(w))
  \ar[rr]|(.65)*+{\ext{i}(d(\sigma))}
  \ar[dl]|*+{\ext{i}(\xi_{w})} &
 &
\ext{\nu}(\varphi(s))
  \ar[dl]|*+{\ext{i}(\xi_{s})} \\
\ext{\nu}(\ext{\psi}(w))
  \ar[rr]|*+{\ext{i}(e(\sigma))} &
 &
\ext{\nu}(\psi(s))
}
$$

\emph{Law of Godement.}
Given the configuration
$$
\xymatrix@C=20ex{
\mathbf{\Sigma}
  \ar@/^6ex/[r]|*+{\mathbf{d}_{0}}="f0"
  \ar       [r]|*+{\mathbf{d}_{1}}="f1"
  \ar@/_6ex/[r]|*+{\mathbf{d}_{2}}="f2" &
\mathbf{\Lambda}
  \ar@/^6ex/[r]|*+{\mathbf{e}_{0}}="g0"
  \ar       [r]|*+{\mathbf{e}_{1}}="g1"
  \ar@/_6ex/[r]|*+{\mathbf{e}_{2}}="g2" &
\mathbf{\Omega}
\ar @{} "f0";"f1"|{\dir{~>}}^{\,\xi}
\ar @{} "f1";"f2"|{\dir{~>}}^{\,\chi}
\ar @{} "g0";"g1"|{\dir{~>}}^{\,\xi'}
\ar @{} "g1";"g2"|{\dir{~>}}^{\,\chi'}
}
$$
we have that
$$
(\chi'\hcomp\chi)\comp(\xi'\hcomp\xi) = (\chi'\comp\xi')\hcomp (\chi\comp\xi).
$$
This is so since the following diagram commutes
\begin{narrow}{-3pt}{-3pt}
$$
\xymatrix@C=35pt@R=35pt{
\ext{\psi}_{0}(\varphi_{0}(s))
  \ar@/^25pt/[rr]^{\ext{e}_{0}(\chi_{s}\comp \xi_{s})}
  \ar[r]_{\ext{e}_{0}(\xi_{s})}
  \ar[d]_{\xi'_{\varphi_{0}(s)}}
  \ar@/_55pt/[dd]|{(\chi'\comp\xi')_{\varphi_{0}(s)}} &
\ext{\psi}_{0}(\varphi_{1}(s))
  \ar[r]_{\ext{e}_{0}(\chi_{s})}
  \ar[d]|{\xi'_{\varphi_{1}(s)}} &
\ext{\psi}_{0}(\varphi_{2}(s))
  \ar[d]^{\xi'_{\varphi_{2}(s)}}
  \ar@/^55pt/[dd]|{(\chi'\comp\xi')_{\varphi_{2}(s)}}\\
\ext{\psi}_{1}(\varphi_{0}(s))
  \ar[r]^{\ext{e}_{1}(\xi_{s})}
  \ar[d]_{\chi'_{\varphi_{0}(s)}} &
\ext{\psi}_{1}(\varphi_{1}(s))
  \ar[r]^{\ext{e}_{1}(\chi_{s})}
  \ar[d]|{\chi'_{\varphi_{1}(s)}} &
\ext{\psi}_{1}(\varphi_{2}(s))
  \ar[d]^{\chi'_{\varphi_{2}(s)}}\\
\ext{\psi}_{2}(\varphi_{0}(s))
  \ar[r]^{\ext{e}_{2}(\xi_{s})}
  \ar@/_25pt/[rr]_{\ext{e}_{2}(\chi_{s}\comp \xi_{s})} &
\ext{\psi}_{2}(\varphi_{1}(s))
  \ar[r]^{\ext{e}_{2}(\chi_{s})} &
\ext{\psi}_{2}(\varphi_{2}(s))
}
$$
\end{narrow}
\emph{Identities.} Finally, given polyderivor $\mathbf{d}\colon
\mathbf{\Sigma}\mor\mathbf{\Lambda}$ and $\mathbf{e}\colon
\mathbf{\Lambda}\mor\mathbf{\Omega}$ it is obvious that
\begin{enumerate}
\item The $S$-family
      $(\tp{\pi^{\varphi(s)}_{0},\ldots,\pi^{\varphi(s)}_{\bb{\varphi(s)}-1}}
      _{\varphi(s),\varphi(s)})_{s\in S}$, denoted by $\id_{\mathbf{d}}$,
      is the identity trans\-forma\-tion at $\mathbf{d}$, and that

\item $\id_{\mathbf{e}}\hcomp\id_{\mathbf{d}} = \id_{\mathbf{e}\comp\mathbf{d}}$.
\qedhere
\end{enumerate}
\end{proof}


Our next goal is to prove that the transformations between polyderivors
from a signature into a like one, determine natural transformations
between the functors between the categories of algebras associated to
the signatures.  To accomplish this we begin by proving that every
transformation $\xi$ from a polyderivor $\mathbf{d}$ to another one
$\mathbf{e}$, both from a signature $\mathbf{\Sigma}$ to a signature
$\mathbf{\Lambda}$, determines, for a given $\mathbf{\Lambda}$-algebra
$\mathbf{B}$, a $\mathbf{\Sigma}$-homomorphism $\xi^{\mathbf{B}}$ from
$\mathbf{d}^{\ast}_{\mathfrak{pd}}(\mathbf{B})$ to
$\mathbf{e}^{\ast}_{\mathfrak{pd}}(\mathbf{B})$.

\begin{proposition}\label{XialbHom}
Let $\mathbf{d}$ and $\mathbf{e}$ be polyderivors from $\mathbf{\Sigma}$
to $\mathbf{\Lambda}$, $\xi\colon \mathbf{d}\mdf\mathbf{e}$ a
transformation in $\mathbf{Sig}_{\mathfrak{pd}}$, and, for a
$\mathbf{\Lambda}$-algebra $\mathbf{B} = (B,G)$, let
$\xi^{\mathbf{B}}$ be the $S$-sorted mapping
$(\xi^{\mathbf{B}}_{s})_{s\in S}$ from $B_{\varphi}$ to $B_{\psi}$,
where, for every $s\in S$, we have that
$$
\xi_{s}^{\mathbf{B}} = \ext{G}_{\varphi(s),\psi(s)}(\xi_{s})\colon
B_{\varphi(s)}\mor B_{\psi(s)}.
$$
Then $\xi^{\mathbf{B}}$ is a $\mathbf{\Sigma}$-homomorphism from
$\mathbf{d}^{\ast}_{\mathfrak{pd}}(\mathbf{B})$ to
$\mathbf{e}^{\ast}_{\mathfrak{pd}}(\mathbf{B})$.
\end{proposition}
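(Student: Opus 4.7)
The plan is to reduce the claim, for each formal operation $\sigma\colon w\mor s$ in $\Sigma$, to the commutativity of
$$
\xymatrix@C=5ex{
(B_{\varphi})_{w} \ar[r]^{G^{\mathbf{d}}_{\sigma}} \ar[d]_{(\xi^{\mathbf{B}})_{w}} & B_{\varphi(s)} \ar[d]^{\xi^{\mathbf{B}}_{s}} \\
(B_{\psi})_{w} \ar[r]_{G^{\mathbf{e}}_{\sigma}} & B_{\psi(s)}
}
$$
and to derive this diagram by pushing the defining equation of a transformation, $\xi_{s}\comp d(\sigma) = e(\sigma)\comp \xi_{w}$, through the canonical B\'enabou-algebra homomorphism $\ext{G}\colon \mathbf{BTer}_{T}(\Lambda)\mor \mathbf{BOp}_{T}(B)$ induced by the $\mathbf{\Lambda}$-algebra $\mathbf{B}$.

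First, since $\ext{G}$ preserves the operation $\comp$ of the B\'enabou signature $\Sigma^{\Ben_{T}}$, applying it to the equation $\xi_{s}\comp d(\sigma) = e(\sigma)\comp \xi_{w}$ yields the identity $\ext{G}(\xi_{s})\comp \ext{G}(d(\sigma)) = \ext{G}(e(\sigma))\comp \ext{G}(\xi_{w})$ of mappings between the appropriate products of components of $B$. By definition $\ext{G}(\xi_{s}) = \xi^{\mathbf{B}}_{s}$, and the realizations $\ext{G}(d(\sigma))\colon B_{\ext{\varphi}(w)}\mor B_{\varphi(s)}$ and $\ext{G}(e(\sigma))\colon B_{\ext{\psi}(w)}\mor B_{\psi(s)}$ are precisely what $G^{\mathbf{d}}_{\sigma}$ and $G^{\mathbf{e}}_{\sigma}$ become after precomposition with the reorganization isomorphisms, since the definition of $\mathbf{d}^{\ast}_{\mathfrak{pd}}$ gives $G^{\mathbf{d}}_{\sigma} = \ext{G}(d(\sigma))\comp \iota^{B}_{\fmon{\varphi}(w)}$ and similarly $G^{\mathbf{e}}_{\sigma} = \ext{G}(e(\sigma))\comp \iota^{B}_{\fmon{\psi}(w)}$ (with the target isomorphism $\iota^{B}_{\fmon{\varphi}((s))}$ reducing to the identity since $(s)$ is a singleton word).

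Second, the critical step: I would show
$$
\ext{G}(\xi_{w})\comp \iota^{B}_{\fmon{\varphi}(w)} = \iota^{B}_{\fmon{\psi}(w)}\comp (\xi^{\mathbf{B}})_{w}.
$$
For this, I would unfold the definition $\xi_{w} = \xi_{w_{0}}\concat\cdots\concat \xi_{w_{\bb{w}-1}} = \tp{\xi_{w_{0}}\comp \pi^{\fmon{\varphi}(w)}_{0},\ldots,\xi_{w_{\bb{w}-1}}\comp \pi^{\fmon{\varphi}(w)}_{\bb{w}-1}}$ and apply $\ext{G}$, using the fact that $\ext{G}$ preserves $\tp{\,}$ and sends each derived projection $\pi^{\fmon{\varphi}(w)}_{i}$ to the canonical projection $B_{\ext{\varphi}(w)}\mor B_{\varphi(w_{i})}$ of the product (this is essentially the content of Proposition~\ref{derivorFuji} specialized to identities). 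Thus $\ext{G}(\xi_{w})$ is the unique map into $B_{\ext{\psi}(w)}\cong \prod_{i}B_{\psi(w_{i})}$ whose $(i,k)$-th coordinate is $(\xi^{\mathbf{B}}_{w_{i}})_{k}$ applied to the $i$-th factor; and the desired equality then follows from the universal property of products, i.e., from the naturality of $\iota$ with respect to the componentwise map $(\xi^{\mathbf{B}})_{w} = \prod_{i}\xi^{\mathbf{B}}_{w_{i}}$.

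Combining these two observations, the outer rectangle in
$$
\xymatrix@C=6ex@R=6ex{
(B_{\varphi})_{w} \ar[r]^{\iota^{B}_{\fmon{\varphi}(w)}} \ar[d]_{(\xi^{\mathbf{B}})_{w}} & B_{\ext{\varphi}(w)} \ar[r]^{\ext{G}(d(\sigma))} \ar[d]|{\ext{G}(\xi_{w})} & B_{\varphi(s)} \ar[d]^{\xi^{\mathbf{B}}_{s}} \\
(B_{\psi})_{w} \ar[r]_{\iota^{B}_{\fmon{\psi}(w)}} & B_{\ext{\psi}(w)} \ar[r]_{\ext{G}(e(\sigma))} & B_{\psi(s)}
}
$$
commutes, which is exactly the homomorphism condition for $\sigma$. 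The main obstacle is the bookkeeping inherent in the critical middle square: one must juggle four levels of indexing (sorts in $S$, sorts in $T$, letters in words on $T$, and letters in words on $\fmon{T}$) and track how the isomorphism $\iota$ relates the nested products $\prod_{i}\prod_{k} B_{\varphi(w_{i})_{k}}$ to the flat product $\prod_{j} B_{\ext{\varphi}(w)_{j}}$. Once one has internalized that $\ext{G}$, being a homomorphism of B\'enabou algebras, transports the purely syntactic equality $\xi_{w} = \tp{\xi_{w_{i}}\comp \pi^{\fmon{\varphi}(w)}_{i}}_{i}$ to the analogous semantic equality in $\mathbf{BOp}_{T}(B)$, the naturality of $\iota$ closes the argument with no further computation.
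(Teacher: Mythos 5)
Your proposal is correct and follows essentially the same route as the paper's proof: the paper arranges the argument as a cube whose back face is the image under $\ext{G}$ of the transformation identity $\xi_{s}\comp d(\sigma)=e(\sigma)\comp\xi_{w}$, whose top and bottom faces are the definitions of $G^{\mathbf{d}}_{\sigma}$ and $G^{\mathbf{e}}_{\sigma}$, and whose lateral faces are exactly your ``critical step'' $(\xi_{w})^{\mathbf{B}}=\iota^{B}_{\fmon{\psi}(w)}\comp(\xi^{\mathbf{B}})_{w}\comp(\iota^{B}_{\fmon{\varphi}(w)})^{-1}$, verified there too by unfolding $\xi_{w}$ as a tupling of the $\xi_{w_{i}}\comp\pi^{\fmon{\varphi}(w)}_{i}$ and checking coordinates against the projections. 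The only difference is presentational (cube versus pasted rectangle).
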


\begin{proof}
For every  operation $\sigma\colon w\mor s$, in $\Sigma$, we have
to prove that $G^{\mathbf{e}}_{\sigma}\comp \xi^{\mathbf{B}}_{w} =
\xi^{\mathbf{B}}_{s}\comp G^{\mathbf{d}}_{\sigma}$, and for this
it is enough to prove that every face, up to at most the frontal
one, in the following diagram commutes
$$
\xymatrix@C=40pt@R=20pt{
 &
{B_{\ext{\varphi}(w)}}
  \ar[rr]^{(\ext{G}_{\ext{\varphi}\bprod\ext{\varphi}}\comp d)_{w,s}(\sigma)}
  \ar[dd]_(.65){(\xi_{w})^{\mathbf{B}}} & &
{B_{\ext{\varphi}(s)}}
  \ar[rd]^(.45){(\iota^{B}_{\fmon{\varphi}(s)})^{-1}}
  \ar[dd]^(.65){(\xi_{s})^{\mathbf{B}}} \\
{B_{\varphi}}_{w}
  \ar[ru]^{\iota^{B}_{\fmon{\varphi}(w)}}
  \ar[rrrr]^{G^{\mathbf{d}}_{\sigma}}
  \ar[dd]|(.4){(\xi^{\mathbf{B}})_{w}} & & & &
{B_{\varphi}}_{s}
  \ar[dd]|(.4){(\xi^{\mathbf{B}})_{s}} \\
 &
{B_{\ext{\psi}(w)}}
  \ar[rr]_{(\ext{{G}}_{\ext{\psi}\bprod\ext{\psi}}\comp e)_{w,s}(\sigma)} & &
{B_{\ext{\psi}(s)}}
  \ar[rd]^(.45){(\iota^{B}_{\fmon{\psi}(s)})^{-1}}  \\
{B_{\psi}}_{w}
  \ar[ru]^{\iota^{{B}}_{\fmon{\psi}(w)}}
  \ar[rrrr]_{{G}^{\mathbf{e}}_{\sigma}} & & & &
{B_{\psi}}_{s} }
$$
from which it also follows, necessarily, that the frontal face
also commutes.

The top and bottom faces commute by definition.
The back face commutes because, being $\xi$ a transformation from
$\mathbf{d}$ to $\mathbf{e}$, from the commutativity of the
following diagram
$$
\xymatrix@C=70pt{ \ext{\varphi}(w) \ar[r]^{d(\sigma)}
\ar[d]_{\xi_{w}} & \varphi(s)
\ar[d]^{\xi_{s}} \\
\ext{\psi}(w) \ar[r]_{e(\sigma)} & \psi(s) }
$$
it follows that
\begin{align*}
(\xi_{s})^{\mathbf{B}}\comp
 (\ext{G}_{\ext{\varphi}\bprod\ext{\varphi}}\comp d)_{w,s}(\sigma)
&= \ext{G}_{\varphi(s),\psi(s)}(\xi_{s})\comp
  \ext{G}_{\ext{\varphi}(w),\varphi(s)}(d_{w,s}(\sigma)) \\
&=
\ext{G}_{\ext{\varphi}(w),\psi(s)}(\xi_{s}\comp d_{w,s}(\sigma)) \\
&=
\ext{G}_{\ext{\varphi}(w),\psi(s)}(e_{w,s}(\sigma)\comp \xi_{w}) \\
&= \ext{G}_{\ext{\psi}(w),\psi(s)}(e_{w,s}(\sigma))\comp
  \ext{G}_{\ext{\varphi}(w),\ext{\psi}(w)}(\xi_{w}) \\
&= (\ext{G}_{\ext{\psi}\bprod\ext{\psi}}\comp e)_{w,s}(\sigma)
\comp
  (\xi_{w})^{\mathbf{B}}.
\end{align*}
Relative to the lateral faces, let us verify, e.g., that the left
one commutes. For this it is enough to prove that
$$
(\xi_{w})^{\mathbf{B}}=\iota^{B}_{\fmon{\psi}(w)}\comp\xi_{w}^{\mathbf{B}}
\comp(\iota^{B}_{\fmon{\varphi}(w)})^{-1}.
$$
But we have that
\begin{align*}
    (\xi_{w})^{\mathbf{B}} &=
    \ext{G}_{\ext{\varphi}(w),\ext{\psi}(w)}(\xi_{w_{0}}\bconcat\cdots
    \bconcat \xi_{w_{\bb{w}-1}}) \\
    &=
    \ext{G}_{\varphi(w_{0}),\psi(w_{0})}(\xi_{w_{0}})\bconcat\cdots
    \bconcat\ext{G}_{\varphi(w_{\bb{w}-1},\psi(w_{\bb{w}-1})}
    (\xi_{w_{\bb{w}-1}})\\
    &=
    \xi_{w_{0}}^{\mathbf{B}}\bconcat\cdots\bconcat
    \xi_{w_{\bb{w}-1}}^{\mathbf{B}}\\
    &=
    \tp{
    \xi_{w_{0}}^{\mathbf{B}}\comp\pr^{B_{\ext{\varphi}(w)}}_{(0)},\ldots,
    \xi_{w_{\bb{w}-1}}^{\mathbf{B}}\comp\pr^{B_{\ext{\varphi}(w)}}_{(\bb{w}-1)},
    }
\end{align*}
hence it is enough to prove that, for every $i\in\bb{w}$, we have
that
$$
\pr^{B_{\ext{\psi}(w)}}_{(i)}\comp
\iota^{B}_{\fmon{\psi}(w)}\comp\xi_{w}^{\mathbf{B}}
\comp(\iota^{B}_{\fmon{\varphi}(w)})^{-1} =
\xi_{w_{i}}^{\mathbf{B}}\comp\pr^{B_{\ext{\varphi}(w)}}_{(i)},
$$
but this follows from the commutativity of the following diagram
$$
\xymatrix@C=50pt@R=28pt{ {} & B_{\ext{\varphi}(w)}
  \ar[rd]^{\pr^{B_{\ext{\varphi}(w)}}_{(i)}}
  \ar[ld]_{(\iota^{B}_{\fmon{\varphi}(w)})^{-1}} & {} \\
{B_{\varphi}}_{w}
  \ar[rr]^{\pr^{{B_{\varphi}}_{w}}_{i}}
  \ar[d]_{(\xi^{\mathbf{B}})_{w}} & {} &
B_{\varphi(w_{i})}
  \ar[d]^{\xi^{\mathbf{B}}_{w_{i}}} \\
{B_{\psi}}_{w}
  \ar[rr]_{\pr^{{B_{\psi}}_{w}}_{i}}
  \ar[dr]_{\iota^{B}_{\fmon{\psi}(w)}} & {} &
B_{\psi(w_{i})}
  \\
{} & B_{\ext{\psi}(w)}
  \ar[ru]_(.6){\pr^{B_{\ext{\psi}(w)}}_{(i)}} & {}
}
$$
\end{proof}

After having proved, for two polyderivors $\mathbf{d}$ and
$\mathbf{e}$ from $\mathbf{\Sigma}$ to $\mathbf{\Lambda}$, that
every transformation $\xi$ from $\mathbf{d}$ to $\mathbf{e}$,
induces, for every $\mathbf{\Lambda}$-algebra $\mathbf{B}$, a
$\mathbf{\Sigma}$-homomorphism $\xi^{\mathbf{B}}$ from
$\mathbf{d}^{\ast}_{\mathfrak{pd}}(\mathbf{B})$ to
$\mathbf{e}^{\ast}_{\mathfrak{pd}}(\mathbf{B})$, we prove in the
following proposition the naturalness of the involved process.

\begin{proposition}\label{AlgFujXiNat}
Let $\xi\colon \mathbf{d}\mdf\mathbf{e}$ be a transformation with
$\mathbf{d}$ and $\mathbf{e}$ polyderivors from $\mathbf{\Sigma}$ to
$\mathbf{\Lambda}$.  Then the family $(\xi^{\mathbf{B}})
_{\mathbf{B}\in\mathbf{Alg}(\mathbf{\Lambda})}$, denoted by
$\Alg_{\mathfrak{pd}}(\xi)$, is a natural transformation from the
functor $\mathbf{d}^{\ast}_{\mathfrak{pd}}$ to the functor
$\mathbf{e}^{\ast}_{\mathfrak{pd}}$, both from
$\mathbf{Alg}(\mathbf{\Lambda})$ to $\mathbf{Alg}(\mathbf{\Sigma})$.
\end{proposition}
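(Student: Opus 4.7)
The plan is to reduce the naturality condition for $\mathrm{Alg}_{\mathfrak{pd}}(\xi)$ to the well-known fact, recalled earlier in the paper, that $\mathbf{\Lambda}$-homomorphisms commute with $\mathbf{\Lambda}$-term operations, applied here to the components of $\xi$ viewed as ordinary terms. First, I would fix a $\mathbf{\Lambda}$-homomorphism $f\colon \mathbf{B}\mor \mathbf{B}'$, with $\mathbf{B} = (B,G)$ and $\mathbf{B}' = (B',G')$, and unfold the statement that $(\xi^{\mathbf{B}})_{\mathbf{B}}$ is natural: by the definition of the morphism mapping of $\mathrm{Alg}_{\mathfrak{pd}}$ we have $\mathbf{d}^{\ast}_{\mathfrak{pd}}(f) = f_{\varphi}$ and $\mathbf{e}^{\ast}_{\mathfrak{pd}}(f) = f_{\psi}$, so the naturality square amounts to the equality $f_{\psi}\comp \xi^{\mathbf{B}} = \xi^{\mathbf{B}'}\comp f_{\varphi}$ of $S$-sorted mappings from $B_{\varphi}$ to $B'_{\psi}$, which it suffices to verify coordinatewise, i.e.\ to show that $f_{\psi(s)}\comp \xi^{\mathbf{B}}_{s} = \xi^{\mathbf{B}'}_{s}\comp f_{\varphi(s)}$ for every $s\in S$.

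Next I would identify $\xi^{\mathbf{B}}_{s}$ with a term realization. By definition $\xi_{s}\in \mathrm{T}_{\mathbf{\Lambda}}(\vs{\varphi(s)})_{\psi(s)}$, which, via the canonical isomorphism underlying $\mathbf{BTer}_{T}(\Lambda)$, is a tuple $((\xi_{s})_{i})_{i\in \bb{\psi(s)}}$ where each $(\xi_{s})_{i}$ is an ordinary $\mathbf{\Lambda}$-term of type $(\vs{\varphi(s)},\psi(s)_{i})$, or equivalently a generalized term $\xi_{s}\colon \vs{\varphi(s)}\mor \vs{\psi(s)}$ in $\mathbf{Ter}(\mathbf{\Lambda})$. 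A direct unfolding of the $\Ben_{T}$-algebra structure transported onto $\mathrm{BOp}_{T}(B)$ shows that $\ext{G}_{\varphi(s),\psi(s)}(\xi_{s}) = \tp{(\xi_{s})_{i}^{\mathbf{B}}}_{i\in\bb{\psi(s)}}\colon B_{\varphi(s)}\mor B_{\psi(s)}$, which is precisely the generalized term operation $\xi_{s}^{\mathbf{B}}$ of the preceding section; the same holds for $\mathbf{B}'$ in place of $\mathbf{B}$.

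The naturality equality then reduces, for every $s\in S$ and every $i\in \bb{\psi(s)}$, to
\[
f_{\psi(s)_{i}}\comp (\xi_{s})_{i}^{\mathbf{B}} = (\xi_{s})_{i}^{\mathbf{B}'}\comp f_{\varphi(s)},
\]
which is an instance of the general principle that $\mathbf{\Lambda}$-homomorphisms commute with $\mathbf{\Lambda}$-term operations, i.e.\ $u_{s}\comp P^{\mathbf{A}} = P^{\mathbf{B}}\comp u_{X}$, as recalled in the second section; here $X = \vs{\varphi(s)}$, the target sort is $\psi(s)_{i}$, and the homomorphism is $f$. Putting these coordinate equalities together yields the desired $f_{\psi}\comp \xi^{\mathbf{B}} = \xi^{\mathbf{B}'}\comp f_{\varphi}$, and hence the naturality of $\mathrm{Alg}_{\mathfrak{pd}}(\xi)$.

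The only real technical point, and the part that will need to be written out carefully, is the identification in the second step: namely, that the value of the extended homomorphism $\ext{G}\colon \mathbf{BTer}_{T}(\Lambda)\mor \mathbf{BOp}_{T}(B)$ on the tuple $\xi_{s}$ coincides with the generalized term realization $\xi_{s}^{\mathbf{B}}$. This uses the way the operations $\tp{\,}_{u,w}$ and $\comp_{u,x,w}$ of the B\'enabou algebra $\mathbf{BOp}_{T}(B)$ are realized, together with the isomorphism $\mathbf{BTer}_{T}(\Lambda)\cong \mathbf{T}_{\Ben_{T}}(\tcoprod_{1\bprod\between_{T}}\Lambda)$ of Corollary~\ref{Beniso}; beyond this the argument is routine, since once this identification is in place the desired commutativity is simply the commutation of $f$ with $\mathbf{\Lambda}$-term operations, applied componentwise.
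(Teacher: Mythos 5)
Your proposal is correct and follows essentially the same route as the paper's own proof: reduce naturality to the coordinatewise squares for each $s\in S$, observe that $\xi^{\mathbf{B}}_{s}$ is the realization of the term $\xi_{s}$ in $\mathbf{B}$, and conclude by the commutation of $\mathbf{\Lambda}$-homomorphisms with term operations. The only difference is that you spell out the identification of $\ext{G}_{\varphi(s),\psi(s)}(\xi_{s})$ with the (tupled) term realization, which the paper treats as immediate.
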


\begin{proof}
We have to prove that, for every $\mathbf{\Lambda}$-algebras $\mathbf{B} =
(B,G)$, $\mathbf{C} = (C,H)$ and morphism $f\colon
\mathbf{B}\mor \mathbf{C}$ in $\mathbf{Alg}(\mathbf{\Lambda})$, the following
diagram commutes
$$
\xymatrix@C=70pt{
(B_{\varphi},G^{\mathbf{d}})
      \ar[r]^{\xi^{\mathbf{B}}}
      \ar[d]_{f_{\varphi}} &
(B_{\psi},G^{\mathbf{e}})
      \ar[d]^{f_{\psi}}  \\
(C_{\varphi},H^{\mathbf{d}})
      \ar[r]_{\xi^{\mathbf{C}}} &
(C_{\psi},H^{\mathbf{e}})
}
$$
But this is immediate because, for every $s\in S$, we have that
$\xi^{\mathbf{B}}_{s}$ and $\xi^{\mathbf{C}}_{s}$ are the realizations
of the term $\xi_{s}$ in the respective algebras, hence the following
diagram commutes
$$
\xymatrix@C=70pt{
B_{\varphi(s)} \ar[r]^{\xi^{\mathbf{B}}_{s}}
            \ar[d]_{f_{\varphi(s)}} &
B_{\psi(s)} \ar[d]^{f_{\psi(s)}} \\
C_{\varphi(s)} \ar[r]_{\xi^{\mathbf{C}}_{s}} &
C_{\psi(s)}
}
$$
\end{proof}

Once stated that the transformations between polyderivors from a
signature into a like one, induce natural transformations among the
functors between the categories of algebras associated to the
signatures, we can properly lift the pseudo-functor
$\Alg_{\mathfrak{pd}}\colon\mathbf{Sig}_{\mathfrak{pd}}\mor
\mathbf{Cat}$ up to the $2$-cells in the $2$-category
$\mathbf{Sig}_{\mathfrak{pd}}$.

\begin{proposition}
There exists a pseudo-functor $\Alg_{\mathfrak{pd}}$, contravariant in the
morphisms and covariant in the $2$-cells, from the $2$-category
$\mathbf{Sig}_{\mathfrak{pd}}$ to the $2$-category $\mathbf{Cat}$
given schematical\-ly by the following data
$$
\xymatrixcolsep={17ex}
\xymatrixrowsep={11ex}
\xymatrix{
\mathbf{\Sigma}
  \xymn[0pt,30pt]{\mathbf{Sig}_{\mathfrak{pd}}}{1}
  \ar @/_16pt/ [d]_*+{\mathbf{d}}="d"
  \ar @/^16pt/ [d]^*+{\mathbf{e}}="e"
  \ar @{} "d";"e"^*+/3pt/{\xi}|(.53){\dir{~>}} &&
\mathbf{Alg}(\mathbf{\Sigma})
  \xymn[0pt,30pt]{\mathbf{Cat}}{2}
  \ar "1";"2"^{\Alg_{\mathfrak{pd}}} \\
\mathbf{\Lambda} &&
\mathbf{Alg}(\mathbf{\Lambda})
  \ar @/^10pt/ []+<-4ex,2ex>;[u]+<-4ex,-2ex>
               ^*+{\mathbf{d}^{\ast}_{\mathfrak{pd}}}="dd"
  \ar @/_10pt/ []+<+4ex,2ex>;[u]+<+4ex,-2ex>
               _*+{\mathbf{e}^{\ast}_{\mathfrak{pd}}}="ee"
  \ar @{}"dd";"ee"^*+/3pt/{\Alg_{\mathfrak{pd}}(\xi)}|(.53){\dir{=>}}
\ar@{}"e";"dd" |{\lmapsto}
}
$$
together with the accompanying natural isomorphisms
$\gamma^{\mathbf{d},\mathbf{e}}$ and $\nu^{\mathbf{\Sigma}}$, as defined
in Proposition~\ref{defPseudofunctorAlgfuj}.
\end{proposition}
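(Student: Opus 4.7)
The plan is to verify that the assignment $\xi\mapsto \Alg_{\mathfrak{pd}}(\xi)$ constructed in Proposition~\ref{AlgFujXiNat} upgrades the pseudo-functor $\Alg_{\mathfrak{pd}}$ of Proposition~\ref{defPseudofunctorAlgfuj} to a pseudo-functor on the $2$-category $\mathbf{Sig}_{\mathfrak{pd}}$. The action on $0$- and $1$-cells, together with the natural isomorphisms $\gamma^{\mathbf{d},\mathbf{e}}$ and $\nu^{\mathbf{\Sigma}}$, is already in place, so four things remain to check: (i) preservation of identity $2$-cells; (ii) preservation of vertical composition; (iii) preservation of horizontal composition; and (iv) compatibility of the $2$-cell action with $\gamma^{\mathbf{d},\mathbf{e}}$ and $\nu^{\mathbf{\Sigma}}$.

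For (i), I would observe that for a polyderivor $\mathbf{d}\colon\mathbf{\Sigma}\mor\mathbf{\Lambda}$ the identity transformation $\id_{\mathbf{d}}$ has $s$-th component $\tp{\pi^{\varphi(s)}_{0},\ldots,\pi^{\varphi(s)}_{\bb{\varphi(s)}-1}}_{\varphi(s),\varphi(s)}$, whose realization on any $\mathbf{\Lambda}$-algebra $\mathbf{B}$ is, by the very definition of the B�nabou algebra $\mathbf{BOp}_{T}(B)$, the identity mapping on $B_{\varphi(s)}$. Hence $\Alg_{\mathfrak{pd}}(\id_{\mathbf{d}})_{\mathbf{B}} = \id_{\mathbf{d}^{\ast}_{\mathfrak{pd}}(\mathbf{B})}$, which is the identity natural transformation at $\mathbf{d}^{\ast}_{\mathfrak{pd}}$. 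For (ii), given vertically composable transformations $\xi\colon\mathbf{d}\mdf\mathbf{e}$ and $\chi\colon\mathbf{e}\mdf\mathbf{h}$, the $s$-th component of $\chi\comp\xi$ is the formal composite $\chi_{s}\comp\xi_{s}$ in $\mathbf{BTer}_{T}(\Lambda)$; since $\ext{G}\colon\mathbf{BTer}_{T}(\Lambda)\mor\mathbf{BOp}_{T}(B)$ is a homomorphism of B�nabou algebras, the realization functor preserves this composite, yielding $\Alg_{\mathfrak{pd}}(\chi\comp\xi)_{\mathbf{B}} = \Alg_{\mathfrak{pd}}(\chi)_{\mathbf{B}}\comp\Alg_{\mathfrak{pd}}(\xi)_{\mathbf{B}}$.

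Step (iii) is the main obstacle. Given $\xi\colon\mathbf{d}\mdf\mathbf{e}$ in $\mathbf{Sig}_{\mathfrak{pd}}(\mathbf{\Sigma},\mathbf{\Lambda})$ and $\chi\colon\mathbf{h}\mdf\mathbf{i}$ in $\mathbf{Sig}_{\mathfrak{pd}}(\mathbf{\Lambda},\mathbf{\Omega})$, the $s$-th component of $\chi\hcomp\xi$ is $\chi_{\psi(s)}\comp \ext{h}(\xi_{s}) = \ext{i}(\xi_{s})\comp\chi_{\varphi(s)}$. For a $\mathbf{\Omega}$-algebra $\mathbf{C}$ with structure $H$, I need to verify that the realization of this composite agrees, modulo the coherence isomorphism $\gamma^{\mathbf{d},\mathbf{h}}_{\mathbf{C}} = \iota^{C}_{\fmon{\psi}\comp\varphi}$ built into $\Alg_{\mathfrak{pd}}$, with the horizontal composite $\Alg_{\mathfrak{pd}}(\chi)_{\mathbf{C}}\ast\Alg_{\mathfrak{pd}}(\xi)_{\mathbf{C}}$ of natural transformations. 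The key identity to establish is that $\ext{H}_{\ext{\psi}\bprod\ext{\psi}}(\ext{h}(\xi_{s}))$, computed on $\mathbf{C}$, coincides with the semantic action of $\chi^{\ast}_{\mathfrak{pd}}(\mathbf{C})$ composed with $\ext{e}$ applied along the adjoint square for $\mathbf{h}$; this reduces to Proposition~\ref{deformacionesSimDer} (which transports the commutativity from generators to arbitrary terms) together with the compatibility of the $\kappa$-isomorphisms with the horizontal composition given by $\iota$. After squaring these observations into a single pasting diagram it follows, by a bookkeeping argument, that the law of Godement for transformations of polyderivors corresponds exactly to the interchange law for natural transformations in $\mathbf{Cat}$.

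Finally, step (iv) amounts to the naturality of $\gamma^{\mathbf{d},\mathbf{e}}$ and $\nu^{\mathbf{\Sigma}}$ with respect to $2$-cells. For $\nu^{\mathbf{\Sigma}}$ this is immediate since the identity polyderivor admits only the identity $2$-cell. For $\gamma^{\mathbf{d},\mathbf{e}}$ one must check that, whenever $\xi$ and $\chi$ are $2$-cells with appropriate domains and codomains, the square built from $\gamma^{\mathbf{d},\mathbf{h}}$, $\gamma^{\mathbf{e},\mathbf{i}}$, $\Alg_{\mathfrak{pd}}(\chi\hcomp\xi)$ and $\Alg_{\mathfrak{pd}}(\chi)\ast\Alg_{\mathfrak{pd}}(\xi)$ commutes. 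This follows from the naturality in $\mathbf{C}$ of $\iota_{\fmon{\psi}\comp\varphi}^{C}$, which was already exploited in the proof of Proposition~\ref{defPseudofunctorAlgfuj}. Collecting (i)--(iv) delivers the claimed pseudo-functor.
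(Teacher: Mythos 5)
Your proposal is correct and follows essentially the same route as the paper, which disposes of this proposition in a single sentence by asserting that the coherence isomorphisms are compatible with the $2$-category structure of $\mathbf{Sig}_{\mathfrak{pd}}$; you simply make explicit the checks (identity $2$-cells, vertical and horizontal composition, compatibility with $\gamma^{\mathbf{d},\mathbf{e}}$ and $\nu^{\mathbf{\Sigma}}$) that this assertion summarizes, resting, as the paper does, on Propositions~\ref{defPseudofunctorAlgfuj}, \ref{XialbHom} and \ref{AlgFujXiNat}. One small inaccuracy: the identity polyderivor may admit non-identity endo-transformations, so your justification of the $\nu^{\mathbf{\Sigma}}$-compatibility is misstated, although nothing in the argument actually depends on that claim.
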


\begin{proof}
It follows from the fact that the natural isomorphisms of the
pseudo-functor are compatible with the structure of $2$-category of
$\mathbf{Sig}_{\mathfrak{pd}}$.
\end{proof}

On the basis of this last proposition we can lift the category
$\mathbf{Alg}_{\mathfrak{pd}}$ up to a 2-category as in the following

\begin{definition}
We denote by $\mathbf{Alg}_{\mathfrak{pd}} =
\iint^{\mathbf{Sig}_{\mathfrak{pd}}}\Alg_{\mathfrak{pd}}$ the
$2$-category which has
\begin{enumerate}
\item As objects ($0$-cells) the pairs
      $(\mathbf{\Sigma},\mathbf{A})$, where $\mathbf{\Sigma}$ is a
      signature and $\mathbf{A}$ a $\mathbf{\Sigma}$\nobreakdash-algebra,

\item As morphisms ($1$-cells) from
      $(\mathbf{\Sigma},\mathbf{A})$ to
      $(\mathbf{\Lambda},\mathbf{B})$ the pairs $(\mathbf{d},f)$,
      where $\mathbf{d}$ is a polyderivor from $\mathbf{\Sigma}$ to
      $\mathbf{\Lambda}$ and $f$ a $\mathbf{\Sigma}$-homomorphism from
      $\mathbf{A}$ to $\mathbf{d}^{\ast}_{\mathfrak{pd}}(\mathbf{B})$,
      and

\item As $2$-cells from $(\mathbf{d},f)$ to $(\mathbf{e},g)$,
      where $(\mathbf{d},f)$ and $(\mathbf{e},g)$ are morphisms from
      $(\mathbf{\Sigma},\mathbf{A})$ to
      $(\mathbf{\Lambda},\mathbf{B})$, the $2$-cells
      $\xi\colon\mathbf{\Sigma}\mdf\mathbf{\Lambda}$ in
      $\mathbf{Sig}_{\mathfrak{pd}}$ such that
      the following diagram commutes
      $$\xymatrix@R=10pt@C=60pt{
      {} &
      \mathbf{d}^{\ast}_{\mathfrak{pd}}(\mathbf{B})
      \ar[dd]^-{\xi^{\mathbf{B}}} \\
      \mathbf{A}
      \ar[ru]^-{f}
      \ar[rd]_-{g} \\
      {} & \mathbf{e}^{\ast}_{\mathfrak{pd}}(\mathbf{B})
      }
      $$
\end{enumerate}

\end{definition}

Relative to the above $2$-category we point out that for the following
configuration of $2$-cells
$$
\xymatrix@C=15ex@R=8ex{
(\mathbf{\Sigma},\mathbf{A})
  \ar@/^3ex/[r]|*+{(\mathbf{d},f)}="f1"
  \ar@/_3ex/[r]|*+{(\mathbf{e},g)}="f2" &
(\mathbf{\Lambda},\mathbf{B})
  \ar@/^3ex/[r]|*+{(\mathbf{h},p)}="f3"
  \ar@/_3ex/[r]|*+{(\mathbf{i},q)}="f4" &
(\mathbf{\Omega},\mathbf{C})
\ar @{} "f1";"f2"|{\dir{~>}}^{\xi}
\ar @{} "f3";"f4"|{\dir{~>}}^{\chi}
}
$$
we have the commutative diagram
$$
\xymatrix@C=25pt@R=2pt{
{} & {} & {} & {} & {} &
(\mathbf{h}\comp \mathbf{d})^{\ast}_{\mathfrak{pd}}(\mathbf{C})
\ar[dddd]|(.5){\xi^{\mathbf{h}^{\ast}_{\mathfrak{pd}}(\mathbf{C})}}
\ar[ddl]|(.5){\mathbf{d}^{\ast}_{\mathfrak{pd}}(\chi^{\mathbf{C}})} \\
{} & {} &
\mathbf{d}^{\ast}_{\mathfrak{pd}}(\mathbf{B})
\ar[rrru]|(.4){\mathbf{d}^{\ast}_{\mathfrak{pd}}(p)}
\ar[rrd]|(.5){\mathbf{d}^{\ast}_{\mathfrak{pd}}(q)}
\ar[dddd]|(.5){\xi^{\mathbf{B}}} & {} & {} & {} \\
{} & {} & {} & {} &
(\mathbf{i}\comp \mathbf{d})^{\ast}_{\mathfrak{pd}}(\mathbf{C})
\ar[dddd]|(.4){\xi^{\mathbf{i}^{\ast}_{\mathfrak{pd}}(\mathbf{C})}} & {} \\
\mathbf{A}\ar[rruu]^-{f}\ar[rrdd]_-{g} & {} & {} & {} & {} & {} \\
{} & {} & {} & {} & {} & (\mathbf{h}\comp
\mathbf{e})^{\ast}_{\mathfrak{pd}}(\mathbf{C})
\ar[ddl]|(.5){\mathbf{e}^{\ast}_{\mathfrak{pd}}(\chi^{\mathbf{C}})} \\
{} & {} &
\mathbf{e}^{\ast}_{\mathfrak{pd}}(\mathbf{B})
\ar[rrru]|(.4){\mathbf{e}^{\ast}_{\mathfrak{pd}}(p)}
\ar[rrd]|(.5){\mathbf{e}^{\ast}_{\mathfrak{pd}}(q)} & {} & {} & {} \\
{} & {} & {} & {} &
(\mathbf{i}\comp \mathbf{e})^{\ast}_{\mathfrak{pd}}(\mathbf{C}) & {}
}
$$


As was the case above for algebras and transformations, our goal now
is to prove that the transformations between polyderivors from a signature
into a like one, also determine natural transformations between the
functors between the categories of terms associated to the signatures.
To accomplish this we begin by proving that every transformation $\xi$
from a polyderivor $\mathbf{d}$ to another one $\mathbf{e}$, both from a
signature $\mathbf{\Sigma}$ to a signature $\mathbf{\Lambda}$,
determines, for a given $S$-sorted set $X$, a morphism $\xi_{X}$, in
the category $\mathbf{Ter}(\mathbf{\Lambda})$, from
$\coprod_{\varphi}^{\dagger}X$ to $\coprod_{\psi}^{\dagger}X$.

\begin{proposition}
Let $\mathbf{d}$ and $\mathbf{e}$ be polyderivors from $\mathbf{\Sigma}$
to $\mathbf{\Lambda}$, $\xi\colon \mathbf{d}\mdf\mathbf{e}$ a
transformation in $\mathbf{Sig}_{\mathfrak{pd}}$, and, for an
$S$-sorted set $X$, let $\xi_{X}\colon \coprod_{\psi}^{\dagger}X\mor
\mathrm{T}_{\mathbf{\Lambda}}(\coprod_{\varphi}^{\dagger}X)$ be the
$T$-sorted mapping defined, for $t\in T$ and $(x,s,\psi(s),i)\in
(\coprod_{\psi}^{\dagger}X)_{t}$, as follows
$$
(\xi_{X})_{t}(x,s,\psi(s),i) =
(\xi_{s})_{i}(v^{\varphi(s)_{j}}_{j}
             /(x,s,\varphi(s),j)\mid j\in \bb{\varphi(s)}).
$$
Then the mapping $\xi_{X}$ is a morphism, in the category
$\mathbf{Ter}(\mathbf{\Lambda})$, from $\coprod_{\varphi}^{\dagger}X$
to $\coprod_{\psi}^{\dagger}X$.
\end{proposition}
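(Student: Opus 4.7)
The plan is to verify that the prescription defining $\xi_{X}$ really produces a $T$-sorted mapping of the required type, because, by the very definition of $\mathbf{Ter}(\mathbf{\Lambda}) = \mathbf{Kl}(\mathbb{T}_{\mathbf{\Lambda}})^{\mathrm{op}}$, a morphism in $\mathbf{Ter}(\mathbf{\Lambda})$ from $\coprod_{\varphi}^{\dagger}X$ to $\coprod_{\psi}^{\dagger}X$ is exactly a $T$-sorted mapping from $\coprod_{\psi}^{\dagger}X$ to $\mathrm{T}_{\mathbf{\Lambda}}(\coprod_{\varphi}^{\dagger}X)$. So the entire content of the statement is sort-bookkeeping together with the observation that the substitution appearing in the formula is the canonical extension of an $S$-sorted valuation under the adjunction $\mathbf{T}_{\mathbf{\Lambda}}\ladj\mathrm{G}_{\mathbf{\Lambda}}$.

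First, I would give a cleaner category-theoretical formulation of the substitution. For every $s\in S$ and every $x\in X_{s}$, let $\rho^{x,s}\colon \vs{\varphi(s)}\mor\coprod_{\varphi}^{\dagger}X$ be the $S$-sorted mapping whose $\varphi(s)_{j}$-th coordinate, for $j\in\bb{\varphi(s)}$, sends $v^{\varphi(s)_{j}}_{j}$ to $(x,s,\varphi(s),j)$; this is well-defined since, by the description of $(\coprod_{\varphi}^{\dagger}X)_{\varphi(s)_{j}}$ given after Proposition~$25$, the quadruple $(x,s,\varphi(s),j)$ lies in that set. Composing with $\eta_{\coprod_{\varphi}^{\dagger}X}$ and applying the universal property of the free $\mathbf{\Lambda}$-algebra yields a $\mathbf{\Lambda}$-homo\-morphism
\[
\ext{(\eta_{\coprod_{\varphi}^{\dagger}X}\comp \rho^{x,s})}\colon
\mathbf{T}_{\mathbf{\Lambda}}(\vs{\varphi(s)})\mor
\mathbf{T}_{\mathbf{\Lambda}}(\textstyle\coprod_{\varphi}^{\dagger}X),
\]
which performs precisely the substitution $v^{\varphi(s)_{j}}_{j}/(x,s,\varphi(s),j)$.

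Second, I would observe that, for $t\in T$ and $(x,s,\psi(s),i)\in (\coprod_{\psi}^{\dagger}X)_{t}$, the value $(\xi_{X})_{t}(x,s,\psi(s),i)$ is, by the definition in the statement, the image of the term $(\xi_{s})_{i}$ under the homomorphism just described, evaluated at sort $\psi(s)_{i}$. Now by the definition of $\xi$ as a transformation, $\xi_{s}\in \mathrm{T}_{\mathbf{\Lambda}}(\vs{\varphi(s)})_{\psi(s)}$, so $(\xi_{s})_{i}\in \mathrm{T}_{\mathbf{\Lambda}}(\vs{\varphi(s)})_{\psi(s)_{i}}$. Since $(x,s,\psi(s),i)\in (\coprod_{\psi}^{\dagger}X)_{t}$ forces $\psi(s)_{i}=t$, the image of $(\xi_{s})_{i}$ under the above homomorphism lies in $\mathrm{T}_{\mathbf{\Lambda}}(\coprod_{\varphi}^{\dagger}X)_{\psi(s)_{i}} = \mathrm{T}_{\mathbf{\Lambda}}(\coprod_{\varphi}^{\dagger}X)_{t}$, as required.

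Third, I would note that each element of $(\coprod_{\psi}^{\dagger}X)_{t}$ is, by construction, a unique quadruple $(x,s,\psi(s),i)$ with $x\in X_{s}$ and $\psi(s)_{i}=t$, so there is no ambiguity in the prescription and $\xi_{X}$ is a well-defined $T$-sorted mapping of the claimed type. This completes the verification, since, as recalled at the outset, such a $T$-sorted mapping is by definition a morphism from $\coprod_{\varphi}^{\dagger}X$ to $\coprod_{\psi}^{\dagger}X$ in $\mathbf{Ter}(\mathbf{\Lambda})$. There is no substantive obstacle here; the only care required is to track the four indices in the quadruples populating the fibers of $\coprod_{\varphi}^{\dagger}X$ and $\coprod_{\psi}^{\dagger}X$ and to recognize that the substitution in the definition of $\xi_{X}$ is the canonical extension of $\rho^{x,s}$ along the universal property of $\mathbf{T}_{\mathbf{\Lambda}}(\vs{\varphi(s)})$.
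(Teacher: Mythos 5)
Your proof is correct and follows essentially the same route as the paper's, which likewise reduces the statement to a soundness check: the quadruples $(x,s,\varphi(s),j)$ lie in the correct fibers of $\coprod_{\varphi}^{\dagger}X$, the term $(\xi_{s})_{i}$ has coarity $\psi(s)_{i}=t$, and hence the substituted term lands in $\mathrm{T}_{\mathbf{\Lambda}}(\coprod_{\varphi}^{\dagger}X)_{t}$. Your explicit packaging of the substitution as the canonical extension of the valuation $\rho^{x,s}$ is a harmless (and slightly more careful) elaboration of what the paper leaves implicit.
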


\begin{proof}
The definition of $\xi_{X}\colon \coprod_{\psi}^{\dagger}X\mor
\mathrm{T}_{\mathbf{\Lambda}}(\coprod_{\varphi}^{\dagger}X)$ is sound
since, for every $j\in\bb{\varphi(s)}$, we have that
$(x,s,\varphi(s),j)\in
(\coprod_{\varphi}^{\dagger}X)_{\varphi(s)_{j}}$ and $(\xi_{s})_{i}\in
\mathrm{T}_{\mathbf{\Lambda}}(\varphi(s))_{\psi(s)_{i}}$, hence
$(\xi_{X})_{t}(x,s,\varphi(s),i)$ is a term for $\mathbf{\Lambda}$ of
type $\psi(s)_{i}=t$.
\end{proof}

After having proved, for two polyderivors $\mathbf{d}$ and $\mathbf{e}$ from
$\mathbf{\Sigma}$ to $\mathbf{\Lambda}$, that every transformation
$\xi$ from $\mathbf{d}$ to $\mathbf{e}$, induces, for every $S$-sorted
set $X$, a morphism $\xi_{X}$ from $\coprod_{\varphi}^{\dagger}X$ to
$\coprod_{\psi}^{\dagger}X$, we prove in the following proposition
that they are the components of a natural transformation.

\begin{proposition}\label{TerFujXiNat}
Let $\xi\colon \mathbf{d}\mdf\mathbf{e}$ be a transformation in
$\mathbf{Sig}_{\mathfrak{pd}}$, with $\mathbf{d}$, $\mathbf{e}$ polyderivors from
$\mathbf{\Sigma}$ to $\mathbf{\Lambda}$.  Then
$\mathrm{Ter}_{\mathfrak{pd}}(\xi) =
(\xi_{X})_{X\in\mathbf{Ter}(\mathbf{\Sigma})}$ is a natural transformation
from $\mathbf{d}^{\mathfrak{pd}}_{\diamond}$ to
$\mathbf{e}^{\mathfrak{pd}}_{\diamond}$.
\end{proposition}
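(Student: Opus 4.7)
The plan is to verify the naturality square
$$
\mathbf{e}^{\mathfrak{pd}}_{\diamond}(P) \diamond \xi_{X} = \xi_{Y} \diamond \mathbf{d}^{\mathfrak{pd}}_{\diamond}(P)
$$
in $\mathbf{Ter}(\mathbf{\Lambda})$ for an arbitrary morphism $P\colon X\mor Y$ in $\mathbf{Ter}(\mathbf{\Sigma})$. Since both sides are $T$-sorted mappings from $\coprod_{\psi}^{\dagger}Y$ into $\mathrm{T}_{\mathbf{\Lambda}}(\coprod_{\varphi}^{\dagger}X)$, it is enough to check the equality on each generator $(y,s,\psi(s),i)$ of $\coprod_{\psi}^{\dagger}Y$, thereby reducing the verification to an individual variable $y\in Y_{s}$ and a single coordinate $i\in\bb{\psi(s)}$.

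First, I would unfold the definitions. According to the description of $\mathbf{d}^{\mathfrak{pd}}_{\diamond}$ given immediately after its introduction, the image of the generator $(y,s,\varphi(s),j)$ under $\mathbf{d}^{\mathfrak{pd}}_{\diamond}(P)$ is the term in $\mathrm{T}_{\mathbf{\Lambda}}(\coprod_{\varphi}^{\dagger}X)$ obtained from $P_{s}(y)\in\mathrm{T}_{\mathbf{\Sigma}}(X)_{s}$ by recursively replacing every formal operation $\sigma\colon w\mor s'$ by the family $d(\sigma)\colon\ext{\varphi}(w)\mor\varphi(s')$ and every variable $x\in X_{s'}$ by the family $((x,s',\varphi(s'),k))_{k\in\bb{\varphi(s')}}$; the analogous description applies to $\mathbf{e}^{\mathfrak{pd}}_{\diamond}(P)$, and by construction $\xi_{X}$ and $\xi_{Y}$ act on generators by substituting the chosen terms $\xi_{s}$ of the transformation into the quadruple variables.

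Second, I would pass to the B\'enabou-algebra viewpoint. Via the isomorphism of Corollary~\ref{Beniso}, each individual component $P_{s}(y)$ corresponds to a morphism $w\mor (s)$ in $\mathbf{BTer}_{S}(\Sigma)$, where $w$ is a word listing the variables occurring in $P_{s}(y)$. Under this correspondence, evaluating the two sides of the naturality square at $(y,s,\psi(s),i)$ yields precisely the two composites around the square
$$
\xymatrix@C=40pt{
\ext{\varphi}(w)\ar[r]^{\ext{d}(P_{s}(y))}\ar[d]_{\xi_{w}} & \varphi(s)\ar[d]^{\xi_{s}} \\
\ext{\psi}(w)\ar[r]_{\ext{e}(P_{s}(y))} & \psi(s)
}
$$
whose commutativity is exactly the conclusion of Proposition~\ref{deformacionesSimDer}, which propagates the defining transformation condition on formal operations $\sigma$ to arbitrary B\'enabou-algebra terms. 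This supplies all the algebraic content of the naturality claim.

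Finally, I would translate the resulting equality in $\mathbf{BTer}_{T}(\Lambda)$ back into the required equality of $T$-sorted mappings into $\mathrm{T}_{\mathbf{\Lambda}}(\coprod_{\varphi}^{\dagger}X)$, and check that the Kleisli composition $\diamond$ in $\mathbf{Ter}(\mathbf{\Lambda})$ is reflected by composition in $\mathbf{BTer}_{T}(\Lambda)$ in the manner that the reduction requires. The principal obstacle will lie precisely in this last bookkeeping step: reconciling the three notational registers (the Kleisli category $\mathbf{Ter}(\mathbf{\Sigma})$, the B\'enabou algebra $\mathbf{BTer}_{S}(\Sigma)$, and the quadruple-variable presentation of $\coprod_{\varphi}^{\dagger}X$) so that the recursive substitutions defining $\mathbf{d}^{\mathfrak{pd}}_{\diamond}(P)$ correspond correctly to the action of $\ext{d}$ on the B\'enabou-algebra side. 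Once this dictionary is fixed, the substantive verification is immediate from Proposition~\ref{deformacionesSimDer}.
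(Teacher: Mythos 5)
Your proposal is correct and follows essentially the same route as the paper: the paper's own proof consists of asserting the naturality square and then remarking that the proposition is ``analogous to Proposition~\ref{deformacionesSimDer} but for derived operations with variables in arbitrary many-sorted sets,'' which is exactly the reduction you carry out (generator-by-generator on the quadruples, finite-support passage to a word $w$, and appeal to Proposition~\ref{deformacionesSimDer}). Your version merely makes explicit the dictionary between the Kleisli composition and the B\'enabou-algebra composition that the paper leaves tacit.
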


\begin{proof}
Because, for a morphism $P\colon X\mor Y$ in $\mathbf{Ter}(\mathbf{\Sigma})$,
the following diagram commutes
$$
\xymatrix@C=16ex@R=7ex{
\coprod_{\varphi}^{\dagger}X
  \ar[r]^{\xi_{X}}
  \ar[d]_{\mathbf{d}^{\mathfrak{pd}}_{\diamond}(P)} &
\coprod_{\psi}^{\dagger}X
  \ar[d]^{\mathbf{e}^{\mathfrak{pd}}_{\diamond}(P)}  \\
\coprod_{\varphi}^{\dagger}Y
  \ar[r]_{\xi_{Y}} &
\coprod_{\psi}^{\dagger}Y
}
$$
\end{proof}

Observe that this last proposition is analogous to
Proposition~\ref{deformacionesSimDer} but for derived operations with
variables in arbitrary many-sorted sets.

Once stated that the transformations between polyderivors from a
signature into a like one, induce natural transformations among the
functors between the categories of terms associated to the signatures,
we can properly lift the pseudo-functor
$\mathrm{Ter}_{\mathfrak{pd}}\colon\mathbf{Sig}_{\mathfrak{pd}}\mor
\mathbf{Cat}$ up to the $2$-cells of the $2$-category
$\mathbf{Sig}_{\mathfrak{pd}}$.

\begin{proposition}
There exists a pseudo-functor $\mathrm{Ter}_{\mathfrak{pd}}$ from the
$2$-category $\mathbf{Sig}_{\mathfrak{pd}}$ to $\mathbf{Cat}$,
covariant in the morphisms and the 2-cells, given schematical\-ly by the
following data
$$
\xymatrixcolsep={17ex}
\xymatrixrowsep={11ex}
\xymatrix{
\mathbf{\Sigma}
  \xymn[0pt,30pt]{\mathbf{Sig}_{\mathfrak{pd}}}{1}
  \ar @/_16pt/ [d]_*+{\mathbf{d}}="d"
  \ar @/^16pt/ [d]^*+{\mathbf{e}}="e"
  \ar @{} "d";"e"^*+/3pt/{\xi}|(.53){\dir{~>}} &&
\mathbf{Ter}(\mathbf{\Sigma})
  \xymn[0pt,30pt]{\mathbf{Cat}}{2}
  \ar "1";"2"^{\mathrm{Ter}_{\mathfrak{pd}}}
  \ar @/^10pt/ []+<4ex,-2ex>;[d]+<+4ex,+2ex>
               ^*+{\mathbf{e}^{\mathfrak{pd}}_{\diamond}}="ee"
  \ar @/_10pt/ []+<-4ex,-2ex>;[d]+<-4ex,+2ex>
               _*+{\mathbf{d}^{\mathfrak{pd}}_{\diamond}}="dd"
  \ar @{}"dd";"ee"^*+/3pt/{\mathrm{Ter}_{\mathfrak{pd}}(\xi)}|(.53){\dir{=>}}
  \\
\mathbf{\Lambda} &&
\mathbf{Ter}(\mathbf{\Lambda})
\ar@{}"e";"dd"|{\lmapsto}
}
$$
together with the accompanying natural isomorphisms
$\gamma^{\mathbf{d},\mathbf{e}}$ and $\nu^{\mathbf{\Sigma}}$, as
defined in Proposition~\ref{defPseudofunctorTerfuj}.
\end{proposition}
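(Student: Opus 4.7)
The plan is to build on Proposition~\ref{defPseudofunctorTerfuj}, which already supplies the object mapping, the morphism mapping, and the coherence isomorphisms $\gamma^{\mathbf{d},\mathbf{e}}$ and $\nu^{\mathbf{\Sigma}}$ at the level of $1$-cells. Only the assignment on $2$-cells is new, and the naturality for each individual transformation $\xi\colon\mathbf{d}\mdf\mathbf{e}$ is exactly the content of Proposition~\ref{TerFujXiNat}. So the verification reduces to checking (i) that identities go to identities, (ii) that vertical composition is preserved, (iii) that horizontal composition is preserved, and (iv) that the family $(\mathrm{Ter}_{\mathfrak{pd}}(\xi))_{\xi}$ is coherent with $\gamma^{\mathbf{d},\mathbf{e}}$ and $\nu^{\mathbf{\Sigma}}$.

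For (i), I would unfold the definition: the identity transformation $\id_{\mathbf{d}}$ assigns, for each $s\in S$, the tuple of projections $\tp{\pi^{\varphi(s)}_{0},\ldots,\pi^{\varphi(s)}_{\bb{\varphi(s)}-1}}$. Substituting in the defining formula for $(\xi_{X})_{t}(x,s,\psi(s),i)$, one sees that this returns the canonical insertion $\eta_{\coprod_{\varphi}^{\dagger}X}(x,s,\varphi(s),i)$, i.e., the identity morphism at $\coprod_{\varphi}^{\dagger}X$ in $\mathbf{Ter}(\mathbf{\Lambda})$. For (ii), given the vertical composite $\chi\comp\xi$ of transformations $\mathbf{d}\mdf\mathbf{e}\mdf\mathbf{h}$, I would expand $(\chi\comp\xi)_{X}$ using that $(\chi\comp\xi)_{s} = \chi_{s}\comp\xi_{s}$ is a composition in the B\'enabou algebra $\mathbf{BTer}_{T}(\Lambda)$; substituting variables via the formula gives, up to the vertical composition $\diamond$ in $\mathbf{Ter}(\mathbf{\Lambda})$, exactly $\chi_{X}\diamond\xi_{X}$, using Proposition~\ref{deformacionesSimDer} to relate substitution in terms with the action of $\ext{e}$.

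The main obstacle will be (iii), the horizontal composition axiom. Given $\xi\colon\mathbf{d}\mdf\mathbf{e}$ in $\mathbf{Sig}_{\mathfrak{pd}}(\mathbf{\Sigma},\mathbf{\Lambda})$ and $\chi\colon\mathbf{h}\mdf\mathbf{i}$ in $\mathbf{Sig}_{\mathfrak{pd}}(\mathbf{\Lambda},\mathbf{\Omega})$, I must show that modulo the coherence isomorphisms $\gamma^{\mathbf{d},\mathbf{h}}$ and $\gamma^{\mathbf{e},\mathbf{i}}$, the natural transformation $\mathrm{Ter}_{\mathfrak{pd}}(\chi\hcomp\xi)$ coincides with the whiskered composite $\mathrm{Ter}_{\mathfrak{pd}}(\chi)\ast\mathrm{Ter}_{\mathfrak{pd}}(\xi)$, i.e., with $(\chi_{\diamond}^{\mathfrak{pd}}\ast\xi)\vcomp(\chi\ast\mathbf{d}_{\diamond}^{\mathfrak{pd}})$. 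Here one must carefully unpack the definition $(\chi\hcomp\xi)_{s} = \chi_{\psi(s)}\comp\ext{h}(\xi_{s})$ and translate it through the substitution formula for $(\chi\hcomp\xi)_{X}$; I would reduce the computation, via the alternative formula for $\mathbf{d}_{\diamond}^{\mathfrak{pd}}(P)$ given after Proposition~\ref{polyderivor induces 1-cell}, to an equality of $U$-sorted mappings $\coprod_{\ext{\psi}\comp\varphi}^{\dagger}X\mor\mathrm{T}_{\mathbf{\Omega}}(\coprod_{\ext{\nu}\comp\varphi}^{\dagger}X)$, exploiting Proposition~\ref{deformacionesSimDer} applied to $\chi$ to exchange $\ext{h}(\xi_{s})$ with $\chi_{\varphi(s)}$-substitutions. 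The associativity and unit coherence hidden in the isomorphisms $\gamma^{\varphi,\psi}$, $\zeta$, and $\tau^{S}$ are what forces $\gamma^{\mathbf{d},\mathbf{e}}$ to appear as an iso-commuting cell rather than a strict identity.

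Finally, for (iv), coherence with $\nu^{\mathbf{\Sigma}}$ amounts to remarking that $\id_{\mathbf{\Sigma}} = (\between_{S},\eta^{\Ben_{S}}_{\Sigma})$ admits only the identity transformation, whose image is the identity on $X \cong \coprod^{\dagger}_{\between_{S}} X$ via $\tau^{S}_{X}$; coherence with $\gamma^{\mathbf{d},\mathbf{e}}$ follows from the two-variable naturality one obtains by applying Proposition~\ref{TerFujXiNat} to both $\xi$ and the identity transformation at the opposite factor and then composing. Taken together these verifications yield the desired lift of $\mathrm{Ter}_{\mathfrak{pd}}$ to a pseudo-functor on the $2$-category $\mathbf{Sig}_{\mathfrak{pd}}$.
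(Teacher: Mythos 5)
Your proposal is correct and follows essentially the same route as the paper: the paper's own proof is a one-line appeal to the compatibility of the natural isomorphisms of Proposition~\ref{defPseudofunctorTerfuj} with the $2$-category structure of $\mathbf{Sig}_{\mathfrak{pd}}$, taking the $2$-cell assignment from Proposition~\ref{TerFujXiNat} for granted. You simply make explicit the routine verifications (identities, vertical and horizontal composition, coherence with $\gamma^{\mathbf{d},\mathbf{e}}$ and $\nu^{\mathbf{\Sigma}}$) that the paper leaves implicit.
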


\begin{proof}
It follows from the fact that the natural isomorphisms of the
pseudo-functor are compatible with the structure of $2$-category of
$\mathbf{Sig}_{\mathfrak{pd}}$.
\end{proof}


Up to this point what we have at our disposal consists, essentially,
of the following data:
\begin{enumerate}
\item The pseudo-functor $\Alg_{\mathfrak{pd}}$, contravariant in
      the polyderivors and covariant in the transformations, from the
      $2$-category $\mathbf{Sig}_{\mathfrak{pd}}$ to the $2$-category
      $\mathbf{Cat}$,

\item The pseudo-functor $\mathrm{Ter}_{\mathfrak{pd}}$,
      covariant in the polyderivors and the transformations,
      from the $2$-category $\mathbf{Sig}_{\mathfrak{pd}}$ to
      the $2$-category $\mathbf{Cat}$,

\item The family of functors $\mathrm{Tr} =
      (\mathrm{Tr}^{\mathbf{\Sigma}})
      _{\mathbf{\Sigma}\in\mathbf{Sig}_{\mathfrak{pd}}}$, and

\item The family of natural isomorphisms
      $\theta =
      (\theta^{\mathbf{d}})
      _{\mathbf{d}\in\Mor(\mathbf{Sig}_{\mathfrak{pd}})}$, and

\item Taking $\mathbf{Sig}_{\mathfrak{pd}}$ as an ordinary
      category, the institution $\mathfrak{Tm}_{\mathfrak{pd}}$
      on $\mathbf{Set}$.
\end{enumerate}

But it happens that $\mathbf{Sig}_{\mathfrak{pd}}$ is a $2$-category,
hence our next goal will be to show that
$\mathfrak{Tm}_{\mathfrak{pd}}$ is not only an institution on
$\mathbf{Set}$ but actually a $2$-institution on $\mathbf{Set}$.  To
attain the just stated goal we should begin by proving that the
realization of the terms in the algebras is compatible with the
additional structure derived from the $2$-cells in
$\mathbf{Sig}_{\mathfrak{pd}}$, i.e., the transformations between
polyderivors.

\begin{lemma}
Let $\xi\colon \mathbf{d}\mdf\mathbf{e}$ be a transformation in
$\mathbf{Sig}_{\mathfrak{pd}}$ from the polyderivor $\mathbf{d}$ to the
polyderivor $\mathbf{e}$, both from $\mathbf{\Sigma}$ to
$\mathbf{\Lambda}$.  Then, for every $\mathbf{\Lambda}$-algebra
$\mathbf{B}$, and $S$-sorted set $X$, the mappings
$(\xi^{\mathbf{B}})_{X}\comp (\theta^{\dagger\nat}_{\varphi})_{X,B}$
and $(\theta^{\dagger\nat}_{\psi})_{X,B}\comp (\xi_{X})^{\mathbf{B}}$
from $B_{\coprod_{\varphi}^{\dagger}X}$ to
$(\Delta^{\nat}_{\psi}B)_{X}$ are identical, i.e., the following
diagram commutes
$$
\xymatrix@C=12ex@R=7ex{
B_{\coprod_{\varphi}^{\dagger}X}
  \ar[r]^-{(\xi_{X})^{\mathbf{B}}}
  \ar[d]_{(\theta^{\dagger\nat}_{\varphi})_{X,B}} &
B_{\coprod_{\psi}^{\dagger}X}
  \ar[d]^{(\theta^{\dagger\nat}_{\psi})_{X,B}}  \\
(\Delta^{\nat}_{\varphi}B)_{X}
  \ar[r]_-{(\xi^{\mathbf{B}})_{X}} &
(\Delta^{\nat}_{\psi}B)_{X}  }
$$
\end{lemma}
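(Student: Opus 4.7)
The plan is to verify the identity by evaluating both mappings on an arbitrary element of $B_{\coprod_{\varphi}^{\dagger}X} = \mathrm{Hom}(\coprod_{\varphi}^{\dagger}X,B)$, exploiting the explicit descriptions of the three ingredients: the adjunction isomorphism $\theta^{\dagger\nat}$, the term $\xi_{X}$ (built by substitution from the family $\xi$), and the $S$-sorted mapping $\xi^{\mathbf{B}}$ (built by realizing the terms $\xi_{s}$ componentwise as term operations on $\mathbf{B}$).

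First I would fix $f\in B_{\coprod_{\varphi}^{\dagger}X}$, a sort $s\in S$ and a variable $x\in X_{s}$, and compute the image of $f$ under the right-down path. By Proposition~23, $(\theta_{\varphi}^{\dagger\nat})_{X,B}(f)_{s}(x)$ is the tuple $(f_{\varphi(s)_{j}}(x,s,\varphi(s),j))_{j\in\bb{\varphi(s)}}\in B_{\varphi(s)}$, and applying $(\xi^{\mathbf{B}})_{X}$ amounts to composing with $\xi^{\mathbf{B}}_{s} = \ext{G}_{\varphi(s),\psi(s)}(\xi_{s})$, so the $i$-th coordinate of the resulting element of $B_{\psi(s)}$ is the term operation $((\xi_{s})_{i})^{\mathbf{B}}$ evaluated at the valuation $v^{\varphi(s)_{j}}_{j}\mapsto f_{\varphi(s)_{j}}(x,s,\varphi(s),j)$.

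Next I would compute the down-right path on the same $f$. By definition, $(\xi_{X})^{\mathbf{B}}(f)$ is the composite $f^{\sharp}\comp \xi_{X}$; applying $(\theta_{\psi}^{\dagger\nat})_{X,B}$ and reading off the $s$-th coordinate at $x$ produces the family $\bigl(f^{\sharp}_{\psi(s)_{i}}(\xi_{X,\psi(s)_{i}}(x,s,\psi(s),i))\bigr)_{i\in\bb{\psi(s)}}$. Using the explicit definition of $\xi_{X}$ from the preceding proposition, each entry is $f^{\sharp}$ applied to the term $(\xi_{s})_{i}$ after substituting $v^{\varphi(s)_{j}}_{j}$ by $(x,s,\varphi(s),j)$; by the exchange law, this equals $((\xi_{s})_{i})^{\mathbf{B}}$ evaluated at precisely the valuation produced in the previous paragraph. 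The two families coincide, which is the desired equality.

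The argument is essentially a direct unfolding of definitions glued together by the exchange law $a^{\sharp}_{s}(P)=P^{\mathbf{A}}(a)$, so there is no conceptual obstacle; the only real difficulty is the notational bookkeeping required to keep straight the three layers of indexing (the sort $s$, the position $i\in\bb{\psi(s)}$, and the position $j\in\bb{\varphi(s)}$) and to apply the explicit descriptions of $\theta^{\dagger\nat}$, $\xi_{X}$ and $\xi^{\mathbf{B}}$ in the correct order. A more conceptual alternative would be to rephrase the claim as the naturality, in $X\in\mathbf{Ter}(\mathbf{\Sigma})$, of the adjunction bijection applied to the component of $\mathrm{Ter}_{\mathfrak{pd}}(\xi)$ at $X$, thereby reducing it to the compatibility of $\coprod_{\varphi}^{\dagger}\ladj\Delta_{\varphi}^{\nat}$ with term realization that was already established in Proposition~46.
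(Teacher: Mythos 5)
Your proof is correct and follows essentially the same route as the paper's: fix $f$, $s\in S$ and $x\in X_{s}$, unfold both composites using the explicit descriptions of $\theta^{\dagger\nat}$, $\xi_{X}$ and $\xi^{\mathbf{B}}$, and identify the two resulting families via the compatibility of term realization with the homomorphism $f^{\sharp}=\ext{f}$ (the paper phrases this as the naturality of $\Alg_{\mathfrak{pd}}(\xi)$ together with $\ext{f}$ being a homomorphism, which is the same fact you invoke through the exchange law). The only difference is presentational: the paper runs a single chain of equalities from the down-right path to the right-down path, while you evaluate the two paths separately and match them coordinatewise.
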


\begin{proof}
For every $f\in B_{\coprod_{\varphi}^{\dagger}X}$,
$(\xi_{X})^{\mathbf{B}}(f)\in B_{\coprod_{\psi}^{\dagger}X}$ is the
morphism $\ext{f}\comp \xi_{X}$, where $\ext{f}$ is the extension of
$f$ to $\mathbf{T}_{\mathbf{\Lambda}}(\coprod_{\varphi}^{\dagger}X)$,
obtained as shown in the following diagram
$$
\xymatrix@C=10ex@R=9ex{
\coprod_{\varphi}^{\dagger}X
  \ar[r]^-{\eta_{\coprod_{\varphi}^{\dagger}X}}
  \ar[rd]_-{f} &
\mathrm{T}_{\mathbf{\Lambda}}(\coprod_{\varphi}^{\dagger}X)
  \ar[d]^-{\ext{f}}  &
\coprod_{\psi}^{\dagger}X
  \ar[l]_-{\xi_{X}}
  \ar[ld]^-{(\xi_{X})^{\mathbf{B}}(f)} \\
&
B
}
$$
hence $(\theta^{\dagger\nat}_{\psi})_{X,B}((\xi_{X})^{\mathbf{B}}(f))$
is a morphism from $X$ to $\Delta^{\nat}_{\psi}B$.  Now, for $s\in S$
and $x\in X_{s}$, we have that
\begin{align*}
\bigl(
  (\theta^{\dagger\nat}_{\psi})_{X,B}
    \bigl((\xi_{X})^{\mathbf{B}}(f)\bigr)
\bigr)_{s}(x)
&=
\bigl(
  (\theta^{\dagger\nat}_{\psi})_{X,B}
    \bigl(\ext{f}\comp\xi_{X}\bigr)
\bigr)_{s}(x)
\\&=
\bigl(
  (\ext{f}\comp\xi_{X})_{\psi(s)_{i}}(x,s,\psi(s),i)\mid i\in\bb{\psi(s)}
\bigr)
\\&=
\bigl(
  \ext{f}_{\psi(s)_{i}}\bigl((\xi_{X})_{\psi(s)_{i}}(x,s,\psi(s),i)
\bigr)\mid i\in\bb{\psi(s)}
\bigr)
\\&=
\ext{f}_{\psi(s)}
  \bigl(
    (\xi_{X})_{\psi(s)_{i}}(x,s,\psi(s),i)\mid i\in\bb{\psi(s)}
  \bigr)
\\&=
\ext{f}_{\psi(s)}
  \Bigl(
    (\xi_{s})_{i}\bigl(v^{\varphi(s)_{j}}_{j}/(x,s,\varphi(s),j)
    \mid j\in\bb{\varphi(s)}\bigr)_{i\in\bb{\psi(s)}}
  \Bigr)
\\&=
\ext{f}_{\psi(s)}
 \bigl(
  \xi_{s}^{\mathbf{T}_{\mathbf{\Lambda}}(\coprod_{\varphi}^{\dagger}X)}
  \bigl((x,s,\varphi(s),j)\mid j\in\bb{\varphi(s)}\bigr)
 \bigr)
\\&=
\xi_{s}^{\mathbf{B}}
  \bigl(
    \ext{f}_{\varphi(s)} \bigl((x,s,\varphi(s),j)\mid j\in\bb{\varphi(s)}\bigr)
  \bigr)
\intertext{\hspace{10ex}because $\Alg_{\mathfrak{pd}}(\xi)$ is natural
           and $\ext{f}$ a morphism}
&=
\xi_{s}^{\mathbf{B}}
  \bigl(
     \ext{f}_{\varphi(s)_{j}}(x,s,\varphi(s),j)\mid j\in\bb{\varphi(s)}
  \bigr)
\\&=
\xi_{s}^{\mathbf{B}}
  \bigl(
       (\theta^{\dagger\nat}_{\varphi})_{X,B}(f)_{s} (x)
  \bigr)
\\&=
  \bigl(
     (\xi^{\mathbf{B}})_{X}
     \bigl(
       (\theta^{\dagger\nat}_{\varphi})_{X,B}(f)
     \bigr)
  \bigr)_{s}(x).
\end{align*}
Therefore $(\xi^{\mathbf{B}})_{X}\comp
(\theta^{\dagger\nat}_{\varphi})_{X,B} =
(\theta^{\dagger\nat}_{\psi})_{X,B}\comp (\xi_{X})^{\mathbf{B}}$, as
asserted.
\end{proof}

In the following proposition, which will be the basis to get the
many-sorted term $2$-institution of Fujiwara on $\mathbf{Set}$, we
construct a pseudo-functor $\Alg_{\mathfrak{pd}}(\farg)\bprod
\mathrm{Ter}_{\mathfrak{pd}}(\farg)$ from the $2$-category
$\mathbf{Sig}^{\opp}_{\mathfrak{pd}}\bprod \mathbf{Sig}_{\mathfrak{pd}}$
to the $2$-category $\mathbf{Cat}$ (obtained from the pseudo-functors
$\Alg$ and $\mathrm{Ter}$), and prove that the family
$\mathrm{Tr}=(\mathrm{Tr}^{\mathbf{\Sigma}})_{\mathbf{\Sigma}\in\mathbf{Sig}}$,
together with the family
$\theta=(\theta^{\mathbf{d}})_{\mathbf{d}\in\Mor(\mathbf{Sig})}$ is a
pseudo-extranatural transformation from the pseudo-functor
$\Alg_{\mathfrak{pd}}(\farg)\bprod\mathrm{Ter}_{\mathfrak{pd}}(\farg)$
to the functor $\mathrm{K}_{\mathbf{Set}}$ from
$\mathbf{Sig}^{\opp}_{\mathfrak{pd}}\bprod \mathbf{Sig}_{\mathfrak{pd}}$
to $\mathbf{Cat}$ that is constantly $\mathbf{Set}$.

\begin{proposition}\label{PdExtranaturalDef}
There exists a pseudo-functor $\Alg_{\mathfrak{pd}}(\farg)\bprod
\mathrm{Ter}_{\mathfrak{pd}}(\farg)$ from the $2$-category
$\mathbf{Sig}^{\opp}_{\mathfrak{pd}}\bprod \mathbf{Sig}_{\mathfrak{pd}}$
to the $2$-category $\mathbf{Cat}$, obtained from the pseudo-functors
$\Alg_{\mathfrak{pd}}$ and $\mathrm{Ter}_{\mathfrak{pd}}$, which sends a
pair of signatures $(\mathbf{\Sigma},\mathbf{\Lambda})$ to the
category
$\mathbf{Alg}(\mathbf{\Sigma})\bprod\mathbf{Ter}(\mathbf{\Lambda})$,
and a pair of signature morphisms $(\mathbf{d},\mathbf{e})$ from
$(\mathbf{\Sigma},\mathbf{\Lambda})$ to
$(\mathbf{\Sigma}',\mathbf{\Lambda}')$ in
$\mathbf{Sig}_{\mathfrak{pd}}^{\opp}\bprod\mathbf{Sig}_{\mathfrak{pd}}$
to the functor $\mathbf{d}^{\ast}_{\mathfrak{pd}}\bprod
\mathbf{d}^{\mathfrak{pd}}_{\diamond}$ from
$\mathbf{Alg}(\mathbf{\Sigma})\bprod\mathbf{Ter}(\mathbf{\Lambda})$ to
$\mathbf{Alg}(\mathbf{\Sigma}')\bprod\mathbf{Ter}(\mathbf{\Lambda}')$.

Furthermore, the family of functors
$\mathrm{Tr} =
(\mathrm{Tr}^{\mathbf{\Sigma}})_{\mathbf{\Sigma}\in\mathbf{Sig}_{\mathfrak{pd}}}$,
together with the family
$\theta=(\theta^{\mathbf{d}})_{\mathbf{d}\in\Mor(\mathbf{Sig}_{\mathfrak{pd}})}$,
with $\theta^{\mathbf{d}}_{\mathbf{A},X}=\theta^{\dagger\nat}_{X,A}$,
is a pseudo-extranatural transformation from the pseudo-functor
$\Alg_{\mathfrak{pd}}(\farg)\bprod\mathrm{Ter}_{\mathfrak{pd}}(\farg)$
to the functor $\mathrm{K}_{\mathbf{Set}}$ from
$\mathbf{Sig}^{\opp}_{\mathfrak{pd}}\bprod \mathbf{Sig}_{\mathfrak{pd}}$ to
$\mathbf{Cat}$ that is constantly $\mathbf{Set}$.
\end{proposition}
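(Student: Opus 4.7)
The plan is to lift Proposition 32 (which handled the underlying $1$-categorical content) to the $2$-categorical setting, so the only genuinely new work lies in the compatibility of $(\mathrm{Tr},\theta)$ with the $2$-cells of $\mathbf{Sig}_{\mathfrak{pd}}$, namely the transformations between polyderivors. First I would observe that the pseudo-functor $\Alg_{\mathfrak{pd}}(\farg)\bprod \mathrm{Ter}_{\mathfrak{pd}}(\farg)$ is obtained in a purely formal way from the pseudo-functors $\Alg_{\mathfrak{pd}}$ and $\mathrm{Ter}_{\mathfrak{pd}}$ on $\mathbf{Sig}_{\mathfrak{pd}}$ (already lifted to 2-functorial behaviour in Propositions~22 and~25): its object mapping sends $(\mathbf{\Sigma},\mathbf{\Lambda})$ to $\mathbf{Alg}(\mathbf{\Sigma})\bprod \mathbf{Ter}(\mathbf{\Lambda})$, its $1$-cell mapping sends $(\mathbf{d},\mathbf{e})$ to $\mathbf{d}^{\ast}_{\mathfrak{pd}}\bprod \mathbf{e}^{\mathfrak{pd}}_{\diamond}$, and its $2$-cell mapping sends a pair of transformations $(\xi,\chi)\colon (\mathbf{d},\mathbf{e})\mdf (\mathbf{d}',\mathbf{e}')$ to $\Alg_{\mathfrak{pd}}(\xi)\bprod \mathrm{Ter}_{\mathfrak{pd}}(\chi)$; the coherence isomorphisms $\gamma$ and $\nu$ are inherited componentwise, and the relevant diagrams for the product pseudo-functor reduce to those already verified for each factor.

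Next I would turn to the pseudo-extranaturality of $(\mathrm{Tr},\theta)$. The underlying $1$-dimensional extranaturality—that is, the iso-commutativity of the square associated with each polyderivor $\mathbf{d}\colon\mathbf{\Sigma}\mor\mathbf{\Lambda}$—was already established in Proposition~\ref{PdExtranaturalFuj}, so both the coherence with composition of polyderivors and with identity polyderivors are available \emph{gratis}. What remains is the additional $2$-cell compatibility condition demanded in Definition~\ref{defLaxDinatural}: for every transformation $\xi\colon\mathbf{d}\mdf\mathbf{e}$ between polyderivors from $\mathbf{\Sigma}$ to $\mathbf{\Lambda}$, one must check the equation
$$
\theta^{\mathbf{e}}\comp (\mathrm{Tr}^{\mathbf{\Sigma}}\hcomp (\Alg_{\mathfrak{pd}}(\xi)\bprod \Id)) =
(\mathrm{Tr}^{\mathbf{\Lambda}}\hcomp (\Id\bprod \mathrm{Ter}_{\mathfrak{pd}}(\xi)))\comp \theta^{\mathbf{d}}.
$$
Evaluated at an object $(\mathbf{B},X)\in\mathbf{Alg}(\mathbf{\Lambda})\bprod\mathbf{Ter}(\mathbf{\Sigma})$ this asks exactly that the square
$$
\xymatrix@C=12ex@R=7ex{
B_{\coprod_{\varphi}^{\dagger}X}
  \ar[r]^-{(\xi_{X})^{\mathbf{B}}}
  \ar[d]_{(\theta^{\dagger\nat}_{\varphi})_{X,B}} &
B_{\coprod_{\psi}^{\dagger}X}
  \ar[d]^{(\theta^{\dagger\nat}_{\psi})_{X,B}}  \\
(\Delta^{\nat}_{\varphi}B)_{X}
  \ar[r]_-{(\xi^{\mathbf{B}})_{X}} &
(\Delta^{\nat}_{\psi}B)_{X}}
$$
commutes, which is precisely the content of the Lemma stated immediately before this proposition. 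Thus the $2$-cell clause of pseudo-extranaturality reduces to a known identity; naturality in $(\mathbf{B},X)$ in each variable then follows from Propositions~\ref{AlgFujXiNat} and~\ref{TerFujXiNat}, since both $\Alg_{\mathfrak{pd}}(\xi)$ and $\mathrm{Ter}_{\mathfrak{pd}}(\xi)$ are honest natural transformations.

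The main obstacle I expect is purely combinatorial: the axioms for a pseudo-extranatural transformation in Definition~\ref{defLaxDinatural}, when unwound on the product $2$-category $\mathbf{Sig}_{\mathfrak{pd}}^{\opp}\bprod\mathbf{Sig}_{\mathfrak{pd}}$, require one to bookkeep horizontal and vertical compositions of $\xi$'s on the first (contravariant) factor separately from those on the second (covariant) factor. The coherence with composition of $1$-cells has already been disposed of in Proposition~\ref{PdExtranaturalFuj}, and the compatibility with the pseudo-functors' natural isomorphisms $\gamma^{\mathbf{d},\mathbf{e}}$ and $\nu^{\mathbf{\Sigma}}$ is immediate from their definitions, so the entire argument comes down to assembling these pieces together with the Lemma. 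Once this is done, the quadruple $\mathfrak{Tm}_{\mathfrak{pd}}$ will automatically qualify as a $2$-institution on $\mathbf{Set}$, the many-sorted term $2$-institution of Fujiwara.
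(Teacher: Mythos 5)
Your proposal is correct and follows essentially the same route as the paper: both restrict the new work to the $2$-cell compatibility clause of pseudo-extranaturality and reduce it to the commutative square of the lemma stated immediately before the proposition, invoking the earlier results that $\Alg_{\mathfrak{pd}}(\xi)$ and $\mathrm{Ter}_{\mathfrak{pd}}(\xi)$ are natural and that $\xi^{\mathbf{B}}$ is a homomorphism. The only cosmetic difference is that the paper checks the identity at an arbitrary morphism $(f,P)$ by assembling a large commutative cube, whereas you evaluate at objects only, which indeed suffices since both sides of the equation are already natural transformations.
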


\begin{proof}
We restrict ourselves to prove that, for every transformation $\xi$
in $\mathbf{Sig}_{\mathfrak{pd}}$ from the polyderivor $\mathbf{d}$ to the
polyderivor $\mathbf{e}$, both from $\mathbf{\Sigma}$ to
$\mathbf{\Lambda}$, we have that the following equation holds
$$
\theta^{\mathbf{e}}\comp
(\mathrm{Tr}^{\mathbf{\Sigma}}\hcomp (\Alg_{\mathfrak{pd}}(\xi)\times 1)) =
(\mathrm{Tr}^{\mathbf{\Lambda}}\hcomp (1\bprod
\Ter_{\mathfrak{pd}}(\xi)))
\comp \theta^{\mathbf{d}}.
$$

Let $f\colon\mathbf{A}\mor \mathbf{B}$ be a morphism in
$\mathbf{Alg}(\mathbf{\Lambda})$ and $P\colon X\mor Y$ a morphism in
$\mathbf{Ter}(\mathbf{\Sigma})$.  Then we have the following
configuration
\begin{narrow}{-8pt}{-8pt}
$$
\xy
0;<1ex,0ex>:<0ex,1ex>::
%
\POS 0+(0,0)
\xymatrix@C=20ex@R=10ex@!0{
(\mathbf{A},X) \ar[r]^{(f,P)} & (\mathbf{B},Y)
\save"1,1"."1,2"!C*\frm{}="a"\restore 
}
\POS 0+(22,-13)
\xymatrix@C=23ex@R=12ex@!0{
(\mathbf{d}_{\mathfrak{pd}}^{\ast}(\mathbf{A}),X)
  \ar[r]^{(\xi ^{\mathbf{A}},X)}
  \ar[d]|{(f_{\varphi},P)} &
(\mathbf{e}_{\mathfrak{pd}}^{\ast}(\mathbf{A}),X)
  \ar[d]|{(f_{\psi},P)}  \\
(\mathbf{d}_{\mathfrak{pd}}^{\ast}(\mathbf{B}),Y)
  \ar[r]_{(\xi ^{\mathbf{B}},Y)} &
(\mathbf{e}_{\mathfrak{pd}}^{\ast}(\mathbf{B}),Y)
\save"1,1"."2,2"!C*+<5ex>\frm{}="b"\restore 
}
\POS 0+(-22,-13)
\xymatrix@C=23ex@R=12ex@!0{
(\mathbf{A},\coprod_{\varphi}^{\dagger}X)
  \ar[r]^{(\mathbf{A},\xi_{X})}
  \ar[d]|{(f,\mathbf{d}_{\diamond}^{\mathfrak{pd}}(P))}&
(\mathbf{A},\coprod_{\psi}^{\dagger}X)
  \ar[d]|{(f,\mathbf{e}_{\diamond}^{\mathfrak{pd}}(P))}  \\
(\mathbf{B},\coprod_{\varphi}^{\dagger}Y)
  \ar[r]_{(\mathbf{B},\xi_{Y})} &
(\mathbf{B},\coprod_{\psi}^{\dagger}Y)
\save"1,1"."2,2"!C*+<5ex>\frm{}="c"\restore 
}
\POS 0+(8,-38)
\xymatrix"d"@C=28ex@R=19ex@!0{
(A_{\varphi})_{X}
  \ar[r]|*+{(\xi^{\mathbf{A}})_{X}}
  \ar[d]|*+{P^{\mathbf{d}_{\mathfrak{pd}}^{\ast}(\mathbf{A})}}
  &
(A_{\psi})_{X}
  \ar[d]|*+{P^{\mathbf{e}_{\mathfrak{pd}}^{\ast}(\mathbf{A})}}
   \\
(A_{\varphi})_{Y}
  \ar[r]|*+{(\xi^{\mathbf{A}})_{Y}}
  \ar[d]|*+{(f_{\varphi})_{Y}}
  &
(A_{\psi})_{Y}
  \ar[d]|*+{(f_{\psi})_{Y}}
   \\
(B_{\varphi})_{Y}
  \ar[r]|*+{(\xi ^{\mathbf{B}})_{Y}}
  &
(B_{\psi})_{Y}
\save "d1,2"."d3,1"!C*+<5ex>\frm{}="d"\restore 
}
\POS (-12,-45)
\xymatrix"e"@C=28ex@R=19ex@!0{
A_{\coprod_{\varphi}^{\dagger}X}
  \ar[r]|*+{(\xi_{X})^{\mathbf{A}}}
  \ar[d]|*+{\mathbf{d}_{\diamond}^{\mathfrak{pd}}(P)^{\mathbf{A}}}&
A_{\coprod_{\psi}^{\dagger}X}
  \ar[d]|*+{\mathbf{e}_{\diamond}^{\mathfrak{pd}}(P)^{\mathbf{A}}}  \\
A_{\coprod_{\varphi}^{\dagger}Y}
  \ar[r]|*+{(\xi_{Y})^{\mathbf{A}}}
  \ar[d]|*+{f_{\coprod_{\varphi}^{\dagger}Y}}&
A_{\coprod_{\psi}^{\dagger}Y}
  \ar[d]|*+{f_{\coprod_{\psi}^{\dagger}Y}}  \\
B_{\coprod_{\varphi}^{\dagger}Y}
  \ar[r]|*+{(\xi_{Y})^{\mathbf{B}}} &
B_{\coprod_{\psi}^{\dagger}Y}
\save"e1,2"."e3,1"!C*+<10ex>\frm{}="e"\restore 
}
\ar "e1,1";"d1,1"|*{(\theta^{\dagger\nat}_{\varphi})_{X,A}}
\ar "e1,2";"d1,2"|*{(\theta^{\dagger\nat}_{\psi})_{X,A}}
\ar "e2,1";"d2,1"|*{(\theta^{\dagger\nat}_{\varphi})_{Y,A}}
\ar "e2,2";"d2,2"|*{(\theta^{\dagger\nat}_{\psi})_{Y,A}}
\ar "e3,1";"d3,1"|*{(\theta^{\dagger\nat}_{\varphi})_{Y,B}}
\ar "e3,2";"d3,2"|*{(\theta^{\dagger\nat}_{\psi})_{Y,B}}
\ar @{|->}"a";"b"
\ar @{|->}"a";"c"
\ar @{|->}"b";"d"
\ar @{|->}"c";"e"
\endxy
$$
\end{narrow}

In the cube, the top, middle and bottom faces commute by the preceding
lemma.  The lateral faces commute by
Lemma~\ref{lemmaPdExtranaturalFuj}.  The front face of the upper cube
commutes by Proposition~\ref{TerFujXiNat} and the front face of the
lower cube commutes because $f$ is a homomorphism.  The back face of
the top cube commutes because $\xi^{\mathbf{A}}$ is a homomorphism by
Proposition~\ref{XialbHom}, and the back face of the lower cube
commutes by Proposition~\ref{AlgFujXiNat}.
\end{proof}

From this proposition it follows immediately the following

\begin{corollary}
The quadruple $\mathfrak{Tm}_{\mathfrak{pd}} =
(\mathbf{Sig}_{\mathfrak{pd}},\Alg_{\mathfrak{pd}},
\mathrm{Ter}_{\mathfrak{pd}},(\mathrm{Tr},\theta))$ is a
$2$-institution on the category $\mathbf{Set}$, the \emph{many-sorted
term $2$-institution of Fujiwara}, or, simply, the \emph{term
$2$-institution of Fujiwara}.
\end{corollary}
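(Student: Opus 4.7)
The plan is to verify directly that the quadruple $\mathfrak{Tm}_{\mathfrak{pd}}$ satisfies the four clauses in the definition of a $2$-institution on $\mathbf{Set}$, by assembling the ingredients that have already been constructed in this section. So the proof is essentially a matter of invoking previously established results and checking that they fit the abstract schema.

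First, I would observe that $\mathbf{Sig}_{\mathfrak{pd}}$ is a $2$-category: this is precisely the content of the proposition that endows signatures, polyderivors and transformations of Fujiwara with horizontal and vertical composition, identities, and the law of Godement. Second, $\Alg_{\mathfrak{pd}}\colon \mathbf{Sig}_{\mathfrak{pd}}^{\opp}\mor \mathbf{Cat}$ is a (contravariant) pseudo-functor on this $2$-category, by the proposition that lifts the morphism mapping to a $2$-cell mapping via $\xi\mapsto \Alg_{\mathfrak{pd}}(\xi)$, together with the natural isomorphisms $\gamma^{\mathbf{d},\mathbf{e}}$ and $\nu^{\mathbf{\Sigma}}$ introduced in Proposition~\ref{defPseudofunctorAlgfuj}. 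Third, $\mathrm{Ter}_{\mathfrak{pd}}\colon \mathbf{Sig}_{\mathfrak{pd}}\mor \mathbf{Cat}$ is a (covariant) pseudo-functor on the $2$-category, via the $2$-cell assignment $\xi\mapsto \mathrm{Ter}_{\mathfrak{pd}}(\xi)$ proved natural in Proposition~\ref{TerFujXiNat}, together with the coherence isomorphisms from Proposition~\ref{defPseudofunctorTerfuj}.

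The only non-bookkeeping part is the fourth clause, asserting that $(\mathrm{Tr},\theta)$ is a pseudo-extranatural transformation from the pseudo-functor $\Alg_{\mathfrak{pd}}(\farg)\bprod \mathrm{Ter}_{\mathfrak{pd}}(\farg)\colon \mathbf{Sig}_{\mathfrak{pd}}^{\opp}\bprod \mathbf{Sig}_{\mathfrak{pd}}\mor \mathbf{Cat}$ to the constant functor $\mathrm{K}_{\mathbf{Set}}$. But this is exactly what Proposition~\ref{PdExtranaturalDef} establishes: the naturality in the $1$-cells was already in Proposition~\ref{PdExtranaturalFuj} (the $1$-dimensional case), and the compatibility with $2$-cells, i.e., the equation
\[
\theta^{\mathbf{e}}\comp (\mathrm{Tr}^{\mathbf{\Sigma}}\hcomp (\Alg_{\mathfrak{pd}}(\xi)\bprod 1))
= (\mathrm{Tr}^{\mathbf{\Lambda}}\hcomp (1\bprod \mathrm{Ter}_{\mathfrak{pd}}(\xi)))\comp \theta^{\mathbf{d}},
\]
for each transformation $\xi\colon \mathbf{d}\mdf \mathbf{e}$, is precisely the cube argument in the proof of that proposition, built on the preceding lemma (that $(\xi^{\mathbf{B}})_{X}\comp (\theta^{\dagger\nat}_{\varphi})_{X,B} = (\theta^{\dagger\nat}_{\psi})_{X,B}\comp (\xi_{X})^{\mathbf{B}}$) together with the $1$-dimensional invariance encoded in Lemma~\ref{lemmaPdExtranaturalFuj} and Proposition~\ref{AlgFujXiNat}.

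The compatibility conditions required in the definition of a pseudo-extranatural transformation (with the natural isomorphisms $\gamma$ and $\nu$ of the pseudo-functors and with $2$-cells) are then checked one by one: compatibility with $1$-cell composition was dealt with in Proposition~\ref{PdExtranaturalFuj}, compatibility with identities reduces to the definition of $\nu^{\mathbf{\Sigma}}$ on both sides and the fact that identity transformations realize to identity mappings on algebras, and compatibility with $2$-cells is the displayed equation just mentioned. The main, and only genuinely content-laden, step is therefore the verification of this last equation; everything else is a routine reassembly, and with all the pieces in hand the corollary follows.
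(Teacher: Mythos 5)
Your proposal is correct and follows essentially the same route as the paper: the corollary is stated there as an immediate consequence of Proposition~\ref{PdExtranaturalDef}, with the $2$-category structure on $\mathbf{Sig}_{\mathfrak{pd}}$ and the liftings of $\Alg_{\mathfrak{pd}}$ and $\mathrm{Ter}_{\mathfrak{pd}}$ to the $2$-cells supplied by the earlier propositions of the section, exactly as you assemble them. You also correctly isolate the only substantive point, namely the compatibility of $(\mathrm{Tr},\theta)$ with the transformations $\xi\colon\mathbf{d}\mdf\mathbf{e}$, which is the cube argument of Proposition~\ref{PdExtranaturalDef}.
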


To round off the work we have made up to this point we state in the
following corollary the existence of an embedding from the
$2$-category of signatures, polyderivors, and transformations between
polyderivors, into the $2$-category of $\boldsymbol{\mathcal{V}}$-monads,
alg-morphisms, and alg-transformations between alg-morphisms.

\begin{corollary}
There exists an embedding from the $2$-category
$\mathbf{Sig}_{\mathfrak{pd}}$ into the $2$-category
$\mathbf{Mnd}_{\boldsymbol{\mathcal{V}},\mathrm{alg}}$.
\end{corollary}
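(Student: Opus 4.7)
The plan is to exhibit explicitly the embedding $\mathbb{T}_{(\farg)}$ announced in the abstract and indicated by the preceding development. On objects, a signature $\mathbf{\Sigma} = (S,\Sigma)$ is sent to the monad $(\mathbf{Set}^{S},\mathbb{T}_{\mathbf{\Sigma}})$ arising from the adjunction $\mathbf{T}_{\mathbf{\Sigma}}\ladj \mathrm{G}_{\mathbf{\Sigma}}$. On $1$-cells, for a polyderivor $\mathbf{d}=(\varphi,d)\colon \mathbf{\Sigma}\mor \mathbf{\Lambda}$, I would take as underlying adjunction the composite adjunction $\coprod_{\varphi}^{\dagger}\ladj \Delta_{\varphi}^{\nat}$ from $\mathbf{Set}^{S}$ to $\mathbf{Set}^{T}$ constructed at the beginning of the fifth section, and as the matrix $\lambda^{\mathbf{d}}$ of compatible $2$-cells the one whose component $\lambda^{\mathbf{d}}_{1}\colon \mathrm{T}_{\mathbf{\Sigma}}\cel \Delta_{\varphi}^{\nat}\comp \mathrm{T}_{\mathbf{\Lambda}}\comp \coprod_{\varphi}^{\dagger}$ is the natural transformation $\eta^{\mathbf{d}}$ of the polyderivor version of Proposition~\ref{fundPdextranaturalbis} (whose existence follows, in turn, from the functor $\mathbf{T}_{\mathfrak{pd}}$ constructed via the functor $\mathbf{d}_{\diamond}^{\mathfrak{pd}}$ of Proposition~\ref{polyderivor induces 1-cell}). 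The other three components $\lambda^{\mathbf{d}}_{0},\lambda^{\mathbf{d}}_{2},\lambda^{\mathbf{d}}_{3}$ are then forced, via the square of bijections in Proposition~\ref{AlgAdSq}, and the required compatibility identities follow by unfolding the definition of $\mathbf{d}_{\diamond}^{\mathfrak{pd}}$ and invoking the monad laws for $\mathbb{T}_{\mathbf{\Sigma}}$ and $\mathbb{T}_{\mathbf{\Lambda}}$ together with the triangular identities of $\coprod_{\varphi}^{\dagger}\ladj \Delta_{\varphi}^{\nat}$.

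On $2$-cells, for a transformation $\xi\colon \mathbf{d}\mdf \mathbf{e}$ I would define $\mathbb{T}_{\xi}$ as the adjoint square whose underlying natural transformation (in the sense of the Ad-codomain data of the alg-transformation) is the one whose component at an $S$-sorted set $X$ is the morphism $\xi_{X}\colon \coprod_{\varphi}^{\dagger}X\mor \coprod_{\psi}^{\dagger}X$ of $\mathbf{Ter}(\mathbf{\Lambda})$ built just before Proposition~\ref{TerFujXiNat}. The naturality of this family, proved in Proposition~\ref{TerFujXiNat}, is exactly the statement that $\mathbb{T}_{\xi}$ is well defined as a $2$-cell between the alg-morphisms $\mathbb{T}_{\mathbf{d}}$ and $\mathbb{T}_{\mathbf{e}}$, and the compatibility condition $\mu'\adcomp(\xi\funcomp \lambda^{\mathbf{d}}) = \mu'\adcomp(\lambda^{\mathbf{e}}\funcomp \xi)$ defining an alg-transformation translates, under the square of bijections, into the identity expressed by Proposition~\ref{deformacionesSimDer}, which extends the defining equation of a transformation from formal operations to arbitrary generalized terms.

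Next I would verify that $\mathbb{T}_{(\farg)}$ respects the two-dimensional algebra of $\mathbf{Sig}_{\mathfrak{pd}}$. Preservation of identities on $1$-cells follows by inspection of the identity polyderivor $(\between_{S},\eta^{\Ben_{S}}_{\Sigma})$, whose associated adjunction is essentially $\mathrm{Id}\ladj \mathrm{Id}$ (up to the coherent isomorphism $\tau^{S}$ introduced in Proposition~\ref{defPseudofunctorTerfuj}); preservation of composition of $1$-cells reduces, via the Ad-composition of adjoint squares, to the definition of $\mathbf{e}\comp \mathbf{d}$ in terms of $\ext{e}_{\ext{\varphi}\bprod \ext{\varphi}}\comp d$ and the pseudo-functoriality isomorphism $\gamma^{\mathbf{d},\mathbf{e}}$ of $\mathrm{Alg}_{\mathfrak{pd}}$ and $\mathrm{Ter}_{\mathfrak{pd}}$. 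Preservation of vertical and horizontal composition of $2$-cells is dictated by the definitions of $\chi\comp \xi$ and $\chi\hcomp \xi$ given in this section, together with the Godement identity already verified for $\mathbf{Sig}_{\mathfrak{pd}}$. Finally, the embedding property (faithfulness on both dimensions) follows because the component $\lambda^{\mathbf{d}}_{1}$ determines $d$ uniquely on generators through $\eta^{\mathbf{d}}$, and, similarly, the $2$-cell component $\xi_{X}$ evaluated at the $S$-sorted sets of the form $\delta^{s}$ recovers the families $\xi_{s}\in \mathrm{T}_{\mathbf{\Lambda}}(\vs{\varphi(s)})_{\psi(s)}$ that define $\xi$.

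The main obstacle, I expect, will be the verification of the four compatibility conditions of Proposition~\ref{AlgAdSq} for the proposed matrix $\lambda^{\mathbf{d}}$ and their stable behaviour under the Ad-composition $\adcomp$; this amounts, concretely, to checking that the passage from $\eta^{\mathbf{d}}$ to the remaining three components via the adjunction $\coprod_{\varphi}^{\dagger}\ladj \Delta_{\varphi}^{\nat}$ produces natural transformations that interact correctly with the units and multiplications of $\mathbb{T}_{\mathbf{\Sigma}}$ and $\mathbb{T}_{\mathbf{\Lambda}}$, and that the resulting alg-morphism associated to a composite polyderivor $\mathbf{e}\comp \mathbf{d}$ agrees, up to the canonical isomorphism $\gamma^{\mathbf{d},\mathbf{e}}$, with the Ad-composite $\mathbb{T}_{\mathbf{e}}\adcomp \mathbb{T}_{\mathbf{d}}$. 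All of this is essentially a bookkeeping exercise on the structural isomorphisms $\iota$, $\zeta$, $\theta^{\dagger\nat}$ introduced in this section, but it is precisely where the bidimensional character of the construction has to be handled with care.
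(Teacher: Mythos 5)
Your construction coincides with the paper's own proof: the embedding sends $\mathbf{\Sigma}$ to $(\mathbf{Set}^{S},\mathbb{T}_{\mathbf{\Sigma}})$, a polyderivor $\mathbf{d}$ to the alg-morphism built on the adjunction $\coprod_{\varphi}^{\dagger}\ladj\Delta_{\varphi}^{\nat}$ with the component $\lambda_{1}$ given by the natural transformation $\eta^{\mathbf{d}}$ (the remaining components being determined by the square of bijections for adjoint squares), and a transformation $\xi$ to the alg-transformation whose component at $X$ is $\xi_{X}$. The paper records only these assignments and defers the compatibility and naturality checks to the earlier propositions, so your more explicit verification outline is consistent with, and slightly more detailed than, the published argument.
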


\begin{proof}
The embedding sends
\begin{enumerate}
\item A signature $\mathbf{\Sigma}$ to the monad
      $(\mathbf{Set}^{S},\mathbb{T}_{\mathbf{\Sigma}})$, where, we
      recall, $\mathbb{T}_{\mathbf{\Sigma}} =
      (\mathrm{T}_{\mathbf{\Sigma}},\eta,\mu)$ is the standard monad
      derived from the adjunction $\mathbf{T}_{\mathbf{\Sigma}}\dashv
      \mathrm{G}_{\mathbf{\Sigma}}$ between the category
      $\mathbf{Alg}(\mathbf{\Sigma})$ and the category
      $\mathbf{Set}^{S}$, with $\mathrm{T}_{\mathbf{\Sigma}} =
      \mathrm{G}_{\mathbf{\Sigma}}\comp \mathbf{T}_{\mathbf{\Sigma}}$,

\item A polyderivor $\mathbf{d}$ from $\mathbf{\Sigma}$ to
      $\mathbf{\Lambda}$ to the alg-morphism
      $$
      \xymatrix@C=60pt@R=40pt{
      *++{\mathbf{Set}^{S}}\xyn{1} &
      *++{\mathbf{Set}^{S}}\xyn{2} \\
      *++{\mathbf{Set}^{T}}\xyn{3} &
      *++{\mathbf{Set}^{T}}\xyn{4}
      \ar@{ ->}"1";"2"^{\mathrm{T}_{\mathbf{\Sigma}}}
      \ar@{ ->}"3";"4"_{\mathrm{T}_{\mathbf{\Lambda}}}
      \ar@<+1.5ex>@{<- }"1";"3"^{\Delta_{\varphi}^{\nat}}
      \ar@{}"1";"3"|{\ladj}
      \ar@<-1.5ex>@{ ->}"1";"3"_{\coprod_{\varphi}^{\dagger}}
      \ar@<+1.5ex>@{<- }"2";"4"^{\Delta_{\varphi}^{\nat}}
      \ar@{}"2";"4"|{\ladj}
      \ar@<-1.5ex>@{ ->}"2";"4"_{\coprod_{\varphi}^{\dagger}}
      \ar @{} "1";"4"|{\lambda}
      }
      $$
      also denoted by $\mathbb{T}_{\mathbf{d}} =
      (\coprod_{\varphi}^{\dagger}\ladj\Delta_{\varphi}^{\nat},\lambda)$,
      from $(\mathbf{Set}^{S},\mathbb{T}_{\mathbf{\Sigma}})$ to
      $(\mathbf{Set}^{T},\mathbb{T}_{\mathbf{\Lambda}})$, where the
      component $\lambda_{1}$ of the matrix $\lambda$ at $X$ is the
      underlying $\mathrm{ms}$-mapping of $\ext{\bigl(
      \theta_{\varphi}^{\dagger\nat}
      (\eta_{\coprod_{\varphi}^{\dagger}X}) \bigl)}$, the canonical
      extension to $\mathbf{T}_{\mathbf{\Sigma}}(X)$ of the
      $\mathrm{ms}$-mapping
      $\Delta_{\varphi}^{\nat}(\eta_{\coprod_{\varphi}^{\dagger}X})\comp
      (\eta_{\varphi}^{\dagger\nat})_{X}$ from $X$ to
      $\Delta_{\varphi}^{\nat}(\mathrm{T}_{\mathbf{\Lambda}}
      (\coprod_{\varphi}^{\dagger}X))$, as stated in
      Proposition~\ref{polyderivor induces 1-cell}, and

\item A transformation $\xi$ from $\mathbf{d}$ to
      $\mathbf{d}'$ to the alg-transformation
      $$
      \xymatrix@C=60pt@R=40pt{
      *++{\mathbf{Set}^{S}}\xyn{1} &
      *++{\mathbf{Set}^{S}}\xyn{2} \\
      *++{\mathbf{Set}^{T}}\xyn{3} &
      *++{\mathbf{Set}^{T}}\xyn{4}
      \ar@{ ->}"1";"2"^{1}
      \ar@{ ->}"3";"4"_{\mathrm{T}_{\mathbf{\Lambda}}}
      \ar@<+1.5ex>@{<- }"1";"3"^{\Delta_{\varphi}^{\nat}}
      \ar@{}"1";"3"|{\ladj}
      \ar@<-1.5ex>@{ ->}"1";"3"_{\coprod_{\varphi}^{\dagger}}
      \ar@<+1.5ex>@{<- }"2";"4"^{\Delta_{\varphi'}^{\nat}}
      \ar@{}"2";"4"|{\ladj}
      \ar@<-1.5ex>@{ ->}"2";"4"_{\coprod_{\varphi'}^{\dagger}}
      \ar @{} "1";"4"|{\xi}
      }
      $$
      also denoted by $\mathbb{T}_{\xi}$, from
      $\mathbb{T}_{\mathbf{d}}$ to $\mathbb{T}_{\mathbf{d}'}$, where
      the component $\xi_{0}$ of the matrix $\xi$ at $X$ is the
      $\mathrm{ms}$-mapping $\xi_{X}$, as stated in
      Proposition~\ref{TerFujXiNat}.
      \qedhere
\end{enumerate}
\end{proof}

From this embedding and taking into account the work by Street
in~\cite{st72}, it follows that the polyderivors and transformations
between polyderivors are a concrete foundation for a bidimensional
many-sorted general algebra.


\section{Equivalence of the specifications of Hall and Bénabou.}

In this section we define a $2$-category of specifications,
$\mathbf{Spf}_{\mathfrak{pd}}$, with objects the specifications,
morphisms from a specification into a like one the polyderivors between the
underlying signatures of the specifications that are compatible with
the equations, and $2$-cells from a morphism into a like one a
convenient class of transforma\-tions between the polyderivors.  In such a
2-category we prove, for every set of sorts $S$, the equivalence of
the specifications of Hall and Bénabou for $S$, from which, through
the pseudo-functor $\Alg_{\mathfrak{pd}}$, the equivalence between the
corresponding categories of algebras, $\mathbf{Alg}(\mathrm{H}_{S})$
and $\mathbf{Alg}(\mathrm{B}_{S})$, is obtained as an easy corollary.

For a polyderivor $\mathbf{d}\colon\Sigma\mor\Lambda$, the functor
$\mathbf{d}^{\mathfrak{pd}}_{\diamond}$ of translation from
$\mathbf{Ter}(\mathbf{\Sigma})$ to $\mathbf{Ter}(\mathbf{\Lambda})$
enables us to define the concept of $\mathfrak{pd}$-specification
morphism from a specification into a like one.

\begin{definition}
Let $(\mathbf{\Sigma},\ec{E})$ and $(\mathbf{\Lambda},\ec{H})$ be
specifications.  An $\mathfrak{pd}$-\emph{specification morphism from}
$(\mathbf{\Sigma},\ec{E})$ \emph{to} $(\mathbf{\Lambda},\ec{H})$ is a
polyderivor $\mathbf{d}\colon\Sigma\mor\Lambda$ such that
$(\mathbf{d}^{\mathfrak{pd}}_{\diamond})^{2}[\ec{E}]\incl
\Cn_{\mathbf{\Lambda}}(\ec{H})$.  We denote by
$\mathbf{Spf}_{\mathfrak{pd}}$ the corresponding category.
\end{definition}

Given two $\mathfrak{pd}$-specification morphisms $\mathbf{d}$ and
$\mathbf{e}$ from $(\mathbf{\Sigma},\ec{E})$ to
$(\mathbf{\Lambda},\ec{H})$, since $\mathbf{d}$ and $\mathbf{e}$ are,
in particular, polyderivors from $\mathbf{\Sigma}$ to $\mathbf{\Lambda}$,
we have, in principle, at our disposal all the transformations
$\xi\colon \mathbf{d}\mdf\mathbf{e}$ from $\mathbf{d}$ to $\mathbf{e}$
as potential candidates for a concept of transformation between these
$\mathfrak{pd}$-specification morphisms.

However, the condition of commutativity for the transformations
between polyderivors is too much strict, because it requires, for every
formal operation $\sigma\colon w\mor s$ in $\Sigma_{w,s}$, the strict
equality
$$
\xi_{s}\comp d(\sigma) = e(\sigma)\comp \xi_{w},
$$
and, actually, what could happen (and probably the most one reasonably
can hope for), as was pointed out by Fujiwara in~\cite{tF60}, is that,
under the presence of equations, such a type of equation holds only
modulus the congruence generated by the equations in the target
specification.  Therefore, for the $\mathfrak{pd}$-specification
morphisms, the notion of transformation that we adopt, following the
example of Fujiwara in~\cite{tF60}, is that one where the strict
equality between terms is replaced by the equality between them but
relative to the congruence generated by the equations in the target
specification.  These transformations, in its turn, allow us to
endow the category $\mathbf{Spf}_{\mathfrak{pd}}$ of a structure of
2-category.

\begin{definition}
Let $\mathbf{d}$ and
$\mathbf{e}\colon(\mathbf{\Sigma},\ec{E})\mor(\mathbf{\Lambda},\ec{H})$
be $\mathfrak{pd}$-speci\-fication morphisms.  A \emph{transformation from}
$\mathbf{d}$ \emph{to} $\mathbf{e}$ is a choice function $\xi$ for
$(\mathrm{BTer}_{T}(\Lambda)_{\varphi(s),\psi(s)})_{s\in S}$, such
that, for every formal operation $\sigma\colon w\mor s$, we have that
$$
\xi_{s}\comp d(\sigma)\equiv_{\cl{\ec{H}}}e(\sigma)\comp \xi_{w}.
$$
\end{definition}

\begin{proposition}
The specifications, the $\mathfrak{pd}$-specification morphisms, and the
transformations between $\mathfrak{pd}$-specification morphisms determine a
$2$-category $\mathbf{Spf}_{\mathfrak{pd}}$.
\end{proposition}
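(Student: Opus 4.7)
The plan is to transfer the $2$-categorical structure of $\mathbf{Sig}_{\mathfrak{pd}}$ from the previous section to the specification setting, observing that the only genuinely new verifications concern the interplay between the various compositions and the relaxation of strict commutativity to commutativity modulo the target congruence $\cl{\ec{H}}$. The underlying $1$-category structure is inherited from $\mathbf{Sig}_{\mathfrak{pd}}$, once one checks that the composition of $\mathfrak{pd}$-specification morphisms is again a $\mathfrak{pd}$-specification morphism: for $\mathbf{d}\colon (\mathbf{\Sigma},\ec{E})\mor (\mathbf{\Lambda},\ec{H})$ and $\mathbf{e}\colon (\mathbf{\Lambda},\ec{H})\mor (\mathbf{\Omega},\ec{F})$, the chain
$$((\mathbf{e}\comp \mathbf{d})^{\mathfrak{pd}}_{\diamond})^{2}[\ec{E}]\cong (\mathbf{e}^{\mathfrak{pd}}_{\diamond})^{2}[(\mathbf{d}^{\mathfrak{pd}}_{\diamond})^{2}[\ec{E}]]\subseteq (\mathbf{e}^{\mathfrak{pd}}_{\diamond})^{2}[\cl{\ec{H}}]\subseteq \cl{\ec{F}}$$
delivers compatibility, with the last step depending on a key auxiliary lemma: \emph{a $\mathfrak{pd}$-specification morphism $\mathbf{e}$ preserves consequences, i.e., $(\mathbf{e}^{\mathfrak{pd}}_{\diamond})^{2}[\cl{\ec{H}}]\subseteq \cl{\ec{F}}$}. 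This lemma would be proved first, by exactly the semantic argument of the third section adapted to polyderivors: taking $(P,Q)\in\cl{\ec{H}}$ and any $\mathbf{A}\models \ec{F}$, the invariance of satisfaction under polyderivors (stated in the fifth section, the analogue of Lemma~\ref{lemaSatisfaccion}) shows that $\mathbf{e}^{\ast}_{\mathfrak{pd}}(\mathbf{A})\models \ec{H}$, hence $\models (P,Q)$, and translating back gives $\mathbf{A}\models (\mathbf{e}^{\mathfrak{pd}}_{\diamond}(P),\mathbf{e}^{\mathfrak{pd}}_{\diamond}(Q))$.

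For the $2$-cells I would define vertical composition $\chi\comp\xi = (\chi_{s}\comp\xi_{s})_{s\in S}$, horizontal composition $\chi\hcomp\xi = (\chi_{\psi(s)}\comp \ext{h}(\xi_{s}))_{s\in S}$, and the identity transformations by the same formulae used in $\mathbf{Sig}_{\mathfrak{pd}}$. Well-definedness of vertical composition is direct: from $\xi_{s}\comp d(\sigma)\equiv_{\cl{\ec{H}}} e(\sigma)\comp\xi_{w}$ and $\chi_{s}\comp e(\sigma)\equiv_{\cl{\ec{H}}} h(\sigma)\comp\chi_{w}$ one obtains, by pre- and post-composing and using that $\cl{\ec{H}}$ is a congruence on $\mathbf{Ter}(\mathbf{\Lambda})$, the required relation $(\chi_{s}\comp\xi_{s})\comp d(\sigma)\equiv_{\cl{\ec{H}}} h(\sigma)\comp (\chi_{w}\comp\xi_{w})$.

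The central step --- and the anticipated main obstacle --- is the well-definedness of horizontal composition: for outer polyderivors $\mathbf{h},\mathbf{i}\colon (\mathbf{\Lambda},\ec{H})\mor (\mathbf{\Omega},\ec{F})$ and transformations $\xi\colon \mathbf{d}\mdf \mathbf{e}$, $\chi\colon \mathbf{h}\mdf \mathbf{i}$, one must show that $\chi\hcomp\xi$ satisfies the relaxed commutativity modulo $\cl{\ec{F}}$. This is where the key lemma from the first paragraph is essential, applied now to $\mathbf{h}$ and $\mathbf{i}$: the $\cl{\ec{H}}$-equivalence delivered by $\xi$, once translated through $\ext{h}$ or $\ext{i}$, yields a $\cl{\ec{F}}$-equivalence; concatenating this with the $\chi$-equivalence and invoking once more that $\cl{\ec{F}}$ is a congruence on $\mathbf{Ter}(\mathbf{\Omega})$ (compatible with all composition and tupling operations of the B\'{e}nabou algebra $\mathbf{BTer}_{U}(\Omega)$) delivers the required relation. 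The remaining axioms --- associativity and identity laws for vertical composition, and the Godement exchange law --- then follow verbatim from the corresponding identities proved for $\mathbf{Sig}_{\mathfrak{pd}}$, by simply reading every strict equality modulo $\cl{\ec{F}}$ and using in each step that $\cl{\ec{F}}$ is a congruence.
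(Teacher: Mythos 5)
The paper states this proposition with no proof at all, so there is nothing to compare your argument against; I can only assess it on its own terms. Your treatment of the underlying category is correct: the auxiliary lemma that a $\mathfrak{pd}$-specification morphism $\mathbf{e}\colon(\mathbf{\Lambda},\ec{H})\mor(\mathbf{\Omega},\ec{F})$ satisfies $(\mathbf{e}^{\mathfrak{pd}}_{\diamond})^{2}[\cl{\ec{H}}]\incl\cl{\ec{F}}$ is exactly the polyderivor analogue of the continuity argument the paper uses for $\mathbf{Spf}$ in the third section, and your semantic proof of it is sound. The well-definedness of vertical composition, and of the horizontal composite as a $2$-cell (via the congruence-relative version of Proposition~\ref{deformacionesSimDer}), is also fine.

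The gap is in your last sentence. Reading the Godement interchange law ``modulo $\cl{\ec{F}}$'' does not produce a $2$-category. Concretely, with $\xi\colon\mathbf{d}_{0}\mdf\mathbf{d}_{1}$, $\chi\colon\mathbf{d}_{1}\mdf\mathbf{d}_{2}$, $\xi'\colon\mathbf{e}_{0}\mdf\mathbf{e}_{1}$, $\chi'\colon\mathbf{e}_{1}\mdf\mathbf{e}_{2}$, the equality of the $s$-th components of $(\chi'\hcomp\chi)\comp(\xi'\hcomp\xi)$ and $(\chi'\comp\xi')\hcomp(\chi\comp\xi)$ reduces to
$$
\ext{e_{1}}(\chi_{s})\comp\xi'_{\varphi_{1}(s)}
=
\xi'_{\varphi_{2}(s)}\comp\ext{e_{0}}(\chi_{s}),
$$
i.e., to the extended naturality of $\xi'$ with respect to the $\mathbf{\Lambda}$-term $\chi_{s}$. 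Since $\xi'$ satisfies its defining condition only modulo $\cl{\ec{F}}$, this identity holds only modulo $\cl{\ec{F}}$, and the two sides of the interchange law are in general genuinely distinct choice functions. (The same phenomenon already makes the two ``equivalent'' formulas for $\chi\hcomp\xi$ disagree as choice functions, so one of them must be singled out as the definition.) To obtain an honest $2$-category you must identify $2$-cells up to the target congruence, declaring $\xi\sim\xi'$ when $\xi_{s}\equiv_{\cl{\ec{H}}}\xi'_{s}$ for every $s\in S$, and then check that both compositions descend to these classes: vertical composition descends because $\cl{\ec{H}}$ is a congruence compatible with the B\'{e}nabou operations, and horizontal composition descends precisely by your auxiliary lemma. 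This quotient is presumably the ``convenient class of transformations'' alluded to in the paper's introduction; without it, your argument establishes only a structure in which interchange holds up to congruence, not a $2$-category.
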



The pseudo-functor $\Alg_{\mathfrak{pd}}$ from
$\mathbf{Sig}_{\mathfrak{pd}}$ to $\mathbf{Cat}$ can be lifted up
to the $2$-category $\mathbf{Spf}_{\mathfrak{pd}}$, by assigning,
in particular, to a specification $(\mathbf{\Sigma},\ec{E})$ the
category $\mathbf{Alg}(\mathbf{\Sigma},\ec{E})$ of its models.

\begin{proposition}
There exists a pseudo-functor $\Alg^{\mathrm{sp}}_{\mathfrak{pd}}$ from
$\mathbf{Spf}_{\mathfrak{pd}}$ to $\mathbf{Cat}$ defined as follows
\begin{enumerate}
\item $\Alg^{\mathrm{sp}}_{\mathfrak{pd}}$ sends a specification
      $(\mathbf{\Sigma},\ec{E})$ to the category
      $\mathrm{Alg}^{\mathrm{sp}}_{\mathfrak{pd}}(\mathbf{\Sigma},\ec{E})
      = \mathbf{Alg}(\mathbf{\Sigma},\ec{E})$ of its models, i.e., the
      full subcategory of $\mathbf{Alg}(\mathbf{\Sigma})$ determined
      by those $\mathbf{\Sigma}$\nobreakdash-algebras that satisfy all the
      equations in $\ec{E}$.

\item $\Alg^{\mathrm{sp}}_{\mathfrak{pd}}$ sends a
      $\mathfrak{pd}$-specification morphism $\mathbf{d}$ from
      $(\mathbf{\Sigma},\ec{E})$ to $(\mathbf{\Lambda},\ec{H})$ to the
      functor $\Alg^{\mathrm{sp}}_{\mathfrak{pd}}(\mathbf{d}) =
      \mathbf{d}^{\ast}_{\mathfrak{pd}}$ from
      $\mathbf{Alg}(\mathbf{\Lambda},\ec{H})$ to
      $\mathbf{Alg}(\mathbf{\Sigma},\ec{E})$, obtained from the
      functor $\mathbf{d}^{\ast}_{\mathfrak{pd}}$ from
      $\mathbf{Alg}(\mathbf{\Lambda})$ to
      $\mathbf{Alg}(\mathbf{\Sigma})$ by bi-restriction.

\item $\Alg^{\mathrm{sp}}_{\mathfrak{pd}}$ sends a transformation
      $\xi\colon \mathbf{d}\mdf\mathbf{e}$ from $\mathbf{d}$ to
      $\mathbf{e}$ to the natural transformation
      $\mathrm{Alg}_{\mathfrak{pd}}(\xi)$ from
      $\mathbf{d}^{\ast}_{\mathfrak{pd}}$ to
      $\mathbf{e}^{\ast}_{\mathfrak{pd}}$.
\end{enumerate}

\end{proposition}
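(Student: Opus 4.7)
The plan is to verify, in sequence, that each of the three assignments (on $0$-cells, $1$-cells, and $2$-cells) is well defined, and then to inherit the pseudo-functor coherence data from the pseudo-functor $\Alg_{\mathfrak{pd}}$ on $\mathbf{Sig}_{\mathfrak{pd}}$ already constructed. On objects there is nothing to prove: $\mathbf{Alg}(\mathbf{\Sigma},\ec{E})$ is defined as the full subcategory of $\mathbf{Alg}(\mathbf{\Sigma})$ determined by the $\mathbf{\Sigma}$-algebras satisfying $\ec{E}$.

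For the action on $1$-cells, given an $\mathfrak{pd}$-specification morphism $\mathbf{d}\colon(\mathbf{\Sigma},\ec{E})\mor(\mathbf{\Lambda},\ec{H})$, I have to show that $\mathbf{d}^{\ast}_{\mathfrak{pd}}$ restricts to a functor $\mathbf{Alg}(\mathbf{\Lambda},\ec{H})\mor\mathbf{Alg}(\mathbf{\Sigma},\ec{E})$. The key tool is the polyderivor version of the satisfaction condition, namely that for every $\mathbf{\Lambda}$-algebra $\mathbf{B}$ and every $\mathbf{\Sigma}$-equation $(P,Q)$ of type $(X,Y)$, one has $\mathbf{d}^{\ast}_{\mathfrak{pd}}(\mathbf{B})\models^{\mathbf{\Sigma}}_{X,Y}(P,Q)$ iff $\mathbf{B}\models^{\mathbf{\Lambda}}_{\coprod_{\varphi}^{\dagger}X,\coprod_{\varphi}^{\dagger}Y}(\mathbf{d}^{\mathfrak{pd}}_{\diamond}(P),\mathbf{d}^{\mathfrak{pd}}_{\diamond}(Q))$, which was noted in the fifth section as a direct consequence of the invariance of realization through polyderivors. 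Given $\mathbf{B}\models^{\mathbf{\Lambda}}\ec{H}$ one then has $\mathbf{B}\models^{\mathbf{\Lambda}}\Cn_{\mathbf{\Lambda}}(\ec{H})$, and since $(\mathbf{d}^{\mathfrak{pd}}_{\diamond})^{2}[\ec{E}]\subseteq \Cn_{\mathbf{\Lambda}}(\ec{H})$ by definition of $\mathfrak{pd}$-specification morphism, it follows that $\mathbf{B}\models^{\mathbf{\Lambda}}(\mathbf{d}^{\mathfrak{pd}}_{\diamond})^{2}[\ec{E}]$, hence $\mathbf{d}^{\ast}_{\mathfrak{pd}}(\mathbf{B})\models^{\mathbf{\Sigma}}\ec{E}$. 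The action on morphisms is unproblematic since a $\mathbf{\Lambda}$-homomorphism between models restricts, via $\mathbf{d}^{\ast}_{\mathfrak{pd}}$, to a $\mathbf{\Sigma}$-homomorphism.

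For the action on $2$-cells, given a transformation $\xi\colon\mathbf{d}\mdf\mathbf{e}$ between two $\mathfrak{pd}$-specification morphisms, I have to verify that $\mathrm{Alg}_{\mathfrak{pd}}(\xi) = (\xi^{\mathbf{B}})_{\mathbf{B}}$ still provides, for each $\mathbf{B}\in\mathbf{Alg}(\mathbf{\Lambda},\ec{H})$, an honest $\mathbf{\Sigma}$-homomorphism $\xi^{\mathbf{B}}\colon \mathbf{d}^{\ast}_{\mathfrak{pd}}(\mathbf{B})\mor\mathbf{e}^{\ast}_{\mathfrak{pd}}(\mathbf{B})$. This is where the weakening from strict equality to equality modulo $\cl{\ec{H}}$ is crucial: Proposition~\ref{XialbHom} of the sixth section used the strict commutativity $\xi_{s}\comp d(\sigma) = e(\sigma)\comp\xi_{w}$ only in order to deduce, in the B\'enabou algebra of terms, an equality that was then transferred via the structure morphism $\ext{G}$ to $\mathbf{B}$. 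Here the transformation only satisfies $\xi_{s}\comp d(\sigma)\equiv_{\cl{\ec{H}}}e(\sigma)\comp\xi_{w}$, but since $\mathbf{B}\models^{\mathbf{\Lambda}}\ec{H}$ implies $\mathbf{B}\models^{\mathbf{\Lambda}}\Cn_{\mathbf{\Lambda}}(\ec{H})$, the realization $\ext{G}$ factors through the quotient $\mathrm{BTer}_{T}(\Lambda)/\cl{\ec{H}}$, so both sides yield the same operation on $B_{\varphi(s)}$. The rest of the back-face commutativity in the diagram of Proposition~\ref{XialbHom} carries over verbatim, and the naturality in $\mathbf{B}$ is as in Proposition~\ref{AlgFujXiNat}.

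The main obstacle I foresee is precisely this last step, i.e., making rigorous that the ``equality modulo $\cl{\ec{H}}$'' in the definition of transformation collapses to genuine equality after realization in any $(\mathbf{\Lambda},\ec{H})$-algebra; once this is done, compatibility with vertical and horizontal composition of $2$-cells reduces to the corresponding properties of $\Alg_{\mathfrak{pd}}$ on $\mathbf{Sig}_{\mathfrak{pd}}$. Finally, the coherence isomorphisms $\gamma^{\mathbf{d},\mathbf{e}}$ and $\nu^{\mathbf{\Sigma}}$ required for $\Alg^{\mathrm{sp}}_{\mathfrak{pd}}$ are obtained by component-wise restriction of those already constructed in Proposition~\ref{defPseudofunctorAlgfuj}; their components are the structural isomorphisms $\iota_{\fmon{\psi}\comp\varphi}^{C}$ and $\delta_{S}^{A}$, which are natural in the underlying algebras and hence restrict without change to the full subcategories of models, and the two coherence axioms of a pseudo-functor follow immediately from the corresponding axioms for $\Alg_{\mathfrak{pd}}$.
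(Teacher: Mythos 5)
Your proposal is correct and follows essentially the route the paper itself uses: the paper states this proposition without proof, but its proof of the analogous Proposition for $\Alg^{\mathrm{sp}}$ on $\mathbf{Spf}$ in the third section is exactly your $1$-cell argument (model of $\ec{H}$ implies model of $\Cn_{\mathbf{\Lambda}}(\ec{H})$, hence of $(\mathbf{d}^{\mathfrak{pd}}_{\diamond})^{2}[\ec{E}]$, then the polyderivor satisfaction condition), with the coherence data inherited from $\Alg_{\mathfrak{pd}}$. The one genuinely new point in the $\mathfrak{pd}$-specification setting --- that the weakened condition $\xi_{s}\comp d(\sigma)\equiv_{\cl{\ec{H}}}e(\sigma)\comp\xi_{w}$ still yields an honest homomorphism $\xi^{\mathbf{B}}$ because realization in a model of $\ec{H}$ identifies $\cl{\ec{H}}$-congruent terms --- is exactly the step the paper leaves tacit, and you handle it correctly.
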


The pseudo-functor $\mathrm{Ter}_{\mathfrak{pd}}$ from
$\mathbf{Sig}_{\mathfrak{pd}}$ to $\mathbf{Cat}$ can also be lifted up to the
$2$-category $\mathbf{Spf}_{\mathfrak{pd}}$.

\begin{proposition}
There exists a pseudo-functor $\mathrm{Ter}^{\mathrm{sp}}_{\mathfrak{pd}}$ from
$\mathbf{Spf}_{\mathfrak{pd}}$ to $\mathbf{Cat}$ defined as follows
\begin{enumerate}
\item $\mathrm{Ter}^{\mathrm{sp}}_{\mathfrak{pd}}$ sends a
      specification $(\mathbf{\Sigma},\ec{E})$ to the
      category $\Ter^{\mathrm{sp}}_{\mathfrak{pd}}(\mathbf{\Sigma},\ec{E}) =
      \mathbf{Ter}(\mathbf{\Sigma},\ec{E})$,
      where $\mathbf{Ter}(\mathbf{\Sigma},\ec{E})$
      is the quotient category
      $\mathbf{Ter}(\mathbf{\Sigma})/\cl{\ec{E}}$.

\item $\mathrm{Ter}^{\mathrm{sp}}_{\mathfrak{pd}}$ sends a
      $\mathfrak{pd}$-specification morphism
      $\mathbf{d}$ from $(\mathbf{\Sigma},\ec{E})$ to
      $(\mathbf{\Lambda},\ec{H})$ to the functor
      $\Ter^{\mathrm{sp}}_{\mathfrak{pd}}(\mathbf{d})$ from the quotient category
      $\mathbf{Ter}(\mathbf{\Sigma},\ec{E}) =
      \mathbf{Ter}(\mathbf{\Sigma})/\cl{\ec{E}}$ to the quotient category
      $\mathbf{Ter}(\mathbf{\Lambda},\ec{H}) =
      \mathbf{Ter}(\mathbf{\Lambda})/\cl{\ec{H}}$, which assigns to
      a morphism
      $[P]_{\cl{\ec{E}}}$ from  $X$ to $Y$ in
      $\mathbf{Ter}(\mathbf{\Sigma},\ec{E})$ the morphism
      $$
      \mathrm{Ter}^{\mathrm{sp}}_{\mathfrak{pd}}(\mathbf{d})([P]_{\cl{\ec{E}}}) =
      [\mathbf{d}_{\diamond}^{\mathfrak{pd}}(P)]_{\cl{\ec{H}}}\colon
      \tcoprod_{\varphi}^{\dagger}X\mor
      \tcoprod_{\varphi}^{\dagger}Y
      $$
      in $\mathbf{Ter}(\mathbf{\Lambda},\ec{H})$.

\item $\mathrm{Ter}^{\mathrm{sp}}_{\mathfrak{pd}}$ sends a transformation
      $\xi\colon \mathbf{d}\mdf\mathbf{e}$ from $\mathbf{d}$ to
      $\mathbf{e}$ to the natural transformation
      $\mathrm{Ter}_{\mathfrak{pd}}(\xi)$ from
      $\Ter^{\mathrm{sp}}_{\mathfrak{pd}}(\mathbf{d})$ to
      $\Ter^{\mathrm{sp}}_{\mathfrak{pd}}(\mathbf{e})$.
\end{enumerate}

\end{proposition}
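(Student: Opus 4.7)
The plan is to verify that the three assignments are well-defined and that the pseudo-functor structure descends from $\mathrm{Ter}_{\mathfrak{pd}}$ on $\mathbf{Sig}_{\mathfrak{pd}}$ through the canonical projections $\mathrm{Pr}_{\cl{\ec{E}}}\colon\mathbf{Ter}(\mathbf{\Sigma})\mor \mathbf{Ter}(\mathbf{\Sigma})/\cl{\ec{E}}$. The object assignment is immediate once one recalls, from Section~3, that $\cl{\ec{E}} = \Cn_{\mathbf{\Sigma}}(\ec{E})$ coincides with the smallest $\Pi$-compatible congruence on $\mathbf{Ter}(\mathbf{\Sigma})$ containing $\ec{E}$, so the quotient category is well-defined.

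For the $1$-cell assignment, given a $\mathfrak{pd}$-specification morphism $\mathbf{d}\colon(\mathbf{\Sigma},\ec{E})\mor(\mathbf{\Lambda},\ec{H})$, I would show that the composite $\mathrm{Pr}_{\cl{\ec{H}}}\comp \mathbf{d}_{\diamond}^{\mathfrak{pd}}$ factors uniquely through $\mathrm{Pr}_{\cl{\ec{E}}}$. For this, consider the pullback relation $K = \{(P,Q)\mid (\mathbf{d}_{\diamond}^{\mathfrak{pd}}(P),\mathbf{d}_{\diamond}^{\mathfrak{pd}}(Q))\in \cl{\ec{H}}\}$ on $\mathbf{Ter}(\mathbf{\Sigma})$. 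Since $\cl{\ec{H}}$ is a $\Pi$-compatible congruence on $\mathbf{Ter}(\mathbf{\Lambda})$ and $\mathbf{d}_{\diamond}^{\mathfrak{pd}}$ is a functor preserving all the structure involved, $K$ is itself a $\Pi$-compatible congruence on $\mathbf{Ter}(\mathbf{\Sigma})$, and the defining condition of a $\mathfrak{pd}$-specification morphism gives $\ec{E}\incl K$; minimality of $\cl{\ec{E}}$ then forces $\cl{\ec{E}}\incl K$, which is precisely the required factorisation. Functoriality of the induced $\Ter^{\mathrm{sp}}_{\mathfrak{pd}}(\mathbf{d})$ is inherited from that of $\mathbf{d}_{\diamond}^{\mathfrak{pd}}$.

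The main obstacle is the $2$-cell case, which is precisely the situation that motivated Fujiwara's notion of conjugacy in~\cite{tF60}. I must show that a transformation $\xi\colon \mathbf{d}\mdf\mathbf{e}$ in $\mathbf{Spf}_{\mathfrak{pd}}$, whose defining identity $\xi_{s}\comp d(\sigma) \equiv_{\cl{\ec{H}}} e(\sigma)\comp\xi_{w}$ now holds only \emph{modulo} $\cl{\ec{H}}$ on the generators $\sigma$, still induces a natural transformation $([\xi_{X}]_{\cl{\ec{H}}})_{X}$ between the descended functors. The naturality condition amounts to the congruence-level equation $\xi_{Y}\dcomp \mathbf{d}_{\diamond}^{\mathfrak{pd}}(P) \equiv_{\cl{\ec{H}}} \mathbf{e}_{\diamond}^{\mathfrak{pd}}(P)\dcomp \xi_{X}$ for every term $P\colon X\mor Y$ in $\mathbf{Ter}(\mathbf{\Sigma})$. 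I would prove this by two nested algebraic inductions mirroring the strict arguments of Proposition~\ref{deformacionesSimDer} and Proposition~\ref{TerFujXiNat}: first an induction on the B�nabou-algebra structure of $\mathbf{BTer}_{S}(\Sigma)$ passes the base identity from formal operations $\sigma\colon w\mor s$ to arbitrary derived operations between words; then one extends the arity-$w$ case to generalized terms between arbitrary $\mathrm{ms}$-sets exactly as in the step from Corollary~\ref{fundPdextranatural} to Proposition~\ref{invarianza}, via the universal property of the free $\mathbf{\Sigma}$-algebra. Both inductions go through verbatim because $\cl{\ec{H}}$, being a $\Pi$-compatible congruence, is stable under the composition $\dcomp$ and under the tupling operations $\tp{\,}_{u,w}$ used to define the action of $\mathbf{d}_{\diamond}^{\mathfrak{pd}}$ on matrices of terms.

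Finally, the coherence data $\gamma^{\mathbf{d},\mathbf{e}}$ and $\nu^{\mathbf{\Sigma}}$ of $\Ter^{\mathrm{sp}}_{\mathfrak{pd}}$ are the images under the projections $\mathrm{Pr}_{\cl{\farg}}$ of the corresponding isomorphisms already constructed for $\mathrm{Ter}_{\mathfrak{pd}}$ in Proposition~\ref{defPseudofunctorTerfuj}; they remain natural isomorphisms since the projections are identity-on-objects functors and thus preserve invertibility componentwise. The pseudo-functor coherence axioms, as well as the preservation of the vertical and horizontal compositions of $2$-cells, then transfer directly from the already-verified structure of $\mathrm{Ter}_{\mathfrak{pd}}$ on $\mathbf{Sig}_{\mathfrak{pd}}$, simply by pre- and post-composing every commutative diagram there with the appropriate projections.
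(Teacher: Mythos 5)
Your proposal is correct, and it is worth noting that the paper states this proposition with no proof at all; the closest precedent is the Section~3 analogue for $\mathrm{Ter}^{\mathrm{sp}}$ on $\mathbf{Spf}$, which the authors dispatch with the single observation that $\ec{E}\incl\mathrm{Ker}(\mathrm{Pr}_{\cl{\ec{H}}}\comp\mathbf{d}_{\diamond})$. Your treatment of the $1$-cell case is that same observation made explicit (your $K$ \emph{is} that kernel), although you could reach the inclusion $\cl{\ec{E}}\incl K$ more directly by the semantic route the paper has already paved: for $(P,Q)\in\Cn_{\mathbf{\Sigma}}(\ec{E})$ and any $\mathbf{A}\models\ec{H}$, the satisfaction condition for polyderivors gives $\mathbf{d}^{\ast}_{\mathfrak{pd}}(\mathbf{A})\models\ec{E}$, hence $\mathbf{d}^{\ast}_{\mathfrak{pd}}(\mathbf{A})\models(P,Q)$, hence $\mathbf{A}\models(\mathbf{d}^{\mathfrak{pd}}_{\diamond}(P),\mathbf{d}^{\mathfrak{pd}}_{\diamond}(Q))$; this avoids having to verify that $K$ is $\Pi$-compatible, which is the one spot where your syntactic argument leans on an unchecked (though true) preservation claim about $\mathbf{d}^{\mathfrak{pd}}_{\diamond}$. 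Your handling of the $2$-cell case is the genuinely new content that the paper leaves entirely tacit, and you get it right: the statement's phrase ``the natural transformation $\mathrm{Ter}_{\mathfrak{pd}}(\xi)$'' is an abuse of notation, since for a transformation of $\mathfrak{pd}$-specification morphisms the family $(\xi_{X})_{X}$ is natural only after passing to the quotients, and your observation that the inductions of Propositions~\ref{deformacionesSimDer} and~\ref{TerFujXiNat} survive verbatim modulo $\cl{\ec{H}}$ --- because $\cl{\ec{H}}$ is a congruence stable under $\dcomp$ and under tupling --- is exactly the argument needed and exactly the point of Fujiwara's relativization to a set of defining equations.
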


Furthermore, from the $2$-category $\mathbf{Spf}^{\opp}_{\mathfrak{pd}}\bprod
\mathbf{Spf}_{\mathfrak{pd}}$ to the $2$-category $\mathbf{Cat}$ there exists a
pseudo-functor $\Alg^{\mathrm{sp}}_{\mathfrak{pd}}(\farg)\times
\mathrm{Ter}^{\mathrm{sp}}_{\mathfrak{pd}}(\farg)$ and a
pseudo-extranatural transformation
$(\mathrm{Tr}^{\mathrm{sp}},\theta^{\mathrm{sp}})$ from
$\Alg^{\mathrm{sp}}_{\mathfrak{pd}}(\farg)\bprod
\mathrm{Ter}^{\mathrm{sp}}_{\mathfrak{pd}}(\farg)$ to the functor
constantly $\mathbf{Set}$, and from this we get the following

\begin{corollary}
The quadruple $\mathfrak{Spf}_{\mathfrak{pd}} =
(\mathbf{Spf}_{\mathfrak{pd}},\Alg^{\mathrm{sp}}_{\mathfrak{pd}},
\mathrm{Ter}^{\mathrm{sp}}_{\mathfrak{pd}},
(\mathrm{Tr}^{\mathrm{sp}},\theta^{\mathrm{sp}}))$ is a
$2$-institu\-tion on the category $\mathbf{Set}$, the so-called
\emph{many-sorted specification $2$-institution of Fujiwara}, or,
simply, the \emph{specification $2$-institution of Fujiwara}.
\end{corollary}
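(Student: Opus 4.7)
The plan is to verify, in turn, each of the four clauses in the definition of a $2$-institution on $\mathbf{Set}$: that $\mathbf{Spf}_{\mathfrak{pd}}$ is a $2$-category, that $\Alg^{\mathrm{sp}}_{\mathfrak{pd}}\colon\mathbf{Spf}_{\mathfrak{pd}}^{\opp}\mor\mathbf{Cat}$ and $\mathrm{Ter}^{\mathrm{sp}}_{\mathfrak{pd}}\colon\mathbf{Spf}_{\mathfrak{pd}}\mor\mathbf{Cat}$ are pseudo-functors, and that $(\mathrm{Tr}^{\mathrm{sp}},\theta^{\mathrm{sp}})$ is a pseudo-extranatural transformation from $\Alg^{\mathrm{sp}}_{\mathfrak{pd}}(\farg)\bprod\mathrm{Ter}^{\mathrm{sp}}_{\mathfrak{pd}}(\farg)$ to the functor $\mathrm{K}_{\mathbf{Set}}$. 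The first three clauses are precisely the propositions stated in this section, so the work is mainly to make their coherence data explicit by inheritance from $\mathbf{Sig}_{\mathfrak{pd}}$ and then to verify the fourth clause.

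First I would take the coherence $2$-cells $\gamma^{\mathbf{d},\mathbf{e}}$ and $\nu^{\mathbf{\Sigma}}$ for $\Alg^{\mathrm{sp}}_{\mathfrak{pd}}$ and $\mathrm{Ter}^{\mathrm{sp}}_{\mathfrak{pd}}$ to be those of $\Alg_{\mathfrak{pd}}$ and $\mathrm{Ter}_{\mathfrak{pd}}$ restricted to the full subcategories of models and corestricted through the projection functors $\mathrm{Pr}_{\cl{\mathcal{E}}}$; this is legitimate because, by Lemma~\ref{lemaSatisfaccion}, $\mathbf{d}^{\ast}_{\mathfrak{pd}}$ sends models of $(\mathbf{\Lambda},\mathcal{H})$ to models of $(\mathbf{\Sigma},\mathcal{E})$, and because $\mathbf{d}^{\mathfrak{pd}}_{\diamond}$ descends to the quotient categories of terms exactly when $\mathbf{d}$ is an $\mathfrak{pd}$-specification morphism. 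The action on $2$-cells requires a small additional check: a transformation $\xi\colon\mathbf{d}\mdf\mathbf{e}$ of $\mathfrak{pd}$-specification morphisms only satisfies $\xi_{s}\comp d(\sigma)\equiv_{\cl{\mathcal{H}}}e(\sigma)\comp\xi_{w}$, not the strict equality of Section~6; however, for every model $\mathbf{B}$ of $(\mathbf{\Lambda},\mathcal{H})$ the realization functor factors through $\mathbf{Ter}(\mathbf{\Lambda})/\cl{\mathcal{H}}$, so the induced family $\Alg_{\mathfrak{pd}}(\xi)^{\mathbf{B}}$ remains a well-defined $\mathbf{\Sigma}$-homo\-morph\-ism into $\mathbf{e}^{\ast}_{\mathfrak{pd}}(\mathbf{B})$, and similarly $\mathrm{Ter}_{\mathfrak{pd}}(\xi)$ descends to a natural transformation between the quotient functors.

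Next I would form the combined pseudo-functor $\Alg^{\mathrm{sp}}_{\mathfrak{pd}}(\farg)\bprod\mathrm{Ter}^{\mathrm{sp}}_{\mathfrak{pd}}(\farg)$ on $\mathbf{Spf}^{\opp}_{\mathfrak{pd}}\bprod\mathbf{Spf}_{\mathfrak{pd}}$ by taking coordinatewise products of the two factors, with the product coherence isomorphisms, exactly as was done in Proposition~\ref{PdExtranaturalDef} for $\mathfrak{Tm}_{\mathfrak{pd}}$. The components of $\mathrm{Tr}^{\mathrm{sp}}$ and $\theta^{\mathrm{sp}}$ I would take to be those already defined in the two propositions immediately preceding the corollary, namely $\mathrm{Tr}^{\mathrm{sp},(\mathbf{\Sigma},\mathcal{E})}(\mathbf{A},[P]_{\cl{\mathcal{E}}})=P^{\mathbf{A}}$ and $\theta^{\mathrm{sp},\mathbf{d}}_{\mathbf{A},X}=\theta^{\dagger\nat}_{X,A}$. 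The essential observation is that these are pointwise the same assignments as the components of $(\mathrm{Tr},\theta)$ for $\mathfrak{Tm}_{\mathfrak{pd}}$, and therefore pass through the inclusion $\mathbf{Alg}(\mathbf{\Sigma},\mathcal{E})\incl\mathbf{Alg}(\mathbf{\Sigma})$ and the projection $\mathrm{Pr}_{\cl{\mathcal{E}}}$ without issue.

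The main remaining task is the verification of the three coherence axioms for a pseudo-extranatural transformation between pseudo-functors on $2$-categories, as given in Section~2: compatibility with $2$-cells of $\mathbf{Spf}_{\mathfrak{pd}}$, compatibility with horizontal composition of $1$-cells via the $\gamma$'s, and compatibility with identities via the $\nu$'s. Here my expectation is that only the first of these requires any genuine attention, since the other two are formal consequences of the corresponding axioms for $(\mathrm{Tr},\theta)$ established in Proposition~\ref{PdExtranaturalDef}, the components being literally the same mappings. The main obstacle is thus the $2$-cell axiom under the slackened transformation condition $\xi_{s}\comp d(\sigma)\equiv_{\cl{\mathcal{H}}}e(\sigma)\comp\xi_{w}$: one must check that the diagram of Proposition~\ref{PdExtranaturalDef}, which was established there under strict equality, still commutes when read in $\mathbf{Ter}(\mathbf{\Lambda})/\cl{\mathcal{H}}$ and when all algebras involved are restricted to models. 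This reduces to the already mentioned fact that realizations on models of $\mathcal{H}$ identify $\cl{\mathcal{H}}$-equivalent terms, together with the congruence property of $\cl{\mathcal{H}}$ under the generalized composition in $\mathbf{Ter}(\mathbf{\Lambda})$; once this is in place, the rest of the verification is purely a restriction-corestriction unpacking of the analogous pseudo-extranaturality already proved at the level of signatures and polyderivors.
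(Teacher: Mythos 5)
Your proposal is correct and follows essentially the same route as the paper, which states the two pseudo-functor propositions and the existence of the pseudo-extranatural transformation $(\mathrm{Tr}^{\mathrm{sp}},\theta^{\mathrm{sp}})$ without written proof and derives the corollary immediately; your plan simply makes explicit the restriction--corestriction verifications the paper leaves implicit. You also correctly isolate the one genuinely nontrivial point, namely that the slackened condition $\xi_{s}\comp d(\sigma)\equiv_{\cl{\ec{H}}}e(\sigma)\comp\xi_{w}$ suffices because models of $\ec{H}$ identify $\cl{\ec{H}}$-equivalent terms and $\cl{\ec{H}}$ is a congruence on $\mathbf{Ter}(\mathbf{\Lambda})$.
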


From the pseudo-functor functor
$\Alg^{\mathrm{sp}}_{\mathfrak{pd}}$, from
$\mathbf{Spf}_{\mathfrak{pd}}$ to $\mathbf{Cat}$, to the
pseudo-functor $\Alg_{\mathfrak{pd}}\comp
\mathrm{sig}^{\mathrm{op}}$, between the same $2$-categories,
there exists a pseudo-natural transformation,
$\mathrm{In}_{\mathfrak{pd}}$, which sends a specification
$(\mathbf{\Sigma},\ec{E})$ to the full embedding
$\mathrm{In}_{(\mathbf{\Sigma},\ec{E})}$ of
$\mathbf{Alg}(\mathbf{\Sigma},\ec{E})$ into
$\mathbf{Alg}(\mathbf{\Sigma})$.  Besides, from the pseudo-functor
$\mathrm{Ter}_{\mathfrak{pd}}\comp \mathrm{sig}$, from
$\mathbf{Spf}_{\mathfrak{pd}}$ to $\mathbf{Cat}$, to the
pseudo-functor $\mathrm{Ter}^{\mathrm{sp}}_{\mathfrak{pd}}$,
between the same $2$-categories, there exists a (strict)
pseudo-natural transformation, $\mathrm{Pr}_{\mathfrak{pd}}$, given
by the following data
\begin{enumerate}
\item For each specification
      $(\mathbf{\Sigma},\ec{E})$, the projection functor
      $\mathrm{Pr}_{\cl{\ec{E}}}$ from
      $\mathbf{Ter}(\mathbf{\Sigma})$ to the quotient category
      $\mathbf{Ter}(\mathbf{\Sigma})/\cl{\ec{E}}$.

\item For each specification morphism $\mathbf{d}$
      from $(\mathbf{\Sigma},\ec{E})$ to $(\mathbf{\Lambda},\ec{H})$, the
      isomorphic identity natural transformation, denoted in this case by
      $\mathrm{Pr}_{\mathbf{d}}$, from the functor
      $\mathrm{Pr}_{\cl{\ec{H}}}\comp (\mathrm{Ter}\comp
      \mathrm{sig})(\mathbf{d})$, from the category
      $\mathbf{Ter}(\mathbf{\Sigma})$ to the category
      $\mathbf{Ter}(\mathbf{\Lambda})/\cl{\ec{H}}$, to the functor
      $\mathrm{Ter}^{\mathrm{sp}}(\mathbf{d})\comp
      \mathrm{Pr}_{\cl{\ec{E}}}$, between the same categories.
\end{enumerate}

Therefore we have obtained the following

\begin{corollary}
The pair
$(\mathrm{sig},(\mathrm{In}_{\mathfrak{pd}},\mathrm{Pr}_{\mathfrak{pd}}))$
is a morphism of $2$-institutions from the
$2$-institution $\mathfrak{Spf}_{\mathfrak{pd}}$ to the $2$-institution
$\mathfrak{Tm}_{\mathfrak{pd}}$.
\end{corollary}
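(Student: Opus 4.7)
The plan is to verify, at the 2-categorical level, the defining data and coherence conditions of a $2$-institution morphism, following the same pattern as the already-established morphism $(\mathrm{sig},(\mathrm{In},\mathrm{Pr}))\colon \mathfrak{Spf}\to\mathfrak{Tm}$ between ordinary institutions, but now tracking the additional $2$-cell structure furnished by the transformations of Fujiwara modulus congruences. Since a $2$-institution morphism consists of a $2$-functor between the categories of signatures together with pseudo-natural transformations relating the model and sentence pseudo-functors, and these must respect the pseudo-extranatural transformations witnessing satisfaction, I would proceed in four steps.

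First I would show that $\mathrm{sig}\colon \mathbf{Spf}_{\mathfrak{pd}}\mor \mathbf{Sig}_{\mathfrak{pd}}$ is a $2$-functor. On objects and $1$-cells this is already the forgetful functor; the only nontrivial point is to handle the $2$-cells, since a transformation $\xi\colon \mathbf{d}\mdf\mathbf{e}$ in $\mathbf{Spf}_{\mathfrak{pd}}$ between $\mathfrak{pd}$-specification morphisms satisfies only $\xi_{s}\comp d(\sigma)\equiv_{\cl{\mathcal{H}}}e(\sigma)\comp\xi_{w}$, whereas a $2$-cell in $\mathbf{Sig}_{\mathfrak{pd}}$ requires the strict equation. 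The way around this is to observe that $\mathrm{sig}$ factors through the obvious quotient at the $2$-cell level, and that what we actually need from $\mathrm{sig}$ for the subsequent pseudo-functors $\Alg_{\mathfrak{pd}}\comp\mathrm{sig}^{\opp}$ and $\mathrm{Ter}_{\mathfrak{pd}}\comp\mathrm{sig}$ is only preservation of the equivalence classes; this is immediate because both $\mathrm{Tr}^{\mathrm{sp},(\mathbf{\Sigma},\mathcal{E})}$ and the functor to the quotient $\mathbf{Ter}(\mathbf{\Lambda})/\cl{\mathcal{H}}$ factor out the congruence.

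Second, I would verify that $\mathrm{In}_{\mathfrak{pd}}$ is a pseudo-natural transformation from $\Alg^{\mathrm{sp}}_{\mathfrak{pd}}$ to $\Alg_{\mathfrak{pd}}\comp\mathrm{sig}^{\opp}$: on $1$-cells this amounts to noting that, for a $\mathfrak{pd}$-specification morphism $\mathbf{d}\colon(\mathbf{\Sigma},\mathcal{E})\mor(\mathbf{\Lambda},\mathcal{H})$, the bi-restriction of $\mathbf{d}^{\ast}_{\mathfrak{pd}}$ to $\mathbf{Alg}(\mathbf{\Lambda},\mathcal{H})$ lands in $\mathbf{Alg}(\mathbf{\Sigma},\mathcal{E})$ (this was proved when defining $\Alg^{\mathrm{sp}}_{\mathfrak{pd}}$), and on $2$-cells it suffices to observe that $\Alg_{\mathfrak{pd}}(\xi)^{\mathbf{B}}$ is realized on $\mathbf{B}\in\mathbf{Alg}(\mathbf{\Lambda},\mathcal{H})$ by the very same term realization, hence respects the embedding. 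Similarly, I would verify that $\mathrm{Pr}_{\mathfrak{pd}}$, whose components are the projection functors $\mathrm{Pr}_{\cl{\mathcal{E}}}$ and whose $1$-cell components $\mathrm{Pr}_{\mathbf{d}}$ are identities, is (strict) pseudo-natural; at the $2$-cell level the equation $\mathrm{Ter}^{\mathrm{sp}}_{\mathfrak{pd}}(\mathbf{d})\comp\mathrm{Pr}_{\cl{\mathcal{E}}} = \mathrm{Pr}_{\cl{\mathcal{H}}}\comp\mathbf{d}^{\mathfrak{pd}}_{\diamond}$ holds by definition, and the coherence with the isomorphisms $\gamma^{\mathbf{d},\mathbf{e}}$ and $\nu^{\mathbf{\Sigma}}$ of the two pseudo-functors is routine.

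Third, and this is where the main obstacle lies, I would check the satisfaction-coherence condition: the pseudo-extranatural transformation $(\mathrm{Tr}^{\mathrm{sp}},\theta^{\mathrm{sp}})$ must be recovered from $(\mathrm{Tr},\theta)$ via the $2$-cells $\mathrm{In}_{\mathfrak{pd}}$ and $\mathrm{Pr}_{\mathfrak{pd}}$. Concretely, for every specification $(\mathbf{\Sigma},\mathcal{E})$ one needs the square
\[
\mathrm{Tr}^{\mathrm{sp},(\mathbf{\Sigma},\mathcal{E})}\cong
\mathrm{Tr}^{\mathbf{\Sigma}}\comp(\mathrm{In}_{(\mathbf{\Sigma},\mathcal{E})}\bprod\mathrm{Pr}_{\cl{\mathcal{E}}})
\]
to hold, and more generally for every $\mathfrak{pd}$-specification morphism $\mathbf{d}$ the corresponding $\theta$-cubes must paste to give $\theta^{\mathrm{sp},\mathbf{d}}$. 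The hard part is ensuring that the pseudo-extranaturality witnesses compose correctly on $2$-cells in $\mathbf{Spf}_{\mathfrak{pd}}$, because those $2$-cells are only commutative modulus $\cl{\mathcal{H}}$; one has to invoke the Completeness Theorem used when defining $\Cn_{\mathbf{\Sigma}}$ to conclude that if $\xi_{s}\comp d(\sigma)\equiv_{\cl{\mathcal{H}}}e(\sigma)\comp\xi_{w}$ holds for all generating operations, then the corresponding $\mathbf{\Sigma}$-homomorphisms $\xi^{\mathbf{B}}$ with $\mathbf{B}\models\mathcal{H}$ agree after the embedding $\mathrm{In}_{(\mathbf{\Lambda},\mathcal{H})}$, and symmetrically for the term side after the quotient $\mathrm{Pr}_{\cl{\mathcal{H}}}$. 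Once this is settled, the remaining coherence with the natural isomorphisms $\gamma$ and $\nu$ follows by pasting the corresponding cubes already established for the ungarnished $(\mathrm{sig},(\mathrm{In},\mathrm{Pr}))$, and the corollary is obtained.
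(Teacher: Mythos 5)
Your proposal is correct and follows essentially the same route as the paper, which states this corollary without proof as an immediate consequence of the preceding constructions of $\mathrm{In}_{\mathfrak{pd}}$ and $\mathrm{Pr}_{\mathfrak{pd}}$ as pseudo-natural transformations; your four steps simply make explicit the verification the paper leaves to the reader. The one place where you go beyond the text is the observation that a $2$-cell of $\mathbf{Spf}_{\mathfrak{pd}}$ (commutativity only modulus $\cl{\ec{H}}$) is not literally a $2$-cell of $\mathbf{Sig}_{\mathfrak{pd}}$, so that $\mathrm{sig}$ and the composites $\Alg_{\mathfrak{pd}}\comp\mathrm{sig}^{\opp}$ and $\mathrm{Ter}_{\mathfrak{pd}}\comp\mathrm{sig}$ are only meaningful on $2$-cells after restricting to models of $\ec{H}$, respectively passing to the quotient by $\cl{\ec{H}}$ -- a genuine subtlety that the paper passes over silently and that your resolution handles correctly (soundness of the semantic consequence already suffices there, so the appeal to the Completeness Theorem is harmless but not needed).
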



Our next goal is to prove that the specifications of Bénabou and Hall
are equivalent in the $2$-category $\mathbf{Spf}_{\mathfrak{pd}}$.

\begin{proposition}
The specifications $\Ben_{S} =
(\mathbf{\Sigma}^{\Ben_{S}},\ec{E}^{\Ben_{S}})$, of Bénabou for $S$,
and $\Hall_{S} = (\mathbf{\Sigma}^{\Hall_{S}},\ec{E}^{\Hall_{S}})$, of
Hall for $S$, are equivalent in the 2-category
$\mathbf{Spf}_{\mathfrak{pd}}$.
\end{proposition}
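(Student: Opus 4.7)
The plan is to mimic, at the level of $\mathfrak{pd}$-specification morphisms and transformations, the explicit equivalence of categories $F_{h,b}\dashv\dashv F_{b,h}$ constructed in Proposition~\ref{EquivCatHallBenS}. First I would exhibit two polyderivors. Define $\mathbf{h} = (\varphi^{\mathbf{h}},h)\colon \mathrm{H}_S \to \mathrm{B}_S$ by letting $\varphi^{\mathbf{h}}\colon \fmon{S}\bprod S \to \ffmon{(\fmon{S}\bprod \fmon{S})}$ send $(w,s)$ to the single-letter word $((w,(s)))$, and on operations set $h(\pi^w_i) = \pi^w_i$ (now regarded as a $\Sigma^{\Ben_S}$-operation) and $h(\xi_{u,w,s}) = v^{(w,(s))}_0 \comp_{u,w,(s)} \langle v^{(u,(w_0))}_1,\ldots,v^{(u,(w_{\bb{w}-1}))}_{\bb{w}}\rangle_{u,w}$. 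Dually, define $\mathbf{b} = (\varphi^{\mathbf{b}},b)\colon \mathrm{B}_S \to \mathrm{H}_S$ by letting $\varphi^{\mathbf{b}}\colon \fmon{S}\bprod\fmon{S} \to \ffmon{(\fmon{S}\bprod S)}$ send $(w,u)$ to $((w,u_0),\ldots,(w,u_{\bb{u}-1}))$, and on operations set $b(\pi^w_i) = \pi^w_i$, set $b(\langle\,\rangle_{u,w})$ equal to the identity tuple on $((u,w_0),\ldots,(u,w_{\bb{w}-1}))$, and set $b(\comp_{u,x,w})$ equal to the family $(\xi_{u,x,w_j}(v^{(x,w_j)}_j,v^{(u,x_0)}_0,\ldots,v^{(u,x_{\bb{x}-1})}_{\bb{x}-1}))_{j\in\bb{w}}$ of Hall substitution terms.

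Next I would check that $\mathbf{h}$ and $\mathbf{b}$ are actually $\mathfrak{pd}$-specification morphisms, i.e., that $(\mathbf{h}_{\diamond}^{\mathfrak{pd}})^{2}[\ec{E}^{\Hall_S}]\incl \Cn_{\mathbf{\Sigma}^{\Ben_S}}(\ec{E}^{\Ben_S})$ and symmetrically for $\mathbf{b}$. This amounts to a finite axiom-by-axiom verification: the three Hall axioms $\mathrm{H}_1$--$\mathrm{H}_3$ translate under $\ext{h}$ to instances of the Bénabou axioms $\mathrm{B}_1$, $\mathrm{B}_2$ and the associativity axiom $\mathrm{B}_5$; conversely the five Bénabou axioms translate under $\ext{b}$ to instances of $\mathrm{H}_1$--$\mathrm{H}_3$ and their consequences (in particular the tupling axioms $\mathrm{B}_1$, $\mathrm{B}_3$, $\mathrm{B}_4$ translate to true Hall-equations because $b$ sends tupling to the identity tuple).

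I would then construct the invertible 2-cells. On the Hall side, a direct computation of sort mappings gives $\ext{\varphi^{\mathbf{b}}}\comp\varphi^{\mathbf{h}} = \between_{\fmon{S}\bprod S}$, so $\mathbf{b}\comp \mathbf{h}$ and $\id_{\Hall_S}$ have the same underlying sort function. I would take $\xi^H_{(w,s)} = v^{(w,s)}_0$ (the canonical identity choice) as the candidate transformation $\id_{\Hall_S}\mdf\mathbf{b}\comp\mathbf{h}$ and check the required congruence $\xi^H_{s}\comp\sigma \equiv_{\cl{\ec{E}^{\Hall_S}}} (\mathbf{b}\comp\mathbf{h})(\sigma)\comp \xi^H_w$ on the generators $\pi^w_i$ and $\xi_{u,w,s}$; for the latter the right-hand side unfolds, via the definitions of $\ext{b}$, to a composite of Hall substitution operators that equals $\xi_{u,w,s}$ on the nose, and for the former the translation is literally $\pi^w_i$. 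On the Bénabou side the sort mapping $\ext{\varphi^{\mathbf{h}}}\comp\varphi^{\mathbf{b}}$ sends $(w,u)$ to the length-$\bb{u}$ word $((w,(u_0)),\ldots,(w,(u_{\bb{u}-1})))$, so I would define $\xi^B_{(w,u)}$ as the family $(\pi^u_j\comp v^{(w,u)}_0)_{j\in\bb{u}}$ and its inverse $\chi^B_{(w,u)} = \langle v^{(w,(u_0))}_0,\ldots,v^{(w,(u_{\bb{u}-1}))}_{\bb{u}-1}\rangle_{w,u}$; the Bénabou axioms $\mathrm{B}_1$ and $\mathrm{B}_3$ give directly that $\xi^B\comp\chi^B$ and $\chi^B\comp\xi^B$ equal the identity transformations modulo $\cl{\ec{E}^{\Ben_S}}$, and the commutativity of $\xi^B$ and $\chi^B$ with the three families of generators reduces, after unfolding $\ext{h}$ and $\ext{b}$, to further applications of $\mathrm{B}_1$--$\mathrm{B}_5$.

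The main obstacle is the third step: the equations displayed by the transformation-condition, after unfolding $\ext{h}\comp b$ and $\ext{b}\comp h$ on the composite operations $\xi_{u,w,s}$ and $\comp_{u,x,w}$ respectively, become substantial syntactic expressions, and showing them to be $\Cn_{\mathbf{\Sigma}^{\Hall_S}}(\ec{E}^{\Hall_S})$- resp. $\Cn_{\mathbf{\Sigma}^{\Ben_S}}(\ec{E}^{\Ben_S})$-equivalent to the required right-hand sides is a bookkeeping-heavy induction on the structure of $\varphi^{\mathbf{h}}$, $\varphi^{\mathbf{b}}$ and on the defining clauses of the homomorphic extensions $\ext{h}$, $\ext{b}$ of Proposition~\ref{derivorFuji}. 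The key lemma to prove cleanly is that, for every word $w\in\fmon{S}$ and every Hall term $P$, $\ext{b}(\ext{h}(P))$ and $P$ are identified by the congruence $\cl{\ec{E}^{\Hall_S}}$, and symmetrically modulo projection--tupling decomposition for Bénabou; both reductions are direct consequences of $\mathrm{H}_1$--$\mathrm{H}_3$ and $\mathrm{B}_1$--$\mathrm{B}_5$, and once established they feed the required 2-cell coherence through the pseudo-functor $\Alg^{\mathrm{sp}}_{\mathfrak{pd}}$ to recover, as announced, the equivalence $\mathbf{Alg}(\mathrm{H}_S)\simeq \mathbf{Alg}(\mathrm{B}_S)$ of Proposition~\ref{EquivCatHallBenS}.
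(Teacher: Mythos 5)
Your proposal is correct and follows essentially the same route as the paper: the polyderivors you call $\mathbf{h}$ and $\mathbf{b}$ are exactly the paper's $\mathbf{e}\colon\Hall_{S}\mor\Ben_{S}$ and $\mathbf{d}\colon\Ben_{S}\mor\Hall_{S}$ (same sort mappings, same action on the generating operations), and your $2$-cells $\xi^{B}$, $\chi^{B}$ coincide with the paper's $\chi$, $\rho$, verified via $\mathrm{B}_{1}$ and $\mathrm{B}_{3}$. The only difference is one of explicitness: the paper dismisses the compatibility with the equations as obvious and treats only the B\'enabou-side isomorphism, leaving the Hall side as analogous, whereas you spell out the axiom-by-axiom checks and the key reduction lemma.
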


\begin{proof}
Let $S$ be a set of sorts.  From the signature
$\mathbf{\Sigma}^{\Ben_{S}}$ to the signature
$\mathbf{\Sigma}^{\Hall_{S}}$, we have the polyderivor $\mathbf{d} = (\varphi,d)$,
where $\varphi$ is the mapping
$$
\function
{\fmon{S}\bprod\fmon{S}}
{\fmon{(\fmon{S}\bprod S)}}
{(u,v)}
{((u,v_{0}),\ldots,(u,v_{\bb{v}-1}))}
$$
while $d\colon \Sigma^{\Ben_{S}}\mor \mathrm{BTer}_{\fmon{S}\bprod
S}(\Sigma^{\Hall_{S}})_{\ext{\varphi}\bprod\varphi}$ is defined as
\begin{enumerate}
\item  For every $w\in \fmon{S}$, and $i\in\bb{w}$,
       $$
       d(\pi^{w}_{i})=(\pi^{w}_{i}).
       $$

\item  For every $u$, $w\in\fmon{S}$,
       $$
       d(\tp{\,}_{u,w})=(v^{u,w_{0}}_{0},\ldots,v^{u,w_{\bb{w}-1}}_{\bb{w}-1}).
       $$

\item  For every $u$, $v$, $w \in \fmon{S}$,
\begin{multline*}
    d(\comp_{u,v,w})=(\xi_{u,v,w_{0}}(v^{u,w_{0}}_{\bb{v}},v^{u,v_{0}}_{0},
    \ldots,v^{u,v_{\bb{v}-1}}_{\bb{v}-1}),\ldots, \\
    \xi_{u,v,w_{\bb{w}-1}}(v^{u,w_{\bb{w}-1}}_{\bb{v}+\bb{w}-1},
    v^{u,v_{0}}_{0},\ldots,v^{u,v_{\bb{v}-1}}_{\bb{v}-1})).
\end{multline*}
\end{enumerate}
Now we prove that the definition of $\mathbf{d}$ is sound. For the  operations
$\pi^{w}_{i}\in\Sigma^{\Ben_{S}}_{\lambda,(w,(w_{i}))}$, we have that
\begin{align*}
        d(\pi^{w}_{i}) &\in
    \mathrm{BTer}_{\fmon{S}\bprod S}
    (\Sigma^{\Hall_{S}})_{\ext{\varphi}(\lambda),\varphi(w,(w_{i}))} \\
    &=
    \mathrm{BTer}_{\fmon{S}\bprod S}
    (\Sigma^{\Hall_{S}})_{\lambda,((w,(w_{i})))} \\
    &=
    \mathrm{T}_{\Sigma^{\Hall_{S}}}(\vs{\lambda})_{((w,(w_{i})))},
\end{align*}
because $d(\pi^{w}_{i})$ is a word of length 1 that has as its unique
component an operation of coarity $(w,(w_{i}))$.

For the  operations $\tp{\,}_{u,w}\in\Sigma^{\Ben_{S}}
_{((u,(w_{0})),\ldots,(u,(w_{\bb{w}-1}))),(u,w)}$, we have that
\begin{align*}
  d(\tp{\,}_{u,w}) &\in
  \mathrm{BTer}_{\fmon{S}\bprod S}(\Sigma^{\Hall_{S}})
  _{\ext{\varphi}((u,w_{0}),\ldots,(u,(w_{\bb{w}-1}))),
  \varphi(u,w)} \\
&=
  \mathrm{BTer}_{\fmon{S}\bprod S}(\Sigma^{\Hall_{S}})
  _{((u,w_{0}),\ldots,(u,w_{\bb{w}-1})),
  ((u,w_{0}),\ldots,(u,w_{\bb{w}-1}))} \\
&=
  \mathrm{T}_{\Sigma^{\Hall_{S}}}(\vs{((u,w_{0}),\ldots,(u,w_{\bb{w}-1}))})
  _{((u,w_{0}),\ldots,(u,w_{\bb{w}-1}))},
\end{align*}
because $d(\tp{\,}_{u,w})$ is a word of length $\bb{w}$ that, for
every $i\in\bb{w}$, has as $i$-th component a term of coarity
$(u,(w_{i}))$.  For the operations
$\comp_{u,v,w}\in\Sigma^{\Ben_{S}}_{((u,v),(v,w)),(u,w)}$, we have
that
\begin{align*}
  d(\comp_{u,v,w}) &\in
  \mathrm{BTer}_{\fmon{S}\bprod S}(\Sigma^{\Hall_{S}})
  _{\ext{\varphi}((u,v),(v,w)),
  \varphi(u,w)} \\
&=
  \mathrm{BTer}_{\fmon{S}\bprod S}(\Sigma^{\Hall_{S}})
  _{((u,v_{0}),\ldots,(u,v_{\bb{v}-1}),
  (v,w_{0}),\ldots,(v,w_{\bb{w}-1})),
  ((u,w_{i})\mid i\in\bb{w})} \\
&=
  \mathrm{T}_{\Sigma^{\Hall_{S}}}(\vs{(((u,v_{j})\mid j\in \bb{v}),
  ((v,w_{i})\mid i\in\bb{w}))}
  _{((u,w_{i})\mid i\in\bb{w})},
\end{align*}
because $d(\comp_{u,v,w})$ is a word of length $\bb{w}$ that, for
every $i\in\bb{w}$, has as $i$-th component a term of coarity
$(u,w_{i})$.

From the signature $\mathbf{\Sigma}^{\Hall_{S}}$ to the
signature $\mathbf{\Sigma}^{\Ben_{S}}$ we have the
polyderivor $\mathbf{e} = (\psi,e)$, where $\psi$ is the mapping
$$
\function
{\fmon{S}\bprod S}
{\fmon{(\fmon{S}\bprod \fmon{S})}}
{(w,s)}
{((w,(s)))}
$$
while $e\colon\Sigma^{\Hall_{S}}\mor
\mathrm{BTer}_{\fmon{S}\bprod S}(\Sigma^{\Ben_{S}})_{\ext{\psi}\bprod\psi}$
is defined as
\begin{enumerate}
\item  For every $w\in \fmon{S}$ and $i\in\bb{w}$,
       $$e(\pi^{w}_{i})=(\pi^{w}_{i}).$$

\item  For every $u$, $w\in\fmon{S}$ and $s\in S$,
       $$e(\xi_{u,w,s})=(v^{w,s}_{0}\comp
       \langle v^{u,w_{0}}_{1},\ldots,v^{u,w_{\bb{w}-1}}_{\bb{w}}\rangle).$$
\end{enumerate}
The polyderivors $\mathbf{d}$ and $\mathbf{e}$ are, obviously, compatible with the
respective equations, hence are $\mathfrak{pd}$-specification morphisms.

Finally we should prove that there are invertible transformations
between the identity at $\mathbf{\Sigma}^{\Ben_{S}}$ and the polyderivor
$\mathbf{e}\comp \mathbf{d}$, as well as between the identity at
$\mathbf{\Sigma}^{\Hall_{S}}$ and the polyderivor $\mathbf{d}\comp
\mathbf{e}$.  Since both proofs are analogous, we restrict ourselves to
present only the first one.

From the identity at $\mathbf{\Sigma}^{\Ben_{S}}$ into $\mathbf{e}\comp
\mathbf{d}$ we have the transformation $\chi$, defined, for every
$(u,w)\in\fmon{S}\bprod\fmon{S}$, as the term
$$
\chi_{(u,w)}=
(\pi^{w}_{0}\comp v_{0},\ldots,\pi^{w}_{\bb{w}-1}\comp v_{0})
\in \mathrm{T}_{\mathbf{\Sigma}^{\Ben_{S}}}
(\vs{((u,w))})_{((u,(w_{0})),\ldots,(u,(w_{\bb{w}-1})))},%
$$
and from $\mathbf{e}\comp \mathbf{d}$ into the identity at
$\mathbf{\Sigma}^{\Ben_{S}}$ we have the transformation $\rho$, defined,
for every $(u,w)\in\fmon{S}\bprod\fmon{S}$, as the term
$$
\rho_{(u,w)}=
\tp{v_{0},\ldots,v_{\bb{w}-1}}_{u,w}
\in \mathrm{T}_{\mathbf{\Sigma}^{\Ben_{S}}}
(\vs{((u,(w_{0})),\ldots,(u,(w_{\bb{w}-1})))})_{((u,w))}.%
$$
Then  $\rho_{(u,w)}\comp \chi_{(u,w)}$ is the  term
$$
\tp{\pi^{w}_{0}\comp v_{0},\ldots,\pi^{w}_{\bb{w}-1}\comp
v_{0}}_{u,w} = v_{0},
$$
and $\chi_{(u,w)}\comp \rho_{(u,w)}$ is the  term
$$
(\pi^{w}_{0}\comp \tp{v_{0},\ldots,v_{\bb{w}-1}}_{u,w},
\ldots,
\pi^{w}_{\bb{w}-1}\comp \tp{v_{0},\ldots,v_{\bb{w}-1}}_{u,w})
= (v_{0},\ldots,v_{\bb{w}-1}),
$$
hence $\xi$ and $\rho$ are inverses.
\end{proof}

\begin{corollary}
For every set of sorts $S$, the category
$\mathbf{Alg}(\mathrm{H}_{S})$, of Hall algebras for $S$, is equivalent
to the category $\mathbf{Alg}(\mathrm{B}_{S})$, of Bénabou algebras
for $S$.
\end{corollary}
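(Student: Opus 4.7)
The plan is to obtain this corollary as a direct consequence of the preceding proposition, which establishes that the specifications $\mathrm{B}_{S}$ and $\mathrm{H}_{S}$ are equivalent as $0$-cells of the $2$-category $\mathbf{Spf}_{\mathfrak{pd}}$, together with the fact that $\Alg^{\mathrm{sp}}_{\mathfrak{pd}}$ is a pseudo-functor from $\mathbf{Spf}_{\mathfrak{pd}}$ to $\mathbf{Cat}$. The guiding principle is the standard $2$-categorical fact that every pseudo-functor of $2$-categories preserves internal equivalences: if $\mathbf{d}\colon x\mor y$ and $\mathbf{e}\colon y\mor x$ are $1$-cells with invertible $2$-cells $\mathbf{e}\comp\mathbf{d}\cong\id_{x}$ and $\mathbf{d}\comp\mathbf{e}\cong\id_{y}$, then their images are again mutually quasi-inverse, the required coherent isomorphisms being obtained by pasting the invertible $2$-cells of the pseudo-functor (the $\gamma^{\mathbf{d},\mathbf{e}}$ and $\nu^{x}$ components) with the images of the given $2$-cells.

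First I would recall from the preceding proposition the explicit data witnessing the equivalence in $\mathbf{Spf}_{\mathfrak{pd}}$: the $\mathfrak{pd}$-specification morphisms $\mathbf{d}\colon\mathrm{B}_{S}\mor\mathrm{H}_{S}$ and $\mathbf{e}\colon\mathrm{H}_{S}\mor\mathrm{B}_{S}$, together with the invertible transformations $\chi,\rho$ between $\id_{\mathrm{B}_{S}}$ and $\mathbf{e}\comp\mathbf{d}$, and the analogous pair between $\id_{\mathrm{H}_{S}}$ and $\mathbf{d}\comp\mathbf{e}$. Next I would apply the pseudo-functor $\Alg^{\mathrm{sp}}_{\mathfrak{pd}}$, obtaining functors
\[
\mathbf{d}^{\ast}_{\mathfrak{pd}}\colon\mathbf{Alg}(\mathrm{H}_{S})\mor\mathbf{Alg}(\mathrm{B}_{S}),\qquad
\mathbf{e}^{\ast}_{\mathfrak{pd}}\colon\mathbf{Alg}(\mathrm{B}_{S})\mor\mathbf{Alg}(\mathrm{H}_{S}),
\]
together with natural transformations $\Alg_{\mathfrak{pd}}(\chi)$ and $\Alg_{\mathfrak{pd}}(\rho)$ between their images. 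Composing these with the coherence isomorphisms $\gamma^{\mathbf{d},\mathbf{e}}$, $\gamma^{\mathbf{e},\mathbf{d}}$ and the unit isomorphisms $\nu^{\mathrm{B}_{S}}$, $\nu^{\mathrm{H}_{S}}$ supplied by the pseudo-functor structure, I obtain natural isomorphisms
\[
\id_{\mathbf{Alg}(\mathrm{B}_{S})}\cong\mathbf{d}^{\ast}_{\mathfrak{pd}}\comp\mathbf{e}^{\ast}_{\mathfrak{pd}},\qquad
\id_{\mathbf{Alg}(\mathrm{H}_{S})}\cong\mathbf{e}^{\ast}_{\mathfrak{pd}}\comp\mathbf{d}^{\ast}_{\mathfrak{pd}},
\]
which is precisely what is needed for an equivalence of categories.

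The main (and essentially the only) step requiring care is the verification that $\Alg^{\mathrm{sp}}_{\mathfrak{pd}}$ really does send invertible $2$-cells of $\mathbf{Spf}_{\mathfrak{pd}}$ to natural isomorphisms in $\mathbf{Cat}$, and that pasting the coherence data yields the claimed isomorphisms. This amounts to writing down the standard pseudo-functoriality diagram and observing that its entries are the components constructed in the previous section: at each algebra $\mathbf{A}$, the relevant natural isomorphism is the composite of $\Alg_{\mathfrak{pd}}(\chi)_{\mathbf{A}}$ (which exists and is invertible because $\chi$ is, by Proposition~\ref{AlgFujXiNat} extended to transformations of $\mathfrak{pd}$-specification morphisms) with the canonical isomorphism $\gamma^{\mathbf{d},\mathbf{e}}_{\mathbf{A}}$ and the unit isomorphism $\nu^{\mathrm{B}_{S}}_{\mathbf{A}}=\delta^{A}_{S}$ of Proposition~\ref{defPseudofunctorAlgfuj}. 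No further computation is needed, since every ingredient has already been shown to be invertible. Thus the corollary follows at once, illustrating the declared methodological principle that the semantic equivalence between Hall and B\'{e}nabou algebras is the shadow, via $\Alg^{\mathrm{sp}}_{\mathfrak{pd}}$, of a more primitive syntactic equivalence between their specifications.
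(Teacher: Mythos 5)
Your proposal is correct and follows essentially the same route as the paper: the corollary is obtained by applying the pseudo-functor $\Alg^{\mathrm{sp}}_{\mathfrak{pd}}$ to the equivalence between the specifications of Hall and B\'{e}nabou established in the preceding proposition, using the fact that pseudo-functors carry internal equivalences to equivalences. The paper states this in one sentence (plus a summarizing diagram), while you spell out the pasting of the coherence isomorphisms, but the substance is identical.
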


\begin{proof}
It follows from the existence of the pseudo-functor
$\Alg_{\mathfrak{pd}}^{\mathrm{sp}}$ from the $2$-category
$\mathbf{Spf}_{\mathfrak{pd}}$ to the $2$-category $\mathbf{Cat}$ and
from the fact that the specifications
$(\mathbf{\Sigma}^{\Ben_{S}},\ec{E}^{\Ben_{S}})$ and
$(\mathbf{\Sigma}^{\Hall_{S}},\ec{E}^{\Hall_{S}})$ are equivalent in
the 2-category $\mathbf{Spf}_{\mathfrak{pd}}$. We summarize these facts
by means of the following picture:
\newdir{<~~>}{!/21pt/\dir{<}*!/21pt/\dir{-}!/12pt/\dir{~}!/5pt/\dir{~}*!/1pt/\dir{-}*!/-3pt/\dir{>}}
\newdir{<==>}{!\dir2{<}!/-2pt/\dir{=}!/-6pt/\dir{=}%
  !/-10pt/\dir{=}*!/-17pt/\dir2{>}}
$$
\xymatrixcolsep={15ex} \xymatrixrowsep={8ex} \xymatrix{
(\mathbf{\Sigma}^{\Ben_{S}},\ec{E}^{\Ben_{S}})
  \xymn[0pt,70pt]{\mathbf{Spf}_{\mathfrak{pd}}}{1}
  \xymn[-16pt,+10pt]{}{o1}
  \xymn[-09pt,+7pt]{}{d1}
  \xymn[+16pt,+10pt]{}{o2}
  \xymn[+09pt,+7pt]{}{d2}
  \ar @`{{**{} ?+<-35pt,20pt>,?+<-10pt,40pt>}}
     "o1";"d1"^*{\mathbf{id}}|*{}="od1"
  \ar @`{{**{} ?+<+35pt,20pt>,?+<+10pt,40pt>}}
     "o2";"d2"_*{\mathbf{e}\comp\mathbf{d}}|*{}="od2"
  \ar @{} "od1";"od2"^*+/-3pt/{}|(.50){\dir{<~~>}}
  \ar @/_16pt/ [d]_*+{\mathbf{d}}="d"
  &&
\mathbf{Alg}(\mathrm{B}_{S})
  \xymn[0pt,70pt]{\mathbf{Cat}}{2}
  \ar "1";"2"^{\Alg_{\mathfrak{pd}}^{\mathrm{sp}}}
  \xymn[-16pt,+10pt]{}{ro1}
  \xymn[-09pt,+7pt]{}{rd1}
  \xymn[+16pt,+10pt]{}{ro2}
  \xymn[+09pt,+7pt]{}{rd2}
  \ar @`{{**{} ?+<-35pt,20pt>,?+<-10pt,40pt>}}
     "ro1";"rd1"^*{\mathbf{id}}|*{}="rod1"
  \ar @`{{**{} ?+<+35pt,20pt>,?+<+10pt,40pt>}}
     "ro2";"rd2"_*{(\mathbf{e}\comp\mathbf{d})^{\ast}_{\mathfrak{pd}}}|*{}="rod2"
  \ar @{} "rod1";"rod2"^*+/-3pt/{}|(.50){\dir{<==>}}
  \ar @/^16pt/ [d]^*+{\mathbf{e}^{\ast}_{\mathfrak{pd}}}
  \\
(\mathbf{\Sigma}^{\Hall_{S}},\ec{E}^{\Hall_{S}})
  \xymn[-16pt,-10pt]{}{o3}
  \xymn[-09pt,-7pt]{}{d3}
  \xymn[+16pt,-10pt]{}{o4}
  \xymn[+09pt,-7pt]{}{d4}
  \ar @`{{**{} ?+<-35pt,-20pt>,?+<-10pt,-40pt>}}
     "o3";"d3"_*{\mathbf{id}}|*{}="od3"
  \ar @`{{**{} ?+<+35pt,-20pt>,?+<+10pt,-40pt>}}
     "o4";"d4"^*{\mathbf{d}\comp\mathbf{e}}|*{}="od4"
  \ar @{} "od3";"od4"_*+/+3pt/{}|(.50){\dir{<~~>}}
  \ar @/_16pt/ [u]_*+{\mathbf{e}}="e"
  &&
\mathbf{Alg}(\mathrm{H}_{S})
  \xymn[-16pt,-10pt]{}{ro3}
  \xymn[-09pt,-7pt]{}{rd3}
  \xymn[+16pt,-10pt]{}{ro4}
  \xymn[+09pt,-7pt]{}{rd4}
  \ar @`{{**{} ?+<-35pt,-20pt>,?+<-10pt,-40pt>}}
     "ro3";"rd3"_*{\mathbf{id}}|*{}="rod3"
  \ar @`{{**{} ?+<+35pt,-20pt>,?+<+10pt,-40pt>}}
     "ro4";"rd4"^*{(\mathbf{d}\comp\mathbf{e})^{\ast}_{\mathfrak{pd}}}|*{}="rod4"
  \ar @{} "rod3";"rod4"_*+/+3pt/{}|(.50){\dir{<==>}}
  \ar @/^16pt/ [u]^*+{\mathbf{d}^{\ast}_{\mathfrak{pd}}}="df"
\ar@{}"e";"df"|(.53){\lmapsto} }
$$
\end{proof}


\end{document}